\documentclass[a4paper,11pt]{article}
\usepackage[utf8]{inputenc}
\usepackage{tikz}
\usepackage{amsthm}
\usepackage{amsmath,stmaryrd, mathrsfs}
\usepackage{amsfonts}
\usepackage{amssymb}
\usepackage{dsfont}
\usepackage{tikz-cd}
\usepackage{subcaption}
\usepackage{float}
\usepackage{url}
\usepackage[hidelinks]{hyperref}
\usepackage{booktabs}
\usepackage{import}
\usepackage{enumitem}
\usepackage[normalem]{ulem}
\usepackage{soul}
\usepackage{upgreek}
\usepackage{soul}

\usepackage{geometry}
\numberwithin{equation}{section}

\newtheorem{theorem}{Theorem}[section]
\newtheorem{lemma}[theorem]{Lemma}
\newtheorem{proposition}[theorem]{Proposition}
\newtheorem{corollary}[theorem]{Corollary}
\newtheorem{nota}[theorem]{Convention}
\newtheorem{remark}[theorem]{Remark}

\newtheorem{definition}[theorem]{Definition}

\def\trim{  \overline{T} }

\def\deg{\mathrm{deg}\,}

\renewcommand\leq{\leqslant}
\renewcommand\geq{\geqslant}
\renewcommand\rho{\varrho}

\def\bg{{\overline{\Gamma}}}
\def\hp{\hat{\Pi}_{\ve}}
\def\sT{\mathsf{D}}
\def\hC{\hat{\mC}}
\def\limI{\mathrm{I}}
\def\n{\mathbf{n}}

\newcommand{\RR}{\mathbf{R}}
\newcommand{\NN}{\mathbf{N}}
\newcommand{\ZZ}{\mathbf{Z}}

\newcommand{\TT}{\mathbf{T}}

\newcommand{\EE}{\mathbb{E}}

\newcommand{\PP}{\mathbb{P}}

\newcommand{\mG}{\mathcal{G}}
\newcommand{\mL}{\mathcal{L}}

\newcommand{\mO}{\mathcal{O}}
\newcommand{\mP}{\mathcal{P}}

\newcommand{\mC}{\mathcal{C}}
\newcommand{\mW}{\mathcal{W}}
\newcommand{\mM}{\mathcal{M}}
\newcommand{\mT}{\mathcal{T}}
\newcommand{\mI}{\mathcal{I}}

\newcommand{\mF}{\mathcal{F}}
\newcommand{\mD}{\mathcal{D}}
\newcommand{\mX}{\mathcal{X}}
\newcommand{\mR}{\mathcal{R}}
\newcommand{\mE}{\mathcal{E}}
\newcommand{\mS}{\mathcal{S}}

\newcommand{\mA}{\mathcal{A}}
\newcommand{\mH}{\mathcal{H}}

\newcommand{\mK}{\mathcal{K}}

\newcommand{\mf}[1]{\mathfrak{#1}}

\newcommand{\ve}{\varepsilon}
\newcommand{\vr}{\varrho}

\newcommand{\bigslant}[2]{{\raisebox{.17em}{$#1$}\hspace{-0.7ex}\left/\hspace{-0.2ex}\raisebox{-.17em}{$\, #2$}\right.}}
\newcommand{\lqm}{``}
\newcommand{\rqm}{'' }

\newcommand*{\ud}{\mathrm{\,d}}
 
\usetikzlibrary{shapes.misc}
\usetikzlibrary{shapes.symbols}
\usetikzlibrary{snakes}
\usetikzlibrary{decorations}
\usetikzlibrary{decorations.markings}
\usetikzlibrary{arrows.meta}
\usetikzlibrary{shapes.geometric}
\usetikzlibrary{decorations.pathmorphing,calc}

\def\drawx{\draw[-,solid, line width = 1.0mm] (-3pt,-3pt) -- (3pt,3pt);\draw[-,solid, line width = 1.0mm] (-3pt,3pt) -- (3pt,-3pt);}
\tikzset{
	dot/.style={circle,fill=black,draw=black, solid,inner sep=0pt,minimum size=1.20mm},
	blue/.style={circle,fill=blue,draw=blue, solid,inner sep=0pt,minimum size=1.20mm},
	green/.style={circle,fill=green,draw=green, solid,inner sep=0pt,minimum	size=1.20mm},
	odot/.style={circle,thick, fill=white,draw=black, solid,inner
sep=0pt,minimum size=1.05mm},
	root/.style={circle, thick, fill=white,draw=purple, solid,inner
sep=0pt,minimum size=1.05mm},
	bigroot/.style={circle,very thick, fill=white,draw=purple, solid,inner
sep=0pt,minimum size=1.5mm},
	rootb/.style={circle, thick, fill=white,draw=blue, solid,inner sep=0pt,minimum size=0.95mm},
	idot/.style={circle, thick, fill=white,draw=purple, solid,inner sep=0pt,minimum size=0.95mm},
	wdot/.style={circle, thick, fill=white,draw=black, solid,inner sep=0pt,minimum size=0.95mm},
	square/.style={rectangle,fill=black,draw=black, solid,inner sep=0pt,minimum size=1mm},
	empty/.style={circle,fill=white,draw=white, solid,inner sep=0pt,minimum size=0.5mm},
	var/.style={circle,fill=black!10,draw=black,inner sep=0pt, minimum size=
	2mm},
	symb/.style={circle,fill=symbols,draw=symbols, solid,inner sep=0pt,minimum size=0.5mm},
	yy/.style={circle,fill=gray!20,draw=black,inner sep=0pt,minimum size=0.8mm},
	>=stealth,
	dotred/.style={circle,fill=black!50,inner sep=0pt, minimum size=2mm},
	generic/.style={semithick,shorten >=1pt,shorten <=1pt},
	dist/.style={ultra thick,draw=testcolor,shorten >=1pt,shorten <=1pt},
	testfcn/.style={ultra thick,testcolor,shorten >=1pt,shorten <=1pt,<-},
	testfcnx/.style={ultra thick,testcolor,shorten >=1pt,shorten <=1pt,<-,
		postaction={decorate,decoration={markings,mark=at position 0.6 with {\drawx}}}},
	kprime/.style={semithick,shorten >=1pt,shorten <=1pt,densely dashed,->},
	kprimex/.style={semithick,shorten >=1pt,shorten <=1pt,densely dashed,->,
		postaction={decorate,decoration={markings,mark=at position 0.4 with {\drawx}}}},
	kernel/.style={semithick,shorten >=1pt,shorten <=1pt,->},
	multx/.style={shorten >=1pt,shorten <=1pt,
		postaction={decorate,decoration={markings,mark=at position 0.5 with {\drawx}}}},
	kernelx/.style={semithick,shorten >=1pt,shorten <=1pt,->,
		postaction={decorate,decoration={markings,mark=at position 0.4 with {\drawx}}}},
	kernel1/.style={->,semithick,shorten >=1pt,shorten <=1pt,postaction={decorate,decoration={markings,mark=at position 0.45 with {\draw[-] (0,-0.1) -- (0,0.1);}}}},
	kernel2/.style={->,semithick,shorten >=1pt,shorten <=1pt,postaction={decorate,decoration={markings,mark=at position 0.45 with {\draw[-] (0.05,-0.1) -- (0.05,0.1);\draw[-] (-0.05,-0.1) -- (-0.05,0.1);}}}},
	kernelBig/.style={semithick,shorten >=1pt,shorten <=1pt,decorate, decoration={zigzag,amplitude=1.5pt,segment length = 3pt,pre length=2pt,post length=2pt}},
	rho/.style={dotted,semithick,shorten >=1pt,shorten <=1pt},
	renorm/.style={shape=circle,fill=white,inner sep=1pt},
	labl/.style={shape=rectangle,fill=white,inner sep=1pt},
	xi/.style={circle,fill=symbols!10,draw=symbols,inner sep=0pt,minimum size=1.2mm},
	xix/.style={crosscircle,fill=symbols!10,draw=symbols,inner sep=0pt,minimum size=1.2mm},
	xib/.style={circle,fill=symbols!10,draw=symbols,inner sep=0pt,minimum size=1.6mm},
	xibx/.style={crosscircle,fill=symbols!10,draw=symbols,inner sep=0pt,minimum size=1.6mm},
	not/.style={circle,fill=symbols,draw=symbols,inner sep=0pt,minimum size=0.5mm},
	>=stealth,
        midarrow/.style={ decoration={ markings, mark=at position 0.7 with
		{\arrow{>}} }, postaction={decorate} }
	}

\makeatletter
\def\DeclareSymbol#1#2#3{\expandafter\gdef\csname
MH@symb@#1\endcsname{\tikz[baseline=#2,scale=0.4]{#3}}
\expandafter\gdef\csname
MH@symb@#1s\endcsname{\scalebox{0.6}{\tikz[baseline=#2,scale=0.3]{#3}}}}
\def\<#1>{\csname MH@symb@#1\endcsname}
\makeatother

\tikzset{every loop/.style={}}

\DeclareSymbol{rl}{0}{
  \node[dot] (x) at (0,0.4) {};
  \node[dot] (y) at (2,0.4) {};
  \draw[purple, thick] (x) -- (y);
  \node at (x.south) [below=2pt] {$x$};
  \node at (y.south) [below=2pt] {$y$};
}

\DeclareSymbol{single}{0}{ \node[root] (r) at (1,0) { }}

\DeclareSymbol{0}{0}{
  \node[root] (r) at (0,-.3) { };
  \node[root] (t) at (0,.7) { };
  \draw[midarrow] (r)  -- (t);
}

\DeclareSymbol{0_b}{0}{
  \node[dot] (r) at (0,-.3) { };
  \node[dot] (t) at (0,.7) { };
  \draw[midarrow] (r)  -- (t);
}

\DeclareSymbol{0_bl}{0}{
  \node[dot] (r) at (0,-.3) {};
  \node[dot] (t) at (0,.7) {};
  \draw[midarrow] (r)  -- (t);
  \node[font=\scriptsize, anchor=west] at (t) {$v_{\Gamma_{12}}$};
  \node[font=\scriptsize, anchor=west] at (r) {$v_{\Gamma_{34}}$};
}

\DeclareSymbol{1}{0}{
  \node[root] (r) at (0,-0.8) {};
  \node[odot] (m) at (0,0.2) {};
  \node[root] (t) at (0,1.2) {};
  \draw[midarrow] (r.north) -- (m.south) {};
  \draw[midarrow] (m.north) -- (t.south) {};
}

\DeclareSymbol{1_b}{0}{
  \node[root] (r) at (0,-0.8) {};
  \node[dot] (m) at (0,0.2) {};
  \node[root] (t) at (0,1.2) {};
  \draw[midarrow] (r.north) -- (m.south) {};
  \draw[midarrow] (m.north) -- (t.south) {};
}

\DeclareSymbol{1_bl}{0}{
  \node[root] (r) at (0,-0.8) {};
  \node[dot] (m) at (0,0.2) {};
  \node[root] (t) at (0,1.2) {};
  \draw[midarrow] (r.north) -- (m.south) {};
  \draw[midarrow] (m.north) -- (t.south) {};
  \node[font=\scriptsize, anchor=west] at (m) {$v_{\Gamma_{1234}}$};
}

\DeclareSymbol{2}{0}{
  \node[root] (r) at (0,-1.3) {};
  \node[odot] (m1) at (0,-0.3) {}; 
  \node[odot] (m2) at (0,0.7) {}; 
  \node[root] (t) at (0,1.7) {}; 
  \draw[midarrow] (r.north) -- (m1.south);
  \draw[midarrow] (m1.north) -- (m2.south);
  \draw[midarrow] (m2.north) -- (t.south);
}

\DeclareSymbol{2_b}{0}{
  \node[root] (r) at (0,-1.3) {};
  \node[odot] (m1) at (0,-0.3) {}; 
  \node[dot] (m2) at (0,0.7) {}; 
  \node[root] (t) at (0,1.7) {}; 
  \draw[midarrow] (r.north) -- (m1.south);
  \draw[midarrow] (m1.north) -- (m2.south);
  \draw[midarrow] (m2.north) -- (t.south);
}

\DeclareSymbol{3}{0}{
  \node[root] (r) at (0,-1.8) {};
  \node[odot] (m1) at (0,-0.8) {};
  \node[odot] (m2) at (0,0.2) {};
  \node[odot] (m3) at (0,1.2) {};
  \node[root] (t) at (0,2.2) {};
  \draw[midarrow] (r.north) -- (m1.south);
  \draw[midarrow] (m1.north) -- (m2.south);
  \draw[midarrow] (m2.north) -- (m3.south);
  \draw[midarrow] (m3.north) -- (t.south);
}

\DeclareSymbol{c3_1}{0}{
  \node[root] (r) at (0,-1.3) {};
  \node[odot] (m1) at (0,-0.3) {};
  \node[dot] (m2) at (0,0.7) {};
  \node[root] (t) at (0,1.7) {};
  \draw[midarrow] (r.north) -- (m1.south);
  \draw[midarrow] (m1.north) -- (m2.south);
  \draw[midarrow] (m2.north) -- (t.south);
  \draw[midarrow] (m2) to[out=-45,in=45,looseness=11] (m2);
}

\DeclareSymbol{c3_2}{0}{
  \node[root] (r) at (0,-1.3) {};
  \node[dot] (m1) at (0,-0.3) {};
  \node[odot] (m2) at (0,0.7) {};
  \node[root] (t) at (0,1.7) {};
  \draw[midarrow] (r.north) -- (m1.south);
  \draw[midarrow] (m1.north) -- (m2.south);
  \draw[midarrow] (m2.north) -- (t.south);
  \draw[midarrow] (m1) to[out=-45,in=45,looseness=11] (m1);
}

\DeclareSymbol{c3_3}{0}{
  \node[odot] (m1) at (0,-0.8) {};
  \node[dot] (m2) at (0,0.2) {};
  \node[root] (r) at (-0.5,1.2) {};
  \node[root] (t) at (0.5,1.2) {};
  \draw[midarrow] (m1.east) to[out=0,in=0] (m2.east);
  \draw[midarrow] (m2.west) to[out=180, in=180] (m1.west);
  \draw[midarrow] (m2.north) -- (t);
  \draw[midarrow] (m2.north) -- (r);
}

\DeclareSymbol{c4}{0}{
  \node[root] (r) at (0,-2.2) {};
  \node[odot] (m1) at (0,-1.2) {};
  \node[odot] (m2) at (0,-.2) {};
  \node[odot] (m3) at (0,.8) {};
  \node[odot] (m4) at (0,1.8) {};
  \node[root] (t) at (0,2.8) {};
  \draw[midarrow] (r.north) -- (m1.south);
  \draw[midarrow] (m1.north) -- (m2.south);
  \draw[midarrow] (m2.north) -- (m3.south);
  \draw[midarrow] (m3.north) -- (m4.south);
  \draw[midarrow] (m4.north) -- (t.south);
  \draw[red, very thick, dash pattern=on 0.5pt off 0.5pt] (m1) to[out=0,in=0] (m3);
  \draw[red, very thick, dash pattern=on 0.5pt off 0.5pt] (m2) to[out=180,in=180] (m4);
}

\DeclareSymbol{cc4}{0}{
  \node[root] (r) at (0,-1.2) {};
  \node[dot] (m1) at (0,-.2) {};
  \node[dot] (m2) at (0,.8) {};
  \node[root] (t) at (0,1.8) {};
  \draw[midarrow] (r.north) -- (m1.south);
  \draw[midarrow] (m1.north) -- (m2.south);
  \draw[midarrow] (m1) to[out=0,in=0] (m2);
  \draw[midarrow] (m2) to[out=180,in=180] (m1);
  \draw[midarrow] (m2.north) -- (t.south);
}

\DeclareSymbol{cc4div}{0}{
  \node[dot] (m1) at (0,-.2) {};
  \node[dot] (m2) at (0,.8) {};
  \draw[midarrow] (m1.north) -- (m2.south);
  \draw[midarrow] (m1) to[out=0,in=0] (m2);
  \draw[midarrow] (m2) to[out=180,in=180] (m1);
}

\DeclareSymbol{c21_half}{0}{
  \node[root] (r) at (0,-1.3) {};
  \node[odot] (m1) at (0,-.3) {};
  \node[odot] (m2) at (0,.7) {};
  \node[root] (t) at (0,1.7) {};
  \draw[midarrow] (r.north) -- (m1.south);
  \draw[midarrow] (m1.north) -- (m2.south);
  \draw[midarrow] (m2.north) -- (t.south);
  \draw[red, very thick, dash pattern=on 0.5pt off 0.5pt] (m1) to[out=0,in=0] (m2);
}

\DeclareSymbol{c21_halfMin}{0}{
\node[root] (r) at (0,0) { };
\node[rootb] (t) at (0,1.2) {};
\draw (r)  -- (t);
\draw (0,0.6) to[out=45,in=90] (0.6,0.6);
\draw (0,0.6) to[out=-45,in=-90] (0.6,0.6);
\node[wdot] at (0,0.6){};
}

\DeclareSymbol{c31_half}{0}{
\node[root] (r) at (0,0) { };
\node[dot] at (0,0.6){};
\node[dot] at (0,1.2) {};
\node[dot] at (0,1.8) {};
\node[rootb] (t) at (0,2.4) {};
\draw (r)  -- (t);
\draw [red, very thick, dash pattern=on 0.5pt off 0.5pt] (0,0.6) to[out=0,in=0] (0.0,1.2);
}

\DeclareSymbol{c31_halfMin}{0}{
\node[root] (r) at (0,0) { };
\node[dot] at (0,1.2) {};
\node[rootb] (t) at (0,1.8) {};
\draw (r)  -- (t);
\draw (0,0.6) to[out=45,in=90] (0.6,0.6);
\draw (0,0.6) to[out=-45,in=-90] (0.6,0.6);
\node[wdot] at (0,0.6){};
}

\DeclareSymbol{c32_half}{0}{
\node[root] (r) at (0,0) { };
\node[dot] at (0,0.6){};
\node[dot] at (0,1.2) {};
\node[dot] at (0,1.8) {};
\node[rootb] (t) at (0,2.4) {};
\draw (r)  -- (t);
\draw [red, very thick, dash pattern=on 0.5pt off 0.5pt] (0,1.2) to[out=0,in=0] (0.0,1.8);
}

\DeclareSymbol{c32_halfMin}{0}{
\node[root] (r) at (0,0) { };
\node[dot] at (0,0.6){};
\node[rootb] (t) at (0,1.8) {};
\draw (r)  -- (t);
\draw (0,1.2) to[out=45,in=90] (0.6,1.2);
\draw (0,1.2) to[out=-45,in=-90] (0.6,1.2);
\node[wdot] at (0,1.2) {};
}

\DeclareSymbol{c33_half}{0}{
\node[root] (r) at (0,0) { };
\node[dot] at (0,0.6){};
\node[dot] at (0,1.2) {};
\node[dot] at (0,1.8) {};
\node[rootb] (t) at (0,2.4) {};
\draw (r)  -- (t);
\draw [red, very thick, dash pattern=on 0.5pt off 0.5pt] (0,0.6) to[out=0,in=0] (0.0,1.8);
}

\DeclareSymbol{c33_halfMin}{0}{
\node[root] (r) at (0,0) { };
\node[dot] at (0,1.2) {};
\node[rootb] (t) at (.6,1) {};
\draw (0,0.6) to[out=50,in=-50] (0,1.2);
\draw (0,0.6) to[out=130,in=-130] (0,1.2);
\node[wdot] (m1) at (0,0.6){};
\draw (r)  -- (m1);
\draw (m1)  -- (t);
}

\DeclareSymbol{c1}{0}{
  \node[root] (r) at (0,-0.7) {};
  \node[odot] (m) at (0,0.3) {};
  \node[root] (t) at (0,1.3) {};
  \draw[midarrow] (r.north) -- (m.south);
  \draw[midarrow] (m.north) -- (t.south);
  \node[root] (r1) at (0.8,-0.7) {};
  \node[odot] (m1) at (0.8,0.3) {};
  \node[root] (t1) at (0.8,1.3) {};
  \draw[midarrow] (r1.north) -- (m1.south);
  \draw[midarrow] (m1.north) -- (t1.south);
  \draw[red, very thick, dash pattern=on 0.5pt off 0.5pt] (m) -- (m1);
}

\DeclareSymbol{cc1}{0}{
  \node[root] (r) at (0,-.55) {};
  \node[root] (r1) at (1.5,-.55) {};
  \node[dot] (m) at (.75,.2) {};
  \node[root] (t) at (0,.95) {};
  \node[root] (t1) at (1.5,.95) {};
  \draw[midarrow] (r) -- (m);
  \draw[midarrow] (r1) -- (m);
  \draw[midarrow] (m) -- (t);
  \draw[midarrow] (m) -- (t1);
}

\DeclareSymbol{cc1_narrow}{0}{
  \node[root] (r) at (0,-.4) {};
  \node[root] (r1) at (1.2,-.4) {};
  \node[dot] (m) at (.6,.2) {};
  \node[root] (t) at (0,.8) {};
  \node[root] (t1) at (1.2,.8) {};
  \draw (r) -- (m);
  \draw (r1) -- (m);
  \draw (m) -- (t);
  \draw (m) -- (t1);
}

\DeclareSymbol{c21}{0}{
  \node[root] (r) at (0,-1.2) {};
  \node[odot] (m1) at (0,-.2) {};
  \node[odot] (m2) at (0,.8) {};
  \node[root] (t) at (0,1.8) {};
  \draw[midarrow] (r.north) -- (m1.south);
  \draw[midarrow] (m1.north) -- (m2.south);
  \draw[midarrow] (m2.north) -- (t.south);
  \node[root] (r1) at (0.8,-1.2) {};
  \node[odot] (n1) at (0.8,-0.2) {};
  \node[odot] (n2) at (0.8,0.8) {};
  \node[root] (t1) at (0.8,1.8) {};
  \draw[midarrow] (r1.north) -- (n1.south);
  \draw[midarrow] (n1.north) -- (n2.south);
  \draw[midarrow] (n2.north) -- (t1.south);
  \draw[red, very thick, dash pattern=on 0.5pt off 0.5pt] (m1) to[out=180,in=180] (m2);
  \draw[red, very thick, dash pattern=on 0.5pt off 0.5pt] (n1) to[out=0,in=0] (n2);
}

\DeclareSymbol{cc21}{0}{
  \node[root] (r) at (0,-0.8) {};
  \node[dot] (m) at (0,0.2) {};
  \node[root] (t) at (0,1.2) {};
  \draw[midarrow] (r) -- (m);
  \draw[midarrow] (m) -- (t);
\draw[midarrow] (m) to[out=-135,in=135,looseness=10] (m);
  \node[root] (r1) at (0.8,-0.8) {};
  \node[dot] (n) at (0.8,0.2) {};
  \node[root] (t1) at (0.8,1.2) {};
  \draw[midarrow] (r1) -- (n);
  \draw[midarrow] (n) -- (t1);
\draw[midarrow] (n) to[out=-45,in=45,looseness=10] (n);
}

\DeclareSymbol{c21_bl}{0}{
  \node[root] (r) at (0,-0.7) {};
  \node[dot] (m) at (0,0.3) {};
  \node[root] (t) at (0,1.3) {};
  \draw[midarrow] (r) -- (m);
  \draw[midarrow] (m) -- (t);
  \draw[midarrow] (m) to[out=-45,in=45,looseness=11] (m);
}

\DeclareSymbol{loop}{0}{
  \node[dot] (m) at (0,0.3) {};
  \draw[midarrow] (m) to[out=-45,in=45,looseness=11] (m);
}

\DeclareSymbol{c22}{0}{
  \node[root] (r) at (0,-1.2) {};
  \node[odot] (m1) at (0,-.2) {};
  \node[odot] (m2) at (0,.7) {};
  \node[root] (t) at (0,1.7) {};
  \draw[midarrow] (r.north) -- (m1.south);
  \draw[midarrow] (m1.north) -- (m2.south);
  \draw[midarrow] (m2.north) -- (t.south);
  \node[root] (r1) at (0.8,-1.2) {};
  \node[odot] (n1) at (0.8,-.2) {};
  \node[odot] (n2) at (0.8,.7) {};
  \node[root] (t1) at (0.8,1.7) {};
  \draw[midarrow] (r1.north) -- (n1.south);
  \draw[midarrow] (n1.north) -- (n2.south);
  \draw[midarrow] (n2.north) -- (t1.south);
  \draw[red, very thick, dash pattern=on 0.5pt off 0.5pt] (m1) -- (n1);
  \draw[red, very thick, dash pattern=on 0.5pt off 0.5pt] (m2) -- (n2);
}

\DeclareSymbol{cc22}{0}{
  \node[root] (r) at (0,-1.2) {};
  \node[root] (r1) at (0.8,-1.2) {};
  \node[dot] (d1) at (0.4,-.2) {};
  \node[dot] (d2) at (0.4,.7) {};
  \node[root] (t) at (0,1.7) {};
  \node[root] (t1) at (0.8,1.7) {};
  \draw[midarrow] (r) -- (d1);
  \draw[midarrow] (r1) -- (d1);
  \draw[midarrow] (d1) to[out=0,in=0] (d2);
  \draw[midarrow] (d1) to[out=180,in=180] (d2);
  \draw[midarrow] (d2) -- (t);
  \draw[midarrow] (d2) -- (t1);
}

\DeclareSymbol{ccc22}{0}{
  \node[root] (r) at (0,-0.5) {};
  \node[root] (r1) at (0.8,-0.5) {};
  \node[dot] (d1) at (0.4,0.25) {};
  \node[dot] (d2) at (0.4,0.85) {};
  \node[root] (t) at (0,1.5) {};
  \node[root] (t1) at (0.8,1.5) {};
  \draw (r) -- (d1);
  \draw (r1) -- (d1);
  \draw[purple, thick] (d1) -- (d2);
  \draw (d2) -- (t);
  \draw (d2) -- (t1);
}

\DeclareSymbol{bubbleA}{0}{
  \node[dot] (d1) at (0.4,-.2) {};
  \node[dot] (d2) at (0.4,.7) {};
  \draw[midarrow] (d1) to[out=0,in=0] (d2);
  \draw[midarrow] (d1) to[out=180,in=180] (d2);
}

\DeclareSymbol{bubbleB}{0}{
  \node[dot] (d1) at (0.4,-.2) {};
  \node[dot] (d2) at (0.4,.7) {};
  \draw[midarrow] (d2) to[out=0,in=0] (d1);
  \draw[midarrow] (d1) to[out=180,in=180] (d2);
}

\DeclareSymbol{cccc22}{0}{
  \node[root] (r) at (0,-0.5) {};
  \node[root] (r1) at (0.8,-0.5) {};
  \node[dot] (d) at (0.4,0.55) {};
  \node[root] (t) at (0,1.5) {};
  \node[root] (t1) at (0.8,1.5) {};
  \draw (r) -- (d);
  \draw (r1) -- (d);
  \draw (d) -- (t);
  \draw (d) -- (t1);
}

\DeclareSymbol{c23}{0}{
  \node[root] (r) at (0,-1.2) {};
  \node[odot] (m1) at (0,-.2) {};
  \node[odot] (m2) at (0,0.8) {};
  \node[root] (t) at (0,1.8) {};
  \draw[midarrow] (r.north) -- (m1.south);
  \draw[midarrow] (m1.north) -- (m2.south);
  \draw[midarrow] (m2.north) -- (t.south);
  \node[root] (r1) at (0.8,-1.2) {};
  \node[odot] (n1) at (0.8,-.2) {};
  \node[odot] (n2) at (0.8,0.8) {};
  \node[root] (t1) at (0.8,1.8) {};
  \draw[midarrow] (r1.north) -- (n1.south);
  \draw[midarrow] (n1.north) -- (n2.south);
  \draw[midarrow] (n2.north) -- (t1.south);
  \draw[red, very thick, dash pattern=on 0.5pt off 0.5pt] (m1) -- (n2);
  \draw[red, very thick, dash pattern=on 0.5pt off 0.5pt] (m2) -- (n1);
}

\DeclareSymbol{dc23}{0}{
  \node[root] (r) at (0,-1.0) {};
  \node[odot] (m1) at (0,0.0) {};
  \node[odot] (m2) at (0,1.0) {};
  \node[root] (t) at (0,2.0) {};
  \draw[midarrow] (r.north) -- (m1.south);
  \draw[midarrow] (m1.north) -- (m2.south);
  \draw[midarrow] (m2.north) -- (t.south);
  \node[root] (r1) at (1.2,-1.0) {};
  \node[odot] (n1) at (1.2,0.0) {};
  \node[odot] (n2) at (1.2,1.0) {};
  \node[root] (t1) at (1.2,2.0) {};
  \draw[midarrow] (r1.north) -- (n1.south);
  \draw[midarrow] (n1.north) -- (n2.south);
  \draw[midarrow] (n2.north) -- (t1.south);
  \draw[red, very thick, dash pattern=on 0.5pt off 0.5pt] (m1) -- (n2);
  \draw[red, very thick, dash pattern=on 0.5pt off 0.5pt] (m2) -- (n1);
}

\DeclareSymbol{cc23}{0}{
  \node[root] (r) at (0,-1.3) {};
  \node[root] (r1) at (0.8,-1.3) {};
  \node[dot] (d1) at (0.4,-.3) {};
  \node[dot] (d2) at (0.4,.7) {};
  \node[root] (t) at (0,1.7) {};
  \node[root] (t1) at (0.8,1.7) {};
  \draw[midarrow] (r) -- (d1);
  \draw[midarrow] (d1) -- (r1);
  \draw[midarrow] (d2) to[out=0,in=0] (d1);
  \draw[midarrow] (d1) to[out=180,in=180] (d2);
  \draw[midarrow] (d2) -- (t);
  \draw[midarrow] (t1) -- (d2);
}

\DeclareSymbol{dcc23}{0}{
  \node[root] (r) at (0,-1.0) {};
  \node[root] (r1) at (1.2,-1.0) {};
  \node[dot] (d1) at (0.6,0.00) {};
  \node[dot] (d2) at (0.6,1.00) {};
  \node[root] (t) at (0,2.0) {};
  \node[root] (t1) at (1.2,2.0) {};
  \draw[midarrow] (r) -- (d1);
  \draw[midarrow] (d1) to (r1);
  \draw[midarrow] (d1) to[out=0,in=0] (d2);
  \draw[midarrow] (d2) to[out=180,in=180] (d1);
  \draw[midarrow] (d2) -- (t);
  \draw[midarrow] (t1) -- (d2);
}

\DeclareSymbol{c31}{0}{
  \node[root] (r) at (0.8,-0.6) {};
  \node[odot] (m1) at (0.8,0.0) {};
  \node[odot] (m2) at (0.8,0.6) {};
  \node[odot] (m3) at (0.8,1.2) {};
  \node[root] (t) at (0.8,1.8) {};
  \draw (r.north) -- (m1.south);
  \draw (m1.north) -- (m2.south);
  \draw (m2.north) -- (m3.south);
  \draw (m3.north) -- (t.south);
  \draw[red, very thick, dash pattern=on 0.5pt off 0.5pt] (m1.east) to[out=0, in=0] (m3.east);
  \node[root] (r1) at (0,-0.6) {};
  \node[odot] (m11) at (0,0.0) {};
  \node[odot] (m21) at (0,0.6) {};
  \node[odot] (m31) at (0,1.2) {};
  \node[root] (t1) at (0,1.8) {};
  \draw (r1.north) -- (m11.south);
  \draw (m11.north) -- (m21.south);
  \draw (m21.north) -- (m31.south);
  \draw (m31.north) -- (t1.south);
  \draw[red, very thick, dash pattern=on 0.5pt off 0.5pt] (m11.west) to[out=180, in=180] (m31.west);
  \draw[red, very thick, dash pattern=on 0.5pt off 0.5pt] (m21.east) to[out=0, in=180] (m2.west);
}

\DeclareSymbol{cc31}{0}{
  \node[root] (r) at (0,-0.6) {};
  \node[root] (r1) at (0.8,-0.6) {};
  \node[root] (r2) at (1.6,-0.6) {};
  \node[root] (r3) at (2.4,-0.6) {};
  \node[dot] (m) at (0.4,0.6) {};
  \node[dot] (m2) at (2.0,0.6) {};
  \node[dot] (t) at (1.2,1.8) {};
  \draw (r.north) -- (m.south);
  \draw (r1.north) -- (m.south);
  \draw (r2.north) -- (m2.south);
  \draw (r3.north) -- (m2.south);
  \draw[purple, thick] (m) to (t);
  \draw[purple, thick] (m2) to (t);
}

\DeclareSymbol{c41}{0}{
  \node[root] (r) at (1.2,-2.2) {};
  \node[odot] (m1) at (1.2,-1.2) {};
  \node[odot] (m2) at (1.2,-.2) {};
  \node[odot] (m3) at (1.2,.8) {};
  \node[odot] (m4) at (1.2,1.8) {};
  \node[root] (t) at (1.2,2.8) {};
  \draw[midarrow] (r.north) -- (m1.south);
  \draw[midarrow] (m1.north) -- (m2.south);
  \draw[midarrow] (m2.north) -- (m3.south);
  \draw[midarrow] (m3.north) -- (m4.south);
  \draw[midarrow] (m4.north) -- (t.south);
  \node[root] (r1) at (0,-2.2) {};
  \node[odot] (m11) at (0,-1.2) {};
  \node[odot] (m21) at (0,-.2) {};
  \node[odot] (m31) at (0,.8) {};
  \node[odot] (m41) at (0,1.8) {};
  \node[root] (t1) at (0,2.8) {};
  \draw[midarrow] (r1.north) -- (m11.south);
  \draw[midarrow] (m11.north) -- (m21.south);
  \draw[midarrow] (m21.north) -- (m31.south);
  \draw[midarrow] (m31.north) -- (m41.south);
  \draw[midarrow] (m41.north) -- (t1.south);
  \draw[red, very thick, dash pattern=on 0.5pt off 0.5pt] (m41.east) to[out=0, in=180] (m2.west);
  \draw[red, very thick, dash pattern=on 0.5pt off 0.5pt] (m31) to[out=45, in=240] (m4);
  \draw[red, very thick, dash pattern=on 0.5pt off 0.5pt] (m21.east) to[out=0, in=180] (m1.west);
  \draw[red, very thick, dash pattern=on 0.5pt off 0.5pt] (m11) to[out=45, in=240] (m3);
}

\DeclareSymbol{ccc41}{0}{
  \node[root] (r1) at (-1.5,-1.5) {};
  \node[dot] (m1) at (-.75,-.75) {};
  \node[dot] (m2) at (-.75,.75) {};
  \node[dot] (m3) at (.75,-.75) {};
  \node[dot] (m4) at (.75,.75) {};
  \node[root] (r2) at (-1.5,1.5) {};
  \node[root] (r3) at (1.5,-1.5) {};
  \node[root] (r4) at (1.5,1.5) {};
  \draw[midarrow] (r1) -- (m1);
  \draw[midarrow] (r2) -- (m2);
  \draw[midarrow] (m3) -- (r3);
  \draw[midarrow] (m4) -- (r4);
  \draw[midarrow] (m1) -- (m2);
  \draw[midarrow] (m2) -- (m3);
  \draw[midarrow] (m4) -- (m3);
  \draw[midarrow] (m2) --(m4);
  \draw[midarrow] (m3) -- (m1);
  \draw[midarrow] (m1) -- (m4);
}

\DeclareSymbol{cc41}{0}{
  \node[root] (r) at (0.8,-2.2) {};
  \node[dot] (m1) at (0.8,-1.2) {};
  \node[dot] (m2) at (0.8,-.2) {};
  \node[dot] (m3) at (0.8,0.8) {};
  \node[dot] (m4) at (0.8,1.8) {};
  \node[root] (t) at (0.8,2.8) {};
  \node[root] (t1) at (0.0,.8) {};
  \node[root] (r1) at (0,-.2) {};
  \draw[midarrow] (r.north) -- (m1.south);
  \draw[midarrow] (m1.north) -- (m2.south);
  \draw[midarrow] (m2.north) -- (m3.south);
  \draw[midarrow] (m3.north) -- (m4.south);
  \draw[midarrow] (m4.north) -- (t.south);
  \draw[midarrow] (m1.west) to[out=140, in =220] (m3.west);
  \draw[midarrow] (m4.east) to[out=0, in =0] (m1.east);
  \draw[midarrow] (m2) to[out=30, in =-30] (m4);
  \draw[midarrow] (m3) to (t1);
  \draw[midarrow] (r1) to (m2);
}

\DeclareSymbol{c42}{0}{
  \node[root] (r) at (1.2,-2.2) {};
  \node[odot] (m1) at (1.2,-1.2) {};
  \node[odot] (m2) at (1.2,-.2) {};
  \node[odot] (m3) at (1.2,.8) {};
  \node[odot] (m4) at (1.2,1.8) {};
  \node[root] (t) at (1.2,2.8) {};
  \draw[midarrow] (r.north) -- (m1.south);
  \draw[midarrow] (m1.north) -- (m2.south);
  \draw[midarrow] (m2.north) -- (m3.south);
  \draw[midarrow] (m3.north) -- (m4.south);
  \draw[midarrow] (m4.north) -- (t.south);
  \node[root] (r1) at (0,-2.2) {};
  \node[odot] (m11) at (0,-1.2) {};
  \node[odot] (m21) at (0,-.2) {};
  \node[odot] (m31) at (0,.8) {};
  \node[odot] (m41) at (0,1.8) {};
  \node[root] (t1) at (0,2.8) {};
  \draw[midarrow] (r1.north) -- (m11.south);
  \draw[midarrow] (m11.north) -- (m21.south);
  \draw[midarrow] (m21.north) -- (m31.south);
  \draw[midarrow] (m31.north) -- (m41.south);
  \draw[midarrow] (m41.north) -- (t1.south);
  \draw[red, very thick, dash pattern=on 0.5pt off 0.5pt] (m41.east) to[out=0, in=180] (m4.west);
  \draw[red, very thick, dash pattern=on 0.5pt off 0.5pt] (m31) to[out=-20, in=160] (m1);
  \draw[red, very thick, dash pattern=on 0.5pt off 0.5pt] (m21.east) to[out=0, in=180] (m3.west);
  \draw[red, very thick, dash pattern=on 0.5pt off 0.5pt] (m11) to[out=45, in=240] (m2);
}

\DeclareSymbol{c42plain}{0}{
  \node[root] (r) at (1.2,-2.2) {};
  \node[odot] (m1) at (1.2,-1.2) {};
  \node[odot] (m2) at (1.2,-.2) {};
  \node[odot] (m3) at (1.2,.8) {};
  \node[odot] (m4) at (1.2,1.8) {};
  \node[root] (t) at (1.2,2.8) {};
  \draw[midarrow] (r.north) -- (m1.south);
  \draw[midarrow] (m1.north) -- (m2.south);
  \draw[midarrow] (m2.north) -- (m3.south);
  \draw[midarrow] (m3.north) -- (m4.south);
  \draw[midarrow] (m4.north) -- (t.south);
  \node[root] (r1) at (0,-2.2) {};
  \node[odot] (m11) at (0,-1.2) {};
  \node[odot] (m21) at (0,-.2) {};
  \node[odot] (m31) at (0,.8) {};
  \node[odot] (m41) at (0,1.8) {};
  \node[root] (t1) at (0,2.8) {};
  \draw[midarrow] (r1.north) -- (m11.south);
  \draw[midarrow] (m11.north) -- (m21.south);
  \draw[midarrow] (m21.north) -- (m31.south);
  \draw[midarrow] (m31.north) -- (m41.south);
  \draw[midarrow] (m41.north) -- (t1.south);
  \draw[red, very thick, dash pattern=on 0.5pt off 0.5pt] (m41.east) to[out=0, in=180] (m4.west);
  \draw[red, very thick, dash pattern=on 0.5pt off 0.5pt] (m31.east) to[out=0, in=180] (m3.west);
  \draw[red, very thick, dash pattern=on 0.5pt off 0.5pt] (m21.east) to[out=0, in=180] (m2.west);
  \draw[red, very thick, dash pattern=on 0.5pt off 0.5pt] (m11.east) to[out=0, in=180] (m1.west);
}

\DeclareSymbol{cc42}{0}{
  \node[root] (r) at (0.8,-2.2) {};
  \node[dot] (m1) at (0.8,-1.2) {};
  \node[dot] (m2) at (0.8,-.2) {};
  \node[dot] (m3) at (0.8,0.8) {};
  \node[dot] (m4) at (0.8,1.8) {};
  \node[root] (t) at (0.8,2.8) {};
  \node[root] (t1) at (0.0,1.8) {};
  \node[root] (r1) at (0,.8) {};
  \draw[midarrow] (r.north) -- (m1.south);
  \draw[midarrow] (m1.north) -- (m2.south);
  \draw[midarrow] (m2.north) -- (m3.south);
  \draw[midarrow] (m3.north) -- (m4.south);
  \draw[midarrow] (m4.north) -- (t.south);
  \draw[midarrow] (r1) to (m3);
  \draw[midarrow] (m3.east) to[out=0, in = 0] (m1.east);
  \draw[midarrow] (m1.west) to[out=180, in =180] (m2.west);
  \draw[midarrow] (m2.east) to[out=0, in =0] (m4.east);
  \draw[midarrow] (m4) to (t1);
}

\DeclareSymbol{cc42plain}{0}{
  \node[root] (r) at (0.0,-2.2) {};
  \node[root] (r1) at (1.6,-2.2) {};
  \node[dot] (m1) at (0.8,-1.2) {};
  \node[dot] (m2) at (0.8,-.2) {};
  \node[dot] (m3) at (0.8,0.8) {};
  \node[dot] (m4) at (0.8,1.8) {};
  \node[root] (t) at (0.0,2.8) {};
  \node[root] (t1) at (1.6,2.8) {};
  \draw[midarrow] (r.north) -- (m1.south);
  \draw[midarrow] (r1.north) -- (m1.south);
  \draw[midarrow] (m1.east) to[out=0, in = 0] (m2.east);
  \draw[midarrow] (m1.west) to[out=180, in = 180] (m2.west);
  \draw[midarrow] (m2.east) to[out=0, in = 0] (m3.east);
  \draw[midarrow] (m2.west) to[out=180, in = 180] (m3.west);
  \draw[midarrow] (m3.east) to[out=0, in = 0] (m4.east);
  \draw[midarrow] (m3.west) to[out=180, in = 180] (m4.west);
  \draw[midarrow] (m4.north) -- (t.south);
  \draw[midarrow] (m4.north) -- (t1.south);
}

\DeclareSymbol{cc42C}{0}{
  \node[root] (r) at (0.8,-1.7) {};
  \node[dot] (m2) at (0.8,-.7) {};
  \node[dot] (m3) at (0.8,0.3) {};
  \node[dot] (m4) at (0.8,1.3) {};
  \node[root] (t) at (0.8,2.3) {};
  \node[root] (t1) at (0.0,1.3) {};
  \node[root] (r1) at (0,0.3) {};
  \draw[midarrow] (r.north)  -- (m2.south);
  \draw[midarrow] (m2.north) -- (m3.south);
  \draw[midarrow] (m3.north) -- (m4.south);
  \draw[midarrow] (m4.north) -- (t.south);
  \draw[midarrow] (r1) to (m3);
  \draw[midarrow] (m3.east) to[out=0, in = 0] (m2.east);
  \draw[midarrow] (m2.east) to[out=0, in =0] (m4.east);
  \draw[midarrow] (m4) to (t1);
}

\DeclareSymbol{cc42D}{0}{
  \node[root] (r) at (0.8,-1.3) {};
  \node[dot] (m3) at (0.8,-.3) {};
  \node[dot] (m4) at (0.8,.7) {};
  \node[root] (t) at (0.8,1.7) {};
  \node[root] (t1) at (0.0,.7) {};
  \node[root] (r1) at (0,-.3) {};
  \draw[midarrow] (r.north)  -- (m3.south);
  \draw[midarrow] (m3.north) -- (m4.south);
  \draw[midarrow] (m4.north) -- (t.south);
  \draw[midarrow] (r1) to (m3);
  \draw[midarrow] (m3.east) to[out=0, in = 0] (m4.east);
  \draw[midarrow] (m4) to (t1);
}

\DeclareSymbol{f21}{0}{
\node[root] (r1) at (0,0.5) { };
\node[root] (r2) at (1.6,0.5) { };
\node[dot] at (0.8,0.5) {};
\draw (r1) -- (0.8,0.5) -- (r2);
\path (0.8,0.5)  edge [loop above, min distance = 15mm, in = 150, out = 30]
(0.8,0.5) ;
}

\DeclareSymbol{f22}{0}{
\node[root] (r1) at (0,0.5) { };
\node[root] (r2) at (2.4,0.5) { };
\node[root] (r3) at (0,-0.5) { };
\node[root] (r4) at (2.4,-0.5) { };
\node[dot] (i1) at (0.8,0) {};
\node[dot] (i2) at (1.6,0) {};
\draw (r1) to (i1) to (r3);
\draw (r4) to (i2) to (r2);
\draw (i1)  to[out=40,in=140] (i2) ;
\draw (i1) to[out=-40,in=-140] (i2) ;
}

\DeclareSymbol{bubble}{0}{
\node[root] (r1) at (0,0) { };
\node[root] (r2) at (2.4,0) { };
\node[dot] (i1) at (0.8,0) {};
\node[dot] (i2) at (1.6,0) {};
\draw (r1) to (i1);
\draw (r2) to (i2);
\draw (i1)  to[out=50,in=130] (i2) ;
\draw (i1) to[out=-50,in=-130] (i2) ;
\draw (i1) to[out=0,in=-180] (i2) ;
}

\DeclareSymbol{f33}{0}{
\node[root] (r) at (0,0) { };
\node[wdot] (m12) at (0.4,1.2) {};
\node[rootb] (t) at (-.6,1) {};
\draw (0,0.6) to[out=20,in=-100] (m12);
\draw (0,0.6) to[out=90,in=-180] (m12);
\node[wdot] (m1) at (0,0.6){};
\draw (r)  -- (m1);
\draw (m1)  -- (t);
\node[root] (r1) at (0.8,0) { };
\node[rootb] (t1) at (1.4,1) {};
\draw (0.8,0.6) to[out=150,in=-80] (m12);
\draw (0.8,0.6) to[out=90,in=0] (m12);
\node[wdot] (m11) at (0.8,0.6){};
\draw (r1)  -- (m11);
\draw (m11)  -- (t1);
}

\DeclareSymbol{ex1}{0}{%
\node[root] (r1) at (-1,.5) {};
\node[root] (r2) at (-1,-.5) {};
\node[root] (r3) at (3,.5) {};
\node[root] (r4) at (3,-.5) {};
  \node[dot] (a) at (0,0) {};
  \node[dot] (b) at (1,0) {};
  \node[dot] (c) at (2,0) {};
  \draw[midarrow] (r1) to (a);
  \draw[midarrow] (r2) to (a);
  \draw[midarrow] (c) to (r3);
  \draw[midarrow] (c) to (r4);
  \draw[midarrow] (a) to[out=90,in=90] (b);
  \draw[midarrow] (a) to[out=-90,in=-90] (b);
  \draw[midarrow] (b) to[out=90,in=90] (c);
  \draw[midarrow] (b) to[out=-90,in=-90] (c);
}

\DeclareSymbol{ex1L}{0}{%
\node[root] (r1) at (-1.8,1.0) {};
\node[root] (r2) at (-1.8,-1.0) {};
\node[root] (r3) at (5.4,1.0) {};
\node[root] (r4) at (5.4,-1.0) {};
  \node[dot, label={[label distance = 1.3mm] below:$x_1$}] (a) at (0,0) {};
  \node[dot, label={[label distance = 1.3mm] below:$x_2$}] (b) at (1.8,0) {};
  \node[dot, label={[label distance = 1.3mm] below:$x_3$}] (c) at (3.6,0) {};
  \draw[midarrow] (r1) to (a);
  \draw[midarrow] (r2) to (a);
  \draw[midarrow] (c) to (r3);
  \draw[midarrow] (c) to (r4);
  \draw[midarrow] (a) to[out=90,in=90] (b);
  \draw[midarrow] (a) to[out=-90,in=-90] (b);
  \draw[midarrow] (b) to[out=90,in=90] (c);
  \draw[midarrow] (b) to[out=-90,in=-90] (c);
}

\DeclareSymbol{ex2}{0}{%
  \node[dot] (a) at (0,0) {};
  \node[dot] (b) at (1.8,0) {};
  \node[dot] (c) at (0.9,1.4) {};
  \draw[midarrow] (a) to[out=20,in=160] (b);
  \draw[midarrow] (a) to[out=-20,in=-160] (b);
  \draw[midarrow] (a) -- (c);
  \draw[midarrow] (b) -- (c);
}

\DeclareSymbol{ex3}{0}{%
  \node[dot] (v) at (0,0) {};
  \node[dot] (c) at (0,1.4) {};
  \draw[midarrow] (v) to[out=70,in=-70] (c);
  \draw[midarrow] (v) to[out=110,in=-110] (c);
}

\DeclareSymbol{bphze1}{0}{%
\node[root]  (r) at (-4.5,0.2) {};
\node[dot]  (m1) at (-3.0,0.2) {};
\node[dot]  (m2) at (-1.5,0.2) {};
\node[dot]  (m3) at ( 0.0,0.2) {};
\node[dot]  (m4) at ( 1.5,0.2) {};
\node[root]  (t) at (3.0,0.2) {};
\node[font=\scriptsize, anchor=north, yshift=-10pt] at (m1) {$1$};
\node[font=\scriptsize, anchor=north, yshift=-10pt] at (m2) {$2$};
\node[font=\scriptsize, anchor=north, yshift=-10pt] at (m3) {$3$};
\node[font=\scriptsize, anchor=north, yshift=-10pt] at (m4) {$4$};
\draw[midarrow] (m1.east) -- (m2.west);
\draw[midarrow] (m2.east) -- (m3.west);
\draw[midarrow] (m3.east) -- (m4.west);
\draw[midarrow] (m1.south) to[out=-90,in=-90] (m2.south);
\draw[midarrow] (m2.north) to[out=90,in=90]   (m1.north);
\draw[midarrow] (m3.north) to[out=90,in=90]   (m4.north);
\draw[midarrow] (m3.south) to[out=-90,in=-90]   (m4.south);
\draw[midarrow] (r) to (m1);
\draw[midarrow] (m4) to (t);
}%

\DeclareSymbol{bphze2}{0}{%
\node[dot]  (m1) at (-3.0,0.2) {};
\node[dot]  (m2) at (-1.5,0.2) {};
\node[font=\scriptsize, anchor=north, yshift=-10pt] at (m1) {$1$};
\node[font=\scriptsize, anchor=north, yshift=-10pt] at (m2) {$2$};
\draw[midarrow] (m1.east) -- (m2.west);
\draw[midarrow] (m1.south) to[out=-90,in=-90] (m2.south);
\draw[midarrow] (m2.north) to[out=90,in=90]   (m1.north);
}%

\DeclareSymbol{bphze3}{0}{%
\node[dot]  (m1) at (-3.0,0.2) {};
\node[dot]  (m2) at (-1.5,0.2) {};
\node[font=\scriptsize, anchor=north, yshift=-10pt] at (m1) {$3$};
\node[font=\scriptsize, anchor=north, yshift=-10pt] at (m2) {$4$};
\draw[midarrow] (m1.east) -- (m2.west);
\draw[midarrow] (m1.south) to[out=-90,in=-90] (m2.south);
\draw[midarrow] (m2.north) to[out=90,in=90]   (m1.north);
}%

\DeclareSymbol{bphze4}{0}{%
\node[dot]  (m1) at (-3.0,0.2) {};
\node[dot]  (m2) at (-1.5,0.2) {};
\node[dot]  (m3) at ( 0.0,0.2) {};
\node[dot]  (m4) at ( 1.5,0.2) {};
\node[font=\scriptsize, anchor=north, yshift=-10pt] at (m1) {$1$};
\node[font=\scriptsize, anchor=north, yshift=-10pt] at (m2) {$2$};
\node[font=\scriptsize, anchor=north, yshift=-10pt] at (m3) {$3$};
\node[font=\scriptsize, anchor=north, yshift=-10pt] at (m4) {$4$};
\draw[midarrow] (m1.east) -- (m2.west);
\draw[midarrow] (m2.east) -- (m3.west);
\draw[midarrow] (m3.east) -- (m4.west);
\draw[midarrow] (m1.south) to[out=-90,in=-90] (m2.south);
\draw[midarrow] (m2.north) to[out=90,in=90]   (m1.north);
\draw[midarrow] (m3.north) to[out=90,in=90]   (m4.north);
\draw[midarrow] (m3.south) to[out=-90,in=-90]   (m4.south);
}%

\DeclareSymbol{var1}{0}{
  \node[root] (r) at (0,-0.7) {};
  \node[odot] (m) at (0,0.3) {};
  \node[root] (t) at (0,1.3) {};
  \draw[midarrow] (r.north) -- (m.south);
  \draw[midarrow] (m.north) -- (t.south);
  \node[root] (r1) at (0.8,-1.7) {};
  \node[odot] (m2) at (0.8,-0.7) {};
  \node[odot] (m1) at (0.8,0.3) {};
  \node[odot] (m3) at (0.8,1.3) {};
  \node[root] (t1) at (0.8,2.3) {};
  \draw[midarrow] (r1.north) -- (m2.south);
  \draw[midarrow] (m2.north) -- (m1.south);
  \draw[midarrow] (m1.north) -- (m3.south);
  \draw[midarrow] (m3.north) -- (t1.south);
  \draw[red, very thick, dash pattern=on 0.5pt off 0.5pt] (m) -- (m1);
  \draw[red, very thick, dash pattern=on 0.5pt off 0.5pt] (m2) to[out=0,in=0] (m3);
}
\DeclareSymbol{var2}{0}{
  \node[root] (r) at (0.8,-0.7) {};
  \node[odot] (m) at (0.8,0.3) {};
  \node[root] (t) at (0.8,1.3) {};
  \draw[midarrow] (r.north) -- (m.south);
  \draw[midarrow] (m.north) -- (t.south);
  \node[root] (r1) at (0,-1.7) {};
  \node[odot] (m2) at (0,-0.7) {};
  \node[odot] (m1) at (0,0.3) {};
  \node[odot] (m3) at (0,1.3) {};
  \node[root] (t1) at (0,2.3) {};
  \draw[midarrow] (r1.north) -- (m2.south);
  \draw[midarrow] (m2.north) -- (m1.south);
  \draw[midarrow] (m1.north) -- (m3.south);
  \draw[midarrow] (m3.north) -- (t1.south);
  \draw[red, very thick, dash pattern=on 0.5pt off 0.5pt] (m) -- (m1);
  \draw[red, very thick, dash pattern=on 0.5pt off 0.5pt] (m2) to[out=180,in=180] (m3);
}

\DeclareSymbol{varc42}{0}{
  \node[root] (r) at (1.2,-3.2) {};
  \node[odot] (m1) at (1.2,-1.7) {};
  \node[odot] (m2) at (1.2,-.2) {};
  \node[odot] (m3) at (1.2,1.3) {};
  \node[odot] (m4) at (1.2,2.8) {};
  \node[root] (t) at (1.2,4.3) {};
  \draw (r.north) -- node[midway, xshift = 4.5pt] {\tiny $2_0$} (m1.south);
  \draw (m1.north) -- node[midway,xshift = 4.5pt] {\tiny $2_1$} (m2.south);
  \draw (m2.north) -- node[midway, xshift = 4.5pt] {\tiny $2_2$} (m3.south);
  \draw (m3.north) -- node[midway, xshift = 4.5pt] {\tiny $2_3$} (m4.south);
  \draw (m4.north) -- node[midway, xshift = 4.5pt] {\tiny $2_4$} (t.south);
  \node[root] (r1) at (0,-3.2) {};
  \node[odot] (m11) at (0,-1.7) {};
  \node[odot] (m21) at (0,-.2) {};
  \node[odot] (m31) at (0,1.3) {};
  \node[odot] (m41) at (0,2.8) {};
  \node[root] (t1) at (0,4.3) {};
  \draw (r1.north) --node[midway, xshift = -4.0pt] {\tiny $1_0$} (m11.south);
  \draw (m11.north) --node[midway, xshift = -4.0pt] {\tiny $1_1$} (m21.south);
  \draw (m21.north) --node[midway, xshift = -4.0pt] {\tiny $1_2$} (m31.south);
  \draw (m31.north) --node[midway, xshift = -4.0pt] {\tiny $1_3$} (m41.south);
  \draw (m41.north) --node[midway, xshift = -4.0pt] {\tiny $1_4$} (t1.south);
  \draw[red, very thick, dash pattern=on 0.5pt off 0.5pt] (m41.east) to[out=0, in=180] (m4.west);
  \draw[red, very thick, dash pattern=on 0.5pt off 0.5pt] (m31) to[out=-20, in=160] (m1);
  \draw[red, very thick, dash pattern=on 0.5pt off 0.5pt] (m21.east) to[out=0, in=180] (m3.west);
  \draw[red, very thick, dash pattern=on 0.5pt off 0.5pt] (m11) to[out=45, in=240] (m2);
}

\DeclareSymbol{varcc42}{0}{
  \node[root] (r) at (1.5,-3.2) {};
  \node[dot] (m1) at (1.5,-1.7) {};
  \node[dot] (m2) at (1.5,-.2) {};
  \node[dot] (m3) at (1.5,1.3) {};
  \node[dot] (m4) at (1.5,2.8) {};
  \node[root] (t) at (1.5,4.3) {};
  \node[root] (t1) at (0.0,2.8) {};
  \node[root] (r1) at (0,1.3) {};
  \draw (r.north) -- node[midway, xshift = 4.5pt] {\tiny $1_0$}(m1.south);
  \draw (m1.north) --node[midway, xshift = 4.5pt] {\tiny $1_1$} (m2.south);
  \draw (m2.north) --node[midway, xshift = 4.5pt] {\tiny $1_2$} (m3.south);
  \draw (m3.north) --node[midway, xshift = 4.5pt] {\tiny $1_3$} (m4.south);
  \draw (m4.north) --node[midway, xshift = 4.5pt] {\tiny $1_4$} (t.south);
  \draw (r1) to node[midway, yshift = 4.5pt] {\tiny $2_0$}(m3);
  \draw (m3.east) to[out=0, in = 0] node[midway, xshift = 4.5pt] {\tiny $2_1$} (m1.east);
  \draw (m1.west) to[out=180, in =180] node[midway, xshift = -4.0pt] {\tiny $2_2$} (m2.west);
  \draw (m2.east) to[out=0, in =0] node[midway, xshift = 4.5pt] {\tiny $2_3$} (m4.east);
  \draw (m4) to node[midway, yshift = 4.5pt] {\tiny $2_4$} (t1);
}

\DeclareSymbol{varB1}{0}{
  \node (r) at (0,-1.7) {};
  \node (r1) at (1.5,-1.7) {};
  \node[dot] (d1) at (0.75,-.2) {};
  \node[dot] (d2) at (0.75,1.3) {};
  \node (t) at (0,2.8) {};
  \node (t1) at (1.5,2.8) {};
  \draw (r) --node[midway, xshift = -5.5pt] {\tiny $\alpha_i$} (d1);
  \draw (r1) --node[midway, xshift = 9.5pt] {\tiny $\alpha_{i+1}$} (d1);
  \draw (d1) to[out=0,in=0] node[midway, xshift = 9.5pt] {\tiny $\beta_{j+2}$} (d2);
  \draw (d1) to[out=180,in=180]node[midway, xshift = -8.5pt] {\tiny $\beta_{j+1}$} (d2);
  \draw (d2) -- node[midway, xshift = -5.5pt] {\tiny $\beta_{j}$}(t);
  \draw (d2) -- node[midway, xshift = 9.5pt] {\tiny $\beta_{j+3}$}(t1);
}

\DeclareSymbol{varB2}{0}{
  \node (r) at (0,-1.7) {};
  \node (r1) at (1.5,-1.7) {};
  \node[dot] (d1) at (0.75,-.2) {};
  \node[dot] (d2) at (0.75,1.3) {};
  \node (t) at (0,2.8) {};
  \node (t1) at (1.5,2.8) {};
  \draw (r) --node[midway, xshift = -5.5pt] {\tiny $\beta_j$} (d1);
  \draw (r1) --node[midway, xshift = 9.5pt] {\tiny $\beta_{j+1}$} (d1);
  \draw (d1) to[out=0,in=0] node[midway, xshift = 9.5pt] {\tiny $\alpha_{i+2}$} (d2);
  \draw (d1) to[out=180,in=180]node[midway, xshift = -8.5pt] {\tiny $\alpha_{i+1}$} (d2);
  \draw (d2) -- node[midway, xshift = -5.5pt] {\tiny $\alpha_{i}$}(t);
  \draw (d2) -- node[midway, xshift = 9.5pt] {\tiny $\alpha_{i+3}$}(t1);
}

\DeclareSymbol{varB3}{0}{
  \node (r) at (0,-1.7) {};
  \node (r1) at (1.5,-1.7) {};
  \node[dot] (d1) at (0.75,-.2) {};
  \node[dot] (d2) at (0.75,1.3) {};
  \node (t) at (0,2.8) {};
  \node (t1) at (1.5,2.8) {};
  \draw (r) --node[midway, xshift = -5.5pt] {\tiny $\alpha_i$} (d1);
  \draw (r1) --node[midway, xshift = 9.5pt] {\tiny $\beta_{j}$} (d1);
  \draw (d1) to[out=0,in=0] node[midway, xshift = 9.5pt] {\tiny $\beta_{j+1}$} (d2);
  \draw (d1) to[out=180,in=180]node[midway, xshift = -8.5pt] {\tiny $\alpha_{i+1}$} (d2);
  \draw (d2) -- node[midway, xshift = -9.5pt] {\tiny $\alpha_{i+2}$}(t);
  \draw (d2) -- node[midway, xshift = 9.5pt] {\tiny $\beta_{j+2}$}(t1);
}

\DeclareSymbol{varB4}{0}{
  \node (r) at (0,-1.7) {};
  \node (r1) at (1.5,-1.7) {};
  \node[dot] (d1) at (0.75,-.2) {};
  \node[dot] (d2) at (0.75,1.3) {};
  \node (t) at (0,2.8) {};
  \node (t1) at (1.5,2.8) {};
  \draw (r) --node[midway, xshift = -5.5pt] {\tiny $\alpha_i$} (d1);
  \draw (r1) --node[midway, xshift = 9.5pt] {\tiny $\beta_{j+2}$} (d1);
  \draw (d1) to[out=0,in=0] node[midway, xshift = 9.5pt] {\tiny $\beta_{j+1}$} (d2);
  \draw (d1) to[out=180,in=180]node[midway, xshift = -8.5pt] {\tiny $\alpha_{i+1}$} (d2);
  \draw (d2) -- node[midway, xshift = -9.5pt] {\tiny $\alpha_{i+2}$}(t);
  \draw (d2) -- node[midway, xshift = 9.5pt] {\tiny $\beta_{j}$}(t1);
}

\DeclareSymbol{varB5}{0}{
  \node (r) at (0,-1.7) {};
  \node (r1) at (1.5,-1.7) {};
  \node[dot] (d1) at (0.75,-.2) {};
  \node[dot] (d2) at (0.75,1.3) {};
  \node (t) at (0,2.8) {};
  \node (t1) at (1.5,2.8) {};
  \draw (r) -- (d1);
  \draw (r1) -- (d1);
  \draw (d1) to[out=0,in=0] (d2);
  \draw (d1) to[out=180,in=180] (d2);
  \draw (d2) -- (t);
  \draw (d2) -- (t1);
}

\DeclareSymbol{varC}{0}{
  \node (r) at (-.3,-.7) {};
  \node (r1) at (1.8,-.7) {};
  \node[dot] (d1) at (0.75,.55) {};
  \node (t) at (-.3,1.8) {};
  \node (t1) at (1.8,1.8) {};
  \draw (r) --  node[midway, xshift = -9pt] {\tiny $\pi \alpha_{i}$}(d1);
  \draw (r1) --  node[midway, xshift = +12.5pt] {\tiny $\pi \alpha_{i+1}$}(d1);
  \draw (d1) --  node[midway, xshift = -9pt] {\tiny $\pi \beta_{j}$}(t);
  \draw (d1) --  node[midway, xshift = +14.5pt] {\tiny $\pi \beta_{j+3}$}(t1);
}

\DeclareSymbol{varC2}{0}{
  \node (r) at (-.3,-.7) {};
  \node (r1) at (1.8,-.7) {};
  \node[dot] (d1) at (0.75,.55) {};
  \node (t) at (-.3,1.8) {};
  \node (t1) at (1.8,1.8) {};
  \draw (r) --  node[midway, xshift = -9pt] {\tiny $\pi \beta_{j}$}(d1);
  \draw (r1) --  node[midway, xshift = +12.5pt] {\tiny $\pi \beta_{j+1}$}(d1);
  \draw (d1) --  node[midway, xshift = -9pt] {\tiny $\pi \alpha_{i}$}(t);
  \draw (d1) --  node[midway, xshift = +14.5pt] {\tiny $\pi \alpha_{i+3}$}(t1);
}

\DeclareSymbol{varC3}{0}{
  \node (r) at (-.3,-.7) {};
  \node (r1) at (1.8,-.7) {};
  \node[dot] (d1) at (0.75,.55) {};
  \node (t) at (-.3,1.8) {};
  \node (t1) at (1.8,1.8) {};
  \draw (r) --  node[midway, xshift = -9pt] {\tiny $\pi \alpha_{i}$}(d1);
  \draw (r1) --  node[midway, xshift = +12.5pt] {\tiny $\pi \beta_{j}$}(d1);
  \draw (d1) --  node[midway, xshift = -14pt] {\tiny $\pi \alpha_{i+2}$}(t);
  \draw (d1) --  node[midway, xshift = +14.5pt] {\tiny $\pi \beta_{j+2}$}(t1);
}

\DeclareSymbol{varC4}{0}{
  \node (r) at (-.3,-.7) {};
  \node (r1) at (1.8,-.7) {};
  \node[dot] (d1) at (0.75,.55) {};
  \node (t) at (-.3,1.8) {};
  \node (t1) at (1.8,1.8) {};
  \draw (r) --  node[midway, xshift = -9pt] {\tiny $\pi \alpha_{i}$}(d1);
  \draw (r1) --  node[midway, xshift = +12.5pt] {\tiny $\pi \beta_{j+2}$}(d1);
  \draw (d1) --  node[midway, xshift = -15pt] {\tiny $\pi \alpha_{i+2}$}(t);
  \draw (d1) --  node[midway, xshift = +14.5pt] {\tiny $\pi \beta_{j}$}(t1);
}

\DeclareSymbol{varC5}{0}{
  \node (r) at (-.3,-.7) {};
  \node (r1) at (1.8,-.7) {};
  \node[dot, label=above:{\tiny $v$}] (d1) at (0.75,.55) {};
  \node (t) at (-.3,1.8) {};
  \node (t1) at (1.8,1.8) {};
  \draw[thick, blue] (r) --  node[midway, xshift = -9pt] {\tiny $\alpha_{i}$}(d1);
  \draw[thick, green!60!black] (r1) --  node[midway, xshift = +12.5pt] {\tiny $\beta_{j}$}(d1);
  \draw[thick, red!70!black] (d1) --  node[midway, xshift = -15pt] {\tiny $\alpha_{i+1}$}(t);
  \draw[thick, orange] (d1) --  node[midway, xshift = +14.5pt] {\tiny $\beta_{j+1}$}(t1);
}

\DeclareSymbol{varD1}{0}{
  \node (r) at (0,-1.7) {};
  \node (r1) at (1.5,-1.7) {};
  \node[dot] (d1) at (0.75,-.2) {};
  \node[dot] (d2) at (0.75,1.3) {};
  \node (t) at (0,2.8) {};
  \node (t1) at (1.5,2.8) {};
  \draw[thick, blue] (r) --node[midway, xshift = -5.5pt] {\tiny $\alpha_i$} (d1);
  \draw[thick, red!70!black] (r1) --node[midway, xshift = 9.5pt] {\tiny $\alpha_{i+1}$} (d1);
  \draw (d1) to[out=0,in=0] node[midway, xshift = 9.5pt] {\tiny $\beta_{j+2}$} (d2);
  \draw (d1) to[out=180,in=180]node[midway, xshift = -8.5pt] {\tiny $\beta_{j+1}$} (d2);
  \draw[thick, green!60!black] (d2) -- node[midway, xshift = -5.5pt] {\tiny $\beta_{j}$}(t);
  \draw[thick, orange] (d2) -- node[midway, xshift = 9.5pt] {\tiny $\beta_{j+3}$}(t1);
}

\DeclareSymbol{varD2}{0}{
  \node (r) at (0,-1.7) {};
  \node (r1) at (1.5,-1.7) {};
  \node[dot] (d1) at (0.75,-.2) {};
  \node[dot] (d2) at (0.75,1.3) {};
  \node (t) at (0,2.8) {};
  \node (t1) at (1.5,2.8) {};
  \draw[thick, green!60!black] (r) --node[midway, xshift = -6.5pt] {\tiny $\beta_{j'}$} (d1);
  \draw[thick, orange] (r1) --node[midway, xshift = 11.5pt] {\tiny $\beta_{j'+1}$} (d1);
  \draw (d1) to[out=0,in=0] node[midway, xshift = 9.5pt] {\tiny $\alpha_{i+2}$} (d2);
  \draw (d1) to[out=180,in=180]node[midway, xshift = -8.5pt] {\tiny $\alpha_{i+1}$} (d2);
  \draw[thick, blue] (d2) -- node[midway, xshift = -5.5pt] {\tiny $\alpha_{i}$}(t);
  \draw[thick, red!70!black] (d2) -- node[midway, xshift = 9.5pt] {\tiny $\alpha_{i+3}$}(t1);
}

\DeclareSymbol{varD3}{0}{
  \node (r) at (0,-1.7) {};
  \node (r1) at (1.5,-1.7) {};
  \node[dot] (d1) at (0.75,-.2) {};
  \node[dot] (d2) at (0.75,1.3) {};
  \node (t) at (0,2.8) {};
  \node (t1) at (1.5,2.8) {};
  \draw[thick, blue] (r) --node[midway, xshift = -5.5pt] {\tiny $\alpha_i$} (d1);
  \draw[thick, green!60!black] (r1) --node[midway, xshift = 9.5pt] {\tiny $\beta_{j'}$} (d1);
  \draw (d1) to[out=0,in=0] node[midway, xshift = 10.5pt] {\tiny $\beta_{j'+1}$} (d2);
  \draw (d1) to[out=180,in=180]node[midway, xshift = -8.5pt] {\tiny $\alpha_{i+1}$} (d2);
  \draw[thick, red!70!black] (d2) -- node[midway, xshift = -9.5pt] {\tiny $\alpha_{i+2}$}(t);
  \draw[thick, orange] (d2) -- node[midway, xshift = 11.5pt] {\tiny $\beta_{j'+2}$}(t1);
}

\DeclareSymbol{varD4}{0}{
  \node (r) at (0,-1.7) {};
  \node (r1) at (1.5,-1.7) {};
  \node[dot] (d1) at (0.75,-.2) {};
  \node[dot] (d2) at (0.75,1.3) {};
  \node (t) at (0,2.8) {};
  \node (t1) at (1.5,2.8) {};
  \draw[thick, blue] (r) --node[midway, xshift = -5.5pt] {\tiny $\alpha_i$} (d1);
  \draw[thick, orange] (r1) --node[midway, xshift = 11.5pt] {\tiny $\beta_{j'+2}$} (d1);
  \draw (d1) to[out=0,in=0] node[midway, xshift = 10.5pt] {\tiny $\beta_{j'+1}$} (d2);
  \draw (d1) to[out=180,in=180]node[midway, xshift = -8.5pt] {\tiny $\alpha_{i+1}$} (d2);
  \draw[thick, red!70!black] (d2) -- node[midway, xshift = -9.5pt] {\tiny $\alpha_{i+2}$}(t);
  \draw[thick, green!60!black] (d2) -- node[midway, xshift = 9.5pt] {\tiny $\beta_{j'}$}(t1);
}

\usepackage{authblk}

\usepackage{orcidlink}

\title{Fluctuations in the weakly coupled 4D Anderson Hamiltonian}
\author[1]{Simon Gabriel\orcidlink{0009-0005-5048-970X}}
\author[2]{Tommaso Rosati\orcidlink{0000-0001-5255-6519}}

\affil[1]{UC Berkeley, USA}
\affil[2]{University of Warwick, UK}

\begin{document}

\maketitle

\begin{abstract}
We study the weak coupling limit of the Anderson Hamiltonian in the
critical dimension $d=4$. In a perturbative sense, we prove Gaussian
fluctuations about the Green's function of the Laplacian. The fluctuations are
described by an explicit
effective variance, up to a critical value of the coupling constant at which
we expect a phase transition in the structure of the fluctuations.
The proof is based on a combinatorial analysis of Feynman diagrams,
and on a detailed study of the BPHZ renormalisation of the model.
We
 characterise the limiting distribution in terms of primitive blow-ups,
 and prove that no Laplacian renormalisation is present.
Our approach seems applicable to a broad class of equations.

\end{abstract}

\noindent\hspace{1cm}{\small\textit{{Keywords.}} Scaling critical singular SPDEs; Anderson model; BPHZ
renormalisation.}

\noindent\hspace{1cm}{\small\textit{{MSC classification.}} Primary: 60H15, Secondary: 35R60.}

\newpage

\setcounter{tocdepth}{2}
\tableofcontents

\newpage

\section{Introduction and main results}

This article concerns the elliptic Anderson model in resolvent form:
\begin{equation}\label{e:rslvnt-naive}
  \begin{aligned}
  (1 - \Delta) v = \lambda \xi v + \delta_y \;, \qquad \forall x \in \TT^{4} \;,
  \end{aligned}
  \end{equation}
with $\TT^4$ the $4$-dimensional torus, $\lambda \in \RR$ a coupling constant,
and $\xi$ space white noise, a
centred generalised Gaussian field on $ \TT^{4}$ with correlation function $\EE[\xi(x) \xi(y)] =
\delta(x-y)$.

The Anderson model arises in the study of particles in
random media, such as 
electrons in a random potential \cite{anderson1958absence} or branching
diffusions in a random environment \cite{MR1069840}.
The noise is a source of disorder
that models inhomogeneities, such as microscopic impurities in a material. A
fundamental question in the study of any disordered system is how the disorder
affects its large scale behaviour. This leads to
the problems of intermittency and localisation of the spectrum of
the Anderson Hamiltonian $\mH= \Delta - \lambda \xi$, both in discrete and in 
continuous settings. Namely, at large scales (on infinite volume)
solutions to the parabolic problem $\partial_t u = \mH u $  concentrate on high
peaks of the potential: see for example \cite{ZMRS,
KLMS, fm_anderson} in the discrete setting and \cite{CvZ, DL, HL, KPvZ} in the
continuous case. 

In the continuum, the Anderson model has so far been defined only in dimensions
$d \leq 3$. In these dimensions, the associated parabolic or elliptic equations are subcritical in the jargon of singular
stochastic PDEs, or superrenormalisable in that of the physics literature, and therefore \eqref{e:rslvnt-naive} is well
posed \cite{allez2015continuous,labbe19} after suitable renormalisation, and for $\lambda$
sufficiently small. On finite
volume both in the discrete setting and in the continuous one in dimension
$d\leq 3$, the Anderson model is always delocalised, since the
Hamiltonian is a perturbation of the Laplacian, and the solution $v$ to
\eqref{e:rslvnt-naive}, after suitable renormalisation, remains a strictly
positive function.

Instead, in dimension $d=4$ the continuous model becomes critical. Its leading order terms are
formally scale invariant, since by considering the equation $\Delta v = \lambda\xi
v$, and setting $v^{(\zeta)}(x)=v(x/\zeta)$ for any $\zeta >0$,
the invariance of white noise
$\xi(x/\zeta) \overset{d}{=} \zeta^{-d/2} \xi(x)$ leads to expect
$v^{(\zeta)} \overset{d}{=}v^{(1)}$. 
As a consequence, in dimensions $4$ and higher, we
do not expect the Anderson model on finite volume to be a perturbation of the Laplacian, since at small scales the noise becomes as relevant (in $d=4$) or more (in $d
>4$) than the Laplacian. This breaks the heuristic that guarantees delocalisation of the continuum
model on finite volume, since a solution to \eqref{e:rslvnt-naive} in $d \geq
4$, in whatever sense, may not be strictly positive.

Indeed, when the equation
is scale invariant or even supercritical, one might expect that localisation
occurs already on finite volume, and in some appropriate continuum limit, since
it appears in the large scale limit of discrete models independently of the
dimension \cite{fm_anderson}. This picture seems beyond our current mathematical
understanding, and to the best of our
knowledge the present work is the first attempt in studying it. Our
main results, which describe the fluctuations of \eqref{e:rslvnt-naive} about the
Green's function of the Laplacian in a weak coupling scaling, suggest that
unless the coupling constant is suitably (logarithmically) tamed, the $4d$
Anderson Hamiltonian will not be a perturbation of the Laplacian. Moreover, they
suggest the existence of a
\emph{critical} choice
of the weak coupling constant
at which to expect 
a non-trivial scaling limit (similar to the $2d$ stochastic heat flow \cite{csz_shf})
that might capture the behaviour of the model at the crossover between localisation and delocalisation.

Overall, critical
and supercritical models have so far been tackled only on a case by case basis,
with notable examples including 
the multiplicative stochastic heat equation (SHE) 
\cite{csz_she, dun_gu_fb_she, csz_shf,tao_nonlin}
and the KPZ equation \cite{csz_kpz, gu_kpz, cd_kpz} in {$ d = 2$}, 
the anisotropic KPZ and Burgers-type equations in $d=2$ \cite{cannizzaro2023weak,
cannizzaro2024gaussian},  diffusions in random environments
\cite{ToninelliGFF, OttoGFF, armstrong2024superdiffusive},
the $\Phi^4$ measure in $d=4$ \cite{Aizenman2024},
superprocesses \cite{EA_sup}, and Dean--Kawasaki dynamics \cite{DK_triv}.
We note that there is no unified theory to treat these systems, and a
variety of tools have been necessary. For SHE and KPZ results build on explicit chaos
decompositions or on tools from Malliavin calculus.
The study of Burgers-type
equations relies on the knowledge of an explicit Gaussian invariant measure,
superprocesses and Dean-Kawasaki build on martingale formulations, and for the
$\Phi^4_4$ model triviality is understood only at the level of the measure
and not of the dynamics.
 
One of the main difficulties when dealing with such systems is
that tools from singular SPDEs, such as regularity structures \cite{Regularity}, paracontrolled
distributions \cite{GIP}, or renormalisation group techniques \cite{kup, duch}, do not apply. These theories hinge on
the idea that the small scale structure of a solution to a nonlinear stochastic PDE is governed by
its Gaussian linearised dynamics and a finite number collection of functionals thereof. In
critical systems, because of the self-similarity of the equation, there is no
distinction between small and large scales and this assumption breaks down.
 
In this work, we consider \eqref{e:rslvnt-naive} for small
values of the coupling constant, $|\lambda| \ll 1$, in a sense that will be made
precise below. If the coupling constant is small, we expect the system to fluctuate around the Green's
function of the Laplacian (the case $\lambda = 0$). Our results suggest that such fluctuations should
be Gaussian up to a critical value of the coupling constant in a suitable
scaling. We predict an exact form of the covariance structure before that
critical value, as well as determine the exact value
at which the phase transition is expected to occur.

Our approach is perturbative, and in the terminology of regularity structures our main result can be thought of as akin to the construction
of the full infinite model associated to the critical equation (whereas in a
subcritical equation a model would consist only of finitely many terms). As in that theory, the power of our
approach lies in the fact that is seems applicable to a much broader class of
equations, beyond the Anderson model, most of which have previously evaded a rigorous
mathematical treatment.

We consider a weak coupling
limit, in which we couple a mollification of the
noise at a vanishing spatial scale $\ve \in (0, 1)$
to a coupling constant that vanishes at a logarithmic rate $$\lambda_\ve =
\hat{\lambda} \big(\log \tfrac{1}{\ve}\big)^{-1/2},$$ for some $\hat{\lambda} \in \RR$.
Namely, for a mollifying sequence
$\rho_\ve(x) = \ve^{-d}\vr(x/\ve)$ with $\rho \geq 0$ radial, smooth,
compactly supported, and with $\smallint \vr =1$, we consider the equation
\begin{equation*}
  \begin{aligned}
  (1 - \Delta) v_{\ve}  =  \rho_{\ve} \star \big( \lambda_{\ve}  \xi v_{\ve} + \delta_y\big) \;, \qquad \forall x \in \TT^{4} \;,
  \end{aligned}
\end{equation*}
and formally decompose its solution perturbatively {into a power series of the}  coupling constant:
\begin{equation*}
  v_{\varepsilon} (x, y) \; \lqm =" \; G_{\ve} (x-y) +
\sum_{n =1}^{\infty} \lambda_{\ve}^{n} \, \mI_{n, \ve} (x,y)  \;,
\end{equation*}
for some sequence of random fields $ (\mI_{n, \ve} )_{n \geq 1}$ independent of
$\lambda$ and with $G_\ve
= \vr_\ve \star G$ where $G(x) \simeq |x|^{-2}$ is the Green's function of $(1 -
\Delta)$.

This construction does not account for the necessity of renormalising the
solution by removing
diverging quantities that appear already in subcritical dimensions. Here we use
BPHZ renormalisation, which is one of the standard ways to systematically
renormalise solutions to singular SPDEs (as well as Feynman diagrams). We define the BPHZ-renormalised terms $\mf{R}_\ve \mI_{n,
\ve}$ as in Section~\ref{sec_bphz} and through them the renormalised function
\begin{equation}\label{e:expansion}
  \mf{R}_\ve v_\ve (x,y)\; \lqm =" \; G_{\ve} (x-y) +
  \sum_{n =1}^{\infty} \lambda_{\ve}^{n} \, \mf{R}_\ve \mI_{n, \ve} (x,y) \;,
\end{equation}
which formally solves the equation
\begin{equation}\label{e:resolvent-renormalised}
  (1 - \Delta) \mf{R}_\ve v_{\ve}  =  \rho_{\ve} \star \big( \lambda_{\ve}  (\xi-c_\ve) \mf{R}_\ve v_{\ve} + \delta_y\big) \;,
\end{equation}
for a diverging sequence of constants $c_{\ve} \sim
\lambda_{\ve} \ve^{-2}$.
Note that it is unclear whether the sum on the right of \eqref{e:expansion}
converges, or even whether the resolvent
equation for $\mf{R}_\ve v_\ve$ is solvable. Therefore the identity in
\eqref{e:expansion} is only formal and instead of working with
\eqref{e:resolvent-renormalised}, our results concern only the perturbative terms on
the right of \eqref{e:expansion}, {see also Remark~\ref{rem_spde}}. 

To state our main theorem, {we introduce the} centred Gaussian random field  $\limI$
with covariance
\begin{equation}\label{e:Jcorr}
  \EE [ \limI(x, y) \limI(x', y')] = \int_{\TT^4} G(x - z) G(y -z) G(x'-z )
G(y'-z) \ud z \;.
\end{equation}
Then we obtain the following scaling limit {for the expansion terms in
\eqref{e:expansion}.}
\begin{theorem}\label{thm:main}
  For every $n\geq 1$ there exists $ \sigma_{n} \in (0, \infty) $  such that in distribution
  in $ \mS^{\prime}
  ( (\TT^{4})^2 ; \RR)^{\NN_*} $:
  \begin{equation*}
  \begin{aligned}
  \lim_{\ve \to 0} \left( \lambda_{\ve}^{n-1} \,
  \mf{R}_{\ve} \, \mI_{n, \ve}
  \right)_{n \geqslant 1} = \Big( \sigma_{n} \, \hat{\lambda}^{n-1} \, \limI^{(n)} \Big)_{n \geqslant
  1}\;, \qquad \forall \hat{\lambda} \in \RR  \;,
  \end{aligned}
  \end{equation*}
  where $\big( \limI^{(n)} \big)_{n \geq 1}$ is an infinite-dimensional
  multivariate Gaussian, with each $ \limI^{(n)} $ equal in
  distribution to the field $ \limI $ described by~\eqref{e:Jcorr}, and the coefficients
  $\sigma_n$ defined in~\eqref{e:sigma-n-def}.
  \end{theorem}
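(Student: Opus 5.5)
Since each $\mI_{n,\ve}$ lies in the $n$-th Wiener chaos of $\xi$, and the BPHZ map $\mf{R}_\ve$ only adds lower-order chaos contractions, I would prove the statement by the method of moments, organised as a fourth-moment/diagram computation. The key steps are as follows.

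\textbf{Step 1 (covariance).} First I would compute $\lambda_\ve^{2n-2}\EE[\mf{R}_\ve\mI_{n,\ve}(\phi)\,\mf{R}_\ve\mI_{m,\ve}(\psi)]$ for test functions $\phi,\psi$ on $(\TT^4)^2$. This expectation is a sum over Feynman diagrams: two chains of $n$ and $m$ Green's functions $G_\ve$, with the noise vertices paired by Wick contractions. After renormalisation, the surviving diagrams should be those in which contractions collapse into nested self-energy blow-ups, namely bubbles and loops. Each primitive blow-up at scale $\ve$ contributes a factor of order $\log\frac1\ve$, which exactly compensates the $\lambda_\ve^2$ it carries. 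The non-local structure must reduce to a single vertex $z$ at which four propagators $G(x-z)G(y-z)G(x'-z)G(y'-z)$ meet, reproducing \eqref{e:Jcorr}.

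\textbf{Step 2 (identifying $\sigma_n$).} I would then show that each primitive blow-up converges after rescaling to an explicit constant. Summing over the admissible nested or concatenated configurations gives a combinatorial coefficient, which I would identify with $\sigma_n$ from \eqref{e:sigma-n-def}; the cross covariances with $n\neq m$ fix the joint law of the $\limI^{(n)}$. A crucial point here is to show that BPHZ subtracts exactly the $\ve^{-2}$ tadpole divergences. The logarithmic two-point subdivergences would otherwise renormalise the Laplacian, so I must check that these are cancelled to leading order, or are absent, through the symmetry (radial mollifier) and the vanishing of the relevant Taylor coefficient.

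\textbf{Step 3 (negligible diagrams).} Next I would prove that every other diagram is $o(1)$. This requires a power-counting bound of BPHZ type, for instance via a multiscale decomposition of $G_\ve$ or a Weinberg-type theorem for renormalised Feynman integrals. The bound should show that a diagram is at most $(\log\frac1\ve)^{k}$, where $k$ is the number of "independent primitive logarithmic forests". It must also show that $k<n-1$ unless the diagram is one of those counted in Step 2. This is the main obstacle: the bound has to be uniform over the number of diagrams for fixed $n$, and it has to handle overlapping divergences. I would try first to bound each diagram by the product over a maximal forest of its divergent subgraphs, and to count how many logarithms each forest can produce.

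\textbf{Step 4 (Gaussianity).} Finally, I would prove Gaussianity. One option is the fourth moment theorem of Nualart–Peccati (and Peccati–Tudor for joint convergence across chaoses) applied to the leading chaos component. Since the renormalised lower chaos components contribute only through the diagrams already controlled in Step 3, I would alternatively show directly that the higher moments factorise into pair diagrams. This amounts to checking that diagrams connecting three or more copies carry strictly fewer logarithms than needed, which reuses the power-counting bound of Step 3. Tightness in $\mS'$ then follows from uniform moment bounds tested against test functions, via Mitoma-type arguments.
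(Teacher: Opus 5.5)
Your overall plan (diagrammatic moments, BPHZ power counting over scales, cumulants, Mitoma) is the paper's. However, the check you call crucial in Step~2 would fail as you propose it, and without it Step~3 cannot be closed for negative diagrams with four legs.

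Here BPHZ subtracts only the zeroth Taylor order of every degree $-2$ two-point subdiagram. This includes more than tadpoles, e.g.\ two vertices joined by three edges. The first order vanishes by isotropy, so the remainder gains exactly two powers and each renormalised subdivergence becomes logarithmic. Hepp-sector power counting then only gives $\lambda_\ve^{|E_\star|}|\hat{\Pi}_\ve\Gamma(\varphi)|\lesssim\sum_T(\log\tfrac1\ve)^{\widehat{\mathrm{Null}}(T)-|\overline{T}|}$. Since $\widehat{\mathrm{Null}}(T)=|\overline{T}|$ does occur, this shows only that these diagrams are $O(1)$.

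You rightly suspect a would-be Laplacian renormalisation, but symmetry removes only the first-order and the off-diagonal second-order terms. The diagonal term is $c\,\Delta G_\ve$, where $c$ is the strictly positive, logarithmically growing second moment of the subdivergence. Because $\Delta G_\ve$ has degree $-4$, power counting bounds this term only by $O(1)$; no Taylor coefficient vanishes.

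The missing idea is that the expanded edge is the Green's function itself, so $\Delta G_\ve=G_\ve-\delta_\ve$. The $\delta_\ve$ part is supported at distance $\le\ve$, so it vanishes identically on Hepp sectors with all scales above $\ve$. Heuristically, it pins the outer scale at $\ve$ and leaves no room for the inner logarithm; the complementary near-diagonal sectors lose a logarithm anyway. The $G_\ve$ part and the cubic Taylor remainder are each one logarithm better. This is Proposition~\ref{prop_improve}, and it relies on the Anderson structure, not on generic BPHZ bounds.

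Further corrections:
\begin{itemize}
\item In Step~1, the surviving structures are nested \emph{four-point} bubbles (two vertices joined by two edges, degree $0$). Loops and self-energy insertions are exactly the degree $-2$ two-point subdiagrams of Lemma~\ref{lem:neg}, and these must vanish.
\item In Step~2, nested bubbles do not contribute independent per-bubble constants. An inner bubble lives at finer scales than the one containing it. The iterated sums $\sum_{n=K}^{N_\ve-1}(N_\ve-n)^k\approx(N_\ve-K)^{k+1}/(k+1)$ produce the factors $1/\overline{T}!$, which is where the count of bubble extraction sequences in $\sigma_\Gamma^2$ comes from.
\item In Step~4, the lower chaos components of $\mf{R}_\ve\mI_n$ are not negligible. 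The first-chaos part of $\mf{R}_\ve\mI_3$ (vertices $1$ and $3$ paired) forms a bubble with $\mI_1$, and it is what correlates $\limI^{(1)}$ with $\limI^{(3)}$. Applying the fourth moment theorem to the top chaos alone therefore gives the wrong joint law; use the cumulant route instead.
\item On that route you need no renormalised bounds for six or more legs. Negative subdiagrams arise only from internal pairings, so the chaos components producing them vanish in $L^2$ by the four-leg result, hence in every $L^p$ by hypercontractivity. The remaining connected diagrams are non-negative and are handled by power counting alone.
\end{itemize}
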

  It is implied  that we can characterise the covariance
structure between the terms $\sigma_{n} \limI^{(n)}$ in Theorem~\ref{thm:main}, which can be expressed in a similar fashion to the
variances, see again~\eqref{e:sigma-n-def}. We stress that the terms
$\limI^{(n)}$ are not independent.
Moreover, the limiting Gaussian fields are summable up to a critical value of
$|\hat{\lambda}|$.
  \begin{theorem}\label{thm:main2}
    For all $|\hat{\lambda} |< \sqrt{2} \pi$ the series $\mathcal{U} (x,y) := \sum_{n =1}^{ \infty} \sigma_{n} \hat{\lambda}^{n-1} \limI^{(n)}
    (x,y)$ is convergent almost surely, as a random distribution in  $\mS'(
    (\TT^4)^{2} ; \RR)$, and satisfies that in law:
    \begin{equation}\label{eq_sigma_eff}
    \begin{aligned}
	    \mathcal{U}  \overset{d}{=} \sigma_{\mathrm{eff}} ( \hat{\lambda}) \, \limI \;, \qquad \sigma_{\mathrm{eff}}^2(
      \hat{\lambda}) = \frac{2 \pi^{2}}{2 \pi^{2} - \hat{\lambda}^2 } \;.
    \end{aligned}
    \end{equation}
    \end{theorem}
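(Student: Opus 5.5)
The plan is to work entirely at the level of the limiting Gaussian family supplied by Theorem~\ref{thm:main}, together with the explicit covariances between its components.

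\textbf{Step 1: factorised covariances.} From the characterisation in~\eqref{e:sigma-n-def}, the covariance between $\sigma_n \limI^{(n)}$ and $\sigma_m \limI^{(m)}$ should be a scalar multiple of the kernel~\eqref{e:Jcorr}. I would write this as
\begin{equation*}
\EE\big[\sigma_n \limI^{(n)}(x,y)\, \sigma_m \limI^{(m)}(x',y')\big] = \kappa_{n,m}\, \EE\big[\limI(x,y)\limI(x',y')\big] \;,
\end{equation*}
with $\kappa_{n,n} = \sigma_n^2$. The spatial structure is always that of $\limI$: only the primitive blow-up of all four external legs at one point survives. The scalars $\kappa_{n,m}$ collect the contributions of the renormalised internal pairings, which in the limit are iterated logarithmic integrals. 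This first step just reads off this factorisation from~\eqref{e:sigma-n-def}.

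\textbf{Step 2: the generating function (main obstacle).} Next I would compute
\begin{equation*}
F(\hat\lambda) := \sum_{n,m\geq 1} \kappa_{n,m}\, \hat\lambda^{n+m-2} \;.
\end{equation*}
I expect the surviving diagrams to be chains of log-divergent bubbles joining the two copies. Each bubble should contribute a factor $(2\pi^2)^{-1}$, namely $\int G^2$ on a logarithmic scale multiplied by the constant in $G(x)\sim (4\pi^2|x|^2)^{-1}$. Combined with the volume of the simplex of ordered log-scales, $1/k!$, this should be compensated by the number of admissible orderings. This step is where the real work lies.

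To handle it, I would first show that $\kappa_{n,m}=0$ unless $n+m$ is even. I would then aim to show that the sum over $\{n,m : n+m-2=2k\}$ equals $(2\pi^2)^{-k}$. That gives
\begin{equation*}
F(\hat\lambda) = \sum_{k\geq 0} \Big(\frac{\hat\lambda^2}{2\pi^2}\Big)^k = \frac{2\pi^2}{2\pi^2-\hat\lambda^2} \;,
\end{equation*}
with absolute convergence exactly when $|\hat\lambda|<\sqrt2\pi$. From the same computation I would extract the bound
\begin{equation*}
|\kappa_{n,m}| \leq \sqrt{\kappa_{n,n}\kappa_{m,m}} \;, \qquad \sigma_n \lesssim_\delta \big((2\pi^2)^{-1/2}+\delta\big)^{n-1}
\end{equation*}
for every $\delta>0$.

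\textbf{Step 3: almost sure convergence.} Set $U_N=\sum_{n\leq N}\sigma_n\hat\lambda^{n-1}\limI^{(n)}$. Fix $s$ large enough that $\EE\|\limI\|_{H^{-s}((\TT^4)^2)}^2<\infty$; this holds because the kernel~\eqref{e:Jcorr} is locally integrable. Since $\limI^{(n)}\overset{d}{=}\limI$, the triangle inequality gives
\begin{equation*}
\EE\|U_{N+1}-U_N\|_{H^{-s}} \leq \sigma_{N+1}|\hat\lambda|^N\,\big(\EE\|\limI\|_{H^{-s}}^2\big)^{1/2} \;.
\end{equation*}
By Step~2 this is geometrically summable for $|\hat\lambda|<\sqrt2\pi$. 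Hence $\sum_N \|U_{N+1}-U_N\|_{H^{-s}}<\infty$ almost surely, so $U_N\to\mathcal U$ almost surely in $H^{-s}\subset\mS'$.

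\textbf{Step 4: identification of the law.} Each $U_N$ is a centred Gaussian field, and almost sure limits of Gaussian fields are Gaussian, so $\mathcal U$ is centred Gaussian. Its covariance is obtained by passing to the limit in $\EE[U_N(\varphi)U_N(\psi)]$, which is justified by the $L^2$ convergence implied by the same bounds. By Steps~1 and~2 this limit is $F(\hat\lambda)\,\EE[\limI(\varphi)\limI(\psi)]$. A centred Gaussian field is determined by its covariance, so $\mathcal U\overset{d}{=}\sqrt{F(\hat\lambda)}\,\limI = \sigma_{\mathrm{eff}}(\hat\lambda)\,\limI$, which is~\eqref{eq_sigma_eff}.
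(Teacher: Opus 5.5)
Your route is the paper's. Step~1 is \eqref{e:corr} with $\kappa_{n,m}=\sigma^2_{n,m}$. The identity you set as the target of Step~2, $\sum_{n_1+n_2=2n}\sigma^2_{n_1,n_2}=(2\pi^2)^{-(n-1)}$, is exactly Proposition~\ref{prop:effective-variance}; the paper's proof of the theorem simply cites it and sums the same geometric series.

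Your Steps~3--4 are more careful than the paper's one-line remark that convergence of $\mathcal U$ is equivalent to convergence of \eqref{e:guess-var}. The $\delta$ in your bound on $\sigma_n$ is superfluous, and the bound comes from nonnegativity rather than from the diagonal identity alone. Every $\sigma^2_\Gamma\geq 0$ by Proposition~\ref{prop:bubble}, so $\sigma_n^2\leq\sigma^{2,(2n)}_{\mathrm{eff}}=(2\pi^2)^{-(n-1)}$, hence $\sigma_n|\hat\lambda|^{n-1}\leq\big(|\hat\lambda|/(\sqrt{2}\,\pi)\big)^{n-1}$.

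What is missing is an actual proof of Step~2, and the mechanism you sketch for it is not the right one.

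\begin{itemize}
\item A bubble does not contribute $(2\pi^2)^{-1}$. By Lemma~\ref{lem_fixingestimate} each dyadic shell gives $\int G_\ve^2\approx\frac{\log 2}{8\pi^2}$, so one bubble carries $(8\pi^2)^{-1}$ per unit of $\log\frac{1}{\ve}$. Corollary~\ref{cor_contr_diagram} then gives $\sigma^2_\Gamma=(8\pi^2)^{-(n-1)}|\mathbb{S}(\Gamma)|/(n-1)!$.
\item The number of pairs (contributing diagram, bubble extraction sequence) is not $(n-1)!$ but $4^{n-1}(n-1)!$. The proof of Proposition~\ref{prop:effective-variance} builds a bijection that blows up one of the $n-1$ inner vertices of a diagram $(\mI_{\tilde n_1},\mI_{\tilde n_2})_{\tilde\kappa}$, with $\tilde n_1+\tilde n_2=2n-2$, into a bubble in one of four edge-labelled configurations, and prepends that bubble to the extraction sequence.
\end{itemize}

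So the $(n-1)!$ cancels the simplex volume, and $4\cdot(8\pi^2)^{-1}=(2\pi^2)^{-1}$. Your heuristic reaches the right constant only because these two discrepancies compensate. If you invoke Proposition~\ref{prop:effective-variance} in place of your sketch, the argument is complete.
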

    This theorem formally implies that for $ |\hat{\lambda}| <
    \sqrt{2} \pi$ 
    \begin{equation*}
    \begin{aligned}
    \mf{R}_\ve v_{\ve}(x,y) = G_{\ve}(x-y) + \lambda_{\ve} \; \mathcal{U}(x,y) +  o(\lambda_\ve)\,,
    \end{aligned}
    \end{equation*}
    which suggests that for $|\hat{\lambda}|< \sqrt{2} \pi$ fluctuations of $ \mf{R}_\ve v_{\ve}$ around $ G_{\ve}$ vanish as $ \ve \to 0 $.
    On the other hand, at the critical threshold $|\hat{\lambda}|=\sqrt{2} \pi$,
    we expect the fluctuations to be of the same order as $G_\ve$.  Our result should be understood as a central limit theorem. The terms
$\mI_{n, \ve}$ are centred through the renormalisation procedure, after which,
at the critical dimension one must account for logarithmic variance blow-ups
    through the weak coupling. At the critical value of $\hat{\lambda}$ we
    do not expect the fluctuations to be Gaussian.
    {Variance blow ups of this kind appear naturally when studying scaling critical SPDEs,
    but have also been considered in subcritical systems \cite{hai_var, gt_var, gh_var} where only finitely many
perturbation terms exhibit this behaviour.}
We
    observe that the effective variance $ \sigma_{\mathrm{eff}}^2(
      \hat{\lambda})$ is independent both of the size of
    the torus and of the mass term in the equation as it only relies on the
    leading order term of the Green's function $G(x)= (4 \pi^2
    |x|^2)^{-1}(1+o(1))$ near $x=0$.

The proofs of our results are based on a combinatorial analysis of the Feynman diagrams that
describe the moments of the renormalised terms $\mf{R}_\ve \mI_{n, \ve}$. We
provide a graphical characterisation of the diagrams that contribute to the limiting
fluctuations in terms of nested primitive blow-ups, see Section~\ref{sec_exact}. To
obtain this characterisation, we must prove among other that the renormalisation
has no contribution in the weak coupling limit.
This is quite delicate,
since in other models the renormalisation is expected to impact the weak coupling
limit (essentially through higher order Taylor remainders). This is the mechanism
that should lead for
instance to the appearance of a renormalised Laplacian with an effective
diffusivity in the anisotropic KPZ equation \cite{cannizzaro2023weak}, although for that
model a perturbative argument has not yet been implemented. See the discussion
in Section~\ref{sec_negative}. 

Eventually, our proof provides an
exact combinatorial characterisation of the contribution of each diagram that
does not vanish in the weak coupling limit, leading to the effective
variance $\sigma_{\rm{eff}}^{2}$. The latter is a geometric series
$\sigma_{\rm{eff}}^2(\hat{\lambda}) = \sum_{n \geq 0} (\hat{\lambda} / \sqrt{2}
\pi)^{2n}$, which is a consequence of the iterative structure of the nested
primitive blow-ups that contribute to the variance, see
Section~\ref{sec:effective-variance}. 
This matches the structure of the Gaussian fluctuations of the {multiplicative} Stochastic Heat Equation (SHE)
before the phase transition. The SHE is the
evolution $\partial_t u = \Delta u + \xi u$ with $\xi$ space-time white noise in
$d=2$. In the SHE model, fluctuations are proven to be Gaussian in a weak coupling scaling,
up to a phase transition and with effective variance given by
a geometric series \cite{csz_she} (unlike ours, the overall result in \cite{csz_she} is
not perturbative, although the proof rests on a perturbative expansion). 

Despite significant progress in the study of SHE, even at the critical
value of the coupling constant \cite{csz_shf, tsai2024stochastic}, it has remained so far unclear
how those results would extend to other models, including Anderson, since they rely on an exact decomposition into
It\^o chaoses and do not require to deal with infinite renormalisations.
In fact, to the best of our knowledge, our work is the first that deals with a
scaling critical SPDE that requires nontrivial renormalisation: in anisotropic
KPZ and Burgers no renormalisation is required because of the antisymmetry of
the nonlinearity (which leads to an explicit Gaussian invariant measure)
\cite{cannizzaro2023weak}, and in SHE no renormalisation is required if
one considers It\^o solutions to the equation. Incidentally, It\^o solutions to SHE admit an explicit It\^o
chaos decomposition that lies at the heart of the analysis of \cite{csz_she,csz_kpz}. See also \cite{hu}, which studies a
parabolic Anderson model (in arbitrary dimension) in which the classical 
product is replaced by a generalised Wick product leading to a similar explicit
chaos decomposition. By contrast, in our model such a structure is not available and we
replace it with the study of general Feynman diagrams via primitive blowups.
In fact, in the particular case of Anderson, it is not even clear whether to
expect Gaussian fluctuations, since the solution to the
parabolic model does not possess finite second moments (for all positive times) in any subcritical
dimension $d\geq 2$ \cite{allez2015continuous}. 

Our results give a first indication that fluctuations might
indeed be Gaussian. Moreover, we believe that {non--trivial}  Gaussian fluctuations are a universal feature of critical
SPDEs in weak coupling regimes for small enough coupling constant. The perturbative
approach presented in this work, through the
study of primitive blowups, may help unveil
similar structures in many other models beyond the reach of current
techniques, such as the dynamic $\Phi^4_4$ equation (note that our work
has a direct link to the $\Phi^4_4$ model, see Remark~\ref{rem_phi4}), or KPZ-type
equations.

One challenging problem that is left open is the convergence of the full
solution, and not just that of the formal power series.
In a work concerning random critical initial data \cite{grz_ac},
we could prove that a truncated Wild expansion is
both a good approximation for the solution to the equation and converges to the desired
Gaussian limit.
See also \cite{cd_ac} for an alternative approach, and \cite{HLR} for a treatment of the
subcritical problem.
While some of the arguments of that approach may hold also in the
present case, other aspects are highly model dependent. In
particular, \cite{grz_ac} uses the nonlinear damping of the Allen--Cahn equation to prove that
the truncated expansion remains close to the original solution (in the Anderson model
there is no nonlinear damping). In this regard, we note recent progress in wave
turbulence and kinetic theory, with the use of similar perturbative
arguments and where related challenges are faced, see for
example \cite{DH}.

\subsection*{Outline of the paper}
In Section~\ref{sec:trees} we introduce the terms in the expansion
\eqref{e:expansion}, including the BPHZ renormalisation of the trees, and the diagrammatic notation that will be
used throughout the work. In Section~\ref{sec:proof-main} we collect all the
elements that lead to the proof of the main results, Theorems~\ref{thm:main}
and~\ref{thm:main2}. In particular, we split our analysis into non-negative and
negative diagrams. Non-negative diagrams are then analysed in
Section~\ref{sec:nonneg} where we characterise their contribution in terms of
nested bubbles. We then compute the exact contribution of such diagrams in
Sections~\ref{sec_exact} and~\ref{sec:effective-variance}. Finally we treat
negative diagrams, which require renormalisation, in Section~\ref{sec_negative}.

\subsection*{Notation}
We write $\NN = \{0, 1, 2, \ldots\}$ and $\NN_{*} = \NN \setminus \{ 0 \}$.
For a finite set $A$ we indicate with $|A|$ its cardinality.
For every $d\in \NN_*$ we write $\TT^d$ for the $d$-dimensional torus $\TT^d =
\bigslant{[-\pi, \pi]^d}{\sim} $ with $\sim$ being the equivalence relation that
glues opposite boundaries, for $x, y \in \TT^d $ we write $|x-y|$ for the
distance on the torus.
We also write $\mS(\TT^d; \RR)  = C^{\infty}( \TT^{d}; \RR) $ for the set of Schwartz (smooth) functions over
$\TT^d$. Whenever $d$ is clear from context we write $\mS$. We write $x_{1:n}$ shorthand for the collection of variables $(x_1,
\ldots, x_n)$, or more generally
 $ x_{A} $
indicates the set of variables $ \{ x_{v}  \; \colon \; v \in A \} $ for an
arbitrary index set $ A $.
For two functions $f, g \colon \mX \to \RR$,
for some set $\mX$, we write $f \lesssim g$ if there exists a constant $C>0$ such that $f(x)\leq C g(x)$ for all $x \in \mX$.
For two finite rooted trees $ T_{1}, T_{2}$, we write $[ T_{1} , T_{2}] $ for the grafted
tree, which is obtained by connected both roots to a new common root:
\begin{equation*}
  [T_1, T_2] =
  \begin{tikzpicture}[baseline=-7]
          \node[dot] (0) at (0,0) {};
          \draw (0) to (-.4,-0.4) node at (-.5, -0.6) {\scalebox{0.8}{$T_1$}} ;
          \draw (0) to (0.4,-0.4) node at (0.5,-.6) {\scalebox{0.8}{$T_2$}} ;
        \end{tikzpicture}.
\end{equation*}


\subsection*{Acknowledgments}
We are particularly thankful to Giuseppe Cannizzaro and Nikos Zygouras for their
support in the development of this project, and to Rongchan Zhu and Xiangchan Zhu for a conversation that
sparked our interest in the Anderson model. We are also grateful to  Nikolay
Barashkov, Yvain Bruned, Ajay Chandra, Martin Hairer, Tom Klose, Colin Piernot, and Hao Shen for helpful discussions.

SG acknowledges financial support through the ERC Grant 101045082, held by Hendrik Weber, the NSF Grant
DMS-2424139 while in residence at SLMath in Berkeley, and the DFG
Excellence Cluster in Münster EXC 2044–390685587.
TR acknowledges financial support through the Leverhulme Trust ECF 2024-543.


\section{Trees, Feynman diagrams, and their renormalisation}\label{sec:trees}

In this section, we will introduce the expansion terms $ \mI_{n, \ve}$ from~\eqref{e:expansion-v}.
To this end,
 it is convenient to introduce some suitable notations concerning
trees, Feynman diagrams, and their connection to moments of the
terms $ \mI_{n, \ve} $.
Throughout the paper, we will express different analytic quantities, such as integrals
that define the elements $ \mI_{n, \ve} $ and their moments, through
\emph{directed} graphs.

\subsection{Trees, Feynman diagrams and their valuation}

A directed graph is a couple $ G =
(V, E) $ of vertices $ V $ and (directed) edges $ E \subseteq V \times V $.
The direction of the edges (from the bottom to the top) does not play an important role at first, but will
be useful in keeping track of combinatorial properties
later on. 
{We will denote by
$\mI_{n} $ the directed graph (tree)} that consist of $ n $ internal vertices and two leaves. For example
\begin{equation*}
\begin{aligned}
\mI_{2} =  \<2> \;.
\end{aligned}
\end{equation*}
We colour in red the leaves of the tree, which correspond to nodes $v$ of degree
$\deg (v) =1 $, i.e. the total number of incoming and outgoing edges.
Unlike in the introduction, we indicate with $ \mI_{n} $ only the graph
(the dependence of $ \ve $ appears only when considering the associated
random variable). The set $ V $ of nodes of $ \mI_{n} $ consists of a set
\begin{equation*}
\begin{aligned}
V_{\star} =  \{ v \in V \, : \, \deg \, v = 2 \} \;,
\end{aligned}
\end{equation*}
of \textbf{internal vertices} that we identify with empty black nodes, and a set
\begin{equation*}
\begin{aligned}
L = \{ v \in V \, : \, \deg \, v = 1 \}  = V \setminus V_{\star}  \;,
\end{aligned}
\end{equation*}
of \textbf{leaves} that are identified by empty red nodes. Whenever we are in this setting, we write $
E_{\star} $ for the set of \textbf{internal edges}, namely which connect internal
vertices:
\begin{equation*}
\begin{aligned}
E_{\star}
= \{ e \in E  \; \colon \; e \in V_{\star} \times V_{\star} \} \;,
\end{aligned}
\end{equation*}
and $ E_L $ for the set of \textbf{legs} which are edges that connect to leaves:
\begin{equation*}
\begin{aligned}
E_L =
 \{ e \in E  \; \colon \; e \in V_{\star} \times L \cup L \times
V_{\star} \} = E \setminus E_{\star} \;.
\end{aligned}
\end{equation*}

\subsubsection{Trees as iterated stochastic integrals}

We start by introducing the terms $ \mI_{n, \ve} $ that describe the
perturbative expansion of $v_\ve$ in~\eqref{e:expansion-v} to arbitrary order in $\lambda_\ve$.
We represent graphically the first order term $G_\ve$ in~\eqref{e:expansion-v} as a tree consisting of one line (the kernel $ G_\ve $) connecting two
vertices, corresponding to the variables $ x, y \in \TT^{4} $:
\begin{equation*}
\begin{aligned}
\mI_{0, \varepsilon}  (x,y) = \<0>^{y}_{x, \ve} \;.
\end{aligned}
\end{equation*}
We have added a direction to the edge, so that we can distinguish the top and
the bottom of the tree (we will often omit the variables $x,y$ in the
graphical representation).
Now, we iteratively define
\begin{equation}\label{e_def_In}
\begin{aligned}
 \mI_{n+1, \varepsilon}(x,y) :=
G_{\varepsilon} \star
(\xi\cdot \mI_{n, \varepsilon}(\cdot , y)) (x) \,,
\end{aligned}
\end{equation}
so that formally
\begin{equation}\label{e:expansion-v}
	v_\ve = \sum_{n =0}^{\infty} \lambda_{\ve}^n \, \mI_{n, \ve} \;.
\end{equation}
Each term in the expansion is therefore associated to a
``linear'' tree in which every inner node corresponds to a noise $ \xi
$, and each edge corresponds to a kernel $ G_{\ve} $,
and we integrate over all variables associated to internal
nodes. The fact that black nodes are empty (a white interior) indicates the presence of a noise at that
node. For example
\begin{equation}\label{eq_laddertrees}
\begin{aligned}
\mI_{1, \varepsilon} = \<1>_{\ve} \;, \qquad \mI_{2, \varepsilon} = \<2>_{\ve}\;, \qquad
\mI_{3,\varepsilon} = \<3>_{\ve} \;, \qquad
\ldots \qquad \;.
\end{aligned}
\end{equation}
Each empty red node is associated to a variable $ x $ (at the bottom)
or $ y $ (at the top).

\subsubsection{Feynman diagram representation of moments}

Next, we introduce pairings of the trees $ \mI_{n} $ to describe moments
of $ \mI_{n , \ve} $.
Each tree $ \mI_{n, \ve} $ from~\eqref{e_def_In} corresponds to a random variable in the $ n
$-th inhomogeneous It\^o chaos, and it can therefore be decomposed into
its homogeneous chaos components. This can be achieved by summing over all internal
pairings of noises within the tree
\begin{equation}\label{e:int-cont}
\begin{aligned}
\mI_{n, \ve} = \sum_{\kappa \in \mK_{n}^{\mathrm{int}}} \mI_{n, \kappa, \ve} \;.
\end{aligned}
\end{equation}
Here $ \mK_{n}^{\mathrm{int}} $ is the set of all possible internal pairings. A pairing is a bijection $ \kappa
\colon V_{\kappa}^{(1)} \to V_{\kappa}^{(2)} $, where $ [n] = \{ 1, \dots, n \}
$ enumerates the set of internal vertices of $
\mI_{n} $ and $ V_{\kappa}^{(i)} \subseteq [n] $ are two disjoint sets
of the same size. We identify $\kappa$ with its inverse $\kappa^{-1} \colon V^{(2)}_{\kappa} \rightarrow V^{(1)}_{\kappa}$, so that a pairing $ \kappa $ indicates that vertices in $ V_{\kappa}^{(1)},
V_{\kappa}^{(2)} $ are paired according to the rule $ \kappa $, namely if $v=\kappa(w)$ or equivalently $w=\kappa^{-1}(v)$.
We write $ \kappa = \emptyset$ for the empty pairing, when $ |
V_{\kappa}^{(1)}| = |V_{\kappa}^{(2)}|=0$.
We say that a pairing $ \kappa $ is \emph{complete}, if $
V_{\kappa}^{(1)} \sqcup V_{\kappa}^{(2)} = [n] $, which is only possible if $ n$ is even.

To any pairing $ \kappa $ we associate a \emph{multi-graph} $ \mI_{n, \kappa} = (V(\mI_{n,\kappa}), E(\mI_{n,\kappa}))
$, according to the rule that a pairing $ \kappa $  identifies two nodes $ v $ and $ w $
provided $ \kappa
(v) = w $.
This corresponds to
quotienting the original graph by the equivalence relation induced by $\kappa$.
Moreover, in the new graph we denote paired nodes with a full
black node, to distinguish these nodes from the
unpaired ones (with a white interior), which will still be associated to instances of the noise.
For example, if $ n =2 $ then we can pair the two vertices
\begin{equation}\label{e:loop}
  \<c21_half> = \<c21_bl> \;,
\end{equation}
where we represent pairings through red dotted
lines connecting vertices $v,w$ whenever $v=\kappa(w)$.

Before we proceed, let us comment on the structure of paired graphs. A
directed multigraph is a couple $(V,E)$, where the edge set $E\subseteq_m
V\times V$ is a finite multi-subset of $V\times V$, meaning that edges may
appear more than once.
For example $\{e_1, e_1, e_2, e_2, e_2, e_3\}\subseteq_m V\times V$ if every $e_i\in V\times V$.
From here onwards, we will omit the subindex $m$ since no confusion can arise.
The multigraph definition is necessary,
for instance due to the following pairing
\begin{equation*}
  \begin{aligned}
    \<c4> = \<cc4> \;.
  \end{aligned}
\end{equation*}
Now, to every paired graph $ \mI_{n , \kappa} $
we associate a homogeneous
Gaussian integral $ \mI_{n , \kappa, \ve} $ as follows. For every $ \ve \in
(0, 1) $, we first associate to $ \mI_{n, \kappa} $ an $ \ve $-dependent
kernel through the mapping $ \mW_{\ve} $:
\begin{equation*}
\begin{aligned}
\mW_{\ve} \mI_{n, \kappa} (x_{V_\star}) = \prod_{e \in E_{\star}}
G_{\ve}  (x_{e_{+}} - x_{e_{-}}) \;,
\end{aligned}
\end{equation*}
where $ E $ is the edge set of the multi-graph $ \mI_{n, \kappa} = (E, V) $, with every edge being
of the form $ e = (e_{-}, e_{+}) \in V \times V $.
We then set
\begin{equation}\label{eq_contractedHomIntegral}
\begin{aligned}
\mI_{n, \kappa, \ve} (x_L) = \int_{( \mathbf{T}^{4})^{ V_{\star} }} \mW_{\ve} \mI_{n, \kappa}
(x_{V_\star}) \prod_{e \in E_L} G_{\varepsilon}( x_{e_+}- x_{e_-}) \prod_{v \in V_{o}} \xi (x_{v}) \ud x_{V_{\star}} \;,
\end{aligned}
\end{equation}
where $ V_{o} = V_{\star}  \setminus (  V^{(1)}_\kappa \sqcup V^{(2)}_\kappa )$ is the set of unpaired inner variables. The integral
\eqref{eq_contractedHomIntegral} should be interpreted as a homogeneous Gaussian integral in the sense of
\cite{Nualart}.
For example,  $ \mI_{1, \ve}  $ is Gaussian and~\eqref{e:int-cont} yields the
 representation  by the empty
pairing
\begin{equation*}
\begin{aligned}
 \mI_{1, \ve} = \mI_{1, \varnothing, \ve}
=  \<1>_{ \varnothing, \varepsilon}  \;,
\end{aligned}
\end{equation*}
since no other pairings are possible.
On the other hand, $ \mI_{2 , \ve} $ can be decomposed in terms of
\begin{equation} \label{e:ex-2}
\begin{aligned}
\<2>_{\,\varepsilon}= \<2>_{ \varnothing, \varepsilon} + \<c21_half>_{\!\varepsilon} \;.
\end{aligned}
\end{equation}
Moreover, we don't expect the limit of  $
\lambda_{\varepsilon}^{n -1} \mI_{n , \kappa, \varepsilon}$ to be function valued, but
instead to lie in the Besov--H\"older space $
\mC^{0 -} ( ( \mathbf{T}^{4})^{2}; \RR) $, which is a space of distributions.
Therefore, we evaluate $\mI_{n,\kappa ,\ve}$
against smooth test functions $\varphi \in \mathcal{S}( (\TT^4)^{L} ;
\RR)$, and write
\begin{equation}\label{eq_def_Itested}
\begin{aligned}
\mI_{n, \kappa, \varepsilon} ( \varphi )
:=
\int_{(\mathbf{T}^{4})^2} \mI_{n, \kappa, \varepsilon}
(x_L) \varphi (x_L) \ud x_L  \,,
\end{aligned}
\end{equation}
and similarly $ \mI_{n, \varepsilon}(\varphi)$ according to~\eqref{e:int-cont}.

On the other hand, two-point correlations of a given tree are given by the sum
over all possible pairings between two copies of that tree. For
example,
\begin{equation*}
\begin{aligned}
\EE   \left[ \mI_{1, \ve }(x_{1}, y_{1}) \mI_{1, \ve }(x_{2}, y_{2})\right] =
\<c1>_{\ve} \;, \qquad \EE  \left[ \mI_{2, \ve }(x_{1}, y_{1}) \mI_{2, \ve }(x_{2},
y_{2})\right] = \<c21>_{\ve} + \<c22>_{\ve} +\<c23>_{\ve} \;.
\end{aligned}
\end{equation*}
and more generally,
\begin{equation}\label{eq_2pt_corr}
\begin{aligned}
\EE \left[ \mI_{n, \ve} (x_{1}, y_{1}) \mI_{k, \ve} (x_{2}, y_{2}) \right] =
\sum_{\kappa \in \mK(\mI_{n}, \mI_{k})} (\mI_{n}, \mI_{k})_{\kappa, \ve}
(x_{12}, y_{12}) \;,
\end{aligned}
\end{equation}
where $ \mK (\mI_{n}, \mI_{k}) $ is the set of all \emph{complete} pairings between the
two trees $ \mI_{n}$ and $ \mI_{k} $. That is, bijections $ \kappa \colon
V^{(1)}_{\kappa} \to V^{(2)}_{\kappa} $, where $ V^{(1)}_{\kappa} \sqcup
V^{(2)}_{\kappa} = V_{\star} (\mI_{n}) \sqcup V_{\star} (\mI_{k}) $. Then, as
before the pairing $ \kappa $ produces a new graph $ (\mI_{n},
\mI_{k})_{\kappa} $ in which paired nodes are coloured full black.

\begin{remark}\label{rem:labels}
 All graphs and diagrams are considered labelled, unless explicitly
  stated, even if the labelling is not referred
  to explicitly.
In particular, the vertices of a single tree $\mI_n$ are considered
  labelled from $0$ to $n+1$, from the bottom to the top, with $0$ being the
  bottom, and $n+1$ the top. For a pairing $\kappa \in \mK(\mI_n, \mI_k)$,
  it will be useful to fix some rule for the labelling of $(\mI_n,
  \mI_k)_\kappa$. The particular choice of the rule does not matter, however for
  completeness we use the following:
  \begin{itemize}
    \item Start from the root of the tree $\mI_n$ and follow it upwards in
    its embedding in $(\mI_n,
    \mI_k)_\kappa$. Label the root $0$ in $(\mI_n,
    \mI_k)_\kappa$, and every time a new vertex is visited, label it
    increasingly ($1, 2, \ldots$)  starting from zero.
    \item Upon reaching the upper leaf of $ \mI_{n}$, start from the root of $\mI_k$ and repeat the same procedure.
  \end{itemize}
\end{remark}

As before, we can associate an integral kernel to each graph on the right--hand side of
\eqref{eq_2pt_corr}, through the map $ \mW_{\ve} $:
\begin{equation*}
\begin{aligned}
\mW_{\ve } (\mI_{n}, \mI_{k})_{\kappa} (x_{V_\star})  = \prod_{e \in
E_{\star} }
G_{\ve} (x_{e_{+}} -
x_{e_{-}}) \;,
\end{aligned}
\end{equation*}
such that
\begin{equation*}
\begin{aligned}
(\mI_{n}, \mI_{k})_{\kappa, \ve } (x_{L})= \int_{(\TT^{4})^{
V_{\star}}} \mW_{\ve} (\mI_{n}, \mI_{k})_{\kappa} (x_{V_{\star} })
\prod_{e \in E_L} G_{\varepsilon}( x_{e_+}- x_{e_-})  \ud x_{V_{\star}}
\;,
\end{aligned}
\end{equation*}
where $ E_{\star} $ is the set of inner edges in $ (\mI_{n}, \mI_{k})_{\kappa} = (V,E)$.

Therefore, the pairing between integration variables leads to Feynman diagrams, which are (deterministic)
integrals associated to the graphs. They can be
distinguished pictorially from the stochastic integrals~\eqref{eq_laddertrees},
 since all their inner vertices are
coloured with full black dots. For example,
\begin{equation*}
\begin{aligned}
\<c1> = \<cc1> \;, \qquad \<c21> = \<cc21> \;, \qquad \<c22> = \<cc22>\;,
\qquad \<c23> = \<cc23> \;,
\end{aligned}
\end{equation*}
where we represented the last two paired graphs as multi-graphs. Moreover, we note that different pairings
can give rise to the same diagram shapes, however representing different analytic
expressions due to the distribution of variables among the
leaves. For example,~\eqref{eq_2pt_corr} yields
\begin{equation} \label{e:exmpl}
\begin{aligned}
\EE  \left[ \, \mI_{2, \ve }(x_{1}, y_{1}) \, \mI_{2, \ve }(x_{2}, y_{2}) \, \right] =
\<cc21>_\ve +  \<cc22>_\ve +  \<cc23>_\ve\,,
\end{aligned}
\end{equation}
where in the last term on the right side the variables $\{x_{1},
y_{2}\}$ and $\{y_{1}, x_{2}\}$
are attached to the top and bottom nodes, respectively.
This is indicated by the direction of the edges.

Instead of considering only two--point correlations, the above can
be generalised
to an arbitrary number of trees and multi-point
correlations.
To this end, we
introduce the following sets of complete pairings
among a set of trees $ \{ \tau_{i}
\}_{i =1}^{k} $,  $ \tau_{i} \in  \{ \mI_{n} \}_{n \geqslant 1} $:
\begin{equation*}
\begin{aligned}
\mK(\tau_{1}, \dots, \tau_{k}) & = \{ \text{complete pairings of }
\tau_{1}, \dots, \tau_{k}  \} \;.
\end{aligned}
\end{equation*}
For any $ \kappa \in \mK (\tau_{1}, \dots, \tau_{k}) $ we call the graph
$ (\tau_{1}, \dots, \tau_{k})_{\kappa} $ a Feynman diagram.
\begin{definition}\label{def:FDgeneral}
  A directed multi-graph $ \Gamma = (V, E) $ is a Feynman diagram, if it is
endowed with a set $L \subseteq V$ of leaves.
We write
  \begin{equation*}
    V_\star = V\setminus L, \qquad E_\star = \{ e \in E \text{ s.t. } e \in V_\star \times V_\star \}\;, \qquad E_L = E \setminus E_\star
  \end{equation*}
  for the set of internal vertices, internal edges, and legs respectively.
Since $E$ is a multiset, each edge $e$ can appear multiple times. We denote
with $\mf{n}(e) \in \NN$ such multiplicity.
\end{definition}
We say that a Feynman diagram $ \Gamma = (V, E) $ is an \textbf{Anderson-Feynman diagram}, if there exists a
$ k \in \NN $ and trees $ \tau_{i}  \in  \{
\mI_{n} \}_{n \in \NN} $, $i =1 ,\ldots, k $, and a pairing $ \kappa \in
\mK (\tau_{1}, \dots , \tau_{k}) $ such that
\begin{equation}\label{eq_AndersonFeyn}
\begin{aligned}
\Gamma = (\tau_{1}, \dots, \tau_{k})_{\kappa} \;,
\end{aligned}
\end{equation}
with the set $L$ of leaves being the union of the leaves of the trees $\tau_i$.
Note that $ k $ is uniquely determined by $ \Gamma $,  since $ 2 k $ is the
number of leaves (or legs) of $ \Gamma $.
To highlight the dependence on $ k $, we will
refer to $ \Gamma $ as a $ 2k $-Anderson--Feynman diagram.
{Moreover, we notice that every inner node of an Anderson--Feynman diagrams is
connected to precisely four half--edges, i.e. it is graph theoretically of degree
four.} 

{
\begin{remark}[Connection to $\Phi^{4}$--Feynman diagrams]\label{rem_phi4}
Anderson–Feynman diagrams are those Feynman diagrams in which each internal vertex 
has degree four and edges represent Green’s functions. 
They therefore coincide with the diagrams generated by the $\Phi^{4}$--measure in
Euclidean quantum field theory, see for example \cite[Chapter~7.4]{zinn2021quantum}. 
Formally, the Anderson Hamiltonian is related to the (wrong-sign) $\Phi^{4}$ measure via a Hubbard–Stratonovich transformation \cite[Chapter~4]{Efetov_1996}.
To obtain the $\Phi^{4}$--measure with the conventional negative sign,
the coupling constant $\lambda$ must be taken purely imaginary. 
This does not affect our analysis: the series defining $\sigma^{2}_{\mathrm{eff}}$ remains summable. 
Most notable, this leads to a sign change in the denominator of \eqref{eq_sigma_eff}.
It therefore recovers, 
at the perturbative level, the triviality of the $\Phi^{4}_{4}$--measure \cite{Aizenman2024}. 
Further details will be addressed elsewhere.
\end{remark}
}

Feynman diagram are interpreted as deterministic integral expressions, which
are functions of the leaf variables. We generalise the valuation of a Feynman diagram
from the case $ ( \mI_{n}, \mI_{k})_{\kappa, \varepsilon}$ as follows:
\begin{definition}\label{def:valuation}
  To a connected Feynman diagram $ \Gamma$, we associate the integral
\begin{equation}\label{eq_def_val}
\begin{aligned}
\Pi_{\varepsilon}\Gamma
( x_{L }) := \begin{cases}
  \int_{(\TT^{4})^{ V_{\star} }} \mW_{\ve} \Gamma
  (x_{V_\star})
  \prod_{e \in E_L}
  G_{\ve}(x_{e_+} - x_{e_-})
   \ud
  x_{V_{\star}} \;, & \text{ if } L \neq \emptyset \;, \\
  \int_{(\TT^{4})^{ V_{\star}\setminus \{v \} }} \mW_{\ve} \Gamma
  (x_{V_\star})
   \ud
  x_{V_{\star} \setminus \{v \}} \;, & \text{ if } L = \emptyset \;,
\end{cases}
\end{aligned}
\end{equation}
where $v \in V_\star$ in the second case is arbitrary, and
\begin{equation}\label{e_def_mW}
\begin{aligned}
\mW_{\ve} \Gamma (x_{V_\star}) := \prod_{e \in E_{\star}}
G_{\ve}(x_{e_{+}} - x_{e_{-}}) \;.
\end{aligned}
\end{equation}
If $\Gamma$ consists of multiple connected components
$\{\Gamma_i\}_{i=1}^n$, then $\Pi_\ve \Gamma = \prod_{i=1}^n \Pi_\ve \Gamma_{i}$.
\end{definition}

In the case $L= \emptyset$ we integrate over all but one internal variable (the
one associated to the vertex $v$). In this case, the
integral does neither depend on the choice of $v$ nor on the variable
$x_{v}$ by translation invariance.
The case $L  = \emptyset$ is not of interest in the discussion above, but
it will be used in the renormalisation procedure drescribed in
Section~\ref{sec_bphz} below, and it is crucial to obtain the correct definition
of renormalisation constant. This matches the notion of vacuum diagrams
in~\cite[Definition~2.9]{BPHZ}.

Moreover, because the limiting random fields that we are going to treat are distribution-valued, the kernel $ \Pi_{\varepsilon} \Gamma (
x_{L}) $ will be tested against smooth functions.
Thus, for a smooth test function $\varphi \in \mS((\TT^4)^L ; \RR) $, we define
\begin{equation}\label{eq_def_valTested}
  \Pi_{\varepsilon}\Gamma ( \varphi) := \int_{(\TT^4)^L} \Pi_{\varepsilon}\Gamma
  ( x_{L })  \varphi(x_L) \ud x_L \;.
\end{equation}
Finally, with this notation at hand, the following formula holds for $ k $-point
correlation functions of any $\{\tau_i \}_{i =1}^{k}$, with $
\tau_{i}\in \{\mI_{n}\}_{n \in \NN}$:
\begin{equation} \label{e:moments}
\begin{aligned}
 \EE[ \tau_{1, \ve } (x_{1}, y_{1}) \cdots \tau_{k, \ve} (x_{k}, y_{k}) ] = \sum_{\kappa \in \mK  (\tau_{1}, \ldots,
\tau_{k})}
\Pi_{\varepsilon}(\tau_{1}, \dots, \tau_{k} )_{\kappa}
(x_{1:k}, y_{1 : k})
\;.
\end{aligned}
\end{equation}

\subsection{BPHZ renormalisation}\label{sec_bphz}
Even in lower dimensions, and in fact in the entire singular subcritical
regime, the limit of single trees $ \mI_{n, \ve} $ as $ \ve \to 0 $ does not necessarily 
exist. This is due to the presence of divergences, which need to be removed through a renormalisation procedure.
Let us start with an example. The second tree $ \mI_{2, \ve} $ diverges as $ \ve \to 0 $, because of the presence of the loop~\eqref{e:loop}:
\begin{equation}\label{eq_tadpole}
  \EE \, \, \<2> = \Pi_\ve \, \,  \<c21_bl> = \Pi_\ve \, \,  \<loop> \cdot \Pi_\ve \, \,  \<1_b> \, \,,
\end{equation}
where the loop $ \Pi_\ve \, \, \<loop> = G_\ve(0) \simeq \ve^{-2}$ leads to a polynomial divergence. These divergences are different from the ones that are cured by the weak coupling, since the former appear when computing expectations of trees, while the latter appear from blow-ups in their variances.
In particular, the first kind of divergence can be cured through the
subtraction of suitable counterterms, while variance blow-ups can only be cured
through weak coupling.

In some cases the renormalisation procedure is simple. For example, the loop in~\eqref{eq_tadpole} must only be subtracted: We set
\begin{equation}\label{e:ren-example}
  \mf{R}_\ve \, \, \, \<2> = \<2>_\ve - \Pi_\ve \<loop> \; \<1_b>_\ve =  \<2>_{\varnothing, \ve}\;,
\end{equation}
where on the right side we have used the notation introduced in~\eqref{e:ex-2} to indicate the component of the original tree in the second homogeneous chaos.
Note that by virtue of the resulting renormalisation, we find that the analogue of~\eqref{e:exmpl} for the renormalised trees reads
\begin{equation*}
\begin{aligned}
\EE  \left[ \, \mf{R}_{\ve} \, \mI_{2 }(x_{1}, y_{1}) \, \mf{R}_{\ve} \, \mI_{2 }(x_{2}, y_{2}) \, \right] =
 \<cc22>_\ve + \<cc23>_\ve  \;,
\end{aligned}
\end{equation*}
where the divergent diagram $ \<cc21>_\ve  $ appearing in~\eqref{e:exmpl} has
been removed by the renormalisation procedure. The resulting tree~\eqref{e:ren-example} has mean zero and a variance that is finite in
subcritical dimensions, but which diverges logarithmically in dimension four.
Precisely such variance blow ups will be compensated by the weak coupling.
This example leads to the two main points that we must address in this section:
\begin{itemize}
  \item A systematic way of defining the map $\mf{R}_{\ve}$: This is given by the well-known BPHZ renormalisation procedure.
  \item A link between renormalisation of trees and the computation of moments, which amounts to renormalisation of Feynman diagrams. This is necessary in our arguments because the relevance of the weak coupling appears only when computing second moments.
\end{itemize}
In fact, we will proceed somewhat backwards, starting from the definition of BPHZ renormalisation $\mR$ for Feynman diagrams and building from that a renormalisation map at the level of the trees.

\begin{remark}\label{rem_spde}
  The result of our next discussion is the construction of a renormalisation
  procedure $\mf{R}_\ve$ for trees. Such renormalisation is widely used in many
  different settings in the study of singular SPDEs~\cite{BrunedHairerZabotti19,Ajay, BCCH21}. In
  particular, we expect this map to coincide, mutatis mutandis, with the
  renormalisation procedure in~\cite{Ajay}. In fact our setting is simpler
  because we do not require the positive renormalisation that appears in
  regularity structures and we are only considering Gaussian noise. Checking
  this equivalence, and in addition establishing a link between the
  renormalisation of the trees and the renormalisation of the original equation
  (cf.~\cite{BCCH21}) requires some technical work that is unnecessary for the heart of our
  arguments, and would not make our work self-contained. It is therefore left
to a future project.
\end{remark}

\subsubsection{Degree, contractions, extractions, and BPHZ for Feynam diagrams}

In order to understand whether the associated integral $ \Pi_{\ve} \Gamma $ of
a Feynman diagram $ \Gamma$ diverges, and to what extent, we require the notion
of degree.

\begin{definition} \label{def:subdiagram}
  Given a Feynman diagram $\Gamma = (V, E)$ we say that $\overline{\Gamma} = (
\overline{V}, \overline{E})$ is a subdiagram of $\Gamma$ if $\overline{V}
\subseteq V_\star$ and $\overline{E} \subseteq E_\star $, where the last
inclusion is in the sense of multi-sets, i.e. the multiplicity
$\overline{\mf{n}} (e)$ of $e$ in $\overline{\Gamma}$ is smaller than the
multiplicity $\mf{n}(e)$ of $e$ in $E$: $\overline{\mf{n}}(e) \leqslant
\mathfrak{n}(e)$.
We say $ \overline{\Gamma}$ is a full subdiagram, if $\overline{\mf{n}}(e) =
\mathfrak{n}(e)$ for every $ e \in \overline{E}$.
\end{definition}

Given a connected subdiagram $ \overline{\Gamma} =
( \overline{V}, \overline{E}) \subseteq \Gamma $
 we associate to it the degree
\begin{equation}\label{e_def_deg}
\begin{aligned}
\deg \overline{\Gamma}
:= 4 (| \overline{V}|-1) - 2 |\overline{E}|  \,,
\end{aligned}
\end{equation}
where $4$ is the spatial dimension, and $-2$ is the homogeneity of the Green's function in four dimensions (see Section~\ref{sec:nonneg} for a more general definition).
If a diagram consists of several connected components, we define its degree as the sum of the degrees of its connected components.
In the following discussion it will be furthermore important to extract and
contract subdiagrams of Feynman diagrams. We define
contractions as follows.

\begin{definition}\label{def:contract}
  Given a Feynman diagram $ \Gamma = (V, E) $ and a connected, full subdiagram $
  \overline{\Gamma} = ( \overline{V} , \overline{E}) $,
  we define the \emph{contracted diagram} $ \mK_{\overline{\Gamma}} \Gamma = (
  V ( \mK_{ \overline{\Gamma}} \Gamma), E
  (\mK_{ \overline{\Gamma}} \Gamma)) $ as follows. We replace all vertices of
  $ \overline{\Gamma} $ with a single new vertex $ v_{\bg} $, meaning that
  \begin{equation*}
  \begin{aligned}
  V (\mK_{ \overline{\Gamma}} \Gamma ) = V \setminus \overline{V} \sqcup \{ v_{\bg} \} \;,
  \end{aligned}
  \end{equation*}
  and for every edge $ e = (e_{-}, e_{+}) \in E
  \setminus \overline{E} $ we replace $ e $ with $ \mK_{\overline{\Gamma}} (e) =
  (e_{-},v_{\bg}) $ (resp. $ \mK_{\overline{\Gamma}} (e) = ( v_{\bg},
e_{+}) $) if $
  e_{+} \in \overline{V}$ (resp. $ e_{-} \in \overline{V} $) and leave it unchanged otherwise. All edges in $\overline{\Gamma}$ are deleted:
  \begin{equation*}
    E(\mK_{\overline{\Gamma}} \Gamma ) = \mK_{\overline{\Gamma}} (E \setminus \overline{E}) \;.
  \end{equation*}
  We say that $\overline{\Gamma}$ has been contracted to $v_\bg$, and whenever
  convenient we use the notation
  \begin{equation*}
    \mK_{\overline{\Gamma}} \Gamma = \bigslant{\Gamma}{\overline{\Gamma}} \;.
  \end{equation*}
  Finally, if the subdiagram $\overline{\Gamma} = ( \overline{V} , \overline{E})
  $ is full, but not necessarily connected (with connected components
  $\overline{\Gamma}_i$ for $i = 1, \ldots, n$), then we define the contracted
  diagram
  \begin{equation*}
    \mK_{ \overline{\Gamma}} \Gamma = \mK_{\overline{\Gamma}_n} \cdots \mK_{\overline{\Gamma}_1} \Gamma \;,
  \end{equation*}
  where the order of the contractions does not matter and we view
  $\overline{\Gamma}_{i+1}$ as a subdiagram of $ \mK_{\overline{\Gamma}_i}
  \cdots \mK_{\overline{\Gamma}_1} \Gamma$, see Remark~\ref{rem:subdiam}.
  \end{definition}
  The following is an example of a contraction (we omit the labelling of the vertices for simplicity):
\begin{equation*}
  \Gamma =  \<cc22>_\ve \;, \qquad \bg = \<bubbleA> \;, \qquad \mK_{\bg} \Gamma = \<cc1>  \;.
\end{equation*}
\begin{remark}\label{rem:subdiam}
  The contraction $\mK_{\bg}$ induces a map from subdiagrams of $\Gamma$ to
subdiagrams of $\mK_{\bg} \Gamma$. Namely if $\widetilde{\Gamma}$ is another
subdiagram of $\Gamma$, then $\mK_{\bg} \widetilde{\Gamma}$ is a subdiagram of
$\mK_{\bg} \Gamma$. The former is defined through the same procedure described in
Definition~\ref{def:contract},
by contracting all vertices in $V(\widetilde{\Gamma}) \cap V(\Gamma)$ to
$v_\bg$. We call $\mK_{\bg} \widetilde{\Gamma}$ the image of
$\widetilde{\Gamma}$ in $\mK_{\bg} \Gamma$.
\end{remark}
Next, we introduce \emph{strictly} negative divergences, which are the ones that we will cure through renormalisation.
  Let $\Gamma$ be a Feynman diagram and set
\begin{equation}\label{e_def_Gminus}
\begin{aligned}
 \mG^{-}_{\Gamma}
  :=\Big\{ \overline{\Gamma} \subset \Gamma: \; \overline{\Gamma} \; \text{is
connected and full with } \deg \overline{\Gamma}< 0\;\Big\}.
\end{aligned}
\end{equation}  \begin{remark}
    It is important to note that we will only renormalise divergences that are
\emph{strictly} negative. Also diagrams with degree equal to zero lead to
logarithmic divergences, however, they are accounted for by the weak coupling.
  \end{remark}
  Now, for a given divergent diagram $\overline{\Gamma} \in \mG_{\Gamma}^-$, we define the extraction map
\begin{equation}\label{e_def_contrextr}
\begin{aligned}
\mC_{\overline{\Gamma}} \Gamma = \overline{\Gamma} \cdot
\mK_{\overline{\Gamma}} \Gamma\,.
\end{aligned}
\end{equation} Here the product of Feynman diagrams corresponds simply to the disjoint union of Feynman diagrams. For example
  \begin{equation*}
    \Gamma = \<cc4> \quad \text{and} \quad \qquad \bg = \<cc4div> \qquad \text{lead to} \quad \mC_{\bg} \Gamma = \<cc4div> \, \cdot \, \<1_b>\;,
  \end{equation*}
  where we note that $\deg (\bg) = -2$, so that $\bg \in
\mG^-_{\Gamma}$.
  \begin{remark}
    Note that with our definitions both a subdiagram $\bg$ and the contracted
diagram $\mK_{\bg}\Gamma$ are Feynman diagrams.
In a common terminology, that we
do not require, $\bg$ is referred to as a vacuum diagram, because it does not
have any legs. In particular, we view $\mC_{\bg} \Gamma$ as an element of the
polynomial ring generated by all Feynman diagrams. Moreover, the map $\Pi_\ve$
extends uniquely to a ring homomorphism from this polynomial ring to $\RR$, by
setting $\Pi_\ve(\Gamma_1 \cdot \Gamma_2) = \Pi_\ve(\Gamma_1)
\Pi_\ve(\Gamma_2)$.
  \end{remark}
  In order to cure multiple divergences, which possibly overlap,
 we must extend our definition to subsets of divergences.
As it turns out, it is sufficient to consider forests of divergences.
\begin{definition}\label{def:forest-div}
A forest is a subset
$\mf{F} \subset \mathcal G^{-}_{\Gamma}$ such that for any $\overline{\Gamma}_1,\overline{\Gamma}_2\in \mf{F}$ either
$\overline{\Gamma}_1\subset \overline{\Gamma}_2$, or $ \overline{\Gamma}_2\subset \overline{\Gamma}_1$, or
$\overline{\Gamma}_1$ and $ \overline{\Gamma}_2$ are vertex-disjoint. We denote by $\mathcal F^{-}_{\Gamma}$
the set of all forests in $\mG^{-}_{\Gamma}$.
\end{definition}
We extend the definition
of the extraction map to forests of divergent subdiagrams: For any $\mf{F} \in \mF^{-}_{\Gamma}$ we set
  \begin{equation}\label{e:cont-for}
    \mC_{\mf{F}} \Gamma = \prod_{\overline{\Gamma} \in \mf{F}} \mC_{\overline{\Gamma}} \Gamma = 
    \mC_{\mf{F}\setminus \vr(\mf{F})}\prod_{ \bg \in \vr(\mf{F})}  \mC_{\bg} \Gamma \;,
  \end{equation}
  where the order in the first product does not matter, and the second is an
equivalent inductive definition, in which we have denoted with $\vr(\mf{F})$
the set of roots (maximal diagrams in terms of inclusion) of $\mf{F}$. It is
implied in the
definition, that once we
extract a larger diagram, the smaller one will be extracted from the larger
one.
For this reason the forest condition is important.
 More precisely, recall that for $\Gamma_1, \Gamma_2 \in \mf{F}$, either $\Gamma_1
\subseteq \Gamma_2$, or $\Gamma_2 \subseteq \Gamma_1$, or the two are vertex
disjoint. In the first case we set  $\mC_{\Gamma_2}  \mC_{\Gamma_1} \Gamma =
\mC_{\Gamma_1} \Gamma_2 \cdot \mK_{\Gamma_2} \Gamma$. In the second case we set
$\mC_{\Gamma_2}  \mC_{\Gamma_1} \Gamma =  \mC_{\Gamma_2} \Gamma_1 \cdot
\mK_{\Gamma_1} \Gamma$. In the last case, we set $\mC_{\Gamma_2}
\mC_{\Gamma_1} \Gamma =  \Gamma_1 \cdot \Gamma_{2} \cdot \mK_{\Gamma_2}
\mK_{\Gamma_1} \Gamma$, where we view $\Gamma_2$ as a subdiagram of
$\mK_{\Gamma_1} \Gamma$, cf. Remark~\ref{rem:subdiam}.
For example, consider
  \begin{equation}\label{exmp_bphz}
    \Gamma = \<bphze1> \;, \qquad \Gamma_{12} = \<bphze2>\;, \qquad \Gamma_{34} = \<bphze3>\;, \qquad \Gamma_{1234} = \<bphze4>\;,
  \end{equation}
  where $\deg(\Gamma_{12}) = \deg(\Gamma_{34}) = \deg(\Gamma_{1234}) =-2$.
The diagram $ \Gamma $ arises for example when calculating the
mean of $ \mI_{8, \varepsilon}$.
 Then for the forest
    $\mf{F} = \{ \Gamma_{12}, \Gamma_{34}, \Gamma_{1234} \}$, we have
  \begin{equation*}
    \mC_{\mf{F}} \Gamma =   \<bphze2> \cdot \<bphze3> \cdot \<0_bl> \cdot \<1_bl> \;.
  \end{equation*}
Because the evaluation of a Feynman diagram does not depend on the labelling,
the choice of labels is irrelevant.
However we kept track of it precisely in this example, to show that there is no ambiguity in the definition of~\eqref{e:cont-for}.
Finally, we are ready to state the BPHZ renormalisation of a Feynman diagram
$\Gamma$ through Zimmerman's forest formula, {see for example
\cite[Proposition~3.3]{BPHZ}:}
\begin{equation}\label{eq_zimmermann}
  \mR \Gamma = \sum_{\mf{F} \in \mathcal{F}_\Gamma^{-}} (-1)^{|\mf{F}|}  \mC_{\mf{F}} \Gamma \;,
  \qquad \hat{\Pi}_\ve \Gamma = \Pi_\ve \mR \Gamma \,.
\end{equation}
Note that in the valuation on the right side we used
Definition~\ref{def:valuation}, so that all the diagrams that are contracted and
extracted are integrated over all variables but one (since we are in the case $L = \emptyset$ of
that definition), while only the remainder of the original diagram is integrated
over all internal variables.
Following up on the example in~\eqref{exmp_bphz}, we have
\begin{equation*}
  \begin{aligned}
  \hp \Gamma = & \Pi_\ve \Gamma - 2 \Pi_\ve \Gamma_{12}  \cdot \Pi_{\ve}  \mK_{\Gamma_{12}} \Gamma - \Pi_\ve \Gamma_{1234}   \cdot \Pi_{\ve} \mK_{\Gamma_{1234}} \Gamma + (\Pi_\ve \Gamma_{12})^2 \cdot \Pi_{\ve} \mK_{\Gamma_{12}\sqcup \Gamma_{34}} \Gamma \\
  &+ 2 \Pi_\ve \Gamma_{12} \cdot \Pi_{\ve} \mK_{\Gamma_{12}} \Gamma_{1234}
\cdot \Pi_{\ve} \mK_{\Gamma_{1234}} \Gamma -  (\Pi_\ve \Gamma_{12})^2 (\Pi_\ve
\mK_{\Gamma_{12} \sqcup \Gamma_{34}} \Gamma_{1234}) \Pi_{\ve}
\mK_{\Gamma_{1234}} \Gamma \;,
  \end{aligned}
\end{equation*}
where we used the symmetry of the diagram to group elements together.
This should be interpreted as a form of counterbalancing of inner divergences, which is deeply connected to Taylor expansions. For clarity, in a simpler case, we have
\begin{equation*}
  \hp \<cc4>  = \Pi_\ve\<cc4> - \Pi_\ve \<cc4div> \cdot \Pi_\ve \<1_b>\;.
\end{equation*}
Our discussion so far has led to the definition of BPHZ renormalisation of
Feynman diagrams, and their associated deterministic integrals. The next step is to use this definition to construct a renormalisation of the trees, which are random variables.

\subsubsection{The BPHZ renormalisation of trees}
At the level of trees, the BPHZ renormalisation $\mf{R}_\ve$ is defined as follows. We recall from the identity~\eqref{e:int-cont} that for any $n \in \NN_*$ we have
$\mI_{n, \ve} = \sum_{\kappa \in \mK_{n}^{\mathrm{int}}} \mI_{n, \kappa, \ve}$,
where $\kappa$ is the set of internal pairings. Therefore, it suffices to define the map $\mf{R}_\ve$ on each homogeneous component $\mI_{n, \kappa}$ and then set
\begin{equation}\label{e:ren-1}
  \mf{R}_\ve \mI_{n} := \sum_{\kappa \in \mK_n^{\mathrm{int}}} \mf{R}_\ve \mI_{n, \kappa} \,.
\end{equation}
First, we note that $\mI_{n,\kappa,\ve}$ can be interpreted as a Feynman diagram in
which a subset of the leaves is integrated against noise variables.
To this end, let us define
$\mI_{n, \kappa}^{\rm{int}} = \mI_{n, \kappa}$ as a graph (so it has the
same set of edges and vertices), with the difference that we alter its
set of leaves.
More precisely,
the set of leaves $L^{\rm{int}}$
associated to $\mI^{\rm{int}}_{n, \kappa}$ is given by
\begin{equation*}
  L^{\rm{int}} = L \cup V_o\;, \qquad L = \{\text{leaves of
} \mI_{n,\kappa}\} \;,
\end{equation*}
i.e. nodes that are associated to the noise are also considered
leaves\footnote{While we call the nodes $ V_{o}$ leaves, they are not in the
sense of graph theory. Every node in $ V_{o}$ has degree $2$, i.e. it is connected to two edges.}. Hence, while $\mI_{n, \kappa}^{\rm{int}}$ and $ \mI_{n, \kappa}$
are identical as graphs, they are not identical as Feynman diagrams, following
Definition~\ref{def:FDgeneral}.
Consequently,~\eqref{eq_contractedHomIntegral} can be rewritten through the map
$\Pi_\ve$ from~\eqref{eq_def_valTested} as
\begin{equation*}
  \mI_{n,\kappa,\ve} (x_L) = \int_{( \mathbf{T}^{4})^{ V_{o} }} \Pi_\ve \mI_{n, \kappa}^{\rm{int}}
  (x_L, x_{V_{o}})  \prod_{v \in V_{o}} \xi (x_{v}) \ud x_{V_{o}} \;,
\end{equation*}
with
\begin{equation*}
  \Pi_\ve \mI_{n, \kappa}^{\rm{int}} (x_L, x_{V_{o}}) := \int_{(\TT^4)^{V_\star^{\rm{int}}}} \prod_{e \in E}
  G_{\ve}(x_{e_{+}} - x_{e_{-}}) \ud x_{V_{\star}^{\rm{int}}} \;,
\end{equation*}
where $V_{\star}^{\rm{int}} = V \setminus L^{\rm{int}}$.

Next, we apply $\hat{\Pi}_\ve$ from~\eqref{eq_zimmermann} to the
Feynman diagram
$\mI^{\rm{int}}_{n, \kappa}$ in which noise nodes are now considered leaves.
In this way, we define the renormalisation map $\mf{R}_{\ve}$ as
\begin{equation}\label{e:ren-2}
  \mf{R}_{\ve} \mI_{n, \kappa} (x_L) := \int_{( \mathbf{T}^{4})^{ V_{o} }} \hat{\Pi}_\ve \mI_{n, \kappa}^{\rm{int}}
  (x_L, x_{V_{o}})  \prod_{v \in V_{o}} \xi (x_{v}) \ud x_{V_{o}} \;.
\end{equation}
For example,
\begin{equation*}
  \mf{R}_\ve  \, \<3> = \mf{R}_\ve  \, \<c3_1> + \mf{R}_\ve  \, \<c3_2>+ \<c3_3>_{\varnothing, \ve}  + \<3>_{\varnothing, \ve} = \<c3_3>_{\varnothing, \ve} + \<3>_{\varnothing, \ve}\;,
\end{equation*}
where as usual the subscript $\varnothing$ indicates the homogeneous iterated Gaussian integral, and where we used that:
\begin{equation*}
  \mf{R}_\ve  \, \<c3_1> = \<c3_1>_{\ve} - \Pi_\ve \<loop> \cdot \<2_b>_{ \ \ \
\ve} = 0 \,,
\quad \text{and}
\quad
\mf{R}_\ve  \, \<c3_2> = 0
\;.
\end{equation*}
In this setting, the identity~\eqref{e:moments} which characterises moments of trees in terms of pairings, extends to BPHZ renormalised trees and BPHZ
renormalised Feynman diagrams as follows.
\begin{proposition}\label{prop:bphz}
	For every $ k \in \NN $ and $ \{ \tau_{i} \}_{i =1}^{k}$, with $ \tau_{i} \in \{ \mI_{n} \}_{n \geqslant 1}
$, we have
\begin{equation*}
\begin{aligned}
 \EE[ \mf{R}_{\ve} \tau_{1 } (x_{1}, y_{1}) \cdots \mf{R}_{\ve} \tau_{k} (x_{k}, y_{k}) ] = \sum_{\kappa \in \mK  (\tau_{1}, \ldots,
\tau_{k})} \hat{\Pi}_{\ve} (\tau_{1}, \dots, \tau_{k} )_{\kappa} (x_{1:k}, y_{1:k})\;.
\end{aligned}
\end{equation*}
\end{proposition}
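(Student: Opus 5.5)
We expand the left-hand side with the definitions \eqref{e:ren-1} and \eqref{e:ren-2}, then match terms with the right-hand side through a bijection between forests. Writing each $\mf{R}_\ve \tau_i$ as a sum over internal pairings $\kappa_i \in \mK^{\mathrm{int}}_{n_i}$, the product becomes a sum over tuples $(\kappa_1,\dots,\kappa_k)$ of products of homogeneous Wiener–Itô integrals. Each integral has kernel $\hat{\Pi}_\ve \mI^{\rm int}_{n_i,\kappa_i}$, which is deterministic and smooth for fixed $\ve$. The expectation of a product of homogeneous Gaussian integrals is given by the diagram formula for Wick products (cf.~\cite{Nualart}): it is the sum over all complete pairings $\kappa'$ of the free noise vertices $\bigsqcup_i V_o^{(i)}$, subject to the constraint that no two free noises belonging to the same tree are paired with each other. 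For each such $\kappa'$ one obtains the integral of the product of the kernels, with the paired noise arguments identified.

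Any complete pairing $\kappa\in\mK(\tau_1,\dots,\tau_k)$ decomposes uniquely as $\kappa=(\kappa_1,\dots,\kappa_k,\kappa')$, where $\kappa_i$ is the restriction of $\kappa$ to pairs internal to $\tau_i$ and $\kappa'$ collects the remaining pairs, which by construction satisfy the Wick constraint. This decomposition is a bijection, so both sides are sums indexed by the same set of complete pairings. It therefore suffices to show, for each fixed $\kappa$, that
\begin{equation*}
  \int \prod_{i=1}^k \hat{\Pi}_\ve \mI^{\rm int}_{n_i,\kappa_i}\, \Big|_{\kappa'} \ud x_{V_o} \;=\; \hat{\Pi}_\ve(\tau_1,\dots,\tau_k)_\kappa \;.
\end{equation*}
Here the left side is the product of the renormalised kernels with the variables identified according to $\kappa'$ and integrated. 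By the product convention for disjoint components, the unrenormalised version of this identity is exactly the definition of $\Pi_\ve$.

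The core step is to compare the forests on each side. The forests of $\Gamma=(\tau_1,\dots,\tau_k)_\kappa$ should correspond to tuples $(\mf{F}_1,\dots,\mf{F}_k)$ of forests of the diagrams $\mI^{\rm int}_{n_i,\kappa_i}$. One direction is clear: a connected full subdiagram of $\mI^{\rm int}_{n_i,\kappa_i}$ avoids the noise leaves, so it is a connected full subdiagram of $\Gamma$ with the same degree. The converse is the crux. We must show that every $\bg\in\mG^-_\Gamma$ lies inside a single tree and avoids the vertices produced by $\kappa'$; this is the step I expect to be the main obstacle. I would argue by degree counting: every internal vertex of $\Gamma$ has degree four, so a connected full subdiagram with $|\bar V|$ vertices and $m$ external half-edges has $2|\bar E| = 4|\bar V| - m$, and hence $\deg \bg = m-4$. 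Strict negativity therefore forces $m\le 3$, and since the total degree $4|\bar V|$ is even, $m$ is even, so $m\in\{0,2\}$.

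Next I would use the linear structure of each $\tau_i$ as a path from its bottom leaf to its top leaf, together with the fact that a vertex formed by a cross pairing in $\kappa'$ glues two distinct paths. If $\bg$ contained such a vertex, one would follow the portions of the two paths inside $\bg$; each path portion that enters $\bg$ must also exit it, since its endpoints are leaves outside $\bg$, and each such entry or exit contributes an external half-edge. The aim is to show this forces $m\ge 4$. The delicate configurations are those where the paths wind in and out of $\bg$ several times, and I expect these to require careful bookkeeping, so this parity and half-edge count is where I would spend most effort. Once the claim holds, $\mG^-_\Gamma=\bigsqcup_i \mG^-_{\mI^{\rm int}_{n_i,\kappa_i}}$, and forests factor as $\mf{F}=\bigsqcup_i \mf{F}_i$ because subdiagrams in different trees are vertex-disjoint.

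Finally, for a factorised forest, the extraction $\mC_{\mf F}\Gamma$ equals the product of the extracted vacuum diagrams times $\mK_{\mf F}\Gamma$. The contracted diagram $\mK_{\mf F}\Gamma$ coincides with the diagram obtained by gluing the contracted pieces $\mK_{\mf F_i}\mI^{\rm int}_{n_i,\kappa_i}$ along $\kappa'$, since contractions in different trees commute. The sign satisfies $(-1)^{|\mf F|}=\prod_i(-1)^{|\mf F_i|}$, and $\Pi_\ve$ is multiplicative over these pieces. Summing over forests and applying Fubini, which is justified since all kernels are smooth for fixed $\ve$, yields the per-$\kappa$ identity. Summing over $\kappa$ gives Proposition~\ref{prop:bphz}.
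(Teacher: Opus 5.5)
Your proposal is correct and follows essentially the same route as the paper's proof in Appendix~\ref{app:bphz}: you split each complete pairing into internal pairings $\kappa_i$ and a cross pairing, evaluate the expectation of the product of homogeneous chaoses by the diagram formula, and factor forests via $\mG^-_\Gamma=\bigsqcup_i \mG^-_{\mI^{\rm int}_{n_i,\kappa_i}}$ (the paper only writes out the case $k=2$). The step you flag as the main obstacle is exactly Lemma~\ref{lem:neg} and is immediate: at a cross-paired vertex of $\bar V$ two trees meet, giving two edge-disjoint directed walks between leaves outside $\bar V$, each of which contributes at least one incoming and one outgoing boundary edge, so $m\geq 4$ however often the walks wind in and out of $\bar V$.
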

The proof of this proposition can be found in Appendix~\ref{app:bphz}. In the next
section, we present the main aspects of the proofs of the main results.

\begin{remark}
By virtue of the above proposition, all the technical analysis of our work will take
place at the level of Feynman diagrams, and we all but avoid working with renormalised~trees.
\end{remark}

\section{Proof of the main results}\label{sec:proof-main}

In this section we collect all the main results which lead to the proofs of
Theorems~\ref{thm:main} and~\ref{thm:main2}.

The first step towards the proof of Theorem~\ref{thm:main} is to show convergence of finite-dimensional
distributions of $ (\mI_{n, \ve})_{n \geqslant  1} $ in weak coupling.
We must show that for every $N\in \NN_*$
the sequence $ \big(\big( \lambda_{\ve}^{n-1} \, \mf{R}_{\ve} \, \mI_{n}
\big)_{n = 1}^{N}\big)_{\varepsilon \in (0,1)} $  is tight, that any limit point is Gaussian, and finally
we must identify the limiting mean vector (zero) and covariance matrix. To
prove tightness of the process, it suffices to obtain a uniform (in $\ve$)
control of the second
moments of this term. Here we use Proposition~\ref{prop:bphz}.
We will present the argument for arbitrary moments,
 since it will be useful in proving Gaussianity of the limit.
Our objective is to study the following limit, if it exists, for smooth test
functions $\varphi_i \in \mS((\TT^4)^2;\RR)$:
\begin{equation*}
\begin{aligned}
\lim_{\ve \to 0} &  \lambda_{\ve}^{\sum_{l
=1}^{k} (n_{l}-1)}  \EE[ \mf{R}_{\ve} \mI_{n_{1}, \ve} ( \varphi_1) \cdots
\mf{R}_{\ve} \mI_{n_{k}, \ve} (\varphi_k) ] \\
& = \sum_{\kappa \in \mK  (\mI_{n_{1} }, \ldots,
\mI_{n_{k}})}  \lim_{\ve \to 0}  \lambda_{\ve}^{\sum_{l
=1}^{k} (n_{l}-1)}  \hat{\Pi}_{\ve} (\mI_{n_{1}}, \dots, \mI_{n_{k}} )_{\kappa} (\varphi_1 \otimes \cdots \otimes \varphi_k)\;.
\end{aligned}
\end{equation*}
We will study this limit by identifying exactly those pairings that
\emph{contribute} to the weak coupling limit, and distinguish them from those that lead
to a zero contribution.
\begin{definition}\label{def:contrib}
For any $k \in \NN_*$ and any set of trees $ \{ \mI_{n_{l}}  \}_{l =1}^{k} $, we say that a
pairing $ \kappa \in \mK (\mI_{n_{1}}, \dots, \mI_{n_{k}}) $ is
\emph{contributing}, if there exists a $\varphi \in \mS( (\TT^4)^{2k}; \RR)$ such that
\begin{equation} \label{e:contrib}
\begin{aligned}
\limsup_{ \ve \to 0}  \lambda_{\ve}^{\sum_{l
=1}^{k} (n_{l}-1)}
\big\vert \hat{\Pi}_{\ve} (\mI_{n_{1}}, \dots, \mI_{n_{k}} )_{\kappa}  (\varphi) \big\vert > 0  \;.
\end{aligned}
\end{equation}
We say the pairing is \emph{not contributing} if~\eqref{e:contrib} vanishes for any test function $\varphi$. We write
$\mK^{\rm{cont}}(\mI_{n_1}, \ldots \mI_{n_k})$ for the set of all contributing
pairings for the given set of trees.
\end{definition}
Our approach is to characterise contributing contractions in terms of graphical properties of
the Feynman diagram $ (\mI_{i_{1}}, \dots, \mI_{i_{k}} )_{\kappa} $, and then use
this characterisation to control the moments.
Our first observation is that the contribution of a Feynman diagram in the weak coupling limit depends
strongly on its degree. We say that a Feynman diagram is \emph{positive, negative, or zero
degree} according to the following definition.

\begin{definition}\label{def:degree}
We say that a Feynman diagram $ \Gamma $ is
\begin{enumerate}
\item \textbf{Positive} if $\deg \overline{\Gamma} > 0 $ for all
subdiagrams $ \overline{\Gamma} \subseteq \Gamma $.
\item \textbf{Negative} if there exists a subdiagram $ \overline{\Gamma}
\subseteq \Gamma  $ with $ \deg \overline{\Gamma} < 0 $.
\item \textbf{Zero degree} if $ \deg \overline{\Gamma} \geqslant 0 $ for all
subdiagrams $ \overline{\Gamma} \subseteq
\Gamma $ and there exists a subdiagram $ \overline{\Gamma}
$ with $ \deg \overline{\Gamma} = 0 $.
\end{enumerate}
\end{definition}

We will prove that positive and negative diagrams
(after renormalisation)  do not
contribute, implying that \emph{only zero degree} diagrams have a chance of
contributing. This is the content of the following two results.

\begin{lemma}[No contribution of positive diagrams]\label{lem:pos}
Fix any $ n_{1}, \ldots, n_{k} \in \NN_* $, for some $ k \in \NN_* $. Let $ \kappa \in \mK
(\mI_{n_{1}}, \dots, \mI_{n_{k}}) $ be a pairing such that $ \Gamma =  (
\mI_{n_{1}}, \dots, \mI_{n_{k}})_{\kappa} $ is positive. Then $
\hat{\Pi}_{\ve} \Gamma (x_{1:k}, y_{1: k})= \Pi_{\ve} \Gamma (x_{1:k}, y_{1: k}) $ and
\begin{equation*}
 \sup_{\ve \in (0,1)}
 | {\Pi}_{\ve} \Gamma  (\varphi)  | < \infty \;, \qquad \forall \varphi \in
\mS( (\TT^4)^{2k}; \RR) \;.
\end{equation*}
In particular, $ \kappa$ does not contribute.
\end{lemma}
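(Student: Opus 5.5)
The first claim, $\hat{\Pi}_\ve\Gamma = \Pi_\ve\Gamma$, follows directly from the definitions. If $\Gamma$ is positive, then every connected full subdiagram has degree $>0$, so $\mG^-_\Gamma = \emptyset$. The only forest is then $\mf{F}=\emptyset$, and Zimmermann's formula \eqref{eq_zimmermann} reduces to $\mR\Gamma=\Gamma$. The substance of the lemma is therefore the uniform bound $\sup_\ve|\Pi_\ve\Gamma(\varphi)|<\infty$. The non-contribution then follows at once, since $\lambda_\ve^{\sum(n_l-1)}\to0$ whenever some $n_l\geq 2$. In the degenerate case where all $n_l=1$, the diagram has no internal edges.

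For the bound, we first dominate $G_\ve$ uniformly in $\ve$. Since $\vr$ is a compactly supported probability density and $G(x)\lesssim |x|^{-2}$ on the torus, we have $|G_\ve(x)|\lesssim (|x|+\ve)^{-2}\leq |x|^{-2}$ uniformly in $\ve\in(0,1)$. It therefore suffices to show that the absolutely integrated diagram
\[
\int |\varphi(x_L)|\prod_{e\in E}|x_{e_+}-x_{e_-}|^{-2}\ud x_{V_\star}\ud x_L
\]
is finite. This is a Weinberg-type power-counting statement on a compact domain, where only ultraviolet divergences can occur. I would prove it via a multiscale (Hepp sector) decomposition. Order the mutual distances among the vertices in $V_\star\cup L$, and in each sector build the associated Gallavotti--Nicolò tree of clusters. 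The integral over a sector is then bounded by $\prod_{\text{clusters } \bg}\sum_{j} 2^{-j\,\deg\bg}$, where the product runs over the connected subgraphs $\bg$ of vertices at mutual distance $\sim 2^{-j}$, counting the edges inside them.

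Leaves must be handled separately, since the clusters may contain leaf vertices. Integrating the leaf variables against the smooth $\varphi$ is harmless. A leg $G(x_v-x_\ell)$ integrated in $x_\ell$ against a bounded function is a bounded function of $x_v$. I would therefore first integrate out the leaf variables, using $\sup_{x}\int|x-y|^{-2}\ud y<\infty$ for each leg; this is possible because each leaf has degree one. This replaces every leg by a bounded factor and leaves a vacuum-type integral over $x_{V_\star}$ of $\prod_{e\in E_\star}|x_{e_+}-x_{e_-}|^{-2}$ on the compact torus. A care point is that two legs attached to one vertex are in different leaf variables, so the bound factorises.

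The remaining integral involves only internal vertices and internal edges, and every cluster of it is a subdiagram in the sense of Definition~\ref{def:subdiagram}. The degree \eqref{e_def_deg} of each cluster is then strictly positive by the positivity assumption; passing to the full subdiagram on the same vertices only lowers the degree, and positivity covers all subdiagrams. Hence each scale sum converges geometrically, and summing over the finitely many sectors gives a finite bound independent of $\ve$. The main obstacle is the clean bookkeeping of the multiscale bound, namely checking that the exponent gained in each cluster is exactly $\deg\bg$. I would cite the standard power-counting theorem (e.g.\ as in \cite{BPHZ} or Hairer's analytic BPHZ bounds) rather than re-derive it, verifying only that its hypotheses reduce to positivity of every connected subdiagram.
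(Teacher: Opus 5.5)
Your argument is correct and is essentially the paper's route: pull out $\|\varphi\|_\infty$, integrate each leg separately, then run the Hepp-sector power counting, where positivity forces $\mathrm{Null}(T)=0$ for every Hepp tree (the paper's Proposition~\ref{prop_countlogs_nonneg}, applied in Lemma~\ref{lem_nonneg_diagrams}); your preliminary domination $(|x|+\ve)^{-2}\le|x|^{-2}$ simply lets you do this counting once at $\ve=0$ instead of carrying $\ve$ through the scale sums. The one loose end is your unfinished sentence on the case $n_1=\cdots=n_k=1$: there the prefactor is $\lambda_\ve^{0}=1$ and the diagram (disjoint one-vertex, four-leg stars) genuinely contributes, so it must be classified as zero-degree through its singleton subdiagram (it is the one-vertex nested bubble diagram of Proposition~\ref{prop:bubble}), and the non-contribution claim really does use $\sum_l(n_l-1)\ge1$.
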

The proof of this lemma follows from Lemma~\ref{lem_nonneg_diagrams} and is a
consequence of the well-known fact that positive Feynman
diagrams correspond to converging integrals. In particular, no
weak coupling is necessary to bound these integrals. \\

The next result is more subtle, because we chose the ``subcritical'' BPHZ
renormalisation, which does not renormalise logarithmic divergences in
the critical dimension.
Thus, there is fine balance between logarithmic divergences and weak couplings
$ \lambda_{\ve}$, as explained at length in Section~\ref{sec_negative}. The
result holds for Feynman diagrams of 2 and 4 legs only, which are the only ones
that we will require to prove convergence to zero in $L^2 ( \PP)$. We expect the result
to extend to Feynman diagrams of higher order as well.
\begin{lemma}[No contribution of negative diagrams]\label{lem:negvanish}
Let $ k \in \{1,2\}$,
and fix any $ n_{1},  \ldots, n_{k} \in \NN_* $. Let $ \kappa \in \mK
(\mI_{n_{1}}, \dots, \mI_{n_{k}}) $  be a pairing such that $ \Gamma
=(
\mI_{n_{1}}, \dots, \mI_{n_{k}})_{\kappa} $ is negative. Then  $ \kappa$ does
not contribute.
\end{lemma}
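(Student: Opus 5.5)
The plan is to reduce the statement to a quantitative bound on the BPHZ-renormalised valuation of $\Gamma$: a power of $\log\frac1\ve$ strictly smaller than $\sum_l (n_l-1)/2$. Since $\lambda_\ve^{\sum_l(n_l-1)} = \hat\lambda^{\sum_l(n_l-1)} (\log\frac1\ve)^{-\sum_l(n_l-1)/2}$, such a bound kills the contribution.

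First, a degree count. Every internal vertex of $\Gamma$ has degree four, so a connected full subdiagram $\bg$ with $m$ vertices, $e$ internal edges and $\ell$ external half-edges satisfies $4m = 2e+\ell$. Hence $\deg\bg = 4(m-1)-2e = \ell-4$. So $\deg\bg<0$ exactly when $\ell \in \{0,2\}$ ($\ell$ is even). Divergent subdiagrams are therefore vacuum diagrams or two-point (self-energy) insertions, and zero degree subdiagrams are four-point insertions (bubbles). Since $k\in\{1,2\}$, $\Gamma$ has at most four legs, so the whole diagram can also be viewed as a two- or four-point function.

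Second, I would organise the forests of $\mG^-_\Gamma$ around the maximal negative subdiagrams. I would write $\hat\Pi_\ve\Gamma = \Pi_\ve \mR\Gamma$ as a sum over Zimmermann forests and group, for each maximal two-point insertion $\bg$ (after contracting inner renormalised pieces inductively), the term $\Pi_\ve\Gamma$ with its counterterm $-\Pi_\ve\bg\cdot\Pi_\ve\mK_\bg\Gamma$. Vacuum components cannot occur inside a connected diagram with legs, so only two-point insertions matter. For a two-point insertion between external points $a,b$ the difference is
\[\int \big(K_\bg(a,b)-\delta(a-b)\textstyle\int K_\bg\big)(\cdots)\,,\]
a Taylor remainder. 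Its zeroth-order term cancels; first-order terms vanish by radial symmetry of $\vr$ and of $G$. The second-order remainder improves the degree of $\bg$ from $-2$ to $0$, and the degree of the surrounding diagram accordingly, since a second derivative acting on a Green's function is compensated by two powers of $|a-b|$.

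Third, the resulting object is a "zero or positive degree" diagram with effective kernels. Here I would invoke the bound for non-negative diagrams used for Lemma~\ref{lem:pos} (Lemma~\ref{lem_nonneg_diagrams}). It should give $|\hat\Pi_\ve\Gamma(\varphi)| \lesssim (\log\frac1\ve)^{N(\Gamma)}$, where $N(\Gamma)$ counts nested zero-degree subdiagrams (bubbles) in a maximal chain, including the renormalised insertions, which now behave like degree zero objects. The key combinatorial point is then to show that each negative insertion "wastes" noise vertices. A two-point insertion built from $2j$ paired noises produces at most about $j$ logarithms, hence at most $(\log\frac1\ve)^{j}$ against a weak coupling gain of $(\log\frac1\ve)^{-j}$ for those $2j$ vertices. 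Meanwhile a primitive bubble chain in a nonnegative diagram uses two vertices per logarithm at cost $(\log)^{-1}$ per vertex pair only when it is genuinely contributing. I would prove $N(\Gamma) < \tfrac12\sum_l(n_l-1)$ by induction on the number of vertices: contract an innermost negative insertion, compare the log count and vertex count before and after, and observe that the tadpole/self-energy contraction strictly reduces the exponent budget needed.

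The main obstacle is the second step. Overlapping structures require care, and it is unclear whether the Taylor remainder gains a full logarithm or only $\log\log$. The danger is that the renormalised insertion is degree zero rather than positive and can itself generate a logarithm, namely a Laplacian renormalisation. I would first try to show, using the exact scaling of $G$ near $0$, that the second-order Taylor term of a self-energy insertion is at most $O((\log\frac1\ve)^{j-1})$ times a degree-zero kernel. Comparing with the $2j$ couplings then gives a spare factor $(\log\frac1\ve)^{-1}$, which yields the vanishing of \eqref{e:contrib}.
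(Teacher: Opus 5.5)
Your reduction is essentially fine for $k=1$. There $|E_\star|=2(|V_\star|-1)+1$, so any BPHZ power-counting bound with at most one logarithm per inner Hepp node leaves a spare factor $(\log\tfrac1\varepsilon)^{-1/2}$; this is Corollary~\ref{cor_unsafe}. Making your Step~2 rigorous does require the Hepp-sector-dependent safe/unsafe partition of forests, because Taylor expanding against a counterterm only helps when the insertion sits at a smaller scale than its boundary edges.

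The genuine gap is $k=2$. There $|E_\star|=2(|V_\star|-1)$, and your key claim $N(\Gamma)<\tfrac12\sum_l(n_l-1)$, with $N$ obtained by power counting after the second-order Taylor remainder, is false. Take $\Gamma=(\mathcal{I}_6,\mathcal{I}_2)_\kappa$ on vertices $u,a,b,w$, with the first tree running $u\to a\to b\to a\to b\to w$ and the second $u\to w$. This is a bubble between $u$ and $w$ with a sunset $\gamma=\{a,b\}$ inserted in one line; $|E_\star|=6$ and $\gamma$ is the only negative subdiagram. Consider the sectors $|x_a-x_b|\ll|x_b-x_u|\ll|x_u-x_w|$, which are unsafe for $\gamma$. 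There the remainder on the incoming edge $(u,a)$ is of size $|x_a-x_b|^2|x_b-x_u|^{-4}$, and every dyadic triple of scales contributes $O(1)$. Power counting therefore gives $(\log\tfrac1\varepsilon)^3=(\log\tfrac1\varepsilon)^{|E_\star|/2}$, which is exactly critical.

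This is precisely the Laplacian renormalisation you flag. The renormalised insertion acts as $c_\varepsilon\Delta$ with $c_\varepsilon\sim\log\tfrac1\varepsilon$, and the resulting kernel $\Delta G_\varepsilon$ has degree $-4$. Power counting therefore sees one more logarithm when the collapsed insertion approaches $u$. Your proposed fix, bounding the second-order coefficient by $O((\log\tfrac1\varepsilon)^{j-1})$, is only the power-counting bound on the coefficient. The surplus logarithm comes from the outer diagram, so no induction on vertex and logarithm counts can give the strict inequality.

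What is missing is a model-specific cancellation, which is the content of Proposition~\ref{prop_improve}. In the critical sectors the paper proceeds as follows.
\begin{itemize}
\item It picks a leaf unsafe divergence $\gamma_*$ that is exactly a Hepp subtree, and expands the incoming kernel to third order.
\item It integrates over modified sectors $D^*_{(T,\mathbf{n})}$ that decouple the inner variables of $\gamma_*$ from the outer ones, so the inner integral is isotropic.
\item Reflection symmetry then kills the off-diagonal quadratic terms (Lemma~\ref{lem_offdiag_nochange}).
\item The diagonal terms combine into $\Delta G_\varepsilon=G_\varepsilon-\delta_\varepsilon$. The $\delta_\varepsilon$ part vanishes identically on sectors whose scales all lie above $\varepsilon$, and the $G_\varepsilon$ part has degree $-2$ and gains a logarithm (Lemma~\ref{lem_diagonal}). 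The cubic remainder gains by power counting.
\end{itemize}
Two further pieces are needed separately. Safe forests with $\mathfrak{F}_u=\emptyset$ lose a logarithm, because $\eta\equiv0$ would force $\Gamma$ to be a nested bubble diagram (Lemma~\ref{lem_safe_vanish_2k}). Sub-$\varepsilon$ and diagonal sectors also lose one (Lemma~\ref{lem_Ddot}).

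Without the identity $(1-\Delta)G_\varepsilon=\delta_\varepsilon$, or equivalently harmonicity of $|x|^{-2}$ in $\mathbf{R}^4$, the argument cannot close. In the example above, heuristically, $\hat\Pi_\varepsilon\Gamma$ is only of order $(\log\tfrac1\varepsilon)^2$, because $G\star\Delta G=G\star G-G$ carries no extra logarithm. Power counting alone cannot see this.
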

Overall, proving this result requires revisiting the BPHZ theorem, and the
proof can be found at the very end of Section~\ref{sec_negative}, in
Section~\ref{sec:the-end-proof}.\\

Finally, we turn our attention to zero degree diagrams. Note that by our definition of the BPHZ renormalisation map we have the identity $\Pi_{\ve} \Gamma = \hat{\Pi}_{\ve} \Gamma$. The right concept to
characterise the contribution of a zero degree diagram is that of
\emph{primitive blow-ups}.

\begin{definition}[Primitive blow-up]\label{def:primitive} Given a Feynman diagram $ \Gamma =
(V, E) $ and a subdiagram $ \overline{\Gamma} = ( \overline{V}, \overline{E}) $,
we say that $\overline{\Gamma}$ is a \textbf{primitive blow-up} in $\Gamma$, if
$ \overline{\Gamma} $ is full, $ \deg ( \overline{\Gamma}) = 0 $, and for every subdiagram $
\widetilde{\Gamma} \subsetneq \overline{\Gamma}$, we have $ \deg (
\widetilde{\Gamma})> 0$.

\end{definition}
In other words, primitive blow-ups contain no subdivergence.
We will show that each primitive subdiagram will produce exactly one
logarithmic blow-up. Therefore, the total power of logarithmic blow-ups, that bound
the valuation of a Feynman diagram $ \Gamma $, corresponds to the total number of
primitive blow-ups that one can extract.
Here are some examples of primitive blow-ups that correspond to
Anderson-Feynman diagrams (meaning that the subdiagram we obtain after removing the legs is a primitive blow-up):
\begin{equation*}
\begin{aligned}
  \<cc22> \;,
  \qquad
  \<c41> = \<cc41> = \<ccc41>  \;.
\end{aligned}
\end{equation*}
Incidentally, the second primitive blow-up is the smallest one that one can
find which is not the \textbf{bubble} (the first example above).
In our weak coupling limit, we want to capture the maximal number of blow-ups possible.

\subsection{Nested bubble diagrams and their extraction sequences}

Since the smallest primitive blow-up is given by the bubble,
it is intuitively clear that the maximal number of blow-ups is attained if one can iteratively extract such bubbles.
Note that such blow-ups may appear in parallel, as for example in
\[
   \<ex1>\,,
\]
or they may be nested one in the other, meaning that we see the next bubble only after we have contracted the previous one. For example:
\begin{equation}\label{e:nbex}
  \<c42> = \<cc42> \quad \rightarrow \quad \<cc42C> \quad \rightarrow \quad \<cc42D> \quad \rightarrow \quad \<cc1> \;,
\end{equation}
where as mentioned the succession of diagrams above has been obtained by repeatedly contracting a bubble to a single point:
\begin{equation*}
  \<cc22>  \quad \rightarrow \quad \<cc1> \;.
\end{equation*}
We define nested bubble diagrams, such as the one appearing in~\eqref{e:nbex}, as follows.
\begin{definition}[Nested bubble diagram]\label{def:bubbl}
We say that an Anderson-Feynman diagram $ \Gamma $ with $|V_\star|=n \in \NN_*$ is a
\textbf{nested bubble diagram}, if there exists a
sequence of Feynman diagrams $ \{ \Gamma_{i} \}_{i =1}^{n} $, and a sequence $ \{ \overline{\Gamma}_{i} \}_{i =1}^{n-1} $ of
subdiagrams $\overline{\Gamma}_i \subseteq \Gamma_i$ such that:
\begin{enumerate}
\item $ \overline{\Gamma}_{i} \subseteq \Gamma_{i} $ is a bubble diagram (it is isomorphic to $\<bubbleA>$ or $\<bubbleB>$).
\item $ \Gamma_{1} = \Gamma $ and $ \Gamma_{i+1} = \mK_{
\overline{\Gamma}_{i}} \Gamma_{i} $, for all $ i \in \{ 1, \dots, n -1\} $.
\item In the diagram $ \Gamma_{n} $ all internal edges have been contracted: $\Gamma_n = \scalebox{0.7}{\<cc1>}$.
\end{enumerate}
\end{definition}

While Definition~\ref{def:bubbl} is the closest to our intuition, it is
often more convenient to think about successive extractions of bubbles in terms
of increasing sequences of subdiagrams
$\{\tilde{\Gamma}_i\}_{i=1}^{n-1}$  of the original diagram $\Gamma$, in which
each $\tilde{\Gamma}_i$ consists of the first $i$ bubbles extracted. The only
difficulty in doing so, is that as soon as they are nested, the bubbles $\overline{\Gamma}_i$ do not live in the original
diagram, but in a contracted one.

  To overcome this issue, it is convenient to \lqm invert\rqm a contraction as
  follows. Consider some $\Gamma_{c} \subseteq \Gamma$ which is the disjoint
  union of connected, full subdiagrams. Denote with $V_{\Gamma_c}^\Gamma
  \subseteq V( \mK_{\Gamma_c} \Gamma)$ the set of vertices in $\bigslant{\Gamma}{\Gamma_c}$ to which
  we contract $\Gamma_c$ (their cardinality corresponds to the
  number of connected components of $\Gamma_c$), following
  Definition~\ref{def:contract}. Then for any $\overline{\Gamma} \subseteq
  \bigslant{\Gamma}{\Gamma_c}$ define
  \begin{equation*}
    \mK_{\Gamma_c}^{-1} \overline{\Gamma} = \max\{ \Gamma_c \subseteq \tilde{\Gamma}
    \subseteq \Gamma \text{ such that } \bigslant{\tilde{\Gamma}}{\Gamma_c} =  \overline{\Gamma} \cup V_{\Gamma_c}^\Gamma \} \;,
  \end{equation*}
  where we have used Remark~\ref{rem:subdiam}, the maximum is taken with respect to inclusion of subdiagrams, and we
  view $V_{\Gamma_c}^\Gamma $ as a set of singletons (vertices without incident edges) in
  $\bigslant{\Gamma}{\Gamma_c}$, if they are not contained in $\overline{\Gamma}$.

In this setting, the correct definition of the set $\tilde{\Gamma}_i$, that
consists of the first $i$ bubbles to be contracted, is given inductively
by:
\begin{equation}\label{e:gamma-tild-def}
  \tilde{\Gamma}_1 = \overline{\Gamma}_1 \;, \qquad \tilde{\Gamma}_{i+1} = \mK^{-1}_{\tilde{\Gamma}_i} \overline{\Gamma}_{i+1} \;, \qquad \forall i \in \{1, \ldots, n-2\} \;.
\end{equation}
We note that the $\tilde{\Gamma}_i$ satisfy the following properties. Since the
verification is immediate from the definitions we omit a proof.

\begin{lemma}\label{lem:extraction-sequence}
  For any two collections $\{\Gamma_i\}_{i=1}^n$ and
  $\{\overline{\Gamma}_i\}_{i=1}^{n-1}$ as in Definition~\ref{def:bubbl}, the sequence
  $\{\tilde{\Gamma}_i\}_{i=1}^{n-1}$ defined in~\eqref{e:gamma-tild-def}
  satisfies the following properties:
  \begin{enumerate}
    \item The sequence is increasing: $\tilde{\Gamma}_{i-1} \subseteq
    \tilde{\Gamma}_{i}$ for all $i \in \{2,\ldots, n-1\}$ and $\tilde{\Gamma}_1$
    is isomorphic to $\<bubbleA>$ or $\<bubbleB>$.
    \item The quotient $\bigslant{\tilde{\Gamma}_{i}}{\tilde{\Gamma}_{i-1}}$ is
    of the form $\bigslant{\tilde{\Gamma}_{i}}{\tilde{\Gamma}_{i-1}}
    =\overline{\Gamma}_i \cup V_{\tilde{\Gamma}_{i-1}}^{\Gamma} \subseteq
    \bigslant{\Gamma}{\tilde{\Gamma}_{i-1}}$, where $\overline{\Gamma}_{i}$ is
    isomorphic to $\<bubbleA>$ or $\<bubbleB>$, for $i \in \{2, \ldots, n-1\}$.
  \end{enumerate}
  Moreover, any sequence $\{\tilde{\Gamma}_i\}_{i=1}^{n-1}$ satisfying the two
  points above gives rise to sequences $\Gamma_i$ and $\overline{\Gamma}_i$ as
  in Definition~\ref{def:bubbl} by defining $\overline{\Gamma}_i =
  \bigslant{\tilde{\Gamma}_i}{\tilde{\Gamma}_{i-1}} \setminus \{
  \text{\emph{singletons}}\}$.
\end{lemma}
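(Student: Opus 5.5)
The plan is to argue by induction on $i$, using only the definitions of contraction (Definition~\ref{def:contract}), of images of subdiagrams (Remark~\ref{rem:subdiam}), and of the inverse contraction $\mK^{-1}_{\Gamma_c}$. The basic identity I would establish first is that contractions compose: for full subdiagrams $\Gamma_c \subseteq \tilde{\Gamma} \subseteq \Gamma$, one has $\bigslant{\Gamma}{\tilde{\Gamma}} = \bigslant{(\bigslant{\Gamma}{\Gamma_c})}{(\bigslant{\tilde{\Gamma}}{\Gamma_c})}$. Vertices are identified in the same way on both sides, namely all of $V(\tilde{\Gamma})$ goes to one point. Edges are deleted exactly when they lie in $\tilde{\Gamma}$. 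With this in hand, the inductive hypothesis I would carry is that $\Gamma_{i} = \bigslant{\Gamma}{\tilde{\Gamma}_{i-1}}$ for all $i\geq 2$, with $\tilde{\Gamma}_{i-1}$ connected and full.

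For the base case, $\tilde{\Gamma}_1 = \overline{\Gamma}_1$ is a bubble and $\Gamma_2 = \mK_{\overline{\Gamma}_1}\Gamma$ by Definition~\ref{def:bubbl}. For the inductive step, $\overline{\Gamma}_{i+1}$ is a bubble in $\Gamma_{i+1} = \bigslant{\Gamma}{\tilde{\Gamma}_i}$. I would then verify the following facts about $\tilde{\Gamma}_{i+1} := \mK^{-1}_{\tilde{\Gamma}_i}\overline{\Gamma}_{i+1}$:
\begin{enumerate}
\item It is well defined. The admissible set in the definition of $\mK^{-1}$ is closed under unions, so the maximum exists. It is nonempty: take the preimage of the vertices of $\overline{\Gamma}_{i+1}$, together with all edges of $\Gamma$ whose image lies in $\overline{\Gamma}_{i+1}$, together with $\tilde{\Gamma}_i$.
\item It contains $\tilde{\Gamma}_i$, by definition.
\item It is full. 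The bubble $\overline{\Gamma}_{i+1}$ is full in $\Gamma_{i+1}$, since in an Anderson--Feynman diagram a bubble takes both parallel edges. Maximality then forces all parallel copies of each edge into $\tilde{\Gamma}_{i+1}$.
\item It is connected when $v_{\tilde{\Gamma}_i}\in\overline{\Gamma}_{i+1}$. If instead $v_{\tilde{\Gamma}_i}\notin \overline{\Gamma}_{i+1}$, the bubble is disjoint from the contracted vertex and $\tilde{\Gamma}_{i+1}$ is a disjoint union. In that case I would use the non-connected version of Definition~\ref{def:contract}, so the contraction identity still holds componentwise.
\end{enumerate}
By construction $\bigslant{\tilde{\Gamma}_{i+1}}{\tilde{\Gamma}_i} = \overline{\Gamma}_{i+1}\cup V^{\Gamma}_{\tilde{\Gamma}_i}$, which is item 2 of the lemma. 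The composition identity then gives $\Gamma_{i+2} = \mK_{\overline{\Gamma}_{i+1}}\Gamma_{i+1} = \bigslant{\Gamma}{\tilde{\Gamma}_{i+1}}$, which closes the induction. Together with the base case this also gives item 1.

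For the converse, start from a sequence $\tilde{\Gamma}_i$ satisfying items 1 and 2. Set $\Gamma_1=\Gamma$, $\Gamma_{i+1} = \bigslant{\Gamma}{\tilde{\Gamma}_i}$, and $\overline{\Gamma}_i$ as in the statement. The composition identity shows $\Gamma_{i+1} = \mK_{\overline{\Gamma}_i}\Gamma_i$, because contracting singletons does nothing. Each $\overline{\Gamma}_i$ is a bubble by item 2. For the final condition, note that each bubble contraction removes one vertex and two edges. Since $\Gamma$ has $n$ inner vertices, after $n-1$ steps there is one inner vertex left, carrying only the legs, so $\Gamma_n$ is the four-legged star.

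The main obstacle is bookkeeping rather than analysis. The delicate point is the case distinction on whether the new bubble contains the contracted vertex, which decides whether $\tilde{\Gamma}_{i+1}$ is connected. A related point is that parallel edges become loops when both endpoints lie in $\tilde{\Gamma}_i$. Those edges must be counted as already deleted, so that the maximum in $\mK^{-1}$ does not accidentally add edges the contraction had removed. I would address this by stating the composition identity carefully at the level of multisets of edges before running the induction.
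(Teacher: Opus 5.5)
The gap is in the last step of your converse. Counting vertices does not show that $\Gamma_n$ is the four-legged star. By Definition~\ref{def:contract}, a contraction deletes only the edges of the contracted subdiagram. So an edge of $\Gamma$ whose endpoints lie in one component of $\tilde{\Gamma}_i$, but which does not belong to $\tilde{\Gamma}_i$, survives as a self-loop. No later bubble (which has two distinct vertices) can ever remove it.

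What closes the argument is the edge count. Every inner vertex has degree four, so a $2k$-legged Anderson--Feynman diagram has $|E_\star| = 2n-k$, cf.~\eqref{eq_Estar_card}. Items~1 and~2 make each step delete exactly two edges and merge exactly two vertices. Hence $\Gamma_n$ has one inner vertex and $2-k$ internal edges, which vanishes only for $k=2$.

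For $k=1$ the converse is actually false. Pair the first inner vertex of $\mI_4$ with the third and the second with the fourth. This gives the sunset diagram, whose internal edges are $(m_1,m_2)$ with multiplicity two and $(m_2,m_1)$ with multiplicity one. The double edge is a full subdiagram isomorphic to a bubble, so the one-term sequence satisfies items~1 and~2. Yet contracting it produces a tadpole, not the star.

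So you must assume $\Gamma$ has four legs and insert this count. The assumption is automatic in the first part, since Definition~\ref{def:bubbl} forces it, and it is the only case the paper needs. The count also shows that no self-loop is ever created, i.e.\ each component of $\tilde{\Gamma}_i$ contains every edge of $\Gamma$ joining two of its vertices. This gives two things you currently lack:
\begin{itemize}
\item the fullness of $\overline{\Gamma}_i$ in $\Gamma_i$, which is required for $\mK_{\overline{\Gamma}_i}$ to be defined (items~1 and~2 only assert isomorphism to a bubble);
\item a resolution of your worry about loops, without treating such edges as deleted, which is not the convention of Definition~\ref{def:contract}.
\end{itemize}

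Second, your inductive invariant that $\tilde{\Gamma}_{i-1}$ is connected is false in general, as your own item~4 concedes. Your case split on whether $v_{\tilde{\Gamma}_i}\in\overline{\Gamma}_{i+1}$ also presumes a single contracted vertex. In the five-vertex example in the proof of Lemma~\ref{lem_bijection}, $\tilde{\Gamma}_2$ is the disjoint union of the bubbles on $\{1,2\}$ and $\{4,5\}$. The fourth bubble then joins the two contracted vertices, merging two components.

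To fix this:
\begin{itemize}
\item carry instead the invariant that $\tilde{\Gamma}_i$ is a union of vertex-disjoint connected full subdiagrams, with $\Gamma_{i+1} = \mK_{\tilde{\Gamma}_i}\Gamma$ contracted componentwise (so $V^{\Gamma}_{\tilde{\Gamma}_i}$ has one vertex per component);
\item state the composition identity per component, rather than sending all of $V(\tilde{\Gamma})$ to one point;
\item split cases according to whether the new bubble contains zero, one or two vertices of $V^{\Gamma}_{\tilde{\Gamma}_i}$.
\end{itemize}
With these two repairs your induction is a complete version of the verification the paper omits as immediate from the definitions.
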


This remark motivates the following definition.

\begin{definition}[Bubble extraction sequences]\label{def:extraction-sequences}
  Given an Anderson-Feynman diagram $\Gamma = (V, E)$ with $|V_\star|=n \in \NN_*$,
  we say that any sequence of subdiagrams $ \{ \tilde{\Gamma}_{i} \}_{i =1}^{n-1}
  $ satisfying the two points of Lemma~\ref{lem:extraction-sequence} is a bubble extraction sequence for $\Gamma$.
  We denote by $ \mathbb{S} ( \Gamma)$ the set of all bubble extraction
  sequences. If $n=1$ we define the only extraction sequence to be the empty
one.
\end{definition}

In this setting, our fundamental result concerning the convergence of zero-degree Feynman diagrams is the
following.

\begin{proposition}\label{prop:bubble} The following holds for any $k \in \NN_*$.
  \begin{enumerate}
    \item A zero degree, connected $2k$-Anderson-Feynman diagram $ \Gamma $ is contributing if and only if it is
    a connected nested bubble diagram. If $\Gamma$ is contributing, then $k=2$.
    \item Moreover, if $\Gamma$ is contributing, then
    \begin{equation}\label{eq_prop_bubble_supp1}
      \lim_{\ve \to 0} \lambda_{\ve}^{2(|V_\star| -1)} \, \Pi_{\ve} \Gamma \,
    (\varphi) = \sigma_\Gamma^2  \, \hat{\lambda}^{2(|V_\star| -1)} \,
    \<cc1>  \, (\varphi) \,,
    \end{equation}
    where for any smooth test function $\varphi \in \mS( (\mathbf{T}^{4})^4;\RR)$,
\begin{equation*}
\begin{aligned}
\<cc1>  \, (\varphi)  :=
\int_{ ( \mathbf{T}^{d} )^{L}} \varphi (x_{L} )
\int_{\mathbf{T}^{d}} \prod_{v \in L} G
(x_{\star} - x_{v}) \ud x_{\star}  \ud x_{L}\,,
\end{aligned}
\end{equation*}
 and
    \begin{equation*}
      \sigma_\Gamma^2 = \Big(\frac{ 1}{ \sqrt{8} \pi}\Big)^{2(|V_\star| -1)}
    \frac{|\mathbb{S}( \Gamma)|}{ (|V_\star| -1 )! } \;,
    \end{equation*}
    with $\mathbb{S}$ as in Definition~\ref{def:extraction-sequences}.
\end{enumerate}
\end{proposition}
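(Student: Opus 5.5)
We plan to prove Proposition~\ref{prop:bubble} by a multiscale analysis of $\Pi_\ve\Gamma$ that counts logarithmic divergences. The key quantity is the maximal length of a chain of nested subdiagrams of degree zero.

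\textbf{Step 1: an upper bound by chains of zero-degree subdiagrams.} Decompose each kernel $G_\ve(x)\approx(4\pi^2|x|^2)^{-1}$ dyadically into scales $2^{-j}$ with $0\le j\le \log_2(1/\ve)$. Assign scales to edges and organise the vertices into a Gallavotti--Nicolò (Hepp sector) tree. The standard power counting then gives
\begin{equation*}
|\Pi_\ve\Gamma(\varphi)|\lesssim \sum_{\text{scales}}\prod_{\overline\Gamma}2^{-\deg(\overline\Gamma)(j_{\overline\Gamma}-j_{\mathrm{parent}})}.
\end{equation*}
Here the product runs over the connected clusters $\overline\Gamma$ of the Hepp tree. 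Since $\Gamma$ has zero degree, every factor is either summable or equal to one. The free sums come only from clusters with $\deg\overline\Gamma=0$, so $|\Pi_\ve\Gamma(\varphi)|\lesssim(\log\frac1\ve)^{m(\Gamma)}$, where $m(\Gamma)$ is the maximal length of a chain of nested zero-degree full subdiagrams. Each step of such a chain must add at least one vertex; this is where a primitive blow-up enters. Moreover, the outermost cluster cannot have degree zero unless $k=2$. Indeed, after the legs are removed, $\deg\Gamma=4(|V_\star|-1)-2|E_\star|$ with $4|V_\star|=2|E_\star|+2k$, so $\deg(\Gamma\setminus\text{legs})=2k-4$. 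This gives $m(\Gamma)\le|V_\star|-1$, with equality only if each step adds exactly one vertex with two edges. That last condition says exactly that the successive quotients are bubbles, i.e.\ that $\Gamma$ is a nested bubble diagram. Since $\lambda_\ve^{2(|V_\star|-1)}=\hat\lambda^{2(|V_\star|-1)}(\log\frac1\ve)^{-(|V_\star|-1)}$, every non-nested-bubble diagram and every $k\neq2$ yields a vanishing limit. This proves the ``only if'' direction of item 1 and the claim $k=2$.

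\textbf{Step 2: exact asymptotics for nested bubble diagrams.} Only the region of scales in which the clusters form one of the chains $\tilde\Gamma_1\subset\dots\subset\tilde\Gamma_{n-1}$ of an extraction sequence, with strictly separated (ordered) scales, contributes at leading order. Configurations with comparable scales, or corresponding to no chain, lose at least one logarithm. Inside a given chain, integrate out the innermost bubble first. At separated scales a bubble with external points at distance $\gg r$ from its internal scale $r$ contributes $\int G(z)^2\,dz$ over the annulus, namely
\begin{equation*}
\int_{r<|z|<R}(4\pi^2|z|^2)^{-2}\ud z=\frac{2\pi^2}{16\pi^4}\log\frac Rr=\frac{1}{8\pi^2}\log\frac Rr,
\end{equation*}
times a delta function identifying its two vertices, up to errors of lower order. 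Iterating this shows that each chain contributes
\begin{equation*}
(8\pi^2)^{-(n-1)}\int_{\ve<r_1<\dots<r_{n-1}<1}\prod_i\frac{dr_i}{r_i}\;\cdot\;\<cc1>(\varphi)=\frac{(8\pi^2)^{-(n-1)}(\log\frac1\ve)^{n-1}}{(n-1)!}\,\<cc1>(\varphi),
\end{equation*}
where $n=|V_\star|$. The ordered-simplex volume $1/(n-1)!$ appears because the scales of the $n-1$ extracted bubbles must be ordered. Summing over chains gives $|\mathbb S(\Gamma)|$, so the limit is $\sigma_\Gamma^2\hat\lambda^{2(n-1)}\<cc1>(\varphi)$ with $(1/\sqrt8\pi)^{2(n-1)}=(8\pi^2)^{-(n-1)}$, which is strictly positive. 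This gives \eqref{eq_prop_bubble_supp1} and the ``if'' direction of item 1.

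\textbf{Main obstacle.} The main difficulty is the bookkeeping in Step 2. Different extraction sequences can produce overlapping regions of scale space, so one must show that distinct elements of $\mathbb S(\Gamma)$ correspond, up to a set that is negligible after dividing by $(\log\frac1\ve)^{n-1}$, to disjoint ordered scale regions. Equivalently, a Hepp sector with $n-1$ maximally separated zero-degree clusters determines its extraction sequence uniquely. The plan is to define the map from sectors to sequences via the Hepp tree, prove that it is injective on the leading-order sectors, and control all remaining regions (coinciding scales, mollifier corrections, torus and mass corrections to $G$) by the bound of Step 1 with one fewer logarithm.
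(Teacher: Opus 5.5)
Your Step 2 is essentially the paper's argument: restriction to Hepp trees in $\mT^0_{V_\star}$ with pairwise distinct scales, rewiring each bubble to a point with a Taylor error, $\frac{\log 2}{8\pi^2}$ per dyadic shell, and the bijection between extraction sequences and Hepp trees with heap orderings. Your ordered simplex per sequence replaces the tree-factorial/hook-length step. Step 1, however, has two genuine gaps.

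\textbf{The normalisation for $k\neq2$.} For $k\neq2$, Definition~\ref{def:contrib} normalises by $\lambda_\ve^{|E_\star|}$ with $|E_\star|=2|V_\star|-k$, not by $\lambda_\ve^{2(|V_\star|-1)}$. This is only a factor $(\log\frac1\ve)^{-(|V_\star|-k/2)}$. Knowing that the root cluster has degree $2k-4>0$ gives at most $|V_\star|-2$ logarithms, hence only $O((\log\frac1\ve)^{(k-4)/2})$. This does not vanish for $k\geq4$. The missing idea is to count logarithms against internal edges rather than vertices. A maximal zero-degree cluster $\Gamma_u$ has exactly $2(|V(\Gamma_u)|-1)$ internal edges and carries at most $|V(\Gamma_u)|-1$ zero-degree nodes, and the maximal clusters are disjoint. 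Since $\Gamma$ is connected and the root is not logarithmic, some internal edge lies outside all of them. Hence $\mathrm{Null}(T)\leq(|E_\star|-1)/2$ and the normalised diagram is $O((\log\frac1\ve)^{-1/2})$; see Lemma~\ref{lem_eta_nonneg} and the case $k\geq3$ of Lemma~\ref{lem_nonneg_diagrams}.

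\textbf{Chains versus Hepp trees.} The logarithms are counted by the zero-degree nodes of a Hepp tree, and these form a laminar family, not a chain. Consider the pairing in $\mK(\mI_3,\mI_5)$ whose trees visit the vertices in the orders $1,2,4$ and $2,1,3,4,3$. It consists of bubbles on $\{1,2\}$ and $\{3,4\}$ joined by single edges $1\to3$ and $2\to4$, with one leg at each vertex. It is a zero-degree nested bubble diagram. The Hepp tree $[[1,2],[3,4]]$ has three zero-degree nodes, so the diagram contributes at order $(\log\frac1\ve)^3$. But every three-vertex subdiagram has degree $2$, so the longest chain of connected zero-degree subdiagrams (with at least two vertices) has length $2$, and your bound $(\log\frac1\ve)^{m(\Gamma)}$ is false. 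If you instead allow disconnected subdiagrams, as your extraction-sequence chains in Step 2 do, the step $\{1,2\}\sqcup\{3,4\}\subset\Gamma$ adds no vertex. Then ``each step adds at least one vertex'' fails, and so does your characterisation of equality. What is true is this: a laminar family of subsets of $V_\star$ of size at least two has at most $|V_\star|-1$ members, with equality iff it is binary. Zero degree at a binary node forces exactly two edges between the two contracted children, i.e.\ a bubble. This is the heap-ordering argument at the end of the proof of Lemma~\ref{lem_nonneg_diagrams}.
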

\begin{proof}
  The proof of the first statement is a consequence of
  Lemma~\ref{lem_nonneg_diagrams}. The second statement follows from
  Corollary~\ref{cor_contr_diagram} and is overall the aim of
  Section~\ref{sec_exact}.
\end{proof}

\begin{remark}
  note that whenever $\Gamma$ is a $4$-Anderson-Feynman diagram, then it is
  of the form $\Gamma = (\mI_n, \mI_m)_{\kappa}$, for some $n, m$ and some
  pairing $\kappa$. Therefore, $|V_\star| = (n+m)/2$ and the weak coupling
  scaling in~\eqref{eq_prop_bubble_supp1} corresponds to the one in
  Theorem~\ref{thm:main}.
\end{remark}

As a consequence of Proposition~\ref{prop:bubble}, we can now provide a formula
for the covariance of the limiting Gaussian process appearing in
Theorem~\ref{thm:main}. We set
\begin{equation}\label{e:sigma-n-def}
  \sigma_{n}^{2} := \sum_{\kappa \in \mK^{\rm{cont}}(\mI_n, \mI_n)}
\sigma^{2}_{(\mI_n, \mI_n)_\kappa}\,.
\end{equation}
Moreover, we are able to define the overall effective variance coefficients
as follows.
\begin{definition}\label{def:sigma-n}
  For every $n \in \NN_*$, we set
  \begin{equation*}
    \sigma_{\rm{eff}}^{2,(n)} := \sum_{n_1 + n_2 = n} \sum_{\kappa \in
    \mK^{\mathrm{cont}}(\mI_{n_1}, \mI_{n_2})} \sigma_{(\mI_{n_1},
\mI_{n_2})_\kappa}^{2} \;,
  \end{equation*}
  with $\sigma_{(\mI_{n_1}, \mI_{n_2})_\kappa}^{2}$ as in
Proposition~\ref{prop:bubble},
and $  \mK^{\mathrm{cont}}$ introduced in Definition~\ref{def:contrib}.
\end{definition}
As a consequence of Proposition~\ref{prop:bubble} and
Proposition~\ref{prop:bphz}, we expect that the effective variance $\sigma_{\rm{eff}}^2$ appearing in
Theorem~\ref{thm:main} is given by
\begin{equation}\label{e:guess-var}
  \sigma_{\rm{eff}}^2 (\hat{\lambda}) = \sum_{n=1}^\infty \sigma_{\rm{eff}}^{2,(2n)} \; \hat{\lambda}^{2n-2} \;,
\end{equation}
where we have used that $\sigma^{2,(2n+1)}_{\rm{eff}} = 0$ by antisymmetry (reflecting $\xi \mapsto - \xi$). In fact, this series simplifies further,
since we can show that $\sigma^{2,(2n)}_{\rm{eff}}$ is geometric.
\begin{proposition}\label{prop:effective-variance}
  For every $n \in \NN_\star$, and with $\sigma_n$ from Definition~\ref{def:sigma-n} we have:
  \begin{equation*}
    \begin{aligned}
      \sigma_{\rm{eff}}^{2,(2n)} = \bigg(\frac{1}{2 \pi^{2}}\bigg)^{n-1}
\;, \qquad  \sigma_{\rm{eff}}^{2,(2n+1)}  = 0 \;.
    \end{aligned}
  \end{equation*}
\end{proposition}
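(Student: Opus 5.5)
Since $\sigma^2_\Gamma=(8\pi^2)^{-(|V_\star|-1)}|\mathbb{S}(\Gamma)|/(|V_\star|-1)!$ by Proposition~\ref{prop:bubble}, I would reduce the statement to a pure counting problem. The odd case is immediate. If $n_1+n_2=2n+1$ is odd, then $\mK(\mI_{n_1},\mI_{n_2})$ contains no complete pairing, so the sum in Definition~\ref{def:sigma-n} is empty and $\sigma^{2,(2n+1)}_{\rm eff}=0$.

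For $n_1+n_2=2n$, every $\Gamma=(\mI_{n_1},\mI_{n_2})_\kappa$ has $|V_\star|=n$. By Proposition~\ref{prop:bubble}, the contributing pairings are exactly those for which $\Gamma$ is a connected nested bubble diagram. The claim $\sigma^{2,(2n)}_{\rm eff}=(2\pi^2)^{-(n-1)}=4^{n-1}(8\pi^2)^{-(n-1)}$ is therefore equivalent to
\begin{equation*}
  A_n:=\#\big\{(n_1,n_2,\kappa,S)\;:\; n_1+n_2=2n,\ \kappa\in\mK^{\rm cont}(\mI_{n_1},\mI_{n_2}),\ S\in\mathbb{S}((\mI_{n_1},\mI_{n_2})_\kappa)\big\} = 4^{n-1}(n-1)! \;.
\end{equation*}
I would prove $A_{n+1}=4n\,A_n$ with $A_1=1$. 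The base case holds because the only term is $(\mI_1,\mI_1)$, which gives $\<cc1>$ with the empty extraction sequence.

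To get the recursion, I would read an extraction sequence backwards. Contracting the last bubble $\overline{\Gamma}_{n}$ of a sequence for a diagram with $n+1$ vertices yields a nested bubble diagram $\Gamma_2$ with $n$ vertices together with an extraction sequence for it, and a marked vertex $v_{\overline{\Gamma}}$. Conversely, any such pair can be \emph{blown up} at any of its $n$ vertices. I would show that at a fixed vertex there are exactly four blow-ups that produce a labelled Anderson--Feynman diagram $(\mI_{n_1'},\mI_{n_2'})_{\kappa'}$ with $n_1'+n_2'=2n+2$.

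To see the four choices, note that the vertex $v$ carries four labelled half-edges. These are two consecutive edges $\alpha_i,\alpha_{i+1}$ of one ladder path and two consecutive edges $\beta_j,\beta_{j+1}$ of a path traversing $v$ a second time, as in $\<varC5>$. A bubble replacing $v$ must split these into two pairs, one at each bubble vertex, and then route the paths through the two internal bubble edges so that each new vertex is again the identification of two noise nodes. The admissible configurations are precisely the four pictured in $\<varD1>,\<varD2>,\<varD3>,\<varD4>$: either one path passes straight through a bubble vertex while the other takes a detour around the bubble (two choices), or the two paths cross (two choices, according to orientation). In each case the tree lengths grow by a total of $2$, the Anderson structure is preserved, and the resulting pairing is unique. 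Each blow-up then yields a new extraction sequence by prepending the preimage structure via $\mK^{-1}$, as in Lemma~\ref{lem:extraction-sequence}.

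The main obstacle is showing that this map from $\{$(diagram, sequence, vertex, one of $4$ choices)$\}$ to $\{$(diagram, sequence)$\}$ at level $n+1$ is a bijection, with labels and ordered pairs $(n_1,n_2)$ tracked correctly.
\begin{itemize}
  \item Surjectivity: contraction inverts blow-up, and the result at level $n+1$ is again contributing by Proposition~\ref{prop:bubble}.
  \item Injectivity: different choices must give different labelled pairings $\kappa'$. This includes the degenerate case where the same tree visits $v$ twice. I would handle it by reading the labelling rule of Remark~\ref{rem:labels} along the two paths, which records which labelled half-edges share a bubble vertex and the direction of the two bubble edges.
  \item Uniqueness of the final sequence: the new sequence is recovered uniquely, because its last quotient is the inserted bubble.
\end{itemize}
With the bijection established, $A_{n+1}=4n A_n$, and therefore $A_n=4^{n-1}(n-1)!$, which proves the even case.
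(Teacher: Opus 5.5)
Your proposal is essentially the paper's proof. Both reduce the even case to $\sum_{n_1+n_2=2n}\sum_{\kappa\in\mK^{\rm cont}}|\mathbb{S}((\mI_{n_1},\mI_{n_2})_\kappa)|=4^{n-1}(n-1)!$ and obtain the factor $4n$ from a bijection between (smaller diagram, extraction sequence, vertex, one of the four edge-labelled blow-ups) and (larger diagram, extraction sequence), whose inverse is contracting a bubble. Fix one slip: the bubble you insert, and later contract, must be the \emph{first} element $\overline{\Gamma}_1$ of the extraction sequence, as your ``$\Gamma_2$'' and ``prepending'' indicate and as in the paper. It cannot be the last one $\overline{\Gamma}_n$, which lives in the two-vertex diagram $\Gamma_n$, so contracting it does not yield a diagram with $n$ vertices.
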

The proof of this proposition can be found in Section~\ref{sec:effective-variance}.

\subsection{Proofs of Theorems~\ref{thm:main} and~\ref{thm:main2}}

We have now collected all the main results that lead to the proof of both
Theorem~\ref{thm:main} and Theorem~\ref{thm:main2}. We start with the first
result.

\begin{proof}[Proof of Theorem~\ref{thm:main}]
  Tightness of the sequence $\left( \lambda_\ve^{n-1} \, \mf{R}_\ve \,
\mI_{n,\ve} \right)_{n=1}^N $ is a consequence of Mitoma's criterion.
Combining
 Proposition~\ref{prop:bphz}, with Lemma~\ref{lem:pos},
Lemma~\ref{lem:negvanish}, and Proposition~\ref{prop:bubble}, implies
that for any pairing $\kappa\in\mK(\mI_{n},
\mI_{n})$
\begin{equation*}
\begin{aligned}
\sup_{ \ve \in (0,1)} \lambda_\ve^{2n-2}|\hat{\Pi}_{\ve} (\mI_n, \mI_n )_{\kappa} (\varphi \otimes
\psi ) | < \infty\,,
\end{aligned}
\end{equation*}
for any $ \varphi, \psi \in \mS ( (\TT^{4})^{2}; \RR) $.

  Next we must identify the law of the limit. First, we observe that the
limiting random variables are centred. This is a consequence of
Proposition~\ref{prop:bubble}, because there is no internal pairing $\kappa
\in \mK(\mI_n)$ that is contributing, since any contributing pairing must
belong to a $4$-Anderson-Feynman diagram.
Moreover, we observe that negative
Feynman diagrams arise only from internal contractions, see Lemma~\ref{lem:neg}. Therefore,
Lemma~\ref{lem:negvanish} implies that all homogeneous chaos terms that give
rise to negative diagrams vanish in the weak coupling limit in $L^2$, and therefore also in
$L^p$ for any $p>2$ by hypercontractivity. Thus, we must only
concentrate on positive and non-negative contractions in the following.

Given these preliminaries, Gaussianity of the limit follows through the fourth moment
theorem \cite{nualart2005central}. We note that {mixed} cumulants are given through the Link-Cluster Theorem and
  Proposition~\ref{prop:bphz} by
  \begin{equation*}
    \rm{Cumulant} (\mf{R}_{\ve} \mI_{n_1, \ve} (\varphi_1), \ldots, \mf{R}_{\ve} \mI_{n_k, \ve} (\varphi_k)) = \sum_{\kappa \in \mK_0(\mI_{n_1}, \ldots, \mI_{n_k})} \hat{\Pi}_{\ve} (\mI_{n_1}, \ldots, \mI_{n_k})_\kappa (\varphi_1 \otimes \cdots \otimes \varphi_k) \;,
  \end{equation*}
  where $\mK_0(\mI_{n_1}, \ldots, \mI_{n_k})$ is the subset of all pairings $\kappa
  \in \mK (\mI_{n_1}, \ldots, \mI_{n_k})$ that result in a connected
  Anderson-Feynman diagram. Since for $k>2$ none of these pairings
  contribute by the discussion above concerning negative diagrams, by
  Lemma~\ref{lem:pos} concerning positive diagrams, and by
  Proposition~\ref{prop:bubble} concerning zero-degree diagrams, we see that all cumulants
  of
order $k>2$ vanish for the limiting variable, and therefore it is Gaussian.

Finally, the covariance of the limit is determined by Proposition~\ref{prop:bubble},
which implies that the only contributing pairings are those $\kappa \in \mK^{\rm{cont}}(\mI_n, \mI_m)$,
and that each such pairing contributes with a constant $\sigma^{2}_{(\mI_n,
\mI_m)_\kappa}$. Hence,
for every pair $\varphi, \psi \in \mS( (\TT^4)^2 ; \RR )$:
  \begin{equation} \label{e:corr}
    \EE \left[  \limI^{(n)} (\varphi) \limI^{(m)} (\psi) \right] = \frac{ \sigma_{n, m}^{2}}{\sigma_{n}  \sigma_{m} }  \<cc1> (\varphi \otimes \psi)\;, \qquad
    \sigma_{n,m}^{2} := \sum_{\kappa \in \mK^{\rm{cont}}(\mI_n, \mI_m)}
\sigma^{2}_{(\mI_n, \mI_m)_\kappa} \;,
  \end{equation}
  with the last constant as in Proposition~\ref{prop:bubble},
and $ \sigma_{n}^{2}= \sigma_{n, n}^{2}$ (cf.~\eqref{e:sigma-n-def}). This completes the proof of the result.
\end{proof}

We conclude the section with the proof of Theorem~\ref{thm:main2}.

\begin{proof}[Proof of Theorem~\ref{thm:main2}]
  This result is a consequence of Proposition~\ref{prop:bubble} and
  Proposition~\ref{prop:effective-variance}. In fact the first result implies
  that convergence of $\mathcal{U}$ is equivalent to checking the convergence of the
  series in~\eqref{e:guess-var}. In turn, by Proposition~\ref{prop:effective-variance}, this amounts to controlling
  \begin{equation*}
    \sum_{n=1}^\infty \bigg(\frac{1}{2 \pi^{2}}\bigg)^{n-1} \; \hat{\lambda}^{2n-2} = \sum_{n=0}^\infty \left( \frac{\hat{\lambda}^2}{2\pi^2} \right)^n =
    \frac{2 \pi^{2}}{2 \pi^{2} - \hat{\lambda}^2} \;,
  \end{equation*}
  where the identities hold under the assumption $|\hat{\lambda}| < \sqrt{2} \pi $.
\end{proof}

\section{Non--negative Feynman diagrams}\label{sec:nonneg}

This section is devoted to the proof of Lemma~\ref{lem:pos} as well as the first point of Proposition~\ref{prop:bubble}.
However, for later use and because the results of this section apply also to other models than the 4D Anderson model, we introduce a more general setting and work with typed Feynman diagrams in arbitrary dimension $d\in \NN_*$.
For this purpose, we consider a fixed set $\mf{T} \subseteq (-d, \infty)$ of
types, and assume that any $\mf{t} \in \mf{T}$ is associated to a kernel
$K_{\mf{t}, \varepsilon} \colon \TT^d \to \RR$, satisfying the bound
\begin{equation}\label{e_orderbound}
  \begin{aligned}
	  C_{\mf{t}}:= { \sup_{ \varepsilon \in (0,1)}}  \sup_{x \in
  \mathbf{T}^{d}} (|x|+ \varepsilon)^{-\mathfrak{t} } | K_{ \mf{t}, \varepsilon}(
  x)| < \infty\,.
  \end{aligned}
  \end{equation}
  Then we can add types to Feynman diagrams as follows.

\begin{definition}\label{def:FDtyped}
  A directed, typed, multi-graph $ \Gamma = (V, E, \mf{t}) $ is a \emph{typed} Feynman
  diagram, if it is a Feynman diagram according to Definition~\ref{def:FDgeneral}, and if
  in addition it is endowed with a map $\mf{t} \colon E \rightarrow \mf{T}$.
\end{definition}
Since all Feynman diagrams in this section are typed, we will omit the word \lqm
typed\rqm from now onwards.
Again we associate to any Feynman diagram an integral as in the previous section, but now with general kernels $ K_{\mf{t}, \varepsilon}$ instead of the Green's function $ G_{\varepsilon}$:
\begin{equation*}
\begin{aligned}
\Pi_{\varepsilon}\Gamma
( x_{L }) := \int_{(\TT^{d})^{ V_{\star} }} \mW_{\ve} \Gamma
(x_{V_\star})
\prod_{e \in E_L}
 K_{\mf{t}(e), \varepsilon} (x_{e_+} - x_{e_-})
 \ud
x_{V_{\star}} \;,
\end{aligned}
\end{equation*}
where
\begin{equation*}
\begin{aligned}
\mW_{\ve} \Gamma (x_{V_\star}) = \prod_{e \in E_{\star}}
K_{ \mf{t}(e),\ve}(x_{e_{+}} - x_{e_{-}}) \;.
\end{aligned}
\end{equation*}
Note that by assumption $ \mathfrak{t} ( e) >-d  $, so that each single kernel is integrable.
For any connected subdiagram $\overline{\Gamma}$ (which inherits the types
from the original diagram) of a Feynman diagram $\Gamma$,
 we can define the degree
\begin{equation}\label{e:deg-extended}
\begin{aligned}
\deg ( \overline{\Gamma}) = d ( | V_{\star} ( \overline{\Gamma})| -1) + \sum_{e \in E(\overline{\Gamma})} \mathfrak{t}(e) \,.
\end{aligned}
\end{equation}
If a diagram is composed of several connected components, then we define its degree as the sum of the degrees of its connected components.
This is equivalent to the definition~\eqref{e_def_deg} we have used so far in the case in which
$\mf{t} \equiv -2$, which corresponds to the case in which the kernels involved are
Green's functions of the 4D Laplacian. According to this definition, we can also
define non--negative diagrams analogously to Definition~\ref{def:degree}, simply
by using the extended notion of degree from~\eqref{e:deg-extended}.

The main result of this section is a bound on non--negative Feynman
diagrams, stated in Proposition~\ref{prop_countlogs_nonneg}.
However, in order to state this result, we must introduce Hepp sectors, which
will allow us
to break up integrals $\Pi_{\varepsilon} \Gamma ( \varphi) $ into smaller
pieces, and estimate them individually.

\subsection{Hepp sectors and trees, and their contribution}

Let $ \Gamma $ be a
Feynman diagram and denote by $
\mT_{V_{\star} } $  the set of finite rooted binary trees with
leaf set given by the inner vertices $ V_{\star}$ of $\Gamma$.
We call $\mT_{V_{\star}}$ the set of Hepp trees over $V_\star$.

Every $ T \in \mT_{V_{\star}}$ induces a partial order $\prec$ on its vertices, with the root
being the smallest element and the leaves the largest ones. For $u \neq u ' $,
we write $ u \perp
u' $ if neither $u \prec u ' $ nor $ u \succ u ' $.
Moreover, let $ \trim$ denote the trimming of $T$,
where we removed all leaves and edges connected to them:
\begin{equation*}
  T   = \quad
  \begin{tikzpicture}[
    baseline={([yshift=-.6ex]current bounding box.center)},
    grow'=down,
    level distance=8mm,
    level 1/.style={sibling distance=18mm},
    level 2/.style={sibling distance=12mm},
    int/.style={circle, fill=black, inner sep=1.2pt},
    leaf/.style={circle, fill=white, inner sep=1.2pt},
    lab/.style={font=\small}
  ]
    \begin{scope}[shift={(-3.0,0)}]
      \node[int] (r) {}
        child { node[int] (a) {}
          child { node[leaf] {} }
          child { node[leaf] {} }
        }
        child { node[int] (b) {}
          child { node[int] (c) {}
            child { node[leaf] {} }
            child { node[leaf] {} }
          }
          child { node[leaf] {} }
        };
    \end{scope}
  \end{tikzpicture} \quad   \;,
\qquad \qquad \qquad  \trim  = \quad
\begin{tikzpicture}[
  baseline={([yshift=-.6ex]current bounding box.center)},
  grow'=down,
  level distance=8mm,
  level 1/.style={sibling distance=18mm},
  level 2/.style={sibling distance=12mm},
  int/.style={circle, fill=black, inner sep=1.2pt},
  leaf/.style={circle, fill=white, inner sep=1.2pt},
  lab/.style={font=\small}
]
  \begin{scope}[shift={(3.3,0)}]
    \node[int] (r2) {}
      child { node[int] (a2) {} }
      child { node[int] (b2) {}
        child { node[int] (c2) {} }
      };
  \end{scope}
\end{tikzpicture} \quad \;.
\end{equation*}
We say that a map $ \mathbf{n} \colon \trim \to \NN $ is
\emph{compatible} with a tree $T$, if $ \mathbf{n}(u')  \geqslant  \mathbf{n}(u)$
whenever $ u' \succ u $,
where we have identified $ \trim $ with the set of inner
nodes (not leaves) of $ T$. We write $\mA(T)$ for the set of all maps compatible with $T$. Finally, we denote the root of $ T $ (and $\trim$)
by $ \bullet$.

Now, for any pair $ \mathbf{T} = (T, \mathbf{n}) $ where $ \mathbf{n} $ is compatible with
$ T $, we define the Hepp sector
\begin{equation}\label{eq_def_Hepp_dom}
\begin{aligned}
D_{\mathbf{T}} = \left\{
x \in (\TT^{d})^{V_{\star}}\, :\, (\sqrt{d} \pi)  2^{- \mathbf{n} (v \wedge w)-1} < | x_{v} - x_{w} |
\leqslant (\sqrt{d} \pi)   2^{- \mathbf{n} (v \wedge w)} \ \  \forall v, w \in
V_{\star} \right\}
\end{aligned}
\end{equation}
where $ v \wedge w $ is the least common ancestor of $ v $ and $ w $ in the
tree $ T $, and $| \cdot |$ is the distance on the torus.
Here we included the factor $ \sqrt{d} \pi$ so that the 
 sets $D_{\mathbf{T}}$ cover
the full space $ (\mathbf{T}^{d})^{V_{\star}}$, up to diagonals, see
Lemma~\ref{lem_hepp_almost_covers}.

Before we proceed, let us introduce a further notion that will be useful later
on.

\begin{definition}\label{def:heap-ordering}
  For any partially ordered set $(\mX, \prec)$, we say
that a bijection $ \ell \colon \mX
  \to \{1, \ldots, |\mX|\}$ is a \emph{heap ordering} of $\mX$, if for any $ a, b \in \mX$ such
  that $ a \prec b $, we have $ \ell(a) \geq \ell(b)$.
\end{definition}

The reason why inequalities are inverted (imposing $\ell(a) \leq \ell(b)$ would
appear more natural) in the definition above is that we consider 
{leaves} of trees as maximal
elements, while the usual convention is to consider them as minimal ones.
However, in the present setting this is natural, because roots of Hepp sectors are connected to
the largest scales {(smallest $ \n$)} in a Feynman diagram.\\

Now, let us fix a Feynman diagram $\Gamma$ and define for every $u \in
\overline{T}$ the subdiagram $\Gamma_u$ as the
(possibly disconnected) full subdiagram spanned by vertices in
\begin{equation}\label{e:Gu}
\begin{aligned}
V_{\star} ( \Gamma_{u}) := \{ v \in V_{\star} \, : \, v \succ u \}\,, \qquad E(\Gamma_{u}) = \{e \in E \, \colon \, \{e_-, e_+\} \subseteq V(\Gamma_u)\}\;.
\end{aligned}
\end{equation}
Moreover, we set:
\begin{equation}\label{e:null-defn}
  \begin{aligned}
    \overline{\deg} (u) & = d(|V_\star(\Gamma_u)|-1) + \sum_{e \in E(\Gamma_u)}
\mathfrak{t}(e) \;, \qquad
\text{and} \qquad
    \mathrm{Null}(T) := | \{ u \in \trim \, :\, \overline{\deg}(u) = 0 \} | \;.
  \end{aligned}
\end{equation}
Note that the only difference between $\overline{\deg}(u)$ and $\deg(\Gamma_u)$
is that $\overline{\deg}$ is not given by the sum of the degrees over the
connected components of $\Gamma_u$, if it consists of disconnected components.
In particular, we have $\overline{\deg}(u) > \deg(\Gamma_u)$ whenever $
\Gamma_{u}$ is not connected.
The quantity of zero degree subdiagrams spanned by inner vertices, $\mathrm{Null} (T)$, determines the rate of logarithmic blow-up of the integral associated to a Feynman diagram over a fixed Hepp sector. This is the content of the following proposition, which is the main result of this section.

\begin{proposition}\label{prop_countlogs_nonneg}
  There exists a constant $C>0$ such that the following holds uniformly over any diagram
  $\Gamma$, test function $\varphi \in \mS$, and $\ve \in (0, 1)$. Let $ \Gamma $ be a non--negative $2k$-Feynman diagram with $
  \mathfrak{t} ( e) \in
  \mathbf{Z}\cap (-d, \infty) $ for all $ e \in E_{\star}$, then
  \begin{equation*}
  \begin{aligned}
  |\Pi_{\varepsilon} \Gamma( \varphi)|
  \leqslant
\| \varphi\|_{\infty}
C^{|V_\star|+ | E_{\star}| + 2k}
  \sum_{T \in \mT_{V_{\star}}}
  \big(
  \log { \tfrac{1}{ \varepsilon}}
  \big)^{\mathrm{Null} (T)}\,.
  \end{aligned}
  \end{equation*}
for every $ \varphi \in
\mS( (\TT^4)^{2k}; \RR) $.
  \end{proposition}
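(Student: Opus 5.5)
We decompose the integral over Hepp sectors and run a standard multiscale power counting, sector by sector. By Lemma~\ref{lem_hepp_almost_covers} the sets $D_{\mathbf{T}}$, $\mathbf{T}=(T,\n)$ with $T\in\mT_{V_\star}$ and $\n\in\mA(T)$, cover $(\TT^d)^{V_\star}$ up to a null set. Bounding the legs, we get
\begin{equation*}
|\Pi_\ve\Gamma(\varphi)|\leq \|\varphi\|_\infty \sum_{T}\sum_{\n\in\mA(T)}\int_{D_{(T,\n)}}\prod_{e\in E_\star}|K_{\mf{t}(e),\ve}(x_{e_+}-x_{e_-})|\,\Big(\int\prod_{e\in E_L}|K_{\mf{t}(e),\ve}|\ud x_L\Big)\ud x_{V_\star}.
\end{equation*}
Each leg kernel is integrable in its leaf variable uniformly in $\ve$, since $\mf{t}>-d$. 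This produces a factor $C^{2k}$. If two legs share a leaf (the leaves are distinct by construction), the same integrability still applies.

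On $D_{(T,\n)}$ and for an edge $e=(v,w)$, we have $|x_v-x_w|\sim 2^{-\n(v\wedge w)}$. By \eqref{e_orderbound} the kernel is therefore bounded by $C_{\mf t}\,(2^{-\n(v\wedge w)}\vee\ve)^{\mf{t}(e)}$. Here one uses $\mf{t}\in\ZZ$ only to split $\mf t$ into signs, which is harmless. For $\mf t(e)\ge0$ the bound $(|x|+\ve)^{\mf t}\lesssim 2^{-\n \mf t}$ holds as well.

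The volume of $D_{(T,\n)}$ is bounded by $C^{|V_\star|}\prod_{u\in\trim}2^{-d\,\n(u)}$. To see this, integrate the vertices out along the tree: each internal node $u$ with two children fixes one relative displacement at scale $2^{-\n(u)}$. Only the last vertex is left free, and it costs $O(1)$ on the torus.

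Next we reorganise exponents by telescoping. Write $\n(u)=\sum_{u'\preceq u}(\n(u')-\n(\mathrm{parent}(u')))$, with the increments $\delta(u')\ge0$ and $\delta(\bullet)=\n(\bullet)$. Then the sector bound becomes $\prod_{u\in\trim}2^{-\delta(u)\,\overline{\deg}(u)}$, where $\overline{\deg}(u)$ is from \eqref{e:null-defn}. This is the usual identity: an edge contributes at every ancestor $u\preceq v\wedge w$, and the volume factor gives $d(|V_\star(\Gamma_u)|-1)$ after grouping. The cutoff at $\ve$ needs care. For scales $2^{-\n}<\ve$, the kernels saturate at $\ve^{\mf t}$, while the volume keeps shrinking. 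So we restrict $2^{-\n(u)}\ge\ve$ up to replacing $\n(u)$ by $\min(\n(u),\log_2\tfrac1\ve)$. Summing over sectors beyond that threshold gives a convergent geometric sum, because volumes then decay with no compensating growth. Thus we may assume every $\n(u)\le N_\ve:=\lceil\log_2\tfrac1\ve\rceil$.

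Non-negativity now gives $\overline{\deg}(u)\ge0$ for each $u$. Indeed, $\overline{\deg}(u)\ge\deg\Gamma_u$, and the latter is a sum of non-negative degrees of connected full subdiagrams. Moreover $\overline{\deg}(u)\ge1$ when it is nonzero, because it is an integer. Summing over the increments $\delta(u)\in\{0,\dots,N_\ve\}$ independently (compatibility is exactly $\delta\ge0$) gives
\begin{equation*}
\prod_{u:\overline{\deg}(u)>0}\sum_{\delta\ge0}2^{-\delta}\ \prod_{u:\overline{\deg}(u)=0}(N_\ve+1)\le 2^{|\trim|}(C\log\tfrac1\ve)^{\mathrm{Null}(T)}.
\end{equation*}
Since $|\trim|=|V_\star|-1$, collecting constants gives $C^{|V_\star|+|E_\star|+2k}$, with one $C_{\mf t}$ per edge.

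The main obstacle is the careful justification of the volume bound and of the $\ve$-cutoff on $D_{\mathbf T}$. In particular, we must check that the telescoped exponent is exactly $\overline{\deg}(u)$ (counting only edges inside $\Gamma_u$) and that sectors below scale $\ve$ do not generate extra logarithms. This works because there the kernels are constant while the volume decays, and each $u$ with $2^{-\n(u)}<\ve$ contributes at least a factor $2^{-d\delta}$ beyond the cutoff.
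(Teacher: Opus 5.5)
Your proposal is correct. Its first half (integrating out the legs, covering $(\TT^d)^{V_\star}$ by Hepp sectors via Lemma~\ref{lem_hepp_almost_covers}, the pointwise kernel bound on $D_{(T,\n)}$, and the volume bound $\prod_{u\in\trim}2^{-d\n(u)}$) coincides with the paper's proof. The difference is in how you carry out the multiscale sum over $\n\in\mA(T)$, that is, in what replaces Lemma~\ref{lem_estimate_nonneg_Hepp}.

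The paper proves that lemma by induction on $|\trim|$. It splits $T=[T_1,T_2]$ at the root, keeps a running lower scale $K$ and the factors $(2^{-n}+\ve)^{\eta-d}$ throughout, and treats scales beyond $N_\ve$ separately. It evaluates the one-dimensional geometric sums with Lemma~\ref{lem_simple_sumargument} and arrives at the threshold-dependent bound \eqref{eq_istep}.

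You instead remove the $\ve$-cutoff once and for all by the truncation $\n\mapsto\min(\n,N_\ve)$, which costs a constant per node. You then pass to the increments $\delta(u)=\n(u)-\n(\mathrm{parent}(u))$. Under this change of variables $\sum_u\n(u)\eta(u)=\sum_u\delta(u)\,\overline{\deg}(u)$, and the constrained sum is dominated by the factorised product $\prod_{u}\sum_{\delta=0}^{N_\ve}2^{-\delta\,\overline{\deg}(u)}$.

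Your route is shorter and makes the counting completely explicit: each node with $\overline{\deg}(u)=0$ contributes exactly one factor $N_\ve+1$, and every other node contributes $O(1)$. The paper's induction instead produces the localised estimate \eqref{eq_istep}, which tracks the outer scale $K$. Its root-splitting structure with a running threshold is the template the paper reuses for the exact asymptotics in Lemma~\ref{lem_nbd_singleTree}.

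Two minor remarks.
\begin{itemize}
\item Your bound $(|x|+\ve)^{\mf t}\lesssim 2^{-\n\mf t}$ for $\mf t\geq 0$ holds only when $2^{-\n}\gtrsim\ve$. This is harmless, since you only use it after truncating.
\item The step $N_\ve+1\leq C\log\frac1\ve$ fails as $\ve\to 1$, so strictly the right-hand side should read $(1+\log\frac1\ve)^{\mathrm{Null}(T)}$. The paper's own proof, which ends with $(2N_\ve+1)^{\mathrm{Null}(T)}$, has the same imprecision.
\end{itemize}
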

  \begin{remark}
    The result should be understood as a sector-by-sector estimate. Namely, the proof actually shows that there exists a $C_1>0$ such that for every $T \in \mT_{V_\star}$
    \begin{equation*}
      \begin{aligned}
        \int_{D(T)} |\mW_{\ve} \Gamma
(x_{V_\star})| \ud x_{V_{\star}} \leq C_1^{|V_\star|} \big( \log { \tfrac{1}{ \varepsilon}}
\big)^{\mathrm{Null} (T)} \;, \qquad \text{where} \qquad D(T) := \bigcup_{\mathbf{n} \in \mA(T)} D_{(T, \mathbf{n})} \;.
      \end{aligned}
    \end{equation*}
    Finally, the restriction to $ \mathfrak{t}$ being integer valued can be easily lifted, but adds an additional technical burden that we avoid.
  \end{remark}
\begin{remark}
  A similar result to Proposition~\ref{prop_countlogs_nonneg} was already
  obtained in~\cite[Proposition~6.1]{BerglundBruned}, and in any case the proof
  follows through a careful
  analysis of the classical proof for convergent Feynman diagrams, see for
  example the exposition in~\cite{BPHZ}. However, the
  result~\cite[Proposition~6.1]{BerglundBruned} is not presented in full
  generality (it allows at most for blow-ups of order $(\log(1/\ve))^2$ due to the
  nature of the problem).
  Therefore, we provide here a complete proof: the same techniques
  are also used in later proofs.
\end{remark}

To explain the result, we continue our analysis of the example in~\eqref{e:nbex}
\begin{equation*}
	\begin{tikzpicture}[scale =0.6]
	\node[dot]  (m1) at (-3.0,0.0) {};
	\node[dot]  (m2) at (-1.5,0.0) {};
	\node[dot]  (m3) at ( 0.0,0.0) {};
	\node[dot]  (m4) at ( 1.5,0.0) {};
	\node[bigroot]  (l1) at ( -4.2,0.0) {};
	\node[bigroot]  (l2) at ( 2.7,0.5) {};
	\node[bigroot]  (l3) at ( 2.7,-.5) {};
	\node[bigroot]  (l4) at ( 0,-1) {};
\node[font=\scriptsize, anchor=north, xshift = - 3pt] at (m1) {$1$};
\node[font=\scriptsize, anchor=north, xshift = 3 pt] at (m2) {$2$};
\node[font=\scriptsize, anchor=north, xshift = - 5pt] at (m3) {$3$};
\node[font=\scriptsize, anchor=north] at (m4) {$4$};
\draw[midarrow, thick] (m1.east) -- (m2.west);
\draw[midarrow, thick] (m2.east) -- (m3.west);
\draw[midarrow, thick] (m3.east) -- (m4.west);
\draw[midarrow, thick] (m3.north) to[out=90,in=90]   (m1.north);
\draw[midarrow, thick] (m1.south) to[out=-90,in=-90] (m2.south);
\draw[midarrow, thick] (m2.north) to[out=90,in=90]   (m4.north);
	\draw[midarrow, thick] (l1) to (m1);
	\draw[midarrow, thick] (l4) to (m3);
	\draw[midarrow, thick] (m4) to (l3);
	\draw[midarrow, thick] (m4) to (l2);
\end{tikzpicture}\,.
\end{equation*}
Among many others, we then consider the following Hepp sectors:
\begin{equation*}
\begin{tikzpicture}[
  baseline={([yshift=-.6ex]current bounding box.center)},
  grow'=down,
  level distance=8mm,
  level 1/.style={sibling distance=18mm},
  level 2/.style={sibling distance=12mm},
  int/.style={circle, fill=black, inner sep=1.2pt},
  red/.style={circle, fill=purple, inner sep=1.7pt},
  leaf/.style={circle, fill=white, inner sep=1.2pt},
  lab/.style={font=\small}
]
  \begin{scope}[shift={(-6.0,0)}]
    \node[red] (rA) {}
      child { node[red] (aA) {}
        child { node[leaf,label=below:$1$] {} }
        child { node[leaf,label=below:$2$] {} }
      }
      child { node[int] (bA) {}
        child { node[leaf,label=below:$3$] {} }
        child { node[leaf,label=below:$4$] {} }
      };
  \end{scope}
\end{tikzpicture}
\qquad \qquad
\begin{tikzpicture}[
  baseline={([yshift=-.6ex]current bounding box.center)},
  grow'=down,
  level distance=8mm,
  level 1/.style={sibling distance=18mm},
  level 2/.style={sibling distance=12mm},
  int/.style={circle, fill=black, inner sep=1.2pt},
  red/.style={circle, fill=purple, inner sep=1.7pt},
  leaf/.style={circle, fill=white, inner sep=1.2pt},
  lab/.style={font=\small}
]
  \begin{scope}[shift={(0,0)}]
    \node[red] (rB) {}
      child { node[red] (uB) {}
        child { node[red] (vB) {}
          child { node[leaf,label=below:$1$] {} }
          child { node[leaf,label=below:$2$] {} }
        }
        child { node[leaf,label=below:$3$] {} }
      }
      child { node[leaf,label=below:$4$] {} };
  \end{scope}
\end{tikzpicture}
\qquad \qquad
\begin{tikzpicture}[
  baseline={([yshift=-.6ex]current bounding box.center)},
  grow'=down,
  level distance=8mm,
  level 1/.style={sibling distance=18mm},
  level 2/.style={sibling distance=12mm},
  int/.style={circle, fill=black, inner sep=1.2pt},
  red/.style={circle, fill=purple, inner sep=1.7pt},
  leaf/.style={circle, fill=white, inner sep=1.2pt},
  lab/.style={font=\small}
]
  \begin{scope}[shift={(6.0,0)}]
    \node[red] (rC) {}
      child { node[leaf,label=below:$1$] {} }
      child { node[int] (uC) {}
        child { node[leaf,label=below:$2$] {} }
        child { node[int] (vC) {}
          child { node[leaf,label=below:$3$] {} }
          child { node[leaf,label=below:$4$] {} }
        }
      };
  \end{scope}
\end{tikzpicture}
\end{equation*}
where we have drawn in thick red the nodes for which $\overline{\deg}(u)=0$. 
Notice that legs of the Feynman diagrams do not play a role in this evaluation.
We
see that our bound guarantees that the first term blows up at order
$(\log(1/\ve))^2$, the second one at order $(\log(1/\ve))^3$, and the last one at
order $\log(1/\ve)$. Notably, the second Hepp tree is connected to the idea of
$\Gamma$ being an iterated bubble diagram: every inner node corresponds to
a bubble that appears iteratively, as we remove one after the other, from the
leaves upwards. For example, after pairing the points $1$ and $2$ (which
form a bubble), we can pair $1$ (or $2$) and $3$ which has now become a
bubble, and so forth, see Definition~\ref{def:extraction-sequences}.

\subsection{Proof of Proposition~\ref{prop_countlogs_nonneg}}

Let us fix a Feynman diagram $ \Gamma$ and an associated Hepp tree $ T \in
\mT_{V_{\star}}$.
We will perform an inductive proof over the size of the Hepp tree.
To this end, it will be convenient to define the map $ \eta: \trim \to
\mathbf{R}$
\begin{equation}\label{eq_def_eta}
\begin{aligned}
\eta (u):= d+ \sum_{ \substack{e \in E_{\star} \\  u = e_{+}\wedge e_{-}}} \mathfrak{t} (e)\,,
\end{aligned}
\end{equation}
which satisfies
\begin{equation}\label{eq_def_degbar}
  \overline{\deg}(u) = \sum_{v \in \trim, \, v \succ u} \eta(v) \geqslant 0 \;,
\end{equation}
where the last inequality holds for positive or non-negative Feynman diagrams, since $\overline{\deg} (u )\geqslant \deg(\Gamma_u)$.
In this setting, we obtain the following estimate.

\begin{lemma}\label{lem_estimate_nonneg_Hepp}
Fix a Feynman diagram $\Gamma$ as in Proposition~\ref{prop_countlogs_nonneg}.
Let $T \in \mT_{V_{\star}}$ be a Hepp tree and set $\eta$ as in~\eqref{eq_def_eta}. Then there exists a $C>0$ (independent in $ \ve$, $ \Gamma$,
and $ T$) such that
\begin{equation}\label{eq_estnonneg_Hepp}
\begin{aligned}
\sum_{\mathbf{n} \in \mA(T)}
\prod_{u \in \trim}
(2^{- \mathbf{n}(u)} + \varepsilon)^{ \eta(u) -d}
2^{-d \, \mathbf{n}(u)}
\leqslant
C^{|V_\star|}
\big(
\log { \tfrac{1}{ \varepsilon}}
\big)^{ \mathrm{Null}(T)}\,.
\end{aligned}
\end{equation}
\end{lemma}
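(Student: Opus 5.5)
We argue by induction on the number of inner nodes of $\trim$, summing over $\mathbf{n}$ from the leaves of $\trim$ towards the root $\bullet$. The only information used about $\eta$ is that it takes integer values (since $\mf{t}$ is integer valued) and that the partial sums $\overline{\deg}(u)=\sum_{v\succeq u}\eta(v)$ are non-negative, by \eqref{eq_def_degbar}. It is convenient to prove a stronger, weighted statement. For $u\in\trim$ and a value $m=\mathbf{n}(\mathrm{parent}(u))$, define
\begin{equation*}
S_u(m):=\sum_{\substack{\mathbf{n}\ \text{on the subtree above } u\\ \text{compatible},\ \mathbf{n}(u)\geq m}}\ \prod_{v\succeq u}(2^{-\mathbf{n}(v)}+\ve)^{\eta(v)-d}2^{-d\mathbf{n}(v)} .
\end{equation*}
The inductive claim is
\begin{equation*}
S_u(m)\leq C^{N_u}\,(2^{-m}+\ve)^{\overline{\deg}(u)}\,\big(1+\log_2\tfrac{1}{2^{-m}+\ve}\big)^{\mathrm{Null}_u},
\end{equation*}
where $N_u$ is the number of leaves of $T$ above $u$ and $\mathrm{Null}_u$ counts the nodes $v\succeq u$ with $\overline{\deg}(v)=0$. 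Taking $u=\bullet$ and $m=0$, the factor $(1+\ve)^{\overline{\deg}(\bullet)}$ is bounded, the logarithm is at most $\log_2(1/\ve)$, and this gives \eqref{eq_estnonneg_Hepp}.

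\textbf{Inductive step.} Let $u_1,u_2$ be the children of $u$ that lie in $\trim$; children that are leaves of $T$ contribute nothing. Fix $\mathbf{n}(u)=j\geq m$. The subtrees above $u_1$ and $u_2$ decouple, so by the induction hypothesis, used with $m=j$, their total contribution is at most
\begin{equation*}
C^{N_{u_1}+N_{u_2}}\,(2^{-j}+\ve)^{\overline{\deg}(u)-\eta(u)}\,\big(1+\log_2\tfrac{1}{2^{-j}+\ve}\big)^{\mathrm{Null}_{u_1}+\mathrm{Null}_{u_2}} .
\end{equation*}
The term at $u$ supplies $(2^{-j}+\ve)^{\eta(u)-d}2^{-dj}$. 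Since $2^{-dj}\leq(2^{-j}+\ve)^d$, the summand in $j$ is bounded by
\begin{equation*}
(2^{-j}+\ve)^{\overline{\deg}(u)}\,L_j^{\,\mathrm{Null}_{u_1}+\mathrm{Null}_{u_2}},\qquad L_j:=1+\log_2\tfrac{1}{2^{-j}+\ve} .
\end{equation*}
We now sum over $j\geq m$ in two cases.

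Case $\overline{\deg}(u)\geq1$ (integer valued). For $2^{-j}\geq\ve$ the summand decays geometrically in $j$. For $2^{-j}<\ve$ one must not overcount: we use the sharper factor $2^{-dj}$ instead of replacing it by $(2^{-j}+\ve)^d$. Then $(2^{-j}+\ve)^{\eta(u)-d}2^{-dj}\approx\ve^{\eta(u)-d}2^{-dj}$, which is summable in $j$, and its sum is comparable to the $j=\lceil\log_2(1/\ve)\rceil$ term. In either regime the sum is dominated by its first term, giving $C(2^{-m}+\ve)^{\overline{\deg}(u)}L_m^{\mathrm{Null}_{u_1}+\mathrm{Null}_{u_2}}$, with no new logarithm.

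Case $\overline{\deg}(u)=0$. The same tail argument beyond $j\approx\log_2(1/\ve)$ shows that only the terms with $m\leq j\lesssim\log_2(1/\ve)$ matter. There are at most $L_m$ of these, each bounded by $L_m^{\mathrm{Null}_{u_1}+\mathrm{Null}_{u_2}}$. This produces exactly one extra logarithm, matching $\mathrm{Null}_u=\mathrm{Null}_{u_1}+\mathrm{Null}_{u_2}+1$.

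\textbf{Main obstacle.} The delicate point is the regime $2^{-j}<\ve$ when the exponent at $u$ is positive. The inductive bounds are stated in terms of $(2^{-j}+\ve)$, which stops decreasing once $2^{-j}<\ve$. We must therefore keep the genuine factors $2^{-d\mathbf{n}(v)}$ through the induction: they are the volume factors and they supply the decay in $j$. Concretely, the claim should be proved with $2^{-d\mathbf{n}}$ retained, and the bound $2^{-d\mathbf{n}}\leq(2^{-\mathbf{n}}+\ve)^d$ should be applied only where $2^{-\mathbf{n}}\geq\ve$. For deep nodes with $\mathbf{n}\gg\log(1/\ve)$, the summation is controlled by the total volume factor $\prod_v 2^{-d\mathbf{n}(v)}$, which is summable over compatible $\mathbf{n}$ with a constant $C^{|V_\star|}$.

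The constants multiply once per node of $\trim$, and $|\trim|=|V_\star|-1$, so the final constant is $C^{|V_\star|}$, uniform in $\ve$, $\Gamma$ and $T$.
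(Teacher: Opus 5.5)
Your strategy is the paper's: a leaf-to-root induction. The sum over all scales above a node $u$ with $\mathbf{n}(u)\geq m$ is bounded by $(2^{-m}+\ve)^{\overline{\deg}(u)}$ times one logarithm per zero-degree node, and the node's own volume factor $2^{-dj}$ gives the decay for $j>N_\ve=\lfloor\log_2\tfrac{1}{\ve}\rfloor$.

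\emph{The gap is in your logarithmic weight.} $L_m=1+\log_2\frac{1}{2^{-m}+\ve}$ counts the scales between $1$ and $2^{-m}$, not those between $2^{-m}$ and $\ve$. It is increasing in $m$, with $L_0=1-\log_2(1+\ve)\leq 1$, and with it your inductive claim is false. Take a node $u$ whose two children are leaves of $T$, with $\eta(u)=0$. For every $j\leq N_\ve$ one has $(2^{-j}+\ve)^{-d}2^{-dj}\geq 2^{-d}$, so $S_u(0)\geq 2^{-d}(N_\ve+1)$, whereas your claim gives $S_u(0)\leq C^{2}L_0\leq C^2$.

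The same reversal breaks both steps of your case $\overline{\deg}(u)=0$:
\begin{itemize}
\item the number of indices $m\leq j\leq N_\ve$ is $N_\ve-m+1$, which is not bounded by $L_m$;
\item replacing $L_j^{\mathrm{Null}_{u_1}+\mathrm{Null}_{u_2}}$ by $L_m^{\mathrm{Null}_{u_1}+\mathrm{Null}_{u_2}}$ for $j\geq m$ requires $L$ to be decreasing.
\end{itemize}
Read literally, your claim at the root would bound the whole sum with no logarithm at all.

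\emph{The fix} is a weight that counts the remaining scales, for instance $\tilde L_m:=1+\log_2\frac{2^{-m}+\ve}{\ve}$. It is decreasing in $m$, and it satisfies
\begin{itemize}
\item $\tilde L_m\geq N_\ve-m+1$ for $m\leq N_\ve$;
\item $1\leq \tilde L_m\leq 3$ for $m\geq N_\ve$;
\item $\tilde L_0\leq 2+\log_2\tfrac{1}{\ve}$.
\end{itemize}
With $\tilde L$ in place of $L$, both cases of your inductive step go through as you describe. In the zero-degree case the terms with $j>N_\ve$ contribute at most a constant times $\tilde L_m^{\mathrm{Null}_{u_1}+\mathrm{Null}_{u_2}}$, which is absorbed because $\tilde L_m\geq 1$. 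Your treatment of the deep scales is also correct: keeping $2^{-dj}$ there, rather than bounding it by $(2^{-j}+\ve)^d$, is exactly what is needed.

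The corrected argument is then the paper's proof. There the weight is $(2N_\ve-K+1)$, the induction is run only for $K\leq N_\ve$, and the region where all scales are at least $N_\ve$ is split off and bounded directly by the total volume factor, cf.\ \eqref{eq_supp2_aboveN}.
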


Now, we are ready to prove the main result of this section. Afterwards, we
provide the proof of Lemma~\ref{lem_estimate_nonneg_Hepp}.

\begin{proof}[Proof of Proposition~\ref{prop_countlogs_nonneg}]
  The value of the constant $C>0$ that appears below may change from line to line, in
  order to keep the presentation clean.
  We start with the following upper bound, which is derived by taking the
supremum over the test function and integrating out the legs (using the assumption $\mf{t}>-d$):
  \begin{equation}\label{eq_pos_est}
  \begin{aligned}
  |\Pi_{\ve} \Gamma ( \varphi)| \leqslant \|
\varphi\|_{\infty} C^{2k} \sum_{T \in \mT_{ V_{\star}}}  \sum_{\mathbf{n} \in \mA(T)}  \int_{D_{(T, \mathbf{n})}}
  | \mW_{\ve} \Gamma (x_{V_{\star}}) | \ud x_{V_{\star}} \;.
  \end{aligned}
  \end{equation}
  Furthermore, for all $ x\in D_{(T, \mathbf{n})}$ we estimate via~\eqref{e_orderbound}:

  \begin{equation}\label{eq_supp1_estLinf}
  \begin{aligned}
  |\mW_{\ve} \Gamma (x_{V_{\star}})|
  &=
  \prod_{e \in E_{ \star}}
 | K_{ e, \ve}(x_{e_{+}} - x_{e_{-}}) |
  \leqslant
  C^{| E_{\star}|}
  \prod_{e \in E_{\star}}
  ( | x_{ e_{+}} - x_{e_{-}}| + \varepsilon )^{\mathfrak{t}(e)}\\
  & \leqslant
  C^{| E_{\star}|}
  \prod_{e \in E_{\star}}
  (2^{- \mathbf{n}(e_{+} \wedge e_{-})} + \varepsilon)^{ \mathfrak{t} (e)}
  =
 C^{| E_{\star}|}
  \prod_{ u \in \trim}
  (2^{- \mathbf{n}(u)} + \varepsilon)^{\sum_{e \,: \, u = e_{+}\wedge e_{-}} \mathfrak{t}
  (e)}\,.
  \end{aligned}
  \end{equation}
  Hence, we find the upper bound
  \begin{equation}\label{eq_pos_Hepp_est_supp1}
  \begin{aligned}
  |\Pi_{\ve} \Gamma ( \varphi)|
  \leqslant
\| \varphi\|_{\infty}
  C^{|E_\star|+2k }
  \sum_{T \in \mT_{V_{\star}}}  \sum_{\mathbf{n} \in \mA ( T)}
  \prod_{u \in \trim}
  (2^{- \mathbf{n}(u)} + \varepsilon)^{ \eta(u) -d}
  2^{-d \, \mathbf{n}(u)}\,,
  \end{aligned}
  \end{equation}
  with $\eta$ defined as in~\eqref{eq_def_eta}.
  Applying Lemma~\ref{lem_estimate_nonneg_Hepp} for every $ T \in
  \mT_{V_{\star}}$, we obtain
  \begin{equation*}
  \begin{aligned}
  |\Pi_{\ve} \Gamma ( \varphi) |
  &\leqslant
  \| \varphi\|_{\infty}
  C^{|V_\star|+ | E_{\star}| + 2k}
  \sum_{T \in \mT_{V_{\star}}}
  \big(
  \log { \tfrac{1}{ \varepsilon}}
  \big)^{ \mathrm{Null}(T)}  \,,
  \end{aligned}
  \end{equation*}
  which concludes the proof of the result.
  \end{proof}

\begin{proof}[Proof of Lemma~\ref{lem_estimate_nonneg_Hepp}]
We perform a proof by induction over the size of $|
\overline{T}| =m$, i.e. $ |
V_{\star}| = m+1$.
First, for $m=0$ the sum in~\eqref{eq_estnonneg_Hepp} is empty, hence the
estimate is trivially true. Now fix
\begin{equation*}
  N_{\varepsilon} = \lfloor \log_{2} \tfrac{1}{\varepsilon} \rfloor \;.
\end{equation*}
For $m=1$, we have for all $K \leqslant
N_{\varepsilon}$
\begin{equation*}
\begin{aligned}
\sum_{n=K}^{\infty}
(2^{- n} + \varepsilon)^{ \eta -d}
2^{-d \, n}
& \leqslant
 4 ( N_{\varepsilon} - K +1 )^{ \mathds{1}_{\eta =0}}
( 2^{-  K}
+ \varepsilon )^{ \eta} \,,
\end{aligned}
\end{equation*}
using Lemma~\ref{lem_simple_sumargument},
where we wrote for convenience $ n = \mathbf{n} (u)$ and $ \eta = \eta(u)$ for the
only node $u \in \trim$ and used that $\eta (u) \in \NN$ by our assumption $\mf{t}(e) \in \ZZ$.
For clarity, let us write $\mA_K(T)$ for the set of all compatible scalings such
that $\mathbf{n}(u) \geqslant K$ for all $u \in \trim$:
\begin{equation}\label{e:def-AK}
  \mA_K(T) = \{ \mathbf{n} \in \mA(T) \text{ s.t. } \mathbf{n}(u) \geq K \;, \ \forall u \in \trim \}\;.
\end{equation}
Next, let us fix a tree $T$ with $| \overline{T}|=m+1$ assume that the following bound
\begin{equation}\label{eq_istep}
\begin{aligned}
\sum_{\mathbf{n} \in \mA_{K}(T')}
\prod_{u \in \trim'}
(2^{- \mathbf{n}(u)} + \varepsilon)^{ \eta(u) -d}
2^{-d \, \mathbf{n}(u)}
 \leqslant  C^{| \overline{T} '|}
\big(
2 N_{\varepsilon}- K +1 \big)^{ \mathrm{Null}(T')}
(2^{- K}+ \varepsilon)^{ \overline{\deg} (\vr')}
\,,
\end{aligned}
\end{equation}
holds true for all sub-Hepp trees $ T' \subseteq T$ with $ | \overline{T}'| \leqslant m$. Here a subtree is defined as the set of all vertices and leaves of $T$ that are the children of some $\vr' \in \trim$, which is the root of $T'$.
note that
$ 2N_{\varepsilon} -K \to \infty $ for all $ K \leqslant N_{\varepsilon}$,
because of the factor $2$ in front of $ N_{\varepsilon}$.

Now, by construction there exist trees  $ T_{1}, T_{2}$ (with roots $ \vr_{1},
\vr_{2} \in \trim$) such that $T= [T_1, T_2]$ is obtained by grafting $T_1$ and
$T_2$ onto a new root. Note that $ T_{i}$ may be
empty, if $ \vr_{i}$ is a leaf in $ V_{\star} $.
We then write, with $n = \mathbf{n}(\bullet)$ and $\eta_0 = \eta(\bullet)$ (where we recall that $\bullet$ is the root of $T$):
\begin{equation}\label{eq_supp1_heppsummation}
\begin{aligned}
\sum_{\mathbf{n} \in \mA_K(T)}
\prod_{u \in \overline{T}} &
(2^{- \mathbf{n}(u)} + \varepsilon)^{ \eta(u) -d}
2^{-d \, \mathbf{n}(u)}\\
&=
\sum_{ n = K}^{ N_{\varepsilon}}
(2^{- n} + \varepsilon)^{\eta_0 -d}
2^{-d \, n}
\prod_{i =1}^{2}
\sum_{\mathbf{n}|_{ T_{i}} \in \mA_{n}(T_{i})}
\prod_{u \in \overline{T}_{i}}
(2^{- \mathbf{n}(u)} + \varepsilon)^{ \eta(u) -d}
2^{-d \, \mathbf{n}(u)}\\
& \qquad \qquad +\sum_{\mathbf{n} \in \mA_{N_{\varepsilon}}(T)}
\prod_{u \in \overline{T}}
(2^{- \mathbf{n}(u)} + \varepsilon)^{ \eta(u) -d}
2^{-d \, \mathbf{n}(u)}\,,
\end{aligned}
\end{equation}
where $ \mathbf{n} |_{T_{i}}$ denotes the scales restricted to the nodes in $
T_{i}$.
First, we note that the second term on the right side is bounded by
\begin{equation}\label{eq_supp2_aboveN}
\begin{aligned}
\sum_{\mathbf{n} \in \mA_{N_{\varepsilon}}(T)}
\prod_{u \in \overline{T}}
(2^{- \mathbf{n}(u)} + \varepsilon)^{ \eta(u) -d}
2^{-d \, \mathbf{n}(u)}
& \leqslant
\prod_{u \in \overline{T}}
(2 \varepsilon)^{ \eta(u) -d}
\sum_{n = N_{\varepsilon}}^{\infty}
2^{-d \,n}\\
&\leqslant
\prod_{u \in \overline{T}}
(2\varepsilon)^{ \eta(u) -d}
2^{-d\,  N_{\varepsilon}}
=
2^{-d | \overline{T}|}
(2\varepsilon)^{\sum_{u \succ \bullet} \eta ( u)}
\leqslant C^{| \overline{T} |} \,,
\end{aligned}
\end{equation}
up to choosing an appropriate $C>0$, since $\sum_{u \succ \bullet} \eta ( u)
\geqslant 0 $ by assumption.
On the other hand, for the first term on the right side of~\eqref{eq_supp1_heppsummation} we apply the induction hypothesis, which
yields the upper bound
\begin{equation*}
\begin{aligned}
&\sum_{ n = K}^{ N_{\varepsilon}}
(2^{- n} + \varepsilon)^{\eta_0 -d}
2^{-d \, n}
\prod_{i =1}^{2}
\sum_{\mathbf{n}|_{T_{i}} \in \mA_{n}(T_{i})}
\prod_{u \in \overline{T}_{i}}
(2^{- \mathbf{n}(u)} + \varepsilon)^{ \eta(u) -d}
2^{-d \, \mathbf{n}(u)}\\
& \leqslant
C^{| \overline{T}_{1}| + | \overline{T}_{2}|}
\sum_{n= K}^{ N_{\varepsilon}}
(2^{- n} + \varepsilon)^{ \eta_0 -d}
2^{-d \, n}
\prod_{i =1}^{2}
\big(
2N_{\varepsilon}- n+1 \big)^{ \mathrm{Null}(T_i)}
(2^{- n}+ \varepsilon)^{ \sum_{u \succ \vr_{i}} \eta(u)}\\
& =
C^{| \overline{T}| -1}
\sum_{ n = K}^{ N_{\varepsilon}}
\big( 2N_{\varepsilon}- n +1 \big)^{ \mathrm{Null}(T_1) + \mathrm{Null}(T_2)}
(2^{- n} + \varepsilon)^{  \sum_{u \succ \bullet} \eta(u) -d}
2^{-d \, n}\,.
\end{aligned}
\end{equation*}
We then consider the following two cases:
\begin{enumerate}
\item If $  \sum_{u \succ \bullet} \eta(u) =0$, then $ \mathrm{Null}(T) =
\mathrm{Null} (T_1) + \mathrm{Null} (T_1) +1 $. In this case, the previous sum is bounded by:
\begin{equation*}
\begin{aligned}
C^{| \overline{T}| -1}
\sum_{ n = K}^{ N_{\varepsilon}} &
\big( 2 N_{\varepsilon}-  n +1 \big)^{\mathrm{Null}(T)-1}
(2^{- n} + \varepsilon)^{  -d}
2^{-d \, n} \\
& \leqslant C^{| \overline{T}| -1}
\sum_{n = K}^{ N_{\varepsilon}}
\big( 2 N_{\varepsilon}- n +1 \big)^{ \mathrm{Null} (T) -1} \\
& \leqslant C^{| \overline{T}| -1} \big( 2 N_{\varepsilon}- K +1 \big)^{ \mathrm{Null} (T)-1} (N_\ve - K +1)
\leqslant C^{| \overline{T}| -1} \big( 2 N_{\varepsilon}- K +1 \big)^{ \mathrm{Null} (T)} \,.
\end{aligned}
\end{equation*}
This leads to an estimate of the form~\eqref{eq_istep} for $ T' = T$.

\item If $  \sum_{u \succ \bullet} \eta(u) >0$, then  $ \mathrm{Null}(T) = \mathrm{Null} (T_1) + \mathrm{Null} (T_1) $.
Hence, by estimating
$ N_{\varepsilon}- n \leqslant N_{\varepsilon}- K$, we
obtain
\begin{equation*}
\begin{aligned}
&
C^{| \overline{T}| -1}
\sum_{n = K}^{ N_{\varepsilon}}
\big( 2 N_{\varepsilon}- n +1 \big)^{ \mathrm{Null} (T_1) + \mathrm{Null} (T_1)}
(2^{- n} + \varepsilon)^{  \sum_{u \succ \bullet} \eta(u) -d}
2^{-d \, n}
\\
& \leqslant
C^{| \overline{T}| -1}
\big(2 N_{\varepsilon}- K +1 \big)^{ \mathrm{Null}(T)}
\sum_{n = K}^{ \infty}
(2^{- n} + \varepsilon)^{  \sum_{u \succ \bullet} \eta(u) -d}
2^{-d \, n}\\
& \leqslant C^{| \overline{T}|-1}
4 \big(2  N_{\varepsilon}- K +1 \big)^{ \mathrm{Null}(T)}
(2^{- K} + \varepsilon)^{  \sum_{u \succ \bullet} \eta(u)}
\end{aligned}
\end{equation*}
where we used Lemma~\ref{lem_simple_sumargument} in the last step. This is
again an estimate of the correct order, provided that $C \geqslant 4$.
\end{enumerate}
This completes the proof of the induction argument.
Using~\eqref{eq_istep} for $ T'= T$ and setting $K=0$ delivers the overall estimate claimed in the lemma. The proof is complete.
\end{proof}
The next result is a supporting statement for the proof of the preceding lemma.

\begin{lemma}\label{lem_simple_sumargument}
For every $ \alpha \geqslant 0 $ and $K \leqslant N_{\varepsilon}  = \log_{2} \tfrac{1}{ \varepsilon}$
\begin{equation*}
\begin{aligned}
\sum_{n=K}^{\infty}
(2^{- n} + \varepsilon)^{ \alpha -d}
2^{-d \, n}
& \leqslant
\begin{cases}
 \frac{2}{ 1- 2^{- \alpha\wedge d}} ( 2^{-  K}
+ \varepsilon )^{ \alpha}  & \quad \text{if } \ \alpha>0 \,,\\
\frac{1}{1- 2^{-d}} ( N_{\varepsilon} - K + 1)\,,  & \quad \text{if } \ \alpha=0
\,.
\end{cases}
\end{aligned}
\end{equation*}
In particular, if $ \alpha \in \mathbf{N}$ then
\begin{equation*}
\begin{aligned}
\sum_{n=K}^{\infty}
(2^{- n} + \varepsilon)^{ \alpha -d}
2^{-d \, n}
\leqslant 4 ( N_{\varepsilon} - K +1 )^{ \mathds{1}_{\alpha =0}}
( 2^{-  K}
+ \varepsilon )^{ \alpha} \,.
\end{aligned}
\end{equation*}
\end{lemma}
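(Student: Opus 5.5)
We compare the sum with a geometric series, splitting it at the threshold $n = N_\varepsilon$, where $2^{-n}$ and $\varepsilon$ are of the same size. Throughout we use that $2^{-n}\geq \varepsilon$ for $n\leq N_\varepsilon$ and $2^{-n}\leq \varepsilon$ for $n\geq N_\varepsilon$, so that $2^{-n}\leq 2^{-n}+\varepsilon\leq 2\cdot 2^{-n}$ in the first range and $\varepsilon \leq 2^{-n}+\varepsilon\leq 2\varepsilon$ in the second. Since the exponent $\alpha - d$ may be negative, we bound $(2^{-n}+\varepsilon)^{\alpha-d}$ from above in each range by choosing the appropriate endpoint. For $n\leq N_\varepsilon$ this gives
\[
(2^{-n}+\varepsilon)^{\alpha-d}2^{-dn}\leq 2^{|\alpha - d|}\, 2^{-\alpha n}
\]
(up to a harmless constant, which is absorbed using $\alpha-d\geq -d$). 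For $n\geq N_\varepsilon$ we use $(2^{-n}+\varepsilon)^{\alpha-d}\leq \varepsilon^{\alpha-d}$ when $\alpha<d$, and $\leq (2\varepsilon)^{\alpha-d}$ otherwise.

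\emph{Case $\alpha=0$.} In the range $n\in[K,N_\varepsilon]$ each term is at most $(2^{-n})^{-d}2^{-dn}=1$, so this part contributes at most $N_\varepsilon-K+1$. In the range $n> N_\varepsilon$ each term is at most $\varepsilon^{-d}2^{-dn}$. Summing the geometric tail gives $\varepsilon^{-d}2^{-dN_\varepsilon}\cdot 2^{-d}/(1-2^{-d})\leq 1/(1-2^{-d})$, because $2^{-N_\varepsilon}\leq 2\varepsilon$. Both contributions together are at most $\frac{1}{1-2^{-d}}(N_\varepsilon-K+1)$ after adjusting constants (the stated constant may need the harmless replacement of the $+1$ bookkeeping).

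\emph{Case $\alpha>0$.} Write $(2^{-n}+\varepsilon)^{\alpha-d}2^{-dn} = (2^{-n}+\varepsilon)^{\alpha}\,\big(2^{-n}/(2^{-n}+\varepsilon)\big)^{d}$, where the second factor is at most $1$. For $n\leq N_\varepsilon$ the term is therefore bounded by $(2\cdot 2^{-n})^{\alpha}$, and the sum over $n\geq K$ is at most $2^{\alpha}2^{-\alpha K}/(1-2^{-\alpha})$, which is comparable to $(2^{-K}+\varepsilon)^\alpha$. For $n>N_\varepsilon$ we bound the term by $(2\varepsilon)^{\alpha}(2^{-n}/\varepsilon)^{d}$. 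Its sum is at most $(2\varepsilon)^\alpha\cdot C/(1-2^{-d})$, which is at most a constant times $(2^{-K}+\varepsilon)^\alpha$ because $K\leq N_\varepsilon$. Combining the two ranges produces the factor $1/(1-2^{-\alpha\wedge d})$. Care is needed to get the exact constant $2$; if it does not come out, one tightens by not splitting and instead using $(2^{-n}+\varepsilon)^\alpha\leq$ the value at $n=K$ multiplied by the geometric decay ratio.

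\emph{Integer $\alpha$.} For $\alpha\in\NN$ with $\alpha\geq1$ we have $1-2^{-\alpha\wedge d}\geq 1/2$, so the first bound gives the constant $4$. For $\alpha=0$, $1/(1-2^{-d})\leq 2\leq 4$. The main (minor) obstacle is tracking the exact constants at the junction $n\approx N_\varepsilon$, which is only bookkeeping.
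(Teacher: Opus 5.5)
Your plan (split at $N_\varepsilon$ and compare with a geometric series) is the paper's, but for $\alpha>0$ your estimates lose a factor exponential in $\alpha$, so the constants in the statement are not proved. On $K\le n\le N_\varepsilon$ you discard $\big(2^{-n}/(2^{-n}+\varepsilon)\big)^{d}\le 1$ and use $(2^{-n}+\varepsilon)^{\alpha}\le (2\cdot 2^{-n})^{\alpha}$. This part alone sums to $\frac{2^{\alpha}}{1-2^{-\alpha}}2^{-\alpha K}$. Take $K=0$ and $\varepsilon\to 0$: this is larger than the claimed $\frac{2}{1-2^{-\alpha\wedge d}}(2^{-K}+\varepsilon)^{\alpha}$ for every integer $\alpha\ge 2$, is at least $16/3>4$ there, and grows like $2^{\alpha}$. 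Your preliminary factor $2^{|\alpha-d|}$ has the same defect. So the uniform constant $4$ in the second display does not follow.

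This uniformity matters. In Lemma~\ref{lem_estimate_nonneg_Hepp} the lemma is applied with $\alpha=\sum_{u\succ\bullet}\eta(u)$, which depends on $\Gamma$, while $C$ must not. Your fallback does not repair this. Beyond $N_\varepsilon$ you have $(2^{-n}+\varepsilon)^{\alpha}\ge\varepsilon^{\alpha}$, so it does not decay geometrically. Bounding it by its value at $n=K$ leaves $\sum_{n\ge K}\big(2^{-n}/(2^{-n}+\varepsilon)\big)^{d}\ge 2^{-d}(N_\varepsilon-K+1)$, which is a spurious logarithm.

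The missing step is to separate $\alpha\le d$ from $\alpha>d$, and to use monotonicity of $t\mapsto t^{\alpha-d}$ instead of $2^{-n}+\varepsilon\le 2\cdot 2^{-n}$.

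\textbf{Case $0<\alpha\le d$.} Here $(2^{-n}+\varepsilon)^{\alpha-d}\le\min\{2^{-n(\alpha-d)},\varepsilon^{\alpha-d}\}$. Use the first bound on $K\le n<N_\varepsilon$, so each term is at most $2^{-\alpha n}$ with no constant lost. Use the second on $n\ge N_\varepsilon$ together with $2^{-N_\varepsilon}=\varepsilon$, which gives the tail $\frac{\varepsilon^{\alpha}}{1-2^{-d}}$. Altogether the sum is at most $\frac{1}{1-2^{-\alpha}}(2^{-\alpha K}+\varepsilon^{\alpha})\le\frac{2}{1-2^{-\alpha}}(2^{-K}+\varepsilon)^{\alpha}$.

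\textbf{Case $\alpha>d$.} Do not split at all: $(2^{-n}+\varepsilon)^{\alpha-d}\le(2^{-K}+\varepsilon)^{\alpha-d}$ for $n\ge K$, and $\sum_{n\ge K}2^{-dn}=\frac{2^{-dK}}{1-2^{-d}}\le\frac{(2^{-K}+\varepsilon)^{d}}{1-2^{-d}}$. These two cases are exactly the paper's proof.

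\textbf{Case $\alpha=0$ (minor).} With $2^{-N_\varepsilon}\le 2\varepsilon$ you only get $N_\varepsilon-K+1+\frac{1}{1-2^{-d}}$, which exceeds the claim when $N_\varepsilon-K+1<2^{d}$. If instead you use $2^{-N_\varepsilon}=\varepsilon$, as in the statement, your tail over $n>N_\varepsilon$ is exactly $\frac{2^{-d}}{1-2^{-d}}$. The total is then $N_\varepsilon-K+\frac{1}{1-2^{-d}}\le\frac{1}{1-2^{-d}}(N_\varepsilon-K+1)$.
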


\begin{proof}
First consider the case $ \alpha>0 $ and $ \alpha-d \leqslant   0 $. Then
\begin{equation*}
\begin{aligned}
\sum_{n=K}^{\infty}
(2^{- n} + \varepsilon)^{ \alpha -d}
2^{-d \, n}
&\leqslant
\sum_{n=K}^{N_{\varepsilon}-1}
2^{- n(  \alpha -d)} 2^{-d \, n}
+
 \varepsilon^{ \alpha -d}
\sum_{n=N_{\varepsilon}}^{ \infty}
2^{-d \, n}\\
& =
\frac{2^{- \alpha\,  K} - 2^{- \alpha\,  N_{\varepsilon}}}{ 1- 2^{- \alpha}}
+
\frac{
 \varepsilon^{ \alpha -d}
2^{-d \, N_{\varepsilon}}}{1- 2^{-d}} \\
& \leqslant
\frac{1}{ 1- 2^{- \alpha }}
\big( 2^{- \alpha\, K}
+ \varepsilon^{\alpha} \big)
\leqslant
\frac{2}{ 1- 2^{- \alpha }}
( 2^{-  K}
+ \varepsilon )^{ \alpha}\,.
\end{aligned}
\end{equation*}
On the other hand, if $ \alpha=0$, we instead
get the bound
\begin{equation*}
\begin{aligned}
\sum_{n=K}^{\infty}
(2^{- n} + \varepsilon)^{ \alpha -d}
2^{-d \, n}
&\leqslant ( N_{\varepsilon}- K )
+
\frac{
 1}{1- 2^{-d}} \,.
\end{aligned}
\end{equation*}
Lastly, if $ \alpha- d >0$, then
\begin{equation*}
\begin{aligned}
\sum_{n=K}^{\infty}
(2^{- n} + \varepsilon)^{ \alpha -d}
2^{-d \, n}
&\leqslant
(2^{- K} + \varepsilon)^{ \alpha -d}
\sum_{n=K}^{\infty}
2^{-d \, n}
=
(2^{- K} + \varepsilon)^{ \alpha -d}
\frac{2^{-d\, K}}{1 - 2^{-d}} \\
&\leqslant
(2^{- K} + \varepsilon)^{ \alpha -d}
\frac{(2^{- K}+ \varepsilon)^{d}}{1 - 2^{-d}}
=
\frac{1}{1 - 2^{-d}}
(2^{- K} + \varepsilon)^{ \alpha }\,.
\end{aligned}
\end{equation*}
Combining the bounds respectively yields the statement of the lemma.
\end{proof}

\subsection{Application to the Anderson model}

From Proposition~\ref{prop_countlogs_nonneg} we deduce the following lemma, which proves that only nested bubble diagrams contribute in the weak coupling limit of the Anderson model.

\begin{lemma}\label{lem_nonneg_diagrams}
Fix any $k \in \NN_*$ and let $ \Gamma$ be a connected  $2k$-Anderson-Feynman diagram that
is positive or has zero degree, cf. Definition~\ref{def:degree}. Then for every
$ \varphi \in \mS ( ( \TT^{4})^{L}; \RR) $
\begin{equation}\label{eq_estimate_nonneg}
\begin{aligned}
\sup_{\varepsilon \in (0,1)}
|\lambda_{\varepsilon}|^{| E_{\star}|}
| \Pi_{\varepsilon} \Gamma ( \varphi)| \lesssim_{|
E_{\star}|, k} |\hat{\lambda}|^{|E_\star|} \,.
\end{aligned}
\end{equation}
Moreover, if $k \neq 2$, or if $k=2$ but $ \Gamma$ is \emph{not} a nested bubble diagram, then
\begin{equation}\label{eq_nonbubble_vanish}
\begin{aligned}
\lim_{ \varepsilon \to 0}
|\lambda_{\varepsilon}|^{| E_{\star}|}
| \Pi_{\varepsilon} \Gamma ( \varphi)|
= 0\,.
\end{aligned}
\end{equation}
\end{lemma}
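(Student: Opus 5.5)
The plan is to combine Proposition~\ref{prop_countlogs_nonneg} with a purely combinatorial count of how many nodes of a Hepp tree can be null. Recall that $\lambda_\ve = \hat\lambda(\log\frac1\ve)^{-1/2}$. Since every inner vertex of an Anderson--Feynman diagram has degree four, counting half-edges gives $4|V_\star| = 2|E_\star| + 2k$. Hence
\begin{equation*}
|E_\star| = 2|V_\star| - k, \qquad \deg \Gamma = 4(|V_\star|-1) - 2|E_\star| = 2k-4 .
\end{equation*}
Proposition~\ref{prop_countlogs_nonneg} (with $\mf t\equiv -2$, $d=4$) gives
\begin{equation*}
|\lambda_\ve|^{|E_\star|}|\Pi_\ve\Gamma(\varphi)| \le \|\varphi\|_\infty\, C^{|V_\star|+|E_\star|+2k}\, |\hat\lambda|^{|E_\star|} \sum_{T\in\mT_{V_\star}} \big(\log\tfrac1\ve\big)^{\mathrm{Null}(T)-|E_\star|/2}.
\end{equation*}
Since $|\mT_{V_\star}|$ depends only on $|V_\star|$, both \eqref{eq_estimate_nonneg} and \eqref{eq_nonbubble_vanish} reduce to proving $\mathrm{Null}(T)\le |E_\star|/2 = |V_\star|-k/2$ for every Hepp tree $T$. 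For \eqref{eq_nonbubble_vanish} I additionally need strict inequality for every $T$ unless $k=2$ and $\Gamma$ is a nested bubble diagram.

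\textbf{Bookkeeping of external half-edges.} For $u\in T$ let $h(u)=4|V_\star(\Gamma_u)|-2|E(\Gamma_u)|$ be the number of half-edges leaving $\Gamma_u$. Then $\overline{\deg}(u)=h(u)-4$, so $u$ is null iff $h(u)=4$. Leaves of $T$ have $h=4$, the root has $h=2k$, and at $u=[u_1,u_2]$ one has $h(u)=h(u_1)+h(u_2)-2m_u$, where $m_u$ is the number of edges joining the two children. By induction over the tree I would prove $h(u)\le 4+4\,\#\{\text{non-null internal nodes below }u\}$. The case $m_u=0$ forces $u$ to be non-null, and the null case is trivial. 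Applied at the root this gives at least $\lceil (k-2)/2\rceil$ non-null nodes, hence $\mathrm{Null}(T)\le |V_\star|-1-\lceil(k-2)/2\rceil$. For $k=1$ this is $|V_\star|-1<|V_\star|-\tfrac12$, and for odd $k\ge3$ it is strictly below $|V_\star|-k/2$. Both cases then vanish like $(\log\frac1\ve)^{-1/2}$.

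For $k=2$ the bound reads $\mathrm{Null}(T)\le|\overline T| = |V_\star|-1 = |E_\star|/2$, with equality iff every node of $\overline T$ is null. In that case every $\Gamma_u$ is connected: otherwise $\overline{\deg}(u)>\deg(\Gamma_u)\ge0$ by non-negativity. So every $\Gamma_u$ is connected of degree zero with exactly four external half-edges. A lowest internal node has two leaf children $v,w$ with $h=4$, which forces $m_u=2$, i.e.\ $\Gamma_u$ is a bubble. Contracting it yields a $4$-Anderson--Feynman-type diagram whose Hepp tree is $T$ with that cherry collapsed and all nodes still null. Induction on $|V_\star|$ then produces a bubble extraction sequence in the sense of Lemma~\ref{lem:extraction-sequence}, so $\Gamma$ is a nested bubble diagram. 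Contrapositively, if $\Gamma$ is not a nested bubble diagram, every $T$ has $\mathrm{Null}(T)\le |E_\star|/2-1$ and the limit is zero.

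\textbf{Main obstacle.} The delicate case is even $k\ge4$, where the crude inequality $h\le 4+4\,\#\text{non-null}$ only gives $\mathrm{Null}(T)\le |E_\star|/2$, not strict inequality. The first thing I would try is to sharpen the induction using connectedness of $\Gamma$. Non-null nodes with $m_u=0$ (disconnected $\Gamma_u$) must later be compensated by an edge-carrying merge higher up, which lowers $h$ by at least $2$ without creating a null node. The aim is the refined potential $h(u)\le 2+2\,\#\text{non-null}(u)$ for connected $\Gamma_u$. At the root this would give $\#\text{non-null}\ge k-1>k/2-1$ for $k\ge3$, which is the required strict inequality.
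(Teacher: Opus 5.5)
Your reduction to $\mathrm{Null}(T)\le |E_\star|/2$ (strict for \eqref{eq_nonbubble_vanish}), the half-edge potential $h$, and your $k=2$ characterisation of nested bubble diagrams are all sound. In fact your inequality $h(u)\le 4+4\,\#\{\text{non-null nodes below }u\}$ is just Lemma~\ref{lem_eta_nonneg} written bottom-up.

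\textbf{The gap.} It is where you place it: for even $k\ge 4$ you only obtain $\mathrm{Null}(T)\le |E_\star|/2$, so the vanishing \eqref{eq_nonbubble_vanish} is not proved. The repair you suggest cannot work. The potential $h(u)\le 2+2\,\#\text{non-null}(u)$ already fails at every leaf of $T$, where $h=4$. It also fails at every bubble node, where $h=4$ with no non-null node below. Its root consequence $\#\text{non-null}\ge k-1$ is false as well, since it would force $|V_\star|\ge k$. For a counterexample, take trees $\mI_1,\mI_1,\mI_2,\mI_2$ with inner vertices $a$; $b$; $c\to d$; $e\to f$, and pair $a\sim c$, $d\sim e$, $b\sim f$. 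This gives a connected positive $8$-leg Anderson--Feynman diagram, a path of three inner vertices. Every Hepp tree has $|\overline{T}|=2<k-1=3$.

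\textbf{The missing idea.} Use connectedness of $\Gamma$ only once, at the root, rather than through a global refinement. Write $T=[T_1,T_2]$ with roots $u_1,u_2$. The leaf sets of $T_1,T_2$ partition $V_\star$, so connectedness gives $m_\bullet\ge 1$. For $k\ge 3$ the root is non-null, since $h(\bullet)=2k\neq 4$. So with $N=N_1+N_2+1$ the number of non-null nodes of $\overline{T}$, your own bound on the two subtrees gives
\[
2k=h(\bullet)=h(u_1)+h(u_2)-2m_\bullet\le 8+4(N_1+N_2)-2=2+4N .
\]
Hence $N\ge (k-1)/2$, and for every $T$
\[
\mathrm{Null}(T)\le |V_\star|-1-\tfrac{k-1}{2}=\tfrac{|E_\star|-1}{2}.
\]
This gives decay $(\log\tfrac1\ve)^{-1/2}$ for all $k\ge 3$, whatever the parity of $k$. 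It is exactly the paper's argument: Lemma~\ref{lem_eta_nonneg} applied to $T_1$ and $T_2$ separately, plus one edge joining their leaf sets.
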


\begin{proof}
Let $ \Gamma $ be a non--negative Anderson--Feynman diagram with $2k$ legs, i.e. it is
of the form $ \Gamma =  (\mI_{n_{1} }, \ldots, \mI_{n_{k}})_{\kappa} $, for some $k
\geqslant 1 $ and some complete pairing $\kappa$.
Then for every $T \in \mT_{V_{\star}}$
\begin{equation}\label{eq_Estar_card}
\begin{aligned}
| E_{\star}|
= \sum_{\ell =1}^{k}
(n_{\ell}-1)
=
2
\sum_{\ell =1}^{k} \frac{n_{\ell}}{2} -k
= 2 | V_{\star}|-k
= 2 |\trim| +(2-k)\,.
\end{aligned}
\end{equation}
Thus, by applying Proposition~\ref{prop_countlogs_nonneg}, we have for some $C>0$:
\begin{equation}\label{eq_supp1_nonneg}
\begin{aligned}
|\lambda_{\varepsilon}|^{| E_{\star}|}
|\Pi_{\varepsilon} \Gamma ( \varphi) |
&\leq
\| \varphi\|_{\infty} C^{|E_\star| +2k } |\hat{\lambda}|^{|E_\star|}
\sum_{T \in \mT_{V_{\star}}}
\big(
\log{ \tfrac{1}{ \varepsilon}}
\big)^{\mathrm{Null}(T)- |\trim|+\frac{k-2}{2}}\,.
\end{aligned}
\end{equation}
We analyse the following three cases to conclude~\eqref{eq_estimate_nonneg}:

\begin{itemize}

  \item \underline{$k =1$}.
In this case $ \Gamma = ( \mI_{n})_{\kappa}$, for some $n$
that is even. Then $ \deg \, \Gamma = d (\tfrac{n}{2}-1) - 2
(n-1)=2-d<0$, since the pairing $ \kappa$ halves the number of vertices. Hence, a non--negative diagram can never emerge from only
internal pairings.

\item \underline{$k=2$}.
Then the exponent on the right side of~\eqref{eq_supp1_nonneg}
is non--positive, because $ \mathrm{Null}(T) \leqslant |T|$ by definition.


\item \underline{$k \geqslant 3$}.
In this case, we will prove that $\mathrm{Null}(T) \leqslant |T| -
\tfrac{k-1}{2}$ for every $ T \in \mT_{V_{\star}}$,
which implies that the right side in~\eqref{eq_supp1_nonneg} always vanishes in the limit $\ve \to 0$.
To see this, we first note that because $ k \geqslant 3$
\begin{equation}\label{eq_k3_posdeg}
\begin{aligned}
\sum_{v \in \trim} \eta (v)
=
\deg \, \Gamma
= d ( | V_{\star}| -1 ) -2 (2 |V_{\star}| -k)
= 2k-4 >0
\,,
\end{aligned}
\end{equation}
since $ d= 4$.
Thus, the last integration at the root of the Hepp tree $T$ can not yield a
logarithmic divergence.
Therefore, $\mathrm{Null} (T) = \mathrm{Null} (T_{1}) + \mathrm{Null}( T_{2})$ where $ T_{1}, T_{2}$ are the two
subtrees obtained by splitting off the root of $T = [T_1, T_2]$. We will write $ V_{i}$ for the set of
leaves in $ T_{i}$. Note that $ V_{1}$ and $ V_{2}$ form a partition of $
V_{\star} $.

Next, note that there is at least one edge that connects
$ V_{1}$ to $ V_{ 2}$ in $ \Gamma $, otherwise $ \Gamma $ would be disconnected.
Hence,
\begin{equation*}
\begin{aligned}
\mathrm{Null} (T) =
 \mathrm{Null} (T_{1}) + \mathrm{Null}( T_{2})
\leqslant \frac{1}{2} \big(| E_{\star}| -1\big)
=
\frac{1}{ 2}
\big(2| \trim| +2-k\big)  - \frac{1}{2} \,,
\end{aligned}
\end{equation*}
 where we used Lemma~\ref{lem_eta_nonneg} below, which states that
$\mathrm{Null} (T_{i}) $ is bounded by half the number of edges present in the
subgraph that is spanned by the leaves of $ T_{i}$.
In combination with~\eqref{eq_supp1_nonneg}, this finally yields
\begin{equation*}
\begin{aligned}
|\lambda_{\varepsilon}|^{| E_{\star}|}
| \Pi_{\varepsilon} \Gamma ( \varphi) |
\lesssim_{|E_\star|, \varphi, \hat{\lambda}}
\frac{1}{ \sqrt{\log \tfrac{1}{ \varepsilon}}} \to 0  \,, \quad \text{as }
\varepsilon \to 0 \,.
\end{aligned}
\end{equation*}
\end{itemize}
To conclude~\eqref{eq_nonbubble_vanish} we first note that if $ k \geqslant
3$ then $ \Gamma$ is never a nested bubble diagram because $ \deg\,
\Gamma >0$, cf.~\eqref{eq_k3_posdeg}.
On the other hand, in the case $k=2$, it suffices to show that if there exists a Hepp tree $ T
\in \mT_{V_{\star}}$ such that $ \mathrm{Null}(T) = |\trim|$, then $ \Gamma$ must be a
nested bubble diagram.

Indeed fix any Heap ordering $\{u_i\}_{i=1}^{|\trim|} = \trim $ of the inner
vertices (see Definition~\ref{def:heap-ordering}),
and set $\widetilde{\Gamma}_i = \Gamma_{u_i}$. Now, we iteratively contract the
subdiagrams $\widetilde{\Gamma}_{i}$. Let us define $\Gamma_1 =
\Gamma$ and then for $i\geqslant 1$ set $\Gamma_{i+1} = \mK_{\bg_{i}} \Gamma_i$, where $\bg_i$
is the image of $\widetilde{\Gamma}_{i}$ in the contracted diagram $\Gamma_{i}$,
following Remark~\ref{rem:subdiam}.
Then we claim that $\{\Gamma_i\}_{i=1}^{|\trim|+1}$ and
$\{\bg_i\}_{i=1}^{|\trim|}$
form a sequence of nested bubble diagrams as in Definition~\ref{def:bubbl}. The
only thing we must show is that each $\bg_i$ is isomorphic to a bubble diagram.
This is the case for $\bg_1$ because it necessarily consists of the
subdiagram spanned by two leaves and in order to have degree zero, these must
be connected by two edges, forming a bubble (since self--loops are not possible
as they lead to strictly negative divergences). For all $i>1$ we argue similarly. Let $v_{i,1}, v_{i,2}$ be the two
children of $u_i$ in $ T $. If any of the $v_{i,j}$ is not a leaf, then it must
have already appeared as some $u_k$ for $k< i$, because we chose a Heap
ordering. This means that all the leaf variables that have $u_k$ as their ancestor have been
contracted to a single node in $\Gamma_k$, and therefore also in $\Gamma_{i}$.
Hence, $\bg_i$ is again spanned by two vertices in $\Gamma_{i-1}$, and the same argument as before shows that it must be a bubble diagram. This concludes the proof.
\end{proof}

\begin{lemma}\label{lem_eta_nonneg}
  Let $ \Gamma $ be a non--negative Anderson--Feynman diagram and $ T \in \mT_{
  V_{\star}}$ an associated  Hepp tree. We have the upper bound $ \mathrm{Null} ( T ) \leqslant \tfrac{ |E_{\star} |}{2}$.
  \end{lemma}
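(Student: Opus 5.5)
The plan is to attach to every node counted in $\mathrm{Null}(T)$ at least two internal edges of $\Gamma$, in such a way that different nodes receive disjoint sets of edges. The natural assignment sends each edge $e\in E_\star$ to the least common ancestor $e_+\wedge e_-\in\trim$. This is well defined, since an Anderson--Feynman diagram has no self-loops contributing to $E_\star$ without giving a strictly negative subdiagram, and non-negativity excludes those. Since each edge has exactly one least common ancestor, these edge sets are automatically disjoint. The claim then reduces to showing that every $u\in\trim$ with $\overline{\deg}(u)=0$ has at least two edges $e$ with $e_+\wedge e_-=u$.

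In the Anderson setting $d=4$ and $\mathfrak t\equiv -2$, so~\eqref{eq_def_eta} reads
\[
\eta(u)=4-2\,\big|\{e\in E_\star : e_+\wedge e_-=u\}\big| .
\]
Fix $u\in\trim$ with $\overline{\deg}(u)=0$, and let $c_1,c_2$ be its two children in $T$. By~\eqref{eq_def_degbar}, $\overline{\deg}(u)=\eta(u)+\overline{\deg}(c_1)+\overline{\deg}(c_2)$. Here we adopt the convention $\overline{\deg}(c)=0$ when $c$ is a leaf, which is consistent with~\eqref{e:null-defn} since a single vertex gives $d(1-1)=0$ and spans no edges. If $c_i\in\trim$, then~\eqref{eq_def_degbar} gives $\overline{\deg}(c_i)\ge\deg(\Gamma_{c_i})\ge 0$, using that $\Gamma$ is non-negative. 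Hence
\[
\eta(u)=-\overline{\deg}(c_1)-\overline{\deg}(c_2)\le 0 .
\]
This means $4-2\,|\{e:e_+\wedge e_-=u\}|\le 0$, so at least two edges have least common ancestor $u$.

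Summing over the null nodes and using disjointness gives
\[
2\,\mathrm{Null}(T)\le\sum_{u\in\trim}\big|\{e\in E_\star: e_+\wedge e_-=u\}\big|=|E_\star| ,
\]
which is the claim.

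The only delicate point is the non-negativity of $\overline{\deg}(c_i)$ for inner children. Here $\Gamma_{c_i}$ may be disconnected, but $\overline{\deg}$ dominates the sum of the degrees of its connected components, each of which is $\ge 0$ because $\Gamma$ is non-negative. This is exactly the inequality recorded after~\eqref{eq_def_eta}, so I expect no real obstacle.
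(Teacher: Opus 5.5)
Your proof is correct, but it takes a different route from the paper's.

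\emph{The paper's argument.} The paper considers the set $F\subseteq\trim$ of \emph{topmost} null nodes. For $u\in F$, the identity $0=\overline{\deg}(u)=4(|V(\Gamma_u)|-1)-2|E(\Gamma_u)|$ says that the subtree rooted at $u$ has exactly $\tfrac12|E(\Gamma_u)|$ inner nodes. Every null node lies below some $u\in F$, and the $\Gamma_u$, $u\in F$, are disjoint. Hence $\mathrm{Null}(T)\le\sum_{u\in F}\tfrac12|E(\Gamma_u)|\le\tfrac12|E_\star|$. This is a global count on whole subtrees. It never uses non-negativity or the absence of tadpoles, so it holds for any diagram with $d=4$, $\mathfrak t\equiv-2$.

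\emph{Your argument.} Yours is local: you charge each edge to its least common ancestor. You then use $\overline{\deg}(u)=\eta(u)+\overline{\deg}(c_1)+\overline{\deg}(c_2)$ with $\overline{\deg}(c_i)\ge0$ to show $\eta(u)\le0$, i.e.\ at least two edges are born at every null node. This needs the non-negativity hypothesis essentially; for a negative diagram a null node can carry a single edge when a child has $\overline{\deg}=-2$. It also needs the absence of self-loops, so that $e\mapsto e_+\wedge e_-$ lands in $\trim$. You justify both correctly.

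\emph{What each buys.} The paper's count is more robust, since it holds without the hypothesis. Yours gives a sharper node-by-node statement, which makes the application in the case $k\ge3$ of Lemma~\ref{lem_nonneg_diagrams} immediate. There the root is not null and, by connectedness, carries at least one edge, so $\mathrm{Null}(T)\le\tfrac12(|E_\star|-1)$ follows directly. Your argument also applies verbatim to the subtrees $T_i$ and the possibly disconnected subgraphs $\Gamma_{\vr_i}$ used there, since all their subdiagrams are subdiagrams of $\Gamma$.
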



  \begin{proof}

  Let $ F \subset \trim $ be the subset of inner nodes $u$ such that
  \begin{equation}\label{eq_supp1_boundK}
  \begin{aligned}
\overline{\deg} ( u) =
\sum_{v \succ u} \eta (v) =0 \quad \text{and} \quad
\overline{\deg} ( w) >0 \quad \text{for all } w \prec u \,,
  \end{aligned}
  \end{equation}
  i.e.  all nodes $w$ on the path from $u \in F $ to the root $\bullet$ satisfy $\sum_{v \succ w} \eta
  (v)>0$, and
  \begin{equation*}
  \begin{aligned}
  0=\sum_{v \succ u} \eta(v)
  = d (|V(\Gamma_u)| -1) - 2 | E (\Gamma_u)|\,.
  \end{aligned}
  \end{equation*}
  In particular, if $T_u \subseteq \trim$ is the set of inner
nodes that are children of $u \in F$, namely $T_u = \{v \in \trim \text{ such that }
v\succ u\}$, then since $d=4$
  \begin{equation*}
    |T_u|= |V(\Gamma_u)| -1 = \frac{1}{2} |E (\Gamma_u) | \;, \qquad \forall u \in F\;.
  \end{equation*}
  Because all inner nodes $v \in \trim$ satisfying $ \overline{\deg}(v) =0$ must be descendants of nodes in $ F $,
  we have the upper bound
  \begin{equation*}
  \begin{aligned}
  \mathrm{Null} (T) \leqslant \sum_{u \in F} | T_{u}|
  = \sum_{u \in F} \frac{| E (\Gamma_u)|}{ 2}
  \leqslant \frac{| E_{\star}|}{2} \,,
  \end{aligned}
  \end{equation*}
  where in the last step we used that the subdiagrams $ \Gamma_{u}$ are
  disjoint.
  \end{proof}

\section{Exact contributions of nested bubble diagrams}\label{sec_exact}

In the previous section, we proved that any non--negative contributing diagram must be a
nested bubble diagram, see Lemma~\ref{lem_nonneg_diagrams}.
In particular, this lemma proves an upper bound to the
contribution of a single contributing diagram:
\begin{equation}\label{eq_bubble_upperbound}
\begin{aligned}
\lim_{\varepsilon \to 0}
|\lambda_{\varepsilon}|^{| E_{\star}|}
|\Pi_{\varepsilon} \Gamma ( \varphi) |
\leqslant  (C |\hat{\lambda}|)^{| E_{\star}|}
\big| \{
T \in \mT_{V_{ \star}} \,: \,  \overline{\deg}(u) \equiv 0
\}\big|\,,
\end{aligned}
\end{equation}
where $ C>0$ is some constant. However,~\eqref{eq_bubble_upperbound} is not
sharp, and the objective of this section is to obtain the precise limiting
behaviour of a nested bubble diagram. The fundamental challenge we face is
captured by the following example. We must be able to distinguish the
contributions of the following two diagrams:
\begin{equation}\label{eq_exactcontr_ex1}
  \<c42plain> = \<cc42plain> \;, \qquad \qquad \<c42> = \<cc42> \;.
\end{equation}
Here we will link the contribution of a diagram to its symmetries. The
distinguishing feature between the two diagrams above is that the first one has
three bubbles that we can extract, while the second one only has one: in this
case the next bubble will only appear once we contract the first one. Diagrams that
contain many nested bubbles turn out to contribute less, because if one bubble
is nested in another, then in order to contribute there is a natural order
forced on the integration variables. More precisely, the scales associated to the innermost
variables must be smaller than the outermost ones. As we will see, the correct
way to count the \lqm symmetries\rqm of nested bubble diagrams, in order to link
symmetries to the limiting contribution, is by counting
the number of nested bubble extraction sequences from
Definition~\ref{def:extraction-sequences}. Eventually, since there are $6$ ways
to extract three bubbles from the first diagram in~\eqref{eq_exactcontr_ex1}, but only one way to extract
three bubbles from the second diagram, the contribution of the first will be $6$ times
the one of the second diagram.


The main technical result of this section is the following
proposition, which is in the spirit of Proposition~\ref{prop_countlogs_nonneg},
and allows us to estimate the exact contribution of a nested bubble diagram over
a given Hepp ordering. For the statement of this result we require tree
factorials. For a finite rooted tree $\tau$ obtained by grafting $\tau = [\tau_1, \ldots,
\tau_n]$ for some $n \in \NN_*$, we define
\begin{equation*}
  \tau ! = |\tau | \tau_1 ! \cdots \tau_n ! \;, \qquad \bullet ! = 1 \;,
\end{equation*}
where with $\bullet$ we indicate the tree that consists only of the root node,
and $|\tau| = 1 + |\tau_1| + \cdots + |\tau_n|$ is the number of vertices
(including leaves) of $\tau$.
Finally, let us define the set of \emph{contributing Hepp trees} over a given diagram
$\Gamma$:
\begin{equation}\label{e:def_contr_hepp}
  \mT_{V_\star}^{0}(\Gamma) = \{ T \in \mT_{V_\star} \text{ such that } \rm{Null}(T) = |\trim| \} \;.
\end{equation}
Here $\rm{Null}(T)$ is defined as in~\eqref{e:null-defn} with $\mf{t} \equiv -2$.
Whenever clear from context we write $\mT_{V_\star}^{0}$ instead of $\mT_{V_\star}^{0}(\Gamma)$.

\begin{proposition}\label{prop_bubbleDiag_exact}
Let $ \Gamma $ be a connected nested bubble diagram as in Definition~\ref{def:bubbl}, then for all $\varphi \in
\mS( (\TT^d)^L; \RR)$
\begin{equation*}
\begin{aligned}
\lim_{ \varepsilon \to 0} \lambda_{\varepsilon}^{| E_{\star}|}
\Pi_{\varepsilon} \Gamma ( \varphi)
=
\bigg(\frac{\hat{\lambda}}{ \sqrt{8} \pi} \bigg)^{| E_{\star} |} \sum_{T \in \mT_{V_{\star}}^{0}(\Gamma)}
\frac{1}{ \overline{T} !}
\<cc1_narrow> ( \varphi ) \;,
\end{aligned}
\end{equation*}
with
\begin{equation*}
\begin{aligned}
\<cc1_narrow> ( \varphi )
 :=
\int_{ ( \mathbf{T}^{d} )^{L}} \varphi (x_{L} )
\int_{\mathbf{T}^{d}} \prod_{v \in L} G
(x_{\star} - x_{v}) \ud x_{\star}  \ud x_{L}\,.
\end{aligned}
\end{equation*}
\end{proposition}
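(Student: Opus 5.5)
We plan to decompose $\Pi_\ve\Gamma(\varphi)$ over Hepp trees as in~\eqref{eq_pos_est}, discard the non-contributing ones, and compute each contributing one exactly. Since $\Gamma$ is a $4$-leg diagram, \eqref{eq_Estar_card} gives $|E_\star| = 2|\trim|$. Consequently every $T\notin\mT^0_{V_\star}(\Gamma)$ has $\mathrm{Null}(T)\le|\trim|-1$. The sector-by-sector bound of Proposition~\ref{prop_countlogs_nonneg} then shows that the contribution of $D(T)$, multiplied by $\lambda_\ve^{|E_\star|}=\hat\lambda^{|E_\star|}(\log\frac1\ve)^{-|\trim|}$, vanishes. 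The diagonals have measure zero by Lemma~\ref{lem_hepp_almost_covers}. It therefore remains to show, for each $T\in\mT^0_{V_\star}$, that
\begin{equation*}
\Big(\log\tfrac1\ve\Big)^{-|\trim|}\int_{D(T)}\mW_\ve\Gamma(x_{V_\star})\,\Psi_\ve(x_{V_\star})\,\ud x_{V_\star}\;\longrightarrow\;\frac{(8\pi^2)^{-|\trim|}}{\trim!}\,\<cc1_narrow>(\varphi)\,.
\end{equation*}
Here $\Psi_\ve$ denotes $\varphi$ integrated against the four legs. The constant matches the claim, since $(\sqrt8\pi)^{-|E_\star|}=(8\pi^2)^{-|\trim|}$.

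\textbf{Step 1: localisation.} For $T\in\mT^0_{V_\star}$, every inner node $u$ has $\eta(u)=0$ (inductively, $\overline{\deg}(u)=0$ for all $u$). As in the proof of Lemma~\ref{lem_nonneg_diagrams}, each node $u$ corresponds to a bubble joining the two (contracted) children clusters. Write $n_u=\mathbf{n}(u)$ and restrict to sectors with $n_u\in[\delta N_\ve,(1-\delta)N_\ve]$ and $n_{u'}-n_u\ge M$ whenever $u'\succ u$. The complement costs at least one logarithm (adapt Lemma~\ref{lem_estimate_nonneg_Hepp} by counting the summation ranges), so its contribution is $o(1)$ after first letting $\ve\to0$, then $M\to\infty$, then $\delta\to0$. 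On the good sectors all inner vertices lie within distance $O(\ve^{\delta})$ of one another. Hence the legs and $\Psi_\ve$ may be replaced by their values at a single point $x_\star$, which produces $\<cc1_narrow>(\varphi)$ in the limit. Inside the bubbles, $\ve\ll|x|\ll1$, so $G_\ve(x)$ may be replaced by $(4\pi^2|x|^2)^{-1}$ up to a factor $1+o(1)$. Scale separation also lets us replace the two edges of the bubble at $u$ by $|y_u|^{-4}/(16\pi^4)$. Here $y_u$ is the difference between representative points of the two child clusters, and the representatives may be chosen consistently through the tree.

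\textbf{Step 2: exact asymptotics.} After the change of variables to $(y_u)_{u\in\trim}$, the integral factorises into $\prod_u (16\pi^4)^{-1}|y_u|^{-4}\ud y_u$, subject to the order constraints $|y_{u'}|\ll|y_u|$ for $u'\succ u$. With $s_u=\log(1/|y_u|)$, each factor becomes $2\pi^2\,\ud s_u/(16\pi^4)=\ud s_u/(8\pi^2)$, using $|S^3|=2\pi^2$. The problem thus reduces to the volume of $\{s\in[0,\log\frac1\ve]^{\trim}: s_{u'}>s_u \text{ for } u'\succ u\}$. This equals $(\log\frac1\ve)^{|\trim|}$ times the probability that uniform i.i.d.\ variables respect the tree order, which is exactly $1/\trim!$ by the standard hook-length formula for trees. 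The factor $\trim!$ is defined recursively, and the recursion for this probability is immediate: the root must carry the minimum among its subtree, which has probability $1/|\tau|$.

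\textbf{Main obstacle.} I expect the difficulty to lie in Step 1, namely in justifying rigorously that the Hepp sector constraints, written in terms of all pairwise distances, coincide up to negligible errors with the ordered log-scale region in the $y_u$ variables. Boundary effects of the dyadic shells are only $O(1)$ per level and therefore lower order. I would address this by sandwiching $D(T)$ between the unions of shells with constraints shifted by $\pm M$, and by controlling the difference with the counting argument of Lemma~\ref{lem_estimate_nonneg_Hepp}.
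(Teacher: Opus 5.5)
Your argument is correct. Its skeleton is the same as the paper's: decompose into Hepp sectors, discard $T\notin\mT^0_{V_\star}(\Gamma)$ by power counting, localise the test function, reduce each bubble to one relative variable, and evaluate the ordered scale sum exactly. Both technical steps, however, are carried out differently.

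\begin{itemize}
\item \textbf{The paper's route.} It localises $\Psi_\ve$ by adding an auxiliary edge of type $+1$ and invoking Proposition~\ref{prop_countlogs_nonneg} (Lemma~\ref{lem_testfunc_loc}). It moves the two edges of each bubble onto distinguished vertices with the rewiring maps $\hat{\mC}_\gamma$. The error is paid for by a Taylor gain that shifts $\eta$ by $+1$ at $u_i$ and by $-1$ at its parent (Lemma~\ref{lem_bubbleTree_exact}). The main term is then evaluated by induction on $T$, dyadic shell by dyadic shell, using Lemma~\ref{lem_fixingestimate} and the Faulhaber estimate of Lemma~\ref{lem_logsum}.
\item \textbf{Your route.} You impose a scale window $[\delta N_\ve,(1-\delta)N_\ve]$ and a gap $M$ along chains. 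This makes three replacements multiplicative with uniformly small relative error: localisation, $G_\ve(x)\approx(4\pi^2|x|^2)^{-1}$, and the two bubble edges by $(16\pi^4)^{-1}|y_u|^{-4}$. You then compute the main term as the volume of an order polytope in the variables $s_u=\log(1/|y_u|)$.
\end{itemize}

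What your route buys:
\begin{itemize}
\item It avoids the rewiring combinatorics and the induction.
\item It identifies $1/\overline{T}!$ directly as the probability that i.i.d.\ uniform scales respect the tree order. This is exactly the hook-length identity the paper uses later in Corollary~\ref{cor_contr_diagram}.
\item Your sandwich between $M$-shifted constraints explicitly handles the mismatch between the pairwise-distance constraints defining $D_{(T,\mathbf{n})}$ and constraints on the $y_u$ alone. The paper's factorisation \eqref{eq_supp1_treefac} writes this as an exact identity.
\end{itemize}
What the paper's route buys: quantitative bounds, uniform in the lower cutoff $K$, with no triple limit in auxiliary parameters. Its machinery is also reused for the BPHZ estimates of Section~\ref{sec_negative}.

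Two details of your write-up need correcting.
\begin{itemize}
\item Measure-zero diagonals do not justify reducing to $\sum_T\int_{D(T)}$. Sectors of different trees overlap on sets of positive measure wherever scales tie along a chain. What you actually use is that sectors whose scales increase strictly along chains are pairwise disjoint across trees; the triplet argument in the proof of Lemma~\ref{lem_disjoin_hepp} needs strictness only between comparable nodes. So your good sectors are disjoint, and the rest of the space is covered by bad or non-contributing sectors, which you bound in absolute value.
\item The complement of the $\delta$-window costs a factor $O(\delta)$, not a logarithm. Given your order of limits this is harmless.
\end{itemize}
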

The proof of this result is presented in Section~\ref{sec:proof_prop_bubbleDiag_exact} below.
Now, a combinatorial argument yields a representation of the limit in terms of the
number of possible bubble extraction sequences, which links the proposition to
the discussion at the beginning of this section.

\begin{corollary}\label{cor_contr_diagram}
Let $ \Gamma $ be a connected nested bubble diagram as in Definition~\ref{def:bubbl} and write $m =
\tfrac{1}{2} | E_{\star}| = | V_{\star}| -1 $, then for all $\varphi \in \mS(
(\TT^d)^L ; \RR^d)$
\begin{equation*}
\begin{aligned}
\lim_{ \varepsilon \to 0}
\lambda_{\varepsilon}^{2m}
\Pi_{\varepsilon} \Gamma ( \varphi)
=
\Big(\frac{ \hat{\lambda}}{ \sqrt{8} \pi}\Big)^{2m}
\frac{|\mathbb{S}( \Gamma)|}{m! }
\<cc1_narrow> ( \varphi )  \,,
\end{aligned}
\end{equation*}
where $ \mathbb{S}( \Gamma )$ is the set of bubble extraction sequences of $
\Gamma$ from Definition~\ref{def:extraction-sequences}, and we used the same
notation as in Proposition~\ref{prop_bubbleDiag_exact}.
\end{corollary}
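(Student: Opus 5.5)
The corollary follows from Proposition~\ref{prop_bubbleDiag_exact} once we prove the combinatorial identity
\begin{equation*}
  \sum_{T \in \mT^{0}_{V_\star}(\Gamma)} \frac{1}{\overline{T}!} = \frac{|\mathbb{S}(\Gamma)|}{m!} \;.
\end{equation*}
The exponent is consistent: $|E_\star| = 2m$ by~\eqref{eq_Estar_card} with $k=2$, and $|\overline{T}| = |V_\star| - 1 = m$. The plan is to recognise $m!/\overline{T}!$ as the number of heap orderings of the trimmed tree $\overline{T}$ (Definition~\ref{def:heap-ordering}). Then we build a bijection between pairs $(T, \ell)$, with $T \in \mT^0_{V_\star}(\Gamma)$ and $\ell$ a heap ordering of $\overline{T}$, and bubble extraction sequences in $\mathbb{S}(\Gamma)$. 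Summing over $T$ then gives $\sum_T m!/\overline{T}! = |\mathbb{S}(\Gamma)|$, which is the claim.

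The first step is the hook-length fact: for a rooted tree $\tau$ with $m$ vertices, the number of orderings $\ell\colon\tau\to\{1,\dots,m\}$ in which every descendant is labelled before its ancestor is $m!/\tau!$. We prove this by induction on the grafting $\tau=[\tau_1,\dots,\tau_n]$. The root must receive label $m$, the labels $\{1,\dots,m-1\}$ are split among the subtrees in $\binom{m-1}{|\tau_1|,\dots,|\tau_n|}$ ways, and each subtree is ordered recursively. This yields $(m-1)!\prod_i 1/\tau_i! = m!/\tau!$. Here the factorial is taken on the trimmed tree $\overline{T}$ (the leaves of $T$ are removed), which is the convention of the proposition.

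The second step constructs the map $(T,\ell)\mapsto$ extraction sequence. This was essentially done in the proof of Lemma~\ref{lem_nonneg_diagrams}. Given a heap ordering $u_1,\dots,u_m$ of $\overline{T}$, set $\tilde\Gamma_i$ to be the full subdiagram spanned by $\bigcup_{j\le i} V_\star(\Gamma_{u_j})$. Since $\overline{\deg}(u_i)=0$ and both children of $u_i$ have already been contracted to single vertices, the argument there shows that each quotient $\tilde\Gamma_i/\tilde\Gamma_{i-1}$ is a bubble plus singletons. The sequence therefore satisfies Lemma~\ref{lem:extraction-sequence}, with the last step $i=m$ contracting everything. (Length indexing needs care: Definition~\ref{def:bubbl} uses $n-1$ bubbles with $n = |V_\star|$, so there are $m$ bubbles.)

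For the inverse, take an extraction sequence. Each bubble $\overline{\Gamma}_i$ joins exactly two vertices of $\Gamma_i$, and each of these is either an original vertex or a contracted blob corresponding to an earlier bubble. Record this as a binary tree $T$: node $u_i$ has as children those two vertices or blobs, and the leaves of $T$ are $V_\star$. Labelling $u_i$ by $i$ gives a heap ordering. We must check that $T\in\mT^0$. At each $u_i$, $\Gamma_{u_i}$ is the set of original vertices merged into the blob, and it has degree $0$, because each contraction of a bubble removes two edges and one vertex and so preserves degree, starting from $0$ for a bubble. Hence $\overline{\deg}(u_i)=0$.

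The final step, which I expect to be the main obstacle, is verifying that the two maps are mutually inverse. The danger is that different $(T,\ell)$ could produce the same nested sets $\tilde\Gamma_i$, or that the tree read off from a sequence depends on choices. The key point is that $\tilde\Gamma_i$ may be disconnected, since parallel bubbles coexist. The connected component containing the newest bubble determines $V_\star(\Gamma_{u_i})$, and its two merged parts determine the children of $u_i$. So $T$ and $\ell$ are recovered uniquely from the increasing sequence. One must also confirm that $\tilde\Gamma_i$ as constructed via $\mK^{-1}$ in~\eqref{e:gamma-tild-def} coincides with this union of blobs. This holds because $\mK^{-1}$ takes the maximal full preimage, which is exactly the full subdiagram on the merged vertices. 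With the bijection established, the corollary follows.
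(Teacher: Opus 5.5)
Your reduction matches the paper's: by Proposition~\ref{prop_bubbleDiag_exact} it suffices to show $\sum_{T\in\mT^0_{V_\star}(\Gamma)}1/\trim! = |\mathbb{S}(\Gamma)|/m!$. This follows from the hook-length count of heap orderings (your inductive proof of it is fine) together with a bijection between pairs $(T,\ell)$ and bubble extraction sequences, as in Lemma~\ref{lem_bijection}.

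\textbf{The forward map fails as written.} You define $\tilde\Gamma_i$ as the full subdiagram spanned by $\bigcup_{j\le i}V_\star(\Gamma_{u_j})$, i.e.\ the induced subdiagram containing every edge with both endpoints in that set. Once two incomparable nodes have been processed, this picks up the edges joining their vertex-disjoint blobs. Then $\tilde\Gamma_i/\tilde\Gamma_{i-1}$ is no longer a bubble plus singletons.

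\emph{Concrete example.} Take the five-vertex example in the proof of Lemma~\ref{lem_bijection}, with the heap ordering that processes the node with leaves $\{1,2\}$, then the node with leaves $\{4,5\}$, then the node with leaves $\{1,2,3\}$, then the root. Your $\tilde\Gamma_2$ contains the edge joining $2$ and $4$. So $\tilde\Gamma_2/\tilde\Gamma_1$ is the bubble on $\{4,5\}$ plus an edge from the contracted vertex to $4$, and $\tilde\Gamma_3/\tilde\Gamma_2$ is a triple edge. Hence the sequence is not in $\mathbb{S}(\Gamma)$. Reading off $V_\star(\Gamma_{u_i})$ as ``the component containing the newest bubble'' breaks for the same reason.

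\emph{The appeal to $\mK^{-1}$ does not rescue this.} In~\eqref{e:gamma-tild-def}, $\mK^{-1}_{\tilde\Gamma_{i-1}}\overline\Gamma_i$ must contract to exactly $\overline\Gamma_i$ plus singletons. So it equals $\tilde\Gamma_{i-1}$ together with the two bubble edges, and contains no cross edges between blobs. It is not the full subdiagram on the merged vertices.

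\textbf{The repair.} Take, as the paper does, $\tilde\Gamma_i=\bigcup_{j\le i}\Gamma_{u_j}$ as a union of subdiagrams. Equivalently, it is the disjoint union of $\Gamma_u$ over the processed nodes $u$ whose parent has not yet been processed. Each component is then connected and full. The condition $\eta(u_i)=0$ forces exactly two edges between the vertex sets of the two children of $u_i$, so the quotient is that bubble plus singletons. This is the ``union of blobs'' you describe in your last paragraph, and with it your inverse map and the mutual-inverse check go through.

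In the inverse direction, you identify a blob with $\Gamma_{u_i}$ from~\eqref{e:Gu}, which contains all edges between its vertices. For this you also need that no edge outside the contracted bubbles has both endpoints in the blob. Such an edge would become a self-loop that survives every later contraction, whereas the final diagram in Definition~\ref{def:bubbl} has none.
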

\begin{proof}
For the proof of this result recall from Definition~\ref{def:heap-ordering} that
a map $\ell : \overline{T} \to \{1, \ldots, |
\overline{T}|\}$ is a heap ordering with respect to the partial order of
the tree $\trim$ if $\ell(a) \geq \ell(b)$ whenever $a \prec b$.

Then the hooklength formula~\cite{Sagan1989}\footnote{In~\cite{Sagan1989}, heap orderings are called ``natural orderings''
and assign to the root the minimial label $1$, instead of the maximal label.
These two formulations are equivalent, since $ u \mapsto | \overline{T}| +1 - \ell (u)$
defines such a natural ordering.} states that
\begin{equation*}
  \begin{aligned}
  \frac{|\overline{T}|! }{ \overline{T} !}
  =
  \big|
  \{
  \ell \,: \, \ell \text{ is a heap ordering of } { \trim }
  \}\big|\,,
  \end{aligned}
\end{equation*}
which combined with Proposition~\ref{prop_bubbleDiag_exact} leads to
\begin{equation*}
  \begin{aligned}
  \lim_{ \varepsilon \to 0} \lambda_{\varepsilon}^{| E_{\star}|}
  \Pi_{\varepsilon} \Gamma ( \varphi)
  =
  \bigg(\frac{\hat{\lambda}}{ \sqrt{8} \pi} \bigg)^{| E_{\star} |} \bigg( \sum_{T \in \mT_{V_{\star}}^{0}(\Gamma)}
  \frac{1}{ m !} \big|
  \{
  \ell \,: \, \ell \text{ is a heap ordering of } {\trim}
  \}\big| \bigg)
  \<cc1_narrow> ( \varphi ) \;.
  \end{aligned}
  \end{equation*}
  Then the result is a consequence of Lemma~\ref{lem_bijection}, which proves
  that the sets
  \begin{equation}\label{e:def-Tord}
    \mT^{\rm{ord}}_{V_\star} (\Gamma) = \{
    (T,\ell) \,: \, T \in \mT_{V_\star}^0(\Gamma) \;, \text{ and }\ell \text{ is a heap ordering of } { \trim }
    \}
  \end{equation}
  and $\mathbb{S}(\Gamma)$ are in bijection. This completes the proof of the result.
\end{proof}

The missing ingredient of the proof of Corollary~\ref{cor_contr_diagram} is
the following lemma, which states that every $ T \in \mT_{V_{\star}}^{0}(\Gamma)$ and
heap ordering $ \ell$ of $\trim$, is associated to exactly one bubble extraction sequence in
$\mathbb{S} ( \Gamma) $.

\begin{lemma}\label{lem_bijection}
  For every Anderson--Feynman diagram $ \Gamma$, there exists a bijection $ F  : \mathbb{S} (\Gamma) \to
  \mT_{V_{\star}}^{\mathrm{ord}}(\Gamma)$, with the notation of
  Definition~\ref{def:extraction-sequences} and~\eqref{e:def-Tord}.
  \end{lemma}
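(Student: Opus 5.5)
We construct the map $F$ explicitly and then its inverse. Given a bubble extraction sequence $\{\tilde{\Gamma}_i\}_{i=1}^{n-1}$, $n = |V_\star|$, each step $i$ merges exactly two "blocks" of vertices of $V_\star$. These are the vertex sets of the connected pieces contracted so far, with singletons counted as blocks. Indeed, by Lemma~\ref{lem:extraction-sequence} the quotient $\tilde{\Gamma}_i/\tilde{\Gamma}_{i-1}$ is a bubble on two vertices of $\Gamma/\tilde{\Gamma}_{i-1}$, and each such vertex is either an original vertex or a contracted block. We record this as a binary merge history. Start with the leaves $V_\star$. At step $i$, create a new inner node $u_i$ whose two children are the current roots of the two merged blocks, and set $\ell(u_i) = n - i$. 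Since the final diagram $\Gamma_n$ has a single inner vertex, after $n-1$ steps everything has merged into one binary tree $T$ with leaf set $V_\star$. By construction $\ell$ decreases towards the root, so it is a heap ordering in the sense of Definition~\ref{def:heap-ordering}: every descendant is created earlier and receives a larger label. We set $F(\{\tilde{\Gamma}_i\}) = (T,\ell)$.

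Next we check that $T \in \mT^0_{V_\star}(\Gamma)$, i.e.\ $\overline{\deg}(u) = 0$ for every $u \in \trim$. For $u = u_i$, the full subdiagram $\Gamma_{u_i}$ is spanned by the union of the two merged blocks. The induction runs as follows.
\begin{itemize}
\item Each block $B$ that is the vertex set of a contracted piece satisfies $4(|B|-1) = 2|E(\Gamma|_B)|$, since every previous bubble contributes $1$ vertex reduction and $2$ edges.
\item The new bubble adds exactly two edges between the two blocks.
\item We need the full subdiagram on the union to contain no further edges. This holds because the bubble is full in $\Gamma_i$: all edges in $\Gamma_i$ between the two contracted vertices belong to the bubble, and edges internal to blocks were already counted.
\end{itemize}
Hence $\overline{\deg}(u_i) = 4(|B_1|+|B_2|-1) - 2(|E_1|+|E_2|+2) = 0$.

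The inverse map follows the argument at the end of the proof of Lemma~\ref{lem_nonneg_diagrams}. Given $(T,\ell)$ with $T$ contributing, list the inner nodes as $u_i := \ell^{-1}(n-i)$ and set $\tilde{\Gamma}_i$ to be the full subdiagram spanned by $\bigcup_{j\le i} V_\star(\Gamma_{u_j})$. Components are taken according to the current blocks, so this is the disjoint union of the $\Gamma_{u}$ for the maximal $u$ among $u_1,\dots,u_i$. We show that $\tilde{\Gamma}_i/\tilde{\Gamma}_{i-1}$ is a bubble plus singletons, as in Lemma~\ref{lem:extraction-sequence}. The heap property ensures that both children of $u_i$ are either leaves or earlier nodes, so their blocks are already contracted. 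In the contracted diagram, $\Gamma_{u_i}$ then becomes a two-vertex diagram. Its degree is $4 - 2(\#\text{edges}) = \overline{\deg}(u_i) - \overline{\deg}(\text{children}) = 0$, so it has exactly two edges and is a bubble. Self-loops do not occur, as they would give a negative subdiagram in an Anderson--Feynman diagram. Monotonicity $\tilde{\Gamma}_{i-1}\subseteq\tilde{\Gamma}_i$ is clear.

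It remains to see that the two constructions are mutually inverse. Both are determined by the same data: the ordered sequence of block merges. An extraction sequence is determined by which pair of blocks merges at each step, because fullness means the bubble is the full subdiagram on those two contracted vertices. A pair $(T,\ell)$ encodes exactly this ordered merge sequence. The main obstacle is the bookkeeping of identifying subdiagrams of contracted diagrams with subdiagrams of $\Gamma$ via $\mK^{-1}$. In particular, we must verify that $\mK^{-1}_{\tilde{\Gamma}_{i-1}}\overline{\Gamma}_i$, being a maximal full subdiagram, coincides with the full subdiagram on the vertex union. We would prove this first as a small auxiliary claim, noting that maximality forces fullness and that vertex sets determine full subdiagrams.
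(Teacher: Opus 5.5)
Your construction is the paper's: graft the two current trees under a new node at each bubble extraction, read the ordering off the order of creation, and invert via $\tilde{\Gamma}_i=\bigcup_{j\le i}\Gamma_{u_j}$. Your degree bookkeeping for $T\in\mT^0_{V_\star}(\Gamma)$ and for the inverse is more explicit than the paper's ``by construction''.

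However, your labelling runs the wrong way. In Definition~\ref{def:heap-ordering}, $a\prec b$ forces $\ell(a)\ge\ell(b)$, and the root $\bullet$ is the $\prec$-minimal element. So the root must receive the \emph{largest} label; the paper sets $\ell(u_{\overline{\Gamma}_i})=i$. With $\ell(u_i)=n-i$ the root gets label $1$, so your $F$ does not land in $\mT^{\mathrm{ord}}_{V_\star}(\Gamma)$. Correspondingly, your inverse $u_i:=\ell^{-1}(n-i)$, applied to an actual heap ordering, starts with $u_1=\bullet$. Then $\tilde{\Gamma}_1$ is the whole inner diagram rather than a bubble. Replace $n-i$ by $i$ in both places (equivalently, compose with $\ell\mapsto n-\ell$); the rest is unaffected.

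Second, your sentence ``self-loops do not occur, as they would give a negative subdiagram'' uses a non-negativity hypothesis that is not in the statement, and without it the claim fails. Take $\Gamma=(\mI_1,\mI_3)_\kappa$, pairing the vertex of $\mI_1$ with the first vertex of $\mI_3$ and the last two vertices of $\mI_3$ with each other. The two inner vertices are then joined by a single edge, and one of them carries a tadpole. The unique Hepp tree has $\overline{\deg}(\bullet)=4-2\cdot 2=0$, so $|\mT^{\mathrm{ord}}_{V_\star}(\Gamma)|=1$. But $\Gamma$ contains no bubble, so $\mathbb{S}(\Gamma)=\emptyset$.

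You should therefore assume $\Gamma$ non-negative, which is all that Corollary~\ref{cor_contr_diagram} uses; the paper's proof passes over this point too. Non-negativity forces $k\ge 2$, and for $k\ge3$ both sets are empty by counting. For $k=2$, the identity $|E_\star|=2|V_\star|-2$, together with exactly two internal edges being removed per contraction, shows that no self-loop is ever created along an extraction sequence. That is precisely what your block identity $4(|B|-1)=2|E(\Gamma|_B)|$ requires. It is also what your identification of $\mK^{-1}_{\tilde{\Gamma}_{i-1}}\overline{\Gamma}_i$ with the full subdiagram on the vertex union requires, whichever reading of ``full'' one adopts.
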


\begin{proof}
We construct a map $ F : \mathbb{S} (\Gamma) \to
\mT_{V_{\star}}^{\mathrm{ord}}(\Gamma)$ below, and check
that it is indeed a bijection.

Consider a bubble extraction sequence $\mE = \{ \tilde{\Gamma}_i \}_{i=1}^{n-1} \in \mathbb{S}(\Gamma)$,
where $n = |V_\star(\Gamma)|$. To construct a tree and associated heap ordering
$(T, \ell)$ we proceed as follows:
Our starting point is a forest $T^{(0)} =\{T_v, v \in V_\star\}$ of singletons,
meaning that each $T_v$ is a finite rooted tree consisting only of the root node.

\begin{enumerate}
\item Let $\overline{V}_1=\{v, w\}$ be the vertex set of the bubble
	$\overline{\Gamma}_1 \subseteq \tilde{\Gamma}_1$, see
	{Lemma~\ref{lem:extraction-sequence}.} 
	Note that $\tilde{\Gamma}_1$ consists
	only of $\overline{\Gamma}_1$ and possibly a number of
	singletons. Since this is the first bubble, we could also fix $\overline{\Gamma}_1 =
\tilde{\Gamma}_1$, but we refrain from doing so to make the next steps clearer.
Then define $T_{u_{\overline{\Gamma}_1}} := [T_{v}, T_{w}]_{u_{\overline{\Gamma}_1}}$ by which we mean the tree
that has root labelled $v_{\overline{\Gamma}_1}$ and leaves $v, w$. The choice
of the label $u_{\overline{\Gamma}_1}$ corresponds to the fact that
$u_{\overline{\Gamma}_1}$ is the vertex created through the contraction
$\bigslant{\Gamma}{\tilde{\Gamma}_1}$ following Definition~\ref{def:contract}.
We then set $T^{(1)} = (T^{(0)} \setminus \{T_{v_1}, T_{v_2}\} )\cup \{
T_{\overline{\Gamma}_1}\}$. For example, 
\begin{equation*}
  \begin{tikzpicture}[thick, scale = 0.7]
    \tikzset{
      vtx/.style={circle,fill=black,inner sep=2pt}
    }
    \foreach \i/\x in {1/0,2/1.6,3/3.2,4/5.0,5/6.4}{
      \node[dot] (v\i) at (\x,0) {};
    }
    \node[left] at (v1) {$1$};
    \node[above] at (v2) {$2$};
    \node[below] at (v3) {$3$};
    \node[above] at (v4) {$4$};
    \node[above] at (v5) {$5$};
    \draw (v1) -- (v2) -- (v3) -- (v4) -- node[midway,above=-1.5pt,purple] {$\overline{\Gamma}_2$} (v5);
    \draw (v1) to[out=-90, in =-90] node[midway,below=0.5pt,purple] {$\overline{\Gamma}_1$} (v2);
    \draw (v4) to[out=-90, in =-90] (v5);
    \draw (v1) .. controls +(0,1.3) and +(0,1.3) .. node[midway,above=0.5pt,purple] {$\overline{\Gamma}_3$} (v3);
    \draw (v2) .. controls +(0,-1.0) and +(0,-1.0) .. node[midway,below=0.5pt,purple] {$\overline{\Gamma}_4$} (v4);
    \end{tikzpicture}
    \qquad \qquad
    \begin{tikzpicture}[thick, scale =0.5]
      \tikzset{
        vtx/.style={circle,fill=black,inner sep=1.7pt}
      }
    \begin{scope}[xshift=11cm]
      \node[vtx,fill=purple] (u1) at (0,1.8) {};
      \node[dot] (w1) at (-0.9,0) {};
      \node[dot] (w2) at (0.9,0) {};
      \draw (u1) -- (w1);
      \draw (u1) -- (w2);
      \node[above,purple] at (u1) {$u_{\overline{\Gamma}_1}$};
      \node[below] at (w1) {$1$};
      \node[below] at (w2) {$2$};
      \node[dot] (w3) at (2.9,0) {};
      \node[dot] (w4) at (4.9,0) {};
      \node[dot] (w5) at (6.9,0) {};
      \node[below] at (w3) {$3$};
      \node[below] at (w4) {$4$};
      \node[below] at (w5) {$5$};
    \end{scope}
    \end{tikzpicture}
\end{equation*}
{where the $\overline{\Gamma}_i$ are suggestive of the bubbles that appear
  iteratively as we remove one after the other (the actual bubble
$\overline{\Gamma}_i$ appear only after some contractions).}

\item We iterate this procedure. Indeed, we are in the same position we
started from. Assume that for $i\in \{1, \ldots, n-2\}$ we are given a forest $T^{(i)}$ of the form $T^{(i)} = \{T_v \,
\colon \, v \in V(\bigslant{\Gamma}{\tilde{\Gamma}_{i}})\}$. Then we define the
forest $T^{(i+1)}$ as follows. Let $\{v, w\}$ be the vertices of the bubble
$\overline{\Gamma}_{i+1} \subseteq \bigslant{\Gamma}{\tilde{\Gamma}_{i}}$, see 
{Lemma~\ref{lem:extraction-sequence}.} Then
define the grafted tree $T_{u_{\overline{\Gamma}_{i+1}}}= [T_{v}, T_{w}]_{u_{\overline{\Gamma}_{i+1}}}$, and set $T^{(i+1)} = ( T^{(i)} \setminus \{T_v, T_w\}) \cup
\{T_{u_{\overline{\Gamma}_{i+1}}}\}$.


\end{enumerate}

With this construction, the forest $T^{(n-1)}$ is a single rooted trees, and we set $T =
T^{(n-1)}$. In our example we would obtain:

\begin{equation}\label{e:hepp-tree-example}
  \begin{tikzpicture}[thick,scale=0.5]
    \tikzset{
      leaf/.style={circle,fill=black,inner sep=2pt},
      internal/.style={circle,fill=purple,inner sep=1.7pt}
    }
    \node[dot] (x1) at (0,0)   {};
    \node[dot] (x2) at (1.8,0) {};
    \node[dot] (x3) at (3.6,0) {};
    \node[dot] (x4) at (5.4,0) {};
    \node[dot] (x5) at (7.2,0) {};
    \node[below] at (x1) {$1$};
    \node[below] at (x2) {$2$};
    \node[below] at (x3) {$3$};
    \node[below] at (x4) {$4$};
    \node[below] at (x5) {$5$};
    \node[internal] (u1) at (0.9,1.3) {};
    \node[internal] (u3) at (2.7,2.4) {};
    \node[internal] (u2) at (6.3,1.9) {};
    \node[internal] (u4) at (4.5,3.3) {};
    \node[above left,purple]  at (u1) {$u_{\overline{\Gamma}_1}$};
    \node[above left,purple]  at (u3) {$u_{\overline{\Gamma}_3}$};
    \node[above right,purple] at (u2) {$u_{\overline{\Gamma}_2}$};
    \node[above,purple]       at (u4) {$u_{\overline{\Gamma}_4}$};
    \draw (x1) -- (u1);
    \draw (x2) -- (u1);
    \draw (u1) -- (u3);
    \draw (x3) -- (u3);
    \draw (u3) -- (u4);
    \draw (u4) -- (u2);
    \draw (x4) -- (u2);
    \draw (x5) -- (u2);
  \end{tikzpicture}
\end{equation}
Furthermore, by construction we have that $T \in
\mT_{V_\star}^{0}(\Gamma)$, and we have also obtained a heap ordering via the
labelling of $\trim$, by
assigning $\ell(u_{\overline{\Gamma}_i}) = i$.

Finally, this construction can be inverted. Fix any $ (T, \ell) \in
\mT_{V_{\star}}^{\mathrm{ord}}(\Gamma)$. We write $u_i$ for the inner node $u
\in \trim$ such that $\ell(u)=i$. Then define
\begin{equation*}
  \tilde{\Gamma}_{i}= \bigcup_{j \leq i} \Gamma_{u_j}  \;, \qquad \forall i \in \{1, \ldots, n-1\}\;,
\end{equation*}
where we have followed the notation of~\eqref{e:Gu}. It can be checked following
similar arguments as above, that $\{\tilde{\Gamma}_i\}_{i=1}^{n-1}$ is a bubble
extraction sequence in the sense of Definition~\ref{def:extraction-sequences}.
This complete the proof of the result.
\end{proof}

With the conclusion of Corollary~\ref{cor_contr_diagram}, we are
ready to move towards the proof of Proposition~\ref{prop_bubbleDiag_exact}. 
This requires some further concepts.

\subsection{Zero degree forests}\label{sec:zero-forests}

In this subsection, we introduce forests that consist of degree zero subdiagrams.
These are related to many of the concepts we already introduced: in particular
to contributing Hepp trees as in~\eqref{e:def_contr_hepp} and to heap orderings
as seen in the previous subsection. The following concept is the analogue of
Definition~\ref{def:forest-div}.

\begin{definition}[Zero degree forests]\label{def_zeroforest}
  Let $ \Gamma$ be a nested bubble diagram.
  A collection $ \mf{F} $ of connected, full subdiagrams of $ \Gamma$ is called a
  \emph{degree zero forest}, if for any $\overline{\Gamma}_1,\overline{\Gamma}_2\in \mf{F}$ either
  $\overline{\Gamma}_1\subset \overline{\Gamma}_2$, or $ \overline{\Gamma}_2\subset \overline{\Gamma}_1$, or
  $\overline{\Gamma}_1$ and $ \overline{\Gamma}_2$ are vertex-disjoint,
  and if $ \deg\, \overline{\Gamma}=0$ for every $ \overline{\Gamma} \in \mf{F} $.
  \end{definition}
Every contributing Hepp tree $ T \in \mT_{V_{\star}}^{0}(\Gamma)$, as
in~\eqref{e:def_contr_hepp}, induces a zero degree forest. 

\begin{lemma}\label{lem_forestT}
For a nested  bubble diagram diagram $ \Gamma$ and an associated Hepp tree $ T
\in \mT_{V_{\star}}^{(0)}(\Gamma)$, the collection
\begin{equation*}
\begin{aligned}
\mf{F}_{T}: = \{ \Gamma_{u} \,: \, u \in \trim \}\,,
\end{aligned}
\end{equation*}
defines a zero degree forest, with $ \Gamma_{u}$ as in~\eqref{e:Gu}.
\end{lemma}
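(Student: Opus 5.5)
We have to check three properties of the collection $\mf{F}_T = \{\Gamma_u : u \in \trim\}$: each $\Gamma_u$ is full, each is connected with $\deg \Gamma_u = 0$, and any two of them are nested or vertex-disjoint. Fullness holds by construction, since \eqref{e:Gu} takes all edges of $\Gamma$ between vertices of $V_\star(\Gamma_u)$. The forest property is inherited from the tree $T$. For $u, u' \in \trim$, either one is an ancestor of the other or $u \perp u'$. If $u \prec u'$, then every leaf $v \succ u'$ also satisfies $v \succ u$, so $\Gamma_{u'} \subseteq \Gamma_u$. If $u \perp u'$, the sets of leaf descendants of $u$ and $u'$ are disjoint, and hence $\Gamma_u$ and $\Gamma_{u'}$ are vertex-disjoint.

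The main point is to show that each $\Gamma_u$ is connected and has degree zero. Since $T \in \mT^0_{V_\star}(\Gamma)$, we have $\mathrm{Null}(T) = |\trim|$, so $\overline{\deg}(u) = 0$ for every $u \in \trim$. By the definition \eqref{e:null-defn},
\[
\overline{\deg}(u) = 4\big(|V_\star(\Gamma_u)|-1\big) - 2|E(\Gamma_u)|,
\]
and this quantity coincides with $\deg \Gamma_u$ from \eqref{e_def_deg} as soon as $\Gamma_u$ is connected. It therefore suffices to prove connectedness. Suppose instead that $\Gamma_u$ has connected components $C_1, \dots, C_r$ with $r \geq 2$. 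Then
\[
\deg \Gamma_u = \sum_{j} \deg C_j = \overline{\deg}(u) - 4(r-1) < 0,
\]
so some $C_j$ is a full connected subdiagram of strictly negative degree. This is impossible, because a nested bubble diagram is non-negative. Indeed, the proof of Lemma~\ref{lem_nonneg_diagrams} establishes that $\mathrm{Null}(T) = |\trim|$ for some $T$ characterises nested bubble diagrams, and these are the zero-degree diagrams of Proposition~\ref{prop:bubble}.

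The step I expect to be the main obstacle is justifying that a nested bubble diagram has no negative subdiagram, if one wants this directly from Definition~\ref{def:bubbl} rather than by appealing to the standing non-negativity assumption. If a direct argument is needed, I would use the bubble extraction sequence and induct on the number of contractions. Contracting a bubble (two vertices joined by two edges) preserves the degree of any subdiagram containing it. A subdiagram that contains only one of the two vertices loses at most one incident bubble edge and one vertex, so its degree can only decrease when passing back to the uncontracted diagram through a controlled count. This yields the statement that every connected subdiagram has degree $\geq 0$, and with it the lemma.
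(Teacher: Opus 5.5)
Your reduction is sound, and it takes a different route from the paper. You obtain connectedness of $\Gamma_u$ from a global fact, non-negativity of $\Gamma$, through the count $\sum_j \deg C_j=\overline{\deg}(u)-4(r-1)$. The paper instead argues locally on $T$. Since $\overline{\deg}(u)=\sum_{v\succ u}\eta(v)$ vanishes at every $u\in\trim$, one gets $\eta\equiv 0$. This means exactly two edges of $\Gamma$ join the leaves below the two children of $u$, and induction from the bottom of $T$ gives connectedness of every $\Gamma_u$, hence $\deg\Gamma_u=\overline{\deg}(u)=0$. That argument never uses non-negativity of $\Gamma$. It only uses the identity $\overline{\deg}(u)=\sum_{v\succ u}\eta(v)$, i.e.\ absence of self-loops, and this is automatic in a nested bubble diagram because contracting full bubbles neither creates nor removes self-loops.

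The step you flag as the obstacle is where your write-up has a real gap. First, your citation runs the wrong way. The proof of Lemma~\ref{lem_nonneg_diagrams} shows that a non-negative diagram admitting $T$ with $\mathrm{Null}(T)=|\trim|$ is a nested bubble diagram, and Proposition~\ref{prop:bubble} takes zero degree as a hypothesis. Neither yields that a nested bubble diagram is non-negative. You must either state non-negativity as a standing assumption (as the paper does in this section, e.g.\ in the proof of Lemma~\ref{lem_testfunc_loc}) or prove it.

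Second, your fallback argument has the monotonicity reversed. Let $S$ be a connected full subdiagram containing exactly one vertex $a$ of a full bubble $\{a,b\}$. Then $S$ contains no bubble edge. Its image $S'$ after contraction has the same number of vertices and additionally contains the $k\ge 0$ edges joining $b$ to $V(S)\setminus\{a\}$, so $\deg S=\deg S'+2k\ge \deg S'$; if $S$ contains both or neither of $a,b$, then $\deg S=\deg S'$. So passing back to the uncontracted diagram the degree can only \emph{increase}. That is exactly what the induction from $\Gamma_n$ (where every subdiagram has degree $0$) needs, whereas with your ``can only decrease'' the induction gives nothing. With this correction your proof is complete.
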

We do not provide a proof, because this observation is immediate, up to the fact that $
\Gamma_{u}$ as in~\eqref{e:Gu} with $u \in \trim$ is always connected, provided $
T \in \mT_{V_{\star}}^{(0)}(\Gamma)$. 
To see this, we first notice that $T \in \mT_{V_{\star}}^{(0)}(\Gamma)$ implies $
\eta\equiv 0 $. In particular, for $ \eta $ to vanish at every node, exactly two
(previously unobserved) edges
have to be
taken into account for in \eqref{eq_def_eta}, connecting leaves in the left branch of $
\trim$ rooted at $
u $ to leaves in the right branch.

Given a nested bubble diagram $ \Gamma = (V,E)$, it will be useful to iteratively extract
subdiagrams from a forest $\mf{F} $ of zero degree subdiagrams by ``rewiring''
the edges of the original diagram $ \Gamma$. This operation is performed through the iterative
application of extractions maps $\hat{\mC}_{\gamma}$ for $\gamma \in \mf{F}$.
The figure to keep in mind is the following:
\begin{equation*}
  \hat{\mC}_{\gamma} \;
  \begin{tikzpicture}[thick,>=stealth, scale=0.5, baseline=-0.2em]
    \tikzset{
      dot/.style={circle,fill=black,inner sep=1.5pt}
    }
    \node[above] at (-3.0,0.1) {$e_1$};
    \draw (-0.4,0) ellipse (1.8 and 1.1);
    \node at (-0.4,0) {$\gamma$};
    \node[dot] (bL)  at (-2.2,0) {};          
    \node[dot] (cL)  at (1.4,0)   {};         
    \node[dot] (dL)  at (0.5,-0.9) {};        
    \node[dot] (vL)  at (0.1,1.05) {};        
    \draw (-3.5,0) -- (bL);
    \node[above] at (vL) {$v_*$};
    \draw (vL) -- (2.2,1.9);
    \node[above] at (1.1,1.5) {$e_4$};
    \draw (cL) -- (2.8,0.05);
    \node[above] at (1.9,0.08) {$e_2$};
    \draw (dL) -- (2.3,-1.4);
    \node[below] at (1.3,-1.5) {$e_3$};
  \end{tikzpicture}
   \qquad = \qquad
  \begin{tikzpicture}[thick,>=stealth, scale=0.5, baseline = 0.0em]
    \tikzset{
      dot/.style={circle,fill=black,inner sep=1.5pt}
    }
     \draw (-0.4,0) ellipse (1.8 and 1.1);
     \node at (-0.4,0) {$\gamma$};
     \node[dot] (bL)  at (-2.2,0) {};          
     \node[dot] (cL)  at (1.4,0)   {};         
     \node[dot] (dL)  at (0.5,-0.9) {};        
     \node[dot] (vL)  at (0.1,1.05) {};        
     \node[above left] at (vL) {$v_*$};
     \coordinate (p1) at (1.7,2.15);
     \coordinate (p2) at (1.7,2.80);
     \coordinate (p3) at (1.7,1.62);
     \coordinate (p4) at (1.7,1.05);
     \draw (vL) -- (p1);
     \draw (vL) -- (p2);
     \draw (vL) -- (p3);
     \draw (vL) -- (p4);
     \node[right] at (p1) {$\scriptsize e_2$};
     \node[right] at (p2) {$\scriptsize e_1$};
     \node[right] at (p3) {$\scriptsize e_3$};
     \node[right] at (p4) {$\scriptsize e_4$};
    \end{tikzpicture}
\end{equation*}
However, a rigorous definition of these extraction maps requires some care, because we must systematically fix   the vertex $v_{\star} \in
V(\gamma)$ to which we rewire the edges.

For a fixed, connected, Feynman diagram $\Gamma = (V, E)$, we introduce the set $
\mD_{ \Gamma} $ of connected Feynman
diagrams $ \tilde{\Gamma}= ( V, \tilde{E})$ over the same vertex set of
$\Gamma$ such that there exists a bijection
\begin{equation}\label{e:tau-def}
  \tau_{\tilde{\Gamma}, \Gamma}: \tilde{E} \to E \;,
\end{equation}
such that $ \tau_{\tilde{\Gamma}, \Gamma} ( \tilde{E}_{L}) = E_{L} $, i.e. legs are
mapped to legs.
For brevity we may write $\tau$ instead of $\tau_{\tilde{\Gamma}, \Gamma}$. In
words $\tilde{\Gamma}$ is a Feynman diagram over the same set of vertices as
$\Gamma$, with the same number of edges.
We equip the inner vertices $ V_{\star} $ of both $ \Gamma$ and $
\tilde{\Gamma}$ with a
total ordering, by numbering them from $1$ to $| V_{\star}| $. The choice of the
ordering is irrelevant, and we can fix for instance a labelling similar to the
one from Remark~\ref{rem:labels} (in that case we are labelling also leaf
nodes). Given a subdiagram $\gamma \subseteq \Gamma$ we will then write
$v_\star(\gamma) \in V(\gamma)$ for the maximal vertex in $\gamma$ with respect
to this ordering.

Given a connected, full subdiagram $ \gamma$ of $ \Gamma$,
we then define the extraction map $ \hat{\mC}_{\gamma}: \mD_{\Gamma} \to
\mD_{\Gamma}$ as follows. For any $e= (e_-, e_+) \in \tilde{E}$ (the edge set of some
$\tilde{\Gamma} \in \mD_\Gamma$) we set
\begin{equation} \label{e:def-hat-C}
  \hat{\mC}_{\gamma} e = \begin{cases}
    (v_{\star}(\gamma), e_+)\,, & \text{if } e_- \in V_{\star}(\gamma) \text{ and } e_+ \notin V (\gamma) \,, \\
    (e_-, v_{\star}(\gamma))\,, & \text{if } e_+ \in V_{\star}(\gamma) \text{ and } e_- \notin V (\gamma) \,, \\
    e\,, & \text{else}\,.
  \end{cases}
\end{equation}
And in this way we define
\begin{equation*}
  \hat{\mC}_{\gamma} \tilde{\Gamma} = (V, \{ \hat{\mC}_{\gamma} e \,: \, e \in \tilde{E} \})\,.
\end{equation*}
Note that since $\gamma$ was connected, then also $\hat{\mC}_{\gamma}
\tilde{\Gamma}$ is connected, so that indeed $\hat{\mC}_{\gamma} \tilde{\Gamma}
\in \mD_{\Gamma}$.
Given a zero degree forest, we can then iteratively extract one divergence
after the other using the extraction map $ \hat{\mC}$, leading to the operator
\begin{equation}\label{eq_forest_extr}
\begin{aligned}
\mathfrak{K}_{\mf{F}} \Gamma := \prod_{\gamma \in \mf{F}} \hat{\mC}_{\gamma} \Gamma
\,.
\end{aligned}
\end{equation}
The order in which we perform the contractions does not matter:
\begin{lemma}
  For any
  Feynman diagram $\Gamma$, any two zero degree subdiagrams $\gamma_1,
  \gamma_2 \subseteq \Gamma$ such that $\gamma_1 \subseteq \gamma_2$ or
  $\gamma_1, \gamma_2$ are vertex disjoint, and any $\tilde{\Gamma} \in \mD_{\Gamma}$ it holds that $\hat{\mC}_{\gamma_1}
  \hat{\mC}_{\gamma_2} \tilde{\Gamma} = \hat{\mC}_{\gamma_2}
  \hat{\mC}_{\gamma_1} \tilde{\Gamma} $.
\end{lemma}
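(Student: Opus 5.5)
The argument is a direct edge-by-edge verification. By definition, $\hat{\mC}_{\gamma}$ acts on $\tilde{\Gamma} = (V, \tilde{E})$ edge by edge, via the map $e \mapsto \hat{\mC}_{\gamma} e$ of~\eqref{e:def-hat-C}. This map depends only on the fixed vertex set $V(\gamma)$ and on the fixed vertex $v_\star(\gamma)$, and the vertex set $V$ is never changed. Hence it suffices to show that for every pair $e = (a, b) \in V \times V$
\begin{equation*}
\hat{\mC}_{\gamma_1} \hat{\mC}_{\gamma_2} e = \hat{\mC}_{\gamma_2} \hat{\mC}_{\gamma_1} e \;.
\end{equation*}
The maps then commute on multisets of edges, since they commute on each element. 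Leaves are never vertices of subdiagrams, so legs only have their internal endpoint possibly moved.

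The case analysis rests on two elementary observations. First, $\hat{\mC}_{\gamma}$ only replaces an endpoint lying in $V(\gamma)$, and only when the other endpoint lies outside $V(\gamma)$. Second, the replacement $v_\star(\gamma)$ again lies in $V(\gamma)$.

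\emph{Disjoint case.} Since $V(\gamma_1) \cap V(\gamma_2) = \emptyset$, an endpoint moved by $\hat{\mC}_{\gamma_1}$ lies in $\gamma_1$ and is sent to $v_\star(\gamma_1) \in \gamma_1$. So membership of each endpoint in $V(\gamma_2)$ is unchanged, and vice versa. If $a \in \gamma_1$ and $b \in \gamma_2$, both orders yield $(v_\star(\gamma_1), v_\star(\gamma_2))$. If only one endpoint belongs to some $\gamma_i$, then only $\hat{\mC}_{\gamma_i}$ acts, and it acts identically in both orders. Otherwise both maps fix $e$.

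\emph{Nested case $\gamma_1 \subseteq \gamma_2$.} The key remark is that $v_\star(\gamma_1) \in V(\gamma_1) \subseteq V(\gamma_2)$. Moreover, if $v_\star(\gamma_2) \in V(\gamma_1)$, then $v_\star(\gamma_1) = v_\star(\gamma_2)$, because $v_\star$ is the maximum of the vertex set. We split according to the positions of $a, b$.
\begin{enumerate}
\item Both endpoints are in $\gamma_1$, or both are outside $\gamma_2$. Then both maps fix $e$.
\item Both endpoints are in $\gamma_2$, but not both in $\gamma_1$. Then $\hat{\mC}_{\gamma_2}$ fixes $e$, and it also fixes $\hat{\mC}_{\gamma_1} e$, since the moved endpoint $v_\star(\gamma_1)$ stays in $\gamma_2$. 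So both orders give $\hat{\mC}_{\gamma_1} e$.
\item Exactly one endpoint, say $a$, is in $\gamma_2$; the case with $b$ is symmetric.
\begin{itemize}
\item Applying $\hat{\mC}_{\gamma_1}$ first: it sends $a$ either to $v_\star(\gamma_1) \in \gamma_2$ (if $a \in \gamma_1$) or leaves it in $\gamma_2$. Then $\hat{\mC}_{\gamma_2}$ produces $(v_\star(\gamma_2), b)$.
\item Applying $\hat{\mC}_{\gamma_2}$ first: it gives $(v_\star(\gamma_2), b)$. Then $\hat{\mC}_{\gamma_1}$ either does nothing, if $v_\star(\gamma_2) \notin \gamma_1$, or replaces $v_\star(\gamma_2)$ by $v_\star(\gamma_1) = v_\star(\gamma_2)$.
\end{itemize}
Both orders therefore agree.
\end{enumerate}

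The only point requiring care is the last subcase, where the coincidence $v_\star(\gamma_1) = v_\star(\gamma_2)$ is essential. It follows precisely from fixing $v_\star$ as the \emph{maximal} vertex for a global ordering. This choice is what makes the rewiring consistent across nested subdiagrams. Notably, the zero-degree assumption plays no role in the argument.
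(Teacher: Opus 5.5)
Your proof is correct and follows the same route as the paper: an edge-by-edge case analysis. In the nested case, the only nontrivial point is that $v_\star(\gamma_2)\in V(\gamma_1)$ forces $v_\star(\gamma_1)=v_\star(\gamma_2)$ by maximality, which is what the paper means by ``$v_\star(\gamma_1)\leqslant v_\star(\gamma_2)$''; your version just spells out explicitly the cases the paper treats tersely, including the disjoint case.
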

\begin{proof}
The case
$\gamma_1$ and $\gamma_2$ vertex disjoint is immediate.
{On the other hand, if $ \gamma_{1}\subset \gamma_{2}$ then} the extraction map acts only on the
following three sets of edges $E_{i} \subseteq \tilde{E}$: 
\begin{equation*}
  \begin{aligned}
    E_1 & = \{ (e_-, e_+) \text{ s.t. } e_\pm \in V(\gamma_1), e_{\mp} \in V(\gamma_2) \setminus V(\gamma_1) \} \;, \\
    E_2 & = \{ (e_-, e_+) \text{ s.t. } e_\pm \in V(\gamma_2), e_{\mp} \in V  \setminus
    V(\gamma_2) \} \;.
  \end{aligned}
\end{equation*}
Because $v_\star(\gamma_1) { \leqslant } v_\star(\gamma_2)$ in the ordering of the vertices, we find
\begin{equation*}
  \begin{aligned}
    \hat{\mC}_{\gamma_1} \hat{\mC}_{\gamma_2} e & = \hat{\mC}_{\gamma_1} e = \hat{\mC}_{\gamma_2} \hat{\mC}_{\gamma_1}  e \;, \qquad \forall e \in E_1 \;, \\
    \hat{\mC}_{\gamma_1} \hat{\mC}_{\gamma_2} e & = \hat{\mC}_{\gamma_2} e = \hat{\mC}_{\gamma_2} \hat{\mC}_{\gamma_1}  e \;, \qquad \forall e \in E_2 \;.
  \end{aligned}
\end{equation*}
This concludes the proof.
\end{proof}

Before we proceed, let us make some comments to clarify this definition.
\begin{remark}
  The construction of $\hat{\mC}_\gamma$ is different from the
  contraction-extraction appearing in th BPHZ renormalisation through the map
  $\mC_\gamma$ in~\eqref{e_def_contrextr}. In the latter, we contract the
  subdiagram completely and pull it out of the original diagram, while here we
  rewire the vertices to one selected vertex. Once we integrate over all
  variables, there is no difference between the two operations, because we are
  in a spatially homogeneous setting, see also~\cite{BPHZ}. However, the map $\hat{\mC}_\gamma$ is more
  convenient in the technical analysis that we are going to perform.
\end{remark}

  To further clarify the setting, let us consider the diagram that we used as an
  example in the proof of Lemma~\ref{lem_bijection}, which for simplicity we
  have taken without legs and edge orientation to keep the picture simple:
  \begin{equation*}
    \begin{tikzpicture}[thick, scale = 0.7]
      \tikzset{
        vtx/.style={circle,fill=black,inner sep=2pt}
      }
      \foreach \i/\x in {1/0,2/1.6,3/3.2,4/5.0,5/6.4}{
        \node[dot] (v\i) at (\x,0) {};
      }
      \node[left] at (v1) {$1$};
      \node[above] at (v2) {$2$};
      \node[below] at (v3) {$3$};
      \node[above] at (v4) {$4$};
      \node[above] at (v5) {$5$};
      \draw (v1) -- (v2) -- (v3) -- (v4) -- node[midway,above=-1.5pt,purple] {$\overline{\Gamma}_2$} (v5);
      \draw (v1) to[out=-90, in =-90] node[midway,below=0.5pt,purple] {$\overline{\Gamma}_1$} (v2);
      \draw (v4) to[out=-90, in =-90] (v5);
      \draw (v1) .. controls +(0,1.3) and +(0,1.3) .. node[midway,above=0.5pt,purple] {$\overline{\Gamma}_3$} (v3);
      \draw (v2) .. controls +(0,-1.0) and +(0,-1.0) .. node[midway,below=0.5pt,purple] {$\overline{\Gamma}_4$} (v4);
      \end{tikzpicture}
  \end{equation*}
  Again, the $\overline{\Gamma}_i$ are suggestive of the bubbles that appear
  iteratively as we remove one after the other (the actual bubble
  $\overline{\Gamma}_i$ appear only after some contractions).
  In this case, the forest induced by the Hepp tree~\eqref{e:hepp-tree-example}
  is given by
  \begin{equation*}
    \mf{F}_{T} = \{ \Gamma_{u_i}, \; \colon \; i \in \{1,2,3,4\} \} \;.
  \end{equation*}
  We then find that
  \begin{equation*}
    \hat{\mC}_{\Gamma_{u_1}} \Gamma =
    \begin{tikzpicture}[thick, scale = 0.7, baseline = -0.2em]
      \tikzset{
        vtx/.style={circle,fill=black,inner sep=2pt}
      }
      \foreach \i/\x in {1/0,2/1.6,3/3.2,4/5.0,5/6.4}{
        \node[dot] (v\i) at (\x,0) {};
      }
      \node[above] at (v1) {$1$};
      \node[above left] at (v2) {$2$};
      \node[below] at (v3) {$3$};
      \node[above] at (v4) {$4$};
      \node[above] at (v5) {$5$};
      \draw (v1) -- (v2) -- (v3) -- (v4) -- node[midway,above=-1.5pt,red] {} (v5);
      \draw (v1) to[out=-90, in =-90] node[midway,below=0.5pt,red] {} (v2);
      \draw (v4) to[out=-90, in =-90] (v5);
      \draw (v2) .. controls +(0,1.3) and +(0,1.3) .. node[midway,above=0.5pt,red] {} (v3);
      \draw (v2) .. controls +(0,-1.0) and +(0,-1.0) .. node[midway,below=0.5pt,red] {} (v4);
      \end{tikzpicture}\,,
  \end{equation*}
  and
  \begin{equation*}
    \hat{\mC}_{\Gamma_{u_2}}\hat{\mC}_{\Gamma_{u_1}} \Gamma =
    \begin{tikzpicture}[thick, scale = 0.7, baseline = -0.2em]
      \tikzset{
        vtx/.style={circle,fill=black,inner sep=2pt}
      }
      \foreach \i/\x in {1/0,2/1.6,3/3.2,4/5.0,5/6.4}{
        \node[dot] (v\i) at (\x,0) {};
      }
      \node[above] at (v1) {$1$};
      \node[above left] at (v2) {$2$};
      \node[below] at (v3) {$3$};
      \node[above] at (v4) {$5$};
      \node[above] at (v5) {$4$};
      \draw (v1) -- (v2) -- (v3) -- (v4) -- node[midway,above=-1.5pt,red] {} (v5);
      \draw (v1) to[out=-90, in =-90] node[midway,below=0.5pt,red] {} (v2);
      \draw (v4) to[out=-90, in =-90] (v5);
      \draw (v2) .. controls +(0,1.3) and +(0,1.3) .. node[midway,above=0.5pt,red] {} (v3);
      \draw (v2) .. controls +(0,-1.0) and +(0,-1.0) .. node[midway,below=0.5pt,red] {} (v4);
      \end{tikzpicture}\,,
  \end{equation*}
  where we observe that the only difference to the diagram above is the flipping
  of the vertices $4$ and $5$. Next,
  \begin{equation*}
    \hat{\mC}_{\Gamma_{u_3}}\hat{\mC}_{\Gamma_{u_2}}\hat{\mC}_{\Gamma_{u_1}} \Gamma=
    \begin{tikzpicture}[thick, scale = 0.7, baseline = -0.2em]
      \tikzset{
        vtx/.style={circle,fill=black,inner sep=2pt}
      }
      \foreach \i/\x in {1/0,2/1.6,3/3.2,4/5.0,5/6.4}{
        \node[dot] (v\i) at (\x,0) {};
      }
      \node[above] at (v1) {$1$};
      \node[above left] at (v2) {$2$};
      \node[below left] at (v3) {$3$};
      \node[above] at (v4) {$5$};
      \node[above] at (v5) {$4$};
      \draw (v1) -- (v2) -- (v3) -- (v4) -- node[midway,above=-1.5pt,red] { } (v5);
      \draw (v1) to[out=-90, in =-90] node[midway,below=0.5pt,red] { } (v2);
      \draw (v4) to[out=-90, in =-90] (v5);
      \draw (v2) .. controls +(0,1.3) and +(0,1.3) .. node[midway,above=0.5pt,red] { } (v3);
      \draw (v3) .. controls +(0,-1.0) and +(0,-1.0) .. node[midway,below=0.5pt,red] { } (v4);
      \end{tikzpicture}\,.
  \end{equation*}
  Finally, $\hat{\mC}_{\mf{F}_T} \Gamma = \hat{\mC}_{\Gamma_{u_3}}\hat{\mC}_{\Gamma_{u_2}}\hat{\mC}_{\Gamma_{u_1}} \Gamma$.
  With these tools at hand, we are ready to work towards the proof of Proposition~\ref{prop_bubbleDiag_exact}.


\subsection{Proof of Proposition~\ref{prop_bubbleDiag_exact}}\label{sec:proof_prop_bubbleDiag_exact}

The proof of Proposition~\ref{prop_bubbleDiag_exact} follows from a
sequence of extractions and rewirings such as the one in~\eqref{eq_forest_extr}. At the
end of this, we will be able to provide the proof of the proposition.
Recall that our objective is to study the weak coupling limit of the following integral:
\begin{equation}\label{eq_Gamma_test}
\begin{aligned}
\Pi_{\varepsilon} \Gamma ( \varphi)
=
\int_{ (\mathbf{T}^{d})^{V_{\star}} }
\Psi_{\varepsilon}( x_{V_{\star}} )
\mW_{\ve} \Gamma (x_{V_{\star}})
 \ud
x_{V_{\star}} \;,
\end{aligned}
\end{equation}
where we defined
\begin{equation}\label{eq_def_Psi}
\begin{aligned}
\Psi_{\varepsilon} ( x_{V_{\star}}) =
\int_{(\mathbf{T}^{d})^{L}}  \varphi (x_{L})
\prod_{e \in E_L}
 G_{\varepsilon} (x_{e_+} - x_{e_-}) \ud x_{L} \;.
\end{aligned}
\end{equation}
To simplify matters, we will observe that we can replace $\Psi_\ve(x_{V_\star})$ with its limit for
$\ve \to 0$
\begin{equation}\label{eq_def_tf_loc}
\Psi ( x_{v_{\star}}) := \lim_{ \varepsilon \to 0} \Psi_{\varepsilon} ( x_{v_{\star}}, \ldots, x_{v_{\star}}) \;,
\end{equation}
where all the variables are evaluated at the same point $x_{v_\star}$, see
Lemma~\ref{lem_testfunc_loc} below. Here $v_\star \in
V_\star$ could in principle be chosen arbitrarily, but for concreteness we set to be
$v_\star= v_\star(\Gamma)$, the maximal element
of $V_\star$ following the order in the discussion below~\eqref{e:tau-def}.

In addition, we will decompose the
integral \eqref{eq_Gamma_test} into the sum over
  \emph{disjoint} Hepp sectors, see
  Lemma~\ref{lem_disjoin_hepp}, and reduce ourselves to spatial scales larger than
  {$2^{-N_\ve} \simeq \varepsilon$}, where we recall from the
  previous section that 
  \begin{equation}\label{e:Ne}
    N_\ve = \lfloor \log_2  \tfrac{1}{\ve} \rfloor \;.
  \end{equation}
  For any fixed Feynman diagram $\Gamma$ and $T \in \mT_{V_\star}$, we hence define the following subset:
  \begin{equation}\label{e:ANE}
  \mA^{\#, N_\ve}(T) :=
   \{ \mathbf{n} \in \mA (T) \,: \, \forall u \neq u ' \
  \text{ } \ \mathbf{n}(u) \neq \mathbf{n}(u') \ \text{ and }\ \mathbf{n} (u)
  \leqslant N_{\varepsilon} -1 \}\,,
  \end{equation}
  the set $\mA(T)$
  of compatible scalings.
  In this setting, we find the following result.

\begin{lemma}\label{lem_exact_heppsplit}
  Let $ \Gamma$ be a nested bubble diagram, then
  \begin{equation*}
  \begin{aligned}
  \lim_{ \varepsilon \to 0}  |\lambda_{\varepsilon}|^{|E_{\star}|} \bigg\vert
  \Pi_{\varepsilon} \Gamma ( \varphi)
  -
  \sum_{T \in \mT_{V_{\star}}}
  \sum_{\substack{\mathbf{n} \in \mA^{\#, N_\ve}	(T)}}
  \int_{D_{(T, \mathbf{n})}}
  \Psi ( x_{v_\star})
  \mW_{\ve} \Gamma (x_{V_{\star}})
   \ud
  x_{V_{\star}} \bigg\vert = 0 \,,
  \end{aligned}
  \end{equation*}
  {where $ \Psi( x_{v_\star })$ was defined in \eqref{eq_def_tf_loc}.}
  \end{lemma}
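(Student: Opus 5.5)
We prove the lemma by three successive replacements in the integral \eqref{eq_Gamma_test}. Each replacement changes $|\lambda_\ve|^{|E_\star|}\Pi_\ve\Gamma(\varphi)$ by an error of order at most $(\log\frac1\ve)^{-1/2}$, or at least $o(1)$. Throughout we use $|E_\star| = 2|\trim|$ for $k=2$ (cf.~\eqref{eq_Estar_card}), so $|\lambda_\ve|^{|E_\star|}\simeq(\log\frac1\ve)^{-|\trim|}$. By Lemma~\ref{lem_estimate_nonneg_Hepp}, any sector sum over a Hepp tree is bounded by $C^{|V_\star|}(\log\frac1\ve)^{\mathrm{Null}(T)} \le C^{|V_\star|}(\log\frac1\ve)^{|\trim|}$. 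It therefore suffices to show that each replacement costs at least one logarithm compared with this bound.

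\emph{Step 1: disjoint sectors.} We write $\Pi_\ve\Gamma(\varphi)=\sum_{T}\sum_{\n\in\mA(T)}\int_{D_{(T,\n)}}\Psi_\ve\,\mW_\ve\Gamma$. Here the Hepp sectors are made disjoint, up to null sets, by Lemma~\ref{lem_hepp_almost_covers} and Lemma~\ref{lem_disjoin_hepp}. That is, we fix one canonical representative $(T,\n)$ per configuration, and any double counting is removed by a choice rule on ties. $\Psi_\ve$ is bounded uniformly in $\ve$, because the legs have integrable kernels and $\varphi$ is smooth.

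\emph{Step 2: restrict scalings to $\mA^{\#,N_\ve}(T)$.} Scalings with some $\n(u)\ge N_\ve$ are handled by splitting the sum as in \eqref{eq_supp1_heppsummation}. On the subtree where $\n\ge N_\ve$, the bound \eqref{eq_supp2_aboveN} gives an $O(1)$ factor instead of a logarithm for each node there. Hence these terms are $O((\log\frac1\ve)^{|\trim|-1})$. Scalings with a tie $\n(u)=\n(u')$ for $u\ne u'$ are handled by rerunning the induction of Lemma~\ref{lem_estimate_nonneg_Hepp} with one summation index frozen to equal another. The free sum $\sum_{n=K}^{N_\ve}1$ at one zero-degree node is then replaced by a single term, which loses one logarithm. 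Since there are at most $|\trim|^2$ pairs, the total error is again $O((\log\frac1\ve)^{|\trim|-1})$. After multiplying by $|\lambda_\ve|^{|E_\star|}$, both errors vanish.

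\emph{Step 3: localise the test function.} Next we replace $\Psi_\ve(x_{V_\star})$ by $\Psi(x_{v_\star})$, using Lemma~\ref{lem_testfunc_loc} in the form of an estimate
\[
|\Psi_\ve(x_{V_\star})-\Psi(x_{v_\star})|\lesssim \Big(\max_{v,w}|x_v-x_w|+\ve\Big)^{\theta}
\]
for some $\theta>0$. This Hölder continuity comes from the fact that convolving the smooth $\varphi$ against $G_\ve$ legs, which are integrable singularities of order $-2$ in $d=4$, gives a function that is Hölder continuous in the leg endpoints, uniformly in $\ve$. The maximal distance in a sector is $\simeq 2^{-\n(\bullet)}$. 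Inserting the factor $(2^{-\n(\bullet)}+\ve)^\theta$ into the root summation turns the degree at the root from $0$ into $\theta>0$. Lemma~\ref{lem_simple_sumargument} then gives an $O(1)$ root sum instead of a logarithm, so this error is $O((\log\frac1\ve)^{|\trim|-1})$ as well.

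\emph{Main obstacle.} The expected main difficulty is the bookkeeping in Step 2 for tied scales. The induction of Lemma~\ref{lem_estimate_nonneg_Hepp} must be redone with a constraint linking two possibly non-comparable nodes of $T$. The approach I would try first is to bound the tied sum by the untied nested sum in which one index is fixed, summing over the value of the tied pair first. Step 3 is comparatively routine, provided the Hölder bound is uniform in $\ve$; if the mollified legs make this delicate, one can instead first replace $G_\ve$ by $G$ in the legs, which costs $O(\ve^\theta)$ in $L^1$.
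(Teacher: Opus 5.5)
Your proposal is correct and is essentially the paper's proof. The paper localises the test function through the uniform regularity of $\Psi_\ve$, which puts a positive-degree factor at the root of the Hepp tree and removes one logarithm; it does this first, on the whole space, in Lemma~\ref{lem_testfunc_loc}, whereas you do it last, sector by sector. It then discards the scalings in $\mA(T)\setminus\mA^{\#,N_\ve}(T)$ by rerunning Lemma~\ref{lem_estimate_nonneg_Hepp} with a tied or sub-$\ve$ scale, which also costs one logarithm, exactly as in your Step~2.

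The only point to tidy is Step~1. Sectors with $\n\in\mA(T)$ overlap across different trees, and Lemma~\ref{lem_disjoin_hepp} only gives disjointness within $\mA^{\#}$. So instead of a ``canonical representative'', argue as the paper does: the $\mA^{\#,N_\ve}$-sectors are pairwise disjoint, and the rest of the full-measure set is covered by sectors with $\n\in\mA(T)\setminus\mA^{\#,N_\ve}(T)$, whose absolute contributions your Step~2 already controls.
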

  The proof of this result is postponed to
  Section~\ref{sec:test-functions}. Instead, we pass to the heart of our argument first. The following lemma proves that inner edges can be rewired in such a
  way that bubbles can be integrated out individually.

\begin{lemma}\label{lem_bubbleTree_exact}
Let $ \Gamma$ be a nested bubble diagram and $T \in \mT_{V_{\star}}^{0}(\Gamma)$
as in~\eqref{e:def_contr_hepp}. Then
\begin{equation*}
\begin{aligned}
&\lim_{\varepsilon \to 0}
|\lambda_{\varepsilon}|^{|E_{\star} |}
\sum_{\substack{\mathbf{n} \in \mA^{\#, N_\ve} (T)}}
\bigg|
 \int_{D_{ \bf T}}
\Psi ( x_{v_*})
\mW_{\ve} \Gamma (x_{V_{\star}})
 \ud
x_{V_{\star}}
-
\int_{D_{\bf T}}
\Psi ( x_{v_\star})
\mW_{\ve} \mathfrak{K}_{\mf{F}_{T}} \Gamma (x_{V_{\star}})
\ud x_{V_{\star}}
\bigg| = 0 \,.
\end{aligned}
\end{equation*}
\end{lemma}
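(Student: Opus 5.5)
The plan is to pass from $\Gamma$ to $\mathfrak{K}_{\mf{F}_T}\Gamma$ by a telescoping sum, one rewiring at a time, and to show that each single rewiring costs at least one power of $\log\frac1\ve$ relative to the a priori bound of Lemma~\ref{lem_estimate_nonneg_Hepp}.

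\textbf{Setting up the telescope.} Fix $T\in\mT^0_{V_\star}(\Gamma)$ and a heap ordering $u_1,\dots,u_m$ of $\trim$, where $m=|\trim|$. Put $\Gamma^{(0)}=\Gamma$ and $\Gamma^{(i)}=\hat{\mC}_{\Gamma_{u_i}}\Gamma^{(i-1)}$. Since the extraction maps commute, $\Gamma^{(m)}=\mathfrak{K}_{\mf{F}_T}\Gamma$. Write $\mW_\ve\Gamma-\mW_\ve\mathfrak{K}_{\mf{F}_T}\Gamma$ as the sum of the differences $\mW_\ve\Gamma^{(i-1)}-\mW_\ve\Gamma^{(i)}$. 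Each such difference is in turn a telescoping sum over the edges $e$ that $\hat{\mC}_{\Gamma_{u_i}}$ actually moves. A moved edge has one endpoint $a\in V(\Gamma_{u_i})$ and the other endpoint $w\notin V(\Gamma_{u_i})$, and $a$ is replaced by $v_\star(\Gamma_{u_i})$. In each telescoping term, one factor $G_\ve(x_w-x_a)$ is replaced by $G_\ve(x_w-x_a)-G_\ve(x_w-x_{v_\star})$. Every other factor is the kernel of an edge of some intermediate diagram.

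\textbf{Sector bounds for intermediate diagrams.} On $D_{\mathbf T}$, every edge of every intermediate diagram still has length of order $2^{-\mathbf n(u')}$, where $u'$ is the least common ancestor of its endpoints in the original $\Gamma$. The reason is that rewiring only moves an endpoint inside a set $V(\Gamma_u)$ whose points all lie in the subtree of $u$, while the other endpoint lies outside that subtree. Consequently $u'$ is unchanged, and \eqref{eq_supp1_estLinf} applies verbatim with the same function $\eta\equiv 0$. For the replaced factor, $w$ is outside $\Gamma_{u_i}$ and both $a,v_\star$ are inside, so $a\wedge w=v_\star\wedge w=u'\prec u_i$. Since $\mathbf n(u)\le N_\ve-1$, the mean value theorem together with $|\nabla G_\ve(z)|\lesssim(|z|+\ve)^{-3}$ gives
\[
|G_\ve(x_w-x_a)-G_\ve(x_w-x_{v_\star})|\lesssim 2^{-(\mathbf n(u_i)-\mathbf n(u'))}\,(2^{-\mathbf n(u')}+\ve)^{-2}.
\]
Distinct scales ensure $\mathbf n(u_i)>\mathbf n(u')$. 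The test function $\Psi(x_{v_\star})$ is simply bounded by $\|\Psi\|_\infty$.

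\textbf{Losing one logarithm.} It remains to show that
\[
\sum_{\mathbf n\in\mA(T)}2^{-(\mathbf n(u_i)-\mathbf n(u'))}\prod_{u}(2^{-\mathbf n(u)}+\ve)^{-d}2^{-d\mathbf n(u)}\lesssim C^{m}\big(\log\tfrac1\ve\big)^{m-1}.
\]
Once this holds, multiplying by $|\lambda_\ve|^{|E_\star|}=|\hat\lambda|^{2m}(\log\frac1\ve)^{-m}$, and summing over the finitely many $i$ and moved edges, gives a bound $O((\log\frac1\ve)^{-1})\to0$. To prove the display, I would rerun the induction of Lemma~\ref{lem_estimate_nonneg_Hepp}, carrying the extra weight along the path from $u'$ to $u_i$. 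Split $2^{-(\mathbf n(u_i)-\mathbf n(u'))}$ as a product of $2^{-(\mathbf n(c)-\mathbf n(p))}$ over the consecutive parent--child pairs $(p,c)$ on that path. When the inductive summation at the child $c=u_i$ is performed over $\mathbf n(c)\ge\mathbf n(p)$, the weight $2^{-(\mathbf n(c)-\mathbf n(p))}$ turns the would-be sum $\sum_{n}(2N_\ve-n+1)^{\mathrm{Null}-1}$, which produced the extra log in case 1, into a geometric sum. That sum is bounded by $(2N_\ve-\mathbf n(p)+1)^{\mathrm{Null}-1}$, with no gain of a log, and the remaining weight factors are at most $1$.

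I expect the main obstacle to be this bookkeeping: formulating an inductive hypothesis like \eqref{eq_istep} for the lowered scale cutoff $K$ that tracks a single weighted edge of the Hepp tree, and checking that the weight exactly cancels the one logarithm generated at $u_i$ while every other node still contributes at most one log. A secondary but routine point is justifying the mean value bound uniformly at scales close to $\ve$, which the restriction $\mathbf n\le N_\ve-1$ in $\mA^{\#,N_\ve}(T)$ is designed to handle.
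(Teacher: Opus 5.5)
Your proposal is correct and follows essentially the same route as the paper: you telescope over the forest $\mf{F}_T$ and over the rewired boundary edges, and a first-order Taylor gain at each rewiring removes exactly one logarithm in the Hepp-sector summation.

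The bookkeeping you flag as the main obstacle is handled more directly in the paper, and no new induction is needed. Since $u'\preceq w_i$ for the parent $w_i$ of $u_i$, the gain is bounded by $2^{-\mathbf n(u_i)}/(2^{-\mathbf n(w_i)}+\ve)$, which amounts to replacing $\eta\equiv0$ by $\eta^i=\mathds{1}_{u_i}-\mathds{1}_{w_i}$. Lemma~\ref{lem_estimate_nonneg_Hepp} then applies verbatim, because all $\overline{\deg}$ remain non-negative and only $u_i$ leaves the null set.
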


\begin{proof}
Let us consider a heap ordering $ \gamma_{ 1}, \ldots,
\gamma_{|\trim|}$ of the zero degree forest $\mf{F}_{T}$ from Lemma~\ref{lem_forestT}.
That is, any ordering such that $ \gamma_{i} \subset \gamma_{j} $ implies $ i \geqslant 
j$. Moreover, let us denote with $u_i \in \trim$ the node such that $\gamma_i =
\Gamma_{u_i}$, as defined in~\eqref{e:Gu}.
We will perform a telescopic sum argument, using the identity
\begin{equation*}
\begin{aligned}
\mW_{\ve} \Gamma - \mW_{\ve} \mathfrak{K}_{\mf{F}_{T}} \Gamma
= \sum_{i =1}^{|\trim|} \mW_{\ve}  \mathfrak{K}_{\mf{F}_{i-1}} \Gamma - \mW_{\ve}
\hat{\mC}_{ \gamma_{i}} \mathfrak{K}_{\mf{F}_{i-1}} \Gamma \,,
\end{aligned}
\end{equation*}
where $ \mf{F}_{i} := \{ \gamma_{j}\, : \, j \leqslant i  \}$, i.e. we first
contract the ``larger''  divergences (in the sense of inclusions) before contracting the ``smaller'' ones. Hence, it suffices to show for $ \overline{\Gamma}=
\mathfrak{K}_{\mf{F}_{i -1}} \Gamma =(V, \overline{E})$
\begin{equation}\label{e_bubbleTree_exact_onestep}
\begin{aligned}
\lim_{ \varepsilon \to 0 }
| \lambda_{\varepsilon}|^{|E_{\star} |}
\sum_{\substack{\mathbf{n} \in \mA^{\#, N_\ve}(T)}}
\bigg|
 \int_{D_{ \bf T}}
\Psi ( x_{v_\star})
\Big(
\mW_{\ve}  \overline{\Gamma} (x_{V_{\star}})
-
\mW_{\ve} \hat{\mC}_{ \gamma_{i}} \overline{\Gamma} (x_{V_{\star}})\Big)
\ud x_{V_{\star}}
\bigg|
=0\,.
\end{aligned}
\end{equation}
First, we observe that if $T \in \mT_{V_\star}^0 (\Gamma)$, then also $T \in \mT_{V_\star}^0(\mathfrak{K}_{\mf{F}_{i}}
\Gamma)$ for all $i=1,\ldots, |\trim|$ instead of $ \Gamma$.
In fact, if for any $u \in \trim$ and $i \leq |\trim|$, we define $\Gamma_u^i$
to be the full subdiagram of $\mathfrak{K}_{\mf{F}_{i}}
\Gamma$  (with $\mathfrak{K}_{\mf{F}_{0}}
\Gamma=\Gamma$) spanned by the vertices $\{ v \in V_{\star}\,: \, v \succ u  \} $, then
the number of edges $|E(\Gamma_u^{i})| = |E(\Gamma_u^{0})|$ does not depend on
$i$.\

Next, let $ \partial \overline{E}(\gamma_i) \subset \overline{E}_{\star}$ denote
the set of edges of $\overline{\Gamma}$ that contain
exactly one vertex in $ V ( \gamma_i )$:
\begin{equation*}
  \partial \overline{E}(\gamma_i) = \{ (e_-, e_+) \in \overline{E}_\star \;
\colon \; e_{\pm} \in V(\gamma_i) \;, e_{\mp} \not\in V(\gamma_i) \} \;,
\end{equation*}
{we may assume that $\partial \overline{E}(\gamma_i) \neq
\emptyset$, as otherwise \eqref{e_bubbleTree_exact_onestep} vanishes.}

Then from~\eqref{e:def-hat-C}, we obtain
\begin{equation}\label{eq_supp3_extract}
\begin{aligned}
\big|\mW_{\ve}  \overline{\Gamma} & (x_{V_{\star}})
- \mW_{\ve} \hat{\mC}_{ \gamma_{i}} \overline{\Gamma} (x_{V_{\star}})\big|\\
&= \prod_{e \in \overline{E}_{\star} \setminus \partial \overline{E} ( \gamma_{i}) } G_{ \varepsilon}( x_{e_{+}}- x_{e_{-}})
\bigg| \prod_{\partial \overline{E}( \gamma_{i})} G_{ \varepsilon}( x_{e_{+}}- x_{e_{-}})
-
\prod_{\partial \overline{E}( \gamma_{i})}  G_{ \varepsilon}( x_{v_{\star} (
\gamma_{i})}- x_{\mf{s}(\gamma_i, e)})\bigg|\,,
\end{aligned}
\end{equation}
where $\mf{s}(\gamma_i, e) \in \{e_+, e_-\}$ is the vertex of the edge $e$ such
that $\mf{s}(\gamma_i, e) \not\in V(\gamma_i)$.
Moreover, with another telescopic sum we estimate
\begin{equation}\label{eq_supp2_extract}
\begin{aligned}
\bigg| \prod_{e \in \partial \overline{E}( \gamma_{i})} & G_{ \varepsilon}( x_{e_{+}}- x_{e_{-}})
-
\prod_{e \in \partial \overline{E}( \gamma_{i})}  G_{ \varepsilon}( x_{v_{\star} (
\gamma_{i})}- x_{\mf{s}(\gamma_i, e_j)})\bigg|\\
\leqslant &
\sum_{j =1}^{m}
\Big|
 G_{ \varepsilon}( x_{e_{j,+}}- x_{e_{j,-}})
-
 G_{ \varepsilon}( x_{v_{\star} (\gamma_{i})}- x_{\mf{s}(\gamma_i, e_j)})
\Big| \\
& \qquad \qquad
\prod_{\ell = 1}^{j -1}
 G_{ \varepsilon}( x_{v_{\star} (\gamma_{i})}- x_{\mf{s}(\gamma_i, e_\ell)})
\prod_{\ell = j+1}^{m}
G_{\varepsilon}( x_{e_{\ell,+}}- x_{e_{\ell,-}})
\,,
\end{aligned}
\end{equation}
where $ e_{1}, \ldots, e_{m}$ is an arbitrary enumeration of the edges in $
\partial \overline{E} ( \gamma_{i}) $.

Now, we fix $x \in D(T, \mathbf{n})$, so that via the Taylor expansion in
Lemma~\ref{lem_taylor_new} we obtain
\begin{equation*}
\begin{aligned}
&\Big|
 G_{ \varepsilon}( x_{e_{j,+}}- x_{e_{j,-}})
-
 G_{ \varepsilon}( x_{v_{\star} (\gamma_{i})}- x_{\mf{s}(\gamma_i, e_j)})
\Big|
\lesssim
\frac{2^{- \mathbf{n} ( v_{\star} ( \gamma_{i}) \wedge \mf{r}(\gamma_i, e_j) )}}{
(2^{- \mathbf{n} ( e_{j,+} \wedge e_{j, -})} + \varepsilon)}
( |x_{e_{j, +}}- x_{e_{j , -}}| + \varepsilon)^{-2}
\,,
\end{aligned}
\end{equation*}
where $ \mf{r}(\gamma_i, e_j) \in V( \gamma_{i})$ denotes the vertex of $ e_{j}$
that lies in $ \gamma_{i}$ (or similarly $\{\mf{s}(\gamma_i, e_j),
\mf{r}(\gamma_i, e_j)\} = \{e_+, e_- \}$).
In words, we made a small scale improvement within $ \gamma_{i}$ of order $\mathbf{n} (
v_{\star} ( \gamma_{i}) \wedge {\mf{r}(\gamma_i, e_j)} )$ by replacing the Green's function
at $ e_{j}$ (which is of order $\mathfrak{t}(e_{j})=-2$) with a kernel of order
$\mathfrak{t}(e_{j})=-3$.
Note that in order to apply Lemma~\ref{lem_taylor_new}, we must check the
  condition
  \begin{equation*}
    |x_{v_{\star} (\gamma_{i})}- x_{\mf{r}(\gamma_i, e_j)}| < |x_{\mf{r}(\gamma_i, e_j)}- x_{\mf{s}(\gamma_i, e_j)}|, |x_{v_{\star} (\gamma_{i})}- x_{\mf{s}(\gamma_i, e_j)}| \;,
  \end{equation*}
  which is verified by our assumption that $\mathbf{n} \in \mA^{\#, N_\ve}$, so
  that $\mathbf{n} (\mf{r}(\gamma_i, e_j) \wedge \mf{s}(\gamma_i, e_j))+1
  \leqslant \mathbf{n} (v_{\star} (\gamma_{i}) \wedge \mf{s}(\gamma_i, e_j))$.
Moreover, $ \mathbf{n} ( v_{\star} ( \gamma_{i}) \wedge  \mf{r}(\gamma_i, e_j)  ) \geqslant
\mathbf{n} ( u_i )$ and in addition if $w_i$ is the parent of $u_i {= v_{\star} (
\gamma_{i}) }$ in $T$, then
$\mathbf{n}(u_i \wedge \mf{s}(\gamma_i, e_j)) \leq \mathbf{n}(w_i)$.
Above, we also used that
\begin{equation*}
\begin{aligned}
v_{\star} (\gamma_{i}) \wedge  \mf{s}(\gamma_i, e_\ell)
= e_{\ell , +}\wedge e_{\ell,-}\,, \qquad \forall \ell =1, \ldots, m\,.
\end{aligned}
\end{equation*}

Hence, together with~\eqref{eq_supp2_extract} we have derived the upper bound
\begin{equation*}
\begin{aligned}
\bigg| \prod_{e \in \partial \overline{E}( \gamma_{i})} & G_{ \varepsilon}( x_{e_{+}}- x_{e_{-}})
-
\prod_{e \in \partial \overline{E}( \gamma_{i})}  G_{ \varepsilon}( x_{v_{\star} (
\gamma_{i})}- x_{w_{e}})\bigg| \lesssim
\frac{2^{- \mathbf{n} ( u_i )}}{
(2^{- \mathbf{n} ( w_i)} + \varepsilon)}
\prod_{\ell = 1}^{m}
\big(2^{- \mathbf{n} (e_{\ell , +}\wedge e_{\ell,-})}+ \varepsilon\big)^{-2}\,.
\end{aligned}
\end{equation*}
{Notice that $ u_{i}$ always has a parent $ w_{i} \in \trim$, i.e. $ u_{i}$ is not
the root, provided $ \emptyset \neq \partial \overline{E} ( \gamma_{i}) \subset \overline{E}_{\star} $.}

Finally, in combination with~\eqref{eq_supp3_extract}, we then have the bound
\begin{equation*}
\begin{aligned}
\big|\mW_{\ve}  \overline{\Gamma} (x_{V_{\star}})
-
\mW_{\ve} \hat{\mC}_{ \gamma_{i}} \overline{\Gamma} (x_{V_{\star}})\big|
\lesssim
\frac{2^{- \mathbf{n} ( u_i )}}{
(2^{- \mathbf{n} ( w_i )} + \varepsilon)}
\prod_{e \in \overline{E}_{\star}}
\big(2^{- \mathbf{n} (e_{ +}\wedge e_{-})}+ \varepsilon\big)^{-2}
\,,
\end{aligned}
\end{equation*}
which in turn yields, together with $ \mathrm{Leb} ( D_{\bf T} ) \lesssim
\prod_{u \in \trim} 2^{- d \mathbf{n} (u)}$,
\begin{equation*}
\begin{aligned}
&|\lambda_{\varepsilon}|^{| E_{\star}|}
\sum_{\substack{\mathbf{n} \in \mA^{\#, N_\ve}(T)}}
\bigg|
 \int_{D_{ \bf T}}
\Psi ( x_{v_\star})
\Big(
\mW_{\ve}  \overline{\Gamma} (x_{V_{\star}})
-
\mW_{\ve} \hat{\mC}_{ \gamma_{i}} \overline{\Gamma} (x_{V_{\star}})\Big)
\ud x_{V_{\star}}
\bigg| \\
&\lesssim
|\lambda_{\varepsilon}|^{| E_{\star}|}
\sum_{\substack{\mathbf{n} \in\mA^{\#, N_\ve}(T)}}
\frac{2^{- \mathbf{n} ( u_i )}}{
(2^{- \mathbf{n} ( w_i )} + \varepsilon)}
\prod_{ u \in \trim} \big( 2^{- \mathbf{n} ( u )} + \varepsilon \big)^{ \eta(u)-d}
2^{-d \mathbf{n} ( u)} \\
& \lesssim
|\lambda_{\varepsilon}|^{| E_{\star}|}
\sum_{\substack{\mathbf{n} \in\mA^{\#, N_\ve}(T)}}
\prod_{ u \in \trim} \big( 2^{- \mathbf{n} ( u )} + \varepsilon \big)^{ \eta^{i}(u)-d}
2^{-d \mathbf{n} ( u)}\,,
\end{aligned}
\end{equation*}
where $\eta$ is as in~\eqref{eq_def_eta} and
\begin{equation*}
  \eta^i(u) = \begin{cases}
    \eta(u) + 1\,, & \text{if } u = u_i \;,\\
    \eta(u) - 1\,, & \text{if } u = w_i \;,\\
    \eta(u)\,, & \text{else}\;.
  \end{cases}
\end{equation*}
Because by assumption $ \overline{\deg} \equiv 0$, and thus $ \eta\equiv 0 $  on $ \overline{T}$, the right side
vanishes by Lemma~\ref{lem_estimate_nonneg_Hepp} due to the gain of $+1$ at $ u_i$.
This concludes the proof of~\eqref{e_bubbleTree_exact_onestep} and therefore of
this result.
\end{proof}

\begin{lemma}\label{lem_nbd_singleTree}
Let $ \Gamma$ be a nested bubble diagram and $ T \in \mT_{V_{\star}}^{0}(\Gamma)$, then
\begin{equation*}
\begin{aligned}
\lim_{ \varepsilon \to 0}
\lambda_{\varepsilon}^{|E_{\star} |}
\sum_{\substack{\mathbf{n} \in \mA^{\#, N_\ve} (T)}}
\int_{D_{\bf T}}
\Psi ( x_{v_*})
\mW_{\ve} \mathfrak{K}_{\mf{F}_{T}} \Gamma (x_{V_{\star}})
\ud x_{V_{\star}}
=
\frac{1}{  \overline{T} !}
\bigg(
\frac{ \hat{\lambda}^{2}}{ 8 \pi^{2}} \bigg)^{| \overline{T}|}
\<cc1_narrow> ( \varphi ) \,.
\end{aligned}
\end{equation*}
\end{lemma}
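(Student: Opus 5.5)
Once the forest $\mf{F}_T$ has been fully extracted, the rewired diagram $\mathfrak{K}_{\mf{F}_T}\Gamma$ has a very explicit structure. For every inner node $u\in\trim$ with children $v_1,v_2$ (where $\eta\equiv 0$), exactly two edges join the two branches. After rewiring, both edges connect the representative vertices $v_\star(\Gamma_{v_1})$ and $v_\star(\Gamma_{v_2})$. So $\mW_\ve\mathfrak{K}_{\mf{F}_T}\Gamma$ factorises as
\[
\prod_{u\in\trim} G_\ve\big(x_{a_u}-x_{b_u}\big)^2,
\]
where $a_u,b_u$ are the representatives of the two branches at $u$. The map $u\mapsto(a_u,b_u)$ describes a spanning tree on $V_\star$, with one edge per inner node of $T$. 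My first step is to verify this carefully by induction along the heap ordering used in Lemma~\ref{lem_bubbleTree_exact}, using the choice of $v_\star(\gamma)$ as the maximal vertex.

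Next I change variables to the differences $z_u=x_{a_u}-x_{b_u}$, keeping $x_{v_\star}$ as the base point. This map has unit Jacobian on the torus, because it is a spanning-tree change of variables. In the sector $D_{\mathbf T}$ with $\mathbf n\in\mA^{\#,N_\ve}(T)$ all scales are distinct, and the constraint $|x_v-x_w|\sim 2^{-\mathbf n(v\wedge w)}$ becomes, up to $o(1)$ corrections, the condition $|z_u|\in(2^{-\mathbf n(u)-1},2^{-\mathbf n(u)}]\cdot\sqrt d\pi$ for each $u$. This holds because, by the ultrametric structure at well-separated scales, the distance between points in different branches is dominated by the coarsest scale on the path. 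The integral therefore factorises into $\int\Psi(x)\,dx$ times a product over $u$ of annulus integrals
\[
\int_{|z|\sim 2^{-n}}G_\ve(z)^2\,dz .
\]
Using $G(z)=(4\pi^2|z|^2)^{-1}(1+o(1))$ for $2^{-n}\gg\ve$, each annulus integral equals $\frac{2\pi^2}{16\pi^4}\log 2\,(1+o(1))=\frac{\log 2}{8\pi^2}(1+o(1))$. Here $2\pi^2$ is the area of $S^3$, and the $o(1)$ is uniform over $n\le N_\ve-K$ for a slowly growing $K$. The scales with $n>N_\ve-K$ or $n=O(1)$ form a negligible fraction after weighting by $\lambda_\ve^{2|\trim|}$. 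I control them with Lemma~\ref{lem_estimate_nonneg_Hepp}-type bounds, in the same way as the boundary terms in Lemma~\ref{lem_exact_heppsplit}. I also need to identify $\int\Psi$ with $\<cc1_narrow>(\varphi)$. This follows from the definition~\eqref{eq_def_tf_loc}, since the four legs become attached at one point.

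It remains to count the scale assignments. The quantity needed is
\[
\big(\tfrac{\log 2}{8\pi^2}\big)^{|\trim|}\,\#\mA^{\#,N_\ve}(T)\,(1+o(1)).
\]
The number of strictly increasing-along-$T$ injective maps $\trim\to\{0,\dots,N_\ve-1\}$ equals $\binom{N_\ve}{|\trim|}$ times the number of heap orderings, which is $|\trim|!/\trim!$ by the hook length formula. This gives
\[
\frac{N_\ve^{|\trim|}}{\trim!}\,(1+o(1)).
\]
Since $N_\ve\log 2=\log\frac1\ve(1+o(1))$ and $\lambda_\ve^{2|\trim|}=\hat\lambda^{2|\trim|}(\log\frac1\ve)^{-|\trim|}$, the limit is the claimed $\frac{1}{\trim!}(\hat\lambda^2/8\pi^2)^{|\trim|}\<cc1_narrow>(\varphi)$.

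I expect the main obstacle to be the second step: showing that the sector constraint on $D_{\mathbf T}$ transfers exactly, up to a vanishing error, to independent annulus constraints on the $z_u$. The actual constraints involve all pairwise distances, not only tree edges, and points near the annulus boundaries can violate them. My plan is to restrict to $\mathbf n$ whose consecutive scales differ by at least $M$, with $M\to\infty$ slowly. For these, the triangle inequality shows that the constraints are equivalent to the annulus constraints except on a set of relative measure $O(2^{-M})$. The scale assignments with gaps smaller than $M$ are only an $O(M|\trim|^2/N_\ve)$ fraction of all assignments, so both errors vanish in the limit.
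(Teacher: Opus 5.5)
Your proof is correct, but it is organised differently from the paper's.

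The paper argues by induction on $|\overline{T}|$. It proves the stronger statement \eqref{eq_ind_hyp_treefac}, which is uniform in a lower scale cut-off $K$, in four moves:
\begin{itemize}
\item it splits $T=[T_1,T_2]$ at the root;
\item it factorises the rewired integrand into the two smaller rewired diagrams times $G_\ve^2(x_{v_{\star,1}}-x_{v_{\star,2}})$;
\item it evaluates the root annulus with Lemma~\ref{lem_fixingestimate};
\item it uses the Faulhaber estimate of Lemma~\ref{lem_logsum} to turn $\sum_{n\ge K}(N_\ve-n)^{k}$ into $(N_\ve-K)^{k+1}/(k+1)$.
\end{itemize}
In this way $1/\overline{T}!$ is built recursively from $\overline{T}!=|\overline{T}|\,\overline{T}_1!\,\overline{T}_2!$.

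Your product $\prod_u G_\ve(x_{a_u}-x_{b_u})^2$ is exactly what iterating that root factorisation produces. You then decouple all bubbles at once by the spanning-tree change of variables. You obtain $1/\overline{T}!$ in a single step by counting tree-increasing injective scale assignments with the hook length formula. The paper uses the same formula only afterwards, in Corollary~\ref{cor_contr_diagram}, and in the opposite direction.

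What each route buys: yours makes the tree factorial transparent as the normalised volume of the order polytope of $\overline{T}$, and it does not need the uniform-in-$K$ induction hypothesis. The paper's route keeps the combinatorics local to one node at a time.

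Your version also confronts two facts head on. First, $D_{\mathbf T}$ is not a product of annuli in the difference variables. Second, $\mA^{\#,N_\ve}(T)$ forces scales to be distinct across the two subtrees. The paper's identity \eqref{eq_supp1_treefac} writes the sector sum as an exact product of the sums over $\mathbf n_i\in\mA^{\#,N_\ve}_n(T_i)$ and the root annulus. That identity holds only up to the cross-distance and near-diagonal errors that you quantify.

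Your gap-$M$ argument controls both:
\begin{itemize}
\item on well-separated assignments, the triangle inequality gives an $O(2^{-M})$ relative error;
\item the remaining assignments form an $O(M/N_\ve)$ fraction, and each contributes $O(1)$. This holds because the integrand is a product of squares and $D_{\mathbf T}$ is contained in the set where every $|z_u|$ lies in its annulus at scale $\mathbf n(u)$.
\end{itemize}
This is what makes that step rigorous.
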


\begin{proof}
We prove the result by induction over the size of $ |
\overline{T}| = | V_{\star}| -1$. More precisely, we will prove by induction that for every
nested bubble diagram $\Gamma$ and contributing Hepp tree $T
 \in \mT^0_{V_\star}(\Gamma)$
\begin{equation}\label{eq_ind_hyp_treefac}
\begin{aligned}
\lim_{ \varepsilon \to 0}
|\lambda_{\varepsilon}|^{|E_{\star} |}
\sup_{\substack{x_{v_\star} \in \TT^4 \\ K \in [ N_{\varepsilon}-1]}}
 \bigg|
\sum_{\substack{\mathbf{n} \in \mA^{\#, N_\ve}_{K} (T)}}
\int_{D_{\bf T}}
\mW_{\ve} \mathfrak{K}_{\mf{F}_{T}} \Gamma (x_{V_{\star}})
\ud x_{V_{\star} \setminus \{v_\star\}}
{-}
\frac{1}{  \overline{T} !}
\bigg(
\frac{  \log{2^{N_\ve-K}}}{ 8 \pi^{2}} \bigg)^{| \overline{T}|}
\bigg| =0 \;,
\end{aligned}
\end{equation}
with $[n]= \{0, \ldots,
n\}$, and where we have defined, similarly to~\eqref{e:def-AK}:
\begin{equation*}
  \mA^{\#, N_\ve}_{K} (T) :=
   \{ \mathbf{n} \in \mA^{\#, N_\ve} (T) \,: \, \mathbf{n} (u) \geqslant K \;, \forall u \in \trim \} \;.
\end{equation*}

For $ | \overline{T}| = 1$, we write $ V_{\star} =\{v, v_\star\} $ for the two inner
vertices. Then, using Lemma~\ref{lem_disjoin_hepp}~1. with $d=4$ we see that
\begin{equation}\label{e:compute-it}
\begin{aligned}
\lambda_{\varepsilon}^{2}
\sum_{\mathbf{n} (\bullet) =K }^{ N_{\varepsilon}-1}
\int_{D_{\bf T}}
G_{\varepsilon}^{2} ( x_{v_\star} - x_{v})
\ud x_{v} & =
\lambda_{\varepsilon}^{2}
\int_{ ( 2 \pi) 2^{- N_{\varepsilon}} <|x_{v_\star} - x_{v}| \leqslant ( 2
\pi) 2^{-K} }
G_{\varepsilon}^{2} ( x_{v_\star} - x_{v})
\ud x_{v} \\
& = \lambda_{\varepsilon}^{2}
\int_{ ( 2 \pi) 2^{- N_{\varepsilon}} <|x| \leqslant ( 2
\pi) 2^{-K} }
G_{\varepsilon}^{2} ( x)
\ud x  \;.
\end{aligned}
\end{equation}
Note that the integration variable $x$ satisfies $|x| > (2 \pi) 2^{-N_\ve} > 2
\ve$. Therefore, we are able to apply
\eqref{e:G-bd2} from Lemma~\ref{lem_greens_function_order_bound} in order to
compute the asymptotic behaviour of~\eqref{e:compute-it}.
We find the following upper bound for some $C >0 $ that is allowed to
change from line to line:
\begin{equation*}
  \begin{aligned}
  \lambda_{\varepsilon}^{2}
  \int_{ ( 2 \pi) 2^{- N_{\varepsilon}} <|x| \leq( 2
  \pi) 2^{-K} } G_{\varepsilon}^{2} ( x)
  \ud x & \leq \lambda_\ve ^2 \int_{( 2 \pi) 2^{- N_{\varepsilon}}}^{(2
  \pi) 2^{-K}} \frac{ 2 \pi^2 \zeta^3}{ (4 \pi^2)^2 ( \zeta - \ve)^4} \ud \zeta + C \lambda_\ve^2\\
  & \leq \lambda_{\ve}^2
  \int_{( 2 \pi) 2^{- N_{\varepsilon}}}^{(2
  \pi) 2^{-K}} \frac{ 2 \pi^2 \zeta^3}{(4 \pi^2)^2 \zeta^4} \ud \zeta + C
\lambda_\ve^2 \\
    &= \frac{1}{8 \pi^2} \lambda_\ve^2 \left( \log( 2^{N_{\varepsilon}} ) -  \log(2^{K} )
  \right)+ C \lambda_\ve^2 \;,
  \end{aligned}
\end{equation*}
In the second inequality, we used that from the definition~\eqref{e:Ne} of $N_\ve$
\begin{equation*}
  (2 \pi -1)\ve \leq ( 2 \pi) 2^{- N_{\varepsilon}}-\ve \leq (4 \pi -1 ) \ve \;,
\end{equation*}
  so that
  \begin{equation*}
    \bigg\vert \frac{ 2 \pi^2 \zeta^3}{ (4 \pi^2)^2 ( \zeta - \ve)^4} - \frac{ 2 \pi^2 \zeta^3}{ (4 \pi^2)^2 \zeta^4}\bigg \vert \lesssim \frac{ \ve (\zeta-\ve)^3}{ (\zeta-\ve)^5} = \frac{ \ve }{ (\zeta-\ve)^2} \;,
  \end{equation*}
  which implies
  \begin{equation*}
    \int_{( 2 \pi) 2^{- N_{\varepsilon}}}^{(2
  \pi) 2^{-K}} \frac{ \ve }{ (\zeta-\ve)^2} \ud \zeta \lesssim 1 \;.
  \end{equation*}
  {This error term was then absorbed into the constant $ C$.} 

We can produce a matching lower bound, again by employing~\eqref{e:G-bd1}:
  \begin{equation*}
    \begin{aligned}
   \lambda_{\varepsilon}^{2}
  \int_{ ( 2 \pi) 2^{- N_{\varepsilon}} <|x| \leq ( 2
  \pi) 2^{-K} }
  G_{\varepsilon}^{2} ( x)
  \ud x & \geq \lambda_{\varepsilon}^{2}
  \int_{ ( 2 \pi) 2^{- N_{\varepsilon}} <|x|\leq ( 2
  \pi) 2^{-K} } \frac{1}{(4 \pi)^2 (|x|+\ve)^4} \ud x - C \lambda_\ve^2 \\
  & = \frac{1}{8 \pi^2} \lambda_\ve^2 \left( \log( 2^{N_{\varepsilon}} ) -  \log(2^{K} ) \right) - C \lambda_\ve^2 \;.
    \end{aligned}
  \end{equation*}
Together, these estimates lead to
\begin{equation}\label{e:in-between}
\begin{aligned}
\lim_{\ve \to 0}
\sup_{ K \in [N_\ve -1 ]}
\, \lambda_{\varepsilon}^{2}
\bigg|\int_{ ( 2 \pi) 2^{- N_{\varepsilon}} <|x| \leqslant ( 2
\pi) 2^{-K} }
G_{\varepsilon}^{2} ( x)
\ud x
-
\frac{1}{8 \pi^2} \log( 2^{N_{\varepsilon} - K } )
\bigg|= 0
 \,,
\end{aligned}
\end{equation}
so that in this case the induction hypothesis~\eqref{eq_ind_hyp_treefac} holds.


Now, let us assume~\eqref{eq_ind_hyp_treefac} holds for all nested bubble diagrams $ \Gamma' = ( V ',
E')$ and $ T'
\in \mT_{V_{\star}'}^0 (\Gamma') $
such that $| \overline{T}'| \leqslant  n$, for some $n \in \mathbf{N}$, and let $ \Gamma $ be a nested bubble diagram and $ T \in \mT_{V_{\star}}^0(\Gamma) $, with $
| \overline{T}| = | V_{\star} | -1 =n+1$.
Let us denote $T_1, T_2$ the two subtrees such that $T=[T_1, T_2]_{\bullet}$,
where $\bullet$ denotes the root node of $T$, and let $u_i$ be the root of $T_i$ and
$\Gamma_i = \Gamma_{u_i}$, and denote the vertex set of $\Gamma_i$ by $V_i$. Note that $ \Gamma_{1}$ and $
\Gamma_{2}$ are connected by exactly two edges.
Now, we find that
\begin{equation*}
  \mf{K}_{\mf{F}_T} \Gamma =  \hat{\mC}_{\Gamma_{\bullet}}\mf{K}_{\mf{F}_{T_1}} \mf{K}_{\mf{F}_{T_2}}  \Gamma  \;,
\end{equation*}
since $\mf{F}_{T_2}$ and $\mf{F}_{T_2}$ are disjoint (hence the order
in which we apply them does not matter). Here $\Gamma_{\bullet}$ denotes
the subdiagram spanned by all the inner vertices (that is, the original diagram
without the leg vertices and edges). In particular, we note that
$\hat{\mC}_{\Gamma_{\bullet}}$ rewires only legs, all the inner
edges remain fixed. Therefore, if we denote with $v_{\star,i}$
the maximal inner vertex in $V_i$ (see the construction in
Section~\ref{sec:zero-forests}), then we can represent
\begin{equation*}
  \mW_\ve \hat{\mC}_{\Gamma_{\bullet}} \mf{K}_{\mf{F}_{T_1}} \mf{K}_{\mf{F}_{T_2}}
  \Gamma (x_{V}) = \mW_\ve \mf{K}_{\mf{F}_{T_1}} \Gamma_1  (x_{V_1})  \mW_\ve
  \mf{K}_{\mf{F}_{T_2}}  \Gamma_2 (x_{V_2}) G_{\ve}^{2}( x_{v_{\star,1}}- x_{v_{\star,2}}) \;.
\end{equation*}
Furthermore, assume without loss of generality that $v_{\star, 2} \leqslant 
v_{\star,1}$ (in
the ordering of the inner vertices), so that in particular $v_{\star,1} = v_\star$.
This allows us to write
\begin{equation}\label{eq_supp1_treefac}
\begin{aligned}
\sum_{\substack{\mathbf{n} \in \mA_{ K}^{\#, N_\ve} (T)}} &
\int_{D_{\bf T}}
\mW_{\ve} \mathfrak{K}_{\mf{F}_{T}} \Gamma (x_{V_{\star}})
\ud x_{V_{\star} \setminus \{v_\star\} }
\\
& = \sum_{ n=K}^{N_{\varepsilon} -1}
\int_{2^{-n-1} < \frac{|x|}{2\pi} \leq 2^{-n} } I_{1,\ve} (n, x_{v_{\star}})
I_{2,\ve}(n, x_{v_{\star}} -x) G_{\ve}^{2}(x) \ud x  \,,
\end{aligned}
\end{equation}
where we defined
\begin{equation*}
  I_{i, \ve} (n, x_{v_{\star,i}}) := \sum_{\substack{\mathbf{n}_{i} \in \mA_{n}^{\#, N_\ve} (T_{i})}} \int_{D_{(T_i, \mathbf{n}_i)}}
  \mW_{\ve} \mathfrak{K}_{\mf{F}_{T_{i}}} \Gamma_{i} (x_{V_i})
  \ud x_{V_{i} \setminus \{v_{\star,i}\} }\;.
\end{equation*}
Moreover, we set
\begin{equation*}
  A_{i}(n, \ve) := \frac{1}{  \overline{T}_i !}
  \bigg(
  \frac{  \log{2^{N_\ve-n}}}{ 8 \pi^{2}} \bigg)^{| \overline{T}_i|} \;, \qquad
E_{i}(n, \ve) := \sup_{x_{v_{\star,i}} \in \TT^4} |I_{i, \ve} (n, x_{v_{\star,i}}) - A_{i}(n, \ve)| \;,
\end{equation*}
such that we find
\begin{equation*}
  \begin{aligned}
    \bigg\vert & \sum_{\substack{\mathbf{n} \in \mA_{ K}^{\#, N_\ve} (T)}}
\int_{D_{\bf T}}
\mW_{\ve} \mathfrak{K}_{\mf{F}_{T}} \Gamma (x_{V_{\star}})
\ud x_{V_{\star} \setminus \{v_\star\} } -  \sum_{ n=K}^{N_{\varepsilon} -1} A_1(n, \ve) A_{2}(n, \ve)
\int_{2^{-n-1} < \frac{|x|}{2\pi} \leq 2^{-n} }  G_{\ve}^{2}(x) \ud x \bigg\vert \\
& \leqslant  \sum_{ n=K}^{N_{\varepsilon} -1}(A_1(n, \ve)E_{2}(n, \ve)+ A_2(n, \ve)E_{1}(n, \ve) +E_1(n, \ve)E_{2}(n, \ve))
\int_{2^{-n-1} < \frac{|x|}{2\pi} \leq 2^{-n} }  G_{\ve}^{2}(x) \ud x \;.
  \end{aligned}
\end{equation*}
Now, by the induction hypothesis we know that
\begin{equation*}
  \lim_{\ve \to 0} \lambda_{\ve}^{|E_i|} \max_{n \leq N_\ve-1} E_{i}(n, \ve) =0 \;.
\end{equation*}
This, in combination with the estimate~\eqref{e:in-between}, implies the result, as
long as we can prove the correct asymptotic behaviour for
\begin{equation*}
   \sum_{ n=K}^{N_{\varepsilon} -1} A_1(n, \ve) A_{2}(n, \ve)
\int_{2^{-n-1} < \frac{|x|}{2\pi} \leq 2^{-n} }  G_{\ve}^{2}(x) \ud x \;.
\end{equation*}
Here, by Lemma~\ref{lem_fixingestimate} we find that
\begin{equation*}
  \begin{aligned}
    \lim_{\ve \to 0} & \max_{K \in [N_\ve-1]}\lambda_{\ve}^{|E_1|+|E_2|+2} \sum_{ n=K}^{N_{\varepsilon} -1} A_1(n, \ve)
    A_{2}(n, \ve) \bigg\vert  \int_{2^{-n-1} < \frac{|x|}{2\pi} \leq 2^{-n} }  G_{\ve}^{2}(x) \ud x -  \frac{1}{ 8 \pi^{2}}
    \log{ (2)} \bigg\vert \\
    & \lesssim \lim_{\ve \to 0} \max_{K \in [N_\ve-1]} \lambda_{\ve}^{|E_1|+|E_2|+2}\sum_{ n=K}^{N_{\varepsilon} -1} A_1(n, \ve)
    A_{2}(n, \ve) ( 2^{-n} + 2^{- ( N_{\ve} - n -1 )}) \\
    & \lesssim  \lim_{\ve \to 0} \lambda_{\ve}^{|E_1|+|E_2|+2} N_{\ve}^{| E_{1}| +| E_{2}|}
    \sum_{ n=0}^{N_{\varepsilon} -1} ( 2^{-n} + 2^{- ( N_{\ve} - n -1)} ) = 0 \;.
  \end{aligned}
\end{equation*}
Therefore, the problem is further reduced to the limiting behaviour of
\begin{equation*}
 \lambda_{\ve}^{| E_{1}| + | E_{2}| +2} \frac{ \log{ (2)} }{ 8 \pi^{2}}
  \sum_{ n=K}^{N_{\varepsilon} -1} A_1(n, \ve) A_{2}(n, \ve)  \;,
\end{equation*}
which is controlled by Lemma~\ref{lem_logsum}, leading to the desired~\eqref{eq_ind_hyp_treefac}. The statement of the lemma now follows by setting $
K =0$ in~\eqref{eq_ind_hyp_treefac}.
\end{proof}

\begin{lemma}\label{lem_fixingestimate}
  There exists a $C>0$ such that for every $\ve \in (0,1)$ and every $ n \in\{ 0,
  \ldots, N_{\ve}-1 \}$:
  \begin{equation*}
  \begin{aligned}
  \bigg|
  \int_{ ( 2 \pi) 2^{- (n +1)} < | x |
  \leqslant ( 2 \pi) 2^{- n}}
  \,
  G_{ \varepsilon}^{2}  (x)
  \ud x
  -
  \frac{1}{ 8 \pi^{2}}
  \log{ (2)
  }
  \bigg|
  \leqslant C ( 2^{-n} +  2^{-( N_{\ve}- n-1)})\,.
  \end{aligned}
  \end{equation*}
  \end{lemma}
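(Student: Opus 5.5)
We compare $G_\ve^2$ on the annulus $A_n := \{(2\pi)2^{-(n+1)} < |x| \leq (2\pi)2^{-n}\}$ with the leading singular profile $(4\pi^2|x|^2)^{-2}$. The model integral is exactly scale invariant. In polar coordinates in $d=4$, with surface area $2\pi^2$,
\begin{equation*}
\int_{A_n} \frac{\ud x}{(4\pi^2)^2|x|^4} = \frac{2\pi^2}{16\pi^4}\int_{(2\pi)2^{-(n+1)}}^{(2\pi)2^{-n}} \frac{\ud \zeta}{\zeta} = \frac{\log 2}{8\pi^2} \;,
\end{equation*}
independently of $n$. It therefore suffices to bound
\begin{equation*}
\int_{A_n} \big|G_\ve^2(x) - (4\pi^2|x|^2)^{-2}\big| \ud x \;.
\end{equation*}
We split the pointwise difference into a large scale error, where $|x|$ is comparable to the size of the torus, and a small scale error, where $|x|$ is comparable to $\ve$.

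For the large scale error we use the expansion $G(x) = (4\pi^2|x|^2)^{-1} + R(x)$ with $|R(x)| \lesssim 1 + |\log|x||$ near the origin. This holds for the massive Green's function on $\TT^4$: subtracting the Euclidean kernel of $1-\Delta$ and the periodisation leaves a term that is bounded by a logarithm, or even bounded. This expansion is presumably the content of Lemma~\ref{lem_greens_function_order_bound}, which was invoked earlier through \eqref{e:G-bd1} and \eqref{e:G-bd2}. Then
\begin{equation*}
\big|G^2 - (4\pi^2|x|^2)^{-2}\big| \lesssim |x|^{-2}\big(1+|\log |x||\big) \;.
\end{equation*}
Integrated over $A_n$, whose volume is $\simeq 2^{-4n}$, this contributes $\lesssim 2^{-2n}(1+n) \lesssim 2^{-n}$.

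For the small scale error we compare $G_\ve = \vr_\ve \star G$ with $G$. Since $n \leq N_\ve - 1$, every $x \in A_n$ satisfies $|x| \geq (2\pi)2^{-N_\ve} \geq 2\pi\ve$, so $|x|$ is much larger than the support radius $\ve$ of $\vr_\ve$ (assuming $\vr$ is supported in the unit ball; otherwise enlarge the constant). Then
\begin{equation*}
|G_\ve(x) - G(x)| \leq \sup_{|y|\leq \ve}|G(x-y)-G(x)| \lesssim \ve\,|x|^{-3} \;.
\end{equation*}
Here we use the gradient bound $|\nabla G(z)| \lesssim |z|^{-3}$ on the segment, on which $|z| \geq |x|/2$. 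It follows that
\begin{equation*}
|G_\ve^2 - G^2| \lesssim \ve\,|x|^{-5} \;,
\end{equation*}
and integrating over $A_n$ gives $\lesssim \ve\, 2^{n}$. Since $2^{-N_\ve} \geq \ve$, this is bounded by $2^{-(N_\ve - n)} \leq 2^{-(N_\ve - n - 1)}$.

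Adding the two error bounds gives the claim. The only delicate step is the regularity input, namely the expansion of $G$ with a logarithmic remainder and the gradient bound. If Lemma~\ref{lem_greens_function_order_bound} only supplies upper and lower bounds of the form $(4\pi^2)^{-1}(|x|\pm\ve)^{-2} \pm C$, then the small scale error should instead be read off from the difference between the upper and lower bounds. That difference is $\lesssim \ve|x|^{-5} + |x|^{-2}$ on $A_n$, which yields the same two error terms.
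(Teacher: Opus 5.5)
Your argument is correct and follows the paper's proof. In both, the scale-invariant model integral is exactly $\frac{\log 2}{8\pi^2}$, and the pointwise error $|G_\ve^2-(4\pi^2|x|^2)^{-2}|$ splits into a logarithmic large-scale part (from the expansion of Lemma~\ref{lem:G}) contributing $\lesssim 2^{-n}$ and an $\ve|x|^{-5}$ small-scale part contributing $\lesssim \ve 2^{n}\leq 2^{-(N_\ve-n-1)}$. The only difference is that the paper bounds $|G_\ve-(4\pi^2|x|^2)^{-1}|$ in one step via Lemma~\ref{lem_Gmol2ratiomol}, which convolves the explicit expansion with $\vr_\ve$ (essentially your fallback); this avoids the gradient bound $|\nabla G(z)|\lesssim |z|^{-3}$, which the paper never states.
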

  \begin{proof}
  First, we note that
  \begin{equation}\label{e:remove-epsilon}
  \begin{aligned}
  \frac{1}{ 8 \pi^{2}}
  \log{ (2) }
  =
  \frac{1}{16 \pi^{4}}
  \int_{ ( 2 \pi) 2^{- (n +1)} < | x |
  \leqslant ( 2 \pi) 2^{- n}}
  \frac{1}{|x|^{4}}
  \ud x\,.
  \end{aligned}
  \end{equation}
  Therefore, it suffices to estimate for $n \geq N_\ve-1$
  \begin{equation*}
  \begin{aligned}
  &\bigg|
  \int_{ ( 2 \pi) 2^{- (n +1)} < | x |
  \leqslant ( 2 \pi) 2^{- n}}
  \,
  G_{ \varepsilon}^{2}  (x)
  -
  \frac{1}{16 \pi^{4}}
  \frac{1}{|x|^{4}}
  \ud x
  \bigg|\\
  & =
  \bigg|
  \int_{ ( 2 \pi) 2^{- (n +1)} < | x |
  \leqslant ( 2 \pi) 2^{- n}}
  \bigg(
  G_{ \varepsilon}  (x)
  -
  \frac{1}{4 \pi^{2}}
  \frac{1}{|x|^{2}}
  \bigg)
  \bigg(
  G_{ \varepsilon}  (x)
  +
  \frac{1}{4 \pi^{2}}
  \frac{1}{|x|^{2}}
  \bigg)
  \ud x
  \bigg|\\
  & \lesssim
  \int_{ ( 2 \pi) 2^{- (n +1)} < | x |
  \leqslant ( 2 \pi) 2^{- n}}
  \bigg(
  1 + \log{ ( 1+ \tfrac{1}{|x|}) } +
  \frac{ \ve}{(|x| + \ve)^{3}}
  \bigg)
  \frac{1}{(|x| + \ve)^{2}}
  \ud x
  \,,
  \end{aligned}
  \end{equation*}
  where we used Lemmas~\ref{lem:G} and~\ref{lem_Gmol2ratiomol} in the last
inequality, in combination with the fact that $|x|> 2 \ve$.
  In particular, we obtain
  \begin{equation*}
  \begin{aligned}
  \bigg|
  \int_{ ( 2 \pi) 2^{- (n +1)} < | x |
  \leqslant ( 2 \pi) 2^{- n}}
  \, &
  G_{ \varepsilon}^{2}  (x)
  -
  \frac{1}{16 \pi^{4}}
  \frac{1}{|x|^{4}}
  \ud x
  \bigg|\\
  &\lesssim
  \int_{ 0 < | x |
  \leqslant ( 2 \pi) 2^{- n}}
  \frac{1}{|x|^{3}}
  \ud x
  +
  \int_{ ( 2 \pi) 2^{- (n +1)} < | x |
  < \infty}
  \frac{ \ve}{(|x| + \ve)^{5}}
  \ud x\,.
  \end{aligned}
  \end{equation*}
  The first integral gives a factor proportional to $ 2^{- n}$, and the second
  integral yields
  \begin{equation*}
  \begin{aligned}
  \int_{ ( 2 \pi) 2^{- (n +1)} < | x |
  < \infty}
  \frac{ \ve}{(|x| + \ve)^{5}}
  \ud x
  &\lesssim
  \frac{ \ve}{2^{-(n+1)}+ \ve}
  \lesssim
  \ve 2^{n+1} = 2^{- (N_{\ve} -(n+1)) }\,.
  \end{aligned}
  \end{equation*}
  This concludes the proof.
  \end{proof}

\begin{lemma}\label{lem_logsum}
For every $k \in \mathbf{N}$ it holds that
\begin{equation*}
\begin{aligned}
\lim_{ \varepsilon \to 0}
\max_{ K \in \{0, \ldots, N_{\varepsilon} - 1 \}}
\lambda_{\varepsilon}^{2(k+1)}
\bigg|
\sum_{ n = K}^{N_{\varepsilon}-1} (N_{\varepsilon} -n )^{k}
-
\frac{ 1 }{k+1}
( N_{\varepsilon} - K )^{k +1}
\bigg|=0\,.
\end{aligned}
\end{equation*}
\end{lemma}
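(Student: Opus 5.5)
Since $\lambda_\ve^{2(k+1)} = \hat{\lambda}^{2(k+1)} (\log \tfrac{1}{\ve})^{-(k+1)}$ and $N_\ve = \lfloor \log_2 \tfrac{1}{\ve}\rfloor$ is comparable to $\log\tfrac1\ve$, it suffices to show that the difference inside the absolute value is $O(N_\ve^{k})$ uniformly in $K \in \{0,\ldots,N_\ve-1\}$. First reindex with $m = N_\ve - n$, which runs from $1$ to $M := N_\ve - K \in \{1, \ldots, N_\ve\}$. The sum becomes $\sum_{m=1}^{M} m^k$, and the claim reduces to the purely elementary estimate
\begin{equation*}
\Big| \sum_{m=1}^{M} m^k - \frac{M^{k+1}}{k+1} \Big| \leqslant C_k M^{k} \;, \qquad \forall M \in \NN_* \;.
\end{equation*}

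To prove this, compare the sum with the integral $\int_0^M x^k \ud x = \frac{M^{k+1}}{k+1}$. Since $x \mapsto x^k$ is nondecreasing on $[0,\infty)$, for each $m$ we have
\begin{equation*}
\int_{m-1}^{m} x^k \ud x \leqslant m^k \leqslant \int_m^{m+1} x^k \ud x \;.
\end{equation*}
Summing over $m = 1, \ldots, M$ gives $\frac{M^{k+1}}{k+1} \leqslant \sum_{m=1}^M m^k \leqslant \frac{(M+1)^{k+1} - 1}{k+1}$. By the mean value theorem, $(M+1)^{k+1} - M^{k+1} \leqslant (k+1)(M+1)^k \leqslant (k+1) 2^k M^k$, so the estimate holds with $C_k = 2^k$. (Alternatively, one may invoke Faulhaber's formula directly.)

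Finally, since $M \leqslant N_\ve \leqslant \log_2 \tfrac1\ve$, we get
\begin{equation*}
\lambda_\ve^{2(k+1)} \sup_K \Big|\sum_{n=K}^{N_\ve-1}(N_\ve-n)^k - \frac{(N_\ve-K)^{k+1}}{k+1}\Big| \leqslant \hat{\lambda}^{2(k+1)} 2^k (\log 2)^{-k} \big(\log\tfrac1\ve\big)^{-1} \to 0 \;.
\end{equation*}
The only point requiring care is to make the bound uniform in $K$. This is automatic, because the error is controlled by $M^k \leqslant N_\ve^k$ regardless of $K$, so no step is a genuine obstacle.
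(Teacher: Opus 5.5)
Your proof is correct and follows the paper's argument. In both, after the reindexing $m = N_\varepsilon - n$, the claim reduces to $\big|\sum_{m=1}^{M} m^k - \frac{M^{k+1}}{k+1}\big| \lesssim_k M^k \leq N_\varepsilon^k$ uniformly in $K$, and this error is then killed by $\lambda_\varepsilon^{2(k+1)} \sim (\log\tfrac{1}{\varepsilon})^{-(k+1)}$. The only difference is that the paper bounds the lower-order terms through Faulhaber's formula (with Bernoulli numbers), whereas your integral comparison gives the same bound elementarily, with the explicit constant $2^k$.
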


\begin{proof}
The sum can be bounded with  Faulhaber's
formula:
\begin{equation*}
\begin{aligned}
\lambda_{\varepsilon}^{2(k+1)}
\bigg|\sum_{ n = 1}^{N_{\varepsilon}-K}
n^{k}
-
\frac{1}{k +1}
( N_{\varepsilon}-K )^{k+1}\bigg|
&\leqslant
\frac{
\lambda_{\varepsilon}^{2(k+1)}
}{k+1}
\sum_{\ell =1}^{k}
\binom{ k+1}{ \ell}
|B_{\ell} |
( N_{\varepsilon}-K-1 )^{k+1 - \ell} \\
&\lesssim
\lambda_{\varepsilon}^{2(k+1)} N_{\varepsilon}^{k} \to 0 \,,
\end{aligned}
\end{equation*}
with $ (B_{k})_{k}$ the sequence of Bernoulli numbers.
\end{proof}

At this point we are ready to conclude the proof of the desired proposition.

\begin{proof}[Proof of Proposition~\ref{prop_bubbleDiag_exact}]
  Let $ \Gamma$ be a connected nested bubble diagram.
  We use Lemma~\ref{lem_exact_heppsplit} to decompose the valuation of the diagram into
  valuations across Hepp sectors, and 
  Lemma~\ref{lem_estimate_nonneg_Hepp} to reduce the sum to contributing Hepp trees in
  $\mT^{0}_{V_\star}$. 
  Next, extracting bubbles using Lemma~\ref{lem_bubbleTree_exact}, we have
  \begin{equation*}
  \begin{aligned}
  \lim_{ \varepsilon \to 0} \bigg\vert \lambda_{\varepsilon}^{| E_{\star}|}
  \Pi_{\varepsilon} \Gamma ( \varphi) -
  \lambda_{\varepsilon}^{| E_{\star}|}
  \sum_{T \in \mT^{0}_{V_{\star}}}
    \sum_{\substack{\mathbf{n} \in \mA^{\#, N_\ve}	(T)}}
    \int_{D_{\bf T}}
  \Psi ( x_{v_\star})
  \mW_{\ve} \mathfrak{K}_{\mf{F}_{T}} \Gamma (x_{V_{\star}})
  \ud x_{V_{\star}} \bigg\vert = 0 \,.
  \end{aligned}
  \end{equation*}
  Finally, applying Lemma~\ref{lem_nbd_singleTree} we obtain the desired result.
  \end{proof}

\subsection{Test function localisation}\label{sec:test-functions}


In the proof of Lemma~\ref{lem_exact_heppsplit}, we will use that it is sufficient to evaluate the test
function $\Psi$ at a single point  $\Psi(x_{v_\star})$. We prove this ingredient,
before the proof of the aforementioned lemma.

\begin{lemma}\label{lem_testfunc_loc}
Let $ \Gamma$ be a nested bubble diagram, then
\begin{equation}\label{e:lem-simp1}
\begin{aligned}
\lim_{ \varepsilon \to 0}
|\lambda_{\varepsilon}|^{|E_{\star}|} \bigg\vert
\Pi_{\varepsilon} \Gamma ( \varphi)
-
\int_{(\mathbf{T}^{d})^{V_{\star}}}
\Psi ( x_{v_\star})
\mW_{\ve} \Gamma (x_{V_{\star}})
 \ud
x_{V_{\star}}\bigg\vert = 0 \,.
\end{aligned}
\end{equation}
Here $ \Psi( x_{v_\star })$ denotes $ \lim_{ \varepsilon \to 0} \Psi_{\varepsilon} ( x_{V_{\star}})$ when every occurrence
of $ x_{v}$, $ v \in V_{\star} $, is replaced by $ x_{v_\star}  $.
\end{lemma}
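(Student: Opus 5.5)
We have $\Pi_\ve\Gamma(\varphi)=\int \Psi_\ve(x_{V_\star})\mW_\ve\Gamma(x_{V_\star})\ud x_{V_\star}$, and the plan is to control the difference $\Psi_\ve(x_{V_\star})-\Psi(x_{v_\star})$ by splitting it as
\[
\big(\Psi_\ve(x_{V_\star})-\Psi_\ve(x_{v_\star},\ldots,x_{v_\star})\big)+\big(\Psi_\ve(x_{v_\star},\ldots,x_{v_\star})-\Psi(x_{v_\star})\big).
\]
Since $\Gamma$ is a $4$-Anderson--Feynman diagram (Lemma~\ref{lem_nonneg_diagrams} forces $k=2$ for nested bubble diagrams), $\Psi_\ve$ is an integral of the smooth $\varphi$ against four legs $G_\ve$. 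The second difference then tends to zero uniformly in $x_{v_\star}$: the map $x\mapsto\int\varphi(x_L)\prod G_\ve(x-x_{\ell})\ud x_L$ converges uniformly, because $G_\ve\to G$ in $L^1$ and $\varphi$ is bounded. Multiplying by $\int|\mW_\ve\Gamma|$ gives a contribution $o(1)\cdot|\lambda_\ve|^{|E_\star|}\int|\mW_\ve\Gamma|$. By Proposition~\ref{prop_countlogs_nonneg} (sector-by-sector remark) and the count \eqref{eq_Estar_card}, this last quantity is $O(1)$, so the term vanishes.

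For the first difference, I would use a Hölder/Lipschitz type bound on $\Psi_\ve$. Each leg is attached to some inner vertex, so $\Psi_\ve$ is a product-type convolution of $\varphi$ with the kernels $G_\ve$ in the individual variables. Moving one inner variable $x_v$ to $x_{v_\star}$ changes the corresponding convolution $\varphi\star G_\ve$ in that slot, and since $\varphi$ is smooth and $G_\ve$ is uniformly integrable this change is bounded by $C\|\nabla\varphi\|_\infty|x_v-x_{v_\star}|$ uniformly in $\ve$. Hence
\[
|\Psi_\ve(x_{V_\star})-\Psi_\ve(x_{v_\star},\ldots)|\lesssim \sum_{v}|x_v-x_{v_\star}|\wedge 1 .
\]
On a Hepp sector $D_{(T,\mathbf n)}$ we have $|x_v-x_{v_\star}|\lesssim 2^{-\mathbf n(\bullet)}$, where $\bullet$ is the root, so the error gains a factor $2^{-\mathbf n(\bullet)}$ at the root.

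This gain is where the argument should close, and the hardest part is the bookkeeping at the root. We need that inserting the factor $2^{-\mathbf n(\bullet)}$, which raises $\eta(\bullet)$ by one, lowers the logarithmic count enough to beat the weak coupling. I would rerun the induction of Lemma~\ref{lem_estimate_nonneg_Hepp} with $\eta$ replaced by $\eta+\mathds 1_{\bullet}$. In the final root sum of that proof, the case $\overline{\deg}(\bullet)=0$ is now replaced by a sum $\sum_n 2^{-n}(\ldots)$, which is bounded by Lemma~\ref{lem_simple_sumargument}. As a result the exponent of $\log\frac1\ve$ is at most $\mathrm{Null}(T)-1\le|\trim|-1$. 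Combining this with $|\lambda_\ve|^{|E_\star|}=\hat\lambda^{|E_\star|}(\log\frac1\ve)^{-|\trim|}$ (here $k=2$), summing over the finitely many Hepp trees, and using that the sectors cover the space up to null sets (Lemma~\ref{lem_hepp_almost_covers}), the first difference contributes $O((\log\frac1\ve)^{-1})\to0$. The remaining point to check carefully is that non-contributing Hepp trees were already negligible before the modification, so the improvement only needs to be extracted for contributing trees; this is harmless.
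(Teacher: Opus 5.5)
Your proof is correct and follows essentially the same route as the paper: a uniform-in-$\ve$ Lipschitz bound $|\Psi_\ve(x_{V_\star})-\Psi_\ve(x_{v_\star},\ldots,x_{v_\star})|\lesssim\sum_{v}|x_v-x_{v_\star}|$; an extra positive power that makes the root of every Hepp tree non-null, so that $\mathrm{Null}\le|\trim|-1$; and uniform convergence $\Psi_\ve\to\Psi$, paired with the $O(1)$ bound on $|\lambda_\ve|^{|E_\star|}\int|\mW_\ve\Gamma|$. The only cosmetic difference is how the gain is encoded: the paper treats the factor $|x_v-x_{v_\star}|$ as an additional edge of type $\mathfrak t=1$ and applies Proposition~\ref{prop_countlogs_nonneg} directly to the resulting positive-degree diagram, whereas you bound it by $2^{-\mathbf n(\bullet)}$ and rerun the induction of Lemma~\ref{lem_estimate_nonneg_Hepp} with $\eta+\mathds 1_{\bullet}$.
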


\begin{proof}
By Taylor expansion we find
\begin{equation*}
\begin{aligned}
\big| \Psi_{\varepsilon} (x_{V_{\star}}) - \Psi_{\varepsilon}(x_{v_\star})
\big|
\leqslant \| \Psi_{\varepsilon}' \|_{\infty}
\sum_{v \in V_{\star}} \sum_{i = 1}^{d}
|x_{v}^{i} - x_{v_\star}^{i}|
\lesssim
\sum_{v \in V_{\star}} | x_{v}- x_{v_\star}|\,,
\end{aligned}
\end{equation*}
where
\begin{equation*}
\begin{aligned}
\Psi_{\varepsilon}' (x_{V_{\star}}) =
\sum_{v \in V_{\star}} \sum_{i = 1}^{d}
\partial_{x_{v}^{i}} \Psi_{\varepsilon}( x_{V_{\star}})\,.
\end{aligned}
\end{equation*}
Therefore
\begin{equation}\label{eq_supp1_locatestfunc}
\begin{aligned}
\lambda_{\varepsilon}^{|E_{\star}|}
\bigg|
\Pi_{\varepsilon} \Gamma ( \varphi)
- \int_{(\mathbf{T}^{d})^{V_{\star}}}
\Psi_{\varepsilon} ( x_{v_\star}) &
\mW_{\ve} \Gamma (x_{V_{\star}})
 \ud
x_{V_{\star}}
\bigg| \\
& \lesssim
|\lambda_{\varepsilon}|^{|E_{\star}|}
\sum_{v \in V_{\star}}
\int_{(\mathbf{T}^{d})^{V_{\star}}}
 | x_{v}- x_{v_\star}|\cdot
|\mW_{\ve} \Gamma (x_{V_{\star}})|
 \ud
x_{V_{\star}} \;.
\end{aligned}
\end{equation}
In other words, we have added a new kernel (of positive order), which is
represented by
 a new edge $e$ connecting $ v $ and $ v _\star $ in $ \Gamma$, with $
\mathfrak{t} ( e ) =1$. The resulting
diagram $ \overline{\Gamma}$ has positive degree, because
\begin{equation*}
\begin{aligned}
\deg\, \overline{\Gamma} = \deg\, \Gamma + \mathfrak{t} (e) \geqslant 1\,,
\end{aligned}
\end{equation*}
and $ \Gamma$ was assumed to be non--negative.
In particular, using  Proposition~\ref{prop_countlogs_nonneg}
\begin{equation*}
\begin{aligned}
 |\lambda_{\varepsilon}|^{| E_{\star}|}| \Pi_{\varepsilon}
\overline{\Gamma} ( \varphi) |
\lesssim
\sum_{T \in \mT_{V_{\star}}}
|\lambda_{\varepsilon}|^{2 |  \trim  |}
\big(\log{ \tfrac{1}{ \varepsilon}} \big)^{ \mathrm{Null}( T )} \to 0 \,,
\qquad \text{as }\ \varepsilon \to 0\,,
\end{aligned}
\end{equation*}
where we used that $\mathrm{Null}( T ) \leqslant |\trim|-1$ which holds
because the root $\bullet $ of $ T$ satisfies $ \overline{\deg}
(\bullet) = \sum_{u \succ \bullet} \eta(u) =
\deg\, \overline{\Gamma} >0$.

Finally, note that $ \Psi_{\varepsilon}$ can be replaced by $ \Psi $ in the
left side of~\eqref{eq_supp1_locatestfunc}, since
\begin{equation*}
\begin{aligned}
\int_{ \mathbf{T}^{d}} \rho_{\varepsilon}(z) \big( \varphi \star_{i} G (x-
\delta_{i} z) - \varphi \star_{i} G ( x)\big) \ud z \to 0 \,,
\end{aligned}
\end{equation*}
because the integrand is supported on $ |z| \leqslant \varepsilon$ and $ \varphi \star_{i} G$ is uniformly continuous.
Here $ \star_{i}$ denotes the convolution with the $i$-th component of $
\varphi$.
\end{proof}

We are now ready to complete the proof of Lemma~\ref{lem_exact_heppsplit}.

\begin{proof}[Proof of Lemma~\ref{lem_exact_heppsplit}]
First, since
\begin{equation*}
\begin{aligned}
(\mathbf{T}^{d})^{V_{\star}}_{\#}
:=
\{
x \in (\mathbf{T}^{d})^{V_{\star}}\,: \,
x_{v} \neq x_{w} \,, \ \forall v \neq w
\}\,,
\end{aligned}
\end{equation*}
has full Lebesgue measure, we have by Lemma~\ref{lem_testfunc_loc}
\begin{equation*}
\begin{aligned}
\lim_{ \varepsilon \to 0}
|\lambda_{\varepsilon}|^{|E_{\star}|} \bigg\vert
\Pi_{\varepsilon} \Gamma ( \varphi)
-
\int_{(\mathbf{T}^{d})^{V_{\star}}_{\#}
}
\Psi ( x_{v_\star})
\mW_{\ve} \Gamma (x_{V_{\star}})
 \ud
x_{V_{\star}}\bigg\vert = 0 \,.
\end{aligned}
\end{equation*}
To conclude the lemma, it suffices to show for every $T \in
\mT_{V_{\star}}$
\begin{equation}\label{eq_estdiagonalrest}
\begin{aligned}
\lim_{ \varepsilon \to 0}
|\lambda_{\varepsilon}|^{|E_{\star}|}
\sum_{\substack{\mathbf{n} \in \mA ( T) \setminus \mA^{\#, N_\ve}(T) }}
\int_{D_{(T, \mathbf{n})}}
\big|\Psi ( x_{v_\star})
\mW_{\ve} \Gamma (x_{V_{\star}})\big|
 \ud
x_{V_{\star}} =0\,,
\end{aligned}
\end{equation}
because $ \{ (T, \mathbf{n}) \,: \, T \in \mT_{V_{\star}} \ \text{ and }
\mathbf{n} \in \mA (T)\}$ covers all of $(\mathbf{T}^{d})^{V_{\star}}_{\#}
$, see Lemma~\ref{lem_hepp_almost_covers}, while restricting to $ \mathbf{n}
\in \mA^{\#, N_\ve}(T)$ provides a disjoint family of subsets by
Lemma~\ref{lem_disjoin_hepp}.

To prove~\eqref{eq_estdiagonalrest}, first, we note that if $ \mathrm{Null}( T)  <
|T|$, then the integral in~\eqref{eq_estdiagonalrest} vanishes as a consequence of
Lemma~\ref{lem_estimate_nonneg_Hepp}. Henceforward, we assume $ T \in
\mT_{ V_{\star}}^0(\Gamma)$.
Following the derivation of the upper bound~\eqref{eq_pos_Hepp_est_supp1}, we
write
\begin{equation}
\begin{aligned}
\sum_{\substack{\mathbf{n} \in  \mA ( T) \setminus \mA^{\#, N_\ve}(T)}}
\int_{D_{(T, \mathbf{n})}}
\big|\Psi ( x_{v_\star})
\mW_{\ve} \Gamma (x_{V_{\star}})\big|
 \ud
x_{V_{\star}}
\lesssim
\sum_{\substack{\mathbf{n} \in \mA ( T) \setminus \mA^{\#, N_\ve}(T)}}
\prod_{u \in \trim}
(2^{- \mathbf{n}(u)} + \varepsilon)^{ \eta(u) -d}
2^{-d \, \mathbf{n}(u)}\,,
\end{aligned}
\end{equation}
with $ \eta$ defined in~\eqref{eq_def_eta}.
We now follow the proof of Lemma~\ref{lem_estimate_nonneg_Hepp} verbatim, and
see that the restriction to at least one diagonal $ \mathbf{n} (u) =
\mathbf{n} (u')$ or $ \mathbf{n} (u) \geqslant N_{\varepsilon}$ (cf.~\eqref{eq_supp2_aboveN}) results in the bound
\begin{equation*}
\begin{aligned}
|\lambda_{\varepsilon}|^{|E_{\star}|}
\sum_{\substack{\mathbf{n} \in  \mA ( T) \setminus \mA^{\#, N_\ve}(T)}}
\int_{D_{(T, \mathbf{n})}}
\big|\Psi ( x_{V_{\star}})
\mW_{\ve} \Gamma (x_{V_{\star}})\big|
 \ud
x_{V_{\star}}
\lesssim
\lambda_{\varepsilon}^{2| \overline{T}|}
\big( \log{ \tfrac{1}{\varepsilon}} \big)^{| \overline{T}|-1}\,,
\end{aligned}
\end{equation*}
which vanishes as $ \varepsilon \to 0$. The proof is complete.
\end{proof}

\section{Computing the effective variance coefficient}\label{sec:effective-variance}
In this short section, we conclude our analysis of the limiting behaviour of
Feynman diagrams in weak coupling, by providing an exact formula for the limiting effective variance.
We prove Proposition~\ref{prop:effective-variance} using an inductive
argument, building on
  Proposition~\ref{prop:bubble}, which in turn relies on
  Proposition~\ref{prop_bubbleDiag_exact} and Corollary~\ref{cor_contr_diagram}.

\begin{proof}[Proof of Proposition~\ref{prop:effective-variance}]
  First, we observe that if $n_1+n_2$ is odd, then there is no complete
  pairing between $\mI_{n_1}$ and $\mI_{n_2}$, meaning that $\mK(\mI_{n_1},
  \mI_{n_2}) = \emptyset$ and therefore $\sigma_{\rm{eff}}^{2,(n_1 + n_2)} = 0$ by Definition~\ref{def:sigma-n}.

  Let us therefore concentrate on the case $n_1+n_2=2n$ even. Recall that from Definition~\ref{def:sigma-n} and from
  Proposition~\ref{prop:bubble}, we obtain the following identity for $n \in \NN_*$:
  \begin{equation*}
    \sigma_{\rm{eff}}^{2,(2n)} = \sum_{n_1 + n_2 = 2n} \sum_{\kappa \in \mK^{\mathrm{cont}}(\mI_{n_1}, \mI_{n_2})} \Big(\frac{ 1}{ \sqrt{8} \pi}\Big)^{2(n -1)}
    \frac{|\mathbb{S}( (\mI_{n_1}, \mI_{n_2})_\kappa )|}{ (n -1 )! } \;,
  \end{equation*}
  where we have used that the total number of inner vertices in $(\mI_{n_1},
  \mI_{n_2})_\kappa $ is given by $n$. Therefore, our claim is proven if we show
  that
  \begin{equation*}
    \sum_{n_1 + n_2 = 2n} \sum_{\kappa \in \mK^{\mathrm{cont}}(\mI_{n_1}, \mI_{n_2})}
    \frac{|\mathbb{S}( (\mI_{n_1}, \mI_{n_2})_\kappa )|}{ (n -1 )! } = 4^{n-1} \;.
  \end{equation*}
  Note that the claim is true for $n=1$, since $|\mathbb{S}( (\mI_{1},
  \mI_{1})_\kappa )|=1$ from Definition~\ref{def:extraction-sequences}. The
  claim is also true for $n=2$. We check this by hand, since in this case the only contributions come from
  $n_1=1, n_2 =3$, from $n_1=3,n_2=1$, and from $n_1=n_2=2$. In this
  case a quick calculation shows that the only contributing pairings are the
  following:
  \begin{equation*}
    \<var1> \;, \qquad \<var2> \;, \qquad  \<c22> \;, \qquad \<c23> \;,
  \end{equation*}
  implying the desired identity: In each pairing $|\mathbb{S}( (\mI_{n_1},
  \mI_{n_2})_\kappa )|=1$, since there is only one bubble to extract.

  We will prove the claim for arbitrary $n \geqslant 3$ by
  induction. To do so,
  we must keep track of all the information of a given pairing $(\mI_{n_1},
  \mI_{n_2})_\kappa$: not just the resulting graph, but also the original
  trees and the pairing $\kappa$. We do this by labelling all the edges in the left tree
  $\mI_{n_1}$, including the legs, from $1_{0}$ to $1_{n_1}$, from bottom to
  top (following the direction of the edges). And similarly all
  the edges in the right tree $\mI_{n_2}$ from $2_{0}$ to $2_{n_2}$. For example, if $n_1 =
  n_2 = 4$, we find the following diagram with edge labelling after a given pairing:
  \begin{equation*}
    \<varc42> = \<varcc42> \;.
  \end{equation*}
  Note that because of the labelling of the edges we can omit directing the
  edges, since the direction flows along the labelling from $\alpha_i$ to
  $\alpha_{i+1}$ (for $\alpha \in \{1,2\}$).
  Moreover, from the labelling of the edges in the graph $(\mI_{n_1},
  \mI_{n_2})_\kappa$ it is immediately possible to recover the original trees
  $\mI_{n_1}, \mI_{n_2}$ together with the pairing $\kappa$, simply by
  following the labels $(\alpha_i)_{i=0}^{n_\alpha}$ and  $(\beta_j)_{j=0}^{n_\beta}$ from smallest to the largest.

  In the edge-labelled diagrams, consider a subdiagram $\overline{\Gamma}$ that
  is isomorphic as a graph (omitting the edge labelling) to a bubble with four
  legs attached $\scalebox{0.5}{\<varB5>}$. Then, as a graph with edge labelling,
  $\overline{\Gamma}$ is isomorphic to one of the following $4$ subdiagrams
  (here an isomorphism is a map that sends edges and vertices to edges and
  vertices, and additionally maps edge labels to edge labels).
  Namely, there exist $\{\alpha, \beta\} \subseteq \{1,2\}$ and $i,j \in \NN$
  such that
  \begin{equation}\label{e:cond-ab}
    \text{either } \{\alpha, \beta\} = \{1,2\}  \text{ and }   i, j \in \NN \;, \quad \text{ or } \alpha = \beta \text{ and }  i < j \;,
  \end{equation}
  and such that $\overline{\Gamma}$ is isomorphic to
  \begin{equation}\label{e:cont-cases}
    \<varB1> \;, \qquad \<varB2> \;, \qquad \<varB3> \;, \qquad \<varB4>
     \;.
  \end{equation}
  Now, we can extend the contraction of a bubble from
  Definition~\ref{def:contract} to take into account the labelings, in such a
  way that after the contraction the resulting diagram is still the pairing of two
  smaller trees. Namely, we contract
  \begin{equation}\label{e:cont-case-1}
    \begin{aligned}
      & \<varB1> \mapsto  \<varC> \;.
    \end{aligned}
  \end{equation}
  Here the map $\pi$ is simply a rewarping of the labels: since we have
  contracted some edges, the indices associated to the remaining labels form an increasing sequence
  but with some gaps, and the map $\pi$ shifts the indices back so that
  they are given by a sequence of consecutive integers, for example:
  \begin{equation*}
    \pi (\alpha_0, \alpha_1, \alpha_4, \alpha_5)=  (\pi \alpha_0, \pi \alpha_1, \pi \alpha_4, \pi \alpha_5) = (\alpha_0, \alpha_1, \alpha_2, \alpha_3) \;.
  \end{equation*}
  Note that in~\eqref{e:cont-case-1} it is possible that $\alpha=\beta$ or
  $\alpha \neq \beta$. Depending on the case, the map $\pi$ has a different effect,
  and for clarity we provide a formula (we will omit this in the upcoming cases,
  since it is not important). If $\alpha = \beta$ we find that \lqm warped\rqm labelings $\pi \alpha_k, \pi
  \alpha^c_k$ (with $\{\alpha, \alpha^c\}=\{1,2\}$) are given by
  \begin{equation}\label{e:el11}
    \pi \alpha_k = \begin{cases}
      \alpha_k & \quad \forall k \leqslant j \;, \\
      \alpha_{k-2} & \quad \forall  j+3 \leq k \leq n_{\alpha} \;,
    \end{cases}  \quad \pi \alpha^c_k = \alpha^c_k \;, \quad \forall k \leq n_{\alpha^c} \;,
    \quad \text{ and } \begin{cases}
      \tilde{n}_\alpha & = n_\alpha- 2 \;, \\
      \tilde{n}_{\alpha^c} & = n_{\alpha^c}\;.
    \end{cases}
  \end{equation}
  Instead, if $\alpha \neq \beta$, then
  \begin{equation}\label{e:el12}
    \pi \alpha_k = \alpha_k \quad \forall k \leq n_\alpha, \quad \pi \beta_k = \begin{cases}
    \beta_k & \quad \forall k \leq j \;, \\
    \beta_{k-2} & \quad \forall  j+3 \leq k \leq n_\beta \;,
    \end{cases}
    \quad \text{ and } \begin{cases}
      \tilde{n}_\alpha & = n_\alpha \;, \\
      \tilde{n}_{\beta} & = n_{\beta}- 2\;.
    \end{cases}
  \end{equation}
  The only relevant aspect of this rewarping is that labels are again of the form $\pi \alpha = \{\alpha_k\}_{k
  \leq \tilde{n}_\alpha}$ for $\alpha \in \{1,2\}$ and $\tilde{n}_\alpha$ as in~\eqref{e:el11} and~\eqref{e:el12}. Also the exact form of $\tilde{n}_\alpha$
  is not important, except for the obvious observation that $\tilde{n}_1 +
  \tilde{n}_2 = 2n-2$.

  With the contraction~\eqref{e:cont-case-1} we mean that if $\Gamma^B$ is a subdiagram of $\Gamma = (\mI_{n_1}, \mI_{n_2})_{\kappa}$ isomorphic to
  a bubble (when considered without edge labelling) and connected to the remaining
  diagram as in the first case of~\eqref{e:cont-cases}, then we define the
  labelled contraction $\bigslant{\Gamma}{\Gamma^B}$ to be the diagram obtained by contracting the
  bubble following Definition~\ref{def:contract}, and with the edge labelling
  given by the rewarping map $\pi$ as in~\eqref{e:cont-case-1}.
  The result of such a contraction is still the pairing of two trees:
  $\bigslant{\Gamma}{\Gamma^B} = (\mI_{ \tilde{n}_1},
  \mI_{\tilde{n}_2})_{\tilde{\kappa}}$ for some pairing $\tilde{\kappa}$.

  The same applies to the other three cases of~\eqref{e:cont-cases}. In  the
  second case we find:
  \begin{equation}\label{e:cont-case-2}
    \<varB2> \mapsto  \<varC2> \;,
  \end{equation}
  And in the third case:
  \begin{equation}\label{e:cont-case-3}
    \<varB3> \mapsto  \<varC3> \;.
  \end{equation}
  And in the last case:
  \begin{equation}\label{e:cont-case-4}
    \<varB4> \mapsto  \<varC4> \;.
  \end{equation}

  This procedure can be inverted in the following sense. Given an edge-labelled Feynman diagram
 $\Gamma$ arising from a  pairing
  $\Gamma= (\mI_{\tilde{n}_1}, \mI_{\tilde{n}_2})_{\tilde{\kappa}}$, and given a vertex $v \in V_\star((\mI_{\tilde{n}_1},
  \mI_{\tilde{n}_2})_{\tilde{\kappa}})$, then there exist $\alpha, \beta, i, j$ satisfying~\eqref{e:cond-ab} such that the four edges incident in $v$ are of the
  following form:
  \begin{equation*}
    \<varC5> \;.
  \end{equation*}
  The existence of $\alpha, \beta, i, j$ follows from the fact that $\kappa$ is
  a complete pairing and either $v$ arises from an internal pairing (so
  $\alpha=\beta$, in which case we simply choose $i$ to be the smallest (with
  respect to the ordering of the edges) edge
  incident in $v$) or from an external one (so $\alpha \neq \beta$).

  Now, we can define $4$ larger diagrams (indexed by $\ell \in \{1, 2, 3,4\}$)
  $\Gamma^{v, \ell}$ in which we expand the vertex $v$ into a bubble,
  creating one of the four cases appearing in~\eqref{e:cont-cases}. Because of
  the labelling of the edges, these four diagrams are distinct (without edge
  labelling this would not necessarily be the case, because we would not be able
  to distinguish two edges incident in the same two vertices). To construct the
  diagram $\Gamma^{v, \ell}$ we leave the graph untouched, except for the vertex $v$ and the edges
  incident in $v$, and the overall labelling. In the first case $\ell=1$, we expand the
  vertex $v$ as follows:
  \begin{equation*}
    \<varC5> \mapsto \<varD1> \;.
  \end{equation*}
  Here we have colored the edges, in order to distinguish them, since in the process we are also modifying the labelling. Indeed,
  the labelling is chosen as the unique labelling through consecutive integer
  indices such that if $\Gamma^B\subseteq \Gamma^{v, 1}$ is the
  bubble with edges $\beta_{j+1}, \beta_{j+2}$, then, following the contraction~\eqref{e:cont-case-1}
  above we have $\Gamma = \bigslant{\Gamma^{v,1}}{\Gamma^B}$ as a labelled
  diagram (in this case it means that if an edge had label $\beta_k$ in the old
  diagram, then it has a label $\beta_{k+2}$ in the new one if $k \geq j+1$, and
  $\beta_k$ otherwise, and the labels $\beta^c_k$ are left untouched).

  We proceed similarly in the other cases. We list, respectively, the cases
  $\ell=2,3,4$ as follows:
  \begin{equation*}
    \<varC5> \mapsto \<varD2> \;, \quad
    \<varC5> \mapsto \<varD3> \;, \quad
    \<varC5> \mapsto \<varD4> \;.
  \end{equation*}
  Note that $j'$ can be different from $j$, depending on whether $\alpha =
  \beta$ of $\alpha \neq \beta$ (in the latter case $j' = j$).

  Combining everything we have proven so far, we find that for every $n\geq 3$,
  for every pairing
  $(\mI_{\tilde{n}_1}, \mI_{\tilde{n}_2})_{\tilde{\kappa}}$ with
$\tilde{n}_1+\tilde{n}_2=2n-2$, for every $v \in
  V_{\star}((\mI_{\tilde{n}_1}, \mI_{\tilde{n}_2})_\kappa)$, and for every $\ell
  \in \{1,2,3,4\}$ there exists a pairing
  $(\mI_{n_1}, \mI_{n_2})_{\kappa}$ with $n_1
  +n_2 = 2 n$ and a bubble subdiagram $\Gamma^{B, v, \ell} \subseteq (\mI_{n_1},
  \mI_{n_2})_{\kappa}$ such that $(\mI_{\tilde{n}_1},
  \mI_{\tilde{n}_2})_{\tilde{\kappa}} = \bigslant{(\mI_{n_1},
  \mI_{n_2})_{\kappa}}{\Gamma^{B, v, \ell}}$ as an edge-labelled diagram and such that
  $\Gamma^{B, v, \ell}$ contracts to $v$ following~\eqref{e:cont-case-1} if $\ell=1$,~\eqref{e:cont-case-2} if $\ell=2$,~\eqref{e:cont-case-3} if $\ell=3$, or~\eqref{e:cont-case-4} if $\ell=4$.

  This operation lifts to contributing diagrams with associated extraction
  sequences. For $\tilde{n}_1 + \tilde{n}_2 = 2 n- 2$, let $(\mI_{\tilde{n}_1}, \mI_{\tilde{n}_2})_{\tilde{\kappa}}$
  be the diagram associated to a contributing pairing $\tilde{\kappa}$ and
  $\tilde{\mE} = \{ \tilde{\Gamma}_i\}_{i=1}^{n-2}$ any bubble extraction sequence for $(\mI_{\tilde{n}_1}, \mI_{\tilde{n}_2})_{\tilde{\kappa}}$ in
  the sense of Definition~\ref{def:extraction-sequences}. Then for every $v \in
  V_\star ((\mI_{\tilde{n}_1}, \mI_{\tilde{n}_2})_{\tilde{\kappa}})$, and every
  $\ell \in \{1,2,3,4\}$ we find a new diagram (the same as described above) $(\mI_{n_1},
  \mI_{n_2})_{\kappa}$, with associated bubble extraction sequence $\mE^{v,
  \ell} = \{ \tilde{\Gamma}^{v, \ell}_i \}_{i=1}^{n-1}$ given by
  \begin{equation*}
    \tilde{\Gamma}^{v, \ell}_1 = \Gamma^{B, v, \ell} \;, \qquad  \tilde{\Gamma}^{v, \ell}_i =  \tilde{\Gamma}_{i-1} \cup  \Gamma^{B, v, \ell} \;,
  \end{equation*}
  with the natural interpretation that if $v \in V(\tilde{\Gamma}_i)$, then
  $\tilde{\Gamma}_{i} \cup  \Gamma^{B, v, \ell}$ is the image of
  $\tilde{\Gamma}_{i}$ in the graph $(\mI_{n_1},
  \mI_{n_2})_{\kappa}$ with the vertex $v$ replaced by the bubble diagram
  $\Gamma^{B, v, \ell}$, and the edges attached to $\Gamma^{B, v, \ell}$ (and
  the labels prescribed) according to the rule determined by $\ell$.

  Now, we observe that the mapping
  \begin{equation*}
    ((\mI_{\tilde{n}_1}, \mI_{\tilde{n}_2})_{\tilde{\kappa}}, v, \ell, \tilde{\mE}) \mapsto ((\mI_{n_1},
    \mI_{n_2})_{\kappa}, \mE^{v, \ell})
  \end{equation*}
  is a bijection. Its inverse is given by contracting the first bubble
  of the bubble extraction sequence of the larger diagram,
  {which also induces an extraction sequence on the smaller diagram by
  contracting the first bubble in every subdiagram in $ \mE^{v, \ell} $.} 
  Indeed, a contraction uniquely
  identifies the vertex $v$ (to which we have contracted the bubble), and the
  case $\ell$ (through the labelling of the larger diagram).

  Since there are $4$ choices of $\ell$ and $n-1$ choices for $v$, we have proven that for
  $n\geq 3$:
  \begin{equation*}
    \begin{aligned}
      4  (n-1) \!\!\! & \sum_{n_1 + n_2 = 2(n-1)} \sum_{\kappa \in \mK^{\mathrm{cont}}(\mI_{n_1}, \mI_{n_2})}
      |\mathbb{S}( (\mI_{n_1}, \mI_{n_2})_\kappa )| \\
      & =  \sum_{n_1 + n_2 = 2 n} \sum_{\kappa \in \mK^{\mathrm{cont}}(\mI_{n_1}, \mI_{n_2})}
      |\mathbb{S}( (\mI_{n_1}, \mI_{n_2})_\kappa )| \;.
    \end{aligned}
  \end{equation*}
  Therefore, we obtain by induction:
  \begin{equation*}
    \begin{aligned}
      \sum_{n_1 + n_2 = 2n} & \sum_{\kappa \in \mK^{\mathrm{cont}}(\mI_{n_1}, \mI_{n_2})}
      \frac{|\mathbb{S}( (\mI_{n_1}, \mI_{n_2})_\kappa )|}{ (n -1 )! } \\
      & =4\!\!\!  \sum_{n_1 + n_2 = 2(n-1)} \sum_{\kappa \in \mK^{\mathrm{cont}}(\mI_{n_1}, \mI_{n_2})}
      \frac{|\mathbb{S}( (\mI_{n_1}, \mI_{n_2})_\kappa )|}{ (n -2 )! } = 4^{n-1} \;,
    \end{aligned}
  \end{equation*}
  as desired. This concludes the proof.
\end{proof}

\section{Negative Anderson--Feynman diagrams}\label{sec_negative}

The goal of this
section is to prove Lemma~\ref{lem:negvanish}, which states that
\begin{equation*}
\begin{aligned}
\lim_{ \varepsilon \to 0}
\lambda_{\varepsilon}^{| E_{\star}|}
| \hat{\Pi}_{\varepsilon} \Gamma ( \varphi ) | =0\,,
\end{aligned}
\end{equation*}
for any negative
Anderson--Feynman diagram $ \Gamma$ with two or four legs. We recall that the renormalised valuation
$ \hat{\Pi}_{\varepsilon}$ is expressed in terms of Zimmermann's
forest formula~\eqref{eq_zimmermann}
\begin{equation}\label{eq_zimmermann_2}
\begin{aligned}
  \hat{\Pi}_\ve \Gamma = \sum_{\mf{F} \in \mathcal{F}_\Gamma^{-}}
(-1)^{|\mf{F}|} \Pi_\ve \mC_{\mf{F}} \Gamma \,,
\end{aligned}
\end{equation}
where $\mF_{\Gamma}^{-} $ denotes the set of all diverging forests that can be
found within the diagram $ \Gamma $.
In order to prove Lemma~\ref{lem:negvanish} we must carefully review the
approach from~\cite{BPHZ} for BPHZ renormalisation, and extend it to the present
critical setting.

  In doing so, we face two new challenges. These arise from the fact that the
  renormalisation from~\eqref{eq_zimmermann_2} corresponds to a zeroth order
  Taylor expansion. However, in order for diagrams not to diverge we must proceed to a second order Taylor expansion (because the blow-up of
  the renormalisation constant is of order $\ve^{-2}$, and each further order of
  Taylor expansion roughly leads to an improvement of one power of $\ve$, until
  there is no blowup). We find that
  \begin{itemize}
    \item The first order renormalisation does not contribute to the
    renormalisation by a symmetry argument, because of the isotropy and spatial
    homogeneity of the problem, see Section~\ref{sec:higher-order-ren}.
    \item The second order renormalisation does not contribute to the weak
    coupling limit. In particular, there is no renormalisation of the Laplacian
    as in KPZ/Burgers models~\cite{cannizzaro2023weak,cannizzaro2024gaussian}.
    This is due to a precise cancellation, see Section~\ref{sec:no-laplace}.
  \end{itemize}

To leverage on such higher order Taylor expansions, we will add and
subtract higher order renormalisation terms in different stages.
In this section we show how this can be achieved by following the setup of~\cite{BPHZ} as closely as possible.

\subsection{Preliminary structures in renormalisation}
Consider an Anderson--Feynman diagram $\Gamma = (V, E)$, and a divergent subdiagram $\gamma
\in \mG_\Gamma^{-}$.
As in Section~\ref{sec:zero-forests}, we will consider extraction maps $
\hat{\mC}_{\gamma}$ which rewire the edges of a given diagram attached to a
divergence $ \gamma $. These replace the maps $
\mC_{ \gamma } $ appearing in~\eqref{eq_zimmermann}.
While the extraction maps from~\eqref{eq_forest_extr} rewire edges to the highest
vertex of a subdiagram (according to some given ordering amongst the vertices),
in this section it will be convenient to follow a different convention.

First, as a consequence of Lemma~\ref{lem:neg}, every strict divergence $ \gamma \in
\mG^{-}_{\Gamma}$, as in~\eqref{e_def_Gminus}, has exactly one incoming edge $ e_{\gamma}^{\mathrm{in}}$ and
one outgoing edge $ e_{\gamma}^{\mathrm{out}}$ in $ \Gamma$.
\begin{equation*}
  \begin{tikzpicture}[thick,>=stealth,scale=0.5,baseline=-0.2em]
    \tikzset{
      dot/.style={circle,fill=black,inner sep=1.5pt}
    }
    \node[above] at (-3.2,0.1) {$e_\gamma^{\mathrm{in}}$};
    \node[above] at ( 3.0,0.1) {$e_\gamma^{\mathrm{out}}$};
    \draw (-0.4,0) ellipse (1.8 and 1.1);
    \node at (-0.4,0) {$\gamma$};
    \node[dot] (bL) at (-2.2,0)   {};   
    \node[dot] (vL) at (1.4,0) {};   
    \draw[->] (-4.2,0) -- (bL);
    \draw[->] (vL) -- (3.5,0);
    \node[below right] at (vL) {$v_\star$};
  \end{tikzpicture}
\end{equation*}
Then for every
divergence $\gamma \in \mG_{\Gamma}^{-}$ we fix the distinguished vertex
\begin{equation*}
  v_\star(\gamma) = e_{\gamma, -}^{\mathrm{out}} \in V(\gamma) \,.
\end{equation*}
Recall from Section~\ref{sec:zero-forests} that for a given Feynman diagram $\Gamma =(V,E)$, the set $
\mD_{\Gamma}$ denotes the collection of all Feynman diagrams $ \tilde{\Gamma}=
(V, \tilde{E}) $ over the same set of vertices of $\Gamma$, and for
for which there exists an edge bijection $ \tau_{\tilde{\Gamma}, \Gamma} : \tilde{E} \to
E $, mapping legs onto legs.
We write $ \tau$ instead of $ \tau_{\tilde{\Gamma}, \Gamma}$ when the two
diagrams are clear from context. We also define
\begin{equation*}
\begin{aligned}
\sT_{\Gamma}:
= \langle \mD_{\Gamma} \rangle \,,
\end{aligned}
\end{equation*}
 to be the space of
all finite linear combinations of elements in $\mD_{\Gamma}$.
Then, for a fixed Feynman diagram $\Gamma$ and divergent subdiagram $\gamma \in
\mG^{-}_{\Gamma}$ we define the extraction map $\hat{\mC}_\gamma$ on
edges $e \in \tilde{E}$ of some $\tilde{\Gamma} \in \mD_{\Gamma}$ as follows:
\begin{equation}\label{e:Chat}
  \hat{\mC}_{\gamma} e = \begin{cases}
    (e_-, v_{\star}(\gamma)) & \quad \text{if } e = \tau^{-1}_{\tilde{\Gamma}, \Gamma} (e_{\gamma}^{\mathrm{ in}}) \text{ and } e_+ \in V(\gamma) \;, \\
    e & \quad \text{otherwise} \;.
  \end{cases}
\end{equation}
The condition $e_{+} = \tau^{-1}(e^{\rm{in}}_\gamma)_{+} \in V(\gamma)$ ensures
that if we apply iteratively extraction maps to nested diagrams that share the
same incoming edge, then we eventually rewire an incoming
edge to the outermost divergence, see the example discussed below. This guarantees the commutation
property in Lemma~\ref{lem:commute} below.
We then define the action on the full diagram $\tilde{\Gamma}$ as follows:
\begin{equation}\label{e:tauChat}
  \hat{\mC}_{\gamma} \tilde{\Gamma} := ( V, \{ \hat{\mC}_{\gamma} e : e \in \tilde{E} \} ) \,.
\end{equation}
In order to produce a new diagram in
$\mD_\Gamma$ we also provide the bijection $\tau_{\hat{\mC}_{\gamma}
\tilde{\Gamma}, \Gamma} = 
\tau_{\tilde{\Gamma}, \Gamma} \circ \hat{\mC}_{\gamma}^{-1}$.
For later convenience we introduce the following shorthand notation:
\begin{equation*}
  e_\star (\gamma, \tilde{\Gamma}) = \tau^{-1}_{\tilde{\Gamma}, \Gamma} (e^{\rm{in}}_{\gamma}) \;.
\end{equation*}
Whenever $\tilde{\Gamma}$ is clear from context, we abbreviate this by
$e_{\star}(\gamma)$. Moreover, if $\gamma$ is a subdiagram of $\Gamma$, we
set
\begin{equation*}
  \tilde{E}(\gamma) :=  \big\{ \tau_{\tilde{\Gamma}, \Gamma}^{-1} (e) : e \in
E(\gamma) \big\} \;, \qquad \tilde{V}(\gamma) := \bigcup_{e \in \tilde{E}(\gamma) }\{ e_-, e_+\} \;,
\end{equation*}
and define
\begin{equation*}
  \tau_{\tilde{\Gamma}, \Gamma}^{-1}(\gamma) := ( \tilde{V}(\gamma) \;, \tilde{E}(\gamma) ) \subseteq \tilde{\Gamma} \;.
\end{equation*}
In short, $\tau_{\tilde{\Gamma}, \Gamma}^{-1}(\gamma)$ is the image of $\gamma$
in $\tilde{\Gamma}$.

Let us explain graphically this construction. The picture to keep in mind is
that $\hat{\mC}_\gamma$ acts as follows:
\begin{equation*}
  \hat{\mC}_{\gamma} \;
  \begin{tikzpicture}[thick,>=stealth,scale=0.3,baseline=-0.2em]
    \tikzset{
      dot/.style={circle,fill=black,inner sep=1.5pt}
    }
    \node[above] at (-3.2,0.1) {};
    \node[above] at ( 3.0,0.1) {};
    \draw (-0.4,0) ellipse (1.8 and 1.1);
    \node at (-0.4,0) {$\gamma$};
    \node[dot] (bL) at (-2.2,0)   {};   
    \node[dot] (vL) at (1.4,0) {};   
    \draw[->] (-4.2,0) -- (bL);
    \draw[->] (vL) -- (3.5,0);
    \node[below right] at (vL) {$v_\star$};
  \end{tikzpicture}
  =
  \begin{tikzpicture}[thick,>=stealth,scale=0.3,baseline=-0.2em]
    \tikzset{
      dot/.style={circle,fill=black,inner sep=1.5pt}
    }
    \node[above] at (-3.2,0.1) {};
    \node[above] at ( 3.0,0.1) {};
    \draw (-0.4,0) ellipse (1.8 and 1.1);
    \node at (-0.4,0) {$\gamma$};
    \node[dot] (bL) at (-2.2,0)   {};   
    \node[dot] (vL) at (1.4,0) {};   
    \draw[->] (3.5,2.8  ) -- (vL);
    \draw[->] (vL) -- (3.5,-2.0);
    \node[right] at (vL) {$v_\star$};
  \end{tikzpicture}  \;.
\end{equation*}
More concretely, we must renormalise diagrams that are for instance of the
form
\begin{equation}\label{e:example-gamma}
  \Gamma = \begin{tikzpicture}[thick, scale = 0.65, baseline=-0.2em]
    \tikzset{
      vtx/.style={circle,fill=black,inner sep=2pt}
    }
    \foreach \i/\x in {2/1.6,3/3.2,4/4.8,5/6.4}{
      \node[dot] (v\i) at (\x,0) {};
    }
    \foreach \i/\x in {1/0,6/8.0}{
      \node[bigroot] (v\i) at (\x,0) {};
    }

    \node[below left] at (v2) {\footnotesize $1$};
    \node[below right] at (v3) {\footnotesize $2$};
    \node[below left] at (v4) {\footnotesize $3$};
    \node[below right] at (v5) {\footnotesize $4$};
    \draw[midarrow] (v1) -- (v2) ;
    \draw[midarrow] (v2) -- (v3) ;
    \draw[midarrow] (v3) -- (v4) ;
    \draw[midarrow] (v4) -- (v5) ;
    \draw[midarrow] (v5) -- (v6) ;
    \draw[midarrow] (v2) .. controls +(0,-0.7) and +(0,-.7) .. (v3);
    \draw[midarrow] (v3) .. controls +(0,0.7) and +(0,0.7) .. (v2);
    \draw[midarrow] (v5) .. controls +(0,0.7) and +(0,0.7) .. (v4);
    \draw[midarrow] (v4) .. controls +(0,-0.7) and +(0,-0.7) .. (v5);
    \end{tikzpicture} \;.
\end{equation}
In this case there are three divergent subdiagrams. The full subdiagram $\gamma_1$ spanned
by the vertices $1,2$, the diagram $\gamma_2$ spanned by the vertices $3,4$, and
the diagram $\gamma_3$ spanned by all vertices $1,2,3,4$. For example, we then
find the following extractions
\begin{equation*}
  \hat{\mC}_{\gamma_1} \Gamma = \begin{tikzpicture}[thick, scale = 0.65, baseline=-0.2em]
    \tikzset{
      vtx/.style={circle,fill=black,inner sep=2pt}
    }
    \foreach \i/\x in {2/1.6,3/3.2,4/4.8,5/6.4}{
      \node[dot] (v\i) at (\x,0) {};
    }
    \foreach \i/\x in {6/8.0}{
      \node[bigroot] (v\i) at (\x,0) {};
    }

    \node[below left] at (v2) {\footnotesize $1$};
    \node[below right] at (v3) {\footnotesize $2$};
    \node[below left] at (v4) {\footnotesize $3$};
    \node[below right] at (v5) {\footnotesize $4$};
    \node[bigroot] (v1) at (4.2,1.0) {};
    \draw[midarrow] (v1) -- (v3) ;
    \draw[midarrow] (v2) -- (v3) ;
    \draw[midarrow] (v3) -- (v4) ;
    \draw[midarrow] (v4) -- (v5) ;
    \draw[midarrow] (v5) -- (v6) ;
    \draw[midarrow] (v2) .. controls +(0,-0.7) and +(0,-.7) .. (v3);
    \draw[midarrow] (v3) .. controls +(0,0.7) and +(0,0.7) .. (v2);
    \draw[midarrow] (v5) .. controls +(0,0.7) and +(0,0.7) .. (v4);
    \draw[midarrow] (v4) .. controls +(0,-0.7) and +(0,-0.7) .. (v5);
    \end{tikzpicture} \;.
\end{equation*}
Next, extracting the larger divergence we obtain
\begin{equation*}
  \hat{\mC}_{\gamma_3} \hat{\mC}_{\gamma_1} \Gamma = \begin{tikzpicture}[thick, scale =
	  0.65, baseline=-0.2em]
    \tikzset{
      vtx/.style={circle,fill=black,inner sep=2pt}
    }
    \foreach \i/\x in {2/1.6,3/3.2,4/4.8,5/6.4}{
      \node[dot] (v\i) at (\x,0) {};
    }
    \foreach \i/\x in {6/8.0}{
      \node[bigroot] (v\i) at (\x,0) {};
    }
    \node[below left] at (v2) {\footnotesize $1$};
    \node[below right] at (v3) {\footnotesize $2$};
    \node[below left] at (v4) {\footnotesize $3$};
    \node[below right] at (v5) {\footnotesize $4$};
    \node[bigroot] (v1) at (7.4,1.0) {};
    \draw[midarrow] (v1) -- (v5) ;
    \draw[midarrow] (v2) -- (v3) ;
    \draw[midarrow] (v3) -- (v4) ;
    \draw[midarrow] (v4) -- (v5) ;
    \draw[midarrow] (v5) -- (v6) ;
   \draw[midarrow] (v2) .. controls +(0,-0.7) and +(0,-.7) .. (v3);
    \draw[midarrow] (v3) .. controls +(0,0.7) and +(0,0.7) .. (v2);
    \draw[midarrow] (v5) .. controls +(0,0.7) and +(0,0.7) .. (v4);
    \draw[midarrow] (v4) .. controls +(0,-0.7) and +(0,-0.7) .. (v5);
    \end{tikzpicture} \;.
\end{equation*}
Lastly, if we extract the smaller divergence $\gamma_2$ we find:
\begin{equation*}
  \hat{\mC}_{\gamma_2} \hat{\mC}_{\gamma_3} \hat{\mC}_{\gamma_1} \Gamma =
  \begin{tikzpicture}[thick, scale = 0.65, baseline=-0.2em]
    \tikzset{
      vtx/.style={circle,fill=black,inner sep=2pt}
    }
    \foreach \i/\x in {2/1.6,3/3.2,4/4.8}{
      \node[dot] (v\i) at (\x,0) {};
    }
    \foreach \i/\x in {5/6.4}{
      \node[bigroot] (v\i) at (\x,0) {};
    }
    \node[below left] at (v2) {\footnotesize $1$};
    \node[above right] at (v3) {\footnotesize $2$};
    \node[above left] at (v4) {\footnotesize $4$};
    \node[bigroot] (v1) at (5.8,1.0) {};
    \node[dot] (vn) at (4.8,-1.4) {};
    \node[below] at (vn) {$3$};
    \draw[midarrow] (v1) -- (v4) ;
    \draw[midarrow] (v2) -- (v3) ;
    \draw[midarrow] (v3) -- (v4) ;
    \draw[midarrow] (vn) -- (v4) ;
    \draw[midarrow] (v4) -- (v5) ;
    \draw[midarrow] (v2) .. controls +(0,-0.7) and +(0,-.7) .. (v3);
    \draw[midarrow] (v3) .. controls +(0,0.7) and +(0,0.7) .. (v2);
    \draw[midarrow] (v4) .. controls +(.6, 0) and +(.8, 0) ..  (vn);
    \draw[midarrow] (vn) .. controls +(-.6,0) and +(-.8,0) ..  (v4);
    \end{tikzpicture} \;.
\end{equation*}
One can check by hand that the order in which we performed these operations did
not matter. This is in particular the consequence of the choice~\eqref{e:Chat}
of how to rewire the incoming edges. Indeed, this property holds in general.


\begin{lemma}\label{lem:commute}
Let $ \Gamma $ be a negative Anderson--Feynman diagram.
For $ \gamma_{1}, \gamma_{2} \in \mG_{\Gamma}^{-}$ such that either $
\gamma_{1}$ and $ \gamma_{2}$ are vertex disjoint or one is included in the
other,
we have $ \hat{\mC}_{\gamma_{1}}
\hat{\mC}_{\gamma_{2}} \tilde{\Gamma} =  \hat{\mC}_{\gamma_{2}}
\hat{\mC}_{\gamma_{1}} \tilde{\Gamma}$, for every $ \tilde{\Gamma} \in
\mD_{\Gamma}$.
Moreover, if $ \gamma_{1} \subset \gamma_{2}$ such that $
e_{\star} ( \gamma_{1}, \Gamma ) = e_{\star} ( \gamma_{2}, \Gamma) $, then $
\hat{\mC}_{\gamma_{1}} \hat{\mC}_{\gamma_{2}} = \hat{\mC}_{\gamma_{2}}$.
\end{lemma}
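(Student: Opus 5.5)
Both claims are edge-by-edge: by~\eqref{e:tauChat} each $\hat{\mC}_{\gamma}$ acts on $\tilde{\Gamma}$ one edge at a time. Moreover, by~\eqref{e:Chat} it can only modify the single edge $e_\star(\gamma,\tilde{\Gamma}) = \tau^{-1}_{\tilde{\Gamma},\Gamma}(e^{\rm in}_\gamma)$, and only when that edge's head lies in $V(\gamma)$, in which case it moves the head to $v_\star(\gamma)$. The bijection $\tau$ is updated by $\tau\circ\hat{\mC}_\gamma^{-1}$, so the preimage of a given edge of $\Gamma$ is tracked consistently through the rewirings. This lets us reduce to edges $e\in\tilde{E}$ and compare $\hat{\mC}_{\gamma_1}\hat{\mC}_{\gamma_2}e$ with $\hat{\mC}_{\gamma_2}\hat{\mC}_{\gamma_1}e$. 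The only edges that can be affected are the preimages of $e^{\rm in}_{\gamma_1}$ and $e^{\rm in}_{\gamma_2}$. We use Lemma~\ref{lem:neg} (each $\gamma\in\mG^-_\Gamma$ has exactly one incoming and one outgoing edge) throughout.

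\textbf{Vertex-disjoint case.} Suppose $\gamma_1,\gamma_2$ are vertex disjoint and $e^{\rm in}_{\gamma_1}\neq e^{\rm in}_{\gamma_2}$. The two maps then act on different edges. Applying $\hat{\mC}_{\gamma_1}$ moves the head of its edge to $v_\star(\gamma_1)\in V(\gamma_1)$, which is disjoint from $V(\gamma_2)$, and leaves every other edge fixed. So the test ``$e_+\in V(\gamma_2)$'' for the edge relevant to $\gamma_2$ is unchanged, and the two maps commute. The edges cannot coincide: $e^{\rm in}_{\gamma_1}=e^{\rm in}_{\gamma_2}$ would need a head in both $V(\gamma_1)$ and $V(\gamma_2)$, which is impossible for disjoint vertex sets.

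\textbf{Nested case, $\gamma_1\subset\gamma_2$.} If $e^{\rm in}_{\gamma_1}\ne e^{\rm in}_{\gamma_2}$, then $e^{\rm in}_{\gamma_1}$ has its tail in $V(\gamma_2)\setminus V(\gamma_1)$, so it is an internal edge of $\gamma_2$. Rewiring it to $v_\star(\gamma_1)$ does not touch the edge $e_\star(\gamma_2)$, and vice versa, so the maps commute as before.

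If instead $e^{\rm in}_{\gamma_1}=e^{\rm in}_{\gamma_2}=:e$, track the head of its preimage, starting from a head in $V(\gamma_1)\subset V(\gamma_2)$ (or already rewired).
\begin{itemize}
\item Applying $\hat{\mC}_{\gamma_2}$ first sends the head to $v_\star(\gamma_2)$. This is the tail of $e^{\rm out}_{\gamma_2}$, and since $\gamma_2$ has a single outgoing edge, $v_\star(\gamma_2)\notin V(\gamma_1)$ unless $v_\star(\gamma_1)=v_\star(\gamma_2)$. In either situation the subsequent $\hat{\mC}_{\gamma_1}$ acts trivially: the head condition fails, or the map sends the head to the same vertex.
\item Applying $\hat{\mC}_{\gamma_1}$ first sends the head to $v_\star(\gamma_1)\in V(\gamma_2)$. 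The subsequent $\hat{\mC}_{\gamma_2}$ then sends it to $v_\star(\gamma_2)$.
\end{itemize}
Both orders give $\hat{\mC}_{\gamma_2}\tilde{\Gamma}$. This proves commutation and also the second claim $\hat{\mC}_{\gamma_1}\hat{\mC}_{\gamma_2}=\hat{\mC}_{\gamma_2}$.

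\textbf{Main obstacle.} The delicate point is justifying that $v_\star(\gamma_2)\notin V(\gamma_1)$, or that it coincides with $v_\star(\gamma_1)$, when the incoming edges are shared. I would argue this from Lemma~\ref{lem:neg}: every internal vertex has degree four, and a negative subdiagram of a 2- or 4-leg Anderson diagram has exactly two external half-edges. If $v_\star(\gamma_2)\in V(\gamma_1)$, then $e^{\rm out}_{\gamma_2}$ leaves $\gamma_1$, so it is $e^{\rm out}_{\gamma_1}$, which forces $v_\star(\gamma_1)=v_\star(\gamma_2)$. A second thing to check carefully is that the update rule for $\tau$ keeps the identity of $e_\star(\gamma,\cdot)$ consistent after the other map has been applied.
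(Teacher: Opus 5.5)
Your proposal is correct and follows essentially the same route as the paper. The paper also compares the two compositions edge by edge, observes that only the preimages of the incoming edges can move, and settles the one nontrivial case (a shared incoming edge) with Lemma~\ref{lem:neg}: if $v_\star(\gamma_2)\in V(\gamma_1)$, then $e^{\rm out}_{\gamma_2}=e^{\rm out}_{\gamma_1}$, hence $v_\star(\gamma_1)=v_\star(\gamma_2)$.

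One point should be spelled out rather than covered by your ``(or already rewired)''. Since $\tilde\Gamma\in\mD_\Gamma$ is arbitrary, the head of the shared edge may start in $V(\gamma_2)\setminus V(\gamma_1)$, where the same observation gives $v_\star(\gamma_2)$ in both orders. It may also start outside $V(\gamma_2)$, in which case both maps act trivially.
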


\begin{proof}
  If the subdiagrams $\gamma_i$ are vertex disjoint, then the result follows
  because the extraction maps act on different sets of edges. Therefore,
  consider the case $\gamma_1 \subseteq \gamma_2$ and note that the only edges
  that are potentially moved are $e_{\star}(\gamma_i,
  \tilde{\Gamma})$, for $i \in \{1, 2\}$. If $e_{\star}(\gamma_1,
  \tilde{\Gamma}) \neq e_{\star}(\gamma_2,
  \tilde{\Gamma})$, or if they are equal but $e_{\star, +}(\gamma_1,
  \tilde{\Gamma}) \not\in V(\gamma_2)$ then the result follows again from the fact that the
  extraction maps do not act on the same edges (in the second case both
  extractions act trivially). We are left with the case $e_{\star}(\gamma_1,
  \tilde{\Gamma}) = e_{\star}(\gamma_2,
  \tilde{\Gamma}) = (e_-, e_+)$ such that $ e_{+} \in V (
\gamma_{2})$. We find
  \begin{equation*}
    \hat{\mC}_{\gamma_1} \hat{\mC}_{\gamma_2} e_{\star}(\gamma_1,
    \tilde{\Gamma}) = \hat{\mC}_{\gamma_1} (e_-, v_\star(\gamma_2)) = \begin{cases}
      (e_-, v_\star(\gamma_2)) & \text{ if } v_\star(\gamma_2) \in V(\gamma_2 )\setminus V(\gamma_1)\;, \\
      (e_-, v_\star(\gamma_1)) & \text{ otherwise }\;.
    \end{cases}
  \end{equation*}
  However,  if $ v_\star(\gamma_2) \in V(\gamma_2 ) \cap V(\gamma_1)$,
  then $v_\star(\gamma_2) = v_\star(\gamma_1)$ by Lemma~\ref{lem:neg}, which proves the
  commutation and the second statement of the lemma.
\end{proof}

Now, if we replace the contraction maps $ \mC$ appearing in~\eqref{eq_zimmermann_2}
with the extraction maps $ \hat{\mC}$, we can rewrite Zimmermann's forest formula as
\begin{equation}\label{eq_zimmer_chat}
\begin{aligned}
\hat{\mR} \Gamma = \sum_{\mf{F} \in \mathcal{F}_\Gamma^{-}}
(-1)^{| \mf{F}|}
\mf{K}_{\mf{F}} \Gamma \,, \qquad \text{where} \quad
\mf{K}_{\mf{F}} := \prod_{ \gamma \in \mf{F}} \hat{\mC}_{\gamma} \,.
\end{aligned}
\end{equation}
As a consequence of the spatial homogeneity of
the kernels, we have the following identification of diagrams. See also \cite[Pp. 27--28]{BPHZ}.
\begin{lemma}
	For every negative Anderson--Feynman diagram, we have
	$ \Pi_{\ve} \hat{\mR} \Gamma = \hat{\Pi}_{\ve} \Gamma$.
\end{lemma}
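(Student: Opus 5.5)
The plan is to compare the two sides forest by forest. Both $\hat{\Pi}_\ve\Gamma = \Pi_\ve \mR\Gamma$ and $\Pi_\ve\hat{\mR}\Gamma$ are sums over the same index set $\mF^-_\Gamma$ with the same signs $(-1)^{|\mf{F}|}$. It therefore suffices to prove, for every forest $\mf{F}\in\mF^-_\Gamma$ and every leg configuration $x_L$ (or tested against $\varphi$),
\begin{equation*}
\Pi_\ve \mC_{\mf{F}}\Gamma = \Pi_\ve \mf{K}_{\mf{F}}\Gamma \;.
\end{equation*}

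We argue by induction on $|\mf{F}|$, using the inductive form of~\eqref{e:cont-for}, which extracts the roots $\vr(\mf{F})$ first. On the $\hat{\mC}$ side we use Lemma~\ref{lem:commute}, which lets us apply the maps in any order compatible with the forest; we process outermost divergences first.

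The base case is a single $\gamma\in\mG^-_\Gamma$. By Lemma~\ref{lem:neg}, $\gamma$ has exactly one incoming edge $e^{\rm in}_\gamma$ and one outgoing edge $e^{\rm out}_\gamma$ with $e^{\rm out}_{\gamma,-}=v_\star(\gamma)$. In $\Pi_\ve\hat{\mC}_\gamma\Gamma$ the only edge attached to $V(\gamma)\setminus\{v_\star(\gamma)\}$ from outside has been rewired to $v_\star(\gamma)$. Hence the integrand factorises as $\mW_\ve\gamma(x_{V(\gamma)})$ times a function of $x_{v_\star(\gamma)}$ and of the external variables. We integrate over $x_{V(\gamma)\setminus\{v_\star(\gamma)\}}$ first. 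Translation invariance of $G_\ve$ on $\TT^4$ shows this inner integral is the constant $\Pi_\ve\gamma$ of Definition~\ref{def:valuation} (the case $L=\emptyset$), independent of $x_{v_\star(\gamma)}$. What remains is exactly the integrand of $\mK_\gamma\Gamma$, with $v_\gamma$ identified with $v_\star(\gamma)$. So $\Pi_\ve\hat{\mC}_\gamma\Gamma = \Pi_\ve\gamma\cdot\Pi_\ve\mK_\gamma\Gamma = \Pi_\ve\mC_\gamma\Gamma$.

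For general $\mf{F}$ we iterate this factorisation. Vertex-disjoint roots act on disjoint vertex sets and are treated independently. For a nested pair $\gamma_1\subset\gamma_2$ we need two facts:
\begin{itemize}
\item[(i)] after applying $\hat{\mC}_{\gamma_2}$, the image of $\gamma_1$ is still a divergence of the rewired diagram with a single entry and a single exit edge, so the base case applies inside $\gamma_2$;
\item[(ii)] the rewiring prescribed by $\hat{\mC}_{\gamma_1}$ in $\Gamma$ coincides with the rewiring needed for $\mC_{\gamma_1}\gamma_2$.
\end{itemize}
The delicate case in (ii) is $e^{\rm in}_{\gamma_1}=e^{\rm in}_{\gamma_2}$. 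Here the second statement of Lemma~\ref{lem:commute}, $\hat{\mC}_{\gamma_1}\hat{\mC}_{\gamma_2}=\hat{\mC}_{\gamma_2}$, matches the convention in~\eqref{e:cont-for} that $\gamma_1$ is extracted from $\gamma_2$. In this case $\gamma_1$ becomes a vacuum subdiagram of $\gamma_2$ with no external edge, and integrating out its inner variables gives $\Pi_\ve\gamma_1$ times $\Pi_\ve\mK_{\gamma_1}\gamma_2$. Integrating over the variables of each forest element from the inside out then yields the product structure of $\Pi_\ve\mC_{\mf{F}}\Gamma$.

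The main obstacle is the bookkeeping in (i) and (ii): showing that after earlier rewirings each remaining element of $\mf{F}$ is still attached to the rest of the diagram only through $v_\star$ and one entry edge. We would settle this using Lemma~\ref{lem:neg}, which identifies divergences as chains sharing entry and exit structure, together with the identity $v_\star(\gamma_2)=v_\star(\gamma_1)$ whenever $v_\star(\gamma_2)\in V(\gamma_1)$, as in the proof of Lemma~\ref{lem:commute}. This is essentially the argument of~\cite[pp.~27--28]{BPHZ}, and we would adapt it rather than redo it.
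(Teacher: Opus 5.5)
Your proposal is correct and follows the paper's route. The paper justifies the identity only by the spatial homogeneity of the kernels and a reference to \cite[pp.~27--28]{BPHZ}; you fill this in by comparing $\Pi_\ve \mC_{\mf{F}}\Gamma$ with $\Pi_\ve \mf{K}_{\mf{F}}\Gamma$ forest by forest, using that after rewiring each $\gamma\in\mf{F}$ touches the rest of the diagram only at $v_\star(\gamma)$, and integrating the divergences out from the inside by translation invariance. One minor imprecision: in the shared-incoming-edge case, $\gamma_1$ keeps its outgoing edge into $V(\gamma_2)\setminus V(\gamma_1)$, so your item (i) and the phrase ``no external edge'' are not literally true. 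This does not affect the argument, since that edge leaves from $v_\star(\gamma_1)$ and the factorisation still holds.
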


Before we continue, let us immediately treat the case of so called \emph{tadpole} divergences.
We say $ \gamma \in \mG_{\Gamma}^{-}$ is a
tadpole divergence, if it is of the form
\begin{equation*}
\begin{aligned}
\begin{tikzpicture}
  \node[dot] (m) at (0,0) {};
  \draw[midarrow, thick] (-1,0) -- (m);
  \draw[midarrow, thick] (m) -- (1,0);
  \draw[midarrow, thick] (m) to[out=135,in=45,looseness=20] (m);
    \end{tikzpicture}
\end{aligned}
\end{equation*}
i.e. $ \gamma$ is a self--loop or equivalently  $ | V ( \gamma) |= 1$. See for
example~\eqref{e:loop} for an example where such a tadpole divergence arises.

The BPHZ renormalisation always cancels
tadpole divergences exactly, as proven in the following lemma.

\begin{lemma}\label{lem_tadpole}
If $ \Gamma$ is a connected negative Anderson--Feynman diagram containing a tadpole
divergence, then $ \hat{\Pi}_{\ve} \Gamma =0$.
\end{lemma}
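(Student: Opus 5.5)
The plan is to prove the statement by pairing forests in Zimmermann's formula~\eqref{eq_zimmer_chat} so that they cancel exactly. Let $\gamma_0 \in \mG^-_\Gamma$ be a tadpole divergence, i.e. a single vertex $v$ with a self-loop. Since any connected full subdiagram with one vertex is either $\gamma_0$ or has no edges (and so is not in $\mG^-_\Gamma$), $\gamma_0$ is compatible with every other divergence. Indeed, any $\gamma \in \mG^-_\Gamma$ either contains $v$, in which case $\gamma_0 \subseteq \gamma$ because $\gamma$ is full and hence contains the loop, or is vertex-disjoint from $\gamma_0$. It follows that $\mf{F} \mapsto \mf{F} \triangle \{\gamma_0\}$ is an involution on $\mF^-_\Gamma$ that changes $|\mf{F}|$ by one. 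It therefore suffices to show that, for every $\mf{F}$ not containing $\gamma_0$,
\[
\Pi_\ve \mf{K}_{\mf{F}\cup\{\gamma_0\}}\Gamma = \Pi_\ve \mf{K}_{\mf{F}}\Gamma .
\]

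To prove this identity, I would use Lemma~\ref{lem:commute} to write $\mf{K}_{\mf{F}\cup\{\gamma_0\}} = \mf{K}_{\mf{F}}\hat{\mC}_{\gamma_0}$. The key step is to compute $\hat{\mC}_{\gamma_0}$ on $\Gamma$ using~\eqref{e:Chat}. Because $V(\gamma_0)=\{v\}$, we have $v_\star(\gamma_0)=v$, and the incoming edge $e^{\rm in}_{\gamma_0}$ already satisfies $e_+=v$. Rewiring it to $(e_-,v_\star(\gamma_0))=(e_-,v)$ therefore does nothing, so $\hat{\mC}_{\gamma_0}\Gamma=\Gamma$ as a labelled diagram, and $\mf{K}_{\mf{F}}\hat{\mC}_{\gamma_0}\Gamma = \mf{K}_{\mf{F}}\Gamma$. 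The two forests then contribute with opposite signs, and the sum in~\eqref{eq_zimmer_chat} vanishes term by term. Together with the identity $\Pi_\ve\hat{\mR}\Gamma=\hat{\Pi}_\ve\Gamma$, this gives the claim.

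The main obstacle is that this cancellation looks too trivial: it seems to say that the tadpole counterterm equals the tadpole itself. The point to verify is consistency with the original definition via $\mC_{\gamma_0}$ in~\eqref{eq_zimmermann}. There one has $\Pi_\ve\mC_{\gamma_0}\Gamma = \Pi_\ve\gamma_0\cdot\Pi_\ve \mK_{\gamma_0}\Gamma = G_\ve(0)\,\Pi_\ve(\Gamma\text{ without the loop})$. This coincides with $\Pi_\ve\Gamma$ because the self-loop contributes exactly the constant factor $G_\ve(0)$ independently of $x_v$. I would check this directly, and also check that it remains true after applying further extractions $\mf{K}_{\mf{F}}$. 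This holds because the loop edge is never moved: it is internal to every divergence containing $v$, and $\hat{\mC}$ only rewires incoming edges. So the loop always contributes exactly $G_\ve(0)$, and the cancellation persists at the level of valuations even if one prefers to argue with $\mC$ rather than $\hat{\mC}$.
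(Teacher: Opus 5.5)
Your proposal is correct and is essentially the paper's proof. The paper pairs each forest $\mf{F}$ with $\gamma_0\notin\mf{F}$ with $\mf{F}\cup\{\gamma_0\}$, using that the tadpole is either nested in or vertex-disjoint from every other divergence, writes $\hat{\Pi}_\ve\Gamma=\sum_{\gamma_0\notin\mf{F}}(-1)^{|\mf{F}|}\Pi_\ve\mf{K}_{\mf{F}}(\mathrm{id}-\hat{\mC}_{\gamma_0})\Gamma$, and concludes because $\hat{\mC}_{\gamma_0}\Gamma=\Gamma$; your extra check via $\mC_{\gamma_0}$ (the loop contributing the constant $G_\ve(0)$) is a valid consistency check but is not needed.
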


\begin{proof}
Let $ \gamma \in \mG_{\Gamma}^{-}$ be a tadpole divergence. We partition the
set of forests $ \mF_{\Gamma}^{-}$ into two sets $ \mF_{1}$ and $
\mF_{2}$: those forests that contain $
\gamma$ (say $ \mF_{1}$) and those forests that do not contain $ \gamma$ (say $
\mF_{2}$). Moreover, we note that $\{ \gamma\} \cup \mf{F}$ is a forest in $
\mF_{1}$, whenever $ \mf{F} \in \mF_{2}$, because for any other divergence $
\gamma' \in \mG_{\Gamma}^{-}$ the tadpole $ \gamma$ is either included
in $ \gamma' $ or vertex disjoint.
Hence, $ \mF_{1}$ and $ \mF_{2}$ are in bijection.

Thus, the valuation of Zimmermann's forest formula~\eqref{eq_zimmer_chat} can be expressed in
terms of
\begin{equation}\label{eq_supp1_tadpole}
\begin{aligned}
 \hat{\Pi}_\ve \Gamma = \sum_{\mf{F} \in \mF_{2}}
(-1)^{| \mf{F}|}
\Pi_\ve \mf{K}_{\mf{F}} (\mathrm{id}- \hat{\mC}_{\gamma}) \Gamma \,.
\end{aligned}
\end{equation}
Moreover, we note that $ \hat{\mC}_{\gamma}$ has no effect on $ \Gamma$,
since the corresponding rewiring leaves the diagram invariant. Thus,
$(\mathrm{id}- \hat{\mC}_{\gamma}) \Gamma =0$ and~\eqref{eq_supp1_tadpole} vanishes.
This concludes the proof.
\end{proof}

\begin{nota}
As a consequence of Lemma~\ref{lem_tadpole}, for the entire remainder of this
section we consider only diagrams that don't contain any
tadpoles.
\end{nota}

\subsection{Higher order renormalisation}\label{sec:higher-order-ren}

So far we have only introduced a zeroth order expansion term $
\hat{\mC}_{\gamma}$
 in the
renormalisation procedure.
To estimate $\hat{\Pi}_{\varepsilon} \Gamma
( \varphi) $, we will now introduce an additional  term $
\hat{\mC}^{(1)}_{\gamma}$ in the renormalisation procedure, which corresponds to a first order Taylor
expansion about our divergences.
Eventually, we will prove that the first order term $\hat{\mC}^{(1)}_{\gamma}$
is in fact not present in
renormalisation map \eqref{eq_zimmermann_2}, because it vanishes upon integration as a consequence
of the isotropy and spatial homogeneity of the problem.
Therefore, the additional renormalisation we introduce in this subsection
should be thought of as a zero-sum term. It is only useful in the technical
aspects of our bounds.

In order to deal with higher order expansions in~\eqref{eq_zimmermann_2}, we will augment the space of Feynman diagrams with additional labels:
\begin{itemize}
\item We add labels $ \mathfrak{h}  : E \to \NN^{d} $ to every edge, including legs.
The label $ \mathfrak{h} $ will track the number of directional derivatives applied to the
corresponding edge.
\item We add labels $\mathfrak{n} : V_{\star} \to \NN^{d}$ to all inner
vertices. The labels $\mathfrak{n}$ track the powers of monomials gained through Taylor expansion.
\item We add labels $\mathfrak{a} : V_{\star} \to V_\star$ to all inner
vertices. The labels $\mathfrak{a}$ track the point $\mf{a}$ in which we centre
monomials. Overall, to a given vertex $v \in V_\star$ we associate the monomial
$(x_v - x_{\mf{a}(v)})^{\mf{n}(v)}$, with the usual convention
\begin{equation*}
  (x_v - x_{\mf{a}(v)})^{\mf{n}(v)} = \prod_{i=1}^d (x_v^{i} - x_{\mf{a}(v)}^{i})^{\mf{n}(v)_i} \;.
\end{equation*}
\end{itemize}
Every Feynman diagram $\Gamma$ is considered equipped with the
labels $ \mathfrak{n} = \mathfrak{h} \equiv 0$, and $\mf{a}= \rm{id}$, when
viewed as an element of $\mD_{\Gamma}$. \\

Collecting all labels, we are now working with diagrams
$\tilde{\Gamma}$ that are decorated with the following information:
\begin{equation*}
  (\tilde{\Gamma}, \mathfrak{h}, \mathfrak{n}, \mf{a},  \tau_{\tilde{\Gamma}, \Gamma})\,.
\end{equation*}
We denote with
\begin{equation*}
	\begin{aligned}
		\mD_{\Gamma}^{\rm{lab}}  \qquad \text{and} \qquad
\sT^{\rm{lab}}_{\Gamma} = \langle \mD_{\Gamma}^{\rm{lab}} \rangle \,,
	\end{aligned}
\end{equation*}
respectively, the space of such labelled diagrams, and the vector space of all their finite linear
combinations.

On the space $\mD_{\Gamma}^{\rm{lab}}$, we define extraction maps similarly to \cite[Eq.
(3.1)]{BPHZ}.
Namely, let us fix $\gamma \in \mG_\Gamma^{-}$.
First, we extend $\hat{\mC}$ to labelled diagrams by defining $\hat{\mC}_\gamma \colon \mD_{\Gamma}^{\rm{lab}} \to
\sT_{\Gamma}^{\rm{lab}} $ in terms of
\begin{equation}\label{e:Chatnew}
    \hat{{\mC}}_{\gamma} ( \tilde{\Gamma}, \mathfrak{h}, \mathfrak{n}, \mf{a})
    = \sum_{ \overline{\mf{n}} \leq \mf{n} \vert_{V_{\mf{a}}(\gamma)}} { \mf{n} \choose \overline{\mf{n}} } \big( \hat{\mC}_{\gamma} \tilde{\Gamma},
        \mathfrak{h} \;,
        (\mathfrak{n}, \overline{\mf{n}})_\gamma \;, \mf{a}_{\gamma}
        \big)\,,
  \end{equation}
  where $\hat{\mC}_{\gamma} \tilde{\Gamma}$ on the right is the unlabelled
  extraction map from~\eqref{e:Chat}. Moreover, we have defined
  \begin{equation}\label{e:Va}
    V_{\mf{a}}(\gamma) := \{ v \in V(\gamma) \setminus \{ v_{ \star}(
    \gamma) \} \colon \mf{a}(v)  \not\in V(\gamma)\} \;,
  \end{equation}
  as well as
    $\mf{a}_\gamma (v)$ by\footnote{The map $ \mf{a}$ replaces the role of the
    ``depth'' $\mf{d}$ in \cite{BPHZ}.}:
  \begin{equation}\label{e:aChat}
    \mf{a}_\gamma (v ) := \begin{cases}
      v_{\star}(\gamma) & \quad  \text{if } v \in V ( \gamma) \setminus \{ v_{ \star}(
  \gamma) \} \text{ s.t. } \mf{a} (v) = v \;, \\
      v_{\star}(\gamma) & \quad  \text{if } v \in V_{\mf{a}} ( \gamma) \;, \\
      \mf{a} (v) & \quad \text{otherwise} \;.
    \end{cases}
  \end{equation}
  The sum in \eqref{e:Chatnew} runs over  $\overline{\mf{n}} \colon V_\star \to \NN^d$ such that
  $\overline{\mf{n}}(v)_i \leq \mf{n}(v)_i$ for all $v \in V_\star$ and $i \in
  \{1, \ldots, d\}$, and such that $\overline{\mf{n}}(v) = 0$ for all $v \not\in
  V_{\mf{a}}(\gamma)$, with the
  definition:
  \begin{equation*}
    { \mf{n} \choose \overline{\mf{n}}} = \frac{\mf{n}!}{\overline{\mf{n}}! (\mathfrak{n} -\overline{\mf{n}})!} \;, \qquad \mf{n} ! = \prod_{ v \in V_\star} \prod_{i=1}^d \mf{n}(v)_i !  \;.
  \end{equation*}

  In our setting, we will always have $\mf{n}(v)_i \in \{0,1\}$, so that
  ${\mf{n} \choose \overline{\mf{n}}} =1$. However, we keep the combinatorial
  representation as it makes the explanation of the terms clearer and establishes the
  link to \cite[Eq. (3.1)]{BPHZ}. Finally, \eqref{e:Chatnew} contains the \lqm
  contracted\rqm label $(\mf{n},
  \overline{\mf{n}})_\gamma$, which is defined as follows for all $v \in V_\star$:
  \begin{equation*}
    (\mf{n},  \overline{\mf{n}})_\gamma (v) = \begin{cases}
      \overline{\mf{n}}(v) & \quad \text{if } v \in V_{\mf{a}}(\gamma)\;, \\
      \mf{n}(v)+ \sum_{u \in V_{\mf{a}}(\gamma) } (\mf{n}(u) - \overline{\mf{n}}(u)) & \quad \text{if } v = v_{\star}(\gamma) \;, \\
      \mf{n}(v) & \quad \text{otherwise} \;.
    \end{cases}
  \end{equation*}
In the subsequent sections, we will always have
$\mf{a}(v) = \mf{a}(v_\star(\gamma))$ for all
  $v \in V_{\mf{a}}(\gamma)$, because
we only extract divergences that belong to a fixed forest.
Therefore, the choice of $\mf{n}$ and $\mf{a}$
  above correspond to the recentering of monomials via
the multinomial theorem:
  \begin{equation*}
    \begin{aligned}
    \prod_{v \in V_\star} & (x_{v} - x_{\mf{a}(v)})^{\mf{n}(v)}  \\
    & =  \prod_{v \in V_\star \setminus V_{\mf{a}}(\gamma)} (x_{v} - x_{\mf{a}(v)})^{\mf{n}(v)} \prod_{v \in V_{\mf{a}} (\gamma)} (x_{v} - x_{v_\star(\gamma)} + x_{v_\star(\gamma)}- x_{\mf{a}(v)})^{\mf{n}(v)} \\
    & = \sum_{ \overline{\mf{n}} \leq \mf{n} \vert_{V_{\mf{a}}(\gamma)}} { \mf{n} \choose \overline{\mf{n}} } \prod_{v \in V_\star \setminus V_{\mf{a}}(\gamma)} (x_{v} - x_{\mf{a}(v)})^{\mf{n}(v)} \prod_{v \in V_{\mf{a}} (\gamma)} (x_{v} - x_{v_\star(\gamma)})^{\overline{\mf{n}}(v)} ( x_{v_\star(\gamma)}- x_{\mf{a}(v_\star(\gamma))})^{\mf{n}(v) - \overline{\mf{n}}(v)} \\
    & = \sum_{ \overline{\mf{n}} \leq \mf{n} \vert_{V_{\mf{a}}(\gamma)}} { \mf{n} \choose \overline{\mf{n}} } \prod_{v \in V_\star } (x_{v} - x_{\mf{a}(v)})^{( \mf{n}, \overline{\mf{n}})_\gamma (v)} \;,
    \end{aligned}
  \end{equation*}
  so that eventually all the monomials
  are expressed in terms of the difference of two variables,
  that are either both associated to vertices of $\gamma$, or both associated to
  $V(\gamma)^c \cup \{ v_\star(\gamma)\}$.

Similarly, we define the map $\hat{\mC}^{(1)}_\gamma
\colon \mD_{\Gamma}^{\rm{lab}} \to \sT_{\Gamma}^{\rm{lab}} $ as follows: With
$ e_{\star}(\gamma)= e_{\star}(\gamma, \tilde{\Gamma})$,
\begin{equation}\label{eq_def_Cbar1}
\begin{aligned}
\hat{{\mC}}_{\gamma}^{(1)}( \tilde{\Gamma}, \mathfrak{h}, \mathfrak{n}, \mf{a})
= \begin{cases}
  0 \quad \text{ if } \mathfrak{h} ( e_{\star}(\gamma )) \neq 0 \;, &  \\
  0 \quad \text{ if } \hat{\mC}_\gamma \tilde{\Gamma} = \tilde{\Gamma} \;, & \\
\sum_{i =1}^{d}
\Big(
\hat{\mC}_{\gamma} \tilde{\Gamma} \;,
\mathfrak{h} +  \mathds{1}_{
(
  e_{\star, -}(\gamma), v_{\star}  ( \gamma) ) } \delta_{i} \;,
\mathfrak{n} +
\mathds{1}_{ e_{\star, +}(\gamma)}\delta_{i} \;, \mf{a}_{\gamma}
\Big) &  \text{else} \;,
\end{cases}
\end{aligned}
\end{equation}
where $\delta_i$ is the $i$-th canonical basis vector in $\RR^d$,
as well as $\tau$ and $\hat{\mC}_\gamma$ considered in the unlabelled
setting~\eqref{e:Chat}.
Graphically we represent $\hat{\mC}_\gamma^{(1)}$ as follows:
\begin{equation*}
  \hat{\mC}_{\gamma}^{(1)} \;
  \begin{tikzpicture}[thick,>=stealth,scale=0.3,baseline=-0.2em]
    \tikzset{
      dot/.style={circle,fill=black,inner sep=1.5pt}
    }
    \node[above] at (-3.2,0.1) {};
    \node[above] at ( 3.0,0.1) {};
    \draw (-0.4,0) ellipse (1.8 and 1.1);
    \node at (-0.4,0) {$\gamma$};
    \node[dot] (bL) at (-2.2,0)   {};   
    \node[dot] (vL) at (1.4,0) {};   
    \draw[->] (-4.2,0) -- (bL);
    \draw[->] (vL) -- (3.5,0);
    \node[below right] at (vL) {$v_\star$};
  \end{tikzpicture}
  = \sum_{i=1}^d
  \begin{tikzpicture}[thick,>=stealth,scale=0.3,baseline=-0.2em]
    \tikzset{
      dot/.style={circle,fill=black,inner sep=1.5pt}
    }
    \node[above] at (-3.2,0.1) {};
    \node[above] at ( 3.0,0.1) {};
    \draw (-0.4,0) ellipse (1.8 and 1.1);
    \node at (-0.4,0) {$\gamma$};
    \node[dot] (bL) at (-2.2,0)   {};   
    \node at (-2.9,0.3) {\scalebox{0.8}{$\delta_i$}};
    \node[dot] (vL) at (1.4,0) {};   
    \draw[->] (3.5,2.8  ) -- (vL);
    \node at (1.9,2.0) {\scalebox{0.8}{$\delta_i$}};
    \draw[->] (vL) -- (3.5,-2.0);
    \node[right] at (vL) {$v_\star$};
  \end{tikzpicture}  \;,
\end{equation*}
where we have added the nontrivial labelings of $\mf{n}$ and $\mf{h}$ on the
right. We omitted $\mf{a}$ and assumed that we were not in one of the cases in which the map $
\hat{\mC}^{(1)}_{\gamma} $ acts trivially.

  We will denote the action on labelled diagrams with
  \begin{equation*}
    \hC_\gamma \tilde{\Gamma} = \hat{\mC}_{\gamma} ( \tilde{\Gamma}, \mathfrak{h}, \mathfrak{n}, \mf{a}) \;, \qquad \hC_\gamma^{(1)} \tilde{\Gamma} = \hat{\mC}_{\gamma}^{(1)} ( \tilde{\Gamma}, \mathfrak{h}, \mathfrak{n}, \mf{a})
  \end{equation*}
  for short, when no confusion can occur. We also extend the maps $\hC_\gamma,
  \hC^{(1)}_\gamma$ linearly to all of $\sT^{\rm{lab}}_{\Gamma}$.

\begin{remark}
  The conditions concerning the vanishing of the maps in~\eqref{eq_def_Cbar1}
  correspond to the conditions in \cite[Eq. (3.1)]{BPHZ}.
  \begin{itemize}
    \item The first condition in~\eqref{eq_def_Cbar1}
is only relevant if we successively extract divergences
    that share the same incoming edge, namely if $\gamma_1 \subseteq \gamma_2 \in
    \mG^{-}_\Gamma$ satisfy $e_\star(\gamma_1, \Gamma) = e_\star(\gamma_2,
    \Gamma)$. Our choice corresponds to enforcing renormalisation only in
    presence of a \emph{strictly} negative divergence (in contrast to the non-positive degree condition in \cite[Eq.
    (3.1)]{BPHZ}).

    \item   The second condition in~\eqref{eq_def_Cbar1} corresponds to the fact
    that the first order approximation would vanish if $\hC_\gamma \tilde{\Gamma} =
    \tilde{\Gamma}$, since it would lead to a monomial $(x_{v_\star(\gamma)}^i
    -x_{v_\star(\gamma)}^i)=0$.

   \end{itemize}
    Overall, the map $\hC_\gamma + \hC^{(1)}_\gamma$ is equivalent to the
    extraction map $\hC_\gamma$ described in \cite[Pp.~{26-27}]{BPHZ}, except for the explicit choice of
    $v_\star(\gamma)$, and for the requirement that the extracted divergences are strictly negative.
    Notice that in our setting there is no need to
    introduce weights on half-edges, because we are always differentiating the
    only incoming edge.
\end{remark}
Given a forest of divergences $ \mf{F} \in \mF_{\Gamma}^{-}$, we define the
iterative application of extraction maps:
\begin{equation*}
\begin{aligned}
\mf{K}_{\mf{F}}^{(1)} :=  \mf{K}_{\mf{F} \setminus \vr(\mf{F})}^{(1)} \prod_{ \gamma \in \vr(\mf{F})} \big(\hat{\mC}_{\gamma} +
\hat{\mC}_{\gamma}^{(1)} \big)\,,
\end{aligned}
\end{equation*}
where $\vr (\mf{F})$ is the set of roots of $\mf{F}$.
It turns out that the order in which extraction maps are applied does not play
a role. This is the content of the next lemma, which implies
that we can simply write
\begin{equation}\label{eq_mK1}
  \begin{aligned}
  \mf{K}_{\mf{F}}^{(1)} =  \prod_{ \gamma \in \mf{F}} \big(\hat{\mC}_{\gamma} +
  \hat{\mC}_{\gamma}^{(1)} \big)\,.
  \end{aligned}
  \end{equation}
However, for the sake of clarity let us first evaluate the map
$\mf{K}^{(1)}$ on a representative example. Let us consider $\Gamma$ as in
\eqref{e:example-gamma}, namely
\begin{equation*}
\begin{aligned}
\Gamma = \begin{tikzpicture}[thick, scale = 0.6, baseline=-0.2em]
    \tikzset{
      vtx/.style={circle,fill=black,inner sep=2pt}
    }
    \foreach \i/\x in {2/1.6,3/3.2,4/4.8,5/6.4}{
      \node[dot] (v\i) at (\x,0) {};
    }
    \foreach \i/\x in {1/0,6/8.0}{
      \node[bigroot] (v\i) at (\x,0) {};
    }

    \node[below left] at (v2) {\footnotesize $1$};
    \node[below right] at (v3) {\footnotesize $2$};
    \node[below left] at (v4) {\footnotesize $3$};
    \node[below right] at (v5) {\footnotesize $4$};
    \draw[midarrow] (v1) -- (v2) ;
    \draw[midarrow] (v2) -- (v3) ;
    \draw[midarrow] (v3) -- (v4) ;
    \draw[midarrow] (v4) -- (v5) ;
    \draw[midarrow] (v5) -- (v6) ;
    \draw[midarrow] (v2) .. controls +(0,-0.7) and +(0,-.7) .. (v3);
    \draw[midarrow] (v3) .. controls +(0,0.7) and +(0,0.7) .. (v2);
    \draw[midarrow] (v5) .. controls +(0,0.7) and +(0,0.7) .. (v4);
    \draw[midarrow] (v4) .. controls +(0,-0.7) and +(0,-0.7) .. (v5);
    \end{tikzpicture} \;.
\end{aligned}
\end{equation*}
In this example, let now $\gamma_1$ be the subdiagram
spanned by the vertices $1,2$, and $\gamma_2$ be the subdiagram spanned by the
vertices $1,2,3,4$ (notice the slight change of notation with respect to the
discussion below \eqref{e:example-gamma}). We will compute $ (\hC_{\gamma_1}
+ \hC^{(1)}_{\gamma_1})  (\hC_{\gamma_2}
+ \hC^{(1)}_{\gamma_2}) \Gamma$. To graphically encode the map $\mf{a}$, we color in
blue the vertices with $\mf{a}(v) = 4$, and in green the vertices satisfying
$\mf{a}(v)=2$. First, we find that
\begin{equation}\label{e:comm-example}
  \begin{aligned}
  (\hC_{\gamma_2}+ \hC^{(1)}_{\gamma_2}) \Gamma = \begin{tikzpicture}[thick, scale = 0.5, baseline=-0.2em]
    \tikzset{
      vtx/.style={circle,fill=black,inner sep=2pt}
    }
    \foreach \i/\x in {2/1.6,3/3.2,4/4.8,5/6.4}{
      \node[dot, blue] (v\i) at (\x,0) {};
    }
    \node[bigroot] (v6) at (8.0,-1.0) {};
    \node[bigroot] (v7) at (8.0,1.0) {};
    \node[below left] at (v2) {};
    \node[below right] at (v3) {};
    \node[below left] at (v4) {};
    \node[below right] at (v5) {};
    \draw[midarrow] (v2) -- (v3) ;
    \draw[midarrow] (v3) -- (v4) ;
    \draw[midarrow] (v4) -- (v5) ;
    \draw[midarrow] (v5) -- (v6) ;
    \draw[midarrow] (v7) -- (v5) ;
    \draw[midarrow] (v2) .. controls +(0,-0.7) and +(0,-.7) .. (v3);
    \draw[midarrow] (v3) .. controls +(0,0.7) and +(0,0.7) .. (v2);
    \draw[midarrow] (v5) .. controls +(0,0.7) and +(0,0.7) .. (v4);
    \draw[midarrow] (v4) .. controls +(0,-0.7) and +(0,-0.7) .. (v5);
    \end{tikzpicture}
    +
    \begin{tikzpicture}[thick, scale = 0.5, baseline=-0.2em]
      \tikzset{
        vtx/.style={circle,fill=black,inner sep=2pt}
      }
      \foreach \i/\x in {2/1.6,3/3.2,4/4.8,5/6.4}{
        \node[dot, blue] (v\i) at (\x,0) {};
      }
      \node[bigroot] (v6) at (8.0,-1.0) {};
      \node[bigroot] (v7) at (8.0,1.0) {};
      \node[below left] at (v2) {$\delta_i$};
      \node[below right] at (v3) {};
      \node[below left] at (v4) {};
      \node[below right] at (v5) {};
      \draw[midarrow] (v2) -- (v3) ;
      \draw[midarrow] (v3) -- (v4) ;
      \draw[midarrow] (v4) -- (v5) ;
      \draw[midarrow] (v5) -- (v6) ;
      \draw[midarrow] (v7) -- (v5) ;
      \node at (7.0,1.0) {\scalebox{1.0}{$\delta_i$}};
      \draw[midarrow] (v2) .. controls +(0,-0.7) and +(0,-.7) .. (v3);
    \draw[midarrow] (v3) .. controls +(0,0.7) and +(0,0.7) .. (v2);
    \draw[midarrow] (v5) .. controls +(0,0.7) and +(0,0.7) .. (v4);
    \draw[midarrow] (v4) .. controls +(0,-0.7) and +(0,-0.7) .. (v5);
      \end{tikzpicture}
    \;.
  \end{aligned}
\end{equation}
Then, applying $\hC_{\gamma_1} + \hC^{(1)}_{\gamma_1}$ to each term we
obtain
\begin{equation*}
  \begin{aligned}
  (\hC_{\gamma_1} + \hC^{(1)}_{\gamma_1})(\hC_{\gamma_2} + \hC^{(1)}_{\gamma_2}) \Gamma = &
  \begin{tikzpicture}[thick, scale = 0.5, baseline=-0.2em]
    \tikzset{
      vtx/.style={circle,fill=black,inner sep=2pt}
    }
    \foreach \i/\x in {3/3.2,4/4.8,5/6.4}{
      \node[dot, blue] (v\i) at (\x,0) {};
    }
    \node[dot, green!50!black!70] (v2) at (1.6,0) {};
    \node[bigroot] (v6) at (8.0,-1.0) {};
    \node[bigroot] (v7) at (8.0,1.0) {};
    \node[below left] at (v2) {};
    \node[below right] at (v3) {};
    \node[below left] at (v4) {};
    \node[below right] at (v5) {};
    \draw[midarrow] (v2) -- (v3) ;
    \draw[midarrow] (v3) -- (v4) ;
    \draw[midarrow] (v4) -- (v5) ;
    \draw[midarrow] (v5) -- (v6) ;
    \draw[midarrow] (v7) -- (v5) ;
    \draw[midarrow] (v2) .. controls +(0,-0.7) and +(0,-.7) .. (v3);
    \draw[midarrow] (v3) .. controls +(0,0.7) and +(0,0.7) .. (v2);
    \draw[midarrow] (v5) .. controls +(0,0.7) and +(0,0.7) .. (v4);
    \draw[midarrow] (v4) .. controls +(0,-0.7) and +(0,-0.7) .. (v5);
    \end{tikzpicture} \\
    &  +
    \begin{tikzpicture}[thick, scale = 0.5, baseline=-0.2em]
      \tikzset{
        vtx/.style={circle,fill=black,inner sep=2pt}
      }
      \foreach \i/\x in {3/3.2,4/4.8,5/6.4}{
        \node[dot, blue] (v\i) at (\x,0) {};
      }
      \node[dot, green!50!black!70] (v2) at (1.6,0) {};
      \node[bigroot] (v6) at (8.0,-1.0) {};
      \node[bigroot] (v7) at (8.0,1.0) {};
      \node[below left] at (v2) {$\delta_i$};
      \node[below right] at (v3) {};
      \node[below left] at (v4) {};
      \node[below right] at (v5) {};
      \draw[midarrow] (v2) -- (v3) ;
      \draw[midarrow] (v3) -- (v4) ;
      \draw[midarrow] (v4) -- (v5) ;
      \draw[midarrow] (v5) -- (v6) ;
      \draw[midarrow] (v7) -- (v5) ;
      \draw[midarrow] (v2) .. controls +(0,-0.7) and +(0,-.7) .. (v3);
    \draw[midarrow] (v3) .. controls +(0,0.7) and +(0,0.7) .. (v2);
    \draw[midarrow] (v5) .. controls +(0,0.7) and +(0,0.7) .. (v4);
    \draw[midarrow] (v4) .. controls +(0,-0.7) and +(0,-0.7) .. (v5);
      \node at (7.0,1.0) {\scalebox{1.0}{$\delta_i$}};
      \end{tikzpicture}
      + \begin{tikzpicture}[thick, scale = 0.5, baseline=-0.2em]
        \tikzset{
          vtx/.style={circle,fill=black,inner sep=2pt}
        }
        \foreach \i/\x in {3/3.2,4/4.8,5/6.4}{
          \node[dot, blue] (v\i) at (\x,0) {};
        }
        \node[dot, green!50!black!70] (v2) at (1.6,0) {};
        \node[bigroot] (v6) at (8.0,-1.0) {};
        \node[bigroot] (v7) at (8.0,1.0) {};
        \node[below left] at (v2) {};
        \node at (3.55,-0.5) {$\delta_i$};
        \node[below left] at (v4) {};
        \node[below right] at (v5) {};
        \draw[midarrow] (v2) -- (v3) ;
        \draw[midarrow] (v3) -- (v4) ;
        \draw[midarrow] (v4) -- (v5) ;
        \draw[midarrow] (v5) -- (v6) ;
        \draw[midarrow] (v7) -- (v5) ;
        \draw[midarrow] (v2) .. controls +(0,-0.7) and +(0,-.7) .. (v3);
    \draw[midarrow] (v3) .. controls +(0,0.7) and +(0,0.7) .. (v2);
    \draw[midarrow] (v5) .. controls +(0,0.7) and +(0,0.7) .. (v4);
    \draw[midarrow] (v4) .. controls +(0,-0.7) and +(0,-0.7) .. (v5);
        \node at (7.0,1.0) {\scalebox{1.0}{$\delta_i$}};
        \end{tikzpicture} \;.
      \end{aligned}
\end{equation*}
The same arguments, applied in reverse order, show that indeed  $$(\hC_{\gamma_2} + \hC^{(1)}_{\gamma_2})
(\hC_{\gamma_1} + \hC^{(1)}_{\gamma_1}) \Gamma = (\hC_{\gamma_1} + \hC^{(1)}_{\gamma_1})
(\hC_{\gamma_2} + \hC^{(1)}_{\gamma_2}) \Gamma\,.$$
This identity holds in general.

\begin{lemma}\label{lem_C1_comm}
Let $\Gamma$ be an Anderson--Feynman diagram and
 $ \mf{F} \in
 \mF_{\Gamma}^{-}$ be a forest of divergences. Write $\mf{K}^{(1)}_{\mf{F}}
\Gamma = \sum \tilde{\Gamma}$, where each $\tilde{\Gamma} $ is a nontrivial
element of $\mD^{\rm{lab}}_{\Gamma}$.
 Then for any $ \gamma_{1}, \gamma_{2} \in \mG^{-}_{\Gamma}$ such that
$\mf{F} \cup \{ \gamma_{1}, \gamma_{2}\} \in \mF_{\Gamma}^{-}$ and for any term
$\tilde{\Gamma}$, we have
 \begin{equation}\label{e:nd-1}
     ( \hat{\mC}_{\gamma_{2}} +
 \hat{\mC}_{\gamma_{2}}^{(1)})(\hat{\mC}_{\gamma_{1}} +
 \hat{\mC}_{\gamma_{1}}^{(1)}) \tilde{\Gamma} =
 (\hat{\mC}_{\gamma_{1}} +
 \hat{\mC}_{\gamma_{1}}^{(1)})
 ( \hat{\mC}_{\gamma_{2}} +
 \hat{\mC}_{\gamma_{2}}^{(1)})  \tilde{\Gamma}
 \;.
 \end{equation}
 \end{lemma}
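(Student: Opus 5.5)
We argue by cases on the relative position of $\gamma_1,\gamma_2$. Since $\mf{F}\cup\{\gamma_1,\gamma_2\}$ is a forest, either $\gamma_1,\gamma_2$ are vertex disjoint, or (up to relabelling) $\gamma_1\subseteq\gamma_2$. Each of the maps $\hC_{\gamma}$, $\hC^{(1)}_\gamma$ acts on a labelled diagram $(\tilde{\Gamma},\mf{h},\mf{n},\mf{a})$ through four pieces of data: the rewiring of the single edge $e_\star(\gamma,\tilde{\Gamma})$, the derivative label $\mf{h}$ on the rewired edge, the monomial label $\mf{n}$ at $e_{\star,+}(\gamma)$, and the recentering data $(\mf{n},\overline{\mf{n}})_\gamma$, $\mf{a}_\gamma$. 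The plan is to check that for each of the (at most four) composites $\hC^{(a)}_{\gamma_2}\hC^{(b)}_{\gamma_1}$, $a,b\in\{0,1\}$, the resulting sum of labelled diagrams coincides with that of the reversed composite, after possibly regrouping terms, and that the sums over $\overline{\mf{n}}$ in~\eqref{e:Chatnew} match.

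In the vertex-disjoint case, the edges $e_\star(\gamma_1)$, $e_\star(\gamma_2)$ are distinct (by Lemma~\ref{lem:neg} each divergence has a unique incoming edge, and the incoming edge of $\gamma_1$ cannot end in $\gamma_2$ and vice versa after rewiring, since rewiring only moves the head into $v_\star(\gamma_i)\in V(\gamma_i)$). The label updates then act on disjoint vertex sets: $V_{\mf{a}}(\gamma_1)\cap V(\gamma_2)=\emptyset$, and $\mf{a}_{\gamma_1}$ only changes $\mf{a}$ on $V(\gamma_1)$. The only subtlety is that $\hC^{(1)}_{\gamma_1}$ may place $\mf{n}$ on $e_{\star,+}(\gamma_1)$ which could lie in $\gamma_2$ if the incoming edge of $\gamma_1$ originates from $\gamma_2$; but then it is the tail $e_{\star,-}$, not the head, that lies in $\gamma_2$, and the monomial is at the head in $\gamma_1$, so $\hC_{\gamma_2}$ does not recenter it. Hence both orders give identical terms.

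In the nested case $\gamma_1\subseteq\gamma_2$, we split according to whether $e_\star(\gamma_1,\Gamma)=e_\star(\gamma_2,\Gamma)$. If they differ, the rewirings act on different edges exactly as in Lemma~\ref{lem:commute}, and one must verify that the recentering commutes: applying $\hC_{\gamma_2}$ first sets $\mf{a}=v_\star(\gamma_2)$ on $V(\gamma_2)$, after which $\hC_{\gamma_1}$ recenters vertices of $V_{\mf{a}}(\gamma_1)$ at $v_\star(\gamma_1)$ with the binomial split; in the other order $\gamma_1$ first centres its vertices at $v_\star(\gamma_1)$ and then $v_\star(\gamma_1)$ itself is recentered at $v_\star(\gamma_2)$. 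Both reproduce the same polynomial identity (the multinomial expansion of $(x_v-x_{v_\star(\gamma_2)})^{\mf{n}}$ through the intermediate point $x_{v_\star(\gamma_1)}$), and since $\mf{n}\in\{0,1\}$ componentwise the bookkeeping reduces to a check of which vertex carries each unit monomial; this is the computation illustrated in the example preceding the lemma. If the incoming edges coincide, Lemma~\ref{lem:commute} gives $\hC_{\gamma_1}\hC_{\gamma_2}=\hC_{\gamma_2}$ at the unlabelled level, and the first vanishing condition in~\eqref{eq_def_Cbar1} ensures that $\hC^{(1)}$ is applied at most once to the shared edge; we then compare term by term: $\hC^{(1)}_{\gamma_1}\hC^{(1)}_{\gamma_2}=0=\hC^{(1)}_{\gamma_2}\hC^{(1)}_{\gamma_1}$, and the mixed terms match because the rewired edge ends up at $v_\star(\gamma_2)=v_\star(\gamma_1)$ or at $v_\star(\gamma_2)$ consistently in both orders (again via Lemma~\ref{lem:neg}), using the second vanishing condition when the extraction is trivial.

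The main obstacle is this last sub-case: tracking where the monomial $\delta_i$ created by $\hC^{(1)}_{\gamma_1}$ sits and how it is subsequently recentred by $\hC_{\gamma_2}$, versus the order where $\hC^{(1)}_{\gamma_2}$ already put the derivative on the edge and kills $\hC^{(1)}_{\gamma_1}$. I would first try to show that, because $v_\star(\gamma)=e^{\rm out}_{\gamma,-}$ and $\gamma_1\subseteq\gamma_2$ share their incoming edge, the relevant terms pair up bijectively with matching $(\mf{h},\mf{n},\mf{a})$, and that the hypothesis that $\tilde{\Gamma}$ arises from $\mf{K}^{(1)}_{\mf{F}}\Gamma$ guarantees $\mf{a}(v)=\mf{a}(v_\star(\gamma))$ on $V_{\mf{a}}(\gamma)$, so no stray binomial terms appear.
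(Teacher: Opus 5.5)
Your vertex-disjoint case and your nested case with distinct incoming edges are fine in outline: there the two extractions rewire different edges, and recentring through $x_{v_\star(\gamma_1)}$ gives the same result in either order. The gap is in the sub-case you single out yourself. Take $\gamma_1\subsetneq\gamma_2$ sharing the incoming edge $e$, with head $w=e_+\in V(\gamma_1)\setminus\{v_\star(\gamma_1)\}$ in $\tilde\Gamma$ and $v_\star(\gamma_1)\neq v_\star(\gamma_2)$; then $v_\star(\gamma_2)\notin V(\gamma_1)$ by Lemma~\ref{lem:neg}.

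In this sub-case the composites $\hat{\mC}^{(a)}_{\gamma_2}\hat{\mC}^{(b)}_{\gamma_1}$ and $\hat{\mC}^{(b)}_{\gamma_1}\hat{\mC}^{(a)}_{\gamma_2}$ do not agree for fixed $(a,b)$. Knowing where the rewired edge ends up does not make the mixed terms match. This already fails when $\tilde\Gamma$ carries no labels inside $\gamma_2$:
\begin{itemize}
\item Applying $\gamma_1$ first, both $\hat{\mC}^{(1)}_{\gamma_2}\hat{\mC}_{\gamma_1}\tilde\Gamma$ (monomial $x_{v_\star(\gamma_1)}-x_{v_\star(\gamma_2)}$) and $\hat{\mC}_{\gamma_2}\hat{\mC}^{(1)}_{\gamma_1}\tilde\Gamma$ (monomial $x_w-x_{v_\star(\gamma_1)}$) are nonzero. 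The second monomial is \emph{not} recentred by $\hat{\mC}_{\gamma_2}$, because its centre $v_\star(\gamma_1)$ lies in $V(\gamma_2)$.
\item Applying $\gamma_2$ first, $\hat{\mC}^{(1)}_{\gamma_1}\hat{\mC}_{\gamma_2}\tilde\Gamma=0$ by the second vanishing condition in \eqref{eq_def_Cbar1}, since the head already sits at $v_\star(\gamma_2)\notin V(\gamma_1)$. So both terms above must come from the single composite $\hat{\mC}_{\gamma_1}\hat{\mC}^{(1)}_{\gamma_2}\tilde\Gamma$.
\item They arise only through the sum over $\overline{\mf n}$ in \eqref{e:Chatnew}. $\hat{\mC}^{(1)}_{\gamma_2}$ places the monomial $x_w-x_{v_\star(\gamma_2)}$ at $w$, so $w\in V_{\mf a}(\gamma_1)$. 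Then $\hat{\mC}_{\gamma_1}$ splits it into $(x_w-x_{v_\star(\gamma_1)})+(x_{v_\star(\gamma_1)}-x_{v_\star(\gamma_2)})$.
\end{itemize}

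So the identity to establish in this case is
\[
\hat{\mC}^{(1)}_{\gamma_2}\hat{\mC}_{\gamma_1}\tilde\Gamma+\hat{\mC}_{\gamma_2}\hat{\mC}^{(1)}_{\gamma_1}\tilde\Gamma=\hat{\mC}_{\gamma_1}\hat{\mC}^{(1)}_{\gamma_2}\tilde\Gamma ,
\]
together with $\hat{\mC}_{\gamma_2}\hat{\mC}_{\gamma_1}\tilde\Gamma=\hat{\mC}_{\gamma_1}\hat{\mC}_{\gamma_2}\tilde\Gamma$ and the vanishing of both double-$\hat{\mC}^{(1)}$ terms. When $v_\star(\gamma_1)=v_\star(\gamma_2)$, instead $\hat{\mC}^{(1)}_{\gamma_2}\hat{\mC}_{\gamma_1}\tilde\Gamma=0$ and the match is $\hat{\mC}_{\gamma_2}\hat{\mC}^{(1)}_{\gamma_1}\tilde\Gamma=\hat{\mC}_{\gamma_1}\hat{\mC}^{(1)}_{\gamma_2}\tilde\Gamma$, again across different $(a,b)$. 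This is exactly the three-term computation in the example preceding the lemma.

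Your plan, to pair terms bijectively while showing that ``no stray binomial terms appear'', points the wrong way. The binomial recentring of the \emph{newly created} monomial is what makes the two sides agree, not something to be excluded. You also have its direction reversed: it is $\hat{\mC}_{\gamma_1}$ recentring the monomial produced by $\hat{\mC}^{(1)}_{\gamma_2}$, not $\hat{\mC}_{\gamma_2}$ recentring the one produced by $\hat{\mC}^{(1)}_{\gamma_1}$. The condition $\mf a(v)=\mf a(v_\star(\gamma))$ on $V_{\mf a}(\gamma)$ only guarantees that this relabelling is a genuine identity of monomials; it does not remove the split. For comparison, the paper gives no argument of its own here and simply defers to the corresponding verification in Hairer's BPHZ paper.
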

The proof of this result follows as in \cite[Proposition 2.19 and Eq.\ (3.7)]{BPHZ}.
By virtue of Lemma~\ref{lem_C1_comm} the order of operations in
$\mf{K}^{(1)}_{\mf{F}}$ is irrelevant, and we are finally ready to enhance our definition of
renormalisation in
\eqref{eq_zimmer_chat} to include first order terms.
\begin{definition}\label{def_R1}
  Consider the maps $\hat{\mC}_\gamma, \hat{\mC}_\gamma^{(1)}$ from~\eqref{e:Chatnew}
 and~\eqref{eq_def_Cbar1}  extended by linearity to all of
  $\sT_\Gamma^{\rm{lab}}$.
We define
  \begin{equation}\label{e_Rhat_one}
  \begin{aligned}
\hat{\mR}^{(1)} \Gamma := \sum_{\mf{F} \in \mathcal{F}_\Gamma^{-}}
(-1)^{| \mf{F}|}
\mf{K}_{\mf{F}}^{(1)} \Gamma \,,
  \end{aligned}
\end{equation}
with $ \mf{K}_{\mf{F}}^{(1)}$ as in \eqref{eq_mK1}.
In particular, the evaluation with respect to $ \hat{\mR}^{(1)}$ is given by
\begin{equation*}
\begin{aligned}
\Pi_{\ve} \hat{\mR}^{(1)} \Gamma ( \varphi)
=
\sum_{\mf{F} \in \mathcal{F}_\Gamma^{-}}
(-1)^{| \mf{F}|}
\int_{( \TT^{d})^{V_{\star}}}
  \overline{\mW}_{\ve} \mf{K}_{\mf{F}}^{(1)} \Gamma (x_{V_\star})
 \ud x_{V_{\star}} \,,
\end{aligned}
\end{equation*}
where we have extended the map $ \mW_{\ve} $ from
\eqref{e_def_mW} to labelled diagrams as
follows:
\begin{equation}\label{e_updW_labelled}
  \overline{\mW}_{\ve}  (  \tilde{\Gamma}, \mathfrak{h}, \mathfrak{n},  \mf{a}) (x_{V_\star})
  =
  \int\limits_{ ( \TT^{d})^{L}}
  \varphi ( x_{L})
   \prod_{e \in \tilde{E} }
  D^{\mathfrak{h} ( e)} G_{\ve}(x_{e_{+}} - x_{e_{-}})
  \prod_{ v \in V_{\star}}
  \big( x_{ v } - x_{\mf{a}(v) }\big)^{\mathfrak{n}
  (v)} \ud x_{L}\,.
  \end{equation}
\end{definition}
We stress that, in contrast to $ \mW_{\ve}$, $ \overline{\mW}_{\ve}$ now also
includes the Green's functions associated to legs and integrated against the
test function $ \varphi$.\\

The next result proves that the additional renormalisation does not affect the
evaluation of a given Feynman diagram, because we have only added antisymmetric
terms that vanish under integration.
\begin{lemma}\label{lem_include_C1}
For any negative Anderson--Feynman diagram $ \Gamma $,
we have
\begin{equation}\label{e_bphzintervals_1}
\begin{aligned}
\hat{\Pi}_{\varepsilon} \Gamma
( \varphi)
=
\Pi_{\ve} \hat{\mR}^{(1)} \Gamma ( \varphi) \,.
\end{aligned}
\end{equation}
\end{lemma}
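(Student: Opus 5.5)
The plan is to show that $\Pi_\ve\hat{\mR}^{(1)}\Gamma(\varphi)-\Pi_\ve\hat{\mR}\Gamma(\varphi)$ vanishes, and then conclude with the identity $\Pi_\ve\hat{\mR}\Gamma=\hat\Pi_\ve\Gamma$ stated above.

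We expand the product $\mf{K}^{(1)}_{\mf{F}}=\prod_{\gamma\in\mf{F}}(\hC_\gamma+\hC^{(1)}_\gamma)$, using Lemma~\ref{lem_C1_comm} to fix any convenient order. This writes $\mf{K}^{(1)}_{\mf{F}}\Gamma$ as $\mf{K}_{\mf{F}}\Gamma$ plus a sum over nonempty subsets $\mf{S}\subseteq\mf{F}$. In the term indexed by $\mf{S}$, the factor $\hC^{(1)}_\gamma$ is chosen exactly for $\gamma\in\mf{S}$. For labels $\mf{n}=\mf{h}\equiv0$ the map $\hC_\gamma$ of~\eqref{e:Chatnew} reduces to the unlabelled extraction~\eqref{e:Chat} with a relabelled $\mf{a}$. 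Since $\mf{a}$ only matters through monomials with $\mf{n}\neq0$, the $\mf{S}=\emptyset$ term evaluates to $\Pi_\ve\mf{K}_{\mf{F}}\Gamma(\varphi)$. It therefore suffices to show that every term with $\mf{S}\neq\emptyset$ integrates to zero. Each such term carries, for each $\gamma\in\mf{S}$, one derivative $\partial_i$ on the rewired incoming edge $(e_{\star,-}(\gamma),v_\star(\gamma))$, together with a monomial $(x_v-x_{\mf{a}(v)})_i$ at $v=e_{\star,+}(\gamma)\in V(\gamma)$, summed over $i$.

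The key step is a parity argument performed one divergence at a time. Choose $\gamma\in\mf{S}$ minimal with respect to inclusion among elements of $\mf{S}$. After all extractions, the vertices of $\gamma$ that are not in any smaller element of $\mf{F}$, together with the collapsed images of subdivergences, are joined to the rest of the diagram only through $v_\star(\gamma)$. This uses Lemma~\ref{lem:neg}: each divergence has exactly one incoming and one outgoing edge, and the rewiring moves the incoming edge to $v_\star(\gamma)$. The recentring built into $\mf{a}_\gamma$ ensures that every monomial attached to a vertex of $\gamma\setminus\{v_\star(\gamma)\}$ is centred at $v_\star(\gamma)$, or at a vertex inside $\gamma$. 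We therefore fix $x_{v_\star(\gamma)}$ and substitute $y_v=x_v-x_{v_\star(\gamma)}$ for the remaining vertices of $\gamma$. The internal factor then becomes a function of the $y$'s built from $G_\ve$, derivatives of $G_\ve$, and monomials. We apply the reflection $y\mapsto -y$. Because $G_\ve$ is even (it is radial, as $\rho$ is radial), each edge kernel $D^{\mf{h}}G_\ve$ picks up the sign $(-1)^{|\mf{h}|}$ and each monomial picks up $(-1)^{|\mf{n}|}$.

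The main obstacle is the sign bookkeeping under this reflection. Inside $\gamma$ the total parity equals the number of monomial labels on vertices of $\gamma$ minus derivatives on internal edges. The term from $\hC^{(1)}_\gamma$ contributes exactly one extra odd monomial inside $\gamma$, while its derivative sits on an edge outside $\gamma$. By minimality of $\gamma$ in $\mf{S}$, no smaller element of $\mf{S}$ contributes labels strictly inside $\gamma$: by the first vanishing condition in~\eqref{eq_def_Cbar1}, nested divergences sharing $e_\star$ cannot both be selected, and disjoint subdivergences carry no $\hC^{(1)}$. So the integral over the internal variables of $\gamma$ is odd and vanishes. If $\gamma$ contains an element $\gamma'\in\mf{F}\setminus\mf{S}$, the $\hC_{\gamma'}$ rewiring only moves edges within $\gamma$, so the reflection symmetry is preserved. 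One must also check carefully that labels produced by $\hC_{\gamma'}$ with $\gamma'\supsetneq\gamma$, which generate monomial factors at $v_\star(\gamma)$ via $(\mf{n},\overline{\mf{n}})_{\gamma'}$, are all located at $v_\star(\gamma)$ or outside $\gamma$, and hence are unaffected by the reflection. This verification is where the choice $v_\star(\gamma)=e^{\rm out}_{\gamma,-}$ and Lemma~\ref{lem:commute} enter. Integrability in the reflected variables is unproblematic at fixed $\ve>0$, since $G_\ve$ is smooth.

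Summing over $\mf{F}$ with the signs $(-1)^{|\mf{F}|}$ then gives $\Pi_\ve\hat{\mR}^{(1)}\Gamma(\varphi)=\Pi_\ve\hat{\mR}\Gamma(\varphi)=\hat\Pi_\ve\Gamma(\varphi)$.
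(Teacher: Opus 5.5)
Your proposal is correct and follows essentially the paper's route. The paper enumerates the forest from inner to outer divergences and telescopes on the first index $m$ at which $\hC^{(1)}_{\gamma_m}$ is selected, which amounts to taking a minimal element of $\mf{S}$ and guarantees that the diagram it acts on carries no labels. It then shows that the subintegral over $V(\gamma_m)\setminus\{v_\star(\gamma_m)\}$ of one linear monomial times plain $G_\ve$ kernels vanishes, via Lemma~\ref{lem_sym}, which flips a single coordinate using radial symmetry where you use the point reflection and evenness of $G_\ve$.

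The extra label bookkeeping you flag for larger divergences $\gamma'\supsetneq\gamma$ does go through. The reason is that $v_\star(\gamma')\in V(\gamma)$ forces $v_\star(\gamma')=v_\star(\gamma)$ by uniqueness of the outgoing edge (Lemma~\ref{lem:neg}); the paper uses this same fact implicitly when it asserts that later extractions do not touch $\tilde\gamma_m$.
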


\begin{proof}
Let $ \mf{F} \in \mF_{\Gamma}^{-}$ be a forest of divergences,
and $( \gamma_{i} )_{i =1}^{M}$ be an enumeration of the divergences
inside, such that $i<j$ whenever $ \gamma_{i} \subset \gamma_{j} $.
By explicitly expanding the product in \eqref{eq_mK1}, we can write
\begin{equation*}
\begin{aligned}
\big(\mf{K}^{(1)}_{\mf{F}} -
\mf{K}_{\mf{F}} \big) \Gamma
=
\sum_{m = 1}^{M}
\bigg( \prod_{i=m+1}^M
 (\hat{\mC}_{\gamma_{i}}+
\hat{\mC}_{\gamma_{i}}^{(1)}) \bigg)
 \hat{\mC}^{(1)}_{ \gamma_{m}}
\bigg( \prod_{i =1}^{m-1}
\hat{\mC}_{\gamma_{i}} \bigg)
\Gamma\,.
\end{aligned}
\end{equation*}
Our goal is to argue that the valuation of
the right--hand side vanishes after integration, which together
with \eqref{eq_zimmer_chat} and \eqref{e_Rhat_one} implies
\eqref{e_bphzintervals_1}.

We fix $m$, and write $ \tilde{\Gamma} :=  \prod_{i =1}^{m-1}
\hat{\mC}_{\gamma_{i}} \Gamma \in  \mD_\Gamma^{\rm{lab}}$ (indeed, since no
differentiation has been applied $\tilde{\Gamma}$ consists of a single labelled
diagram, and not of a linear combination of them).
 Our objective
is then to prove that the following integral vanishes:
\begin{equation}\label{e:no-idea}
  \int_{ ( \mathbf{T}^{d})^{ V_{\star}}}
 \overline{\mW}_{\varepsilon}
\bigg( \prod_{i=m+1}^M
 (\hat{\mC}_{\gamma_{i}}+
\hat{\mC}_{\gamma_{i}}^{(1)}) \bigg)
 \hat{\mC}^{(1)}_{ \gamma_{m}}
\tilde\Gamma (x_{V_\star})
 \ud x_{ V_{\star} } = 0\;.
\end{equation}
From~\eqref{e:no-idea} we see that in $\tilde{\Gamma}$ the decoration $\mf{h}$ vanishes,
i.e. $\mf{h}
\equiv 0$, since no map $\hat{\mC}^{(1)}$ was ever applied. Moreover, because of
the ordering of the $\gamma_i$, the map $\hC_{\gamma_m} \tilde{\Gamma} \neq
\tilde{\Gamma}$ as an unlabelled diagram.
Therefore, by~\eqref{eq_def_Cbar1} the map
$\hat{\mC}_{\gamma_m}^{(1)}$ acts nontrivially.
Moreover, we note that
 edges of
the subdiagram $ \tilde\gamma_{m}$
of $\tilde{\Gamma}$ spanned by the preimage of the edges of $\gamma_m$,
i.e.
$\tilde{\gamma}_m = \tau_{\tilde{\Gamma}, \Gamma}^{-1} (\gamma)$,
are not affected by any of the further extraction maps
 $ \hat{\mC}_{\gamma_{k}}$ and $
\hat{\mC}^{(1)}_{\gamma_{k}}$ for $k > m$.
This is because those extractions only rewire
edges incoming into $\gamma_k$, but $ \tilde\gamma_{k}$ is either disjoint from $ \tilde\gamma_{m}$ or
fully contains $\tilde{\gamma}_m$, and therefore the incoming edges of any
$ \gamma_{k}$ does not belong
to $ \tilde{E} (\gamma_m)$.

Consequently,~\eqref{e:no-idea} contains the following subintegral, with
$v_{m} :=  e_{\star, +}(\gamma_{m}, \tilde{\Gamma})$ and $ \mf{a} (v_{m}) = v_{\star} (
  \gamma_{m})$:
\begin{equation}\label{eq_cancelfirstorder_1}
\begin{aligned}
&
\sum_{i =1}^{d}
\int_{ ( \mathbf{T}^{d})^{ V ( \gamma ) \setminus \mf{a}(v_{m})}}
\big( x^{i}_{ v_{m}} - x_{ \mf{a} ( v_{m} )}^{i}\big)
\prod_{e \in \tilde{E} ( \gamma_{m} )}
G_{\ve}(x_{e_{+}} - x_{e_{-}})
 \ud x_{ V ( \gamma_{m} ) \setminus \mf{a}(v_{m}) }\,.
\end{aligned}
\end{equation}
By Lemma~\ref{lem_sym} below, we conclude that the integral
\eqref{eq_cancelfirstorder_1} vanishes.
Hence, also the overall integral~\eqref{e:no-idea} vanishes as desired, which completes the
proof.

\end{proof}

\begin{lemma}\label{lem_sym}
Let $ f : ( \TT^{d})^{n+1} \to \RR$ be of the form
\begin{equation*}
\begin{aligned}
f(x_{0, \ldots, n})
=
\prod_{\{k,\ell\} \in I  } G_{\ve}( x_{\ell} - x_{k})
\end{aligned}
\end{equation*}
where $ I$ is a multiset of edges in the complete graph with vertices $\{0,
\ldots, n\}$. Then for any $i =1, \ldots, d $ and $ x_{0} \in \TT^{d}$
\begin{equation*}
\begin{aligned}
\int_{(\TT^{d})^{n}} ( x_{1}^{i}- x_{0}^{i}) f ( x_{0,\ldots, n}) \ud
x_{1 ,\ldots, n}
=0\,.
\end{aligned}
\end{equation*}
\end{lemma}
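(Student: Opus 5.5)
The plan is to exploit the reflection symmetry $x \mapsto 2x_0 - x$ of the torus about the fixed point $x_0$. The key structural input is that $G_\ve = \rho_\ve \star G$ is an even function. Indeed, $G$ is the Green's function of $(1-\Delta)$ on $\TT^d$ and is radial (in particular even), and $\rho$ is assumed radial, so $\rho_\ve$ is even. Hence $G_\ve(-z) = G_\ve(z)$ for all $z \in \TT^d$.

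First I fix $x_0$ and perform the change of variables $y_k = 2x_0 - x_k$ for $k = 1, \ldots, n$ in the integral. This map is a measure-preserving bijection of $(\TT^d)^n$: it is a translation composed with the inversion $z \mapsto -z$, which is well defined on $\TT^d = [-\pi,\pi]^d/\sim$ and preserves Lebesgue measure. Under it, $x_0$ itself is unchanged if we set $y_0 := x_0$, since $2x_0 - x_0 = x_0$.

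Second, I check how $f$ transforms. For every pair $\{k,\ell\} \in I$ we have $y_\ell - y_k = -(x_\ell - x_k)$, including the pairs where one index is $0$. By evenness of $G_\ve$, each factor is unchanged, so $f(y_{0,\ldots,n}) = f(x_{0,\ldots,n})$. Since $I$ is an arbitrary multiset of unordered pairs, no orientation issue arises.

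Third, the monomial changes sign: $y_1^i - y_0^i = -(x_1^i - x_0^i)$. Here a small care point is the meaning of $x_1^i - x_0^i$ on the torus. I will interpret it as the $i$-th component of the representative of $x_1 - x_0$ in $[-\pi,\pi)^d$, as is implicit in the paper's use of Taylor monomials. This choice is odd under $z \mapsto -z$ except on the null set where some component equals $\pm\pi$, which does not affect the integral. Consequently the integral $J$ satisfies $J = -J$, and therefore $J = 0$.

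Integrability is not an issue. Each $G_\ve$ is bounded for fixed $\ve$, so the integrand is bounded on a compact domain, which justifies the change of variables. The only genuine obstacle is the torus representative convention for the coordinate difference, and that is handled as described in the third step.
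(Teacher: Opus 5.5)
Your argument is correct, but it uses a different symmetry from the paper's. The paper recentres at $x_0=0$ and integrates out $x_2,\dots,x_n$ to obtain a function $g(x_1)$. It then shows $g(\uppi^{(i)}x_1)=g(x_1)$, where $\uppi^{(i)}$ flips only the $i$-th coordinate; this step uses the change of variables $x_k\mapsto\uppi^{(i)}x_k$ together with invariance of $G_\ve$ under $\uppi^{(i)}$, which the paper calls radial symmetry. It concludes from the oddness of $x_1^i$ under $\uppi^{(i)}$. You instead apply the point reflection $x\mapsto 2x_0-x$ to all variables at once, which needs only evenness of $G_\ve$. That is a weaker and more robust input. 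Strictly speaking, $G$ is not radial on $\TT^4$; it is only invariant under the symmetries of the cube. Evenness, however, follows immediately from uniqueness for $(1-\Delta)G=\delta_0$ or from the Fourier series, and evenness is all you use. Your treatment of the torus representative of $x_1-x_0$ also makes explicit a point the paper passes over. What the paper's choice buys is reusability. The single-coordinate flip is exactly what kills the mixed second-order terms $(x^i_{e_{\star,+}}-x^i_{v_\star})(x^j_{e_{\star,+}}-x^j_{v_\star})$, $i\neq j$, in Lemma~\ref{lem_offdiag_nochange}. Your point reflection leaves such products invariant and so gives no cancellation there. For the first-order statement at hand, both arguments are complete.
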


\begin{proof}
First, we note that
\begin{equation}\label{e:anti}
\begin{aligned}
\int_{(\TT^{d})^{n}} ( x_{1}^{i}- x_{0}^{i}) f ( x_{0,\ldots, n}) \ud
x_{1 ,\ldots, n}
=
\int_{(\TT^{d})^{n}}  x_{1}^{i} f ( 0, x_{1,\ldots, n}) \ud
x_{1 ,\ldots, n}\,,
\end{aligned}
\end{equation}
by recentering all integration variables around $ x_{0}$.
Next, let $ \uppi^{(i)} : \TT^{d} \to \TT^{d}$ be the map that flips the sign of the $i$--th coordinate,
and define the function
\begin{equation*}
\begin{aligned}
g ( x_{1})
:=
\int_{(\TT^{d})^{n-1}} f (0, x_{1,\ldots, n}) \ud x_{2 ,\ldots, n}
=
\int_{(\TT^{d})^{n-1}} \prod_{\{k, \ell\} \in I\setminus\{1\}  } G_{\ve}(
x_{\ell} - x_{k})
\prod_{ \substack{\{1,\ell\} \in I }} G_{\ve}( x_{\ell} - x_{1}) \ud x_{1 ,\ldots, n}
\,,
\end{aligned}
\end{equation*}
where we integrated out all variables except for  $ x_{0}= 0$ and $ x_{1}$.

If we can show that  $ g ( x_{1}) =g (
\uppi^{(i)}x_{1})$, then the statement of the lemma follows from the
antisymmetry of the integrand in~\eqref{e:anti}.
To prove that $ g ( x_{1}) =g (
  \uppi^{(i)}x_{1})$,
we rewrite the integral as follows:
\begin{equation*}
\begin{aligned}
g ( \uppi^{(i)} x_{1} )
&=
\int_{(\TT^{d})^{n-1}} \prod_{\{k, \ell\} \in I\setminus\{1\}  } G_{\ve}(
x_{\ell} - x_{k})
\prod_{ \substack{\{1,\ell\} \in I }} G_{\ve}( x_{\ell} - \uppi^{(i)} x_{1}) \ud x_{2
,\ldots, n}\\
& =
\int_{(\TT^{d})^{n-1}}  \prod_{\{k, \ell\}\in I\setminus\{1\} } G_{\ve}(
\uppi^{(i)}( \uppi^{(i)} x_{\ell} - \uppi^{(i)} x_{k}))
\prod_{ \substack{\{1,\ell\} \in I }} G_{\ve}(\uppi^{(i)} ( \uppi^{(i)} x_{\ell} -
x_{1})) \ud x_{2
,\ldots, n}\\
& =
\int_{(\TT^{d})^{n-1}} \prod_{\{k, \ell\}\in I\setminus\{1\} }  G_{\ve}(
 \uppi^{(i)} x_{\ell} - \uppi^{(i)} x_{k})
\prod_{ \substack{\{1,\ell\} \in I }} G_{\ve}(\uppi^{(i)} x_{\ell} - x_{1}) \ud x_{2
,\ldots, n}
\end{aligned}
\end{equation*}
where
 we used that $\uppi^{(i)} x_{0} = x_{0} \equiv 0$. Moreover, in the last equality we used that $ G_{\ve}( \cdot)$ is radially
symmetric.
Finally, we perform the change of variables $ x_{k} \mapsto
\uppi^{(i)} x_{k}$ in the above integral, which yields
$g ( \uppi^{(i)} x_{1} )
=
g (x_{1} ) $.
\end{proof}

\subsection{Forest intervals}

It is not immediate how to leverage on cancellations in Zimmermann's forest
formula, when expressed in terms of \eqref{e_Rhat_one}.
Hence, it will be convenient to rewrite the
expression.
To this end, we introduce \emph{forest
intervals}
$ \mathbb{M} =[ \underline{\mathbb{M}}, \overline{\mathbb{M}} ] \subset
\mathcal{F}_\Gamma^{-}$, for subsets $\underline{\mathbb{M}},
\overline{\mathbb{M}} \subset \mF_{\Gamma}^{-}$. More precisely, a forest interval
$[ \underline{\mathbb{M}}, \overline{\mathbb{M}} ]$ consists of all those
forests $ \mf{F} $ satisfying $\underline{\mathbb{M}}
\subset  \mf{F} \subset \overline{\mathbb{M}}$.
We write $ \delta (\mathbb{M}) :=
\overline{\mathbb{M}}\setminus \underline{\mathbb{M}}$.
The following result is a consequence of~\cite[Lemma~3.4]{BPHZ}.

\begin{lemma}\label{lem_pointwise_interval}
	For any negative Anderson--Feynman diagram $ \Gamma $ and any partition $ \mP$ of $
\mF_{\Gamma}^{-}$ into forest intervals, we can write
\begin{equation}\label{e_bphzintervals}
\begin{aligned}
\hat{\mR}^{(1)} \Gamma
=
\sum_{\mathbb{M} \in \mP}
\hat{\mR}_{\mathbb{M}}^{(1)} \Gamma\,,
\end{aligned}
\end{equation}
with
\begin{equation}\label{e_renorm}
\begin{aligned}
\hat{\mR}_{\mathbb{M}}^{(1)} \Gamma
:= \prod_{ \gamma \in \delta( \mathbb{M})} \big( \mathrm{id} - (
\hat{\mC}_{\gamma} + \hat{\mC}^{(1)}_{\gamma})\big)
\prod_{ {\gamma} \in \underline{\mathbb{M}}} (-1)( \hat{\mC}_{ {\gamma
}}
+\hat{\mC}^{(1)}_{ {\gamma}}) \Gamma\,.
\end{aligned}
\end{equation}
The order in both products is irrelevant in view of
Lemma~\ref{lem_C1_comm}.
\end{lemma}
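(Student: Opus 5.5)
The plan is to reduce the statement to a purely algebraic identity among forests, following the proof of \cite[Lemma~3.4]{BPHZ}. Write $\mathcal{C}^{(1)}_\gamma := \hat{\mC}_\gamma + \hat{\mC}^{(1)}_\gamma$. By Definition~\ref{def_R1},
\begin{equation*}
\hat{\mR}^{(1)}\Gamma = \sum_{\mf{F}\in\mF^-_\Gamma} (-1)^{|\mf{F}|}\mf{K}^{(1)}_{\mf{F}}\Gamma , \qquad \mf{K}^{(1)}_{\mf{F}} = \prod_{\gamma\in\mf{F}}\mathcal{C}^{(1)}_\gamma .
\end{equation*}
Since $\mP$ partitions $\mF^-_\Gamma$, the sum splits as $\sum_{\mathbb{M}\in\mP}\sum_{\mf{F}\in\mathbb{M}}$. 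It therefore suffices to prove, for each forest interval $\mathbb{M}=[\underline{\mathbb{M}},\overline{\mathbb{M}}]$, that
\begin{equation*}
\sum_{\mf{F}\in\mathbb{M}}(-1)^{|\mf{F}|}\mf{K}^{(1)}_{\mf{F}}\Gamma = \hat{\mR}^{(1)}_{\mathbb{M}}\Gamma .
\end{equation*}

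First, I will note that forests $\mf{F}$ with $\underline{\mathbb{M}}\subseteq\mf{F}\subseteq\overline{\mathbb{M}}$ correspond bijectively to subsets $S=\mf{F}\setminus\underline{\mathbb{M}}\subseteq\delta(\mathbb{M})$. The key point is that every such union is again a forest, because it is contained in the forest $\overline{\mathbb{M}}$ and subsets of forests are forests. So the interval is exactly the Boolean lattice of $\delta(\mathbb{M})$, shifted by $\underline{\mathbb{M}}$.

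Second, for each such $\mf{F}$, I write $\mf{K}^{(1)}_{\mf{F}} = \prod_{\gamma\in S}\mathcal{C}^{(1)}_\gamma \prod_{\gamma\in\underline{\mathbb{M}}}\mathcal{C}^{(1)}_\gamma$. Here I use that the operators attached to elements of the forest $\overline{\mathbb{M}}$ commute when applied to the relevant diagrams; this is Lemma~\ref{lem_C1_comm}, applied inductively to each term produced by earlier extractions.

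Third, I expand the product $\prod_{\gamma\in\delta(\mathbb{M})}(\mathrm{id}-\mathcal{C}^{(1)}_\gamma)$ as $\sum_{S\subseteq\delta(\mathbb{M})}(-1)^{|S|}\prod_{\gamma\in S}\mathcal{C}^{(1)}_\gamma$. Since $(-1)^{|\mf{F}|}=(-1)^{|S|}(-1)^{|\underline{\mathbb{M}}|}$, the sum over the interval matches \eqref{e_renorm} term by term. Linearity of the maps on $\sT^{\rm lab}_\Gamma$ justifies the expansion.

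The main obstacle is justifying commutation in the expanded products. Lemma~\ref{lem_C1_comm} is stated for terms $\tilde\Gamma$ of $\mf{K}^{(1)}_{\mf{F}}\Gamma$ with $\mf{F}\cup\{\gamma_1,\gamma_2\}$ a forest. I will check that every intermediate term arises as such, taking $\mf{F}$ to be the set of already-applied elements of $\overline{\mathbb{M}}$. The identity terms in $\mathrm{id}-\mathcal{C}^{(1)}_\gamma$ are harmless, since they correspond to simply not adding $\gamma$ to the forest. Once this is in place, the argument is purely combinatorial and reproduces \cite[Lemma~3.4]{BPHZ}.
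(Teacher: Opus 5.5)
Your proposal is correct and is exactly the argument behind \cite[Lemma~3.4]{BPHZ}, which the paper cites without giving further detail. You split the forest sum along the partition, identify each interval with the Boolean lattice of subsets of $\delta(\mathbb{M})$ shifted by $\underline{\mathbb{M}}$, expand $\prod_{\gamma\in\delta(\mathbb{M})}(\mathrm{id}-\hat{\mC}_\gamma-\hat{\mC}^{(1)}_\gamma)$, and reorder factors via Lemma~\ref{lem_C1_comm} applied inductively through adjacent transpositions; as you implicitly assume, this needs $\overline{\mathbb{M}}$ itself to be a forest, which holds for the intervals $[\mf{F}_s,\mf{F}_s\cup\mf{F}_u]$ where the lemma is later used.
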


Note that in the special case of the finest partition of singleton forests,
we recover Zimmermann's forest formula~\eqref{eq_zimmer_chat}.
A consequence of the above lemma is the following pointwise identity

\begin{corollary}\label{cor_ptw_forestinterval}
	For any negative Anderson--Feynman diagram $ \Gamma $, test function $ \varphi$, any
$x_{V_\star} \in (\TT^d)^{V_\star}$, and any partition $ \mP$ of $
\mF_{\Gamma}^{-}$ into forest intervals
\begin{equation}\label{eq_ptw_forestinterval}
\begin{aligned}
\overline{\mW}_{\varepsilon}
\hat{\mR}^{(1)} \Gamma (x_{V_{\star}})
=
\sum_{\mathbb{M} \in \mP}
\overline{\mW}_{\varepsilon}
\hat{\mR}_{\mathbb{M}}^{(1)} \Gamma (x_{V_{\star}})\,,
\end{aligned}
\end{equation}
where the dependence on $ \varphi$ is implicit in $
\overline{\mW}_{\ve}$ by~\eqref{e_updW_labelled}.
\end{corollary}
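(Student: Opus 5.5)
The identity is pointwise in $x_{V_\star}$, so the plan is to push the algebraic identity of Lemma~\ref{lem_pointwise_interval} through the valuation map $\overline{\mW}_{\varepsilon}$. The point to check is that $\overline{\mW}_{\varepsilon}$, defined in~\eqref{e_updW_labelled} on single labelled diagrams, extends linearly to $\sT^{\rm{lab}}_{\Gamma}$. It is then a linear map from $\sT^{\rm{lab}}_{\Gamma}$ into functions of $x_{V_\star}$, with the test function $\varphi$ and the leg integrations built in.

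\textbf{Step 1.} Observe that both sides of~\eqref{e_bphzintervals} are finite linear combinations of elements of $\mD^{\rm{lab}}_{\Gamma}$.
\begin{itemize}
\item On the left, $\hat{\mR}^{(1)}\Gamma$ is a finite sum over $\mF_{\Gamma}^{-}$ of terms $\mf{K}^{(1)}_{\mf{F}}\Gamma$.
\item Each $\mf{K}^{(1)}_{\mf{F}}\Gamma$ is a finite composition of the maps $\hat{\mC}_\gamma$ and $\hat{\mC}^{(1)}_\gamma$ from~\eqref{e:Chatnew} and~\eqref{eq_def_Cbar1}, extended linearly.
\item On the right, $\hat{\mR}^{(1)}_{\mathbb{M}}\Gamma$ in~\eqref{e_renorm} is likewise a finite product of linear operators, namely $\mathrm{id}$, $\hat{\mC}_\gamma$ and $\hat{\mC}^{(1)}_\gamma$, applied to $\Gamma$.
\end{itemize}
Lemma~\ref{lem_pointwise_interval} asserts equality of these two elements in the vector space $\sT^{\rm{lab}}_{\Gamma}$. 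This holds once like labelled diagrams (same graph, same $\mf{h}$, $\mf{n}$, $\mf{a}$ and bijection $\tau$) are identified.

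\textbf{Step 2.} Apply $\overline{\mW}_{\varepsilon}(\cdot)(x_{V_\star})$ to both sides and use linearity. For each fixed $x_{V_\star}$ off the diagonals the value is a finite real number. Indeed, the inner kernels $D^{\mf{h}}G_\ve$ are smooth for $\ve>0$, and the leg integrals against $\varphi$ converge because $G_\ve$ and its derivatives are integrable.

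The only genuine check, and the place I expect any subtlety, is well-definedness. Identical labelled diagrams must receive identical valuations, so that the cancellations occurring in the combinatorial identity of Lemma~\ref{lem_pointwise_interval} (imported from~\cite[Lemma~3.4]{BPHZ} via the commutation of Lemma~\ref{lem_C1_comm}) survive evaluation. This holds because~\eqref{e_updW_labelled} depends only on the data $(\tilde{\Gamma},\mf{h},\mf{n},\mf{a})$. The order of operations is irrelevant by Lemma~\ref{lem_C1_comm}, so no ambiguity arises in the products. Summing over $\mathbb{M}\in\mP$ then gives~\eqref{eq_ptw_forestinterval}.
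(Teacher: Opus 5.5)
Your proposal is correct and is exactly the paper's argument: the corollary is the image of the identity of Lemma~\ref{lem_pointwise_interval}, which holds in $\sT^{\rm{lab}}_{\Gamma}$, under the map $\overline{\mW}_{\varepsilon}(\cdot)(x_{V_\star})$, extended linearly from the basis $\mD^{\rm{lab}}_{\Gamma}$, so well-definedness is automatic. One small remark: $G_\ve$ and its derivatives are smooth for $\ve>0$, so you do not need to restrict to $x_{V_\star}$ off the diagonals, and the identity holds at every point, as stated.
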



\subsection{Some properties of higher order renormalisation}

Before we proceed, let us collect some properties of the extraction maps
$\hC_\gamma, \hC^{(1)}_\gamma$. First we note that the maps $\hC_\gamma$ and
$\hC^{(1)}_\gamma$ act substantially different on the labels $\mf{n}$.
The map $\hC_\gamma$ from \eqref{e:Chatnew} only shifts the labels from their
original location to some new location (the total sum $\sum_v \mf{n}(v)$ remains
unchanged). On the contrary $\hC^{(1)}_\gamma$ does non shift any labels, but
only \lqm creates\rqm new ones at $v_\star(\gamma)$. Therefore, if we are given
a forest $\mf{F} \in \mF_\Gamma^-$ and a vertex $v \in
V_\star $ such that $|\mf{n}(v)|=1$ in some term $\tilde{\Gamma}$ of
$\mf{K}^{(1)}_{\mf{F}} \Gamma = \sum \tilde{\Gamma}$, there exists a unique
$\gamma \in \mf{F}$ such that $\mf{n}(v)$ was created by the application of
$\hC_{\gamma}^{(1)}$, in the sense just described.
We also observe that each term $\tilde{\Gamma}$ is of the form
\begin{equation*}
    \tilde{\Gamma} = \Gamma_n \;, \qquad \Gamma_l     =
    \hC_{\gamma_l}^{(\alpha_l), k_l} \cdots \hC_{\gamma_1}^{(\alpha_1), k_1} \Gamma
    \in \mD_{\Gamma}^{\rm{lab}} \;,
  \end{equation*}
  where $\mf{F} = \{\gamma_m\}_{m=1}^n$, $\alpha_m \in \{0, 1\}$ and with $\hC^{(
	  \alpha_{m}), k_m}_{\gamma_m} $ being one of the
  summands that defines $\hC_{\gamma_m}$ in \eqref{e:Chatnew} if $\alpha_m = 0$,
  and similarly one of the summands of $\hC^{(1)}_{\gamma_m}$ if $\alpha_m=1$ (i.e. $ k
  _{m}$ indexes either a multi--index $ \overline{\mf{n}}$ in $ \hC$ or a direction $i=1, \ldots ,d
  $ in $ \hC^{(1)}$ ).
  We have also assumed that the product
  is defined by applying the operators in order, starting from $\hC_{\gamma_1}^{(\alpha_1), k_1}$ (the order matters, as this product is not
  commutative). These observations and notations are used in the next lemma.
\begin{lemma}\label{lem:n-prop}
  Let $\mf{F} \in \mF^{-}_{\Gamma}$ be a forest of
   divergences with $|\mf{F}|=n$. Decompose $\mf{K}_{\mf{F}}^{(1)} \Gamma = \sum
   \tilde{\Gamma}$, for a finite number of nontrivial terms $\tilde{\Gamma} \in
   \mD^{\rm{lab}}_\Gamma$. Then for any such term $\tilde{\Gamma}$:
   \begin{enumerate}
    \item It holds that $|\mf{n} (v)| \leq 1$ for all $v \in V_{\star}$.
    \item If $|\mf{n}(v)|=1$, then there exists a unique $l \in \{1, \ldots, n\}$ such that one of the two holds:
    \begin{enumerate}
      \item Either $v = e_{\star, +}(\gamma_l, \Gamma_{l-1})$ and $\alpha_l=1$.
      \item Or $\mf{n}(v)$ was created by the application of $\hC_{\gamma_l}^{(1)}$
      in the sense described above, and there exists a (unique) $\gamma \in
      \mf{F}$ such that $v = v_\star(\gamma)$, $\gamma \subseteq \gamma_l$, and
      $e_{\star, +}(\gamma_l, \Gamma_{l-1}) \in V(\gamma)$. In this case, there
      exists no external edge (with respect to $\gamma$) incoming\footnote{There will be still an
      outgoing external edge.}
into $v$ in $\tilde{\Gamma}$, and $e_\star(\gamma, \Gamma) = e_\star(\gamma_l,
      \Gamma)$.
    \end{enumerate}
    In addition, $\mf{a}(v) = v_\star (\gamma')$ for some $\gamma'
    \subseteq \gamma_l$.
   \end{enumerate}
\end{lemma}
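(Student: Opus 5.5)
We argue by induction on $n=|\mf{F}|$, following the product structure $\tilde{\Gamma} = \Gamma_n$, $\Gamma_l = \hC_{\gamma_l}^{(\alpha_l),k_l}\Gamma_{l-1}$, where the $\gamma_l$ are ordered so that each $\gamma_l$ is applied after every divergence it contains. This is allowed by Lemma~\ref{lem_C1_comm}. The inductive claim is the full statement for all intermediate terms $\Gamma_l$. For $l=0$ it is trivial, since $\mf{n}\equiv 0$.

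For the step, we track what a single operator does to the label $\mf{n}$. The map $\hC_{\gamma_l}$ from \eqref{e:Chatnew} only moves existing labels. Labels at $V_{\mf{a}}(\gamma_l)$ are either kept or transferred to $v_\star(\gamma_l)$, with no new mass created. The map $\hC^{(1)}_{\gamma_l}$ adds exactly one unit $\delta_i$ at $e_{\star,+}(\gamma_l,\Gamma_{l-1})$, and it does so only if $\mf{h}(e_\star)=0$ and the rewiring is nontrivial.

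\textbf{Point 1.} We must rule out two sources of a second unit at the same vertex.
\begin{enumerate}
\item \emph{Creation at a vertex that already carries a label.} Suppose $e_{\star,+}(\gamma_l,\Gamma_{l-1})=v$ already has $|\mf{n}(v)|=1$. By the inductive description, that label came from some earlier $\gamma_{l'}\subseteq\gamma_l$.
   \begin{enumerate}
   \item In case (a), $v=e_{\star,+}(\gamma_{l'})$ and the incoming edge already carries $\mf{h}\neq0$, so the first vanishing condition of \eqref{eq_def_Cbar1} kills the term.
   \item In case (b), $v=v_\star(\gamma)$ has no external incoming edge, so it cannot be $e_{\star,+}(\gamma_l)$.
   \end{enumerate}
\item \emph{Transfer to $v_\star(\gamma_l)$.} We must show that at most one unit arrives there and that it does not coincide with an existing label. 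Here we use Lemma~\ref{lem:neg}: a strictly negative divergence has a unique incoming and a unique outgoing edge. Hence inside $\gamma_l$ the labels not centred in $\gamma_l$ (those in $V_{\mf{a}}(\gamma_l)$) can only come from the chain of divergences sharing $e_\star(\gamma_l,\Gamma)$, and there is at most one such label. After the transfer, $v_\star(\gamma_l)$ has no external incoming edge, because the incoming edge was rewired to point into $v_\star$ of the outermost divergence. This is exactly case (b).
\end{enumerate}

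\textbf{Point 2.} Uniqueness of $l$ follows because each label is created by exactly one $\hC^{(1)}$ application and only moved afterwards. Case (a) is the situation where the label has not moved since its creation. Case (b) is the situation where it has been transferred. In case (b) we identify $\gamma$ as the innermost divergence whose $v_\star$ received the label. The equality $e_\star(\gamma,\Gamma)=e_\star(\gamma_l,\Gamma)$ holds because the transfer condition $\mf{a}(v)\notin V(\gamma)$ forces the creating vertex to lie in $\gamma$ and to be reached through its incoming edge. Finally, by \eqref{e:aChat}, $\mf{a}$ only takes values $v_\star(\gamma')$ for $\gamma'$ already extracted and containing $v$. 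Since the label originated inside $\gamma_l$, we get $\gamma'\subseteq\gamma_l$.

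The main obstacle is the bookkeeping in the transfer step (item 2 above). Ruling out two labels merging at $v_\star$ requires the nesting structure of divergences with coinciding incoming edges, together with the second statement of Lemma~\ref{lem:commute}. We will treat this by induction on the depth of that chain.
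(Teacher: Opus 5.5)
Your overall scheme is the paper's: induct along $\Gamma_l=\hC^{(\alpha_l),k_l}_{\gamma_l}\Gamma_{l-1}$, note that units of $\mf n$ are created only by $\hC^{(1)}$ and only moved by $\hC$, and kill collisions with the vanishing conditions in \eqref{eq_def_Cbar1} and Lemma~\ref{lem:neg}. However, the step that carries the content of the lemma is asserted rather than proved. In your item~2 you claim three things: the labelled vertices of $V_{\mf a}(\gamma_l)$ come only from divergences sharing $e_\star(\gamma_l,\Gamma)$; there is at most one of them; and the transferred unit does not land on an existing label at $v_\star(\gamma_l)$. These are exactly what must be shown, and you defer them.

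The paper's argument takes $\gamma_n$ to be the last operator applied, with $\alpha_n=0$.
\begin{itemize}
\item Suppose $v_1\neq v_2$ both lie in $A=V_{\mf a}(\gamma_n)\cap\{\mf n\neq0\}$. By \eqref{e:a-representation} both are centred at $v_\star(\overline\gamma)$, where $\overline\gamma$ is the parent of $\gamma_n$.
\item Their creators $\gamma_{l_1},\gamma_{l_2}$ both contain $\overline\gamma$, so they are nested, say $\gamma_{l_1}\subseteq\gamma_{l_2}$.
\item The inductive description puts $e_{\star,+}(\gamma_{l_2},\Gamma_{l_2-1})$ in $V(\gamma_n)$. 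This forces $e_\star(\gamma_n,\Gamma)=e_\star(\gamma_{l_2},\Gamma)$, hence $e_\star(\gamma_{l_1},\Gamma)=e_\star(\gamma_{l_2},\Gamma)$.
\item So the later of the two $\hC^{(1)}$ met an edge with $\mf h\neq0$, and the term vanishes by the \emph{first} condition in \eqref{eq_def_Cbar1}. The same mechanism excludes a pre-existing label at $v_\star(\gamma_n)$.
\end{itemize}
The second statement of Lemma~\ref{lem:commute}, which you cite, concerns unlabelled rewiring and does not give this. Your item~1(a) silently uses the same identification of incoming edges. ``The incoming edge already carries $\mf h\neq0$'' requires showing that $\gamma_{l'}$ and $\gamma_l$ have the same incoming edge in $\Gamma$. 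This follows from nesting plus uniqueness of the incoming edge of a strictly negative divergence, but it has to be argued.

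Second, your ordering is incompatible with your transfer analysis. If every $\gamma_l$ is applied after all divergences it contains, then every earlier-applied divergence meeting $\gamma_l$ is contained in it. Hence $\mf a(v)\in V(\gamma_l)$ for all $v\in V(\gamma_l)$, and $V_{\mf a}(\gamma_l)=\emptyset$ at every step. So $\hC_{\gamma_l}$ never moves a label, and every label is of type (a). For instance, the unit at $v_\star(\gamma)$, with $\gamma\subsetneq\gamma_l$ sharing the incoming edge, arises from $\alpha_l=1$ acting on an edge already rewired to $v_\star(\gamma)$. Your ``chain'' and ``rewired into $v_\star$ of the outermost divergence'' describe the outer-first order instead.

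You could exploit this: in inner-first order only the collision of two creations at one vertex (your item~1(a)) needs excluding. But then point~2 is established only for that enumeration. The lemma is phrased for the enumeration actually used, and case (b) is invoked later for enumerations that are not inner-first (e.g.\ in the proof of Lemma~\ref{lem_pushonKernel}), so a translation step would be needed. The paper avoids this by fixing no order, inducting on the last operator applied, and splitting on $\alpha_n$.
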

\begin{proof}
  The proof is by induction over $n = |\mf{F}|$. The statement is
  true for $n =0$, since in that case $\tilde{\Gamma} = \Gamma$ and $\mf{n}=0$.
  Assume the statement is true for $n-1 \in \NN$ and write $\tilde{\Gamma} =
  \hC_{\gamma_n}^{(\alpha_n), k_n} \Gamma_{n-1}$, so that the induction
  hypothesis applies to $\Gamma_{n-1}$. We must distinguish two cases, depending
  on the value of $\alpha_n$.

  If $\alpha_n=0$, then consider the set $A =V_{\mf{a}}(\gamma_n) \cap \{\mf{n}
  \neq 0\}$ (where the maps $\mf{n}, \mf{a}$ are considered with respect to
  $\Gamma_{n-1}$ and $V_{\mf{a}}$ is defined in \eqref{e:Va}). Our first objective is to prove that $|A| \leq 1$.  Note that if we set $\mf{F}_{n-1} = \mf{F} \setminus
  \{\gamma_n\}$, then
  \begin{equation}\label{e:a-representation}
    \mf{a}(\Gamma_{n-1}) (v) = \begin{cases}
      v_\star(\gamma)\,, & \quad \text{ if }  v \in \bigcup_{\eta \in \vr(\mf{F}_{n-1})} V(\eta)\setminus \{v_\star (\eta)\}\;, \\
      v\,, & \quad \text{ otherwise } \;.
    \end{cases}
  \end{equation}
  where $\gamma \in \mf{F}_{n-1}$ is the smallest (by inclusion) divergence of $\mf{F}_{n-1}$ such that $v
  \in V(\gamma) \setminus v_\star (\gamma)$. This follows by a much
  simpler induction argument over the size of the forest, by~\eqref{e:aChat}.
  Now suppose by contradiction that $|A| \geq 2$ and consider $v_1, v_2 \in A$.
  From the definition of $V_{\mf{a}}$ and by \eqref{e:a-representation}, we have
  that $\mf{a}(v_1) = \mf{a}(v_2) = v_\star(\overline{\gamma})$, where
  $\overline{\gamma}$ is the parent of $\gamma_n$ in $\mf{F}_{n}$ (the smallest
  divergence such that $\gamma_n \subseteq \overline{\gamma}$ and $\gamma_n \neq
  \overline{\gamma}$). Note that in principle the parent might not exist, if
  $\gamma_n$ is one of the roots of $\mf{F}_n$. However in this case $|A|=0$, so
  we can exclude this possibility.

  Now by the induction hypothesis, let $l_1, l_2$ be such that $\mf{n}(v_i)$ was
  created by the application of $\hC^{(1)}_{\gamma_{l_i}}$. 
  { We then have $\overline{\gamma} \subseteq \gamma_{l_i}$, because
$ v_{i} \in V ( \overline{\gamma}) \cap V ( \gamma_{ \ell_{i}})$ so $
\overline{\gamma}\subset \gamma_{ \ell_{i}}$ because $ \overline{\gamma}$ was the smallest
parent of $ \gamma_{n}$.
	  Hence, by the forest property
$\gamma_{l_1} \subseteq \gamma_{l_2}$ (up to swapping the labels $1$ and $2$). } 

  Moreover, note that we also necessarily have $e_\star(\gamma_{l_1}, \Gamma)
  \neq e_\star(\gamma_{l_2}, \Gamma)$, since otherwise we would have
  $\tilde{\Gamma} = 0$ by the first property of~\eqref{eq_def_Cbar1}. 
	  However,
  this contradicts the induction hypothesis, which implies that $e_{\star,
  +}(\gamma_{l_2}, \Gamma_{l_2-1}) \in V(\gamma_n)$.
    Indeed note that by induction either $v_2 = e_{\star,
    +}(\gamma_{l_2}, \Gamma_{l_2-1}) $ or there exists a $\gamma \in
    \mf{F}_{n-1}$ such that $v_\star(\gamma) = v_2$ and $e_{\star,
    +}(\gamma_{l_2}, \Gamma_{l_2-1}) \in V(\gamma)$. In this case we have
    $\gamma \subseteq \gamma_n$ since $v_2 \neq v_\star(\gamma_n)$ by definition
    of $V_{\mf{a}}$, and so $V(\gamma) \subseteq V(\gamma_n)$.
    Since $\gamma_n
  \subseteq \gamma_{l_2}$, the fact $e_{\star,
  +}(\gamma_{l_2}, \Gamma_{l_2-1}) \in V(\gamma_n)$ implies in turn that
  $e_{\star}(\gamma_{l_2}, \Gamma_{l_2-1}) =e_{\star}(\gamma_n,
  \Gamma_{l_2-1})$.   This is  because the number of incoming edges into any
  divergence of the forest $\mf{F} \cup \{\gamma_n\}$ (and in particular into
  $\gamma_n$) is decreasing with the application of any $\hC^{(\alpha_m),
  k_m}_{\gamma_m}$ and bounded by one by Lemma~\ref{lem:neg},
we would have
  deduced that $e_\star(\gamma_n, \Gamma) =
  e_\star(\gamma_{l_2}, \Gamma)$.
  Then, because $\gamma_n \subseteq \gamma_{l_1} \subseteq \gamma_{l_2}$,
  we deduce $e_{\star}(\gamma_{l_1}, \Gamma) = e_{\star}(\gamma_{l_2}, \Gamma) $, which is a contradiction and therefore $|A|
  \leq 1$ as desired.
  
  Moreover, we note that if $|A|=1$, then the preceding argument proves
  that $e_\star(\gamma_n, \Gamma) = e_\star(\gamma_l, \Gamma)$, $\gamma_n
  \neq\gamma_l, \gamma_n \subseteq \gamma_l$ and in particular
  $\hC_{\gamma_n}$ acts as the identity on $\Gamma_{n-1}$ as an unlabelled
  diagram (because there is no incoming edge into $\gamma_n$ as a subdiagram of $\Gamma_{n-1}$).

  We deduce that the sum appearing in \eqref{e:Chatnew} contains one term in
  which $\mf{n}$ is left invariant, and if $|A|=1$ a second term in which the
  weight of $\mf{n}(v)$ is shifted from $v$ to $v_\star(\gamma_n)$ (with $A =
  \{v\}$). In the first case, by \eqref{e:a-representation} the induction claim
  is immediately verified. In the second case, let us start by verifying that
  $|\mf{n}(\cdot, \Gamma_n)| \leq 1$. The only issue appears for $ v_{\star}(
  \gamma_{n})$.
  From the induction hypothesis we find $|\mf{n}(v_\star(\gamma_n), \Gamma_n)|
  \geq 2$ and the equality can only happen if  $|\mf{n}(v_\star(\gamma_n), \Gamma_{n-1})| = 1$.
  As before, this implies the existence of $\gamma_{l_1}, \gamma_{l_2}$ (one
  associated to $\mf{n}(v)$, the other to $\mf{n}(v_\star(\gamma_n))$) such that
  $\gamma_n \subseteq \gamma_{l_i}$, and it must be
  $e_\star(\gamma_{l_1}, \Gamma) \neq e_\star(\gamma_{l_2}, \Gamma)$, otherwise
  $\tilde{\Gamma}$ would be trivial. However, by induction we find
  $e_{\star}(\gamma_{l_2}, \Gamma) = e_{\star}(\gamma, \Gamma)$ and in the
  previous argument we have proven that $e_{\star}(\gamma_{l_1}, \Gamma) =
  e_{\star}(\gamma, \Gamma)$, which leads to a contradiction, since we would
  have $\tilde{\Gamma}=0$.
  To conclude checking the induction hypothesis for $\Gamma_n$ we observe that
  by the previous arguments $e_{\star}(\gamma_l, \Gamma) = e_\star(\gamma_n,
  \Gamma)$, so that indeed there is no incoming vertex into $v_\star(\gamma_n)$
  and all the desired properties hold with $\gamma=\gamma_n$ and $\gamma' =
  \overline{\gamma}$ the parent of $\gamma$ in $\mf{F}$.

  If $\alpha_n=1$ we must only prove that $|\mf{n}(e_{\star,+}(\gamma_n,
  \Gamma_{n-1}))| =0$. Indeed, we will prove that $|\mf{n}(e_{\star,+}(\gamma_n,
  \Gamma_{n-1}))| =1$ implies $\tilde{\Gamma}=0$. If  $|\mf{n}(e_{\star,+}(\gamma_n,
  \Gamma_{n-1}))| =1$, then $2.(a)$ or $2.(b)$ apply. In the case $2.(a)$ we
  would have $e_{\star,+}(\gamma_n,
  \Gamma_{n-1}) = e_{\star,+}(\gamma_l,
  \Gamma_{l-1})$ for some $l < n$, and by the forest property either $\gamma_n
  \subseteq \gamma_l$ or $\gamma_l \subseteq \gamma_n$. In both cases we deduce
  $e_\star(\gamma_l, \Gamma) = e_\star(\gamma_n, \Gamma)$ since the number of
  external incoming edges into a divergence is decreasing under successive extractions and bounded
  by one by Lemma~\ref{lem:neg} (see also the argument above). Therefore
  $\tilde{\Gamma}=0 $ by the first property in \eqref{eq_def_Cbar1}. A similar
  argument applies in the case $2.(b)$: let us consider $\gamma$ as in
  that statement.
  First we observe that $\gamma \subseteq \gamma_n$. Indeed, we have by
  assumption that $v_\star(\gamma) =  e_{\star,+}(\gamma_n, \Gamma_{n-1})\in
  V(\gamma_n)$ and therefore $\gamma \subseteq \gamma_n$ if $v_\star(\gamma)
  \neq v_\star(\gamma_n)$. If instead $v_\star(\gamma) = v_\star(\gamma_n)$,
  then we would have $\tilde{\Gamma}=0$ by the second property of \eqref{eq_def_Cbar1}.
  Therefore, we again find $\gamma
  \subseteq \gamma_n \subseteq \gamma_l$ or $\gamma
  \subseteq \gamma_l \subseteq \gamma_n$. But this leads to a contradiction of
  the induction hypothesis, since $v = v_\star(\gamma)$ would have an incoming
  external edge in the diagram $\Gamma_{n-1}$. Therefore,
  the claims of the induction hold for $\tilde{\Gamma}$. More precisely,
  $2.(b)$ holds with $l= n$ for $v = e_{\star, +}(\gamma_n, \Gamma_{n-1})$ and $\gamma'
  = \gamma_n$.

  Finally, the fact that there is no external (to $\gamma$) incoming edge into
  any vertex $v$ as in $2.(b)$ follows from the the
  fact that $e_\star(\gamma, \Gamma_{l-1}) = e_\star(\gamma_l, \Gamma_{l-1})$.
  This completes the proof.
\end{proof}
The next result shows that if we renormalise two nested divergences that share
the same incoming edge, then this is equivalent (after valuation) to
renormalising only the smaller one.

\begin{lemma}\label{lem:no_doubles}
   Let $\Gamma$ be an Anderson--Feynman diagram and $\gamma_1 \subseteq \gamma_2,
   \gamma_i \in \mG^{-}_\Gamma$,
 such that $e_{\star}(\gamma_1, \Gamma) =
   e_\star(\gamma_2, \Gamma)$. Let $\mf{F} \in \mF^{-}_{\Gamma}$ be a forest of
   divergences such that $\mf{F} \cup \{\gamma_1, \gamma_2\} \in
   \mF_\Gamma^{-}$. Decompose $\mf{K}_{\mf{F}}^{(1)} \Gamma = \sum
   \tilde{\Gamma}$, for a finite number of nontrivial terms $\tilde{\Gamma} \in
   \mD^{\rm{lab}}_\Gamma$. Then for any such term $\tilde{\Gamma}$:
 \begin{enumerate}
 \item It holds that
 \begin{equation}\label{e:nd-2}
    \overline{\mW}_\ve (\mathrm{id} - \hat{\mC}_{\gamma_{2}} -
 \hat{\mC}_{\gamma_{2}}^{(1)})(\mathrm{id} - \hat{\mC}_{\gamma_{1}} -
 \hat{\mC}_{\gamma_{1}}^{(1)}) \tilde{\Gamma} = \overline{\mW}_\ve (\mathrm{id} -
 \hat{\mC}_{\gamma_{1}} - \hat{\mC}_{\gamma_{1}}^{(1)}) \tilde{\Gamma} \;.
   \end{equation}
 \item Moreover,
 \begin{equation}\label{e:nd-4}
  \overline{\mW}_\ve( \hat{\mC}_{\gamma_{2}} +
 \hat{\mC}_{\gamma_{2}}^{(1)})(\mathrm{id} - \hat{\mC}_{\gamma_{1}} -
 \hat{\mC}_{\gamma_{1}}^{(1)}) \tilde{\Gamma} = 0 \;.
   \end{equation}
 \end{enumerate}
 Notice that the order of operations does not matter by Lemma~\ref{lem_C1_comm}.
 \end{lemma}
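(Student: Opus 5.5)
Observe first that \eqref{e:nd-2} follows from \eqref{e:nd-4}: expanding the product gives
\[
(\mathrm{id} - \hat{\mC}_{\gamma_2} - \hat{\mC}^{(1)}_{\gamma_2})(\mathrm{id} - \hat{\mC}_{\gamma_1} - \hat{\mC}^{(1)}_{\gamma_1})\tilde{\Gamma}
= (\mathrm{id} - \hat{\mC}_{\gamma_1} - \hat{\mC}^{(1)}_{\gamma_1})\tilde{\Gamma}
- (\hat{\mC}_{\gamma_2} + \hat{\mC}^{(1)}_{\gamma_2})(\mathrm{id} - \hat{\mC}_{\gamma_1} - \hat{\mC}^{(1)}_{\gamma_1})\tilde{\Gamma},
\]
and the second term vanishes after $\overline{\mW}_\ve$ by linearity. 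So the proof reduces to \eqref{e:nd-4}. I would verify it term by term, using the commutation of Lemma~\ref{lem_C1_comm} to apply $\hat{\mC}_{\gamma_2} + \hat{\mC}^{(1)}_{\gamma_2}$ first. This leaves
\[
(\hat{\mC}_{\gamma_2} + \hat{\mC}^{(1)}_{\gamma_2})\tilde{\Gamma} - (\hat{\mC}_{\gamma_1} + \hat{\mC}^{(1)}_{\gamma_1})(\hat{\mC}_{\gamma_2} + \hat{\mC}^{(1)}_{\gamma_2})\tilde{\Gamma},
\]
and the goal is to show that $\hat{\mC}_{\gamma_1} + \hat{\mC}^{(1)}_{\gamma_1}$ acts as the identity, after valuation, on every term of $(\hat{\mC}_{\gamma_2}+\hat{\mC}^{(1)}_{\gamma_2})\tilde{\Gamma}$.

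The key structural input is the second statement of Lemma~\ref{lem:commute}. Since $e_\star(\gamma_1,\Gamma)=e_\star(\gamma_2,\Gamma)$, once $\hat{\mC}_{\gamma_2}$ has rewired the shared incoming edge to $v_\star(\gamma_2)$, the unlabelled map $\hat{\mC}_{\gamma_1}$ acts trivially. By the same argument used in that proof, if $v_\star(\gamma_2)\in V(\gamma_1)$ then $v_\star(\gamma_2)=v_\star(\gamma_1)$ by Lemma~\ref{lem:neg}. Otherwise the rewired edge's head lies outside $\gamma_1$, and $\gamma_1$ has no incoming edge left. In both cases $\hat{\mC}_{\gamma_1}\hat{\mC}_{\gamma_2}\tilde\Gamma=\hat{\mC}_{\gamma_2}\tilde\Gamma$ as unlabelled diagrams. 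Two consequences follow.
\begin{itemize}
\item $\hat{\mC}^{(1)}_{\gamma_1}$ vanishes on every term, by the second condition in \eqref{eq_def_Cbar1}. On the terms coming from $\hat{\mC}^{(1)}_{\gamma_2}$ it also vanishes by the first condition, since $\mf{h}(e_\star)\neq0$ there.
\item It remains to check that the labelled map $\hat{\mC}_{\gamma_1}$ from \eqref{e:Chatnew} preserves the labels $\mf{n}$ and $\mf{a}$.
\end{itemize}

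The hard part is this last check, the action on $\mf{a}$ and $\mf{n}$. The map $\hat{\mC}_{\gamma_1}$ recentres monomials of vertices in $V_{\mf{a}}(\gamma_1)$ towards $v_\star(\gamma_1)$ and reassigns $\mf{a}$. I expect the cleanest route is to argue at the level of valuations rather than labels, in two steps.
\begin{itemize}
\item \textbf{Monomial part.} The multinomial identity displayed after \eqref{e:Chatnew} shows that the sum over $\overline{\mf n}$ of the recentred monomials equals the original product of monomials pointwise. This holds provided $\mf{a}(v)=\mf{a}(v_\star(\gamma_1))$ for $v\in V_{\mf a}(\gamma_1)$, which the representation \eqref{e:a-representation} guarantees inside a forest. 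So $\overline{\mW}_\ve$ is unchanged.
\item \textbf{Graph part.} The unlabelled graph is unchanged by the previous paragraph, so the kernels $D^{\mf h}G_\ve$ are unchanged as well.
\end{itemize}
Together these give $\overline{\mW}_\ve\hat{\mC}_{\gamma_1}X=\overline{\mW}_\ve X$ for each term $X$ of $(\hat{\mC}_{\gamma_2}+\hat{\mC}^{(1)}_{\gamma_2})\tilde\Gamma$. Lemma~\ref{lem:n-prop}, which gives $|\mf n|\le1$, is used to make sure the binomial coefficients are trivial and that no term is lost. This yields \eqref{e:nd-4}, and hence \eqref{e:nd-2}.
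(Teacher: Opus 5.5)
Your proof is correct, but it is organised differently from the paper's.

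\textbf{The paper's route.} The paper extracts $\gamma_1$ first and compares $(\hC_{\gamma_2}+\hC^{(1)}_{\gamma_2})\tilde\Gamma$ directly with $(\hC_{\gamma_2}+\hC^{(1)}_{\gamma_2})(\hC_{\gamma_1}+\hC^{(1)}_{\gamma_1})\tilde\Gamma$. It splits according to whether the shared incoming edge already carries a derivative.
\begin{itemize}
\item If $\mf{h}(e_\star(\gamma_1,\tilde\Gamma))\neq0$, both first-order maps vanish. The paper then checks $\overline{\mW}_\ve\hC_{\gamma_2}\tilde\Gamma=\overline{\mW}_\ve\hC_{\gamma_2}\hC_{\gamma_1}\tilde\Gamma$ by sub-cases on $A_i=V_{\mf{a}}(\gamma_i)\cap\{\mf{n}\neq0\}$, using $|A_i|\le1$ from the proof of Lemma~\ref{lem:n-prop}.
\item If $\mf{h}(e_\star(\gamma_1,\tilde\Gamma))=0$, then $A_i=\emptyset$ and the two first-order terms telescope, $(x_v-x_{v_\star(\gamma_1)})+(x_{v_\star(\gamma_1)}-x_{v_\star(\gamma_2)})=x_v-x_{v_\star(\gamma_2)}$, as in \eqref{e:comm-example}.
\end{itemize}

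\textbf{Your route.} You use Lemma~\ref{lem_C1_comm} to extract $\gamma_2$ first. Lemma~\ref{lem:commute} then makes the unlabelled $\hC_{\gamma_1}$ trivial, so $\hC^{(1)}_{\gamma_1}$ vanishes by the second rule in \eqref{eq_def_Cbar1}. The labelled $\hC_{\gamma_1}$ is then a pure recentring of monomials, which leaves $\overline{\mW}_\ve$ unchanged pointwise.

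\textbf{What each buys.} Your argument is uniform and has no case distinctions. The appeal to $|\mf{n}|\le1$ is unnecessary, since the multinomial identity holds with arbitrary binomial coefficients. The price is that everything rests on the exact commutation in Lemma~\ref{lem_C1_comm}, which the paper only cites from BPHZ. The paper's computation, by contrast, effectively verifies that commutation at the level of valuations in exactly this nested, shared-edge configuration.

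\textbf{Two points to state explicitly.} Your ``monomial part'' relies on two facts you leave implicit.
\begin{itemize}
\item On a term of $\mf{K}^{(1)}_{\mf{F}\cup\{\gamma_2\}}\Gamma$, \eqref{e:a-representation} gives $\mf{a}(v)=\mf{a}(v_\star(\gamma_1))=v_\star(\bar\gamma)$ for every $v\in V_{\mf{a}}(\gamma_1)$. Here $\bar\gamma$ is the parent of $\gamma_1$ in $\mf{F}\cup\{\gamma_1,\gamma_2\}$, and $V_{\mf{a}}(\gamma_1)=\emptyset$ if $v_\star(\bar\gamma)=v_\star(\gamma_1)$.
\item The first case of \eqref{e:aChat} reassigns $\mf{a}(v)=v$ to $v_\star(\gamma_1)$, which could turn a vanishing monomial into a non-vanishing one. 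This case is empty here: after $\hC_{\gamma_2}$, no vertex of $V(\gamma_1)\setminus\{v_\star(\gamma_1)\}$ has $\mf{a}(v)=v$.
\end{itemize}
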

 \begin{proof}
  Since \eqref{e:nd-4} implies \eqref{e:nd-2}, we only prove the former. In the
  following, the maps $\mf{a}, \mf{h}, \mf{n}$ are all considered with respect
  to $\tilde{\Gamma}$, unless stated otherwise.
 Note that $e_{\star}(\gamma_1, \Gamma) =
   e_\star(\gamma_2, \Gamma)$ is equivalent to $e_{\star}(\gamma_1, \tilde{\Gamma}) =
   e_\star(\gamma_2, \tilde{\Gamma})$, and set $v = e_{\star, +}(\gamma_1, \tilde{\Gamma})$.

   We start by observing that if  $\mf{h}(e_{\star}(\gamma_1,
   \tilde{\Gamma})) \neq 0$, then by the first identity in
   \eqref{eq_def_Cbar1} we have $\hC^{(1)}_{\gamma_1}
   \tilde{\Gamma} = \hC^{(1)}_{\gamma_2} \tilde{\Gamma} = 0$, and therefore in this
   case it suffices to prove that:
   \begin{equation}\label{e:int-aim-1}
    \overline{\mW}_\ve  \hat{\mC}_{\gamma_{2}} \tilde{\Gamma} = \overline{\mW}_\ve \hC_{\gamma_2}
 \hat{\mC}_{\gamma_{1}}  \tilde{\Gamma} \;.
   \end{equation}
   Now let us define $A_i = V_{\mf{a}}(\gamma_i) \cap \{\mf{n} \neq 0\}$. Then
   following the same arguments of the proof of Lemma~\ref{lem:n-prop}, we have
   that $|A_i| \leq 1$. In particular, if $A_1 = \emptyset$, then
   $\hC_{\gamma_1}$ from \eqref{e:Chatnew} consists of only one term and we have
   $\hC_{\gamma_2} \hC_{\gamma_1} \tilde{\Gamma}= \hC_{\gamma_2}
   \tilde{\Gamma}$: Indeed,
   the identity holds at the level of unlabelled diagrams because the two divergences share an
   incoming edge. It extends to an identity between labelled diagrams because
   $V_{\mf{a}}(\gamma_2, \hC_{\gamma_1} \tilde{\Gamma}) \cap \{\mf{n}(\cdot
   ,\hC_{\gamma_1} \tilde{\Gamma}) \neq 0\}=V_{\mf{a}}(\gamma_2,  \tilde{\Gamma}) \cap \{\mf{n}(\cdot
   , \tilde{\Gamma}) \neq 0\}$ since $A_1 = \emptyset$ (we made explicit the diagrams with respect to
   which we are considering the labelling). Hence, in this case the
   desired identity holds even before applying $\overline{\mW}_{\ve}$. Next
   consider the case $A_1= \{v_1\}$, then we must distinguish the three further cases
   $A_2 = \emptyset, A_2 = \{v_2\}$ with $v_2 \neq v_1$, and $A_2 = \{v_1\}$.
   Define $\mf{n}_{12}(v) = \mf{n}(v)1_{V_\star \setminus \{v_1, v_2\}}(v)$ and
   set $\tilde{\Gamma}_{12}$ for the labelled diagram
   \begin{equation*}
    \tilde{\Gamma}_{12} = ( \hC_{\gamma_2}\tilde{\Gamma}, \mathfrak{h}, \mathfrak{n}_{12}, \mf{a}_{\gamma_2}) \;,
   \end{equation*}
   where in the first term we intend the action of $\hC_{\gamma_2}$ on unlabelled
   diagrams. Then we can rewrite:
   \begin{equation*}
    \overline{\mW}_\ve  \hat{\mC}_{\gamma_{2}} \tilde{\Gamma} (x_{V_\star}) = \begin{cases}
      (x_{v_1} - x_{\mf{a}(v_1)})^{ \mf{n}(v_{1})} (x_{v_2} -
x_{\mf{a}(v_2)})^{\mf{n}(v_{2})}  \overline{\mW}_\ve  \tilde{\Gamma}_{12} (x_{V_\star})  & \quad \text{ if } v_1 \neq v_2\;,\\
      (x_{v_1} - x_{\mf{a}(v_1)})^{\mf{n}(v_{1})}  \overline{\mW}_\ve  \tilde{\Gamma}_{12} (x_{V_\star}) & \quad \text{ else } \;,
    \end{cases}
   \end{equation*}
   where we used \eqref{e:Chatnew} and $(x_{v_2} - x_{\mf{a}(v_2)})^{
\mf{n}(v_{2})}
   = (x_{v_2} - x_{v_\star(\gamma_2)})^{\mf{n}(v_{2})}  + (x_{v_\star(\gamma_2)} -
x_{\mf{a}(v_2)})^{ \mf{n}(v_{2})}  $, (and the same for $v_1$). Here we also used that
$\mf{a}_{\gamma_2}(v_2) = v_\star(\gamma_2)$ and that if $v_1\neq v_2$ then $\mf{a}(v_1) \in
V(\gamma_2)$ since otherwise we would have $v_1 \in V_{\mf{a}}(\gamma_2)$. One obtains the
   desired identity by performing the same calculation on the right side of
   \eqref{e:int-aim-1}.

   Next, consider the case $\mf{h}(e_\star(\gamma_1, \tilde{\Gamma}))=0$. Then
   by Lemma~\ref{lem:n-prop}, and following the same arguments as in the proof of Lemma~\ref{lem:n-prop}, we
   have that $|A_i| = 0 $ for $i=1,2$, since in that proof we show that if
   $|A_i| >0$ then $e_\star(\gamma_i, \Gamma) = e_\star(\gamma_{m_{i}}, \Gamma)$ for
some $\gamma_{m_{i}} \in \mf{F}$ such that $\alpha_{m_{i}} =1$ and therefore
   $\mf{h}(e_\star(\gamma_i, \tilde{\Gamma}))\neq 0$. In this case, the result
   follows as in the case $ \mf{h}(e_\star(\gamma_1, \tilde{\Gamma}))\neq 0$
   and through the same calculations performed in the example
   \eqref{e:comm-example}. This completes the proof.
 \end{proof}

\subsection{Renormalisation expressed through a single diagram}

The aim of this section is to rewrite the expression~\eqref{eq_ptw_forestinterval}
in a convenient form, namely as a sum of valuations of single Feynman diagrams
in $ \mD_{\Gamma}^{\rm lab}$, where $ \Gamma= (V,E)$ is a negative Anderson--Feynman
diagram. In this representation, each edge will be associated with a
linear combination of kernels. This reformulation
simplifies and systematises the subsequent use of Taylor expansions.

First, we extend the kernel assignment $ \overline{\mW}_{\ve} $ of a labelled
Feynman diagram $ \tilde{\Gamma} =  (V, \tilde{E}) \in \mD_{\Gamma}^{\rm{lab}} $, which
previously only involved
Green's
functions $G_\ve$, into an arbitrary kernel assignment $\mK$. Namely, fix any
$\tilde{\Gamma} \in \mD_{\Gamma}^{\rm{lab}}$ as well as  an associated collection of
smooth kernels $\{\mK_{e, \ve}\}_{e \in {E}}$ such that $\mK_{e, \ve} \colon
(\TT^d)^{V} \rightarrow \RR$ (for later convenience we allow
the kernels to depend on all inner variables). We then define
\begin{equation}\label{e:def-WK}
  \overline{\mW}_{\ve}^{( \mK)} (  \tilde{\Gamma}, \mathfrak{h}, \mathfrak{n},  \mf{a}) (x_{V_\star})
  :=
  \int\limits_{ ( \TT^{d})^{L}}
  \varphi ( x_{L})
   \prod_{ e \in \tilde{E} }
  D^{\mathfrak{h} ( e)} \mK_{  \tau_{ \tilde\Gamma, \Gamma} (e) , \ve} (  x_{V})  \prod_{ v \in V_{\star}}
  \big( x_{ v } - x_{\mf{a}(v) }\big)^{\mathfrak{n}
  (v)} \ud x_{L}
\,,
\end{equation}
where derivatives of $ \mK_{\tau_{\tilde{\Gamma}, \Gamma}(e), \ve}$ are always taken with respect to $
x_{e_{+}}$, the variable associated to $ e_{+}$ in $ \tilde\Gamma $.
Moreover, we notice that the kernel assignment $\mK$ is actually a function of the unlabelled diagram,
rather than the labelled one.

We also introduce the operator $ \mH $ acting on smooth kernels $
K :\mathbf{T}^{d} \to \mathbf{R}$ by
\begin{equation} \label{e:hk-def}
  \begin{aligned}
  \mH K ( x, y | x_{\star}) &:=
  K ( x - y)
- K (x_{\star} -y) -  \sum_{i = 1}^{d}
(x^{(i)}- x_{\star}^{(i)})
\partial_{i}K ( x_{\star} - y)\\
  &=
  \sum_{i , j =1}^{d}
  (x^{i} - x_{\star}^{i})
  (x^{j} - x_{\star}^{j})
  \int_{0}^{1} (1-t)
  \partial^2_{x^{i} x^{j}} K
  ( x_{\star}
  + t( x- x_{\star}  )-  y)
  \ud t\,,
  \end{aligned}
  \end{equation}
meaning that $\mH K$ is the remainder of the first order Taylor expansion of $K(x,
y)$, as a function of $x$ around $K( x_{\star}- y)$.

The following lemma provides an alternative description of the valuation
\eqref{eq_ptw_forestinterval}.


\begin{lemma}[Peeling off divergences]\label{lem_pushonKernel}
Let
$\Gamma$ be an Anderson--Feynman diagram and $ \mathbb{M} = [ \underline{ \mathbb{M}}, \overline{\mathbb{M}}]$ be
an interval of forests in $ \mF_{ \Gamma}^{-}$.
Whenever $ \hat{\mR}^{(1)}_{\mathbb{M}}
\Gamma\neq 0 $, we have
\begin{equation}\label{eq_crux}
\begin{aligned}
\overline{\mW}_{\ve}
\hat{\mR}^{(1)}_{\mathbb{M}}
\Gamma ( x_{ V_{\star}})
=
\overline{\mW}_{\ve}^{( \mK )}  \mf{K}^{(1)}_{\underline{\mathbb{M}}}
\Gamma (x_{ V_{\star}})\,,
\end{aligned}
\end{equation}
where for any non trivial $ \tilde\Gamma \in \mD_{\Gamma}^{\rm{lab}}$ and any $ e \in E $,
with $ \tilde{e} = \tau^{-1}_{\tilde\Gamma , \Gamma}(e) \in \tilde{E}$,
\begin{equation} \label{e:def-K}
  \begin{aligned}
  \mK_{ e, \ve} ( x_{V} )
  :=
  \begin{cases}
  (\mH G_{\ve})( x_{ \tilde{e}_{+}},x_{ \tilde{e}_{-}}| x_{ \mf{b} ( e)}) \,, & \quad \text{if }
  \exists \gamma \in \delta( \mathbb{M}) \text{ with }
  e =e_{\star}( \gamma, \Gamma ) \,,\\
  G_{\ve} (
 x_{ \tilde{e}_{+}} - x_{\tilde{e}_{-}}
)  \,, & \quad \text{otherwise.}
  \end{cases}
  \end{aligned}
\end{equation}
Here,  $ \mf{b} ( e) :=
  v_{\star}( \theta)$ with $ \theta \in \delta( \mathbb{M})$
  denoting the smallest divergence (in the sense of inclusion) such that $
  e_{\star}(\theta, \Gamma) = e$.
  Whenever $ \tilde\Gamma$ is trivial, we set $ \mK \equiv 0$.
\end{lemma}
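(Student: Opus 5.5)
We will prove \eqref{eq_crux} by induction on $|\delta(\mathbb{M})|$, peeling off one divergence of $\delta(\mathbb{M})$ at a time. This mirrors the argument of \cite[Section~3]{BPHZ}, with the derivative-weighted Taylor remainder replaced by the operator $\mH$ from \eqref{e:hk-def}.

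\textbf{Base case.} If $\delta(\mathbb{M})=\emptyset$, then $\hat{\mR}^{(1)}_{\mathbb{M}}\Gamma = (-1)^{|\underline{\mathbb{M}}|}\mf{K}^{(1)}_{\underline{\mathbb{M}}}\Gamma$ by \eqref{e_renorm}. In this case every kernel in \eqref{e:def-K} is $G_\ve$, so $\overline{\mW}^{(\mK)}_\ve=\overline{\mW}_\ve$, and the statement holds up to the sign convention, which is carried along throughout the induction.

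\textbf{Inductive step.} Choose $\gamma\in\delta(\mathbb{M})$ maximal among the elements of $\delta(\mathbb{M})$ with a given incoming edge $e=e_\star(\gamma,\Gamma)$. By Lemma~\ref{lem_C1_comm} we may apply $(\mathrm{id}-\hC_\gamma-\hC^{(1)}_\gamma)$ last.

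First we reduce to the case where each incoming edge carries a single divergence. Suppose several divergences $\theta_1\subseteq\theta_2\subseteq\cdots$ in $\delta(\mathbb{M})$ share the same $e_\star$. Lemma~\ref{lem:no_doubles} shows that, after valuation, the product $\prod_j(\mathrm{id}-\hC_{\theta_j}-\hC^{(1)}_{\theta_j})$ collapses to the single factor for the smallest one, $\theta_1$. This is exactly why \eqref{e:def-K} centres at $\mf{b}(e)=v_\star(\theta)$ with $\theta$ the \emph{smallest} such divergence.

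It then remains to verify a pointwise identity for one factor. Let $\tilde\Gamma$ be a nontrivial term of the diagrams already processed. By \eqref{e:Chat} and \eqref{eq_def_Cbar1}:
\begin{itemize}
\item $\hC_\theta$ replaces the kernel $G_\ve(x_{\tilde e_+}-x_{\tilde e_-})$ by $G_\ve(x_{v_\star(\theta)}-x_{\tilde e_-})$;
\item $\hC^{(1)}_\theta$ adds the term $\sum_i (x^i_{\tilde e_+}-x^i_{v_\star(\theta)})\,\partial_i G_\ve(x_{v_\star(\theta)}-x_{\tilde e_-})$.
\end{itemize}
Hence $\mathrm{id}-\hC_\theta-\hC^{(1)}_\theta$ acts on $\overline{\mW}_\ve$ exactly by substituting $\mH G_\ve(x_{\tilde e_+},x_{\tilde e_-}|x_{v_\star(\theta)})$ for the kernel on $\tilde e$, with all other factors unchanged.

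\textbf{Main obstacle.} The difficulty lies in the bookkeeping of the labels $\mf{n}$ and $\mf{a}$, since the substitution above needs the monomial and the centring point to be those prescribed by \eqref{e:def-K}. We must show that:
\begin{itemize}
\item the monomial created by $\hC^{(1)}_\theta$ sits at $\tilde e_+$ and is centred at $v_\star(\theta)$;
\item the recentring performed by the later maps $\hC_{\gamma'}$, for $\gamma'\in\underline{\mathbb{M}}$, via the multinomial sum in \eqref{e:Chatnew}, acts identically on all three summands, so that it commutes with forming $\mH$;
\item the edge $\tilde e$ inside $\mH$ is not rewired again by the maps $\hC_{\gamma'}$, $\gamma'\in\underline{\mathbb{M}}$.
\end{itemize}
For the first two points we will use Lemma~\ref{lem:n-prop}: it guarantees $|\mf{n}|\le1$, that the relevant vertex has no further incoming external edge, and that $\mf{a}$ is a $v_\star$ of an enclosing divergence, so the multinomial recentring pairs the three summands term by term. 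For the third point we will use Lemma~\ref{lem:commute} together with the forest structure.

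The cases in which $\hat{\mR}^{(1)}_{\mathbb{M}}\Gamma$ produces trivial diagrams are exactly those excluded by the hypothesis $\hat{\mR}^{(1)}_{\mathbb{M}}\Gamma\neq0$, and there we have set $\mK\equiv0$.
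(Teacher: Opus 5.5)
Your skeleton matches the paper's. You peel off one factor of $\delta(\mathbb{M})$ at a time, collapse nested elements of $\delta(\mathbb{M})$ with a common incoming edge to the smallest via Lemma~\ref{lem:no_doubles}~1., and recognise $\mathrm{id}-\hC_\theta-\hC^{(1)}_\theta$ as producing the Taylor remainder $\mH G_\ve$. You are also right that \eqref{eq_crux} drops the sign $(-1)^{|\underline{\mathbb{M}}|}$, which is harmless.

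However, the step you call the main obstacle is the actual content of the proof, and your sketch has a genuine gap. You only reduce within $\delta(\mathbb{M})$, and never treat a $\gamma\in\delta(\mathbb{M})$ that shares its incoming edge $e$ with some $\gamma'\in\underline{\mathbb{M}}$.

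\emph{Case $\gamma\subsetneq\gamma'$.} An example is $\gamma_{12}\subsetneq\gamma_{1234}$ in \eqref{e:example-gamma}, with $\underline{\mathbb{M}}=\{\gamma_{1234}\}$ and $\delta(\mathbb{M})=\{\gamma_{12}\}$. Here $\overline{\mW}_\ve\hat{\mR}^{(1)}_{\mathbb{M}}\Gamma=0$ by Lemma~\ref{lem:no_doubles}~2. The right side of \eqref{eq_crux}, however, is in general nonzero, since $v_\star(\gamma)\neq v_\star(\gamma')$. All labelled diagrams involved are nontrivial; the cancellation appears only after valuation, through the multinomial recentring in \eqref{e:Chatnew}. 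So your closing remark about trivial terms and $\mK\equiv0$ does not reach this case. It must be singled out and excluded, as in point 3.\ at the start of the paper's proof.

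\emph{Case $\gamma'\subsetneq\gamma$.} The terms of $\mf{K}^{(1)}_{\underline{\mathbb{M}}}\Gamma$ produced by $\hC^{(1)}_{\gamma'}$ carry $\mf{h}(e)\neq0$. Hence $\hC^{(1)}_\gamma$ vanishes on them, and your one-factor computation, which presupposes $\mf{h}(e)=0$, does not apply. There you need \eqref{eq_derive_mH} to obtain $D^{\mf{h}}\mH G_\ve$.

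Your bookkeeping claims also do not fit the order you fixed. You apply the $\delta(\mathbb{M})$-factor last, but then speak of the ``later'' maps $\hC_{\gamma'}$, $\gamma'\in\underline{\mathbb{M}}$. If these really act afterwards, your second and third claims already fail when $\gamma'\subsetneq\gamma$. In that case:
\begin{itemize}
\item $\hC_{\gamma'}$ rewires $e$ only in the identity summand;
\item $\hC^{(1)}_{\gamma'}$ is nonzero only on the identity summand;
\item the recentring in $\hC_{\gamma'}$ hits only the monomial created by $\hC^{(1)}_\gamma$, which is centred at $v_\star(\gamma)\notin V(\gamma')$ whenever $v_\star(\gamma)\neq v_\star(\gamma')$.
\end{itemize}
The three summands recombine into an $\mH$-kernel only after valuation, not term by term.

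The paper avoids this as follows. It forms $\mf{K}^{(1)}_{\underline{\mathbb{M}}}\Gamma$ first, applies the $\delta(\mathbb{M})$-factors from smallest to largest, and peels off the largest, $\gamma_M$, first. The missing idea is then \eqref{eq_supp1_Va_empty}. On every component $\hat\Gamma$ to which the last factor is applied, no $v\in V(\gamma_M)\setminus\{v_\star(\gamma_M)\}$ with $\hat{\mf{n}}(v)\neq0$ has $\hat{\mf{a}}(v)\notin V(\gamma_M)$. Consequently the sum in \eqref{e:Chatnew} reduces to the single term $\overline{\mf{n}}\equiv0$, and $\hC_{\gamma_M}$ performs no recentring. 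All existing monomials then factor out of the three summands.

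Proving \eqref{eq_supp1_Va_empty} is a case analysis over alternatives 2.(a) and 2.(b) of Lemma~\ref{lem:n-prop}, combined with the forest property. The individual cases are closed using the maximality of $\gamma_M$, the exclusions above, and the reduction inside $\underline{\mathbb{M}}$ via Lemma~\ref{lem:no_doubles}~2. A one-line appeal to Lemma~\ref{lem:n-prop} does not substitute for this argument.
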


\begin{proof}
We begin by reducing the number of cases to be considered. We observe the
following regarding the valuation of $ \hat{\mR}^{(1)}_{\mathbb{M}}
\Gamma $ from \eqref{e_renorm}:
\begin{enumerate}
\item Using Lemma~\ref{lem:no_doubles} 1. and Lemma~\ref{lem_C1_comm}, we can assume that for all
  $\gamma_1, \gamma_2 \in \delta(\mathbb{M})$, such that $\gamma_1 \neq
  \gamma_2$, we have
\begin{equation*}
	e_{\star}(\gamma_1, {\Gamma}) \neq e_{\star}(\gamma_2, {\Gamma})  \;.
\end{equation*}
In other words, if there were multiple such divergences, we only keep the
smallest one (in the sense of inclusion).

\item Using  Lemma~\ref{lem:no_doubles} 2. and Lemma~\ref{lem_C1_comm},
we can assume that for all
  $\gamma_1, \gamma_2 \in \underline{\mathbb{M}}$, such that $\gamma_1 \neq \gamma_2$, we have
\begin{equation*}
	e_{\star}(\gamma_1, {\Gamma}) \neq e_{\star}(\gamma_2, {\Gamma})  \;.
\end{equation*}
In other words, if there were multiple such divergences, we only keep the
largest one (in the sense of inclusion).

\item If two divergences $ \gamma_1 \in \underline{\mathbb{M}}$ and $ \gamma_2 \in \delta ( \mathbb{M})$ share the same incoming edge, i.e. $ e_{\star}(\gamma_1,
{\Gamma}) = e_{\star}(\gamma_2, {\Gamma})$, then $ \gamma_1
\subset \gamma_2$. Otherwise
	\begin{equation*}
	\begin{aligned}
	 \overline{\mW}_\ve \hat{\mR}^{(1)}_{\mathbb{M}}
\Gamma=	\overline{\mW}_\ve ( \mathrm{id} - \hat{\mC}_{ \gamma_{2}}- \hat{\mC}_{
		\gamma_{2}}^{(1)}) ( \hat{\mC}_{\gamma_{1}}+
		\hat{\mC}^{(1)}_{\gamma_{1}})
		\hat{\mR}^{(1)}_{[\underline{\mathbb{M}} \setminus \{\gamma_{1}\} ,
		\overline{\mathbb{M}} \setminus \{ \gamma_{1}, \gamma_{2}\}]} \Gamma= 0
		\,,
	\end{aligned}
\end{equation*}
by Lemma~\ref{lem:no_doubles} 2., which we excluded by assumption.
\end{enumerate}
 Note that this simplification alone explains the choice of $\mf{b}$ appearing in the statement of the result.

In the following, we will treat each non--trivial diagram $ \tilde\Gamma \in  \mD_{\Gamma}^{\rm{lab}}$ separately in the decomposition
\begin{equation*}
\begin{aligned}
\mf{K}_{ \underline{\mathbb{M}}}^{(1)} \Gamma
=
\sum_{\tilde\Gamma} \tilde\Gamma \,.
\end{aligned}
\end{equation*}
The idea of the proof is to ``peel off'' renormalisation operators
$ (\mathrm{id} - \hat{\mC}_{\gamma} - \hat{\mC}^{(1)}_{\gamma})$ in $
\hat{\mR}^{(1)}_{[\emptyset, \delta({\mathbb{M}})]}$, starting from the outer
most operator, until we are left with $\tilde\Gamma$.
Every time we ``peel off'' an operator, we replace it with a suitable kernel assignment.
It will be convenient to first peel operators associated to larger divergences (in the
sense of inclusion), and proceed towards smaller ones.
We therefore fix an order $ \{ \gamma_{i} \}_{i =1}^{M}$ of divergences in $ \delta
( \mathbb{M})$, such that $i \leq j $ whenever $ \gamma_{i}\subseteq
\gamma_{j} $.

Thus, starting with one of the largest divergences, let us consider
 a component $ \hat{\Gamma} = (
\hat{\Gamma}, \hat{\mf{h}}, \hat{\mf{n}}, \hat{\mf{a}}) \in
\mD_{\Gamma}^{\rm{lab}}$ of  the linear combination of diagrams
\begin{equation*}
\begin{aligned}
	\hat{\mR}^{(1)}_{[\emptyset, \delta({\mathbb{M}})\setminus \gamma_{M}]} \tilde\Gamma	=
\prod_{i =1}^{M-1}
(\mathrm{id} - \hat{\mC}_{\gamma_{i}} -
\hat{\mC}^{(1)}_{\gamma_{i}})\tilde\Gamma\,,
\end{aligned}
\end{equation*}
and ``peel off'' the operator $  \big(\mathrm{id} -  \hat{\mC}_{\gamma_{M}} -
\hat{\mC}^{(1)}_{\gamma_{M}} \big) $.
First,
when applying  that $  \big(\mathrm{id} -  \hat{\mC}_{\gamma_{M}} -
\hat{\mC}^{(1)}_{\gamma_{M}} \big) $ to $ \hat{\Gamma}$, we observe
the sum over
vertex labels in \eqref{e:Chatnew} will only contain the term $
\overline{\mf{n}}
\equiv 0 $, because
\begin{equation}\label{eq_supp1_Va_empty}
\begin{aligned}
V_{ \hat{\mf{a}}}( \gamma_{M}) \cap \{ v \, : \, \hat{\mf{n}} (v)\neq
0 \} = \emptyset\,.
\end{aligned}
\end{equation}
To verify \eqref{eq_supp1_Va_empty}, we notice that for a vertex $ v \in V ( \gamma_{M}) \setminus
\{ v_{\star}( \gamma_{M}) \} $ to
satisfy $ \hat{\mf{a}}(v) \notin V ( \gamma_{M})$, we must have
that
\begin{equation}\label{eq_supp_nexist}
\begin{aligned}
&\nexists \ \gamma' \in \overline{\mathbb{M}}\setminus \gamma_{M}\,:\, \gamma'
\subsetneq \gamma_{M}\,, \  v \in V(\gamma') \setminus v_\star(\gamma')
\ \text{ and }\   v_{\star} ( \gamma') \in V ( \gamma_{M})\,.
\end{aligned}
\end{equation}
Now, if $ \hat{\mf{n}}(v) \neq 0$, then by Lemma~\ref{lem:n-prop} there must
exist a $ \gamma' \in
\overline{\mathbb{M}} \setminus \gamma_{M}$ such that

\begin{enumerate}
\item
	Either $ v = e_{\star , +} ( \gamma', \Gamma) \in V ( \gamma_{M}) \setminus \{
	v_{\star} ( \gamma_{M })\}$. Therefore, by the forest property:
	\begin{enumerate}
		\item[(a)] Either $ \gamma' \subsetneq \gamma_{M}$. However, this leads
			to a
			contradiction to \eqref{eq_supp_nexist}, because $ v  = e_{\star , +} ( \gamma', \Gamma)  \in V (
			\gamma' ) \setminus v_{\star} ( \gamma' ) $ and $ v_{\star}(
			\gamma' ) \in V ( \gamma' ) \subset V ( \gamma_{M})$.
		\item[(b)] Or $ \gamma_{M}\subsetneq \gamma' $. Then necessarily $
			v= e_{\star, +} ( \gamma' , \Gamma) = e_{\star , +}(
			\gamma_{M}, \Gamma) $. However, this case is  not possible
			because
			\begin{itemize}
\item Either $ \gamma' \in \delta( \mathbb{M})$, then $ \gamma_{M} \subsetneq
\gamma' $ is not possible due to our ordering.
\item Or $ \gamma' \in \underline{\mathbb{M}}$, then 3. above excludes this
case.
\end{itemize}
	\end{enumerate}
\item Or $ v = v_{\star}( \gamma' ) \in V ( \gamma_{M}) \setminus \{
	v_{\star}( \gamma_{M}) \}$.
Therefore, by the forest property:
\begin{enumerate}
	\item[(a)] Either $ \gamma' \subsetneq
\gamma_{M}$. Then by Lemma~\ref{lem:n-prop} 2. there must exist a divergence
	$ \gamma'' \in \overline{\mathbb{M}} \setminus \gamma_{M}$ (where the label $
	\hat{\mf{n}}(v)$ was created)
	such that $ \gamma'
	\subsetneq \gamma'' $, $ e_{\star ,+}( \gamma' , \Gamma) = e_{\star ,+}(
	\gamma'', \Gamma) $, and thus $ v_{\star}( \gamma' ) \neq v_{\star}( \gamma'')$. By the forest property, we must have
	\begin{itemize}
		\item Either $ \gamma'' \subsetneq \gamma_{M}$. However, this
			contradicts \eqref{eq_supp_nexist}, since $ v= v_{\star}( \gamma'
			)\in V ( \gamma''
			)\setminus v_{\star}( \gamma'') $ and $ v_{\star } ( \gamma'')
			\in V ( \gamma_{M})$.
		\item Or $ \gamma_{M} \subsetneq \gamma''$. Then necessarily $
			e_{\star , +}( \gamma_{M}, \Gamma) \neq e_{\star,+} ( \gamma'',
			\Gamma)= e_{\star, +}( \gamma' , \Gamma) \notin V (
			\gamma_{M}) $ by our induced ordering and 3. above.
			However, this implies that $ \gamma' \nsubseteq \gamma_{M}$
			contradicting our assumption (and the forest property).
	\end{itemize}
\item[(b)] Or $ \gamma_{M}\subsetneq \gamma' $.
	Then the outgoing edge of $\gamma'$ is also outgoing for $\gamma_M$ (in $
	\Gamma$), because
	necessarily $ v_{\star}( \gamma') = v_{\star}( \gamma_{M})$. However, we assumed
	that $v_\star(\gamma')\neq v_\star(\gamma_M)$.
\end{enumerate}



\end{enumerate}
We conclude that no such $ \gamma' \in \mathbb{M} $ exists and therefore
\eqref{eq_supp1_Va_empty} holds true.

Next, we argue that the action of $  \big(\mathrm{id} - \hat{\mC}_{\gamma_{M}} -
\hat{\mC}^{(1)}_{\gamma_{M}} \big)$ leaves monomials represented by $
\hat{\mf{n}}, \hat{\mf{a}}$ invariant.
Clearly this is true when applying the identity, but in view of the discussion
above we also have by \eqref{e:Chatnew} that
\begin{equation*}
\begin{aligned}
\hat{\mC}_{\gamma_{M}}
( \hat{\Gamma}, \hat{\mf{h}}, \hat{\mf{n}}, \hat{\mf{a}})
= ( \hat{\mC}_{\gamma_{M}} \hat{\Gamma}, \hat{\mf{h}}, \hat{\mf{n}},
\hat{\mf{a}}_{\gamma_{M}})
\end{aligned}
\end{equation*}
and provided that $ \hat{\mC}^{(1)}_{\gamma_{M}}$ in \eqref{eq_def_Cbar1} acts
non-trivially (in particular $ \hat{\mf{h}}( e_{\star}(
\gamma_{M}, \hat{\Gamma}))=0$), that
\begin{equation*}
\begin{aligned}
\hat{\mC}^{(1)}_{\gamma_{M}}
( \hat{\Gamma}, \hat{\mf{h}}, \hat{\mf{n}}, \hat{\mf{a}})
= \sum_{i=1}^d
\big( \hat{\mC}_{\gamma_{M}} \hat{\Gamma},
\hat{\mathfrak{h}} +  \mathds{1}_{
(
  e_{\star, -}(\gamma_{M}, \hat{\Gamma}), v_{\star}  ( \gamma_{M}) ) } \delta_{i} \;,
\hat{\mathfrak{n}}+
\mathds{1}_{ e_{\star, +}(\gamma_{M}, \hat{\Gamma})}\delta_{i} \;,
\hat{\mf{a}}_{\gamma_{M}}\big)\,.
\end{aligned}
\end{equation*}
Valuating any of the diagrams in the two previous displays, we see
the monomials (omitting the new monomial created by $
\hat{\mC}^{(1)}_{\gamma_{M}}$)
\begin{equation}\label{eq_supp_mon_oinv}
\begin{aligned}
\prod_{ v \in V_{\star}}
\big( x_{ v } - x_{ \hat{\mf{a}}_{\gamma_{M}}(v) }\big)^{ \hat{\mathfrak{n}}
(v)}
=
\prod_{ v \in V_{\star}}
\big( x_{ v } - x_{ \hat{\mf{a}}(v) }\big)^{ \hat{\mathfrak{n}}
(v)}\,,
\end{aligned}
\end{equation}
where we used \eqref{eq_supp1_Va_empty} in combination with the definition of the map $
\hat{\mf{a}}_{\gamma_{M}}$ \eqref{e:aChat}.\\

The following graphical representation of the renormalisation may be helpful in what follows
(we display only the divergent subgraph $\gamma$, together with its incoming and outgoing edges):
\begin{equation*}
    (\rm{id} - \hC_\gamma - \hC_{\gamma}^{(1)}) {\Gamma} =
    \begin{tikzpicture}[thick,>=stealth,scale=0.3,baseline=-0.2em]
      \tikzset{
        dot/.style={circle,fill=black,inner sep=1.5pt}
      }
      \node[above] at (-3.2,0.1) {};
      \node[above] at ( 3.0,0.1) {};
      \draw (-0.4,0) ellipse (1.8 and 1.1);
      \node at (-0.4,0) {${\gamma}$};
      \node[dot] (bL) at (-2.2,0)   {};   
      \node[dot] (vL) at (1.4,0) {};   
      \draw[->] (-4.2,0) -- (bL);
      \draw[->] (vL) -- (3.5,0);
      \node[below right] at (vL) {$v_\star$};
    \end{tikzpicture}
    \; - \;
    \begin{tikzpicture}[thick,>=stealth,scale=0.3,baseline=-0.2em]
      \tikzset{
        dot/.style={circle,fill=black,inner sep=1.5pt}
      }
      \node[above] at (-3.2,0.1) {};
      \node[above] at ( 3.0,0.1) {};
      \draw (-0.4,0) ellipse (1.8 and 1.1);
      \node at (-0.4,0) {${\gamma}$};
      \node[dot] (bL) at (-2.2,0)   {};   
      \node[dot] (vL) at (1.4,0) {};   
      \draw[->] (3.5,2.8  ) -- (vL);
      \draw[->] (vL) -- (3.5,-2.0);
      \node[right] at (vL) {$v_\star$};
    \end{tikzpicture}
    \; - \; \sum_{i=1}^d
    \begin{tikzpicture}[thick,>=stealth,scale=0.3,baseline=-0.2em]
      \tikzset{
        dot/.style={circle,fill=black,inner sep=1.5pt}
      }
      \node[above] at (-3.2,0.1) {};
      \node[above] at ( 3.0,0.1) {};
      \draw (-0.4,0) ellipse (1.8 and 1.1);
      \node at (-0.4,0) {${\gamma}$};
      \node[dot] (bL) at (-2.2,0)   {};   
      \node at (-2.9,0.3) {\scalebox{0.8}{$\delta_i$}};
      \node[dot] (vL) at (1.4,0) {};   
      \draw[->] (3.5,2.8  ) -- (vL);
      \node at (1.9,2.0) {\scalebox{0.8}{$\delta_i$}};
      \draw[->] (vL) -- (3.5,-2.0);
      \node[right] at (vL) {$v_\star$};
    \end{tikzpicture}  \;.
  \end{equation*}
Now, the fact that previous monomials are not affected by the renormalisation of $
\gamma_{M}$, see \eqref{eq_supp_mon_oinv},
allows us to write
\begin{equation}\label{eq_firsttaylor_supp1}
\begin{aligned}
\overline{\mW}_{\ve} \big( & \mathrm{id} - ( \hat{\mC}_{\gamma_{M}} +
\hat{\mC}^{(1)}_{\gamma_{M}}) \big)
\hat{\Gamma} ( x_{ V_{\star}}) \\
& =
\Big(
D^{ \hat{\mathfrak{h}} ( e_{\star})} G_{\ve}(x_{e_{\star,+}} - x_{
e_{\star,-}})
\\
& \qquad \qquad  -
D^{ \hat{\mathfrak{h}} ( e_{\star})} G_{\ve}(x_{ v_{\star}( \gamma_{M})}- x_{
e_{\star,-}})
-
\mathds{1}_{ \hat{\mathfrak{h}} (e_{\star}) =0}
\sum_{i =1}^{d}
\partial_{ i}
G_{\ve}(x_{ v_{\star}( \gamma_{M})}- x_{
e_{\star,-}})
( x_{ e_{\star,+}}^{(i)} -
x_{ v_{\star}( \gamma_{M})}^{(i)} )
\Big)\\
&\quad \times
\int_{ ( \TT^{d})^{L}}
\varphi ( x_{L})
 \prod_{e \in \hat{E} \setminus e_{\star}}
D^{ \hat{\mathfrak{h}} ( e)} G_{\ve}(x_{e_{+}} - x_{e_{-}})
\prod_{ v \in V_{\star}}
\big( x_{ v } - x_{ \hat{\mf{a}}(v) }\big)^{ \hat{\mathfrak{n}}
(v)}
\ud x_{L} \\
& =
D^{ \hat{\mathfrak{h}} ( e_{\star})}
(\mH  G_{\ve})
\big( x_{ e_{\star,+}}, x_{ e_{\star,-}} | x_{ v_{\star}(
\gamma_{M})}\big) \\
& \quad \times
\int_{ ( \TT^{d})^{L}}
\varphi ( x_{L})
 \prod_{e \in \hat{E} \setminus e_{\star}}
D^{ \hat{\mathfrak{h}} ( e)} G_{\ve}(x_{e_{+}} - x_{e_{-}})
\prod_{ v \in V_{\star}}
\big( x_{ v } - x_{ \hat{\mf{a}}(v) }\big)^{ \hat{\mathfrak{n}}
(v)}
\ud x_{L}\,,
\end{aligned}
\end{equation}
where we used the shorthand
$ e_{\star} = e_{\star} (
\gamma_{M},\tilde\Gamma)$
(note that $ e_{\star}( \gamma_{M},  \hat{\Gamma}) =  e_{\star} ( \gamma_{M},
\tilde\Gamma)$, because of 1. in the beginning of the proof).
In the last step of \eqref{eq_firsttaylor_supp1}, we used that
\begin{equation*}
\begin{aligned}
D
(\mH  G_{\ve})
\big( x_{ e_{\star,+}}, x_{ e_{\star,-}} | x_{ v_{\star}(
\gamma_{M})}\big)
=
D  G_{\ve}(x_{e_{\star,+}} - x_{
e_{\star,-}})
-
D G_{\ve}(x_{ v_{\star}( \gamma_{M})}- x_{
e_{\star,-}})\,,
\end{aligned}
\end{equation*}
which is an immediate consequence of
\begin{equation}\label{eq_derive_mH}
\begin{aligned}
\partial_{i} \mH K ( x, y |  x_{\star} )
&= \frac{\ud}{ \ud x_{i}}  \Big(K (x-y) - K ( x_{\star}- y ) - \sum_{j =1}^{d}
(x^{(j)} - x_{\star}^{(j)})\partial_{j} K (x_{\star}, y)\Big)\\
&=
\partial_{i} K ( x - y )
-
\partial_{i} K (x_{\star} - y)\,.
\end{aligned}
\end{equation}
Hence, the right--hand side of \eqref{eq_firsttaylor_supp1} can be recast into
\begin{equation*}
\begin{aligned}
\overline{\mW}_{\ve}^{(\mK)}
\hat{\Gamma} ( x_{ V_{\star}})
=
\int_{ ( \TT^{d})^{L}}
\varphi ( x_{L})
 \prod_{e \in \hat{E} }
D^{ \hat{\mathfrak{h}} ( e)} \mK_{ \tau_{ \hat{\Gamma}, \Gamma}(e),\ve}(x_{ V_{\star} } )
\prod_{ v \in V_{\star}}
\big( x_{ v } - x_{ \hat{\mf{a}}(v) }\big)^{ \hat{\mathfrak{n}}
(v)}
\ud x_{L} \,,
\end{aligned}
\end{equation*}
with the kernel assignment
\begin{equation}\label{eq_kernelassign_first}
\begin{aligned}
\mK_{ \tau_{ \hat{\Gamma}, \Gamma}(e), \ve} ( x_{ V_{\star}}) =
\begin{cases}
\mH G_{\ve} ( x_{e_{+}}, x_{e_{-}} | x_{ v_{\star}( \gamma_{M}) }) \,, \quad &
\text{if }\ \tau_{ \hat{\Gamma}, \Gamma }(e)
= e_{\star} ( \gamma_{M}, \Gamma)\,, \\
G_{\ve} ( x_{e_{+}} - x_{e_{ -}}) \,, \quad & \text{otherwise.}
\end{cases}
\end{aligned}
\end{equation}
Notice that the kernel assignment in the display above is independent of
$\hat{\Gamma}$, but only depends on $\Gamma$.
In particular, it is independent of $ \hat{\mf{n}}$ and $ \hat{\mf{a}}$.
Hence, we can write
\begin{equation*}
\begin{aligned}
\overline{\mW}_{\varepsilon}
\hat{\mR}_{[ \emptyset, \delta(\mathbb{M})]}^{(1)}
\tilde\Gamma
(x_{V_{\star}})
=\overline{\mW}_{\varepsilon}^{( \mK)}
\hat{\mR}_{[ \emptyset, \delta(\mathbb{M})\setminus \gamma_{M}]}^{(1)}
\tilde\Gamma (x_{V_{\star}})\,.
\end{aligned}
\end{equation*}
Informally, we pushed the renormalising action from $ ( \mathrm{id} -
\hat{\mC}_{\gamma_{M}}- \hat{\mC}_{\gamma_{M}}^{(1)}) $
onto the kernel assignment $ \overline{\mW}^{ (\mK )}$.
We now proceed by ``peeling off'' the subsequent renormalisations associated to $ \gamma_{M-1}$, $ \gamma_{M-2}$, and so on, until
we reach $ \tilde\Gamma $.
It is therefore necessary to repeat the steps above with
the difference that now Green's functions $ G$ in \eqref{eq_firsttaylor_supp1} are replaced
by a general kernel assignment $\mK$.
However, one can check that all the previous steps still apply when considering the action of
 $ ( \mathrm{id} -
\hat{\mC}_{\gamma_{M-1}} - \hat{\mC}^{(1)}_{\gamma_{M-1}}) $ on any of the
components $ \hat{\Gamma} \in \mD_{\Gamma}^{\rm{lab}}$ in the linear combination
\begin{equation}\label{eq_gammahat_nextstep}
\begin{aligned}
	\hat{\mR}^{(1)}_{[\emptyset, \delta ( \mathbb{M})\setminus \{ \gamma_{M}, \gamma
	_{M-1}\}]}\tilde\Gamma
	=
\prod_{i =1}^{M-2}
(\mathrm{id} - \hat{\mC}_{\gamma_{i}} -
\hat{\mC}^{(1)}_{\gamma_{i}})\tilde\Gamma\,.
\end{aligned}
\end{equation}
Indeed,
the statement \eqref{eq_supp1_Va_empty} remains true, when replacing $
\gamma_{M}$ by $ \gamma_{M-1}$ and $ \hat{\Gamma}$ with a component of
\eqref{eq_gammahat_nextstep}, by the same argument.
Moreover, we notice that the edge $ e_{\star}( \gamma_{M}, \tilde\Gamma)$ is not affected by the action of $
\hat{\mC}_{\gamma_{M-1}}$ because either
$ \gamma_{M-1} \subset \gamma_{M}$ (such that they
don't share the same incoming edge), or $ \gamma_{M-1}$ is disjoint from $
\gamma_{M}$. Therefore, an analogue of \eqref{eq_supp_mon_oinv} still holds, and
 the identity \eqref{eq_firsttaylor_supp1} remains true when replacing $
\gamma_{M}$ with $ \gamma_{M-1}$, and Green's functions with the kernel assignment
$ \mK$ in \eqref{eq_kernelassign_first}, because it only relies on
\eqref{eq_derive_mH}.
Note that the centring of the
monomials in \eqref{eq_derive_mH} is determined by the map $\hat{\mf{a}}$ even
after any extraction.

Overall,  after ``peeling off'' the renormalistion with respect to $ \gamma_{M-1}$, this
yields
\begin{equation*}
\begin{aligned}
\overline{\mW}_{\varepsilon}
\hat{\mR}_{[ \emptyset,\delta(\mathbb{M})]}^{(1)}
\tilde\Gamma
(x_{V_{\star}})
=\overline{\mW}_{\varepsilon}^{( \mK)}
\hat{\mR}_{[ \emptyset, \delta(\mathbb{M}) \setminus \{ \gamma_{M},
\gamma_{M-1}\}]}^{(1)}
\tilde\Gamma (x_{V_{\star}})\,,
\end{aligned}
\end{equation*}
with
\begin{equation*}
\begin{aligned}
\mK_{ \tau_{ \hat{\Gamma}, \Gamma}(e), \ve} ( x_{ V_{\star}}) =
\begin{cases}
\mH G_{\ve} ( x_{e_{+}}, x_{e_{-}} | x_{ v_{\star}( \gamma_{M-1}) }) \,, \quad
& \text{if }\ \tau_{ \hat{\Gamma}, \Gamma}(e)
= e_{\star} ( \gamma_{M-1}, {\Gamma})\,, \\
\mH G_{\ve} ( x_{e_{+}}, x_{e_{-}} | x_{ v_{\star}( \gamma_{M}) }) \,, \quad & \text{if }\
\tau_{ \hat{\Gamma}, \Gamma} (e)
= e_{\star} ( \gamma_{M}, {\Gamma})\,, \\
G_{\ve} ( x_{e_{+}} - x_{e_{ -}}) \,, \quad & \text{otherwise.}
\end{cases}
\end{aligned}
\end{equation*}
 Now, proceeding in the same fashion for all remaining divergences until
exhausting  $ \delta
(\mathbb{M})$, we ultimately arrive at
\begin{equation*}
\begin{aligned}
\overline{\mW}_{\ve}
\hat{\mR}^{(1)}_{[ \emptyset, \delta(\mathbb{M})]}
\tilde\Gamma ( x_{ V_{\star}})
= \overline{\mW}_{\ve}^{( \mK )}  \tilde\Gamma (x_{ V_{\star}})\,,
\end{aligned}
\end{equation*}
where for $ e \in \tilde{E}$
\begin{equation}
  \begin{aligned}
  \mK_{ \tau_{ \tilde\Gamma, \Gamma}(e) , \ve} ( x_{V} )
  :=
  \begin{cases}
  (\mH G_{\ve})( x_{ e_{+}},x_{e_{-}}| x_{ \mf{b} ( e)}) \,, & \quad \text{if }
  \exists \gamma \in \delta( \mathbb{M}) \text{ with }
  \tau_{\tilde\Gamma, \Gamma}(e)=e_{\star}( \gamma, \Gamma ) \,,\\
  G_{\ve} ( x_{ e_{+}}- x_{e_{-}})  \,, & \quad \text{otherwise.}
  \end{cases}
  \end{aligned}
\end{equation}

Finally, to complete the proof, we observe that the kernel assignment
$\mK$ depends only on the unlabelled version of the diagram
$\Gamma$ and on $ \mathbb{M} $. It depends on $\tilde{\Gamma}$
only in so far as it determines which variables in $ x_{V_{\star}}$
are taken as arguments. In any case, it depends solely on the unlabelled
version of $ \tilde{\Gamma} $, which is the same for all components of
$\mf{K}^{(1)}_{ \underline{\mathbb{M}}} \Gamma$.
Hence,
\begin{equation*}
\begin{aligned}
\overline{\mW}_{\ve}
\hat{\mR}^{(1)}_{\mathbb{M}}
\Gamma ( x_{ V_{\star}})
=
\sum_{\tilde\Gamma}
\overline{\mW}_{\ve}
\hat{\mR}^{(1)}_{[ \emptyset, \delta(\mathbb{M})]}
\tilde\Gamma ( x_{ V_{\star}})
=
\sum_{\tilde\Gamma}
\overline{\mW}_{\ve}^{( \mK )}  \tilde\Gamma (x_{ V_{\star}})
=
\overline{\mW}_{\ve}^{( \mK )} \mf{K}^{(1)}_{\underline{\mathbb{M}}} \Gamma (x_{ V_{\star}})
\,,
\end{aligned}
\end{equation*}
where in the last step we used that $\mK$ is independent of the specific choice
of $ \tilde\Gamma$, as explained above.
This concludes the proof.
\end{proof}

Combining Lemma~\ref{lem_include_C1}, Corollary~\ref{cor_ptw_forestinterval}, and Lemma~\ref{lem_pushonKernel} finally yields
the following representation, which conveniently translates linear combinations
of Feynman diagrams into linear combinations of kernels.

\begin{corollary}\label{cor_rep_singlediagram}
For any negative Anderson--Feynman diagram $ \Gamma $ and any family of
partitions $ (\mP_{x_{V_{\star}}} )_{x_{V_{\star}} \in (
\TT^{d})^{V_{\star}}}$
of $ \mF_{\Gamma}^{-}$ into forest intervals
\begin{equation*}
\begin{aligned}
\hat{\Pi}_{\varepsilon} \Gamma
( \varphi)
=
\int_{ ( \mathbf{T}^{d})^{ V_{\star}}}
\sum_{\mathbb{M} \in \mP_{x_{V_{\star}}}}
\overline{\mW}_{\ve}^{( \mK )}  \mf{K}^{(1)}_{\underline{\mathbb{M}}}
\Gamma (x_{ V_{\star}})
 \ud x_{ V_{\star} }\,,
\end{aligned}
\end{equation*}
where $ \mK = \mK ( \mathbb{M})$ was defined in Lemma~\ref{lem_pushonKernel}.
\end{corollary}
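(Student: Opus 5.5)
The corollary follows by chaining three earlier identities and then integrating over the inner variables.

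First, I would use Lemma~\ref{lem_include_C1}. It gives $\hat{\Pi}_\ve \Gamma(\varphi) = \Pi_\ve \hat{\mR}^{(1)}\Gamma(\varphi)$. By Definition~\ref{def_R1}, the right-hand side is
\begin{equation*}
\int_{(\TT^d)^{V_\star}} \overline{\mW}_\ve \hat{\mR}^{(1)}\Gamma(x_{V_\star}) \ud x_{V_\star},
\end{equation*}
where $\overline{\mW}_\ve$ is extended linearly to $\sT^{\rm lab}_\Gamma$ and already contains the legs and the test function $\varphi$.

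Second, I would fix $x_{V_\star}$ and apply Corollary~\ref{cor_ptw_forestinterval} with the partition $\mP_{x_{V_\star}}$. This is possible because that corollary is a \emph{pointwise} identity valid for any partition of $\mF_\Gamma^-$ into forest intervals. It ultimately rests on the purely algebraic identity of Lemma~\ref{lem_pointwise_interval}, so the partition may depend on the point $x_{V_\star}$ without affecting the equality of integrands.

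Third, for each interval $\mathbb{M}\in\mP_{x_{V_\star}}$, I would apply Lemma~\ref{lem_pushonKernel} to replace $\overline{\mW}_\ve \hat{\mR}^{(1)}_{\mathbb{M}}\Gamma(x_{V_\star})$ by $\overline{\mW}^{(\mK)}_\ve \mf{K}^{(1)}_{\underline{\mathbb{M}}}\Gamma(x_{V_\star})$, with $\mK = \mK(\mathbb{M})$. That lemma is stated only when $\hat{\mR}^{(1)}_{\mathbb{M}}\Gamma \neq 0$. In the case $\hat{\mR}^{(1)}_{\mathbb{M}}\Gamma = 0$, the left-hand side vanishes. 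I would check that the right-hand side either also vanishes, by the convention $\mK\equiv 0$ for trivial diagrams, or can simply be dropped from the sum; this requires reading the statement with that convention understood. Integrating the resulting pointwise identity over $(\TT^d)^{V_\star}$ then gives the claim.

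The only genuinely delicate point is measurability of $x_{V_\star}\mapsto \sum_{\mathbb{M}\in\mP_{x_{V_\star}}}\cdots$ when the partition varies with the point. This causes no difficulty, because the sum equals the fixed integrable function $\overline{\mW}_\ve\hat{\mR}^{(1)}\Gamma(x_{V_\star})$ at every point. So the integral is well defined regardless of how $\mP$ is chosen, and no further regularity in $x_{V_\star}$ is needed.
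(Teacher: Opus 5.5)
Your proposal is correct and is the same argument as the paper's: Lemma~\ref{lem_include_C1}, then Corollary~\ref{cor_ptw_forestinterval} applied pointwise with the $x_{V_\star}$-dependent partition, then Lemma~\ref{lem_pushonKernel} on each interval, and finally integration over $(\TT^d)^{V_\star}$. Of your two readings of the degenerate case, only the second works in general: the convention $\mK\equiv 0$ concerns trivial $\tilde\Gamma$ and does not make the right-hand side vanish when $\overline{\mW}_\ve\hat{\mR}^{(1)}_{\mathbb{M}}\Gamma\equiv 0$ (e.g.\ nested divergences sharing an incoming edge, with the larger in $\underline{\mathbb{M}}$, the smaller in $\delta(\mathbb{M})$, and distinct $v_\star$, as in \eqref{e:example-gamma}), so such intervals must be dropped, which is harmless for the subsequent upper bound \eqref{eq_est_Hepp}.
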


\subsection{Safe and unsafe divergences}

In the previous section, we augmented the renormalisation map $ \hat{\mR}$ with
first order terms, that left the valuation of renormalised
Anderson--Feynman diagrams unchanged. Therefore, as a consequence of
Corollary~\ref{cor_rep_singlediagram} and
Lemma~\ref{lem_hepp_almost_covers}, we deduce the following upper bound for a
renormalised Anderson--Feynman diagram:
\begin{equation}\label{eq_est_Hepp}
\begin{aligned}
| \hat{\Pi}_{\varepsilon} \Gamma  ( \varphi)|
& \leqslant
\sum_{T \in \mT_{V_{\star}}}
\sum_{{\bf n} \in \mA ( T)}
\sum_{\mathbb{M} \in \mP_{T}}
\int_{ D_{( T, \mathbf{n})}}
\big|
 \overline{\mW}_{\varepsilon}^{( \mK)}
\mathfrak{K}_{ \underline{\mathbb{M}}}^{(1)}
\Gamma
(x_{V_{\star}})
\big|
 \ud x_{ V_{\star} }\\
& \leqslant
\sum_{T \in \mT_{V_{\star}}}
\sum_{{\bf n} \in \mA ( T)}
\sum_{\mathbb{M} \in \mP_{T}}
\sup_{x_{V_{\star}} \in D_{( T, \mathbf{n})}}
\big|
 \overline{\mW}_{\varepsilon}^{( \mK)}
\mathfrak{K}_{ \underline{\mathbb{M}}}^{(1)}
 \Gamma
(x_{V_{\star}})
\big|
\prod_{u \in \overline{T}} 2^{-d \mathbf{n} (u)}
\,,
\end{aligned}
\end{equation}
with $ \mK = \mK (\mathbb{M})$ as in~\eqref{e:def-K}, $\overline{\mW}^{(\mK)}$ as in~\eqref{e:def-WK},
and $ \mP_{T}$ any partition into forest intervals (which we allow to depend on the choice of Hepp tree $T$).

In this section, we follow the classical approach of considering safe and unsafe
forest intervals to obtain a partition $ \mP_{T}$ that will
allow us
to efficiently bound~\eqref{eq_est_Hepp}.
We will see that the bound \eqref{eq_est_Hepp} is too crude for our purposes.
At the end of this section, we will refine it in order to exploit certain symmetries.
However, for the sake of the exposition, we first focus on estimating
the right--hand side of \eqref{eq_est_Hepp}.

Before introducing safe and unsafe divergences, we recall that the rewiring of edges via $ \hat{\mC}_{\gamma}$ leads to
the valuation of diagrams $\tilde\Gamma \in  \mD_{\Gamma}^{\rm{lab}}$, whose scale assignment
is provided by a Hepp sector $ (T, \mathbf{n})$ associated to the original diagram $
\Gamma$. The fact that we consider edges in the new $\tilde{\Gamma}$, but
Hepp sectors associated to the original $\Gamma$, can be a source of confusion. Given an edge $e \in \tilde{E}$, the associated lengthscale will be
given by
\begin{equation*}
  | x_{e_{+}}- x_{ e_{-}}| \simeq
2^{-\mathbf{n} ( e^{\uparrow})} \;,
\end{equation*}
where we have set
\begin{equation}\label{e:e-up-def}
  e^{\uparrow}:= e_{+}\wedge e_{-} \in \trim \;, \qquad \forall e \in \tilde{E} \;.
\end{equation}
However, the point we would like to stress, is that generally $ e^{\uparrow}
\neq \tau_{\tilde{\Gamma}, \Gamma}( e)^{\uparrow} \in \trim$.\\


Safe and unsafe divergences are
fundamental for the analytic estimates that will follow later in this section.
Safe divergences are those in which blow ups within the divergence are compensated by
``good'' edges at the boundary of the divergence (which we define precisely below).
Thus, preventing the overall Feynman
diagram to notice the blow up of this divergence.
Whenever a divergence is not safe, we call it unsafe.

Let us consider a
forest of divergences $ \mf{F} \in \mF_{ \Gamma}^{-}$. For every $ \gamma =
(E(\gamma), V(\gamma)) \in \mG_\Gamma^-$ such that $\mf{F}\cup\{\gamma\}$
is  a forest,
we write $ \mA ( \gamma, \mf{F}) \subseteq \Gamma $ for
the parent of $\gamma$ in $\mf{F} \cup \{\Gamma\}$:
\begin{equation}\label{e:def-a-gamma}
  \mA ( \gamma, \mf{F}) = \min \{ \gamma' \in \mf{F} \cup \{\Gamma\} \, \colon \,
  \gamma \subsetneq \gamma' \} \;,
\end{equation}
where the minimum is taken in the sense of inclusions.
We say that $ \gamma'$ is a child of $\gamma $ in $\mf{F}$, if  $\gamma = \mA (
\gamma ', \mf{F}) $, and we write $\mC(\gamma, \mf{F})$ for the set of all
children of $\gamma$ in $\mf{F}$.
Whenever the
forest is clear from context we shorten the notation to $\mA(\gamma)$.

Next, we define the set of proper edges of $\gamma$ as:
\begin{equation}\label{e:proper-edges}
  \mE_{\mf{F}}(\gamma) = E(\gamma) \setminus
  \bigcup_{ \gamma' \in \mC ( \gamma, \mf{F})}
   E(\gamma')  \subseteq E(\Gamma) \;,
\end{equation}
as well as the set of boundary edges:

\begin{equation}\label{e:def-bdry-edges}
  \partial \mE_{\mf{F}} (\gamma) = \{ e \in E(\mA(\gamma, \mf{F})) \setminus (E(\gamma) \cup E_L) \, \colon \, e_{+} \in V(\gamma) \text{ or } e_{-} \in V(\gamma) \} \;.
\end{equation}
We note that in our definition, legs can not be boundary edges.
We observe that both definitions depend on the choice of the forest $\mf{F}$.

Moreover, define $\mathfrak{K} ( \gamma, \mf{F} ) $ to be the subdiagram of
$\tilde{\Gamma} = \mathfrak{K}_{\mf{F}} \Gamma $ spanned by the
edges in
\begin{equation}\label{e:kgamma-def}
\begin{aligned}
\tilde{E} (\mathfrak{K} (\gamma, \mf{F}) ) :=
\big\{ \tilde{e} \in \tilde{E} ( \gamma)\,: \, \tau_{\tilde\Gamma,
\Gamma}(\tilde{e}) \in\mE_{\mf{F}} (\gamma)\big\}\,.
\end{aligned}
\end{equation}
The following depiction
(see also \cite[Figure~3]{BPHZ})
will be helpful to keep in mind:
\begin{equation*}
	\begin{aligned}
		\begin{tikzpicture}[scale=1, every node/.style={font=\small}]
\usetikzlibrary{decorations.pathmorphing}

\draw[draw=green!50!black, line width=1.2pt, fill=green!50!black!30]
  plot[smooth cycle, tension=1] coordinates {
    (-2,-.5) (-1,0.2) (0,.7) (1,0.4)
    (2,-0.5) (0,-1)
};
\node at (0.0,-0.25) {$\mf{K}(\mA(\gamma))$};

\draw[draw=red!80!black, line width=1.2pt, fill=red!30]
  plot[smooth cycle, tension=1] coordinates {
    (-1,0.2) (-2.45,0.55) (-1.45,1.2) (-1.35,1.7)
     (-.5,1.5)
  };
  \node at (-1.25,.75) {$\mf{K}(\gamma)$};

\draw[draw=blue!70!black, line width=1.2pt, fill=blue!25]
  plot[smooth cycle, tension=1] coordinates {
    (-1.45,1.2) (-2.5,1.2)
    (-1.60,2.20) (-1.5,1.5)
};

\draw[draw=black, line width=1.2pt, fill=black!8]
  plot[smooth cycle, tension=1] coordinates {
    (1,0.4) (0.9,2) (1.5,2.2) (2,1)
};

\fill[black] (1,0.4) circle (2.2pt);
\fill[black] (-1,0.2) circle (2.2pt);
\fill[black] (-1.35,1.25) circle (2.2pt);

\end{tikzpicture}
	\end{aligned}
\end{equation*}
Here, we see four divergences: The blue one and the black one, as well as the divergence
$ \gamma $ (spanned by edges in the red and blue area), and $ \mA (
\gamma) $ (spanned by edges in all shaded areas).\\

Finally, given a Hepp sector $(T,
\mathbf{n})$ and a forest $\mf{F} \in \mF_{\Gamma}^{-}$, we associate lengthscales to edges in $\Gamma$
that are however computed in the new diagram $\mf{K}_{\mf{F}} \Gamma$,
the unlabelled version of $\mf{K}_{\mf{F}}^{(1)} \Gamma$ (see also
\eqref{eq_forest_extr}). To highlight this dependence, we define the following scale
map\footnote{Note that the notation ``scale'' is slightly misleading: If
$n =  \mathrm{scale}^{\mf{F}}_{T, \mathbf{n}} (e)$ is very large, then
the length--scale $
2^{-n} $ is actually very small. However, we didn't alter the
notation to remain close to the literature.}:
\begin{equation*}
  \mathrm{scale}^{\mf{F}}_{T, \mathbf{n}} (e) = \mathbf{n} ( \tau_{\mathfrak{K}_\mathfrak{F} \Gamma, \Gamma}^{-1} (e)_+ \wedge \tau_{\mathfrak{K}_\mf{F} \Gamma, \Gamma}^{-1} (e)_- )
  = \n \big( \tau^{-1}_{\mf{K}_{\mf{F}} \Gamma, \Gamma}(e)^{\uparrow} \big)\;,
  \qquad \forall e \in E(\Gamma) \;,
\end{equation*}
where $\wedge$ indicates the least common ancestor in the Hepp tree $T$. In
words, $\mathrm{scale}^{\mf{F}}_{T, \mathbf{n}} (e)$ is the (inverse, exponential) scale
associated to $e$ in the graph $\mathfrak{K}_\mathfrak{F} \Gamma$. Once more,
this choice is highly dependent on the forest $\mf{F}$.

\begin{definition}[Safe and unsafe forests]\label{def_safe_unsafe}
Let $ \Gamma$ be a Feynman diagram and
fix a Hepp sector $ \mathbf{T}=(T, \mathbf{n})$.
Let $\mf{F} \in \mF_{\Gamma}^{-} $ be a forest of divergences in $ \Gamma$. We say that
\begin{enumerate}
\item The divergence $ \gamma \in \mf{F} $
is \emph{safe} in $\mf{F}$, if
\begin{equation}\label{eq_def_safe}
\begin{aligned}
\sup_{e \in  \partial \mE_{\mf{F}} ( \gamma) }
\mathrm{scale}_{T, \mathbf{n}}^{\mf{F}} (e)
\geqslant \min_{e \in  \mE_{\mf{F}} ( \gamma) } \mathrm{scale}_{T, \mathbf{n}}^{\mf{F}} (e)  \,,
\end{aligned}
\end{equation}
with the condition that the supremum over an empty set is $-\infty$.

\item The divergence $ \gamma \in \mf{F} $ is \emph{unsafe} in $\mf{F}$, if~\eqref{eq_def_safe} fails.
\end{enumerate}
Moreover, we say that $ \gamma$ is \emph{safe to be added} to a forest $\mf{F} $,
if $ \mf{F} \cup \{ \gamma\}$ is a forest and $ \gamma$ is safe in it. Likewise, $
\gamma$ is \emph{unsafe to be added} if $ \mf{F} \cup \{
  \gamma\}$ is a forest and $ \gamma $ is unsafe in $ \mf{F} \cup \{
\gamma\}$.
\end{definition}

For example, if
  $ \gamma$ is a leaf in $ \mf{F}$, then $\gamma$ is
  safe in $\mf{F}$ if there exists an edge $ e^{*} \in \mE_{\mf{F}} ( \gamma)$ and a
  boundary edge $ e' \in \partial \mE_{\mf{F}} (\gamma)$ such that
  \begin{equation*}
    \begin{aligned}
    | x_{\tau^{-1}(e')_{+}} - x_{\tau^{-1}(e')_{-}}|
    \leqslant
    | x_{\tau^{-1}(e^{*})_{+}} - x_{\tau^{-1}(e^{*})_{-}}|
    \,,
    \end{aligned}
    \end{equation*}
with $\tau = \tau_{\mf{K}_{\mf{F}} \Gamma, \Gamma}$.

\begin{remark}\label{rem:safe}
  The condition~\eqref{eq_def_safe} does not depend on the choice of $\mathbf{n}
  \in \mA(T)$, but only on $T$. Therefore, we can define the set
  \begin{equation*}
    \mF^{s}_\Gamma(T) \subseteq \mF_\Gamma^{-} \;,
  \end{equation*}
  of safe divergent forests, depending only on $T \in \mT_{V_\star}$ and not on
  the particular scale assignment.
\end{remark}

Having introduced safe and unsafe divergences, given a Hepp sector, we can construct
a suitable partition of all forests into forest interval, following
\cite[Lemma~3.6]{BPHZ} (where the proofs of the following statements can be found).

\begin{lemma}\label{lem_partition}
Let $ \Gamma$ be a Feynman diagram and
fix a Hepp tree $ T \in \mT_{V_\star}$.
	For every safe forest $\mf{F}_{s} \in \mF_{\Gamma}^{s}(T)$, we define $\mf{F}_{u}$
	to be the set of divergences that are unsafe to be added to $ \mf{F}_{s}$
	 \begin{equation*}
  \mf{F}_u = \{ \gamma \in \mG^{-}_{\Gamma} \, \colon \, \mf{F}_s \cup \{\gamma\} \text{ is not a safe forest } \}\;.
\end{equation*}
Note that $\mf{F}_u$ depends both on $\mf{F}_s$ and on
$T$. However, we omit from writing this explicitly.
Then the following holds:
\begin{enumerate}
\item
The disjoint union $\mf{F}_{s} \sqcup
\mf{F}_{u}$ is a forest.

\item Every $ \gamma \in \mf{F}_{u}$ is unsafe in $ \mf{F}_{s} \cup
\mf{F}_{u}$. Likewise, every $ \gamma \in \mf{F}_{s}$ is safe in
$\mf{F}_s {\cup \mf{F}_{u}}$.

\item The collection of forest intervals
\begin{equation*}
\begin{aligned}
\mP_{T}= \left\{\big[ \mf{F}_{s}, \mf{F}_{s} \cup \mf{F}_{u} \big]
\,:\,
\mf{F}_{s}  \in \mF_{\Gamma}^{s}(T) 
\right\}
\end{aligned}
\end{equation*}
partitions the set of all forests $\mF^{-}_{\Gamma}$.
\end{enumerate}
\end{lemma}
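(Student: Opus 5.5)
The plan follows the proof of \cite[Lemma~3.6]{BPHZ}, with the paper's safety condition \eqref{eq_def_safe} and the forest structure of $\mG^-_\Gamma$. By Remark~\ref{rem:safe}, safety depends only on $T$, so everything is combinatorial once $T$ is fixed. The central observation is a \emph{locality} property of \eqref{eq_def_safe}: whether $\gamma$ is safe in a forest $\mf{F}\ni\gamma$ depends only on three data:
\begin{itemize}
\item the children $\mC(\gamma,\mf{F})$, which fix the proper edges $\mE_{\mf F}(\gamma)$;
\item the parent $\mA(\gamma,\mf F)$, which fixes the boundary edges $\partial\mE_{\mf F}(\gamma)$;
\item the rewired positions of these edges in $\mf K_{\mf F}\Gamma$, which are determined by the divergences strictly between $\gamma$ and its relatives, since $\hat{\mC}$ moves only the incoming edge of each divergence.
\end{itemize}
The first step is to state and prove a monotonicity lemma. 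Suppose $\gamma'$ is added to a forest containing $\gamma$, with $\gamma'\neq\gamma$. If $\gamma'$ is unsafe in the enlarged forest, then the safety status of $\gamma$ is unchanged. The reason is that an unsafe $\gamma'$ has all its boundary edges at strictly coarser scale than some proper edge of itself. Inserting it between $\gamma$ and a relative therefore replaces the relevant extremal scale in \eqref{eq_def_safe} by a comparable one on the correct side.

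Point 1 then reduces to compatibility. First, each $\gamma\in\mf F_u$ is compatible with $\mf F_s$, because by definition $\mf F_s\cup\{\gamma\}$ is a forest that fails only safety. Second, I would show that any two elements $\gamma_1,\gamma_2\in\mf F_u$ are nested or vertex disjoint. Suppose instead that they overlap. I would use the overlapping structure of Anderson--Feynman divergences from Lemma~\ref{lem:neg}: each has a single in-edge and a single out-edge, so $\gamma_1\cup\gamma_2$ and $\gamma_1\cap\gamma_2$ are again divergences. Comparing Hepp scales of the edges in the symmetric difference, I would show that at least one of $\gamma_1,\gamma_2$ must be safe to add to $\mf F_s$, which is a contradiction. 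I expect this overlap argument to be the main obstacle. I would handle it as in BPHZ, by looking at the least common ancestor in $T$ of the vertices of $\gamma_1\cup\gamma_2$ and comparing it with the maximal-scale proper edge of each.

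Point 2 follows by induction with the monotonicity lemma. Add the elements of $\mf F_u$ to $\mf F_s$ one at a time, ordered by inclusion from the largest. Each element is unsafe when added and stays unsafe, and every element of $\mf F_s$ keeps its safe status.

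For point 3, define the map $\mf F\mapsto\mf F_s(\mf F):=\{\gamma\in\mf F:\gamma \text{ safe in }\mf F\}$ on all forests. Point 2 shows that $\mf F_s(\mf F)$ is itself a safe forest. The monotonicity lemma then shows $\mf F\subseteq\mf F_s(\mf F)\cup\mf F_u(\mf F_s(\mf F))$, so every forest lies in some interval $[\mf F_s,\mf F_s\cup\mf F_u]$. For uniqueness: if $\mf F$ lies in $[\mf F_s,\mf F_s\cup\mf F_u]$, then point 2 says the safe elements of $\mf F$ are exactly $\mf F_s$. Hence $\mf F_s=\mf F_s(\mf F)$, the intervals are disjoint, and $\mP_T$ is a partition.
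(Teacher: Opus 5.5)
Your skeleton is the one the paper relies on. The paper gives no argument of its own and imports the statement from \cite[Lemma~3.6]{BPHZ}. Your monotonicity lemma, the map $\mf F\mapsto\mf F_s(\mf F)$ and the uniqueness argument for point~3 are sound. (That $\mf F_s(\mf F)$ is a safe forest follows from removing its unsafe elements one at a time via the monotonicity lemma, not from point~2.)

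\textbf{The gap is in point~2.} Your lemma requires the adjoined element to be unsafe \emph{in the enlarged forest}, whereas for $\gamma_k\in\mf F_u$ you only know that it is unsafe in $\mf F_s\cup\{\gamma_k\}$. To derive unsafety in $\mf F_s\cup\{\gamma_1,\dots,\gamma_k\}$ from the lemma, you would need each $\gamma_j$, $j<k$, to stay unsafe once $\gamma_k$ is present. That is the same claim with the roles exchanged, so the induction is circular. Indeed, the lemma is logically consistent with two compatible elements of $\mf F_u$ that are each unsafe to be added to $\mf F_s$ but both safe in $\mf F_s\cup\{\gamma_j,\gamma_k\}$. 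So ``each element is unsafe when added'' is exactly what remains unproved.

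The missing ingredient is a persistence statement: if $\gamma,\gamma'$ are compatible and both unsafe to be added to $\mf F_s$, then both are unsafe in $\mf F_s\cup\{\gamma,\gamma'\}$. It follows once your locality remark is made precise. First, if $\gamma$ is unsafe, every vertex of $\mf K(\gamma,\cdot)$ lies below $\gamma^\uparrow$ in $T$, while the outer endpoint of each boundary edge does not. Hence moving the head of $e_\star(\gamma)$ to $v_\star(\gamma)$ does not change the scale of any edge reaching outside $\gamma$. Second, if $\gamma'\supsetneq\gamma$ becomes the new parent of $\gamma$, the boundary set of $\gamma$ only shrinks, so the supremum in \eqref{eq_def_safe} can only decrease. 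With this, your insertion from the largest element goes through.

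\textbf{Point~1 is only gestured at.} The fact you single out is true (submodularity of the in-cut, plus the fact from the proof of Lemma~\ref{lem:neg} that every nonempty vertex set has an incoming and an outgoing edge). It forces $\gamma_2\setminus\gamma_1$, $\gamma_1\cap\gamma_2$, $\gamma_1\setminus\gamma_2$ to be joined in series by single edges. But it is not what excludes the overlap.

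The decisive observation is that $\mf K(\gamma_1,\mf F_s\cup\{\gamma_1\})$ and $\mf K(\gamma_2,\mf F_s\cup\{\gamma_2\})$ share a vertex, namely the tail of the unique edge leaving $\gamma_1\cap\gamma_2$. Hence $\gamma_1^\uparrow$ and $\gamma_2^\uparrow$ are comparable in $T$. Suppose $\gamma_1^\uparrow\preceq\gamma_2^\uparrow$. The boundary edge of $\gamma_1$ joining it to $\gamma_2\setminus\gamma_1$ has one endpoint in each contracted graph, so both endpoints lie below $\gamma_1^\uparrow$. Its scale is therefore at least that of the coarsest proper edge of $\gamma_1$, and $\gamma_1$ would be safe to be added, a contradiction. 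The other case is symmetric. Note that the comparison is with the coarsest proper edge (the one realising $\gamma^\uparrow$), not the one of maximal scale.
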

This partition of $\mF^{-}_\Gamma$ into forest intervals in combination with
\eqref{eq_est_Hepp} yields the upper bound that
we will be working with for the remainder of the section:
\begin{corollary}\label{cor_bphzinterval}
Let $ \Gamma$ be a negative Feynman diagram, then
\begin{equation*}
\begin{aligned}
| \hat{\Pi}_{\varepsilon} \Gamma   ( \varphi) |
\lesssim
\sum_{T \in \mT_{V_{\star}}}
\sum_{ \mf{F}_{s}\in \mF_{\Gamma}^{s}(T)}
\sum_{{\bf n} \in \mA ( T)}
\sup_{x \in D_{( T, \mathbf{n})}}
\big| \overline{\mW}_{\varepsilon}^{(\mK)}
\mathfrak{K}_{ \mf{F}_{s}}^{(1)}	 \Gamma
(x_{V_{\star}})
\big|
\prod_{u \in \overline{T}} 2^{-d \mathbf{n} (u)}
\,,
\end{aligned}
\end{equation*}
with $\mK {= \mK ( \mathbb{M})}$ defined in~\eqref{e:def-K}, with $\mathbb{M} = [\mf{F}_s,\mf{F}_s
\cup \mf{F}_u]$ or equivalently $
\delta( \mathbb{M}) = \mf{F}_{u}$.
\end{corollary}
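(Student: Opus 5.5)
The plan is to chain \eqref{eq_est_Hepp} with the partition of Lemma~\ref{lem_partition}; essentially all the work is already done.

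First I fix a Hepp tree $T \in \mT_{V_\star}$ and take $\mP_T$ to be the family of forest intervals $\{[\mf{F}_s, \mf{F}_s \cup \mf{F}_u] : \mf{F}_s \in \mF^s_\Gamma(T)\}$ from Lemma~\ref{lem_partition}. Point 3 of that lemma says this family partitions $\mF^-_\Gamma$. By Remark~\ref{rem:safe} it depends only on $T$, not on $\mathbf{n}$. Hence for every $x_{V_\star} \in D_{(T,\mathbf{n})}$ I may use $\mP_{x_{V_\star}} := \mP_T$ in Corollary~\ref{cor_rep_singlediagram}. The Hepp sectors $D_{(T,\mathbf{n})}$ cover $(\TT^d)^{V_\star}$ up to a null set (Lemma~\ref{lem_hepp_almost_covers}), so the partition is defined almost everywhere. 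That is exactly the freedom granted in \eqref{eq_est_Hepp}, where $\mP_T$ may depend on $T$.

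Second, I insert this choice into \eqref{eq_est_Hepp}. The sum over $\mathbb{M} \in \mP_T$ becomes a sum over $\mf{F}_s \in \mF^s_\Gamma(T)$, with $\underline{\mathbb{M}} = \mf{F}_s$ and $\delta(\mathbb{M}) = \mf{F}_u$. The integrand is then $\overline{\mW}^{(\mK)}_\ve \mf{K}^{(1)}_{\mf{F}_s}\Gamma$, where $\mK = \mK(\mathbb{M})$ is as in \eqref{e:def-K}. The case $\hat{\mR}^{(1)}_{\mathbb{M}}\Gamma = 0$, which Lemma~\ref{lem_pushonKernel} excludes, contributes zero in both formulations, so it is harmless.

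Third, I interchange the finite sums over $\mf{F}_s$ and $\mathbf{n}$. Then I bound each integral over $D_{(T,\mathbf{n})}$ by the supremum of the integrand times the Lebesgue measure of the sector, using $\mathrm{Leb}(D_{(T,\mathbf{n})}) \lesssim \prod_{u \in \trim} 2^{-d\mathbf{n}(u)}$. Tracking constants, the implicit constant depends only on $|V_\star|$ through this volume bound.

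The only point needing care is that a single partition per $T$ is legitimate. This holds because Corollary~\ref{cor_rep_singlediagram} allows the partition to vary measurably with $x_{V_\star}$, and here it is piecewise constant on sectors. No genuine obstacle is expected.
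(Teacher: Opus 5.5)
Your proposal is correct and follows the paper's own argument: the corollary comes from inserting the $T$-dependent partition $\mP_T=\{[\mf{F}_s,\mf{F}_s\cup\mf{F}_u]\,:\,\mf{F}_s\in\mF^s_\Gamma(T)\}$ of Lemma~\ref{lem_partition} into \eqref{eq_est_Hepp}, so that the sum over $\mathbb{M}$ becomes a sum over safe forests with $\underline{\mathbb{M}}=\mf{F}_s$ and $\delta(\mathbb{M})=\mf{F}_u$. One small imprecision: when $\hat{\mR}^{(1)}_{\mathbb{M}}\Gamma=0$, the expression $\overline{\mW}^{(\mK)}_{\ve}\mf{K}^{(1)}_{\underline{\mathbb{M}}}\Gamma$ need not vanish, so that case does not contribute zero ``in both formulations''; this is nevertheless harmless, because you only need an upper bound on absolute values.
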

With this result at hand, we are ready to perform our analytic estimates.

\subsection{Integration of safe forests}

First, we restrict ourselves to the case of safe forest intervals $ [
\mf{F}_{s}, \mf{F}_{s} \cup \mf{F}_{u}]$, in the sense  that $ \mf{F}_{u}=
\emptyset$.
Similarly to~\eqref{eq_def_eta}, we define the analogue of $ \eta$ for labelled
	diagrams $ (\tilde\Gamma, \mf{h}, \mf{n} , \mf{a}) \in  \mD^{\rm{lab}}_\Gamma$
\begin{equation}\label{eq_eta_safe}
  \begin{aligned}
  \eta(u) :=
   d -
\sum_{\substack{e \in \tilde{E}_{\star}\\ e_{+}\wedge e_{-}=u}} (2 + |\mf{h}(e) |)
+
|\{ v \in V_{\star}\,: \, \mf{n}(v) =1\ \text{and} \ v \wedge \mf{a}(v) = u\}|
\,,
  \end{aligned}
  \end{equation}
and set $\overline{\deg}(u) = \sum_{v \succ u} \eta(v) $.

We find the following result
in analogy to \cite[pages 32 -- 34]{BPHZ}.

\begin{lemma}[Safe integration]\label{lem_induction_neg}
Let $ \Gamma$ be a negative Anderson--Feynman diagram with $2k$ legs,
$ T \in \mT_{V_{\star}}$, and
$\mf{F}_{s} \in \mF_{ \Gamma}^{s}(T)$ such that $ \mf{F}_{u} = \emptyset $.
Then
\begin{equation*}
  \overline{\mW}_{\varepsilon}^{(\mK)}
 \mathfrak{K}_{ \mf{F}_{s}}^{(1)}  \Gamma
(x_{V_{\star}}) = \Psi_{\ve} (x_{V_\star})\mW_\ve  \mathfrak{K}_{
\mf{F}_{s}}^{(1)} \Gamma
(x_{V_{\star}})
\end{equation*}
with $ \Psi_{\ve}$ defined in terms of $ \varphi$ in~\eqref{eq_def_Psi} and
$\mW_\ve$ from~\eqref{e_def_mW}, and where  $ \mK_{\varepsilon}\equiv
G_{\varepsilon}$ since $\delta(\mathbb{M}) = \emptyset$ in~\eqref{e:def-K}.
In addition,
it holds that
\begin{equation*}
\begin{aligned}
\sum_{{\bf n} \in \mA ( T)}
\sup_{x \in D_{( T, \mathbf{n})}}
\big| \overline{\mW}_{\varepsilon}^{( \mK)}
\mathfrak{K}_{ \mf{F}_{s} }^{(1)} \Gamma (x_{V_{\star}})\big|
\prod_{u \in \overline{T}} 2^{-d \mathbf{n} (u)}
\lesssim
	\sum_{\tilde\Gamma}
( \log{ \tfrac{1}{\ve}} )^{{\mathrm{Null}} (
T , \tilde\Gamma) }
\,,
\end{aligned}
\end{equation*}
where
$\mathrm{Null }  (T, \tilde\Gamma) = \left| \left\{ u \in \overline{T} \, :\,
\overline{\deg}(u) =0 \right\}\right|$. Here,
we wrote $ \mf{K}^{(1)}_{\mf{F}_{s}} \Gamma=
\sum \tilde\Gamma $ with each $\tilde{\Gamma} \in \mD^{\rm{lab}}_\Gamma$ being nontrivial.
\end{lemma}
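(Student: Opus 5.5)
The plan has two parts. First we establish the factorisation identity. Then we bound each labelled diagram sector by sector, following the proof of Lemma~\ref{lem_estimate_nonneg_Hepp}, with safety supplying nonnegativity of $\overline{\deg}$.

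\emph{Step 1: factorisation.} Since $\mf{F}_u=\emptyset$, we have $\delta(\mathbb{M})=\emptyset$. By~\eqref{e:def-K} every kernel is then $G_\ve$, so $\overline{\mW}^{(\mK)}_\ve=\overline{\mW}_\ve$. Neither $\hat{\mC}_\gamma$ nor $\hat{\mC}^{(1)}_\gamma$ touches legs: they only rewire the incoming edge $e_\star(\gamma)$, which is internal because divergences consist of inner vertices and legs are not edges of $\gamma$. Hence in each term $\tilde\Gamma$ the leg edges attach to the same vertices as in $\Gamma$. Integrating them against $\varphi$ factors out $\Psi_\ve(x_{V_\star})$ from~\eqref{eq_def_Psi}. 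What remains is the product of (differentiated) inner kernels times monomials, which we read as $\mW_\ve$ of the labelled diagram.

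\emph{Step 2: pointwise bound on a sector.} We bound $\|\Psi_\ve\|_\infty\lesssim\|\varphi\|_\infty$ and then, for each term $\tilde\Gamma$ and $x\in D_{(T,\mathbf{n})}$, estimate factor by factor:
\[
|D^{\mf{h}(e)}G_\ve(x_{e_+}-x_{e_-})|\lesssim(2^{-\mathbf{n}(e^\uparrow)}+\ve)^{-2-|\mf{h}(e)|},
\qquad
|x_v-x_{\mf{a}(v)}|\lesssim 2^{-\mathbf{n}(v\wedge\mf{a}(v))}.
\]
The derivative bound follows from Lemma~\ref{lem_greens_function_order_bound}. Together these give
\[
\prod_{u\in\trim}(2^{-\mathbf{n}(u)}+\ve)^{\eta(u)-d}
\]
with $\eta$ as in~\eqref{eq_eta_safe}, up to harmless factors $(2^{-n}+\ve)$ versus $2^{-n}$. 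Monomials only help, and a mismatch in $\ve$ can be absorbed since the ratio is bounded by a constant on each sector.

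\emph{Step 3: summation over scales.} We sum over $\mathbf{n}\in\mA(T)$ exactly as in Lemma~\ref{lem_estimate_nonneg_Hepp}, by induction on subtrees. This yields $(\log\frac1\ve)^{\mathrm{Null}(T,\tilde\Gamma)}$, provided $\overline{\deg}(u)\ge0$ for all $u\in\trim$. One technical point: $\eta$ is now integer valued but may be negative at individual nodes, and Lemma~\ref{lem_simple_sumargument} only needs the partial sums $\overline{\deg}$ to be nonnegative.

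\emph{Main obstacle: showing $\overline{\deg}(u)\ge0$.} This is the BPHZ power counting of \cite[pp.~32--34]{BPHZ}. Fix $u$ and the set $S$ of leaves above $u$. In the original $\Gamma$, a negative degree for the subdiagram $\Gamma_u$ can only come from divergences $\gamma$ whose vertices lie in $S$. The rewired diagram $\mf{K}_{\mf{F}_s}\Gamma$ compensates each $\gamma\in\mf{F}_s$ as follows.
\begin{itemize}
\item If $V(\gamma)\subseteq S$ is the maximal such divergence, the extraction moves the incoming edge of $\gamma$ to $v_\star(\gamma)$. If $\gamma$ is a subdiagram of $\Gamma_u$ with its incoming edge inside $S$, the rewired edge still contributes $-2$ at a node above $u$; the first-order term adds $-1$ from the derivative and $+1$ from the monomial, and is therefore neutral.
\item If, on the other hand, the scales place $\gamma$ across $u$, safety~\eqref{eq_def_safe} guarantees that some boundary edge sits at a scale at least as fine as the coarsest proper edge. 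That boundary edge then enters $\overline{\deg}(u)$ and restores nonnegativity.
\end{itemize}
By Lemma~\ref{lem:neg}, each divergence has degree exactly $-2$ with one incoming and one outgoing edge. So the counting reduces to checking that each divergence captured in $\Gamma_u$ is offset by one rewired or boundary edge contributing $+2$. For an Anderson diagram with $\mf{t}\equiv-2$, any connected subgraph not containing a divergence has degree $\ge0$, which we take from the classification in Lemma~\ref{lem:neg}.

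I expect the induction over the forest to be the delicate part: we must track, via Lemma~\ref{lem:n-prop}, where the monomial labels sit relative to $u$, so as to confirm that the $+1$ from $\mf{n}$ is counted at the same node where the derivative's $-1$ appears.
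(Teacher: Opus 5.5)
Your three steps are the paper's proof. You factorise via Lemma~\ref{lem_pushonKernel} with $\mK\equiv G_\ve$, bound each term on $D_{(T,\mathbf{n})}$ by $\prod_{u\in\trim}(2^{-\mathbf{n}(u)}+\ve)^{\eta(u)-d}2^{-d\mathbf{n}(u)}$, and sum as in Lemma~\ref{lem_estimate_nonneg_Hepp}. The paper does not prove $\overline{\deg}\geq 0$ itself; it imports it from \cite[Eq.~(3.10)]{BPHZ} as Lemma~\ref{lem_safeisnonneg}.

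The sketch you give for that step does not work. \emph{It never uses the hypothesis $\mf{F}_u=\emptyset$, and without it the inequality is false.} Take $\mf{F}_s=\emptyset$ and suppose the leaf set of some $u\in\trim$ is exactly $V(\gamma)$ for a divergence $\gamma$. Then $\tilde\Gamma=\Gamma$ and $\overline{\deg}(u)=\deg\gamma=-2$. Your bullets only treat $\gamma\in\mf{F}_s$, so nothing in them rules this out. What rules it out is that the proper edges of such a $\gamma$ sit at nodes $\succeq u$ while its boundary edges sit at nodes strictly below $u$. Hence $\gamma$ is unsafe to add and would lie in $\mf{F}_u$. The proof has to argue by contradiction: from any node with $\overline{\deg}(u)<0$, produce a divergence that is unsafe to add to $\mf{F}_s$.

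Your claim that the first-order term is ``neutral'' also fails node by node. The derivative's $-1$ is booked at $e_{\star,-}\wedge v_\star(\gamma)$, while the monomial's $+1$ is booked at $e_{\star,+}\wedge v_\star(\gamma)$, and these nodes differ in general. Concretely, let $\gamma$ be a sunset on $\{a,b\}$ with edges $(a,b),(a,b),(b,a)$, $e^{\mathrm{in}}_\gamma=(w,a)$ and $e^{\mathrm{out}}_\gamma=(b,w)$. Let $u$ have leaf set $\{w,b\}$. In the $\hat{\mC}^{(1)}_\gamma$-term, the subdiagram on $\{w,b\}$ carries a plain edge and a differentiated edge, and the monomial sits at $a\notin\{w,b\}$. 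So $\overline{\deg}(u)=4-2-3=-1$. This configuration is excluded only because $\{w,a,b\}$ is then a divergence (by Lemma~\ref{lem:neg}) that is unsafe to add, contradicting $\mf{F}_u=\emptyset$. Counting one compensating edge per divergence of $\mf{F}_s$ cannot replace this argument.

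There is also a smaller error in Step 1. Extractions do move legs. In \eqref{e:example-gamma} the incoming edge of $\gamma_1$ is the bottom leg, and $\hat{\mC}_{\gamma_1}$ reattaches it to vertex $2$. The map $\hat{\mC}^{(1)}$ can also put a derivative on such a leg, cf.\ \eqref{e:comm-example}. So the factorisation holds only term by term, with $\Psi_\ve$ built from the rewired and possibly differentiated legs. That function is still bounded by $\|\varphi\|_\infty$ uniformly in $\ve$, because $|\nabla G_\ve(x)|\lesssim(|x|+\ve)^{-3}$ is integrable on $\TT^4$, so the repair is easy. Note that this derivative bound is not contained in Lemma~\ref{lem_greens_function_order_bound}, which you cite for it.
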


The proof follows the same lines of Lemma~\ref{lem_estimate_nonneg_Hepp} in the case of
non--negative Feynman diagrams. However, here more notation is required to
track the scales $\mathbf{n}$ through iterative application of extraction maps $
\hat{\mC}$ and $ \hat{\mC}^{(1)}$.

\begin{proof}
The first statement is a consequence of Lemma~\ref{lem_pushonKernel} because $
\mK \equiv G $,
together with the
definition of $\Psi_\ve$ and $\mW_\ve$.
As a consequence, we bound
\begin{equation*}
\begin{aligned}
\sup_{x_{V_\star} \in D_{( T, \mathbf{n})}}
\big| \overline{\mW}_{\varepsilon}^{( \mK)}
\mathfrak{K}_{ \mf{F}_{s} }^{(1)}\Gamma (x_{V_{\star}})\big|
\leqslant
\| \Psi_{\ve}\|_{\infty}
\sup_{x_{V_\star} \in D_{( T, \mathbf{n})}}
\big| \mW_{\varepsilon}
\mathfrak{K}_{ \mf{F}_{s} }^{(1)} \Gamma (x_{V_{\star}})\big| \lesssim_\varphi \sup_{x_{V_\star} \in D_{( T, \mathbf{n})}}
\big| \mW_{\varepsilon}
\mathfrak{K}_{ \mf{F}_{s} }^{(1)} \Gamma (x_{V_{\star}})\big| \,,
\end{aligned}
\end{equation*}
with $ \sup_{ \ve \in (0,1)} \| \Psi_{\ve}\|_{\infty} \lesssim_\varphi 1$ since
the test function $\varphi$ is smooth (and in particular it is bounded).

Next, we decompose $ \mathfrak{K}_{ \mf{F}_{s}}^{(1)} \Gamma = \sum_{\tilde\Gamma}
\tilde\Gamma$, where $ \tilde\Gamma \in \mD_{\Gamma}^{\rm{lab}}$.
We proceed by estimating each such $\tilde\Gamma = ( \tilde\Gamma, \mf{h},
\mf{n}, \mf{a})$ individually (note that there are only finitely many).
Writing $\tilde{E}_\star = \tau^{-1}_{\tilde{\Gamma}, \Gamma} E_\star$,
and using that $ | \partial_{i}^{h} G_{ \varepsilon} (x) | \lesssim ( |x| +
\varepsilon )^{-2-h}$, $h \in \{0,1\}$, we see that
 \begin{equation}\label{eq_safe_supp1}
  \begin{aligned}
  \sum_{{\bf n} \in \mA ( T)}
  \sup_{x \in D_{( T, \mathbf{n})}} &
  \big| \mW_{\varepsilon}
  \tilde\Gamma (x_{V_{\star}})\big|
  \prod_{u \in \overline{T}} 2^{-d \mathbf{n} (u)}
  \\
  & \lesssim
  \sum_{{\bf n} \in \mA (T)}
  \prod_{ e \in \tilde{E}_{\star}}
  (2^{-\mathbf{n} ( e_{+} \wedge e_{-})}
  + \varepsilon)^{-2 - |\mf{h} ( e)| }
\prod_{v \in V_{\star}} 2^{- \mf{n}(v)\cdot  \mathbf{n} ( v \wedge \mf{a}(v))}
  \prod_{u \in  \overline{T}} 2^{-d \mathbf{n}(u)}\\
  & {=} 
  \sum_{{\bf n} \in \mA ( T) }
  \prod_{u \in \overline{T}}
  (2^{-\mathbf{n} ( u)}
  + \varepsilon)^{ \eta(u)-d}
   2^{-d \mathbf{n}(u)}
  \,.
\end{aligned}
\end{equation}
By Lemma~\ref{lem_safeisnonneg} below, we see that $ \overline{\deg}(u) =
\sum_{v \succ u} \eta(v) \geqslant 0 $.
Therefore, we proceed as in the proof of Lemma~\ref{lem_estimate_nonneg_Hepp},
to conclude
that~\eqref{eq_safe_supp1} is bounded by
\begin{equation}\label{eq_safe_supp10}
\begin{aligned}
\sum_{{\bf n} \in \mA ( T) }
\prod_{u \in \overline{T}}
(2^{-\mathbf{n} ( u)}
+ \varepsilon)^{ \eta(u) -d}
 2^{-d \mathbf{n}(u)}
\lesssim
\big(
\log{ \tfrac{1}{ \varepsilon}}
\big)^{\mathrm{Null} (T, \tilde\Gamma)}\,,
\end{aligned}
\end{equation}
with $ \mathrm{Null } (T, \tilde\Gamma) = \left| \left\{ u \in \overline{T} \, :\,
\overline{\deg}(u) =0 \right\}\right|$. This finishes the proof.
\end{proof}

Before we proceed, we state the following supporting statement for the proof of
Lemma~\ref{lem_induction_neg}. Its proof follows verbatim to the proof of
\cite[Eq. (3.10)]{BPHZ} with the definition of $\eta$ in \cite[Pp. 32+]{BPHZ}

\begin{lemma}\label{lem_safeisnonneg}
Let $ \Gamma$ be a negative Anderson--Feynman diagram, $ T \in
\mT_{V_{\star}}$, and $ \mf{F}_{s} \in \mF_{ \Gamma}^{s}(T)$ {such that $
\mf{F}_{u} = \emptyset$}.
Then
\begin{equation*}
\begin{aligned}
\overline{\deg}(u) = \sum_{v \succ u} \eta(v) \geqslant 0 \,,
\quad \text{ for every } u \in \overline{T}\,,
\end{aligned}
\end{equation*}
with $ \eta$ defined as in~\eqref{eq_eta_safe} on each of the diagrams $
	\tilde\Gamma = (
V, \tilde{E})$ in $
\mathfrak{K}_{\mf{F}_{s}}^{(1)} \Gamma = \sum \tilde\Gamma $.
\end{lemma}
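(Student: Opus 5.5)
The plan is to follow the classical argument of \cite[Eq.~(3.10)]{BPHZ}. First I would show that the quantity $\overline{\deg}(u)$ computed on a labelled diagram $\tilde\Gamma$ coincides with a generalised degree of a subdiagram of $\tilde\Gamma$. Fix $u\in\trim$ and let $\tilde\Gamma_u$ be the full subdiagram of $\tilde\Gamma$ spanned by the leaves $\{v\succ u\}$ of $T$. By \eqref{eq_eta_safe}, summing $\eta$ over $v\succ u$ gives
\[
\overline{\deg}(u)=d\,(|V(\tilde\Gamma_u)|-1)-\sum_{e\in \tilde E(\tilde\Gamma_u)}(2+|\mf{h}(e)|)+\big|\{v:\ \mf{n}(v)=1,\ v,\mf{a}(v)\in V(\tilde\Gamma_u)\}\big|.
\]
Here each derivative costs one degree and each monomial gains one, provided both its endpoints lie in $\tilde\Gamma_u$. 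If $\tilde\Gamma_u$ is disconnected this quantity dominates the sum over its components, exactly as in Section~\ref{sec:nonneg}. So it suffices to show that every connected full subdiagram $\overline{\gamma}$ of $\tilde\Gamma$ has nonnegative generalised degree in this sense. The argument is by contradiction: assume some $\overline{\deg}(u)<0$.

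The second step is to pass from $\tilde\Gamma$ back to $\Gamma$. Take $U=V(\tilde\Gamma_u)$ and consider the set of divergences $\gamma\in\mf{F}_s$ that are split by $U$, meaning $V(\gamma)\cap U\neq\emptyset$ and $V(\gamma)\not\subseteq U$. For each such $\gamma$, the rewired incoming edge $e_\star(\gamma)$ now attaches at $v_\star(\gamma)$. The derivative and monomial labels created by $\hC^{(1)}_\gamma$ then either cancel (the pair $\delta_i$ on the rewired edge together with $(x_{e_{\star,+}}-x_{v_\star})$) or lower the degree by at most one, and Lemma~\ref{lem:n-prop} guarantees that at most one such label sits at each vertex. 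I would reduce to the set $\hat U$ obtained by adding to $U$ all of $V(\gamma)$ for the split $\gamma$, or removing them when appropriate, in the style of BPHZ, and compare degrees of the corresponding full subdiagrams $\Gamma_{\hat U}$ of $\Gamma$. Since $\Gamma$ is an Anderson--Feynman diagram, Lemma~\ref{lem:neg} says its negative subdiagrams have exactly one incoming and one outgoing edge and degree $-2$. Consequently the only negative connected subdiagrams of $\Gamma$ are the elements of $\mG^-_\Gamma$, and everything else has degree $\geq0$.

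The third step is where safety is used. If $\Gamma_{\hat U}$ is a divergence $\gamma\notin\mf{F}_s$, then $\mf{F}_u=\emptyset$ forces $\gamma$ to be safe to be added. Hence some boundary edge of $\gamma$ lives at scale at least the minimal internal scale, which means inside $T_u$, so that boundary edge is counted in $\tilde\Gamma_u$. Including it contributes $-2$ but also merges $\gamma$ with a further vertex; combined with the first-order renormalisation gain it brings the degree up to $\geq0$. If instead $\gamma\in\mf{F}_s$, the rewiring $\hC_\gamma$ has already moved its incoming edge to $v_\star(\gamma)$, and the safety condition again shows that the rewired edge together with its monomial sits inside $T_u$, compensating the $-2$. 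I expect this step, matching the safe-forest scale inequality \eqref{eq_def_safe} to membership of edges in $\tilde\Gamma_u$ and keeping track of $\mathrm{scale}^{\mf{F}}_{T,\mathbf{n}}$ through iterated rewirings, to be the main obstacle. My first attempt would be an induction over the forest $\mf{F}_s$ from its leaves upward, applying the single-divergence argument to $\mf{K}(\gamma,\mf{F}_s)$, since the proper edges of distinct divergences in the forest are disjoint.
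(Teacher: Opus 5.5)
You take the same route as the paper, which gives no argument of its own and simply refers to the positivity proof behind \cite[Eq.~(3.10)]{BPHZ}. The trouble is in the content of your sketch. The reduction at the end of your first paragraph is false. The maps $\hC_\gamma$ only redirect the incoming edge of $\gamma$ to $v_\star(\gamma)$ and keep every edge of $\gamma$. For instance, if $\mf{F}_s=\{\gamma\}$, then in the term $\hC_\gamma\Gamma$ the full subdiagram spanned by $V(\gamma)$ is connected, unlabelled, and has degree $\deg\gamma=-2$. So nonnegativity can only hold for the particular clades $V_u=\{v\succ u\}$ cut out by $T$. The hypotheses of the lemma depend on $T$ precisely because they must stop some $V_u$ from cutting off such a piece, and you cannot trade the clades $V_u$ for arbitrary connected subdiagrams.

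Beyond this, the decisive step is only announced, and the compensation you appeal to in your third paragraph does not exist. The labels from $\hC^{(1)}_\gamma$ never give a net gain: the derivative is charged at $e_{\star,-}\wedge v_\star(\gamma)$, while the monomial is credited at $e_{\star,+}\wedge v_\star(\gamma)$. If $V_u$ contains $e_{\star,-}$ and $v_\star(\gamma)$ but not the original head $e_{\star,+}$, you lose $2$ because the redirected edge has become internal to $V_u$, and $1$ more for the derivative. Here is a concrete case. Let $w$ be both the tail of the incoming edge and the head of the outgoing edge of $\gamma$, so that $\gamma$ is inserted in a loop at $w$. Let $w$ and $v_\star(\gamma)$ be the two leaves below a node $u$ of $T$. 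Then $\mf{F}_s=\{\gamma\}$ is safe, yet in the term $\hC^{(1)}_\gamma\Gamma$ one gets $\overline{\deg}(u)=\eta(u)=4-(2+1)-2=-1$.

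What excludes this configuration is the hypothesis $\mf{F}_u=\emptyset$, applied to $\delta=\Gamma[V(\gamma)\cup\{w\}]$. By Lemma~\ref{lem:neg}, $\delta$ is a divergence compatible with $\{\gamma\}$. Its proper edges sit at $u$ and its boundary edges sit at a strict ancestor of $u$, so $\delta$ is unsafe to add. In general you must show, roughly, the following. If the count at a maximal such $u$ is negative, then completing $V_u$ by the elements of $\mf{F}_s$ it splits yields a divergence compatible with $\mf{F}_s$ whose proper edges sit at $u$. Safety to add that divergence then forces one of its boundary edges inside $V_u$, which is a contradiction. Your sketch names some of these ingredients but leaves three things open, as you acknowledge:
\begin{enumerate}
\item the completion rule;
\item the degree comparison through the contracted pieces $\mf{K}(\gamma,\mf{F}_s)$ and Lemma~\ref{lem:neg};
\item the treatment of derivative--monomial pairs split across different nodes of $T$.
\end{enumerate}
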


\subsection{Renormalisation of unsafe forests}

After having dealt with safe forests $ \mf{F}_{s}$ for which $
\mf{F}_{u} = \emptyset$, we now move on to the case $\mf{F}_{u} \neq \emptyset$. In this case, we recall that the renormalisation map
\eqref{e_Rhat_one} is of the form
\begin{equation}\label{eq_supp_unsafe_renorm}
\begin{aligned}
\hat{\mR}_{[\mf{F}_{s}, \mf{F}_{s}\cup \mf{F}_{u}]}^{(1)} \Gamma
= (-1)^{| \mf{F}_{s}|} \prod_{\gamma \in \mf{F}_{u} }( \mathrm{id} -
\hat{\mC}_{ \gamma} - \hat{\mC}_{\gamma}^{(1)} ) \mathfrak{K}_{\mf{F}_{s}}^{(1)}
\Gamma\,.
\end{aligned}
\end{equation}
The idea behind BPHZ renormalisation is to leverage cancellations that take the form of Taylor expansion in the valuation $
\Pi_{\varepsilon}( \mathrm{id} -
\hat{\mC}_{ \gamma} - \hat{\mC}_{\gamma}^{(1)} ) \Gamma$. On the level
of a given Hepp tree $ T$,
this will result in a shift in the weights $ \eta$, which will cure
divergent sub-integrals.

  Before we proceed, let us introduce the setting we will be working in
  throughout this sub-section. First, we fix a negative Anderson--Feynman diagram $\Gamma$, and a Hepp tree $ T \in
  \mT_{V_{\star}}$. Next we fix a safe forest $\mf{F}_s$ and consider any divergence
  $\gamma \in \mf{F}_u$, where the latter is the forest of unsafe divergences
  for $\mf{F}_s$. We then define the largest scale of $\mf{K} ( \gamma,
\mf{F}_{s} )$, see \eqref{e:kgamma-def}:
      \begin{equation}\label{e:gup}
        \begin{aligned}
        \gamma^{\uparrow}
        :=
        \min \{ v \wedge v'\,: \, v, v' \in \mf{K} ( \gamma, \mf{F}_{s} ) \} \in \overline{T}\,,
        \end{aligned}
      \end{equation}
      where the minimum is taken with respect to the partial order on $T$. Equivalently, we could write
      \begin{equation} \label{e:gup-edge}
        \gamma^{\uparrow}
        =
        \min \{ e^\uparrow \, \colon \, e \in \tilde{\mE}_{\mf{F}_{s}}( \gamma) \} \;,
      \end{equation}
      where the identity holds because $\mf{K} ( \gamma, \mf{F}_{s} )$
      is connected in $\mf{K}_{\mf{F}_{s}}^{(1)} \Gamma$ (since the original
      $\gamma$ is connected), and for the same reason the infimum is a minimum.

      Similarly, we define another node, which corresponds to the
      scale of the smallest boundary edge connected to $\mf{K} ( \gamma, \mf{F}_{s} )$.
        \begin{equation}\label{e_def_gupup}
        \gamma^{\uparrow \uparrow} := \begin{cases}
          \max \{
            e^{\uparrow}\,: \, e \in \partial \tilde{\mE}_{\mf{F}_{s}} (\gamma)
            \}\,, \qquad & \text{ if }  \partial \tilde{\mE}_{\mf{F}_{s}} (\gamma) \neq \emptyset \;,\\
            \dagger & \text{ otherwise}\;.
        \end{cases}
        \end{equation}
        Here $\partial \tilde{\mE}_{\mf{F}_{s}} (\gamma) = \tau^{-1}
        \partial\mE_{\mf{F}_{s}} (\gamma)$ with the notation of~\eqref{e:def-bdry-edges}, and $\dagger$ is a \lqm cemetary
        state\rqm that appears in the case in
        which all edges $e \in E(\mA(\gamma))\setminus E(\gamma)$ such that $e$ is
        incident in $\gamma$ are legs: see~\eqref{e:def-bdry-edges}. We view $T
        \cup \{\dagger\}$ as a partially ordered set with the assumption that
        $\dagger \prec u$, for all $u \in T$. This reflects the fact that legs are
        smooth, so they correspond to the largest scale possible.

        It follows from the definition of $\gamma^{\uparrow}, \gamma^{\uparrow
        \uparrow}$ that
        \begin{equation*}
          \gamma \in \mf{F}_u \quad \Longrightarrow \quad \gamma^{\uparrow} \succ \gamma^{\uparrow \uparrow} \;, \gamma^{\uparrow} \neq \gamma^{\uparrow \uparrow} \,.
        \end{equation*}
	In the remainder of this section, we remove the
blow-up appearing at $\gamma^{\uparrow}$ (whenever $\overline{\deg} ( \gamma^{\uparrow}) <0$), and shifting and absorbing it into  $
\gamma^{\uparrow \uparrow} $, through a Taylor expansion.
	As in the case of safe divergences, our goal is to find
for every $ \tilde\Gamma$ in $ \mf{K}^{(1)}_{\mf{F}_{s}} \Gamma= \sum \tilde\Gamma$
a map $
\hat{\eta}$ on
the inner vertices $ \overline{T}$ such that
\begin{equation}\label{eq_est_unsafe}
\begin{aligned}
\sum_{{\bf n} \in \mA ( T)}
\sup_{x \in D_{( T, \mathbf{n})}}
\big| \overline{\mW}_{\varepsilon}^{(\mK)}
\tilde\Gamma
(x_{V_{\star}})\big|
\prod_{u \in \overline{T}} 2^{-d \mathbf{n} (u)}
\lesssim
\sum_{ {\bf n} \in \mA ( T)}
\prod_{u \in \overline{T}}
(2^{-\mathbf{n} ( u)}
+ \varepsilon)^{ \hat{\eta}  (u) -d}
 2^{-d \mathbf{n}(u)}\,,
\end{aligned}
\end{equation}
and such that the right side of~\eqref{eq_est_unsafe} is summable, which is again captured by the condition $\sum_{v \succ u} \hat{\eta}(v) \geq 0$.
In Lemma~\ref{lem_hepp_unsafe} below, we will verify
\eqref{eq_est_unsafe} with $\hat{\eta}$ given by:
\begin{equation}\label{e_eta_unsafe}
  \begin{aligned}
  \hat{\eta} (u)
 : = \eta(u)
  +
  2
  \sum_{\gamma \in \mf{F}_{u}} \big(
  \mathds{1}_{ \gamma^{\uparrow}}( u ) - \mathds{1}_{ \gamma^{\uparrow \uparrow}}( u )
  \big)
  \,,
  \end{aligned}
  \end{equation}
where $ \eta$ is as in~\eqref{eq_eta_safe}.
{The factor~$2$ multiplying $\mathds{1}_{ \gamma^{\uparrow}}( u ) - \mathds{1}_{
\gamma^{\uparrow \uparrow}}( u )$ captures the fact that a first--order Taylor expansion
yields a quadratic remainder term.} 

Before we proceed to
the proof of~\eqref{eq_est_unsafe}, we show that the choice of $\hat{\eta}$ from
\eqref{e_eta_unsafe} does indeed lead to the convergence of the right side of
\eqref{eq_est_unsafe}. This is the content of the next lemma.
Its proof follows verbatim to the proof of
\cite[Eq.~(3.10)]{BPHZ} with the definition of $\eta$ in \cite[Eq.~(3.17)]{BPHZ}.

\begin{lemma}\label{lem_unsafe_nonneg}
For $\hat{\eta}$ as in~\eqref{e_eta_unsafe} we have $ \widehat{\deg} (u) :=  \sum_{v \succ u} \hat{\eta}(v) \geqslant 0
$ for all $u \in \trim$.
\end{lemma}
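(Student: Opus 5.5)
We need to show $\widehat{\deg}(u)=\overline{\deg}(u)+2\sum_{\gamma\in\mf{F}_u}\big(\mathds{1}_{\gamma^\uparrow\succeq u}-\mathds{1}_{\gamma^{\uparrow\uparrow}\succeq u}\big)\geq 0$ for every $u\in\trim$, where the monotone sum over descendants has already been carried out. The plan follows the classical argument for Eq.~(3.10) of \cite{BPHZ}. Fix $u$, and split the unsafe divergences into three classes according to the position of $u$ relative to the chain $\gamma^{\uparrow\uparrow}\prec\gamma^\uparrow$:
\begin{itemize}
  \item if $u\preceq\gamma^{\uparrow\uparrow}$, or if $u\not\preceq\gamma^\uparrow$, the two indicators cancel or both vanish, and $\gamma$ contributes $0$;
  \item if $\gamma^{\uparrow\uparrow}\prec u\preceq\gamma^\uparrow$ (with $\dagger$ below everything), $\gamma$ contributes $+2$.
\end{itemize}
Hence $\widehat{\deg}(u)=\overline{\deg}(u)+2\,|\mf{F}_u(u)|$, where $\mf{F}_u(u)$ is the set of unsafe $\gamma$ with $\gamma^{\uparrow\uparrow}\prec u\preceq\gamma^\uparrow$. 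It therefore suffices to show that $\overline{\deg}(u)+2|\mf{F}_u(u)|\ge 0$.

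To control $\overline{\deg}(u)$, decompose the subgraph of $\tilde\Gamma$ spanned by the vertices $\{v\succ u\}$ into its connected pieces. In the spirit of Lemma~\ref{lem_safeisnonneg}, the degree of such a subgraph, computed with $\eta$ from \eqref{eq_eta_safe}, is bounded below by the sum over the divergences it ``sees'' (the contracted pieces $\mf{K}(\gamma,\mf{F}_s\cup\mf{F}_u)$ that lie entirely above $u$) plus a non-negative remainder. Each $\mf{K}(\gamma)$ with $\gamma$ safe, or with $\gamma$ not fully contained above $u$, contributes non-negatively: for safe $\gamma$ a boundary edge lives at a scale at least as small as some proper edge, so it is counted inside the subtree and compensates. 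This is exactly the content of Lemma~\ref{lem_safeisnonneg}, which I invoke for the part of the forest avoiding $\mf{F}_u(u)$.

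The remaining deficit comes from those $\mf{K}(\gamma)$ that are entirely contained in $\{v\succ u\}$ while all their boundary edges lie strictly below $u$. The first condition means $\gamma^\uparrow\succeq u$; the second means $\gamma^{\uparrow\uparrow}\prec u$. Such a $\gamma$ is precisely an element of $\mf{F}_u(u)$. For an Anderson divergence, Lemma~\ref{lem:neg} gives $\deg\gamma=-2$: the graph has one incoming and one outgoing external edge, so $4(|V|-1)-2(2|V|-1)=-2$. Counting the $\mf{h}$ and $\mf{n}$ corrections, the labelled contracted piece has degree at least $-2$ by Lemma~\ref{lem:n-prop}, since derivatives and monomials are paired with at most one shift. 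Thus every $\gamma\in\mf{F}_u(u)$ lowers $\overline{\deg}(u)$ by at most $2$, which is exactly what the $+2$ restores.

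The main obstacle is the bookkeeping showing that the degree of the component above $u$ splits additively over the nested pieces $\mf{K}(\gamma)$ without double counting. This has to hold in the rewired diagram $\mf{K}^{(1)}_{\mf{F}_s}\Gamma$, where edges have been moved to $v_\star(\gamma)$ and derivative and monomial labels appear. I would handle it by induction over the forest $\mf{F}_s\cup\mf{F}_u$ from leaves to roots, as in \cite{BPHZ}. Connectedness of each $\mf{K}(\gamma)$ keeps the induction clean, and the monomial and derivative labels cancel in pairs by Lemma~\ref{lem:n-prop}: a label $\mf{n}(v)=1$ sits at the same node $v\wedge\mf{a}(v)$ as, or above, the differentiated edge.
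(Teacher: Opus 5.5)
Your reduction is fine. Since $\gamma^{\uparrow\uparrow}\prec\gamma^{\uparrow}$ for $\gamma\in\mathfrak{F}_u$, each unsafe divergence contributes $2\,\mathds{1}\{\gamma^{\uparrow\uparrow}\prec u\preceq\gamma^{\uparrow}\}\geq 0$, so it suffices to show $\overline{\deg}(u)\geq -2|\mathfrak{F}_u(u)|$.

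The gap is in your proof of this inequality. Nothing in it uses the property of $\mathfrak{F}_u$ on which the lemma actually rests: its maximality from Lemma~\ref{lem_partition}, i.e.\ $\mathfrak{F}_u$ contains \emph{every} divergence that is unsafe to be added to $\mathfrak{F}_s$. You only use properties of divergences already in the forest, so your reasoning would equally give the bound with $\mathfrak{F}_u$ replaced by a subset, and that is false. Here is an example.
\begin{itemize}
\item Take the $2$-leg diagram $\Gamma$ from $\EE\,\mI_6$ obtained by pairing the inner vertices $1\sim 6$, $2\sim 4$, $3\sim 5$. Its inner vertices are $w,a,b$, with edges $w\to a$, $b\to w$, three edges between $a$ and $b$, and both legs at $w$.
\item Let $\gamma$ be the sunset on $\{a,b\}$, so $v_\star(\gamma)=b$. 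Let $\gamma'$ be the full subdiagram on $\{w,a,b\}$, and let $T=[[b,w]_{u_1},a]_\bullet$.
\item Then $\{\gamma\}$ is a safe forest and its unsafe forest is $\{\gamma'\}$.
\item In the first-order term of $\mathfrak{K}^{(1)}_{\{\gamma\}}\Gamma$ (derivative on the rewired edge $w\to b$, monomial $x_a-x_b$) one gets $\overline{\deg}(u_1)=-1$ and $\overline{\deg}(\bullet)=-2$.
\item Both deficits are repaired only by the $+2$ coming from $\gamma'$. Its contracted piece lives on $\{b,w\}$, so $\gamma'^{\uparrow}=u_1$ and $\gamma'^{\uparrow\uparrow}=\dagger$.
\end{itemize}

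The same example refutes your local mechanism. The safe divergence $\gamma$ has its boundary edges at $u_1\succ\gamma^{\uparrow}=\bullet$, yet the subgraph spanned at $\bullet$ has degree $-2$. The outer vertex $w$ carries two edges into $\gamma$, so adjoining it creates the new divergence $\gamma'$ rather than compensating. Hence safe pieces do not ``contribute non-negatively''.

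For the same reason, Lemma~\ref{lem_safeisnonneg} cannot be invoked ``for the part of the forest avoiding $\mathfrak{F}_u(u)$''. It is a statement about the whole diagram under the hypothesis that no divergence is unsafe to be added to $\mathfrak{F}_s$, which is exactly what fails here.

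What is missing is the core of the argument behind \cite[Eq.~(3.10)]{BPHZ}, to which the paper's proof defers. Whenever the degree at $u$ is not compensated, you must:
\begin{itemize}
\item exhibit a divergence $\bar\gamma$ whose contracted piece lies above $u$, obtained by undoing the $\mathfrak{F}_s$-extractions in the subgraph spanned by $\{v\succeq u\}$;
\item show that $\mathfrak{F}_s\cup\{\bar\gamma\}$ is a forest in which $\bar\gamma$ is unsafe, with $\bar\gamma^{\uparrow\uparrow}\prec u\preceq\bar\gamma^{\uparrow}$;
\item conclude by maximality that $\bar\gamma\in\mathfrak{F}_u(u)$.
\end{itemize}
Only then does the value $-2$ from Lemma~\ref{lem:neg} match the gain of $2$.
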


In the next result, we verify that indeed choosing $\hat{\eta}$ as in
\eqref{e_eta_unsafe} leads to~\eqref{eq_est_unsafe}, and we derive an estimate
on negative Feynman diagrams.

\begin{lemma}\label{lem_hepp_unsafe}
Let $ \Gamma$ be a negative Anderson--Feynman diagram, $T \in
\mT_{V_{\star}}$ an
associated Hepp tree, $ \mf{F}_{s} \in \mF_{\Gamma}^{s}(T)$ be a safe forest, and $\mf{F}_{u}$
the respective unsafe forest. Write $ \mf{K}^{(1)}_{\mf{F}_{s}} \Gamma=
\sum \tilde\Gamma $ where each $\tilde{\Gamma}$ is nontrivial in $\mD^{\rm{lab}}_\Gamma$. Then for $\hat{\eta} $ (which depends on $
\tilde\Gamma$) as in~\eqref{e_eta_unsafe}
the bound~\eqref{eq_est_unsafe} holds. In particular, we obtain
\begin{equation*}
\begin{aligned}
\sum_{{\bf n} \in \mA ( T)}
\sup_{x \in D_{( T, \mathbf{n})}}
\big|
\overline{\mW}_{\varepsilon}^{(\mK)}
\mathfrak{K}_{\mf{F}_{s}}^{(1)} \Gamma (x_{V_{\star}})\big|
\prod_{u \in \overline{T}} 2^{-d \mathbf{n} (u)}
\lesssim
	\sum_{\tilde\Gamma}
( \log{ \tfrac{1}{\ve}} )^{\widehat{\mathrm{Null}} (
T , \tilde\Gamma) }
\,,
\end{aligned}
\end{equation*}
where $\widehat{\mathrm{Null}}(T ,\tilde\Gamma ) = \left| \left\{ u  \in \trim \, :\,  \sum_{v
\succ  u} \hat{\eta} (v) =0 \right\}\right| $.
\end{lemma}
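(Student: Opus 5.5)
We will prove the bound \eqref{eq_est_unsafe} term by term, fixing one nontrivial $\tilde\Gamma$ in $\mf{K}^{(1)}_{\mf{F}_s}\Gamma=\sum\tilde\Gamma$ and a scale assignment $\mathbf{n}\in\mA(T)$. The plan is to bound the integrand $\overline{\mW}^{(\mK)}_\ve\tilde\Gamma$ pointwise on $D_{(T,\mathbf{n})}$ by a product over edges and monomials, as in \eqref{eq_safe_supp1}. Edges not of the form $e_\star(\gamma,\Gamma)$ with $\gamma\in\mf{F}_u$ carry $D^{\mf{h}(e)}G_\ve$. For these we use $|\partial^h G_\ve(x)|\lesssim(|x|+\ve)^{-2-h}$. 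Their contribution, together with the monomials $(x_v-x_{\mf{a}(v)})^{\mf{n}(v)}$ and the test function bounded via $\|\Psi_\ve\|_\infty$, reproduces exactly the weights $\eta$ of \eqref{eq_eta_safe}. This is the computation already done in Lemma~\ref{lem_induction_neg}.

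The new ingredient is the edges $\tilde e=\tau^{-1}(e_\star(\gamma,\Gamma))$ with $\gamma\in\mf{F}_u$. By Lemma~\ref{lem_pushonKernel} they carry $D^{\mf{h}(\tilde e)}\mH G_\ve(x_{\tilde e_+},x_{\tilde e_-}\mid x_{\mf{b}(e)})$, where $\mf{b}(e)=v_\star(\theta)$ for the smallest such $\theta$. Using the integral form \eqref{e:hk-def} of the Taylor remainder (or \eqref{eq_derive_mH} when $\mf{h}\neq0$, giving a first-order remainder), we get
\begin{equation*}
|D^{\mf{h}}\mH G_\ve|\lesssim |x_{\tilde e_+}-x_{\mf{b}(e)}|^{2}\sup_{t\in[0,1]}\big(|x_{\mf{b}(e)}+t(x_{\tilde e_+}-x_{\mf{b}(e)})-x_{\tilde e_-}|+\ve\big)^{-4-|\mf{h}|}.
\end{equation*}
The task is then to identify the two scales. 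Since $\tilde e_+$ and $\mf{b}(e)$ both lie in $\mf{K}(\gamma,\mf{F}_s)$, which is connected, the first factor is $\lesssim 2^{-2\mathbf{n}(\gamma^\uparrow)}$. Because $\gamma$ is unsafe, every proper edge of $\gamma$ is strictly shorter than the boundary edges. Hence the point $x_{\tilde e_-}$ outside $\gamma$ is at distance $\simeq 2^{-\mathbf{n}(\tilde e^\uparrow)}$ from every point of the segment, uniformly in $t$, using the triangle inequality in the ultrametric-like Hepp structure. So the second factor is comparable to the unremainder kernel times $(2^{-\mathbf{n}(\tilde e^\uparrow)}+\ve)^{-2}$.

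The step I expect to be the main obstacle is matching this extra factor with $-2$ at $\gamma^{\uparrow\uparrow}$ rather than at $\tilde e^\uparrow$. Since $\tilde e\in\partial\tilde\mE(\gamma)$, we have $\tilde e^\uparrow\preceq\gamma^{\uparrow\uparrow}$, so $(2^{-\mathbf{n}(\tilde e^\uparrow)}+\ve)^{-2}\le(2^{-\mathbf{n}(\gamma^{\uparrow\uparrow})}+\ve)^{-2}$, which goes in the favourable direction. When $\gamma^{\uparrow\uparrow}=\dagger$, the incoming edge is a leg. Here we instead use smoothness of $\varphi$ integrated against the leg, bounding the second derivative by a constant, so no negative power appears.

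Care is also needed when several unsafe divergences are nested, since their $\mf{b}$ points and remainder edges could interact. By the reductions at the start of Lemma~\ref{lem_pushonKernel}, distinct $\gamma\in\mf{F}_u$ have distinct $e_\star$, so each gain is attached to a separate edge and the gains simply add, giving \eqref{e_eta_unsafe}. We also need the sup over $t$ not to collide with vertices of other divergences; we will argue this by checking that the segment stays within the ball of radius $\simeq2^{-\mathbf{n}(\gamma^\uparrow)}$.

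Collecting the factors gives \eqref{eq_est_unsafe} with $\hat\eta$. By Lemma~\ref{lem_unsafe_nonneg}, $\sum_{v\succ u}\hat\eta(v)\ge0$ for all $u$. The induction of Lemma~\ref{lem_estimate_nonneg_Hepp}, with Lemma~\ref{lem_simple_sumargument} and the integrality of $\hat\eta$, then bounds the sum over $\mathbf{n}$ by $(\log\frac1\ve)^{\widehat{\mathrm{Null}}(T,\tilde\Gamma)}$. Summing over the finitely many $\tilde\Gamma$ concludes the proof.
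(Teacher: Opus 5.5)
The single-edge part of your argument is essentially the paper's. The numerator $|x_{\tilde e_+}-x_{\mf{b}(e)}|^2$ is controlled inside the smallest divergence, and the denominator sits at the scale of $\tilde e$ by the Hepp geometry, which is what Lemma~\ref{lem_taylor_new} gives. Moving the two derivatives onto $\varphi$ handles legs; this is needed whenever the incoming edge is a leg, not only when $\gamma^{\uparrow\uparrow}=\dagger$. The final summation via Lemma~\ref{lem_unsafe_nonneg} and the induction of Lemma~\ref{lem_estimate_nonneg_Hepp} also matches.

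The gap is the sentence dismissing nested unsafe divergences. The reductions at the start of the proof of Lemma~\ref{lem_pushonKernel} do \emph{not} say that distinct elements of $\mf{F}_u$ have distinct incoming edges. They say that, after valuation, the operators $(\mathrm{id}-\hat{\mC}_{\gamma_i}-\hat{\mC}^{(1)}_{\gamma_i})$ for a chain $\gamma_1\subsetneq\cdots\subsetneq\gamma_m$ in $\mf{F}_u$ with a common incoming edge collapse to the one for $\gamma_1$. That is why $\mK$ carries only a single factor $\mH G_\ve(\cdot,\cdot\,|\,x_{v_\star(\gamma_1)})$ on that edge. The chain itself stays in $\mf{F}_u$. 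Already in \eqref{e:example-gamma}, the divergences spanned by $\{1,2\}$ and by $\{1,2,3,4\}$ share their incoming edge, and both are unsafe when $x_1,x_2$ are much closer to each other than to $x_3$. Moreover, $\hat\eta$ in \eqref{e_eta_unsafe} contains $2(\mathds{1}_{\gamma_i^{\uparrow}}-\mathds{1}_{\gamma_i^{\uparrow\uparrow}})$ for \emph{every} $i$.

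So you cannot attach one gain per element of $\mf{F}_u$. Suppose instead you record only the gain of $\gamma_1$, with the loss moved to $\gamma_1^{\uparrow\uparrow}$. Then the weights are not $\hat\eta$ and Lemma~\ref{lem_unsafe_nonneg} does not apply to them. Nonnegativity can in fact fail at $\gamma_2^\uparrow$: $\gamma_1^{\uparrow\uparrow}$ lies in the subtree of $\gamma_2^\uparrow$, so the $\pm2$ of $\gamma_1$ cancel there and nothing compensates the degree $-2$ of $\gamma_2$. Note also that the shared edge is a boundary edge (in the sense of \eqref{e:def-bdry-edges}) only of the outermost $\gamma_m$, not of $\gamma_1$.

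What is missing is the telescoping used around \eqref{eq_unsafe_est_supp2}. First, keep the loss at the outermost scale. Since $\tilde e$ is a boundary edge of $\gamma_m$, your one-edge bound gives
\[
|\mH G_\ve|\lesssim 2^{-2\mathbf{n}(\gamma_1^\uparrow)}\,(2^{-\mathbf{n}(\gamma_m^{\uparrow\uparrow})}+\ve)^{-2}\,(|x_{\tilde e_+}-x_{\tilde e_-}|+\ve)^{-2}.
\]
Next, the outgoing edge of $\gamma_i$ must lie in $\gamma_{i+1}$, because the two share their unique incoming edge and $\gamma_{i+1}$ is connected. So this edge is both a boundary edge of $\gamma_i$ and an edge of $\gamma_{i+1}$, which gives $\mathbf{n}(\gamma_{i+1}^\uparrow)\le\mathbf{n}(\gamma_i^{\uparrow\uparrow})$. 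This is the direction needed, since it makes each ratio $(2^{-\mathbf{n}(\gamma_{i+1}^\uparrow)}+\ve)^2/(2^{-\mathbf{n}(\gamma_i^{\uparrow\uparrow})}+\ve)^2$ at least one. Hence
\[
\frac{(2^{-\mathbf{n}(\gamma_1^\uparrow)}+\ve)^2}{(2^{-\mathbf{n}(\gamma_m^{\uparrow\uparrow})}+\ve)^2}\le\prod_{i=1}^m\frac{(2^{-\mathbf{n}(\gamma_i^\uparrow)}+\ve)^2}{(2^{-\mathbf{n}(\gamma_i^{\uparrow\uparrow})}+\ve)^2},
\]
which spreads the single Taylor gain over the whole chain and produces exactly the terms of \eqref{e_eta_unsafe}. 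For a leg the same works, since a factor $(2^{-\mathbf{n}(\gamma_m^{\uparrow\uparrow})}+\ve)^{-2}\gtrsim1$ can be inserted for free, with $\mathbf{n}(\dagger)=0$. Because each unsafe divergence has exactly one incoming edge, it lies in exactly one such chain, so nothing is double counted.
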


The proof of the lemma follows the approach of~\cite[Section 3]{BPHZ}, with the
difference that we do not renormalise logarithmic
divergences. 

\begin{proof}
Consider $\mf{K}_{\mf{F}_s}^{(1)} \Gamma = \sum \tilde{\Gamma}$, with
$\tilde{\Gamma} = (V, \tilde{E}) \in \mD^{\rm{lab}}_{\Gamma}$.
{Then from \eqref{e:def-WK}}, we have that
\begin{equation}\label{eq_valuation_mK}
\begin{aligned}
\overline{\mW}_{\varepsilon}^{(\mK)}
\tilde{\Gamma}
{ ( x_{V_{\star}}) }
=
\int_{ ( \TT^{d})^{L}}
\varphi ( x_{L})
 \prod_{e \in  \tilde{E}}
\mK_{\tau_{\tilde{\Gamma}, \Gamma}(e), \ve}(x_V)
\ud x_{L} \;,
\end{aligned}
\end{equation}
where {$\mK = \mK ( \mathbb{M})$ is the kernel assignment from~\eqref{e:def-K} with $\mathbb{M} =
[\mf{F}_s, \mf{F}_s \cup \mf{F}_u]$}.
Now, define an edge labelling $\ell \colon \tilde{E} \mapsto \{0,1\}$ as follows:
\begin{equation}\label{eq_def_ell}
\begin{aligned}
\ell(e)
:=
\begin{cases}
1  & \quad \text{ if } \exists \gamma \in \mf{F}_{u} \text{ such that } e_\star(\gamma, \Gamma) = \tau_{\tilde{\Gamma}, \Gamma}(e)\,,\\
0 & \quad \text{ otherwise.}
\end{cases}
\end{aligned}
\end{equation}
Note that from~\eqref{e:def-K}, we have (with $\mf{b}$ as in that equation):
\begin{equation*}
  \begin{aligned}
    \mK_{\tau_{\tilde{\Gamma}, \Gamma} (e),\ve} (x_V) = \big(\mH^{ \ell ( e)} G_{\ve}\big)( x_{e_{+}}, x_{e_{-}} |  x_{ \mf{b} ( e)} ) = \begin{cases}
     (\mH G_{\ve})( x_{e_{+}}, x_{e_{-}} |  x_{\mf{b} ( e)} ) \;, & \text{ if } \ell(e) = 1 \;, \\
     G_{\ve}( x_{e_{+}} - x_{e_{-}} ) \;, & \text{ if } \ell(e) = 0\;.
    \end{cases}
  \end{aligned}
\end{equation*}
To estimate the valuation, we start with the part of the
integral that includes legs. Note that every leg vertex in $ L$ is associated to exactly
one edge (the corresponding leg) in $
\tilde{E}_{L} = \tau^{-1}_{\tilde{\Gamma}, \Gamma}(E_L)$. We will prove the following bound:
\begin{equation}\label{e_leg_contr}
\begin{aligned}
\bigg|\int_{ ( \TT^{d})^{L}}
\varphi ( x_{L})
 \prod_{e \in \tilde{E}_{L}}
\big(\mH^{ \ell ( e)} G_{\ve}\big)( x_{e_{+}}, x_{e_{-}} |  x_{ \mf{b} ( e)} )
\ud x_{L} \bigg|
\lesssim
\prod_{ \substack{e \in \tilde{E}_{L}\\ \ell( e) = 1}}
|x_{e_{+}} - x_{\mf{b}(e)}|^{ 2} \;.
\end{aligned}
\end{equation}
To see this, we notice that for every leg $ e \in \tilde{E}_{L}$ with $\ell ( e) =1$ (and thus $
e_{-} \in L$), the integral on the left of~\eqref{e_leg_contr} contains the 
subintegral
\begin{equation*}
  \int_{\TT^{d}}
  \varphi ( x_{L})
  \mH G_{\ve} ( x , y| z)
   \ud y \;,
\end{equation*}
where we have written $x, y, z$ for the variables
$x_{e_+},x_{e_-}$, and $x_{\mf{b}(e)}$ respectively.
Now, by the definition of $\mH$ from~\eqref{e:hk-def} and integration by
parts, we compute (with $M = L \setminus \{e_-\}$):
\begin{equation*}
\begin{aligned}
\int_{\TT^{d}}
\varphi & ( x_{M}, y)
\mH G_{\ve} ( x , y| z )
 \ud y\\
& =
\int_{\TT^{d}}
\varphi ( x_{M}, y)
\sum_{i , j =1}^{d}
(x^{i} - z^{i})
(x^{j} - z^{j})
\int_{0}^{1} (1-t)
\partial_{i}\partial_{j} G_{\ve}
( z - y
+ t( x - z  ))
\ud t
 \ud y\\
& =
\sum_{i , j =1}^{d}
(x^{i} - z^{i})
(x^{j} - z^{j})
\int_{\TT^{d}}
\partial^2_{ y^{i} y^{j} }
\varphi ( x_{M}, y)
\int_{0}^{1} (1-t)
G_{\ve}
( z - y
+ t( x - z  ))
\ud t
 \ud y \;.
\end{aligned}
\end{equation*}
Since $\varphi$ is smooth and $G_\ve$ is integrable uniformly in $\ve$, this
immediately implies~\eqref{e_leg_contr}.

For all edges that are not legs, we use the following argument. For a given edge $e \in \tilde{E}_\star$, consider the collection
$ \gamma_{1} \subsetneq
\gamma_{2} \subsetneq \cdots \subsetneq \gamma_{m}$ of all unsafe
divergences in $\mf{F}_{u} $ such that $e = e_\star(\gamma_i, \tilde{\Gamma})$.
In particular, it holds that $v: = \mf{b}(e) = v_\star(\gamma_1)$.
Then Lemma~\ref{lem_taylor_new} implies that for $ x_{V_\star} \in D_{(T,\n)}$
\begin{equation*}
\begin{aligned}
|\mH G_{\ve}( x_{e_{+}}, x_{e_{-}}| x_{ v  })|
& \lesssim
\frac{ | x_{e_{+}} - x_{ v}|^{2} }{\big( \min\{ | x_{v} - x_{e_{-}}|, | x_{e_{+}}
- x_{e_{-}}  | \} + \ve\big)^{2} }
( | x_{e_{+}}- x_{e_{-}}|+ \ve)^{-2}\\
& \lesssim
\frac{ 2^{-2 \mathbf{n}( e_{+} \wedge v )} }{\big( \min\{ 2^{- \mathbf{n}( e_{-}
\wedge v )} , 2^{- \mathbf{n}( e^{\uparrow})}  \} + \ve \big)^{2} }
( | x_{e_{+}}- x_{e_{-}}|+ \ve)^{-2}\\
& \lesssim
\frac{ 2^{- 2 \mathbf{n}( \gamma_{1}^{\uparrow})} }{( 2^{- \mathbf{n}(
{\gamma }_{m}^{\uparrow \uparrow})}  + \ve )^{2} }
( | x_{e_{+}}- x_{e_{-}}|+ \ve)^{-2}\,,
\end{aligned}
\end{equation*}
where in the last inequality, we used $e^\uparrow \prec \gamma_m^{\uparrow
  \uparrow}$,  since $e \in \partial
  \tilde{\mE}_{\mf{F}_s}(\gamma_m)$, and $e^\uparrow \prec \gamma_m^{\uparrow
  \uparrow}$, which holds by a similar argument. 
  Hence, for every $ x_{V_\star} \in D_{( T, \n)}$ and $e \in \tilde{E}_\star$
  with $\ell(e)=1$
\begin{equation}\label{eq_unsafe_est_supp2}
\begin{aligned}
|\mH G_{\ve}( x_{e_{+}}, x_{e_{-}}| x_{ v })|
\lesssim
\prod_{ i=1}^{m}
\frac{ 2^{- 2\mathbf{n}( \gamma_{i}^{\uparrow})} }{( 2^{- \mathbf{n}(
\gamma_{i}^{\uparrow \uparrow})}  + \ve )^{2} }
( | x_{e_{+}}- x_{e_{-}}|+ \ve)^{-2}\,,
\end{aligned}
\end{equation}
using that $ \mathbf{n} ( \gamma_{i+1}^{\uparrow})
\geqslant
 \mathbf{n} ( \gamma_{i }^{\uparrow\uparrow})  $
since  $ \gamma_{i}$ is an
unsafe divergence,
and necessarily
 $
e^{\mathrm{out}}_{ \gamma_{i}} \in E ( \gamma_{i +1})$ because all $
\gamma_{i} $ share the same incoming edge.
If $ e$ is a leg we set $ e^{\uparrow} = \dagger =
\gamma_{m}^{\uparrow\uparrow}$, and use~\eqref{e_leg_contr} together with the same
argument as for inner edges, to see that
\begin{equation}\label{eq_legestH}
\begin{aligned}
| x_{e_{+}} - x_{v}|^{2}
\lesssim
2^{- 2\mathbf{n}( \gamma_{1}^{\uparrow})}
\leqslant
2^{- 2\mathbf{n}( \gamma_{m}^{\uparrow})}
\prod_{ i=1}^{{m-1}}
\frac{ 2^{- 2\mathbf{n}( \gamma_{i}^{\uparrow})} }{( 2^{- \mathbf{n}(
\gamma_{i}^{\uparrow \uparrow})}  + \ve )^{2} } \,.
\end{aligned}
\end{equation}
For convenience, we set $ \mathbf{n} ( \dagger ) =0$, so that
$2^{- 2\mathbf{n}( \gamma_{m}^{\uparrow})}$ in~\eqref{eq_legestH} can be absorbed into the product.

Overall, we have derived the bound
\begin{equation}\label{eq_supp111_unsafe}
\begin{aligned}
\big| \overline{\mW}_{\varepsilon}^{(\mK)}
\tilde{\Gamma} (x_{V_{\star}})
\big|
& \lesssim_{ \varphi}
\prod_{ \substack{e \in \tilde{E}_{L}\\ \ell( e) =1 }}
|x_{e_{+}}- x_{\mf{b}(e)} |^{2}
 \prod_{e \in \tilde{E}_{\star} }
\big| \mK_{\tau_{\tilde{\Gamma}, \Gamma}(e), \ve} (x_{V_\star})\big|\\
& \lesssim
\left\{\prod_{ \gamma \in \mf{F}_{u}}
\frac{ 2^{- 2\mathbf{n}( \gamma^{\uparrow})} }{( 2^{- \mathbf{n}(
\gamma^{\uparrow \uparrow})}  + \ve )^{2} } \right\}
\left\{
\prod_{e \in \tilde{E}_{\star}}
( |  x_{e_{+}} - x_{e_{-}} | + \ve)^{-2}
\right\}\,,
\end{aligned}
\end{equation}
where we used~\eqref{e_leg_contr},~\eqref{eq_unsafe_est_supp2} and~\eqref{eq_legestH} in the last
inequality.
Because incoming edges to divergences are unique, we do not count unsafe
divergences multiple times in~\eqref{eq_unsafe_est_supp2}.
Recalling the definition of $ \hat{\eta}$ in~\eqref{e_eta_unsafe}, this finally
yields
\begin{equation}\label{eq_supp100_unsafe}
\begin{aligned}
\sum_{ {\bf n} \in \mA ( T)}
\sup_{x \in D_{\bf T}}
\big| \overline{\mW}_{\varepsilon}^{(\mK)}
\tilde{\Gamma} (x_{V_{\star}})
\big|
\prod_{u \in \overline{T}} 2^{-d \mathbf{n} (u)}
& \lesssim
\sum_{ {\bf n} \in \mA ( T)}
\prod_{u \in \overline{T}}
(2^{-\mathbf{n} ( u)}
+ \varepsilon)^{ \hat{\eta}  (u) -d}2^{-d \mathbf{n} (u)}
\,.
\end{aligned}
\end{equation}
The sum on the right side can the be evaluated as in~\eqref{eq_safe_supp10} (see
also Lemma~\ref{lem_estimate_nonneg_Hepp}), because $
\hat{\eta} $ is nonnegative, see Lemma~\ref{lem_unsafe_nonneg}.
\end{proof}

\subsection{Uniform bound in the weak coupling limit for negative diagrams}

In the previous two sections, we established control of safe and unsafe forests, which
allows us to prove the following
upper bound on negative Anderson--Feynman diagrams.

\begin{corollary}\label{cor_unsafe}
	Let $ \Gamma$ be a negative $2k$-Anderson--Feynman diagram with $k \in \{1,2\}$. Then
\begin{equation*}
\begin{aligned}
\lambda_{\ve}^{| E_{\star}|}
| \hat{\Pi}_{\varepsilon} \Gamma ( \varphi) |
\lesssim
\big(\log{ \tfrac{1}{\ve}} \big)^{- \frac{2-k}{2} }\,.
\end{aligned}
\end{equation*}
In particular, the right--hand side vanishes as $ \ve \to 0 $ if $ k =1$.
\end{corollary}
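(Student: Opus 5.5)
Combine Corollary~\ref{cor_bphzinterval} with the safe and unsafe estimates (Lemma~\ref{lem_induction_neg} and Lemma~\ref{lem_hepp_unsafe}). This gives
\begin{equation*}
\lambda_\ve^{|E_\star|}|\hat{\Pi}_\ve\Gamma(\varphi)| \lesssim \lambda_\ve^{|E_\star|}\sum_{T\in\mT_{V_\star}}\sum_{\mf{F}_s\in\mF^s_\Gamma(T)}\sum_{\tilde\Gamma}\big(\log\tfrac1\ve\big)^{\widehat{\mathrm{Null}}(T,\tilde\Gamma)} ,
\end{equation*}
where $\tilde\Gamma$ ranges over the finitely many nontrivial terms of $\mf{K}^{(1)}_{\mf{F}_s}\Gamma$. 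The first two sums are also finite, with cardinality depending only on $|V_\star|$. Since $\lambda_\ve^{|E_\star|}=\hat\lambda^{|E_\star|}(\log\frac1\ve)^{-|E_\star|/2}$ and, by \eqref{eq_Estar_card}, $|E_\star| = 2|\trim| + 2-k$, it suffices to show
\begin{equation*}
\widehat{\mathrm{Null}}(T,\tilde\Gamma)\le |\trim| \qquad \text{for every } T,\ \mf{F}_s,\ \tilde\Gamma .
\end{equation*}
This inequality is trivial, because $\widehat{\mathrm{Null}}$ counts a subset of $\trim$. The total exponent is then $|\trim| - |\trim| - \frac{2-k}{2} = -\frac{2-k}{2}$, which is the claim.

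The content therefore lies in checking that the hypotheses of Lemma~\ref{lem_hepp_unsafe} and Lemma~\ref{lem_induction_neg} hold uniformly. The key point is that the sums defining the Hepp estimates are finite: by Lemma~\ref{lem_unsafe_nonneg} (respectively Lemma~\ref{lem_safeisnonneg}), $\widehat{\deg}(u)\ge0$ for every $u\in\trim$. Once that holds, the induction of Lemma~\ref{lem_estimate_nonneg_Hepp} goes through verbatim with $\eta$ replaced by $\hat\eta$. One caveat: $\hat\eta$ and $\eta$ may take the value $\eta(u)+1$ or $\eta(u)-1$ through the $\mf{n}$ and $\mf{h}$ labels, but they remain integer-valued, so Lemma~\ref{lem_simple_sumargument} still applies with $\alpha\in\NN$. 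I would also check that $\eta(u)$ as defined in \eqref{eq_eta_safe} uses $\tilde{E}_\star$ and the same count of edges, so that the relation between $|E_\star|$ and $|\trim|$ is unaffected by rewiring. Rewiring preserves the number of edges and $\mf{h}$ only adds decay, so the power of $\lambda_\ve$ is untouched.

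The main obstacle is making sure no hidden extra logarithm appears from constants depending on the partition $\mP_T$ or from the leg estimate \eqref{e_leg_contr}. The leg estimate uses smoothness of $\varphi$ and uniform integrability of $G_\ve$, so it contributes only an $\ve$-independent constant. Summing the finitely many contributions then gives the bound with an implicit constant depending on $\Gamma$, $\varphi$ and $\hat\lambda$. For $k=1$ the exponent is $-\frac12$, so the bound vanishes. For $k=2$ it gives only uniform boundedness, and the vanishing needed in Lemma~\ref{lem:negvanish} must come from the later symmetry refinement and the no-Laplacian-renormalisation cancellation.
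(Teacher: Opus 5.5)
Your proposal is correct and is essentially the paper's proof. You combine Corollary~\ref{cor_bphzinterval} with Lemmas~\ref{lem_induction_neg} and~\ref{lem_hepp_unsafe} to reduce everything to the exponent $\widehat{\mathrm{Null}}(T,\tilde\Gamma)-|E_\star|/2$, and then conclude from the edge count $|E_\star|=2|\trim|+2-k$ in \eqref{eq_Estar_card} together with the trivial bound $\widehat{\mathrm{Null}}(T,\tilde\Gamma)\le|\trim|$. One small point you leave implicit: diagrams containing a tadpole are excluded by the standing convention of that section, so they must be handled separately by Lemma~\ref{lem_tadpole}, which gives $\hat{\Pi}_{\ve}\Gamma=0$.
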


\begin{proof}
By Lemma~\ref{lem_induction_neg} and Lemma~\ref{lem_hepp_unsafe}, we obtain that for every $ T \in
\mT_{V_{\star}}$, and every safe forest $
\mf{F}_{s} \in \mF_{\Gamma}^{s} (T)$
\begin{equation}\label{eq_upperbound_negdiagrams}
\begin{aligned}
\lambda_{\ve}^{| E_{\star}|}
\sum_{{\bf n} \in \mA ( T)}
\sup_{x \in D_{( T, \mathbf{n})}}
\big|
\overline{\mW}_{\varepsilon}^{(\mK)}
\mathfrak{K}_{\mf{F}_{s}}^{(1)} \Gamma ( x_{V_{\star}})
\big|
\prod_{u \in \overline{T}} 2^{-d \mathbf{n} (u)}
\lesssim
\sum_{\tilde\Gamma}
( \log{ \tfrac{1}{\ve}} )^{ \widehat{\mathrm{Null}}( \overline{T}, \tilde\Gamma)
- \frac{| E_{\star}|}{2} }
\,,
\end{aligned}
\end{equation}
where $\widehat{\mathrm{Null}}( T, \tilde\Gamma) = \left| \left\{ u \in \trim \, :\,  \sum_{v
\succ u} \hat{\eta} (v) =0 \right\}\right| $.
Therefore, 
by Corollary~\ref{cor_bphzinterval},
our claim follows once we prove that
\begin{equation}\label{e:some-aim}
  \widehat{\rm{Null}}( T, \tilde\Gamma) - \frac{| E_{\star}|}{2} \leq - \frac{2-k}{2} \;.
\end{equation}
For $k=2$, \eqref{e:some-aim} is trivially true, since $ |E_{\star}| = 2 |
\overline{T}|$ and $\widehat{\mathrm{Null} }(T, \tilde\Gamma) \leqslant | \overline{T}| $.
On the other hand, for $ k=1$ 
we use $ |
E_{\star}| = 2 | \overline{T}| +(2-k)$, cf.~\eqref{eq_Estar_card}, which indeed yields
\begin{equation*}
	\begin{aligned}
		\widehat{\rm{Null}}( T, \tilde\Gamma) - \frac{| E_{\star}|}{2}
		\leqslant | \trim | - ( | \trim| + \tfrac{1}{2}) = -
		\frac{1}{2} \,.
	\end{aligned}
\end{equation*}
This concludes the proof.
\end{proof}

The above corollary states that all negative Feynman diagrams with two legs
vanish in the weak coupling limit.
As we will see in the next section, the four--legged case is more subtle (we
only need to cover these two cases since they are enough to prove that second
moments of trees vanish in weak coupling).
However, in the case in which $ \mf{F}_{u}= \emptyset$, our previous estimates are in fact sharp
enough to yield a vanishing contribution also with $k=2$. This is the content of
the next result.

\begin{lemma}\label{lem_safe_vanish_2k}
Let $ \Gamma$ be a $ 4$--legged, negative Anderson--Feynman diagram,
$ T \in \mT_{V_{\star}}$, and
$\mf{F}_{s} \in \mF_{ \Gamma}^{s}(T)$ such that $ \mf{F}_{u} = \emptyset $.
Then
\begin{equation*}
\begin{aligned}
\sum_{{\bf n} \in \mA ( T)}
\sup_{x \in D_{( T, \mathbf{n})}}
\big| \overline{\mW}_{\varepsilon}
\mathfrak{K}_{ \mf{F}_{s} }^{(1)} \Gamma (x_{V_{\star}})\big|
\prod_{u \in \overline{T}} 2^{-d \mathbf{n} (u)}
\lesssim \big( \log{ \tfrac{1}{\varepsilon}} \big)^{| \overline{T}| -1}\,.
\end{aligned}
\end{equation*}

\end{lemma}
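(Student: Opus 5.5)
By Lemma~\ref{lem_induction_neg}, the left-hand side is bounded by $\sum_{\tilde\Gamma} (\log\tfrac1\ve)^{\mathrm{Null}(T,\tilde\Gamma)}$, where the sum runs over the finitely many nontrivial components of $\mathfrak{K}^{(1)}_{\mf{F}_s}\Gamma$. So it suffices to prove that $\mathrm{Null}(T,\tilde\Gamma) \le |\trim|-1$ for every such $\tilde\Gamma$. In other words, at least one node $u\in\trim$ must have $\overline{\deg}(u) = \sum_{v\succ u}\eta(v) > 0$, where $\eta$ is as in~\eqref{eq_eta_safe}. Lemma~\ref{lem_safeisnonneg} already gives $\overline{\deg}\ge 0$ everywhere.

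The plan is to argue by contradiction. Suppose $\overline{\deg}(u)=0$ for all $u\in\trim$. Then $\eta\equiv 0$ on $\trim$, since $\eta(u) = \overline{\deg}(u) - \sum_{\text{children}}\overline{\deg}$. At the root, $\overline{\deg}(\bullet)=\sum_{u}\eta(u)$ can be computed globally. It equals $4(|V_\star|-1) - 2|\tilde E_\star| - \sum_e|\mf{h}(e)| + |\{v:\mf{n}(v)=1,\ v\neq \mf{a}(v)\}|$. Each application of $\hat{\mC}^{(1)}_\gamma$ adds one derivative, on the rewired edge $e_\star(\gamma)$, and one monomial, at $e_{\star,+}$ centred in $\gamma$. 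These balance exactly when that derivative edge is internal. They do not balance when it is a leg, because legs are excluded from $\tilde E_\star$. The rewiring maps $\hat{\mC}_\gamma$ preserve $|\tilde E_\star|$ except that they may move an internal incoming edge so that it attaches to the leg structure. Using \eqref{eq_Estar_card} with $k=2$, we have $|E_\star| = 2|V_\star|-2$. Hence, for $\tilde\Gamma=\Gamma$ with no extractions, $\overline{\deg}(\bullet)=\deg\Gamma = 0$. The key point is that the negative diagram $\Gamma$ contains some $\gamma\in\mG^-_\Gamma$. By Lemma~\ref{lem:neg}, such a $\gamma$ has degree $-2$, with exactly one incoming and one outgoing edge.

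Next I split into two cases. In the first case some $\gamma\in\mG^-_\Gamma$ lies in $\mf{F}_s$. The root-level balance then forces a gap: the monomial at $e_{\star,+}(\gamma)$ is centred at $v_\star(\gamma)$, so it is counted at the node $e_{\star,+}\wedge v_\star(\gamma)$, which lies inside $\gamma$. The derivative, however, sits on an edge now of scale $\mathbf{n}(e_{\star,-}\wedge v_\star(\gamma))$. I will then check that the subtree spanned by $V(\gamma)$ carries total degree $\deg\gamma$ plus gains. Safety of $\gamma$ in $\mf{F}_s$ means that a boundary edge has scale at least the minimum scale of $\mf{K}(\gamma,\mf{F}_s)$. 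This yields a node whose $\overline{\deg}$ is strictly positive, which contradicts $\eta\equiv0$.

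In the second case no divergence is in $\mf{F}_s$. Then $\tilde\Gamma=\Gamma$, and every $\gamma\in\mG^-_\Gamma$ must be safe to add; otherwise $\mf{F}_u$ would be nonempty, contrary to the hypothesis. Take the minimal node $u_\gamma$ of $T$ dominating all of $V(\gamma)$ (equivalently $\gamma^\uparrow$). If $\overline{\deg}$ vanished identically, the set $\{v\succ u_\gamma\}$ would span a subdiagram of degree $\overline{\deg}(u_\gamma)=0$ containing $\gamma$. That subdiagram has only four more half-edge connections than needed. The degree $-2$ of $\gamma$ would then have to be offset by exactly one extra edge inside, of scale at least $\mathbf n(u_\gamma)$ relative to $\gamma$. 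This contradicts the safe condition \eqref{eq_def_safe} once it is written out for the forest $\{\gamma\}$. This case is the main obstacle. The delicate point is reconciling the Hepp-tree bookkeeping ($\overline{\deg}\equiv 0$ forcing every $\Gamma_u$ to be connected of degree zero, in the spirit of Lemma~\ref{lem_forestT}) with the safety inequality. I expect to first show that $\overline{\deg}\equiv0$ makes $\Gamma_{u}$ connected of degree $0$ for every $u$. Since negative subdiagrams cannot sit inside a zero-degree subdiagram without a compensating positive piece, some node must then carry $\eta<0$, hence $\overline{\deg}<0$ at a descendant, which contradicts Lemma~\ref{lem_safeisnonneg}.
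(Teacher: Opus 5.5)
Your reduction via Lemma~\ref{lem_induction_neg} to showing $\eta\not\equiv0$ on $\trim$ for every nontrivial $\tilde\Gamma$ in $\mathfrak{K}^{(1)}_{\mf{F}_s}\Gamma$ matches the paper's.

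Your Case~2 ($\mf{F}_s=\emptyset$, so $\tilde\Gamma=\Gamma$) points the right way, but its last step is asserted rather than proved. It closes as follows. If $\eta\equiv0$, each $u\in\trim$ is the least common ancestor of exactly two internal edges. Hence for any connected $\gamma$ you get $\deg\gamma=\sum_u(4-2c_u(\gamma))\ge0$. Here the sum runs over the $|V(\gamma)|-1$ nodes at which $V(\gamma)$ meets both children, and $c_u(\gamma)\le2$ counts the edges of $\gamma$ with $e^\uparrow=u$.

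The genuine gap is Case~1 ($\mf{F}_s\neq\emptyset$).
\begin{itemize}
\item The ``root-level gap'' you invoke exists only in components where some $\hat{\mC}^{(1)}_\gamma$ acted with $e^{\rm in}_\gamma$ a leg.
\item If $e^{\rm in}_\gamma$ is internal, the derivative ($-1$) and the monomial ($+1$) both enter $\sum_u\eta(u)$ and cancel.
\item The component $\prod_{\gamma\in\mf{F}_s}\hat{\mC}_\gamma\Gamma$ carries no labels at all.
\end{itemize}
In all these components $\overline{\deg}(\bullet)=\deg\Gamma=0$. (Also, rewiring never changes $|\tilde E_\star|$, since $\tau_{\tilde\Gamma,\Gamma}$ maps legs to legs.)

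Your substitute claim, that safety of $\gamma$ ``yields a node whose $\overline{\deg}$ is strictly positive'', is not justified. Safety is a single inequality between the scale of one boundary edge and the scales of $\mf{K}(\gamma,\mf{F}_s)$. It is exactly what feeds the non-strict bound of Lemma~\ref{lem_safeisnonneg}, and nothing in it forces strict positivity at any node.

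What is missing is the structural consequence of $\eta\equiv0$ for a diagram that has been rewired and labelled. The paper inducts on $|V_\star|$.
\begin{enumerate}
\item At a node whose two children are leaves $v_1,v_2$, $\eta=0$ forces the subdiagram they span in $\tilde\Gamma$ to have labelled degree zero.
\item Lemma~\ref{lem:no-weird-bubble} shows this subdiagram is a genuine bubble: no derivative on its edges, and any monomial centred outside it.
\item Contracting it gives a diagram that again comes from a smaller Anderson--Feynman diagram $\Gamma_2$. When the bubble was created by rewiring $e^{\rm in}_\gamma$ onto $v_\star(\gamma)$, $\Gamma_2$ is obtained by deleting $e^{\rm in}_\gamma$ and $e^{\rm out}_\gamma$ and inserting $\gamma$ in place of the bubble vertex lying outside $\gamma$.
\item Iterating shows $\tilde\Gamma$ would be a nested bubble diagram, which cannot contain the image of a degree $-2$ subdiagram.
\end{enumerate}

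For the unlabelled component, the crossing count above already gives the contradiction, since the image of any $\gamma\in\mf{F}_s$ is still connected of degree $-2$. Once derivatives and monomials are present, however, a monomial counted at $u$ may have an endpoint outside $\gamma$, and the count no longer closes. That is precisely where Lemma~\ref{lem:no-weird-bubble} and the induction are needed.
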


\begin{proof}
	We use the bound derived in Lemma~\ref{lem_induction_neg}.
It is only left to show that the weak coupling limit of the above quantity
vanishes, which follows if we prove that
\begin{equation*}
	\mathrm{Null} (T, \tilde\Gamma) < \frac{|E_\star|}{2} = | \overline{T}|  \;,
\end{equation*}
for every $ \tilde\Gamma$ in $ \mf{K}_{\mf{F}_{s}}^{(1)} \Gamma= \sum \tilde\Gamma$.
In fact,
it suffices to show there exists an inner node
$u \in \overline{T}$ such that $ \eta(u) >0$, since then $ \mathrm{Null} (T, \tilde\Gamma) \leqslant | \trim |-1 =  \tfrac{|
E_{\star}|}{2} - 1$, where we used~\eqref{eq_Estar_card} in
the last identity.

Indeed, we will show by induction that $ \tilde\Gamma= \Gamma$, and that it must be a nested bubble diagram,
whenever $ \eta (u) = 0$ for every $ u \in \overline{T} $.
However,  this
contradicts our assumption that $ \Gamma$ contains a negative sub--diagram.

First, notice that for $ \Gamma'$ with $ | V_{\star}| =| \overline{T}| +1 =2$,
we only have the diagram $ \scalebox{0.7}{\<cc22>}$ (modulo orientation of edges) which clearly is a nested bubble diagram.
Next, we assume that the following statement is true for some $n$.
For every Anderson--Feynman diagram $ \Gamma$, with $ | V_{\star}| =n$:
If some  $ \tilde\Gamma \in \mD_{\Gamma}^{\rm{lab}}$ in $
\mf{K}^{(1)}_{\mf{F}_{s}} \Gamma= \sum_{\tilde\Gamma} \tilde\Gamma $
satisfies $ \eta \equiv
0 $, then $
\tilde\Gamma = \Gamma$ and it is a nested bubble diagram.

Now, let $ \Gamma$ be an Anderson--Feynman diagram of the size $ |
V_{\star}| = n+1$, {and let $ \tilde\Gamma $ be  a diagram in $ \mf{K}^{(1)}_{\mf{F}_{s}}
\Gamma = \sum \tilde\Gamma$ such that $ \eta \equiv 0 $.} 
Choose an arbitrary node $ u \in \overline{T}$ such that it is the parent of
two leaves $ v_{1}, v_{2} \in V_{\star}$, and denote by $ \overline{\Gamma}_{1}$ the
subdiagram in $ \tilde\Gamma$ spanned by $
v_{1}$ and $ v_{2}$.
By Lemma~\ref{lem:no-weird-bubble}, $ \overline{\Gamma}_{1}$ must be a bubble.
We then define $ \tilde{\Gamma}_{2}:= \mK_{ \overline{\Gamma}_{1}} \tilde\Gamma$ (as a
labelled diagram), cf. Definition~\ref{def:bubbl}, and the Hepp tree $ T_{2}$ where the subtree rooted at $u$ in $
\overline{T} $ was contracted to a single leaf.
Note that $ \tilde{\Gamma}_{2}$ must also arise from an Anderson--Feynman diagram $
\Gamma_{2}$:
\begin{itemize}
\item Either $ v_{1}$ and $ v_{2}$ also spanned a bubble in the original
diagram $ \Gamma$. Then setting $ \Gamma_{2}:= \mK_{
\overline{\Gamma}_{1}} \Gamma$ will be again an Anderson--Feynman diagram
(since we preserve that every inner node is connected to four edges). Likewise,
we can perform the same extractions on $ \Gamma_{2}$, that led to $
\tilde\Gamma $ from $ \Gamma$, to construct $ \tilde\Gamma_{2}$.

\item Or the bubble $ \overline{\Gamma}_{1}$ was created by a rewiring of edges
through the operators $ \hat{\mC}$ (not $ \hat{\mC}^{(1)}$, since the edges are
not labelled).
Because only edges that are incoming into divergences are rewired, we know that
say $ e_{1} = (v_{1}, v_{\star} ( \gamma) ) = e_{\star} ( \gamma, \tilde\Gamma)$ (where we
now included a direction of the edge), for some $ \gamma \in \mf{F}_{s}$. In
particular, this implies $ v_{2}= v_{\star} ( \gamma) $.

On the other hand, $ e_{2}= ( v_{2}, v_{1}) $ which must equal $
\tau^{-1}_{ \tilde\Gamma, \Gamma} (e^{\rm out}_{\gamma}) $,
the unique outgoing edge from $ \gamma$ in $ \tilde\Gamma $.
In fact, $ e_{2}= e^{\rm out}_{\gamma}$, i.e. $e^{\rm out}_{\gamma}$ has not
been subject to any rewiring in $ \tilde\Gamma$. If it were,
 there must exist $ \gamma ' \in \mf{F}_{s}$ such that $ e^{\rm out}_{\gamma}
= e_{\star}( \gamma' , \Gamma) $ and $ v_{\star}( \gamma'')= v_{1}$, which
eventually yields $\tau^{-1}_{ \tilde\Gamma, \Gamma} (e^{\rm out}_{\gamma}) =
(v_{2}, v_{ 1})$.
However, this would imply that $ \Gamma$ contains a connected component with no
legs, but only the two divergences $ \gamma $ and $ \gamma' $ connected by the two
edges $ e_{\gamma}^{\rm out}= e_{\star}( \gamma', \Gamma) $ and $
e_{\gamma'}^{\rm out} = e_{\star}( \gamma, \Gamma) $, which is impossible.

Hence, the original diagram $ \Gamma$ must contain a sub--diagram of the form
\begin{equation*}
\begin{aligned}
\Gamma=
\begin{tikzpicture}[thick,>=stealth,scale=0.3,baseline=-0.2em]
      \tikzset{
        dot/.style={circle,fill=black,inner sep=1.5pt}
      }
      \node[above] at (-3.2,0.1) {};
      \node[above] at ( 3.0,0.1) {};
      \draw (-0.4,0) ellipse (1.8 and 1.1);
      \node[dot] (bL) at (-2.2,0)   {};
      \node at (-.4,0) {\scalebox{0.8}{$\gamma$}};
      \node[dot] (vL) at (1.4,0) {};   
      \node[dot] (w) at (-.3,3.5)   {};
      \node[right] at (vL) {$v_2$};
      \node[left] at (bL) {$w$};
	\node[above] at (w) {$v_1$};
\draw[->] (w) to (bL);
\draw[->] (vL) to (w);
\draw[->] (w) to (1.3,4);
\draw[->] (-1.9,4) to (w);
\draw[dotted] (-1.9,4) to (-2.8,4.3);
\draw[dotted] (1.3,4) to (2.3,4.3);
    \end{tikzpicture}
\quad\mapsto \quad
\begin{tikzpicture}[thick,>=stealth,scale=0.3,baseline=-0.2em]
      \tikzset{
        dot/.style={circle,fill=black,inner sep=1.5pt}
      }
      \node[above] at (-3.2,0.1) {};
      \node[above] at ( 3.0,0.1) {};
      \draw (-0.4,0) ellipse (1.8 and 1.1);
      \node[dot] (bL) at (-2.2,0)   {};
      \node[dot] (vL) at (1.4,0) {};   
      \node at (-.4,0) {\scalebox{0.8}{$\gamma$}};
      \node[right] at (vL) {$v_2$};
      \node[left] at (bL) {$w$};
\draw[->] (vL) to (1.4,2.5);
\draw[->] (-2.2,2.5) to (bL);
\draw[dotted] (-2.2,2.5) to (-2.2,3.3);
\draw[dotted] (1.4,2.5) to (1.4,3.3);
    \end{tikzpicture}
= \Gamma_{2}
\end{aligned}
\end{equation*}
where $ w := e_{\star, +} ( \gamma, \Gamma) $.
Thus, when removing the edges $ e_{\star } ( \gamma, \Gamma) =
e_{\gamma}^{\rm in}$ and $ e_{\gamma}^{\rm out}$, and inserting the divergence
$ \gamma$ in place of $ v_{1}$ (see the second diagram in the display above), we
create a new Anderson--Feynman diagram $ \Gamma_{2}$ (all inner nodes still
have degree four).
Performing the same extractions that led from $ \Gamma$ to $ \tilde\Gamma$ will
now lead from $ \Gamma_{2}$ to $\tilde\Gamma_{2}$.
\end{itemize}

Therefore, $ \tilde{\Gamma}_{2}$ and $ \overline{T}_{2}$ satisfy our induction
hypothesis, and we conclude that $ \tilde{\Gamma}_{2}$ is a nested bubble
diagram. Consequently, also $ \tilde{\Gamma}$ must have been a nested bubble diagram.
\end{proof}

\subsection{No renormalised Laplacian}\label{sec:no-laplace}

In this subsection, we complete the proof that there
is no contribution to the weak coupling limit appearing from second order Taylor
corrections, which could
lead potentially to a renormalised Laplacian in \eqref{e:resolvent-renormalised}.
Corollary~\ref{cor_unsafe} already yields that any negative Anderson--Feynman diagram $
\Gamma$ with $2$ legs vanishes in the weak coupling limit.
Unfortunately, the derived upper bound falls short of treating the case of $4$ legs.
In this case,
Lemma~\ref{lem_hepp_unsafe} yields
\begin{equation}\label{e_upperbnd_4legs}
\begin{aligned}
\lambda_{\ve}^{|E_{\star}|} \big| \hat{\Pi}_{\ve} \Gamma ( \varphi) \big|
\lesssim
\sum_{ T \in \mT_{V_{\star}}}
\sum_{\tilde\Gamma}
\big( \log{ \tfrac{1}{\ve}}  \big)^{ \widehat{\mathrm{Null}} (T, \tilde\Gamma) - | \overline{T}| }\,,
\end{aligned}
\end{equation}
cf.~\eqref{eq_upperbound_negdiagrams}.
One may hope that $ \widehat{\mathrm{Null}}( T, \tilde\Gamma) < |
\overline{T}|  $ for all $\tilde\Gamma$ and $ T \in \mT_{V_{\star}}$, so that
\eqref{e_upperbnd_4legs} vanishes.
However, this is not the case, and one can construct examples of diagrams and trees in which $
\widehat{\mathrm{Null}}( T, \tilde\Gamma) = |
\overline{T}|  $.

While we expect the estimates performed so
far in this section to hold for all models (mutatis mutandis), whether or not we
see a contribution from {higher order} Taylor remainders (which typically takes
to form of a Laplacian renormalisation) depends on the model. For our Anderson
model, we can prove that such renormalisation does not appear.
Indeed, the remainder of this section is aimed at improving the bound~\eqref{e_upperbnd_4legs}:

\begin{proposition}\label{prop_improve}
Let $ \Gamma $ be a negative 4-Anderson--Feynman diagram. Then
\begin{equation*}
\begin{aligned}
 |\hat{\Pi}_{\ve} \Gamma ( \varphi) |
\lesssim
\big( \log{ \tfrac{1}{\ve}} \big)^{| E_{\star}|/2 -1}\,.
\end{aligned}
\end{equation*}
\end{proposition}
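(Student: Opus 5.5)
The starting point is the decomposition of Corollary~\ref{cor_bphzinterval}: for each Hepp tree $T$ and safe forest $\mf{F}_s\in\mF^s_\Gamma(T)$, with $\mathbb{M}=[\mf{F}_s,\mf{F}_s\cup\mf{F}_u]$, we bound the contribution of $\overline{\mW}^{(\mK)}_\ve\mf{K}^{(1)}_{\mf{F}_s}\Gamma$ and sum over the finitely many $(T,\mf{F}_s,\tilde\Gamma)$. We split into two cases.

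\emph{Case $\mf{F}_u=\emptyset$.} Lemma~\ref{lem_safe_vanish_2k} already gives $(\log\frac1\ve)^{|\trim|-1}$. Since $|E_\star|=2|\trim|$ for $k=2$ by~\eqref{eq_Estar_card}, this equals $(\log\frac1\ve)^{|E_\star|/2-1}$.

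\emph{Case $\mf{F}_u\neq\emptyset$.} By Lemma~\ref{lem_hepp_unsafe} it suffices to show $\widehat{\mathrm{Null}}(T,\tilde\Gamma)\le|\trim|-1$, up to a remaining class of bad configurations that we handle by cancellation. Fix $\gamma\in\mf{F}_u$. By~\eqref{e_eta_unsafe}, $\hat\eta$ gains $+2$ at $\gamma^\uparrow$ and loses $2$ at $\gamma^{\uparrow\uparrow}$. If $\widehat{\mathrm{Null}}=|\trim|$, then $\widehat{\deg}(u)=0$ for every $u$, hence $\hat\eta\equiv0$, hence $\eta(\gamma^\uparrow)=-2$ summed over the unsafe divergences sharing that node. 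We then classify the configurations with $\hat\eta\equiv0$. Mimicking the inductive proof of Lemma~\ref{lem_safe_vanish_2k} (contracting a cherry of $T$, which by Lemma~\ref{lem:no-weird-bubble} must be a bubble, or else a node where $\hat\eta$ absorbs a divergence), we expect to show the following: every unsafe $\gamma$ in such a configuration is a minimal $\deg=-2$ self-energy insertion, namely a two-point subdiagram $\gamma$ with one incoming and one outgoing edge (Lemma~\ref{lem:neg}). Moreover, $\gamma$ sits on an edge $e$ of an otherwise nested bubble diagram, and the scales of $\gamma$ and of $e$ are ordered exactly as the Taylor remainder requires. In these configurations the only surviving contribution at leading order is the second-order Taylor term of $\mH G_\ve$, namely
\[
\tfrac12(x-x_\star)^{i}(x-x_\star)^{j}\,\partial_{ij}G_\ve,
\]
integrated against the inner kernel of $\gamma$.

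For these extremal configurations we refine~\eqref{eq_est_Hepp}: instead of taking absolute values inside the sector, we keep the exact integral over the internal variables of $\mf{K}(\gamma,\mf{F}_s)$. We write $\mH G_\ve$ as the quadratic term plus a cubic remainder. The cubic remainder gains an extra factor $2^{-\mathbf{n}(\gamma^\uparrow)+\mathbf{n}(\gamma^{\uparrow\uparrow})}$ and hence lowers $\widehat{\mathrm{Null}}$ by one, as in Lemma~\ref{lem_bubbleTree_exact}. For the quadratic term, isotropy (Lemma~\ref{lem_sym} applied with $\uppi^{(i)}$ and coordinate permutations) reduces
\[
\int (x_{v}-x_\star)^{i}(x_{v}-x_\star)^{j}\,\mW_\ve\gamma
\]
to $\delta_{ij}c_\ve$. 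The term therefore becomes $c_\ve\,\Delta G_\ve$ on the edge $e$. Using $(1-\Delta)G_\ve=\rho_\ve$, we get $\Delta G_\ve=G_\ve-\rho_\ve$. The $G_\ve$ part reproduces a diagram with one fewer $\log$, because $c_\ve$ is bounded by $\ve^{0}$ times logs of lower order. The $\rho_\ve$ part collapses $e$ to a point at scale $\ve$, which destroys the logarithmic sum attached to $\gamma^{\uparrow\uparrow}$ and thus again costs one $\log$. Alternatively, we pair the configuration with its mirror partner: the cancellation between the two orientations of the bubble containing $e$ in a nested bubble diagram kills the leading term.

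The main obstacle is the classification step: proving that $\hat\eta\equiv0$ forces precisely this self-energy-on-a-bubble-chain structure, while keeping track of the rewiring by $\hat{\mC}$ and of the labels $\mf{a}$ and $\mf{n}$. The subsequent extraction of the exact second-order coefficient, and showing that it loses a logarithm, is the other delicate point. I would try first the $\Delta G_\ve=G_\ve-\rho_\ve$ identity combined with the bound $c_\ve=O(1)$.
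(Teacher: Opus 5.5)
Your architecture matches the paper's. You treat $\mf{F}_u=\emptyset$ with Lemma~\ref{lem_safe_vanish_2k}. In the extremal case $\widehat{\mathrm{Null}}(T,\tilde\Gamma)=|\trim|$ you do a second-order Taylor expansion along the incoming edge of an unsafe divergence: the cubic remainder loses a logarithm, the mixed quadratic terms are killed by reflections, and the diagonal ones combine into $\Delta G_\ve=G_\ve-\rho_\ve$.

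\textbf{The gap is the symmetry step.} On a Hepp sector $D_{(T,\n)}$ the internal variables of $\gamma$ are constrained through their distances $|x_v-x_w|$ to vertices $w\notin V(\gamma)$. So the domain is not invariant under reflecting a coordinate of $x_v-x_{v_\star}$ or permuting coordinates, and the argument of Lemma~\ref{lem_sym} does not go through. If instead you integrate those variables over all of $(\TT^d)^{V(\gamma)\setminus\{v_\star\}}$, you lose the scale separation $|x-x_\star|<|x-y|$. That separation is needed for Lemma~\ref{lem_taylor_new} and for the power counting of both the cubic remainder and the $G_\ve$ term.

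The missing idea is the modified sector $D^*_{(T,\n)}$ of~\eqref{eq_def_modHeppSec}. There $x_{V(\gamma_*)\setminus\{v_\star\}}$ ranges over $D_{(T_*,\n_*)}(x_{v_\star})$, which involves only differences inside $\gamma_*$ and is therefore invariant under these symmetries about $x_{v_\star}$; the cross constraints are dropped. Then:
\begin{itemize}
\item $D\subseteq D^*$;
\item the triangle inequality (Lemma~\ref{lem_recover}) shows the cross distances keep their scale on $D^*$, so the Taylor bounds survive;
\item $D^*\setminus D$ has relative volume $2^{-d(\n(u_*)-\n(p(u_*)))}$, which costs one logarithm (Lemma~\ref{lem_Dstarenough}).
\end{itemize}
Only on $D^*$ do Lemmas~\ref{lem_offdiag_nochange} and~\ref{lem_diagonal} apply.

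\textbf{Two further issues.} First, keeping the absolute value outside a sector integral requires the sectors to partition the domain exactly, because the forest-interval partition $\mP_T$ depends on $T$. Corollary~\ref{cor_bphzinterval}, your starting point, puts the absolute value inside and sums over overlapping sectors. The paper restricts to $\n\in\mA^{\#,N_\ve}(T)$, where the sectors are disjoint (Lemma~\ref{lem_disjoin_hepp}). It bounds the leftover region $\mathring{D}^\ve$ in absolute value, where the diagonal constraint costs a logarithm (Lemma~\ref{lem_Ddot}). The cut-off $\n\le N_\ve-1$ also makes the $\rho_\ve$ term vanish identically, since $\rho_\ve$ is supported in $|x|\le\ve$. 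So you need neither the ``collapse'' argument nor the mirror-pairing alternative, which is unsupported.

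Second, the classification you single out as the main obstacle is unnecessary and too strong. Unsafe divergences can be nested in extremal configurations. For example, take a sunset $\gamma_1$ inserted in the loop of a divergence $\gamma_2\supset\gamma_1$ whose contraction by $\gamma_1$ is a self-loop; both are unsafe with $\hat\eta\equiv0$, and $\gamma_1$ does not sit on an edge of an otherwise nested bubble diagram. The paper only needs one leaf $\gamma_*\in\mf{F}_u$ with $\Gamma_{u_*}=\gamma_*$, $u_*=\gamma_*^\uparrow$ and $\gamma_*^{\uparrow\uparrow}\neq\dagger$ (Lemma~\ref{lem:gammastar}). It must carry no $\mf{h}$ or $\mf{n}$ labels in its interior (Lemma~\ref{lem_onlyNeed_emptysafe}). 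The kernel is then modified only on $e_\star(\gamma_*)$, through the assignments $\mL$ and $\mM$.
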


In the proof of Lemma~\ref{lem_hepp_unsafe}, we performed a Taylor expansion up to first order,
to estimate a renormalised diagram $ \Pi_{\varepsilon} ( 1- \hat{\mC}_{\gamma})
\Gamma= \Pi_{\varepsilon} ( 1-
\hat{\mC}_{\gamma} -\hat{\mC}_{ \gamma}^{(1)}) \Gamma $.
There we didn't make use of the fact that prior to any renormalisation, every
edge of $ \Gamma$ corresponds to a Green's kernel $ G_{\varepsilon}$, instead
all our estimates are based on power counting.
For the improved upper bound, we will take this additional information into account.
In Lemma~\ref{lem_emptysafe} below, we will postprocess our bound by adding and subtracting
a second order term to the renormalisation map $ \hat{\mR}^{(1)}$.
This additional renormalisation produces a kernel $ \Delta
G_{\ve}$. Formally, $\Delta G_\ve$ is a kernel of degree $-4$, so that
power counting would lead to a logarithmic blow-up after integrating.
However, we will use the identity $ ( 1- \Delta) G_{\varepsilon} = \delta_{
\varepsilon}$ to avoid this blow-up.
Before we delve into the proof, we present the main argument that allows
us to conclude the improved bound in Proposition~\ref{prop_improve}.

\begin{proof}[Proof of Proposition~\ref{prop_improve}]
	We start from the representation in Definition~\ref{def_R1}:
\begin{equation*}
  \hat{\Pi}_{\ve} \Gamma (\varphi) = \int_{(\TT^d)^{V_\star}} \sum_{\mf{F} \in
\mF^{-}_\Gamma} (-1)^{|\mf{F}|} \overline{\mW}_{\ve} \mf{K}^{(1)}_{\mf{F}} \Gamma (x_{V_\star}) \ud x_{V_\star} \;,
\end{equation*}
where $ \overline{\mW}_{\ve} $ was defined in \eqref{e_updW_labelled}.
{Notice that by Lemma~\ref{lem_disjoin_hepp} the sets $D_{(T, \n)}$
for $T \in \mT_{V_\star}$ and $\n \in \mA^{\#, N_\ve}(T)$ are disjoint.
Here
where $\mA^{\#, N_\ve}$ is as in \eqref{e:ANE} and $N_\ve$ as in \eqref{e:Ne}.
Thus, for a suitable partition of the domain into Hepp sectors, we also have to consider
the remainder volume
\begin{equation*}
  \mathring{D}^\ve = (\TT^d)^{V_\star} \setminus \bigcup_{T \in \mT_{V_\star}}
  \bigcup_{\n \in  \mA^{\#, N_\ve}(T)} D_{(T, \n)} \;.
\end{equation*}}
Therefore, we have the upper bound:
\begin{equation*}
  \begin{aligned}
    | \hat{\Pi}_\ve \Gamma (\varphi)| \leq &  \bigg\vert \sum_{T \in
\mT_{V_{\star}}}
\sum_{\n \in \mA^{\#, N_\ve}} \int_{D_{(T, \n)}} \sum_{\mf{F} \in
\mF^{-}_\Gamma} (-1)^{|\mf{F}|} \overline{\mW}_{\ve}
\mf{K}_{\mf{F}}^{(1)} \Gamma (x_{V_\star}) \ud x_{V_\star} \bigg\vert \\
    & + \bigg\vert \int_{\mathring{D}^\ve} \sum_{\mf{F} \in \mF^{-}_\Gamma}
(-1)^{|\mf{F}|} \overline{\mW}_{\ve} \mf{K}_{\mf{F}}^{(1)} \Gamma (x_{V_\star}) \ud x_{V_\star}\bigg\vert \;.
  \end{aligned}
\end{equation*}
Again, to any $T \in \mT_{V_\star}$ we associate a partition of $\mF^{-}_{\Gamma}$ into
forest intervals of the form $[\mf{F}_s, \mf{F}_s \cup \mf{F}_u ]$, where
$\mf{F}_s$ belongs to the set $\mF_{\Gamma}^{s}(T) \subseteq
\mF_{\Gamma}^-$ of safe forests (with
respect to the tree $T$). We then rewrite the estimate as follows:
\begin{align}
    | \hat{\Pi}_\ve \Gamma (\varphi)| \leq &
\sum_{T \in \mT_{V_{\star}}}
\sum_{\substack{ \mf{F}_s \in \mF_\Gamma^{s}(T) \\ \mf{F}_u = \emptyset }}\sum_{\n \in \mA^{\#, N_\ve} (T)}
\int_{D_{(T, \n)}}  |\overline{\mW}_{\ve}
\mf{K}^{(1)}_{\mf{F}_{s}}
\Gamma (x_{V_\star}) |
\ud x_{V_\star} \label{e:A12} \\
& + \sum_{T \in \mT_{V_{\star}}} \sum_{\substack{ \mf{F}_s \in \mF_\Gamma^{s}(T) \\ \mf{F}_u \neq \emptyset }}
\sum_{\n \in \mA^{\#, N_\ve} (T)}
\bigg\vert
 \int_{D_{(T, \n)}}  \overline{\mW}_{\ve}
 \hat{\mR}_{[\mf{F}_s, \mf{F}_{s} \cup \mf{F}_u ]}^{(1)} \Gamma (x_{V_\star}) \ud x_{V_\star}
\bigg\vert \label{e:A11} \\
    & + \sum_{T \in \mT_{V_{\star}}} \sum_{\mf{F}_s \in \mF_\Gamma^{s}(T)}\sum_{\n \in
\mA^{c, N_\ve}(T)}  \int_{{D}_{(T, \n)}}|\overline{\mW}_{\ve}
\hat{\mR}_{[\mf{F}_s, \mf{F}_s \cup \mf{F}_u ]}^{(1)} \Gamma (x_{V_\star}) |  \ud x_{V_\star} \;. \label{e:A2}
\end{align}
Here we have separated the case $\mf{F}_{u}= \emptyset$ from its complement, and moreover we have defined
\begin{equation}\label{eq_Ac}
  \mA^{c, N_\ve} (T) := \mA(T) \setminus \mA^{\#, N_\ve}(T) \;.
\end{equation}
Notice that it is important that the absolute value in the second term is still
outside the integral, so that we can make use of cancellations that are due to
symmetries.

Both \eqref{e:A12} and \eqref{e:A2} can be controlled using previous arguments.
More precisely, \eqref{e:A12} is bounded by Lemma~\ref{lem_safe_vanish_2k}.
On the other hand, \eqref{e:A2} can be controlled since the sum over $ \mA^{c ,
N_{\ve}}(T)$ is almost restricted to a diagonal, see Lemma~\ref{lem_Ddot} below.

Only, the treatment of \eqref{e:A11} requires some more care and the use of
symmetries.
Here, we can restrict ourselves to the case when $\widehat{\rm{Null}}(T,
\tilde\Gamma) =
|\trim|$, for some $\tilde\Gamma$ in $ \mf{K}^{(1)}_{\mf{F}_{s}} \Gamma = \sum
\tilde\Gamma$ (note
that this condition depends only on $\tilde\Gamma$, $T$, and $
\mathfrak{F}_{u}$) since otherwise \eqref{eq_upperbound_negdiagrams} yields that the
contribution of the term vanishes.
In Lemma~\ref{lem_emptysafe} we show that \eqref{e:A11} is bounded by {$ ( \log
\tfrac{1}{ \ve })^{| E_{\star}|/2-1}$} , using a second--order Taylor estimate.
This concludes the proof.
\end{proof}

We conclude this subsection with the control of the remainder terms
in the above proof.

\begin{lemma}\label{lem_Ddot}
Let $ \Gamma$ be a negative 4-Anderson--Feynman diagram and $ T \in
\mT_{V_{\star}}$ a Hepp tree. Then
\begin{equation*}
\begin{aligned}
\sum_{T \in \mT_{V_{\star}}} \sum_{\mf{F}_s \in \mF_\Gamma^{s}(T)}\sum_{\n \in
\mA^{c, N_\ve}(T)}  \int_{{D}_{(T, \n)}}|\overline{\mW}_{\ve}
\hat{\mR}_{[\mf{F}_s, \mf{F}_s \cup \mf{F}_u ]}^{(1)} \Gamma (x_{V_\star}) |
\ud x_{V_\star}
\lesssim \big( \log \tfrac{1}{\ve } \big)^{ |\trim| -1}\,,
\end{aligned}
\end{equation*}
where $ \mA^{c , N_{\ve}}(T)$ is defined in \eqref{eq_Ac}.
\end{lemma}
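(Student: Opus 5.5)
We reduce this to the pointwise Hepp-sector estimates already established, and then show that leaving $\mA^{\#,N_\ve}(T)$ costs one logarithm.

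\emph{Step 1: reduce to a scale-sum estimate.} Fix $T \in \mT_{V_\star}$ and a safe forest $\mf{F}_s \in \mF^s_\Gamma(T)$, with $\mf{F}_u$ the associated unsafe forest. By Lemma~\ref{lem_pushonKernel} we write
\[
\overline{\mW}_\ve \hat{\mR}^{(1)}_{[\mf{F}_s,\mf{F}_s\cup\mf{F}_u]}\Gamma = (-1)^{|\mf{F}_s|}\, \overline{\mW}^{(\mK)}_\ve \mf{K}^{(1)}_{\mf{F}_s}\Gamma ,
\]
and split the right-hand side into finitely many nontrivial terms $\tilde\Gamma$. For each term we bound the integral over $D_{(T,\n)}$ by the supremum times $\prod_{u\in\trim}2^{-d\n(u)}$. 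We then apply the pointwise bounds from the proofs of Lemma~\ref{lem_induction_neg} (if $\mf{F}_u=\emptyset$) and Lemma~\ref{lem_hepp_unsafe} (otherwise), namely \eqref{eq_safe_supp1} and \eqref{eq_supp100_unsafe}. These bounds are pointwise in $\n$, so they survive restriction to $\mA^{c,N_\ve}(T)$, and we obtain
\[
\sum_{\n\in\mA^{c,N_\ve}(T)} \prod_{u\in\trim}(2^{-\n(u)}+\ve)^{\hat\eta(u)-d}\,2^{-d\n(u)},
\]
with $\hat\eta$ as in \eqref{e_eta_unsafe} (equal to $\eta$ if $\mf{F}_u=\emptyset$). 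By Lemmas~\ref{lem_safeisnonneg} and~\ref{lem_unsafe_nonneg} this $\hat\eta$ satisfies $\sum_{v\succ u}\hat\eta(v)\geq 0$ for every $u \in \trim$. The number of pairs $(T,\mf{F}_s)$ and of terms $\tilde\Gamma$ is finite and independent of $\ve$, so it suffices to bound this sum by $(\log\frac1\ve)^{|\trim|-1}$.

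\emph{Step 2: split the complement.} The set $\mA^{c,N_\ve}(T)$ is contained in the union, over ordered pairs $u\neq u'$, of the "diagonal" sets $\{\n:\n(u)=\n(u')\}$, together with the union, over $u \in \trim$, of the sets $\{\n:\n(u)\geq N_\ve\}$. We estimate each piece separately.

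\emph{Case $\n(u)\geq N_\ve$.} We rerun the induction of Lemma~\ref{lem_estimate_nonneg_Hepp}. The sum splits into scales above and below $N_\ve$ as in \eqref{eq_supp1_heppsummation}. The nodes $w\succeq u$ lie in the tail bounded by \eqref{eq_supp2_aboveN}, which produces a constant instead of a logarithm. The logarithm at $u$ is therefore lost, and since every node yields at most one logarithm, the total is at most $(\log\frac1\ve)^{|\trim|-1}$.

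\emph{Diagonal case $\n(u)=\n(u')$.} If $u\prec u'$, compatibility forces $\n$ to be constant along the path from $u$ to $u'$, so it suffices to treat a parent–child pair $u'$, $u$. We merge the two nodes into one summation variable: the induction of Lemma~\ref{lem_estimate_nonneg_Hepp} is then run on the contracted tree with $\eta$ added at the merged node. Nonnegativity of the partial sums is preserved, and there is one summation fewer, hence one logarithm fewer. If $u\perp u'$, we fix the common value $n$ at the lowest of the two nodes. The summations in the two disjoint subtrees then each contribute at most their usual powers, while the constraint $\n(u')=n$ removes one free sum over scales, again saving one logarithm.

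\emph{Main obstacle.} The hard point is making the one-logarithm saving rigorous in the diagonal case $u\perp u'$ within the inductive bound \eqref{eq_istep}. The induction hypothesis there gives $(2N_\ve-K+1)^{\mathrm{Null}}$ factors for sums starting at a lower cutoff $K$; what we need is the analogous bound when a node's scale is pinned to an exact value. We plan to strengthen the induction statement to include, alongside \eqref{eq_istep}, the bound for sums with one node's scale fixed, namely the same right-hand side with exponent $\mathrm{Null}-1$. We then propagate this stronger statement upward through the grafting step, using Lemma~\ref{lem_simple_sumargument}.
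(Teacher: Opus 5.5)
Your proposal follows essentially the paper's route. The paper likewise invokes the pointwise Hepp bounds behind \eqref{eq_supp100_unsafe} and restricts the scale sum to $\mA^{c,N_\ve}(T)$. It then argues, by reference to the proof of Lemma~\ref{lem_exact_heppsplit}, that rerunning the induction of Lemma~\ref{lem_estimate_nonneg_Hepp} on a diagonal $\n(u)=\n(u')$ or a tail $\n(u)\geq N_\ve$ loses one logarithm, using only $\sum_{v\succ u}\hat\eta(v)\geq 0$.

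One correction concerns the step you flag as the main obstacle. The pinned-node bound cannot carry exponent $\mathrm{Null}-1$ in general, because pinning a node $u'$ with $\overline{\deg}(u')>0$ saves nothing. For instance, a single node with $\eta(u')=1$ pinned at $\n(u')=K$ gives $\approx 2^{-K}$, not $\lesssim (2N_\ve-K+1)^{-1}2^{-K}$. The correct exponent is $\mathrm{Null}-\mathds{1}\{\overline{\deg}(u')=0\}$. This still suffices, since a non-null $u'$ already forces $\mathrm{Null}\leq|\trim|-1$.

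Simpler still, do what the paper does in the proof of Lemma~\ref{lem_exact_heppsplit} and first reduce to $\widehat{\mathrm{Null}}(T,\tilde\Gamma)=|\trim|$; otherwise the unrestricted sum already gives $(\log\tfrac{1}{\ve})^{|\trim|-1}$. In the remaining case $\hat\eta\equiv 0$, so each factor is $\lesssim\min\{1,2^{-d(\n(u)-N_\ve)}\}$. All your cases, including $u\perp u'$, then reduce to a weighted count of compatible $\n$ with a coincidence or an entry $\geq N_\ve$, which is $\lesssim N_\ve^{|\trim|-1}$. This requires no strengthened induction.
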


\begin{proof}
Let $ T \in \mT_{V_{\star}}$ and $ \mf{F}_{s} \in \mF_{\Gamma}^{s}(T)$, with
associated unsafe forest $ \mf{F}_{u}$.
Following the same steps as in Lemma~\ref{lem_hepp_unsafe} that led to
\eqref{eq_supp100_unsafe}, yields
\begin{equation*}
\begin{aligned}
\sum_{\n \in
\mA^{c, N_\ve}(T)}  \int_{{D}_{(T, \n)}}|\overline{\mW}_{\ve}
\hat{\mR}_{[\mf{F}_s, \mf{F}_s \cup \mf{F}_u ]}^{(1)} \Gamma (x_{V_\star}) |
\ud x_{V_\star}
\lesssim
\sum_{\n \in
\mA^{c, N_\ve}(T)}
\prod_{u \in \overline{T}}
(2^{-\mathbf{n} ( u)}
+ \varepsilon)^{ \hat{\eta}  (u) -d}
 2^{-d \mathbf{n}(u)}\,,
\end{aligned}
\end{equation*}
where now we only sum over scales in $ \mA^{c, N_\ve}(T)$.
In the proof of  Lemma~\ref{lem_exact_heppsplit}, we saw that this is
equivalent to summing over a diagonal, which leaves us with the upper bound $
\big( \log \tfrac{1}{\ve } \big)^{ |\trim| -1}$.
Indeed, the same proof applies because
 $ \hat{\eta}$ is non--negative, in the sense of
Lemma~\ref{lem_unsafe_nonneg}.
\end{proof}

\subsubsection{Second--order Taylor expansion}

The aim of this subsection is to prove the remaining ingredient that we used in the proof of
Proposition~\ref{prop_improve} above:

\begin{lemma}\label{lem_emptysafe}
Let $ \Gamma$ be a negative $4$-Anderson--Feynman diagram, $ T \in
\mT_{V_{\star}}$ a Hepp tree, and $ \mf{F}_{s}$ a safe forest such that the associated
unsafe forest $ \mf{F}_{u} \neq \emptyset$ is non--empty.
Then
\begin{equation*}
\begin{aligned}
\sum_{\n \in \mA^{\#, N_\ve} (T)}
\bigg\vert \int_{D_{(T, \n)}}  \overline{\mW}_{\ve}
\hat{\mR}_{[\mf{F}_s, \mf{F}_{s} \cup \mf{F}_u ]}^{(1)} \Gamma (x_{V_\star}) \ud x_{V_\star}
\bigg\vert
\lesssim
( \log{ \tfrac{1}{\ve}} )^{ | \trim| -1 }
\,.
\end{aligned}
\end{equation*}
\end{lemma}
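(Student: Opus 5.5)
The plan is to refine the proof of Lemma~\ref{lem_hepp_unsafe} by pushing the Taylor expansion at the unsafe divergences one order further, and then to exploit the specific structure of the Green's function to show that the new second-order term does not produce a logarithm. By Lemma~\ref{lem_pushonKernel} we have $\overline{\mW}_\ve \hat{\mR}^{(1)}_{[\mf{F}_s,\mf{F}_s\cup\mf{F}_u]}\Gamma = \overline{\mW}^{(\mK)}_\ve \mf{K}^{(1)}_{\mf{F}_s}\Gamma$. Here every edge $e=e_\star(\gamma,\Gamma)$ with $\gamma\in\mf{F}_u$ carries the kernel $\mH G_\ve(x_{e_+},x_{e_-}|x_{\mf{b}(e)})$.

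First, restrict to those $\tilde\Gamma$ with $\widehat{\mathrm{Null}}(T,\tilde\Gamma)=|\trim|$; all others are already bounded by $(\log\frac1\ve)^{|\trim|-1}$ through \eqref{eq_upperbound_negdiagrams}. In the remaining case $\widehat{\deg}\equiv 0$, so every $\gamma\in\mf{F}_u$ must satisfy $\overline{\deg}(\gamma^\uparrow)=-2$, with the gain of $2$ exactly absorbed at $\gamma^{\uparrow\uparrow}$. For each such edge, decompose
\begin{equation*}
\mH G_\ve(x,y|z)=\tfrac12\sum_{i,j}(x^i-z^i)(x^j-z^j)\partial^2_{ij}G_\ve(z-y)+\mH_2 G_\ve(x,y|z),
\end{equation*}
where $\mH_2$ is the second-order Taylor remainder. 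Lemma~\ref{lem_taylor_new} (at third order) gives $\mH_2 G_\ve$ an extra factor $2^{-\n(\gamma^\uparrow)}/(2^{-\n(\gamma^{\uparrow\uparrow})}+\ve)$ relative to the previous bound. This shifts $\hat\eta$ by $+1$ at $\gamma^\uparrow$ and by $-1$ at $\gamma^{\uparrow\uparrow}$. As in the proof of Lemma~\ref{lem_bubbleTree_exact}, the strictly positive surplus at $\gamma^\uparrow$ forces $\widehat{\mathrm{Null}}$ down by one, which yields $(\log\frac1\ve)^{|\trim|-1}$ via Lemma~\ref{lem_estimate_nonneg_Hepp}. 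For legs, integrate by parts against $\varphi$ once more, as in \eqref{e_leg_contr}.

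It remains to handle the quadratic term. Keeping the absolute value outside the integral, integrate the variables of $\mf{K}(\gamma,\mf{F}_s)$ with $x_{v_\star(\gamma)}$ fixed. The monomial $(x^i_{e_+}-x^i_{v_\star})(x^j_{e_+}-x^j_{v_\star})$ against the isotropic, translation-invariant integrand (the argument of Lemma~\ref{lem_sym}, using reflections and coordinate permutations) averages to $\delta_{ij}c_\gamma$. The off-diagonal terms vanish, and the diagonal ones combine into $\tfrac{c_\gamma}{2}\Delta G_\ve(x_{v_\star}-y)$. The restriction to the Hepp sector breaks exact symmetry only up to the boundary of scale cells; I expect to handle this by symmetrising the sectors, since the sets $\{D_{(T,\n)}\}$ are invariant under these isometries up to relabelling, or otherwise by accepting a diagonal error controlled as in Lemma~\ref{lem_Ddot}. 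Then substitute $\Delta G_\ve=G_\ve-\vr_\ve$. The $G_\ve$ term is a kernel of order $-2$ rather than $-4$, giving a gain of $2$ at $\gamma^{\uparrow\uparrow}$ and hence again one logarithm fewer. The $\vr_\ve$ term collapses $x_{v_\star}$ onto $y$ at scale $\ve$, so the sum over $\n(\gamma^{\uparrow\uparrow})$ is restricted to $\n\ge N_\ve$ and loses a free logarithmic summation.

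The main obstacle is the symmetry step. The constant $c_\gamma$ must come from integrating the inner divergence fully, but we are summing over Hepp sectors with the inner scales tied to outer ones, and several unsafe divergences may be nested or share boundary scales. I would first attempt the case of a single unsafe root divergence. I would then induct over $\mf{F}_u$ ordered by inclusion, peeling off divergences as in Lemma~\ref{lem_pushonKernel}, and check at each step that the already-expanded outer kernels are unaffected by the reflection of the inner variables.
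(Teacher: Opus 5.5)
Your overall architecture matches the paper's. You reduce to $\widehat{\mathrm{Null}}(T,\tilde\Gamma)=|\trim|$, expand the unsafe kernel $\mH G_\ve$ to second order, gain a logarithm from the third-order remainder, kill the off-diagonal quadratic terms by reflection symmetry, and turn the diagonal ones into $\Delta G_\ve=G_\ve-\vr_\ve$, whose $\vr_\ve$ part vanishes on $\mA^{\#,N_\ve}(T)$.

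However, there is a genuine gap exactly at the step you flag as the main obstacle, and the fixes you suggest do not work as stated. The reflection that produces the cancellation must act only on $x_{V(\gamma)\setminus\{v_\star(\gamma)\}}$ about $x_{v_\star(\gamma)}$. A global isometry does preserve each $D_{(T,\n)}$, but it gives no cancellation, since it also flips $\partial_{ij}G_\ve(x_{v_\star}-x_{e_-})$ and acts on $\varphi$. The partial reflection changes the distances $|x_v-x_w|$ with $v\in V(\gamma)$, $w\notin V(\gamma)$, so it does not permute Hepp sectors: the image of $D_{(T,\n)}$ agrees with $D_{(T,\n)}$ only up to a boundary layer. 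Moreover, the quantity in the statement has the absolute value inside $\sum_{\n}$, so the cancellation must occur for each fixed $\n$; symmetrising across sectors cannot bound it. The resulting error is also not of the diagonal type treated in Lemma~\ref{lem_Ddot}.

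Three ingredients are missing.

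\textbf{(i) Expand at a single divergence.} One gained logarithm suffices, so you should expand at one well-chosen divergence, not at all unsafe edges. By Lemma~\ref{lem:gammastar}, take $u_*$ maximal among the nodes $\gamma^\uparrow$, $\gamma\in\mf{F}_u$. The condition $\widehat{\mathrm{Null}}(T,\tilde\Gamma)=|\trim|$ then forces $\Gamma_{u_*}=\gamma_*$, which is a leaf of $\mf{F}_u$ with no subdivergence. Its vertex set is \emph{exactly} the leaf set of the subtree $T_*$ of $T$ rooted at $u_*$, and $\gamma_*^{\uparrow\uparrow}\neq\dagger$, so $e_\star(\gamma_*)$ is an inner edge. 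The other unsafe edges keep their $\mH G_\ve$ kernels, which do not involve the inner variables of $\gamma_*$. Expanding every unsafe edge and inducting over $\mf{F}_u$ is what creates your nested-symmetry problem. For a general unsafe divergence the vertex set need not be a Hepp subtree, and then no decoupling is available at all.

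\textbf{(ii) Relax the Hepp sector.} Replace $D_{(T,\n)}$ by $D^*_{(T,\n)}$, which drops all constraints between $V(\gamma_*)\setminus\{v_\star\}$ and $V_\star\setminus V(\gamma_*)$. It factorises as $D_{(T',\n')}\times D_{(T_*,\n_*)}(x_{v_\star})$, and the inner factor is invariant under reflections and coordinate permutations about $x_{v_\star}$. Hence for each fixed $\n$ the off-diagonal terms vanish and the diagonal ones combine into $\Delta G_\ve$. The cost is controlled by Lemma~\ref{lem_recover}: since $\n(u_*)>\n(p(u_*))$, the triangle inequality gives $\mathrm{Vol}(D^*_{(T,\n)}\setminus D_{(T,\n)})\lesssim 2^{-d(\n(u_*)-\n(p(u_*)))}\prod_{u\in\trim}2^{-d\n(u)}$. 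With $\hat\eta\equiv 0$ this loses exactly one logarithm (Lemma~\ref{lem_Dstarenough}).

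\textbf{(iii) Check the labels.} You need Lemma~\ref{lem_onlyNeed_emptysafe}: the labels $\mf{h}$ and $\mf{n}$ vanish on $\tau^{-1}_{\tilde\Gamma,\Gamma}\gamma_*$, except possibly at $v_\star$. Otherwise the inner integrand is not a product of radial kernels, and your isotropy argument does not apply.
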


Before proceeding to the proof of this lemma, we require some preliminary
results. We start by singling out a leaf unsafe divergence.
\begin{lemma}\label{lem:gammastar}
  Fix a negative $4$-Anderson--Feynman diagram $\Gamma$ and $T \in \mT_{V_\star}$ a Hepp
  tree. Assume that the unsafe forest
  $\mf{F}_u$ associated to some safe forest $\mf{F}_s \in \mF_{\Gamma}^{s} (T)$ satisfies
  $\mf{F}_u \neq \emptyset$,
  and that $\widehat{\rm{Null}}(T, \tilde{\Gamma})
  = |\trim|$ for some $\tilde{\Gamma}$ in the decomposition
  $\mf{K}^{(1)}_{\mf{F}_s} \Gamma = \sum \tilde{\Gamma}$. Then there exists a $\gamma_* \in \mf{F}_u$ such
  that $\gamma_*$ is a leaf in $\mf{F}_u$ and there exists a $u \in \trim$ with $\Gamma_u =
  \gamma_*$ and $u = \gamma_*^\uparrow$.
  Moreover, $ \gamma_{\ast}^{\uparrow\uparrow} \neq \dagger$.
\end{lemma}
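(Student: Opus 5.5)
The plan is to turn the hypothesis $\widehat{\mathrm{Null}}(T,\tilde\Gamma)=|\trim|$ into the pointwise statement $\hat\eta\equiv 0$ on $\trim$. From that we extract the structural information by looking at a minimal unsafe divergence.

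\emph{Step 1: $\hat\eta\equiv0$.} If $\widehat{\deg}(u)=\sum_{v\succ u}\hat\eta(v)=0$ for every $u\in\trim$, then differencing along the tree gives
\begin{equation*}
\hat\eta(u)=\widehat{\deg}(u)-\sum_{w\text{ child of }u,\, w\in\trim}\widehat{\deg}(w)=0 .
\end{equation*}
So, by \eqref{e_eta_unsafe},
\begin{equation*}
\eta(u)=-2\sum_{\gamma\in\mf F_u}\big(\mathds 1_{\gamma^\uparrow}(u)-\mathds 1_{\gamma^{\uparrow\uparrow}}(u)\big).
\end{equation*}
In particular, at any node $u$ that is $\gamma^\uparrow$ for some $\gamma\in\mf F_u$ and is not the $\uparrow\uparrow$-node of another unsafe divergence, we get $\eta(u)=-2$.

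\emph{Step 2: choose $\gamma_*$.} Take $\gamma_*$ to be a leaf of $\mf F_u$ whose $\gamma_*^\uparrow$ is maximal in the tree order among leaves. Set $u:=\gamma_*^\uparrow$. We claim $\Gamma_u=\gamma_*$.

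\emph{Inclusion $V(\mf K(\gamma_*,\mf F_s))\subseteq V(\Gamma_u)$.} This holds by the definition \eqref{e:gup} of $\gamma^\uparrow$.

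\emph{Reverse inclusion and fullness.} Here we use a degree count. Summing $\hat\eta$ over $v\succ u$ gives $\widehat{\deg}(u)=0$, which can be compared with the degree of the subdiagram $\Gamma_u$ computed in $\tilde\Gamma$. The extra $+2$ sitting at $u=\gamma_*^\uparrow$ must compensate exactly the strictly negative degree $\deg\gamma_*=-2$. Negative divergences in Anderson diagrams have degree $-2$: they have two external edges by Lemma~\ref{lem:neg}, so $4(|V|-1)-2|E|=-2$. If $\Gamma_u$ contained vertices outside $\gamma_*$, each would bring either positive $\eta$ or would force another unsafe $\gamma^\uparrow$ below $u$. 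The latter contradicts the maximality of $u$, since a descendant leaf would have to exist. The former contradicts $\widehat{\deg}(u)=0$, given the nonnegativity of Lemma~\ref{lem_unsafe_nonneg} on the subtrees. Safe divergences inside $\gamma_*$ are already rewired by $\mf K_{\mf F_s}$ and contribute degree zero after contraction. We still need to check that $\gamma_*$, as a divergence of the original diagram, coincides with $\Gamma_u$ and not merely with its contracted image. For this we use the forest property and the fact that $\gamma_*$ is a leaf of $\mf F_u$, so it contains no unsafe children.

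\emph{Step 3: $\gamma_*^{\uparrow\uparrow}\neq\dagger$.} Suppose otherwise. Then all boundary edges of $\gamma_*$ within $\mA(\gamma_*)$ are legs. Since $\gamma_*$ has exactly one incoming and one outgoing edge, $\gamma_*$ together with its two legs would form a whole connected component of a 4-legged diagram. That component is a two-legged piece, which is impossible for a connected $\Gamma$ unless $\mA(\gamma_*)=\Gamma$ has exactly those legs. Alternatively, the $-2$ that $\hat\eta$ places at $\dagger\notin\trim$ is lost, so $\sum_{v\in\trim}\hat\eta(v)=\deg\Gamma+2>0$ because $\deg\Gamma=0$ for $k=2$. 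This contradicts $\widehat{\deg}(\bullet)=0$.

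I expect Step 2 to be the main obstacle, specifically the bookkeeping that reconciles $\Gamma_u$ in the original diagram with the rewired $\tilde\Gamma$ and the labels $\mf n,\mf h$ entering $\eta$.
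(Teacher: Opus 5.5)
Your Steps~1 and~3 are sound. The second argument in Step~3 is exactly the paper's: the $-2$ attached to $\gamma_*$ is never booked inside $\trim$, so $\widehat{\deg}(\bullet)\geq\deg\Gamma+2>0$. The gap is in Step~2, the identification $\Gamma_u=\gamma_*$.

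You claim that a vertex of $\Gamma_u$ outside $\gamma_*$ brings either positive $\eta$ or a further unsafe $\gamma^\uparrow$ strictly above $u$. This dichotomy omits the generic case. Suppose $\hat\eta\equiv0$ and no $\gamma^\uparrow$ lies strictly above $u$ (hence, since $\gamma^{\uparrow\uparrow}\prec\gamma^\uparrow$, no $\gamma^{\uparrow\uparrow}$ either). Then $\eta(v)=0$ for every $v\succ u$ with $v\neq u$. Since $\Gamma_u$ and $\gamma_*$ both have degree $-2$, the quotient $\Gamma_u/\gamma_*$ has degree exactly $0$. Extra vertices therefore cost nothing in the count, and neither $\widehat{\deg}(u)=0$ nor Lemma~\ref{lem_unsafe_nonneg} can exclude $\gamma_*\subsetneq\Gamma_u$. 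For instance, a sunset on $\{x,y\}$ whose incoming and outgoing edges both end at a third vertex $b$ is a degree $-2$ diagram strictly containing another divergence. What excludes such configurations here is the tree structure, not the degree.

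There is a second, smaller issue. You restrict the choice of $\gamma_*$ to leaves of $\mf{F}_u$. You then need an unproved monotonicity (``a descendant leaf would have to exist'') to ensure that no non-leaf unsafe divergence has its $\gamma^\uparrow$ strictly above $u$.

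The missing idea is structural, and the paper argues as follows. Take $u$ maximal among \emph{all} $\gamma^\uparrow$ with $\gamma\in\mf{F}_u$, so that $\hat\eta=\eta=0$ strictly below $u$. Then $\overline{\deg}(\Gamma_u)=-2$, which makes $\Gamma_u$ itself a divergence by Lemma~\ref{lem:neg}. Moreover, the nodes strictly below $u$, taken in a heap order, form a bubble extraction sequence, as in the proofs of Lemmas~\ref{lem_nonneg_diagrams} and~\ref{lem_safe_vanish_2k} (via Lemma~\ref{lem:no-weird-bubble}). This sequence contracts $\Gamma_u$ to the two-vertex, three-edge sunset.

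Such a diagram cannot strictly contain a divergence. Consider the two blobs hanging below the children of $u$. Each is a nested bubble diagram or a single vertex, so it contains no divergence and all its proper cuts have size at least two. Each also carries exactly one of the two external edges of $\Gamma_u$. Hence any divergence meeting both blobs but not filling $\Gamma_u$ would have at least three external edges, which is impossible.

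Therefore $\gamma_*\subseteq\Gamma_u$ forces $\gamma_*=\Gamma_u$, and the leaf property of $\gamma_*$ follows for free. In fact $\gamma_*$ contains no divergence at all, safe ones included.
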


\begin{proof}
Choose a maximal (with respect to the tree order) $u$ such that $u =
\gamma_*^\uparrow$ for some $\gamma_* \in \mf{F}_u$. By maximal we mean
that for every $v \succ u, v \neq u$ it holds that $\eta(v) = \hat{\eta}(v)$, or
in other words $\mathds{1}_{\gamma^\uparrow}(v) = 0$ for all $\gamma \in
\mf{F}_u$ and $v \succ u, v \neq u$. Then since $\sum_{v \succ u}\hat{\eta}(v)
=0$ by assumption it must hold that $\overline{\deg}(\Gamma_u) = - 2$, cf.
\eqref{e_eta_unsafe}, so that
by Lemma~\ref{lem:neg} we have $\Gamma_u \in \mG_{\Gamma}^{-}$. Moreover, we also
have $\gamma_* \subseteq \Gamma_u$ and we can deduce that $\gamma_* =
\Gamma_u$. Indeed from our assumptions we have that $\eta(v) = \hat{\eta} (v) = 0$  for all
$v \succ u, v \neq u$, and therefore we find that the collection $v \succ u, v \neq u$ corresponds to a
bubble extraction sequence which upon successive contraction reduce $\Gamma_u$ to a sunset
diagram $\<cc4div>$. Therefore, $\Gamma_u$ can not strictly contain a divergence.
Finally, assume $ \gamma_{\ast}^{\uparrow\uparrow} = \dagger$. Then
$ \widehat{\mathrm{deg}}(\bullet) >0 $, cf. \eqref{e_eta_unsafe}, since the original diagram $ \Gamma $ satisfied $
\mathrm{deg} \,  \Gamma = 0 $. However, this would imply that
$\widehat{\rm{Null}}(T, \tilde{\Gamma})
  < |\trim|$, which violates our assumption.
\end{proof}

Let $ \gamma_{\ast} \in \mf{F}_{u} $ be an unsafe divergence as in
Lemma~\ref{lem:gammastar}, and denote with $u_* \in \trim$ the node in $ \trim$
such that $\Gamma_{u_*} = \gamma_*$.
We define slightly modified Hepp sectors for $ \mathbf{n} \in \mA^{\#}(T)$. Namely, we set, with $v_\star
= v_\star(\gamma_*)$:
\begin{equation}
  \begin{aligned}
    D^*_{(T, \n)} := \big\{ x_{V_\star} & \in (\TT^d)^{V_\star}  \, \colon \, \\
    & (\sqrt{d} \pi)  2^{- \mathbf{n} (v \wedge w)-1} < | x_{v} - x_{w} |
    \leqslant (\sqrt{d} \pi)   2^{- \mathbf{n} (v \wedge w)} \ \  \forall v, w \in
    (V_{\star} \setminus V(\gamma_*) ) \cup \{v_\star\} \;,  \\
    & (\sqrt{d} \pi)  2^{- \mathbf{n} (v \wedge w)-1} < | x_{v} - x_{w} |
    \leqslant (\sqrt{d} \pi)   2^{- \mathbf{n} (v \wedge w)} \ \  \forall v, w \in
    V(\gamma_*)
    \big\} \;.
  \end{aligned}
\end{equation}
In particular, this choice of set does not impose any condition on the
lengthscales $|x_v - x_w|$ when $v \in V_\star \setminus V(\gamma_*)$ and $w \in
V(\gamma_*) \setminus \{v_\star\}$.

Let us write
$ T_{\ast} = T_{u_{\ast}}$ for the sub--Hepp tree rooted at $u_{\ast} $,
$\n_{\ast} = \n \vert_{T_{\ast}}$, and
$ T' $ for the Hepp tree where we replaced the entire subtree $ T_{\ast}$ with
a single leaf $ v_{\star}$, and $\n'$ the restriction of the scales accordingly.
This allows for the following convenient decomposition:
\begin{equation}\label{eq_def_modHeppSec}
\begin{aligned}
D^*_{(T, \n)}
=
\big\{
x_{V_{\star}} \in (\TT^d)^{V_\star}
\,:\,
x_{(V_{\star}\setminus V ( \gamma_{\ast})) \cup v_{\star}}
\in
D_{ T' , \n '}
\text{ and }
x_{V ( \gamma_{\ast}) \setminus v_{\star}} \in
D_{(T_{\ast}, \n_{\ast})} (x_{ v_{\star}})
\big\}\,,
\end{aligned}
\end{equation}
where
\begin{equation*}
\begin{aligned}
D_{(T_{\ast}, \n_{\ast})} (x_{ v_{\star}})
:=
\big\{
x_{V ( \gamma_{\ast}) \setminus v_{\star}} \in ( \TT^{d})^{V ( \gamma_{\ast})
\setminus v_{\star}}
\,:\,
x_{V ( \gamma_{\ast})} \in D_{( T_{\ast}, \n_{\ast})}
\big\}\,.
\end{aligned}
\end{equation*}
Since we are only dropping some assumptions on
the internal variables, we find that $D_{(T, \n)} \subseteq D^*_{(T, \n)}$.

While having lost information of the exact scale assignment
between variables in $ V ( \gamma_{\ast})$ and outside of $ V ( \gamma_{\ast})$
in $ D^{\ast}_{( T, \n )}$, we can almost fully recover this information.
The following result is a consequence of $ \gamma_{\ast}$ being an unsafe divergence
and the triangle inequality.

\begin{lemma}\label{lem_recover}
	Let $ \gamma_{\ast}$ and $ u_{\ast}$ be as in Lemma~\ref{lem:gammastar} and $\mathbf{n} \in \mA^{\#}(T)$. Then
for all $ v \in V ( \gamma_{\ast}) \setminus v_{\star}( \gamma_{\ast})$  and $ w \in V_{\star}
\setminus V ( \gamma_{\ast} )$, we have
\begin{equation*}
\begin{aligned}
( \sqrt{d} \pi )
2^{- \n ( v \wedge w)-1} ( 1-2^{- ( \n (u_{\ast}) - \n ( v \wedge w))} )
<
| x_{v}- x_{w}|
\leqslant
 ( \sqrt{d} \pi ) 2^{ - \n ( v \wedge w)}
( 1+ 2^{- ( \n (u_{\ast}) - \n ( v \wedge w))} )\,,
\end{aligned}
\end{equation*}
for all $  x_{V_{\star}} \in D^{\ast}_{(T,\n)}$.
In particular
\begin{equation*}
\begin{aligned}
\frac{1}{2}
 (\sqrt{d} \pi )
2^{- \n ( v \wedge w)-1} <
| x_{v}- x_{w}|
\leqslant
2 ( \sqrt{d} \pi ) 2^{ - \n ( v\wedge w)}
\,.
\end{aligned}
\end{equation*}
As a consequence, if
	$p(u_*)$ is the parent of $u_*$ in $T$, then we have
\begin{equation*}
  \begin{aligned}
  \mathrm{Vol} \big( D_{(T, \n)}^{\ast}\setminus D_{(T, \n)} \big)
  \lesssim
  2^{- d ( \n ( u_{\ast}) - \n ( p ( u_{\ast})) )}
  \prod_{u \in \trim} 2^{- d \n (u)}\,.
  \end{aligned}
  \end{equation*}

\end{lemma}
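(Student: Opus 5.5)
The plan is to write $x_v - x_w = (x_v - x_{v_\star}) + (x_{v_\star} - x_w)$ with $v_\star = v_\star(\gamma_*)$, and to control the two pieces using the two families of constraints that define $D^*_{(T,\n)}$ in \eqref{eq_def_modHeppSec}. The first step is to identify the relevant least common ancestors. Since $\gamma_* = \Gamma_{u_*}$, every vertex of $V(\gamma_*)$ is a leaf of the subtree $T_*$ rooted at $u_*$, while $w \in V_\star \setminus V(\gamma_*)$ is not. Hence $v \wedge w = v_\star \wedge w$ is a strict ancestor of $u_*$, so $\n(u_*) > \n(v\wedge w)$; strictness is guaranteed because $\n \in \mA^{\#}(T)$ takes distinct values. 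On the other hand, $v \wedge v_\star \succeq u_*$, so $\n(v\wedge v_\star) \geq \n(u_*)$.

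The second step is to apply the constraint on $T_*$, valid since $v, v_\star \in V(\gamma_*)$, which gives $|x_v - x_{v_\star}| \leq \sqrt d\pi\, 2^{-\n(v\wedge v_\star)} \leq \sqrt d \pi\, 2^{-\n(u_*)}$. The constraint on $T'$, valid since $v_\star, w \in (V_\star\setminus V(\gamma_*))\cup\{v_\star\}$, gives $\sqrt d\pi\, 2^{-\n(v\wedge w)-1} < |x_{v_\star}-x_w| \leq \sqrt d\pi\, 2^{-\n(v\wedge w)}$. The triangle inequality then yields the upper bound $\sqrt d\pi\,2^{-\n(v\wedge w)}(1+2^{-(\n(u_*)-\n(v\wedge w))})$. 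For the lower bound, the same argument gives $\sqrt d\pi\,2^{-\n(v\wedge w)-1} - \sqrt d\pi\,2^{-\n(u_*)}$. Writing $2^{-\n(u_*)} = 2^{-\n(v\wedge w)-1}\cdot 2^{-(\n(u_*)-\n(v\wedge w)-1)}$, this is off from the stated form by a factor $2$ in the exponent. I would absorb this either by noting that $\n(u_*)\geq \n(v\wedge w)+1$ and sharpening the inner bound via the half-open annulus convention, or by accepting the constant and observing that only the ``in particular'' version is used later. For that version, $\n(u_*)-\n(v\wedge w)\geq 1$ gives the factor $1+\tfrac12 \leq 2$ above. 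Below, if the gap is at least $2$, we get at least $\tfrac12$. If the gap is exactly $1$, I would instead invoke the unsafety of $\gamma_*$, namely $\n(\gamma_*^{\uparrow}) > \n(\gamma_*^{\uparrow\uparrow})$ together with $u_* = \gamma_*^\uparrow$, to gain the extra room. \textbf{This borderline case of a gap equal to $1$ is the step I expect to be delicate.}

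The third step is the volume estimate. A point of $D^*_{(T,\n)}\setminus D_{(T,\n)}$ must violate some constraint of $D_{(T,\n)}$ for a pair $v \in V(\gamma_*)\setminus\{v_\star\}$, $w \notin V(\gamma_*)$. By the estimates above, this forces $|x_{v_\star}-x_w|$ to lie within $\sqrt d\pi\,2^{-\n(u_*)}$ of one of the two annulus radii at scale $2^{-\n(v\wedge w)}$. I would therefore integrate variables in the order given by \eqref{eq_def_modHeppSec}. First, the variables of $\gamma_*$ range over $D_{(T_*,\n_*)}(x_{v_\star})$, which has volume $\prod_{u\in \trim_*}2^{-d\n(u)}$. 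Then the outer variables range over $D_{(T',\n')}$, but with one constraint restricted to a thin shell. The cheapest way to bound this is to localise the relevant vertex $x_w$, or the split at $p(u_*)$, to a ball of radius $\lesssim 2^{-\n(u_*)}$ in place of the $2^{-d\n(p(u_*))}$ volume factor. This yields the factor $2^{-d(\n(u_*)-\n(p(u_*)))}$. Summing over the finitely many pairs $(v,w)$ then gives the claim. Matching the exact exponent $d$, rather than $1$ from a thin shell, is where I would have to use that the offending configurations force a full $d$-dimensional localisation, and I would check this carefully.
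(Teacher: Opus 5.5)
Your first two steps are the paper's proof: the triangle inequality through $x_{v_\star}$, using $v\wedge w=v_\star\wedge w\prec u_*$ and $\n(v\wedge v_\star)\ge\n(u_*)$. Your suspicion about the lower bound is also correct. The triangle inequality only gives $\sqrt d\pi\,2^{-\n(v\wedge w)-1}\bigl(1-2^{-(\n(u_*)-\n(v\wedge w)-1)}\bigr)$, and the paper asserts the stronger factor without justification.

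Neither of your repairs can work, because in the gap-one case the stated lower bounds are false.
\begin{itemize}
\item The half-open convention does not improve $|x_v-x_{v_\star}|\le\sqrt d\pi\,2^{-\n(u_*)}$: the endpoint is attained when $v\wedge v_\star=u_*$.
\item Unsafety of $\gamma_*$ only gives $\gamma_*^{\uparrow}\succ\gamma_*^{\uparrow\uparrow}$ strictly, i.e.\ a gap $\ge1$, which $\n\in\mA^{\#}(T)$ already provides.
\end{itemize}
Here is a concrete counterexample. Pair $\mI_5$ internally by $\{1,3\},\{2,4\}$, and pair its top vertex with the vertex of $\mI_1$. This gives a sunset $\gamma_*$ on $\{v,v_\star\}$, joined by the single edge $v_\star\to w$ to a vertex $w$ carrying three legs. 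Take $T=[[v,v_\star]_{u_*},w]_{p}$, $\mf{F}_s=\emptyset$ and $\mf{F}_u=\{\gamma_*\}$; then $\hat\eta\equiv0$, so all hypotheses of Lemma~\ref{lem:gammastar} hold. Let $\n(u_*)=\n(p)+1$, set $|x_{v_\star}-x_w|=\sqrt d\pi\,2^{-\n(p)-1}+\delta$, and place $x_v$ on the segment from $x_{v_\star}$ to $x_w$ at distance $\sqrt d\pi\,2^{-\n(u_*)}$ from $x_{v_\star}$. This is a point of $D^*_{(T,\n)}$ with $|x_v-x_w|=\delta$.

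What is true is the weaker first lower bound, and the ``in particular'' lower bound only when $\n(u_*)-\n(p(u_*))\ge2$. Where two-sided comparability on $D^*_{(T,\n)}$ is needed later (e.g.\ to apply Lemma~\ref{lem_taylor_new} to the incoming edge of $\gamma_*$), you can discard the sectors with $\n(u_*)=\n(p(u_*))+1$. These form a diagonal, and since $\hat\eta\equiv0$ they contribute only $(\log\frac1\ve)^{|\trim|-1}$ by the bound of Lemma~\ref{lem_hepp_unsafe}.

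For the volume bound your reduction is again the paper's: a violating pair forces $|x_{v_\star}-x_w|$ into a window of width $\le\sqrt d\pi\,2^{-\n(u_*)}$ next to one of the radii $\sqrt d\pi\,2^{-\n(v\wedge w)}$, $\sqrt d\pi\,2^{-\n(v\wedge w)-1}$. In the relative variable $x_w-x_{v_\star}$ this is a thin spherical shell, not a ball. So your proposed localisation of $x_w$ to a ball of radius $2^{-\n(u_*)}$ is not available, and the exponent $d$ is in fact false for $d\ge2$.

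To see this, take the example above with $\n(u_*)-\n(p)$ large. For every $x_w$ with $0<|x_w-x_{v_\star}|-\sqrt d\pi\,2^{-\n(p)-1}\le\frac18\sqrt d\pi\,2^{-\n(u_*)}$, a fixed fraction of the admissible $x_v$ (those pointing towards $x_w$) satisfy $|x_v-x_w|\le\sqrt d\pi\,2^{-\n(p)-1}$. Hence $\mathrm{Vol}\big(D^*_{(T,\n)}\setminus D_{(T,\n)}\big)\gtrsim2^{-(\n(u_*)-\n(p))}\prod_{u\in\trim}2^{-d\n(u)}$.

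What you can prove is the exponent-one bound. Integrate $x_{V(\gamma_*)\setminus\{v_\star\}}$ first, then integrate over $D_{(T',\n')}$ with representatives chosen so that $x_w$ is integrated relative to $x_{v_\star}$ at the node $v\wedge w$. The shell then costs $2^{-\n(u_*)}2^{-(d-1)\n(v\wedge w)}$ instead of $2^{-d\n(v\wedge w)}$, and using $\n(v\wedge w)\le\n(p(u_*))$ you get
\[
\mathrm{Vol}\big(D^*_{(T,\n)}\setminus D_{(T,\n)}\big)\lesssim 2^{-(\n(u_*)-\n(p(u_*)))}\prod_{u\in\trim}2^{-d\n(u)} \;.
\]
This is all that Lemma~\ref{lem_Dstarenough} needs, since $\sum_{n\ge m}2^{-(n-m)}$ is bounded just as well as $\sum_{n\ge m}2^{-d(n-m)}$.
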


\begin{proof}
We write $ v_{\star} = v_{\star}( \gamma_{\ast})$.
Then by application of the triangle inequality
\begin{equation}\label{e:ubd}
  |x_v - x_w| \leq |x_v - x_{v_\star}| + |x_{v_\star} - x_w| \leq (\sqrt{d}
\pi) 2^{- \n(v \wedge w)} (1 + 2^{- (\n( u_{\ast}) - \n(v \wedge w))})  \;,
\end{equation}
and
\begin{equation}\label{e:lbd}
  |x_v - x_w| \geq |x_{v_\star} - x_w| - |x_v - x_{v_\star}| \geq (\sqrt{d}
\pi) 2^{- \n(v \wedge w)-1} (1 - 2^{- (\n ( u_{\ast}) - \n(v \wedge w))})  \;.
\end{equation}
where we used that $\n( u_{\ast}) \leqslant \n ( v \wedge v_{\star})$.
Moreover, we used that $ v_{\star} \wedge w = v \wedge w$, because $ w$ is not
a descendant of $ u_{\ast}$. The second statement in the lemma is simply a consequence of the fact that $
2^{- (\n ( u_{\ast}) - \n(v \wedge w))} \leqslant 1/2$ because $ \n \in
\mA^{\#}(T)$.

As for the last claim, we observe that if $x_{V_\star} \in
D^*_{(T,\bf{n})} \setminus D_{(T,\bf{n})}$, then by the first two points and the
definition of $D_{(T, \bf{n})}$ there must exist (at least) one pair $v,w$
such that $ v \in V ( \gamma_*) \setminus \{v_\star(\gamma_*)\} $ and $ w \in
V_{\star} \setminus V ( \gamma_*)$. Consequently,
\begin{equation*}
  \begin{aligned}
    \frac{1}{\sqrt{d} \pi}|x_v - x_w| \in & [2^{- \n(v \wedge w)},  2^{- \n(v \wedge w)} (1 + 2^{- (\n( u_{\ast}) - \n(v \wedge w))})] \\
    & \qquad \cup [ 2^{- \n(v \wedge w)-1} (1 - 2^{- (\n ( u_{\ast}) - \n(v \wedge w))}),  2^{- \n(v \wedge w)-1} ] \;.
  \end{aligned}
\end{equation*}
Since the measure of this union of intervals is bounded by $$2^{- \n(v \wedge
w)}2^{- (\n( u_{\ast}) - \n(v \wedge w))} \leq 2^{- \n(v \wedge
w)}2^{- (\n( u_{\ast}) - \n(p (u_*)))}\;,$$ we obtain the desired result by summing
over all possible choices of pairs $u, w$.
\end{proof}

The following lemma shows that we may replace the integration domain $D_{(T,
\n)}$ by $D_{(T, \n)}^{\ast}$.

\begin{lemma}\label{lem_Dstarenough}
Let $ \Gamma$ be a negative 4-Anderson--Feynman diagram, $ T \in
\mT_{V_{\star}}$ a Hepp tree, and $ \mathfrak{F}_{s}$ a safe forest with associated
unsafe forest $\mf{F}_{u} \neq \emptyset$.
Furthermore, let $ \tilde\Gamma \in \mD_{\Gamma}^{\rm{lab}}$ be a term in the decomposition $\mf{K}_{\mf{F}_s}^{(1)} \Gamma = \sum \tilde{\Gamma}$,
such that $\widehat{\rm{Null}}(T, \tilde\Gamma) = |\trim|$.
Then
\begin{equation*}
\begin{aligned}
\sum_{\n \in \mA^{\#, N_\ve} (T)}
\int_{D_{(T, \n)}^{\ast}\setminus D_{(T, \n)}}  |\overline{\mW}_{\ve}^{(\mK)}
 \tilde\Gamma (x_{V_\star})| \ud x_{V_\star}
\lesssim
\big( \log{ \tfrac{1}{ \ve}} \big)^{| \trim| -1}
\,.
\end{aligned}
\end{equation*}
\end{lemma}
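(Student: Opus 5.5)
The plan is to repeat the pointwise power-counting estimate from the proof of Lemma~\ref{lem_hepp_unsafe}, now on the enlarged set $D^{\ast}_{(T,\n)}$. The volume gain of Lemma~\ref{lem_recover} will then cost us one logarithm.

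\textbf{Step 1: pointwise bound.} Fix $\n \in \mA^{\#, N_\ve}(T)$ and $x_{V_\star} \in D^{\ast}_{(T,\n)}$. By Lemma~\ref{lem_recover}, every distance $|x_v - x_w|$ is comparable, up to a factor $2$ either way, to $2^{-\n(v\wedge w)}$. This includes the cross distances with $v \in V(\gamma_\ast)\setminus\{v_\star\}$ and $w \notin V(\gamma_\ast)$. Hence each step of the derivation of \eqref{eq_supp111_unsafe} goes through verbatim, with only worse constants. This covers the bounds on $G_\ve$, on $D G_\ve$, the monomials $|x_v - x_{\mf{a}(v)}|$, and the Taylor remainder bounds from Lemma~\ref{lem_taylor_new}. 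One must check the hypothesis of Lemma~\ref{lem_taylor_new}, namely that the expansion point is closer than the other two points. On $D_{(T,\n)}$ this used the strict separation of scales, which is a consequence of $\n \in \mA^{\#}$. Here it holds up to a factor $2$, so I would either invoke a version of Lemma~\ref{lem_taylor_new} with comparable rather than strictly ordered distances, or split off the at most finitely many borderline scale configurations. The outcome is
\[
\sup_{x \in D^{\ast}_{(T,\n)}} |\overline{\mW}^{(\mK)}_\ve \tilde\Gamma(x_{V_\star})| \lesssim \prod_{u\in\trim} (2^{-\n(u)}+\ve)^{\hat\eta(u)},
\]
with $\hat\eta$ as in \eqref{e_eta_unsafe}.

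\textbf{Step 2: volume.} Multiply the bound of Step 1 by the volume estimate of Lemma~\ref{lem_recover}:
\[
\mathrm{Vol}(D^\ast_{(T,\n)}\setminus D_{(T,\n)}) \lesssim 2^{-d(\n(u_\ast)-\n(p(u_\ast)))}\prod_{u} 2^{-d\n(u)}.
\]
The resulting summand is the one in \eqref{eq_supp100_unsafe}, except that $\hat\eta$ is replaced by $\hat\eta'$ with $\hat\eta'(u_\ast) = \hat\eta(u_\ast)+d$ and $\hat\eta'(p(u_\ast)) = \hat\eta(p(u_\ast))-d$. Since $\n \le N_\ve$, we may use $2^{-\n} \simeq 2^{-\n}+\ve$ to pass between the two forms. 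This shift leaves $\sum_{v\succ u}\hat\eta'(v) = \sum_{v\succ u}\hat\eta(v) \geq 0$ unchanged for every $u \preceq p(u_\ast)$. For $u$ not comparable to $u_\ast$ nothing changes. For $u$ on the path from $u_\ast$ upwards to the leaves, the sum increases by $d$ if $u = u_\ast$ and is unchanged otherwise. In particular $\sum_{v\succ u_\ast}\hat\eta'(v) = d > 0$, so non-negativity (Lemma~\ref{lem_unsafe_nonneg}) is preserved and the node $u_\ast$ is no longer null.

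\textbf{Step 3: summation.} Apply Lemma~\ref{lem_estimate_nonneg_Hepp}, or rather the same inductive argument run with $\hat\eta'$. This bounds the sum by $(\log\frac1\ve)^{\widehat{\mathrm{Null}}'}$, where $\widehat{\mathrm{Null}}' \le |\trim|-1$ because $u_\ast$ has strictly positive cumulative degree. Lemma~\ref{lem:gammastar} guarantees that $u_\ast$ is an inner node with a parent, since $\gamma_\ast^{\uparrow\uparrow} \neq \dagger$.

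The main obstacle is Step 1. The expansion point $x_{\mf b(e)}$ and the edge endpoints may straddle the boundary of $\gamma_\ast$, where the scales are only known up to the factor-$2$ ambiguity of Lemma~\ref{lem_recover}. Checking that the first-order Taylor remainder estimates still hold there, instead of degenerating near coincident points, is the delicate point.
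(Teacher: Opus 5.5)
Your proposal is correct and follows the paper's proof. You take the pointwise bound from \eqref{eq_supp111_unsafe} on the enlarged sector and multiply it by the volume factor $2^{-d(\n(u_\ast)-\n(p(u_\ast)))}$ from Lemma~\ref{lem_recover}, which loses exactly one logarithm at $u_\ast$. The paper sums this geometric factor directly using $\hat\eta\equiv 0$; that is equivalent to your shift $\hat\eta\to\hat\eta'$, and you are more careful than the paper about the Taylor hypothesis on $D^\ast_{(T,\n)}$. The one slip is the exponent in your Step~1 display, which should be $\hat\eta(u)-d$ rather than $\hat\eta(u)$; your Step~2 already uses the correct exponent.
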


\begin{proof}
First, we fix
$ \gamma_{\ast}$ as in Lemma~\ref{lem:gammastar}.
We then follow the proof of Lemma~\ref{lem_hepp_unsafe} up to \eqref{eq_supp111_unsafe}, which yields
\begin{equation*}
\begin{aligned}
\int_{D_{(T, \n)}^{\ast}\setminus D_{(T, \n)}}  |\overline{\mW}_{\ve}^{(\mK)} \tilde{\Gamma}
(x_{V_\star})| \ud x_{V_\star}
\lesssim
\mathrm{Vol} \big( D_{(T, \n)}^{\ast}\setminus D_{(T, \n)} \big)
\prod_{u \in \overline{T}}
(2^{-\mathbf{n} ( u)}
+ \varepsilon)^{ \hat{\eta}  (u) -d}\,.
\end{aligned}
\end{equation*}
To estimate the volume term, we use  Lemma~\ref{lem_recover} with $  p (
u_{\ast})$ being parent of $u_{\ast}$ in $ \trim $, and recall that $
\gamma_{\ast}^{\uparrow \uparrow} \neq \dagger$ thus $u_{\ast}$ is not the root of $
\trim$ and indeed has a parent. Hence, because $\mathbf{n} \in
\mA^{\#}(T)$, we have
\begin{equation*}
\begin{aligned}
\mathrm{Vol} \big( D_{(T, \n)}^{\ast}\setminus D_{(T, \n)} \big)
\lesssim
2^{- d ( \n ( u_{\ast}) - \n ( p ( u_{\ast})) )}
\prod_{u \in \trim} 2^{- d \n (u)}\,.
\end{aligned}
\end{equation*}
Now, because $ \widehat{\mathrm{Null}} ( T, \tilde\Gamma ) = | \trim|$, and therefore $ \hat{\eta} \equiv
0 $, we have
\begin{equation}\label{e_supp1_DsameDstar}
\begin{aligned}
\sum_{\n \in \mA^{\#, N_\ve} (T)}
2^{- d ( \n ( u_{\ast}) - \n ( p ( u_{\ast})) )}
\prod_{u \in \trim}
(2^{-\mathbf{n} ( u)}
+ \varepsilon)^{ \hat{\eta}  (u) -d}
2^{- d \n (u)}
\lesssim N_{\ve}^{|\trim| -1}
\,,
\end{aligned}
\end{equation}
where we used that
\begin{equation*}
	\begin{aligned}
	\sum_{m =1}^{N_{\varepsilon}}
	2^{dm}
	\sum_{n =m}^{N_{\varepsilon}} 		2^{- dn}
	\lesssim
	\sum_{m =1}^{N_{\varepsilon}} 2^{-d (m-m)} = N_{\varepsilon}\,.
	\end{aligned}
\end{equation*}
This concludes the proof.
\end{proof}

Finally, we can proceed to the key step of the proof of Lemma~\ref{lem_emptysafe},
namely performing
one additional second--order Taylor expansion.
In the following, it suffices to restrict to the case  $\widehat{\rm{Null}}(T, \tilde{\Gamma})
  = |\trim|$ for some $\tilde{\Gamma}$ in the decomposition
  $\mf{K}^{(1)}_{\mf{F}_s} \Gamma = \sum \tilde{\Gamma}$,
  since otherwise the bound from Lemma~\ref{lem_hepp_unsafe} is sufficient for
  Lemma~\ref{lem_emptysafe}.

First, for $K \colon \TT^4 \to
\RR$, we introduce the operators
\begin{equation*}
  \begin{aligned}
  \mH_{2} K ( x, y | x_{\star}) &:=
  K ( x- y)
  - K (x_{\star} -y) -  \sum_{i = 1}^{d}
  (x^{i}- x_{\star}^{i})
  \partial_{i}K ( x_{\star} - y)\\
  &\qquad - \frac{1}{2}
   \sum_{ \substack{i,j = 1 }}^{d}
  (x^{i}- x_{\star}^{i})
  (x^{j}- x_{\star}^{j})
  \partial_{i}\partial_{j}K ( x_{\star} - y)\\
  &=
  \sum_{ \substack{\beta \in \NN^{d}\\ | \beta| = 3}}
  \frac{3}{ \beta!}
  (x - x_{\star})^{ \beta}
  \int_{0}^{1} (1-t)^{2}
  \partial^{\beta} K
  ( x_{\star} - y
  + t( x- x_{\star}  ))
  \ud t \;.
  \end{aligned}
  \end{equation*}
In addition we extract the symmetric bit of this expansion:
\begin{equation*}
  \begin{aligned}
  \mH^{\rm{sym}}_2 K
  ( x, y| x_{\star})
  & := \frac{1}{2}   \sum_{ \substack{i = 1}}^{d}
  \big(x^i- x_{\star}^{i}\big)^{2}
  \partial_{i}^{2} K ( x_{\star}- y )\,.
  \end{aligned}
\end{equation*}
Now, we alter the kernel assignment $\mK$ from Lemma~\ref{lem_pushonKernel} only with respect
to the renormalisation of the divergence $\gamma_*$, which we choose as in
Lemma~\ref{lem:gammastar}. Namely, we define new
kernel assignments $\mL$ and $\mM$ through
\begin{equation}\label{eq_def_L}
  \mL_{e, \ve} (x_V) := \begin{cases}
    (\mH_{2} G_{\ve})( x_{\tilde{e}_{+}},x_{ \tilde{e}_{-}}| x_{\mf{b}(e)}) \;, & \text{ if } e = e_{\star}(\gamma_*, {\Gamma}) \;,\\
    \mK_{e, \ve} (x_V)\;, & \text{ otherwise}\;,
  \end{cases}
\end{equation}
for every $ e \in E $ and $ \tilde{e} = \tau^{-1}_{ \tilde\Gamma, \Gamma}(
e)$. Similarly
\begin{equation}\label{eq_def_M}
  \mM_{e, \ve} (x_V) := \begin{cases}
    (\mH_{2}^{\rm{sym}} G_{\ve})( x_{\tilde{e}_{+}},x_{\tilde{e}_{-}}| x_{\mf{b}(e)}) \;, & \text{ if } e = e_{\star}(\gamma_*, {\Gamma}) \;,\\
    \mK_{e, \ve}\;,  (x_V) & \text{ otherwise}\;.
  \end{cases}
\end{equation}
We note that since we chose $\gamma_*$ to be a leaf of $\mf{F}_u$, we have
$\mf{b}(e) = v_\star(\gamma_*)$. The next result is a consequence of
antisymmetry.

\begin{lemma}\label{lem_offdiag_nochange}
Let $ \Gamma$ be a negative $4$-Anderson--Feynman diagram, $ T \in
\mT_{V_{\star}}$ a Hepp tree, and $ \mathfrak{F}_{s}$ a safe forest with associated
unsafe forest $\mf{F}_{u} \neq \emptyset$.
Furthermore, let $ \tilde\Gamma \in \mD_{\Gamma}^{\rm{lab}}$ be a term in the decomposition $\mf{K}_{\mf{F}_s}^{(1)} \Gamma = \sum \tilde{\Gamma}$,
such that $\widehat{\rm{Null}}(T, \tilde\Gamma) = |\trim|$.
Then
\begin{equation}\label{e:identity-to-prv}
\int_{ D_{( T, \mathbf{n})}^{\ast}}
\overline{\mW}_{\varepsilon}^{(\mK)}
\tilde\Gamma
(x_{V_{\star}}) \ud x_{V_{\star}}
=
\int_{ D_{( T, \mathbf{n})}^{\ast}}
\big( \overline{\mW}_{\varepsilon}^{( \mM)}
 \tilde\Gamma
- \overline{\mW}_{\varepsilon}^{( \mL)}
\tilde\Gamma\big)
(x_{V_{\star}}) \ud x_{V_{\star}}\,.
\end{equation}
\end{lemma}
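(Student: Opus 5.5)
The plan is to compare the kernel assignments $\mK$, $\mL$, $\mM$ edge by edge. By \eqref{eq_def_L} and \eqref{eq_def_M} they coincide on every edge except $e = e_\star(\gamma_*,\Gamma)$. By Lemma~\ref{lem:gammastar} the divergence $\gamma_*$ is a leaf of $\mf{F}_u$, hence $\mf{b}(e)=v_\star(\gamma_*)=:v_\star$. Write $\tilde e=\tau^{-1}_{\tilde\Gamma,\Gamma}(e)$, $x=x_{\tilde e_+}$, $y=x_{\tilde e_-}$ and $x_*=x_{v_\star}$. From the definitions of $\mH$, $\mH_2$ and $\mH_2^{\rm sym}$ we have the exact algebraic identity
\begin{equation*}
\mH G_\ve(x,y|x_*) = \mH_2 G_\ve(x,y|x_*) + \mH_2^{\rm sym} G_\ve(x,y|x_*) + \frac12\sum_{i\neq j}(x^i-x_*^i)(x^j-x_*^j)\,\partial_i\partial_j G_\ve(x_*-y)\;.
\end{equation*}
Since $\overline{\mW}^{(\cdot)}_\ve\tilde\Gamma$ is multilinear in the kernel attached to $\tilde e$, this splits $\overline{\mW}^{(\mK)}_\ve\tilde\Gamma$ into the $\mL$-term, the $\mM$-term, and an off-diagonal term.

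The off-diagonal term is the only one that is new. I would show that its integral over $D^*_{(T,\n)}$ vanishes, and then conclude \eqref{e:identity-to-prv} by collecting terms, keeping track of the sign convention with which $\mL$ enters the statement.

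\textbf{Step 1 (factorisation).} By \eqref{eq_def_modHeppSec}, $D^*_{(T,\n)}$ is a product of two constraints. The first constrains the outer variables $x_{(V_\star\setminus V(\gamma_*))\cup\{v_\star\}}$ through $D_{(T',\n')}$. The second constrains the inner variables $x_{V(\gamma_*)\setminus\{v_\star\}}$ through $D_{(T_*,\n_*)}(x_{v_\star})$, which depends only on the differences $x_w - x_{v_\star}$. This is precisely why $D^*$ was introduced instead of $D$.

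I would then check the structure of the edges meeting $\gamma_*$, using Lemma~\ref{lem:neg} (exactly one incoming and one outgoing edge) and the fact that $\gamma_*$ contains no unsafe divergence (it is a leaf of $\mf{F}_u$, and $\gamma_*=\Gamma_{u_*}$ by Lemma~\ref{lem:gammastar}). The outcome should be the following.
\begin{itemize}
\item The only kernels involving both inner and outer variables are the one at $\tilde e$ and the outgoing edge at $v_\star$. The factor $\partial_i\partial_jG_\ve(x_*-y)$ at $\tilde e$ depends on the inner variables only through the monomial $(x_v-x_*)^i(x_v-x_*)^j$, where $v=\tilde e_+\in V(\gamma_*)$.
\item Edges rewired by larger safe divergences are reattached at $v_\star$ and therefore see only $x_{v_\star}$.
\item Legs incident to $\gamma_*$ can only be $\tilde e$ itself or the outgoing edge, so they also see only $x_*$.
\end{itemize}
Given this, the integral factors into an outer integral times an inner integral over $D_{(T_*,\n_*)}(x_{v_\star})$ of the off-diagonal monomial against the $\gamma_*$-internal kernels.

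\textbf{Step 2 (reflection symmetry).} Fix $i\neq j$ and apply the change of variables $x_w\mapsto x_{v_\star}+\uppi^{(i)}(x_w-x_{v_\star})$ for all $w\in V(\gamma_*)\setminus\{v_\star\}$, exactly as in Lemma~\ref{lem_sym}. The torus distance is invariant under this map, so $D_{(T_*,\n_*)}(x_{v_\star})$ is preserved. The kernels $G_\ve$ inside $\gamma_*$ are radial, hence invariant.

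The only delicate point is the terms created inside $\gamma_*$ by the safe extraction maps $\hat\mC^{(1)}$. Each such summand carries a monomial $(x_a-x_b)^{k}$ together with a derivative $\partial_k$ on a kernel, in the same direction $k$, with both endpoints inside $\gamma_*$ (Lemma~\ref{lem:n-prop}). This product is even under $\uppi^{(i)}$ for every $k$. The off-diagonal factor $(x_v^i-x_*^i)(x_v^j-x_*^j)$, on the other hand, is odd. Hence the inner integral equals its own negative and vanishes.

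\textbf{Main obstacle.} I expect the hard part to be the bookkeeping in Step 1: certifying that after all rewirings by $\mf{K}^{(1)}_{\mf{F}_s}$ no kernel or monomial couples an inner variable of $\gamma_*$, other than $v_\star$, to an outer variable. I would derive this from Lemma~\ref{lem:n-prop}, point~2, together with the uniqueness of the incoming edge into $\gamma_*$.
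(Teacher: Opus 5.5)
Your proposal is correct and follows essentially the paper's argument. Both split $\mH G_\ve = \mH_2 G_\ve + \mH_2^{\rm sym} G_\ve + (\text{off-diagonal part})$ and kill the off-diagonal part on $D^*_{(T,\mathbf{n})}$ by the reflection argument of Lemma~\ref{lem_sym}; the one difference is that the paper disposes of possible labels on $\tilde\gamma_*$ by citing Lemma~\ref{lem_onlyNeed_emptysafe} (they vanish), so your parity argument for such labels is not needed.

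Note also that your decomposition yields $+\,\overline{\mW}^{(\mL)}_\ve\tilde\Gamma$, not $-$. The minus sign in \eqref{e:identity-to-prv} is therefore a typo in the statement, and it is harmless because Lemma~\ref{lem_emptysafe} uses the identity only through the triangle inequality.
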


\begin{proof}
We follow the same lines as in the proof of Lemma~\ref{lem_sym} to show that
the following sub integral (of the right--hand side of \eqref{e:identity-to-prv}) vanishes:
\begin{equation*}
\begin{aligned}
\sum_{ \substack{i,j =1\\ i \neq j }}^{d}
\int_{
D_{( T_{\ast}, \n_{\ast})} ( x_{v_{\star} })
}
\big( x^{i}_{ e_{\star , +}} - x_{ v_{\star}}^{i}\big)
\big( x^{j}_{ e_{\star, +}} - x_{ v_{\star} }^{j}\big)
\prod_{e \in \tilde{E} ( \gamma_{\ast} )}
G_{\ve}(x_{e_{+}} - x_{e_{-}})
 \ud x_{ V ( \gamma_{\ast} ) \setminus v_{\star}}\,,
\end{aligned}
\end{equation*}
where $ v_{\star} = v_{\star} ( \gamma_{\ast}) $ and $ e_{\star}= e_{\star}(
\gamma_{\ast}, \tilde\Gamma)$.
Here we used Lemma~\ref{lem_onlyNeed_emptysafe}, which yields that under the
assumptions of this lemma, the labels $ \mf{h}$ and $ \mf{n}$ (from $
\tilde\Gamma$) vanish on $\tilde{\gamma_*}= \tau_{\tilde{\Gamma}, \Gamma}^{-1}
\gamma_*$ (except that $\mf{n}$ might not vanish on $v_\star(\gamma_*)$, but
this does not affect the expression above).
However, compared to Lemma~\ref{lem_sym} there are
two key differences: there is an additional monomial, and we
integrate over the domain $ D^{\ast}_{(T, \n)}$ instead of the full space $
( \TT^{d})^{V_{\star} }$. Thus, some further arguments are necessary to justify
that the integral still vanishes due to antisymmetry.

Let $f $ be of the same form as in Lemma~\ref{lem_sym},
and set $ v_{0}= v_{\star}$, $ v_{1} = x_{e_{\star, +}}$ (and an arbitrary
order $ 2, \ldots, n $ for the remaining nodes in $ V(\gamma_{\ast})$).
Again, by shifting
all integration variables by $ x_{0}$, we see that the integral is independent
of $ x_{0}$
\begin{equation*}
\begin{aligned}
\int_{D_{( T_{\ast}, \n_{\ast})} ( x_{0} )} ( x_{1}^{i}- x_{0}^{i})( x_{1}^{j}- x_{0}^{j})  f ( x_{0,\ldots, n}) \ud
x_{1 ,\ldots, n}
=
\int_{D_{( T_{\ast}, \n_{\ast})} ( 0)}  x_{1}^{i} x_{1}^{j} f ( 0, x_{1,\ldots, n}) \ud
x_{1 ,\ldots, n}\,.
\end{aligned}
\end{equation*}
Now, define the function
  \begin{equation*}
    \begin{aligned}
    g ( x_{1}) :=
    x_{1}^{j}
    \int_{ ( \TT^{d})^{n-1}}
    \mathds{1}_{D_{( T_{\ast}, \n_{\ast})} ( 0)} ( x_{1 ,\ldots, n})  f ( 0, x_{1,\ldots, n}) \ud
    x_{2,\ldots, n}\,,
    \end{aligned}
    \end{equation*}
and recall that $ \uppi^{(i)} : \TT^{d} \to \TT^{d}$ is the map that flips the
sign of the $i$--th coordinate.
Again we have that $ g ( x_{1}) =g (
\uppi^{(i)}x_{1})$, because, with $D_0 = D_{( T_{\ast}, \n_{\ast})} ( 0)$
\begin{equation*}
  \begin{aligned}
  &g ( \uppi^{(i)} x_{1} ) \\
  &=
  (\uppi^{(i)}x_{1})^{j}
  \int\limits_{ ( \TT^{d})^{n-1}}
  \mathds{1}_{D_0}(\uppi^{(i)} x_{1}, x_{2,\ldots, n}) \prod_{\{k, \ell\} \in I\setminus\{1\}  } G_{\ve}(
  x_{\ell} - x_{k})
  \prod_{ \substack{\{1,\ell\} \in I }} G_{\ve}( x_{\ell} - \uppi^{(i)} x_{1}) \ud x_{2
  ,\ldots, n}\\
  & =
  x_{1}^{j}
  \int\limits_{ ( \TT^{d})^{n-1}}
  \mathds{1}_{D_0}(x_{1}, \uppi^{(i)}x_{2,\ldots, n}) \prod_{\{k, \ell\}\in I\setminus\{1\} }  G_{\ve}(
   \uppi^{(i)} x_{\ell} - \uppi^{(i)} x_{k})
  \prod_{ \substack{\{1,\ell\} \in I }} G_{\ve}(\uppi^{(i)} x_{\ell} - x_{1}) \ud x_{2
  ,\ldots, n}\,,
  \end{aligned}
  \end{equation*}
where we first used $ i \neq j $, and in the last equality that $ G_{\ve}( \cdot)$ is radially
symmetric, as well as the fact that
  \begin{equation*}
    \begin{aligned}
    \uppi^{(i)} \big( x_{1}, (\uppi^{(i)}) x_{2,\ldots, n}\big) \in
    D_0 \quad \text{if and only if }\quad
    (x_{1}, \uppi^{(i)} x_{2,\ldots, n}) \in D_0\,,
    \end{aligned}
    \end{equation*}
since the definition of $D^{\ast}_{ (T_{\ast}, \n)}$ only depends on differences
between coordinates.
Finally, we perform the change of variables $ x_{k} \mapsto
\uppi^{(i)} x_{k}$ in the above integral, which yields as desired that
$g ( \uppi^{(i)} x_{1} )
=
g (x_{1} ) $.
Overall, we  have
\begin{equation*}
\begin{aligned}
\int_{D_{( T_{\ast}, \n_{\ast})} ( 0)}  x_{1}^{i} x_{1}^{j} f ( 0, x_{1,\ldots, n}) \ud
x_{1 ,\ldots, n}
&=
\int_{ \TT^{d}}
\mathds{1}\{ \tfrac{ |x_{1}|}{ \sqrt{d} \pi} \in J\}\
  x_{1}^{i} g( x_{1}) \ud x_{1}\\
& = \int_{ \TT^{d}}
\mathds{1}\{x_{1}^{i}>0, \tfrac{ |x_{1}|}{ \sqrt{d} \pi} \in J \}\
  x_{1}^{i} g( x_{1}) \ud x_{1}\\
&\qquad+
\int_{ \TT^{d}}
\mathds{1}\{ ( \uppi^{(i)}x_{1})^{i}>0, \tfrac{ |x_{1}|}{ \sqrt{d} \pi}
\in J \}\
  x_{1}^{i} g( x_{1}) \ud x_{1}\,,
\end{aligned}
\end{equation*}
where we wrote $ J :=  (2^{- \n( v_{1} \wedge
v_{\star})-1}, 2^{- \n( v_{1} \wedge v_{\star})}] $.
Performing the change of variable $ x_{1} \mapsto \uppi^{(i)} x_{1}$ in the
second integral, and using that $ g ( x_{1}) = g ( \uppi^{(i)}x_{1}) $, shows
that the expression vanishes.
This concludes the proof.
\end{proof}

We proceed by estimating the integrals with respect to the kernel assignment $
\mM$ and $ \mL $ from Lemma~\ref{lem_offdiag_nochange} separately. We start with
the kernel assignment $\mL$, which corresponds to a second order Taylor
expansion about the divergence $\gamma_*$.

\begin{lemma}[Second--order Taylor]\label{lem_hepp_unsafe_improve}
Let $ \Gamma$ be a negative 4-Anderson--Feynman diagram, $ T \in
\mT_{V_{\star}}$ a Hepp tree, and $ \mathfrak{F}_{s}$ a safe forest with associated
unsafe forest $\mf{F}_{u} \neq \emptyset$.
Furthermore, let $ \tilde\Gamma \in \mD_{\Gamma}^{\rm{lab}}$ be a term in the decomposition $\mf{K}_{\mf{F}_s}^{(1)} \Gamma = \sum \tilde{\Gamma}$,
such that $\widehat{\rm{Null}}(T, \tilde\Gamma) = |\trim|$.
Then, with $ \mL$ as in \eqref{eq_def_L}, we have
\begin{equation}\label{eq_unsafe_est}
\begin{aligned}
\sum_{{\bf n} \in \mA^{\#, N_{\ve}} ( T)}
\sup_{x \in D_{( T, \mathbf{n})}^{\ast}}
\big| \overline{\mW}_{\varepsilon}^{(\mL)}
\tilde\Gamma (x_{V_{\star}})\big|
\prod_{u \in \overline{T}} 2^{-d \mathbf{n} (u)}
\lesssim
( \log{ \tfrac{1}{\ve}} )^{  |\trim|-1  }
\,.
\end{aligned}
\end{equation}
\end{lemma}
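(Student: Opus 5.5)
We follow the proof of Lemma~\ref{lem_hepp_unsafe} almost verbatim. The only change is in the edge $e_\star(\gamma_*,\Gamma)$, whose kernel is now $\mH_2 G_\ve$ rather than $\mH G_\ve$. The plan is to show that this edge gains one additional power of the ratio between the inner scale $2^{-\n(u_*)}$ (recall $u_*=\gamma_*^\uparrow$ by Lemma~\ref{lem:gammastar}) and the outer scale $2^{-\n(\gamma_*^{\uparrow\uparrow})}$. This amounts to shifting $\hat\eta$ by $+1$ at $u_*$ and by $-1$ at $\gamma_*^{\uparrow\uparrow}$. Since $\hat\eta\equiv 0$ under the assumption $\widehat{\rm Null}(T,\tilde\Gamma)=|\trim|$, such a gain forces one fewer logarithm in the Hepp sum.

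\textbf{Step 1: kernel estimate.} Using the integral form of the third order remainder of $\mH_2$ together with $|\partial^\beta G_\ve(x)|\lesssim(|x|+\ve)^{-2-|\beta|}$ for $|\beta|=3$, we get on $D^*_{(T,\n)}$
\begin{equation*}
|\mH_2G_\ve(x_{e_+},x_{e_-}|x_{v_\star})|\lesssim \frac{2^{-3\n(u_*)}}{(2^{-\n(\gamma_*^{\uparrow\uparrow})}+\ve)^{3}}\,(|x_{e_+}-x_{e_-}|+\ve)^{-2}.
\end{equation*}
Here $e=e_\star(\gamma_*,\tilde\Gamma)$, $v_\star=v_\star(\gamma_*)$, and $\gamma_*^{\uparrow\uparrow}\neq\dagger$ by Lemma~\ref{lem:gammastar}. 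To justify this we need the segment $x_{v_\star}+t(x_{e_+}-x_{v_\star})-x_{e_-}$ to stay at distance comparable to $|x_{e_+}-x_{e_-}|$. On the modified sectors $D^*_{(T,\n)}$ this follows from Lemma~\ref{lem_recover}, which gives two-sided bounds on $|x_v-x_w|$ for $v\in V(\gamma_*)$ and $w\notin V(\gamma_*)$, combined with $\n\in\mA^{\#,N_\ve}(T)$. For this step we invoke the Taylor bound of Lemma~\ref{lem_taylor_new} (or its third order analogue). If $e$ is a leg, we integrate by parts three times against $\varphi$ exactly as in \eqref{e_leg_contr}, which gives $|x_{e_+}-x_{v_\star}|^3\lesssim 2^{-3\n(u_*)}$.

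\textbf{Step 2: remaining edges.} All other edges are handled exactly as in \eqref{eq_unsafe_est_supp2}--\eqref{eq_supp111_unsafe}, so we obtain \eqref{eq_supp100_unsafe} with the weight $\hat\eta+\mathds 1_{u_*}-\mathds 1_{\gamma_*^{\uparrow\uparrow}}$. We then check two things. First, the partial sums $\sum_{v\succ u}$ of the new weights remain nonnegative, since $\hat\eta\equiv0$ and $u_*\succ\gamma_*^{\uparrow\uparrow}$ strictly. Second, they are strictly positive for every $u$ on the path from $u_*$ up to but excluding $\gamma_*^{\uparrow\uparrow}$, and in particular at $u_*$ itself. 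Summing via Lemma~\ref{lem_estimate_nonneg_Hepp} restricted to $\mA^{\#,N_\ve}(T)$ then yields $N_\ve^{|\trim|-1}\lesssim(\log\frac1\ve)^{|\trim|-1}$.

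\textbf{Main obstacle.} The delicate point is Step 1 on $D^*_{(T,\n)}$ rather than on $D_{(T,\n)}$. There the distances between $V(\gamma_*)\setminus\{v_\star\}$ and the outside are only controlled up to constant factors. We also need that the labels $\mf h,\mf n$ of $\tilde\Gamma$ vanish near $\gamma_*$, which we take from Lemma~\ref{lem_onlyNeed_emptysafe}, so that no extra derivative or monomial spoils the third order expansion. If this control fails, we would instead bound the difference between $D^*$ and $D$ using Lemma~\ref{lem_Dstarenough} and perform the estimate on $D_{(T,\n)}$.
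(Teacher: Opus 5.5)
Your route is the paper's. The only new input is a third-order bound on the kernel $\mH_2G_\ve$ of $e_\star=e_\star(\gamma_\ast,\tilde\Gamma)$, which turns $\hat\eta\equiv0$ into $\hat\eta+\mathds{1}_{u_\ast}-\mathds{1}_{\gamma_\ast^{\uparrow\uparrow}}$ and so removes one logarithm.

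However, the bound you write in Step~1, with $\gamma_\ast^{\uparrow\uparrow}$ in the denominator, does not produce that weight in every case. The problem arises when several unsafe divergences $\gamma_\ast=\gamma_1\subsetneq\gamma_2\subsetneq\dots\subsetneq\gamma_m$ share the incoming edge $e_\star$; the paper's proof explicitly allows this through its $\gamma_m'$. After the reduction at the start of the proof of Lemma~\ref{lem_pushonKernel}, only $\gamma_\ast$ carries a Taylor kernel. In \eqref{eq_unsafe_est_supp2}, the factors $2^{-2\n(\gamma_i^{\uparrow})}(2^{-\n(\gamma_i^{\uparrow\uparrow})}+\ve)^{-2}$ for $i\geq2$ are all extracted from this single edge by telescoping, and no other edge supplies them. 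With your denominator they are lost, and the partial sum of the resulting weight at $\gamma_2^{\uparrow}$ is negative. Lemma~\ref{lem_unsafe_nonneg} then fails, and so does the Hepp summation. The fix is what the paper does. Bound the minimum in the Taylor estimate from below by $2^{-\n(\gamma_m^{\uparrow\uparrow})}$, then telescope. Keep the cube only for $\gamma_\ast$, and bound the other ratios (each at most $1$) by their squares.

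Separately, your justification of the Taylor hypothesis on $D^\ast_{(T,\n)}$ through Lemma~\ref{lem_recover} breaks down in one case. The paper's proof is silent on this point as well.
\begin{itemize}
\item The triangle inequality only gives $|x_v-x_w|>\sqrt{d}\pi\,(2^{-\n(v\wedge w)-1}-2^{-\n(u_\ast)})$; the stated lemma loses a factor $2$ here.
\item This lower bound degenerates when $v\wedge w=p(u_\ast)$ and $\n(u_\ast)=\n(p(u_\ast))+1$. If in addition $e_{\star,+}\wedge v_\star=u_\ast$, then $x_{e_{\star,+}}$ can come arbitrarily close to $x_{e_{\star,-}}$ inside $D^\ast_{(T,\n)}$.
\item In that configuration neither the Taylor hypothesis nor the replacement of $(|x_{e_{\star,+}}-x_{e_{\star,-}}|+\ve)^{-2}$ by its Hepp scale is available, and the supremum picks up values of $G_\ve$ near the origin.
\item When $\n(u_\ast)-\n(p(u_\ast))\geq2$, one has $|x_{e_{\star,+}}-x_{v_\star}|<\tfrac12|x_{v_\star}-x_{e_{\star,-}}|$ on $D^\ast_{(T,\n)}$, and your argument goes through.
\end{itemize}
The adjacent-scale assignments carry one fewer free scale. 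They should therefore be split off in the proof of Lemma~\ref{lem_emptysafe} and bounded directly on $D_{(T,\n)}$ by Lemma~\ref{lem_hepp_unsafe}, at a cost of $(\log\tfrac1\ve)^{|\trim|-1}$. Your fallback through Lemma~\ref{lem_Dstarenough} does not avoid the problem, because that lemma takes the supremum of the same kernel over $D^\ast_{(T,\n)}\setminus D_{(T,\n)}$.
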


\begin{proof}
First, we recall that the valuation with respect to the integration kernel $ \mL$
reads as follows, similarly to \eqref{eq_valuation_mK},
\begin{equation}\label{eq_defI12}
\begin{aligned}
& \overline{\mW}_{\ve}^{(\mL)}\tilde\Gamma (x_{V_{\star}})
=
\int_{ ( \TT^{d})^{L}}
\varphi ( x_{L})
\big(\mH_{2} G_{\ve}\big) ( x_{ e_{\star,+}}, x_{ e_{\star,-}} |  x_{
v_{\star} ( \gamma_{\ast})} )
\prod_{e \in \tilde{E} \setminus e_{\star}  }
 \mK_{ \tau_{\tilde\Gamma, \Gamma}(e), \varepsilon}( x_{V_{\star}})
\ud x_{L} 	\,,
\end{aligned}
\end{equation}
where we wrote $ e_{\star} = e_{\star}( \gamma_{\ast}, \tilde\Gamma)$,
and used that $ \mf{b} (
e_{\star})= v_{\star}( \gamma_{\ast}) $ since $ \gamma_{\ast}$ is a leaf in $
\mf{F}_{u}$.

We proceed as in the proof of Lemma~\ref{lem_hepp_unsafe}, with
the difference that we now have the improved bound, with $ u_{\ast} =
\gamma_{\ast}^{\uparrow}$,
\begin{equation*}
\begin{aligned}
|\mH_{2} G_{\ve}( x_{ e_{\star,+}}, x_{ e_{\star,-}}|  x_{b (
e_{\star})} )|
& \lesssim
\frac{ 2^{- 3 \mathbf{n}( u_{\ast})} }{( 2^{- \mathbf{n}(
{\gamma '}_{m}^{\uparrow \uparrow})}  + \ve )^{3} }
( | x_{ e_{\star,+}}- x_{ e_{\star,-}}|+ \ve)^{-2}\,,
\end{aligned}
\end{equation*}
where $ \gamma '_{m}$ is the largest divergence of $ \gamma_{\ast}=  \gamma_{1}, \ldots,
\gamma_{M}$ such that $ e_{\star} ( \gamma'_{m}, \Gamma) = e_{\star}$.
Overall, this yields the improved upper bound
\begin{equation*}
\begin{aligned}
&\big|\overline{\mW}_{\ve}^{(\mL)}
\tilde\Gamma (x_{V_{\star}}) \big|
 \lesssim
\frac{ 2^{- 3\mathbf{n}( \gamma_{\ast}^{\uparrow})} }{( 2^{- \mathbf{n}(
\gamma_{\ast}^{\uparrow \uparrow})}  + \ve )^{3} }
\left\{\prod_{ \gamma \in \mf{F}_{u}\setminus \gamma_{\ast}}
\frac{ 2^{- 2\mathbf{n}( \gamma^{\uparrow})} }{( 2^{- \mathbf{n}(
\gamma^{\uparrow \uparrow})}  + \ve )^{2} } \right\}
\prod_{e \in \tilde{E}_{\star}}
( |  x_{e_{+}} - x_{e_{-}} | + \ve)^{-2}
\,,
\end{aligned}
\end{equation*}
in analogy to~\eqref{eq_supp111_unsafe}.
Let $\hat{\eta}'(u) := \hat{\eta} (u) + \mathds{1}_{
\gamma_{\ast}^{\uparrow}}(u) - \mathds{1}_{
\gamma_{\ast}^{\uparrow\uparrow}}(u) $, then evaluating the sum as in
\eqref{eq_supp100_unsafe}, we have
\begin{equation*}
\begin{aligned}
\sum_{ {\bf n} \in \mA^{\#, N_{\ve}} ( T)}
\sup_{x \in D_{(T, \n)}^{\ast}}
\big| \overline{\mW}_{\ve}^{(\mL)}\tilde\Gamma (x_{V_{\star}})\big|
\prod_{u \in \overline{T}} 2^{-d \mathbf{n} (u)}
&\lesssim
\sum_{ {\bf n} \in \mA^{\#, N_{\ve}}  ( T)}
\prod_{u \in \overline{T}}
(2^{-\mathbf{n} ( u)}
+ \varepsilon)^{ \hat{\eta}'  (u) -d}
 2^{-d \mathbf{n}(u)}\\
&\lesssim
\big( \log{ \tfrac{1} { \ve}})^{ \max\{\widehat{\mathrm{Null}}  ( T, \tilde\Gamma)
-1,0 \}}\,,
\end{aligned}
\end{equation*}
where we used in the last step that
\begin{equation*}
\begin{aligned}
\Big| \Big\{ u \, :\,  \sum_{v
\succ u} \hat{\eta}' (v) =0 \Big\}\Big|
=\max\Big\{ \Big| \Big\{ u \, :\,  \sum_{v
\succ u} \hat{\eta} (v) =0 \Big\}\Big|
-1, 0 \Big\}\,.
\end{aligned}
\end{equation*}
This concludes the proof.
\end{proof}

\begin{lemma}[Control of the diagonal second--order term]\label{lem_diagonal}
Let $ \Gamma$ be a negative 4-Anderson--Feynman diagram, $ T \in
\mT_{V_{\star}}$ a Hepp tree, and $ \mathfrak{F}_{s}$ a safe forest with associated
unsafe forest $\mf{F}_{u} \neq \emptyset$.
Furthermore, let $ \tilde\Gamma \in \mD_{\Gamma}^{\rm{lab}}$ be a non-trivial term in the decomposition $\mf{K}_{\mf{F}_s}^{(1)} \Gamma = \sum \tilde{\Gamma}$,
such that $\widehat{\rm{Null}}(T, \tilde\Gamma) = |\trim|$.
Then, with $\mM$ as in \eqref{eq_def_M}, we have
\begin{equation}\label{eq_statement_tildeK}
\begin{aligned}
\sum_{{\bf n} \in \mA^{\#, N_{\ve}} ( T)}
\bigg|
\int_{ D_{( T, \mathbf{n})}^{\ast}}
 \overline{\mW}_{\varepsilon}^{( \mM)}
 \tilde\Gamma
(x_{V_{\star}}) \ud x_{V_{\star}}
\bigg|
\lesssim
( \log{ \tfrac{1}{\ve}} )^{| \overline{T}| -1 }\,.
\end{aligned}
\end{equation}
\end{lemma}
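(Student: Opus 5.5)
The key idea is to integrate out the divergence $\gamma_*$ exactly and use $(1-\Delta)G_\ve=\rho_\ve$ to convert the diagonal second-order term into something without a logarithm. In $\overline{\mW}^{(\mM)}_\ve\tilde\Gamma$ the only kernel that differs from $\mK$ is the one on $e_\star=e_\star(\gamma_*,\tilde\Gamma)$:
\[
\tfrac12\sum_{i}(x^i_{e_{\star,+}}-x^i_{v_\star})^2\,\partial_i^2G_\ve(x_{v_\star}-x_{e_{\star,-}}).
\]
This kernel factorises into a monomial depending only on the variables of $\gamma_*$ and a derivative depending only on the outer variables.

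\textbf{Step 1: factorise the sector.} By Lemma~\ref{lem_onlyNeed_emptysafe} the labels $\mf h,\mf n$ vanish on $\tau^{-1}\gamma_*$, apart possibly from $\mf n(v_\star)$. The product structure \eqref{eq_def_modHeppSec} of $D^*_{(T,\n)}$ then lets us write the integral as an outer integral over $D_{(T',\n')}$. Its integrand is $\partial_i^2G_\ve(x_{v_\star}-x_{e_{\star,-}})$ times the remaining outer kernels, multiplied by the inner integral
\[
c_i(\n_*):=\int_{D_{(T_*,\n_*)}(x_{v_\star})}(x^i_{e_{\star,+}}-x^i_{v_\star})^2\prod_{e\in\tilde E(\gamma_*)}G_\ve(x_{e_+}-x_{e_-})\,\ud x_{V(\gamma_*)\setminus v_\star}.
\]
By translation invariance $c_i$ does not depend on $x_{v_\star}$. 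Exactly as in Lemma~\ref{lem_offdiag_nochange}, the coordinate permutations preserve $D_{(T_*,\n_*)}(0)$ because it is defined through Euclidean distances, and $G_\ve$ is radial. Hence $c_i=c(\n_*)$ is independent of $i$. The diagonal term therefore collapses to $\tfrac12 c(\n_*)\,\Delta G_\ve(x_{v_\star}-x_{e_{\star,-}})$, and the sum over $\n$ splits into a sum over $\n_*$ and a sum over $\n'$. Power counting bounds the inner piece by $|c(\n_*)|\lesssim 2^{-2\n(u_*)}\times(\log)^{\mathrm{Null}(T_*)}$, since $\gamma_*$ has degree $-2$ and all its proper subtrees are bubbles. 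The $2^{-2\n(u_*)}$ is exactly what the unsafe weight $\hat\eta$ removes at $u_*$.

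\textbf{Step 2: trade $\Delta G_\ve$ for $G_\ve-\rho_\ve$.} Since $\Delta G_\ve=G_\ve-\rho_\ve$, the term splits in two. The $G_\ve$ part is a kernel of order $-2$ instead of $-4$. Compared with the $\hat\eta$ bookkeeping of Lemma~\ref{lem_hepp_unsafe}, this is a gain of $2$ at $\gamma_*^{\uparrow\uparrow}$, which we know is not $\dagger$ by Lemma~\ref{lem:gammastar}. This kills at least one zero-degree node, and arguing as in Lemma~\ref{lem_hepp_unsafe_improve} the contribution is $(\log\tfrac1\ve)^{|\trim|-1}$.

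The $\rho_\ve$ part forces $|x_{v_\star}-x_{e_{\star,-}}|\le\ve$. But in $\mA^{\#,N_\ve}(T)$ all scales satisfy $\n\le N_\ve-1$, so $|x_{v_\star}-x_{e_{\star,-}}|\gtrsim 2^{-\n(\gamma_*^{\uparrow\uparrow})}\gtrsim\ve$, using Lemma~\ref{lem_recover} to pass between $D^*$ and the true scales. Choosing constants so the supports are disjoint, this term vanishes identically. If it does not vanish exactly, I would bound it by restricting to a diagonal $\n(\gamma_*^{\uparrow\uparrow})\approx N_\ve$, which costs one logarithm as in Lemma~\ref{lem_Ddot}.

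\textbf{Main obstacle.} The delicate step is Step 2's bookkeeping. After the $\n_*$ sum is performed separately, the outer scales $\n'$ must still satisfy the compatibility constraint with $\n(u_*)$. One then has to check that replacing $\Delta G_\ve$ by $G_\ve$ really lowers $\widehat{\mathrm{Null}}$ by one uniformly in $\n'$. My approach is to keep the sum over $\n(u_*)$ inside, bound the inner factor by $2^{-2\n(u_*)}(N_\ve-\n(u_*))^{\mathrm{Null}(T_*)}$, and rerun the inductive summation of Lemma~\ref{lem_estimate_nonneg_Hepp} with the modified weight $\hat\eta+2\mathds 1_{\gamma_*^{\uparrow\uparrow}}$.
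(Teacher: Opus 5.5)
Your proposal follows the paper's proof essentially step by step. You isolate the inner integral over $D_{(T_*,\n_*)}(x_{v_\star})$ and use its translation invariance and coordinate symmetry to replace $\sum_i(x^i_{e_{\star,+}}-x^i_{v_\star})^2\partial_i^2G_\ve$ by $(x^1_{e_{\star,+}}-x^1_{v_\star})^2\Delta G_\ve$, then write $\Delta G_\ve=G_\ve-\rho_\ve$. The $\rho_\ve$ term vanishes identically on $\mA^{\#,N_\ve}(T)$; no appeal to Lemma~\ref{lem_recover} is needed for this, since $v_\star$ and $e_{\star,-}$ both lie in $(V_\star\setminus V(\gamma_*))\cup\{v_\star\}$, where $D^*_{(T,\n)}$ retains the exact Hepp constraint. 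The $G_\ve$ term gains two powers relative to $\hat\eta$ at $\gamma_*^{\uparrow\uparrow}\neq\dagger$, which removes at least one null node.

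One bookkeeping slip to fix: for a fixed sector the inner integral satisfies $|c(\n_*)|\lesssim 1$, not $2^{-2\n(u_*)}$. The monomial's factor $2^{-2\n(u_*)}$ only cancels the degree $-2$ of $\gamma_*$, which is exactly what $\hat\eta(u_*)=\eta(u_*)+2$ already encodes, so it must not be counted a second time on top of $\hat\eta$.
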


\begin{proof}
First, we note that the integrated valuation of the kernel assignment $\mM$
with respect to $ \Gamma$ reads, with $ e_{\star}= e_{\star}(
\gamma_{\ast}, \tilde\Gamma)$,
\begin{equation}\label{eq_diagpert_supp1}
\begin{aligned}
\int_{ D_{( T, \mathbf{n})}^{\ast}}
 \overline{\mW}_{\varepsilon}^{( \mM)}
  \tilde\Gamma
(x_{V_{\star}}) \ud x_{V_{\star}}
&=
\frac{1}{2}
\sum_{i =1}^{d}
\int_{ D_{( T, \mathbf{n})}^{\ast}}
\int_{ ( \TT^{d})^{L}}
\varphi ( x_{L})
\big( x_{ e_{\star,+}}^{i} - x_{ v_{\star} ( \gamma_{\ast}
)}^{i}\big)^{2}
\partial_{i}^{2} G_{\ve} ( x_{ v_{\star} ( \gamma_{\ast}
)}- x_{ e_{\star,-}})\\
& \qquad \qquad \times
\prod_{e \in \tilde{E} \setminus e_{\star} }
\mK_{ \tau_{\tilde\Gamma, \Gamma} (e) , \varepsilon} ( x_{V_{\star}} )
\ud x_{L} \ud x_{V_{\star}} \;,
\end{aligned}
\end{equation}
where
we used that $ \mf{b}
(e_{\star}) = v_{\star}( \gamma_{\ast}) $ since $ \gamma_{\star}$ is a leaf in
$ \mf{F}_{u}$.

Recalling the decomposition of the modified Hepp sector $ D_{(T ,
\mathbf{n})}^{\ast}$ in \eqref{eq_def_modHeppSec}, we see that
each summand on the right contains the subintegral
\begin{equation}\label{eq_gamma1_int}
\begin{aligned}
\int_{D_{(T_{\ast}, \mathbf{n}_{\ast})}( x_{v_{\star}( \gamma_{\ast}) })}
\big( x_{ e_{\star, +}}^{i} - x_{ v_{\star} ( \gamma_{\ast}
)}^{i}\big)^{2}
\prod_{e \in \tilde{E} ( \gamma_{\ast}) }
G_{\ve} ( x_{ e_{+}}- x_{e_{-}})
 \ud x_{V( \gamma_{\ast} ) \setminus v_{\star}(
\gamma_{\ast} )}\,,
\end{aligned}
\end{equation}
since there is no contribution coming from the labels $\mf{h}$ and $\mf{n}$ (of
$\tilde\Gamma$) to this subintegral, by Lemma~\ref{lem_onlyNeed_emptysafe}.
Note that the integral \eqref{eq_gamma1_int} is also independent of  $ x_{v_{\star} (
\gamma_{\ast} )} $, as can be seen by performing a shift of all integration
variables.
The integral is also independent of $i=1, \ldots, d$, because $ G_{ \ve}$ is radially
symmetric, and so is the domain $ D_{(T_{\ast},
\mathbf{n}_{\ast})}(0)$.

Hence, we may rewrite~\eqref{eq_diagpert_supp1} as
\begin{equation*}
\begin{aligned}
\int_{ D_{( T, \mathbf{n})}^{\ast}}
 \overline{\mW}_{\varepsilon}^{( \mM)}
 \tilde\Gamma
(x_{V_{\star}}) \ud x_{V_{\star}}
&=
\frac{1}{2}
\int_{ D_{( T, \mathbf{n})}^{\ast}}
\int_{ ( \TT^{d})^{L}}
\varphi ( x_{L})
(\Delta G_{\ve}) ( x_{ v_{\star} ( \gamma_{\ast}
)}- x_{ e_{\star,-}})
\big( x_{ e_{\star,+}}^{1} - x_{ v_{\star} ( \gamma_{\ast}
)}^{1}\big)^{2}\\
&\qquad \qquad \times
\prod_{e \in \tilde{E} \setminus e_{\star} }
\mK_{ \tau_{\tilde\Gamma, \Gamma} (e) , \varepsilon} ( x_{V_{\star}} )
\ud x_{L} \ud x_{V_{\star}}\,.
\end{aligned}
\end{equation*}
Next, we use that $ (1-\Delta) G_{\ve} =  \delta_{\ve} $, with $\delta_\ve =
\rho_\ve \star \delta$ which allows us to
bound the integrand uniformly over $ D_{( T, \mathbf{n})}^{\ast}$ as follows:
\begin{equation}\label{eq_supp1_vanishLapalce}
\begin{aligned}
\Big|
 \overline{\mW}_{\varepsilon}^{( \mM)}
 \tilde\Gamma
(x_{V_{\star}}) \Big|
&\leqslant
\frac{1}{2}
\bigg|
\int_{ ( \TT^{d})^{L}}
\varphi ( x_{L})
\prod_{e \in E_{L} }
\mH^{ \ell ( e)} G_{\ve}( x_{e_{+}}, x_{e_{-}} |  x_{ \mf{b} ( e)} )
\ud x_{L}
\bigg|\\
&\qquad \times
( \delta_{\ve} +  G_{\ve}) ( x_{ v_{\star} ( \gamma_{\ast}
)}- x_{ e_{\star,-}})
\big| x_{ e_{\star, +}} - x_{ v_{\star} ( \gamma_{\ast}
)}\big|^{2}
\prod_{e \in \tilde{E}_{\star} \setminus e_{\star} }
\mK_{ \tau_{\tilde\Gamma, \Gamma} (e) , \varepsilon} ( x_{V_{\star}} )
\,.
\end{aligned}
\end{equation}
Here we used that $ e_{\star} \in \tilde{E}_{\star}$, because if $ e_{\star}$ were a leg
then then $ \gamma_{\ast}^{\uparrow \uparrow} = \dagger$ which is excluded by
Lemma~\ref{lem:gammastar}.
Note also that we can estimate the integral over legs in~\eqref{eq_supp1_vanishLapalce} as in~\eqref{e_leg_contr}. Hence it
suffices to find an upper bound on the second line
of~\eqref{eq_supp1_vanishLapalce}.

\begin{enumerate}
\item First, we treat the contribution
of the integral containing $
\delta_{\ve}$, given by the kernel
\begin{equation*}
\begin{aligned}
\delta_{\ve}( x_{ v_{\star} ( \gamma_{\ast}
)}- x_{ e_{\star,-}})
\big( x_{ e_{\star,+}}^{1} - x_{ v_{\star} ( \gamma_{\ast}
)}^{1}\big)^{2}
 \prod_{e \in \tilde{E}_{\star} \setminus e_{\star} }
\mK_{ \tau_{\tilde\Gamma, \Gamma} (e) , \varepsilon} ( x_{V_{\star}} )
\,,
\end{aligned}
\end{equation*}
which vanishes, except when $ | x_{ v_{\star} ( \gamma_{\ast}
)}- x_{ e_{\star,-}}| \leqslant \ve$, since $\delta_{\ve} ( x)
=
\rho_{\ve} (x)
\lesssim  \ve^{- d} \mathds{1}_{|x| \leqslant \ve}$, in particular, when
$\mathbf{n} ( v_{\star} ( \gamma_{\ast}) \wedge  e_{\star,-}) \geqslant
N_{\ve} $.
However, such small scales are excluded by definition of $\mA^{\#, N_{\ve}}( T
) $, see \eqref{e:ANE}.
Therefore
\begin{equation}\label{e_supp1_deltacontr_new}
\begin{aligned}
\delta_{\ve}( x_{ v_{\star} ( \gamma_{\ast}
)}- x_{ e_{\star,-}})
\big( x_{ e_{\star,+}}^{1} - x_{ v_{\star} ( \gamma_{\ast}
)}^{1}\big)^{2}
\prod_{e \in \tilde{E}_{\star} \setminus e_{\star} }
\mK_{ \tau_{\tilde\Gamma, \Gamma} (e) , \varepsilon} ( x_{V_{\star}})=0 \,.
\end{aligned}
\end{equation}

\item  On the other hand, the term with $ G_{\ve}$ is better behaved
since it only blows up to order $ -2 $ (instead of $-4$ for $ \Delta
G_{\ve}$ or $ \delta_{\ve}$). We can therefore follow the same steps as in the
proof of Lemma~\ref{lem_hepp_unsafe}, to obtain
\begin{equation}\label{eq_supp1_Gcont}
\begin{aligned}
\sum_{ {\bf n} \in \mA^{\#, N_{\ve}} ( T)}
& \sup_{x \in D_{(T , \mathbf{n})}} \bigg\{
 G_{\ve} ( x_{ v_{\star} ( \gamma_{\ast}
)}- x_{ e_{\star,-}})
\big| x_{ e_{\star,+}} - x_{ v_{\star} ( \gamma_{\ast}
)}\big|^{2}
\prod_{e \in \tilde{E}_{\star} \setminus e_{\star} }
\big|\mK_{ \tau_{\tilde\Gamma, \Gamma} (e) , \varepsilon} ( x_{V_{\star}} )\big|
\bigg\}
\prod_{u \in \overline{T}} 2^{-d \mathbf{n} (u)}
\\
& \lesssim
\sum_{ {\bf n} \in \mA^{\#, N_{\ve}}  ( T)}
( 2^{- \mathbf{n} (u_{\ast})} + \ve )^{2}
\prod_{u \in \overline{T}}
(2^{-\mathbf{n} ( u)}
+ \varepsilon)^{ \hat{\eta}  (u)  -d}
 2^{-d \mathbf{n}(u)}
\lesssim \big( \log{ \tfrac{1}{ \ve}} \big)^{| \trim| -1}\,.
\end{aligned}
\end{equation}
The last inequality holds, because
\begin{itemize}
\item Either $ \sum_{ u_{\ast} \prec u } \hat{\eta} (u) =0
$, in which case we count one zero less, since with the above calculation we increased $ \hat{\eta} (
u_{\ast})$ by $2$.

\item Or  $ \sum_{ u_{\ast} \prec u } \hat{\eta} (u) > 0$, in which
case the number of logarithms is unchanged, but we must have started with $
\widehat{\mathrm{Null}} ( T, \tilde\Gamma) < | \overline{T}| $ to begin with.
\end{itemize}

\end{enumerate}
Overall, combining \eqref{e_supp1_deltacontr_new} and
\eqref{eq_supp1_Gcont} concludes the proof.
\end{proof}

\begin{proof}[Proof of Lemma~\ref{lem_emptysafe}]
First, by Lemma~\ref{lem_pushonKernel}, we have that
\begin{equation*}
	\begin{aligned}
		\sum_{\n \in \mA^{\#, N_\ve} (T)}
\bigg\vert \int_{D_{(T, \n)}}  \overline{\mW}_{\ve}
\hat{\mR}_{[\mf{F}_s, \mf{F}_{s} \cup  \mf{F}_u ]}^{(1)} \Gamma (x_{V_\star}) \ud x_{V_\star}
\bigg\vert
\leqslant
\sum_{\tilde{\Gamma}}
\sum_{\n \in \mA^{\#, N_\ve} (T)}
\bigg\vert \int_{D_{(T, \n)}}
\overline{\mW}_{\ve}^{(\mK)} \tilde{\Gamma}
(x_{V_\star}) \ud x_{V_\star}
\bigg\vert
	\end{aligned}
\end{equation*}
where we wrote $\mf{K}_{\mf{F}_s}^{(1)} \Gamma = \sum \tilde{\Gamma}$, with each
$\tilde{\Gamma} \in \mD_{\Gamma}^{\rm{lab}}$ being non-trivial.
Hence, the desired bound is true for all $ \tilde\Gamma $ satisfying
$\widehat{\rm{Null}}(T, \tilde\Gamma) < |\trim|$, by Lemma~\ref{lem_hepp_unsafe}.

Therefore, it is only left to consider those $ \tilde\Gamma $ with
$\widehat{\rm{Null}}(T, \tilde\Gamma) = |\trim|$.
In this case, let $ \gamma_{*}$ be such as in
Lemma~\ref{lem:gammastar}, and define $ \mM$ and $\mL$ as in
\eqref{eq_def_L} and \eqref{eq_def_M}.
Then applying Lemma~\ref{lem_Dstarenough} yields
\begin{equation*}
\begin{aligned}
\sum_{\n \in \mA^{\#, N_\ve} (T)}
\bigg\vert \int_{D_{(T, \n)}} &
\overline{\mW}_{\ve}^{(\mK)} \tilde{\Gamma}
(x_{V_\star}) \ud x_{V_\star}
\bigg\vert \\
& \lesssim
\sum_{\n \in \mA^{\#, N_\ve} (T)}
\bigg\vert \int_{D_{(T, \n)}^{*}}
\overline{\mW}_{\ve}^{(\mK)} \tilde{\Gamma}
 (x_{V_\star}) \ud x_{V_\star}
\bigg\vert
+
\big( \log{ \tfrac{1}{ \ve}} \big)^{| \trim| -1}\\
& \leqslant
\sum_{{\bf n} \in \mA^{\#, N_{\ve}} ( T)}
\sup_{x \in D_{( T, \mathbf{n})}^{\ast}}
\big| \overline{\mW}_{\varepsilon}^{(\mL)}
\tilde\Gamma (x_{V_{\star}})\big|
\prod_{u \in \overline{T}} 2^{-d \mathbf{n} (u)}\\
&\quad +
\sum_{{\bf n} \in \mA^{\#, N_{\ve}} ( T)}
\bigg|
\int_{ D_{( T, \mathbf{n})}^{\ast}}
 \overline{\mW}_{\varepsilon}^{( \mM)}
 \tilde\Gamma
(x_{V_{\star}}) \ud x_{V_{\star}}
\bigg|
+
\big( \log{ \tfrac{1}{ \ve}} \big)^{| \trim| -1}\,,
\end{aligned}
\end{equation*}
where we used Lemma~\ref{lem_offdiag_nochange} in the second inequality.
The desired statement is now a consequence of Lemmas~\ref{lem_hepp_unsafe_improve} and~\ref{lem_diagonal}.
\end{proof}

\subsubsection{Some graph properties}
In the following, we use the natural notion of degree for labelled diagrams. For a
connected $\gamma \subseteq \tilde\Gamma = (\tilde{\Gamma}, \mf{h}, \mf{n},
\mf{a}) \in \mD_{\Gamma}^{\rm{lab}}$, we set
\begin{equation*}
 \deg (\gamma, \tilde\Gamma) := d(|V(\gamma)| -1) -2 |E(\gamma)| -\sum_{e \in E(\gamma)} \sum_{i=1}^d\mf{h}(e)_i + \sum_{v \in V(\gamma) \colon\mf{a}(v) \in V(\gamma)} \sum_{i=1}^d \mf{n}(v)_i \;.
\end{equation*}
Notice that $ \deg ( \gamma, \Gamma ) $ agrees with $ \deg \gamma$, defined in
	\eqref{e:deg-extended}. Moreover, we stress that the above definition of $ \deg (
	\cdot, \tilde\Gamma)$ does not take into account the
kernel assignment induced by the unsafe divergences from Lemma~\ref{lem_pushonKernel},
i.e. the gain from Taylor expansions.

We then find the following characterisation of degree zero labelled diagrams over two vertices.

\begin{lemma}\label{lem:no-weird-bubble}
  Let $\mf{F} \in \mF^{-}_{\Gamma}$ be a forest of
   divergences with $|\mf{F}|=n$. Decompose $\mf{K}_{\mf{F}}^{(1)} \Gamma = \sum
   \tilde{\Gamma}$, for a finite number of nontrivial terms $(\tilde{\Gamma}, \mf{h},
   \mf{n}, \mf{a}) \in
   \mD^{\rm{lab}}_\Gamma$. Let $\gamma {\subseteq \tilde\Gamma}$
   be a connected subdiagram spanned by two vertices $v_1, v_2 \in V_\star$ such
  that $\deg(\gamma, {\tilde\Gamma})=0$. Then it is isomorphic to a bubble, namely to
  $\<bubbleA>$ or $\<bubbleB>$, in the sense that:
\begin{itemize}
	\item For all $e \in E(\gamma)$ it holds that $\mf{h}(e) = 0$.
  \item If $\mf{n}(v) \neq 0$ for one $v \in V(\gamma)$, then $\mf{a}(v)
  \not\in V(\gamma)$.
\end{itemize}
\end{lemma}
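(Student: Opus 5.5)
The plan is to combine a counting argument on the degree with the structural information of Lemma~\ref{lem:n-prop}. With $d=4$ and $|V(\gamma)|=2$, the condition $\deg(\gamma,\tilde\Gamma)=0$ reads
\begin{equation*}
  2|E(\gamma)| + \sum_{e\in E(\gamma)}|\mf{h}(e)| - \sum_{v\in V(\gamma),\ \mf{a}(v)\in V(\gamma)}|\mf{n}(v)| = 4 \;.
\end{equation*}
By Lemma~\ref{lem:n-prop} we have $|\mf{n}(v)|\le 1$ for every vertex. The $\mf{h}$-labels are only created by $\hC^{(1)}$, and on a single edge at most one unit is ever created, by the first condition in \eqref{eq_def_Cbar1}. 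So the possible configurations are: two edges with no labels (a bubble); or three edges whose label surplus is compensated by monomials, i.e.\ $\sum|\mf{h}| - \sum|\mf{n}| = -2$; or a single edge with $\sum|\mf{n}|\ge 2$, etc. The aim is to rule out every configuration except $|E(\gamma)|=2$, $\mf{h}\equiv0$ on $E(\gamma)$, and no monomial centred inside $\gamma$.

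First I will show that every $\mf{h}$-label is paired with an $\mf{n}$-label. More precisely, $\hC^{(1)}_{\gamma_l}$ places $\mf{h}=\delta_i$ on the rewired edge $(e_{\star,-},v_\star(\gamma_l))$ and $\mf{n}=\delta_i$ at $e_{\star,+}$ with centre $\mf{a}=v_\star(\gamma_l)$. Case 2(a) of Lemma~\ref{lem:n-prop} describes this situation; in case 2(b) the label has moved to $v_\star(\gamma)$, with centre $v_\star(\gamma')$ for some $\gamma'\subseteq\gamma_l$. A labelled edge with both endpoints in $\gamma=\{v_1,v_2\}$ forces $e_{\star,-}\in\{v_1,v_2\}$ and $v_\star(\gamma_l)\in\{v_1,v_2\}$.

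Next I use that the Anderson--Feynman structure is preserved: every inner vertex has total degree four and exactly one incoming edge per strict divergence (Lemma~\ref{lem:neg}). From this I will check the following.
\begin{itemize}
\item[(i)] There is no self-loop in $\gamma$, since tadpoles are excluded by the Convention.
\item[(ii)] $|E(\gamma)|\le 3$. Four edges would make $\gamma$ a closed component, contradicting connectivity with legs; three edges give a sunset, which has degree $-2$ unless the labels compensate.
\item[(iii)] If $|E(\gamma)|=3$, a total label compensation of $+2$ is needed. This would require both vertices to carry $\mf{n}$-labels centred inside $\gamma$ and no $\mf{h}$-label. I will show this is impossible: a vertex $v$ with $\mf{n}(v)\neq0$ and $\mf{a}(v)\in\gamma$ arises, by Lemma~\ref{lem:n-prop}, as $e_{\star,+}$ of a divergence whose incoming edge was rewired to $\mf{a}(v)$ with an $\mf{h}$-label. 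That labelled edge joins $e_{\star,-}$ to $\mf{a}(v)$, and if it lies inside $\gamma$ it contributes $+1$, cancelling the gain. If instead it lies outside $\gamma$, then $v_\star(\gamma_l)=\mf{a}(v)\in\gamma$ while $e_{\star,+}=v\in\gamma$, and one checks that the degree-four constraint then forces $\gamma$ to have only two edges.
\item[(iv)] With $|E(\gamma)|=2$, the identity becomes $\sum|\mf{h}|=\sum|\mf{n}|$ over internal labels. The pairing argument above shows that an internal $\mf{h}$-label comes with a monomial whose centre and base both lie in $\gamma$, but Lemma~\ref{lem:n-prop} places that monomial's base at $e_{\star,+}\ne e_{\star,-}$. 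I will argue that this forces $e_\star$ to coincide with a rewired copy of an edge of $\gamma$, giving a tadpole or sunset in the original diagram, which is a contradiction. Hence $\mf{h}=0$ on $E(\gamma)$, and then $\mf{n}$ cannot be centred inside $\gamma$.
\end{itemize}

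The main obstacle is step (iii)/(iv): tracing, via the rewiring rules \eqref{e:Chat}, \eqref{e:Chatnew} and \eqref{eq_def_Cbar1} together with Lemma~\ref{lem:n-prop}, where a labelled edge and its companion monomial can land relative to a two-vertex subdiagram. I would proceed by induction on $|\mf{F}|$, as in the proof of Lemma~\ref{lem:n-prop}, tracking at each extraction step the invariant ``every internal $\mf{h}$-label of a two-vertex degree-zero subdiagram is matched by a monomial centred in it''.
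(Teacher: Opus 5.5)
Your opening degree count is right, and it is the same case split that underlies the paper's proof. With $|\mf h(e)|\le1$ and $|\mf n(v)|\le1$, the condition $\deg(\gamma,\tilde\Gamma)=0$ leaves only three cases:
\begin{itemize}
\item[(A)] $|E(\gamma)|=3$, with both vertices carrying monomials centred in $\gamma$ and no $\mf h$-label;
\item[(B)] $|E(\gamma)|=2$ with $\sum|\mf h|=\sum|\mf n|\in\{1,2\}$;
\item[(C)] the bubble.
\end{itemize}
A single edge would need $\sum|\mf h|=2+\sum|\mf n|$ (not $\sum|\mf n|\ge2$), which is impossible. The bound $|E(\gamma)|\le 3$ also follows from this count, not from a closed-component argument.

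There is a genuine gap: your eliminations of (A) and (B) rest on claims that are false in $\tilde\Gamma$.
\begin{itemize}
\item The ``degree-four constraint'' used in (ii)--(iii) does not survive extraction. $\hC_\gamma$ moves the head of $e^{\mathrm{in}}_\gamma$ from $e_{\star,+}(\gamma)$ to $v_\star(\gamma)$, so the first vertex loses a half-edge and the second gains one (see $\hC_{\gamma_1}\Gamma$ after \eqref{e:example-gamma}). Only the weaker fact survives that each divergence of the forest has at most one incoming and exactly one outgoing external edge.
\item The pairing claim in (iv) is false. The label $\mf h=\delta_i$ sits on $(e_{\star,-},v_\star(\gamma_l))$, but its companion monomial is based at $e_{\star,+}(\gamma_l)$. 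That vertex can be a third vertex of $\gamma_l$ outside $\{v_1,v_2\}$.
\item Later extractions can shift monomials onto some $v_\star(\gamma)$. This is case 2.(b) of Lemma~\ref{lem:n-prop}, which you never treat.
\end{itemize}
So in (B) the monomial compensating the $\mf h$-label generally comes from a \emph{different} extraction. Your proposed inductive invariant (internal $\mf h$-labels are matched by a monomial centred in $\gamma$) holds in exactly that configuration, so even if proved it cannot exclude it.

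The missing ingredient is an argument on the forest itself.
\begin{itemize}
\item The paper first shows that $\mf n(v_1)$ and $\mf n(v_2)$ cannot both be nonzero, which disposes of (A) and of $\sum|\mf h|=2$. Let $\gamma_{l_1},\gamma_{l_2}$ be the divergences whose $\hC^{(1)}$ created these labels.
  \begin{itemize}
  \item If they are disjoint, the only possible edges between $v_1,v_2$ are their in/out edges. This gives a two-cycle and hence a component of $\Gamma$ without legs.
  \item If they are nested, Lemma~\ref{lem:n-prop} forces $e_\star(\gamma_{l_1},\Gamma)=e_\star(\gamma_{l_2},\Gamma)$, so $\tilde\Gamma=0$ by the first condition in \eqref{eq_def_Cbar1}.
  \end{itemize}
\item It then rules out the single remaining configuration: $\mf n(v_1)\neq0$, $\mf a(v_1)=v_2$, $E(\gamma)=\{e_1,e_2\}$ with $\mf h(e_1)\neq0=\mf h(e_2)$. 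It splits into cases 2.(a)/2.(b) and by the orientation of $e_1$. The tools are:
  \begin{itemize}
  \item \eqref{e:a-representation}, to identify $v_2$ as some $v_\star$;
  \item monotonicity of incoming edges under extraction;
  \item the second vanishing condition in \eqref{eq_def_Cbar1};
  \item uniqueness of the outgoing edge from Lemma~\ref{lem:neg}.
  \end{itemize}
\end{itemize}
Without an argument of this kind, your steps (iii) and (iv) are not established.
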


\begin{proof}
  First, let us show that it can not be that both $\mf{n}(v_1), \mf{n}(v_2)
  \neq 0$. Assume the converse is true. Then following Lemma~\ref{lem:n-prop}
  and the preceeding discussion, there exist two divergences $\gamma_{l_1},
  \gamma_{l_2} \in \mf{F}$ such that $\mf{n}(v_i)$ is associated to the extraction
  $\mC^{(1)}_{\gamma_i}$. Now, from the forest property of $\mf{F}$ it must be
  that either $\gamma_{l_1} \subseteq \gamma_{l_2}$ or $\gamma_{l_2} \subseteq
  \gamma_{l_1}$. Indeed for $\deg(\gamma)=0$ to hold, $\gamma$ must consist of
  at least two edges. If $\gamma_{l_1}$ were disjoint of $\gamma_{l_2}$, and
  since every divergence has at most one incoming edge (which in the case of
  $\gamma_{l_i}$ must satisfy $\mf{h}(e)\neq 0$) and exactly one outgoing
  one, the only possible configuration is that $\gamma$ consists exactly of two
  edges, with the following disposition:
  \begin{equation*}
    \begin{tikzpicture}[thick,>=stealth,scale=0.2,baseline=-0.2em]
      \tikzset{
        dot/.style={circle,fill=black,inner sep=1.5pt}
      }
      \node[dot] (bL) at (-2.2,0)   {};
      \node[dot] (vL) at (1.4,0) {};   
      \node at (0,-2.6) {\scalebox{0.8}{$\delta_k$}};
      \node at (0,2.4) {\scalebox{0.8}{$\delta_i$}};
      \node[right] at (vL) {$v_2$};
      \node[left] at (bL) {$v_1$};
      \draw[midarrow] (vL) to[out=90,in=90] (bL);
      \draw[midarrow] (bL) to[out=-90,in=-90] (vL);
    \end{tikzpicture} \;.
  \end{equation*}
  However, this disposition is impossible, for example because the resulting
  diagram has no legs as the original diagram would consist only of $\gamma_{l_1}$
  and $\gamma_{l_2}$ connected by two edges (see the same argument in the
  proof of Lemma~\ref{lem_safe_vanish_2k}).

  We have therefore concluded (up to relabelling the vertices) that $\gamma_{l_1}
  \subseteq \gamma_{l_2}$. Now for $\tilde{\Gamma} \neq 0$ to hold we must have
  $e_\star(\gamma_{l_2}, \Gamma) \neq e_{\star}(\gamma_{l_1}, \Gamma) $, by the
  first identity of \eqref{eq_def_Cbar1}. We will now prove that this too can
  not be the case. We start by observing by Lemma~\ref{lem:n-prop} 2.(a) or
  2.(b) that $v_2 \in V(\gamma_{l_1})$, since otherwise there would be at least one
  external edge (with respect to $\gamma_{l_1}$) incoming into $v_1$, or at
  least two external outgoing edges from $v_1$ (which would contradict
  Lemma~\ref{lem:neg}). Furthermore, if Lemma~\ref{lem:n-prop} 2.(a) holds at
  $v_2$, then we have that $e_{\star, +}(\gamma_{l_2}, \Gamma_{l_2-1}) \in
  V(\gamma_{l_1})$ which implies $e_\star(\gamma_{l_2}, \Gamma) =
  e_{\star}(\gamma_{l_1}, \Gamma)$. If instead Lemma~\ref{lem:n-prop} 2.(b)
  holds, then $v_2=v_\star(\gamma_2)$ for some $\gamma_2 \subseteq \gamma_{l_2}$
  and $v_2 \in V(\gamma_{l_1})$ implies that $\gamma_2 \subseteq \gamma_{l_1}$
  as well. Then Lemma~\ref{lem:n-prop} 2.(b) implies that $e_{\star, +}(\gamma_{l_2}, \Gamma_{l_2-1}) \in
  V(\gamma_{2}) \subseteq V(\gamma_{l_1})$ leading again to the conclusion $e_\star(\gamma_{l_2}, \Gamma) =
  e_{\star}(\gamma_{l_1}, \Gamma)$. We have deduced that $\mf{n}(v_1),
  \mf{n}(v_2) \neq 0$ is not possible.

  Next we prove that $\mf{h}(e)=0$ for all $e \in E(\gamma)$. Given our
  discussion so far, the only other possibility is that the diagram has the following
  structure (up to relabelling vertices):
  \begin{equation*}
    \begin{tikzpicture}[thick,>=stealth,scale=0.2,baseline=-0.2em]
      \tikzset{
        dot/.style={circle,fill=black,inner sep=1.5pt}
      }
      \node[dot] (bL) at (-2.2,0)   {};
      \node[dot] (vL) at (1.4,0) {};   
      \node at (0,2.4) {\scalebox{0.8}{$\delta_i$}};
      \node[right] at (vL) {$v_2$};
      \node[left] at (bL) {$v_1$};
      \draw (vL) to[out=90,in=90] (bL);
      \draw (bL) to[out=-90,in=-90] (vL);
      \node at (-3.2,-1.6) {\scalebox{0.8}{$\delta_j$}};
    \end{tikzpicture} \;.
  \end{equation*}
  namely that $\mf{n}(v_1) \neq 0$ and $\mf{a}(v_1) = v_2$ and $\gamma$ consists
  of two edges $E(\gamma) = \{e_1, e_2\}$ with $\mf{h}(e_1) \neq 0,
  \mf{h}(e_2)=0$ (note that we have not prescribed a direction on the edges). In this case $v_1$ must satisfy either 2.(a) or 2.(b) of
  Lemma~\ref{lem:n-prop}. Let us start by assuming the case 2.(a). In this case,
  by \eqref{e:a-representation} we have that $v_2 = v_\star(\gamma_2)$ where
  $\gamma_2$ is the smallest divergence in $\mf{F}$ such that $v_1 \in
  V(\gamma_2) \setminus \{v_\star(\gamma_2)\}$.

  If $e_1=(v_1, v_2)$, that is if $e_1$ is incoming into $v_2$, then it must be
  that $v_2=v_\star(\gamma')$ for some $\gamma' \in\mf{F}$ and that $e_1 =
  e_\star (\gamma', \tilde\Gamma)$. We deduce that $\gamma'
  \subseteq \gamma_2$ and that $e_1, e_2 \not\in E(\gamma')$ (otherwise we could
  not have $e_1 = e_\star (\gamma', \tilde\Gamma)$). However this implies that $e_1,
  e_2$ must be the only incoming and outgoing edges of $\gamma'$, but in order
  for $v_2 =v_\star(\gamma_2)$ there must be at least another outgoing edge,
  leading to a contradiction.

  If $e_1=(v_2, v_1)$, that is if $e_1$ is incoming into $v_1$, then it
  must be that $v_1=v_\star(\gamma')$ for some $\gamma' \in \mf{F}$ such that
  $v_2 \not\in \gamma'$. In
  addition, $v_1$ must satisfy 2.(a) from Lemma~\ref{lem:n-prop} and is linked
  to the extraction of some diagram $\gamma_{l_1} \in \mf{F}$, since 2.(b) does
  not allow for an external incoming edge. Here we use the same notation as in
  Lemma~\ref{lem:n-prop} and the preceeding discussion. By the forest property either
  $\gamma' \subseteq \gamma_{l_1}$ or $\gamma_{l_1} \subseteq \gamma'$ as they
  share at least a vertex. Now in the diagram $\Gamma_{l_1-1}$, if $\gamma_{l_1} \subseteq \gamma'$ we would have $\hC^{(1)}_{\gamma_{l_1}} \Gamma_{l_1 -1}
  =0$ by the second identity in \eqref{eq_def_Cbar1}. Therefore, we must have
  $\gamma' \subsetneq \gamma_{l_1}$. However, the number of incoming vertices
  into a divergence is decreasing (see the proof of Lemma~\ref{lem:n-prop} as we
  extract divergences), and therefore it is impossible that in $\tilde{\Gamma}$
  there is an incoming edge into $v_1$ external to $\gamma'$.

  If instead $v_1$ satisfies 2.(b), then it must be that $v_i =
  v_\star(\gamma_i)$ for some $\gamma_i \in \mf{F}$ and that $\gamma_2$ is the
  parent of $\gamma_1$ in $\mf{F}$ by \eqref{e:a-representation} since
  $\mf{a}(v_1) =v_2$. Moreover, by
  2.(b) the vertex $v_1$ can not have any incoming edges that are external to
  $\gamma_1$. However, since there are at least two edges in $\gamma$, this
  implies that there are at least two edges outgoing of $\gamma_1$, which
  contradicts Lemma~\ref{lem:neg} (note that the total number of outgoing edges
  out of divergence is preserved by extractions and equal to one). Hence such a
  diagram can not exist. This concludes the proof.
\end{proof}

Next, consider $\widehat{\rm{Null}}$ as in Lemma~\ref{lem_hepp_unsafe}. We
notice that in order for $ \widehat{\mathrm{Null}}(T, \tilde{\Gamma}) = | \overline{T}| $ to
hold, we must necessarily have a nested bubble structure in the leaves of $\mf{F}_u$.
\begin{lemma}\label{lem_onlyNeed_emptysafe}
Let $ \Gamma$ be a negative Anderson--Feynman diagram, $ T
\in \mT_{V_{\star}}$ a Hepp tree, and $\mf{F}_s$ a forest of safe divergences.
Write $\mf{K}_{\mf{F}_s}^{(1)} \Gamma = \sum \tilde{\Gamma}$, where each
$\tilde{\Gamma} \in \mD_{\Gamma}^{\rm{lab}}$ is non-trivial. Assume that
$\widehat{\rm{Null}}(T, \tilde{\Gamma}) = |\trim|$ for some $\tilde{\Gamma}$.
Let $\gamma_* \in \mf{F}_u$ be as in Lemma~\ref{lem:gammastar}. Then the edge set $\tilde{E}(\gamma_*)$ of
$\tilde{\gamma}_* =
\tau_{\tilde{\Gamma}, \Gamma}^{-1} \gamma_*$ satisfies $\mf{h}(e) = 0 $ for all $e
\in \tilde{E}(\gamma_*)$.
Moreover, if $(\rm{id} - \hC_{\gamma_*} - \hC_{\gamma_*}^{(1)})
\tilde{\Gamma} \neq 0$, then there exists no $v \in V(\gamma_*) \setminus \{v_\star(\gamma_*)\}$ such that $\mf{n}(v)
\neq 0$.
\end{lemma}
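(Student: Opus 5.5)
The plan is to exploit the rigid structure forced by $\widehat{\rm{Null}}(T,\tilde\Gamma)=|\trim|$ inside the subtree $T_{u_*}$ rooted at $u_*=\gamma_*^\uparrow$. By Lemma~\ref{lem:gammastar}, $\Gamma_{u_*}=\gamma_*$, and no other unsafe divergence has $\gamma^\uparrow$ strictly above $u_*$. Since $\widehat{\deg}(v)=0$ for every $v\succ u_*$, we get $\hat\eta(v)=\eta(v)=0$ for all $v\succ u_*$, $v\neq u_*$, and $\sum_{v\succ u_*}\eta(v)=-2$.

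\emph{Step 1: the labels $\mf h$ vanish on $\tilde E(\gamma_*)$.} Process the nodes $v\succ u_*$, $v\ne u_*$, in a heap ordering. At each such $v$, the subdiagram of the (partially contracted) $\tilde\Gamma$ spanned by the two children has $\eta(v)=0$. Inductively, this subdiagram is spanned by two vertices, so Lemma~\ref{lem:no-weird-bubble} applies and shows that it is a genuine bubble with $\mf h=0$ on its edges. We then contract it, as in the end of the proof of Lemma~\ref{lem_nonneg_diagrams}. After exhausting the strict descendants of $u_*$, two vertices remain, and $\gamma_*$ has been reduced to a sunset with $\eta(u_*)=-2$. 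This means exactly three edges join the two remaining vertices. If any of those edges carried $\mf h\neq0$, or a label $\mf n$ with $\mf a$ internal, the degree would shift by $\pm1$. The sum would then no longer be $-2$, and the chain $\hat\eta=\eta+2(\mathds 1_{\gamma^\uparrow}-\mathds 1_{\gamma^{\uparrow\uparrow}})$ could not produce $\widehat{\deg}(u_*)=0$. We would use integrality and parity here: each $\mf h$ costs $-1$ and each internal monomial gains $+1$, so we need to rule out a cancelling pair. Lemma~\ref{lem:n-prop} does this, since a $\mf h$ label on an edge of $\gamma_*$ requires an extraction $\hC^{(1)}_\gamma$ with $e_\star(\gamma)\in E(\gamma_*)$, hence some $\gamma\subsetneq\gamma_*$ in $\mf F_s$. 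We would then argue that such a $\gamma$ must itself be a degree $-2$ subdiagram inside $\gamma_*$, contradicting the conclusion of Lemma~\ref{lem:gammastar} that $\gamma_*$ contains no strict divergence.

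\emph{Step 2: no label $\mf n$ on $V(\gamma_*)\setminus\{v_\star(\gamma_*)\}$.} Suppose $\mf n(v)\ne0$ for such a $v$. By Lemma~\ref{lem:n-prop}, $\mf n(v)$ was created by some $\hC^{(1)}_{\gamma_l}$ with $\gamma_l\in\mf F_s$. Case 2(a) places $v=e_{\star,+}(\gamma_l)$. Case 2(b) places $v=v_\star(\gamma)$ for some divergence $\gamma\subseteq\gamma_l$ sharing the incoming edge with $\gamma_l$. Since $\gamma_*$ contains no strict subdivergence, in both cases $\gamma_l\supseteq\gamma_*$ or $\gamma_l$ is disjoint from $\gamma_*$. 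Disjointness is impossible because $v\in V(\gamma_*)$ and $\gamma_l$ contains $v$ or its incoming edge lands in $\gamma_*$. If $\gamma_l\supsetneq\gamma_*$ with $e_{\star,+}(\gamma_l)\in V(\gamma_*)$, then $e_\star(\gamma_l,\Gamma)=e_\star(\gamma_*,\Gamma)$, because divergences have a unique incoming edge (Lemma~\ref{lem:neg}). In that case the edge $e_\star(\gamma_*,\tilde\Gamma)$ has already been rewired to $v_\star(\gamma_l)$ and carries $\mf h\neq0$. Then $\hC_{\gamma_*}\tilde\Gamma=\tilde\Gamma$ and $\hC^{(1)}_{\gamma_*}\tilde\Gamma=0$, so $(\mathrm{id}-\hC_{\gamma_*}-\hC^{(1)}_{\gamma_*})\tilde\Gamma=0$, contradicting the hypothesis. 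The case $\gamma_l=\gamma_*$ cannot occur, since $\gamma_*\in\mf F_u$ and not $\mf F_s$.

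The main obstacle is Step 1: the bookkeeping needed to turn ``$\hat\eta\equiv0$ on the subtree'' into a statement about labels on the original edges. The rewirings $\hC_\gamma$ from $\mf F_s$ can move edges in and out of $\tilde\gamma_*$, and we must check that Lemma~\ref{lem:no-weird-bubble} remains applicable after each partial contraction. This mirrors the delicate case analysis in the proof of Lemma~\ref{lem_safe_vanish_2k}, which I would reuse nearly verbatim.
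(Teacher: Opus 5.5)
Your proposal is correct in outline, and for the $\mf{h}$-claim it follows the paper's route. You contract the bubbles among the strict descendants of $u_*=\gamma_*^\uparrow$ using Lemma~\ref{lem:no-weird-bubble}, with the bookkeeping of Lemma~\ref{lem_safe_vanish_2k}, until $\tilde\gamma_*$ is a two-vertex diagram of labelled degree $-2$. You then rule out an $\mf{h}$-label on the remaining edges because it would require some $\gamma\in\mf{F}_s$ with $\gamma\subsetneq\gamma_*$. The difference lies in how you justify this last exclusion. The paper argues that such a $\gamma$ would have been contracted to a point, which is impossible because bubble contractions preserve its degree $-2$. You instead import the fact that ``$\gamma_*$ contains no strict divergence''. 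This fact appears only in the \emph{proof} of Lemma~\ref{lem:gammastar}, not in its statement, so you should flag it or reproduce the degree-preservation argument, which is exactly its justification. Once you grant the fact, your forest-property observation ($\tau_{\tilde\Gamma,\Gamma}(e)=e^{\mathrm{in}}_\gamma\in E(\gamma_*)$ forces $\gamma\subsetneq\gamma_*$) applies to every $e\in\tilde E(\gamma_*)$. The bubble contraction and the parity/cancelling-pair discussion in Step~1 then become superfluous.

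For the $\mf{n}$-claim your route genuinely differs from the paper's. The paper splits on the position of $\mf{a}(v)$:
\begin{itemize}
\item if $\mf{a}(v)\in V(\gamma_*)$, then $\mf{h}=0$ on $\tilde E(\gamma_*)$ by the first claim, so the monomial pushes the labelled degree of $\tilde\gamma_*$ above $-2$;
\item if $\mf{a}(v)\notin V(\gamma_*)$, Lemma~\ref{lem:n-prop} shows that the incoming edge of $\gamma_*$ was rewired out of $V(\gamma_*)$.
\end{itemize}
You instead locate the divergence $\gamma_l\in\mf{F}_s$ that created $\mf{n}(v)$. The forest property plus the imported fact then force $\gamma_l\supsetneq\gamma_*$ with a shared incoming edge. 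This handles both positions of $\mf{a}(v)$ uniformly, but it again depends entirely on the imported fact, whereas the paper's internal case needs only the first claim.

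Two small repairs are needed. First, in case 2(b) of Lemma~\ref{lem:n-prop} you only treat the situation $e_{\star,+}(\gamma_l)\in V(\gamma_*)$. That case is in fact contradictory outright: $v=v_\star(\gamma)$ with $\gamma\in\mf{F}_s$ forces $\gamma\supsetneq\gamma_*$. Then $e^{\mathrm{out}}_\gamma$ is the unique outgoing edge of $\gamma_*$ (Lemma~\ref{lem:neg}), so $v=v_\star(\gamma_*)$, which is excluded. Second, $\hC_{\gamma_*}\tilde\Gamma=\tilde\Gamma$ holds only for unlabelled diagrams. When $\mf{a}(v)\notin V(\gamma_*)$, the labelled map~\eqref{e:Chatnew} recentres the monomial. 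Hence $(\mathrm{id}-\hC_{\gamma_*}-\hC^{(1)}_{\gamma_*})\tilde\Gamma$ vanishes only after valuation, via the binomial identity as in Lemma~\ref{lem:no_doubles}; the paper is equally loose on this point.
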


\begin{proof}
Consider the vertex $\gamma_*^\uparrow \in \trim$, and note that by Lemma~\ref{lem:gammastar}
$\tilde{\gamma}_* = \tau_{\tilde{\Gamma}, \Gamma}^{-1} \gamma_*$ is the diagram spanned by all the vertices associated to
leaves $v \in T$ such that $v \succ \gamma_*^{\uparrow}$.
Note that $\deg(\tilde{\gamma}_*, \tilde\Gamma) = -2$, or else $\widehat{\rm{Null}}(T, \tilde{\Gamma})
< |\trim|$ since $ \hat{\eta} ( \gamma_{\ast}^{\uparrow}) = \eta (
\gamma_{\ast}^{\uparrow})+2$.
For any $u \in \trim$
such that $u \succ \gamma_*^\uparrow, u \neq \gamma_*^\uparrow$, we have that
the subdiagram $\tilde{\Gamma}_u \subseteq \tilde{\Gamma}$ spanned by all the
leaves $v$ of $T$ such that $v \succ u$, satisfies $\deg(\tilde{\Gamma}_u,
\tilde\Gamma) =
0$.
Therefore, by Lemma~\ref{lem:no-weird-bubble} it coincides with a nested bubble
diagram and the
associated edges satisfy $\mf{h}(e) = 0$. The result then follows by iteratively
contracting all the bubbles, as in the proof of Lemma~\ref{lem_safe_vanish_2k},
until $\tilde{\gamma}_*$ is reduced to a diagram of degree $-2$, spanned by two
vertices. Also in this case we have $\mf{h}(e)=0$ for all the edges in this last
diagram. If not, one of the two vertices must be of the form $v_\star(\gamma)$
for some $\gamma \in \mf{F}_s, \gamma \subseteq \gamma_*$. However, in this case
the entire diagram $\gamma$ would have contracted to a point, which can not be
because degrees are preserved when contracting bubbles, and the original degree
of $\gamma$ was $-2$, while the degree of a singleton is zero.

As for the last claim, suppose that $\mf{n}(v) \neq 0$ for some $v  \in V (
\gamma_{\ast}) \setminus \{ v_{\star}( \gamma_{\ast}) \}$. Then
either $\mf{a}(v) \in V(\gamma_*)$ or $\mf{a}(v) \in V_\star \setminus
V(\gamma_*)$.
In the first case, since contracting bubbles does not
change the degree (even considering the labels) by
Lemma~\ref{lem:no-weird-bubble}, we would conclude that after contracting all
the bubbles the degree of $\tilde{\gamma}_*$ is greater than $-1$ (since there
are no edges with $\mf{h} \neq 0$ but at least one monomial), which contradicts
$\deg(\tilde{\gamma}_*, \tilde\Gamma)=-2$.
On the other hand, if instead $\mf{a}(v) \in V_\star \setminus
V(\gamma_*)$, then by Lemma~\ref{lem:n-prop} there is no incoming edge into
$\gamma_*$ (the incoming edge that was removed
in order to produce the $\mf{n}(v)$ has been rewired outside of $V(\gamma_*)$
since $\mf{a}(v)$ does not belong to $V(\gamma_*)$)  and in this case $(\rm{id} - \hC_{\gamma_*} -
\hC_{\gamma_*}^{(1)})
\tilde{\Gamma} = 0$. This concludes the proof.
\end{proof}

\subsection{Proof of Lemma~\ref{lem:negvanish}}\label{sec:the-end-proof}

Finally, we collect all the results obtained so far to prove Lemma~\ref{lem:negvanish}, which states
that any negative Anderson--Feynman diagram (with two or four legs) vanishes in the weak coupling limit, after renormalisation.

\begin{proof}[Proof of Lemma~\ref{lem:negvanish}]
	For $k =1$, the statement is a consequence of Corollary~\ref{cor_unsafe}.
	On the other hand, if $k=2$ and
   $ \Gamma $ a negative $ 4$-Anderson--Feynman diagram, then
  \begin{equation*}
  \begin{aligned}
  \lim_{\ve \to 0}
  \lambda_{\ve}^{| E_{\star}|} | \hat{\Pi}_{\ve} \Gamma ( \varphi) | =0\,,
  \end{aligned}
  \end{equation*}
  by Proposition~\ref{prop_improve}. We remark the the tadpole case is treated
  separately in Lemma~\ref{lem_tadpole}.
\end{proof}
\appendix

\section{The Green's function of the Laplacian}

We consider the Green's function $G $ of the massive Laplacian, which is the
solution to
\begin{equation*}
\begin{aligned}
( 1- \Delta) G = \delta_{0}\,, \quad \text{on } \ \mathbf{T}^{4} \;.
\end{aligned}
\end{equation*}
Then $G$ satisfies the following estimates.
\begin{lemma} \label{lem:G}
  We have that $G \in C(\TT^4 \setminus \{0\} ; \RR)$ and $G(x)> 0$ for all $x
  \in \TT^4 \setminus \{0\}$. Moreover
  \begin{align*}
    \sup_{x \in \TT^4 \setminus\{0\} } \Big\vert G(x) - \frac{1}{4 \pi^2 |x|^2} - \frac{1}{8 \pi^2 } \log(|x|) \Big\vert < \infty \;.
  \end{align*}
\end{lemma}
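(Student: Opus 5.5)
We propose to compare $G$ with the explicit fundamental solution of the massive Laplacian on $\RR^4$, and to absorb the periodisation through a smooth correction. Concretely, let $\chi \in C_c^\infty(\RR^4)$ be a radial cutoff equal to $1$ on $\{|x| \le 1\}$ and supported in $\{|x| < 2\} \subset (-\pi,\pi)^4$. Let $K(x) = \frac{1}{4\pi^2} \frac{K_1(|x|)}{|x|}$ be the whole-space Green's function of $1-\Delta$ in $d=4$, where $K_1$ is the modified Bessel function. We view $\chi K$ as a function on $\TT^4$. Then $(1-\Delta)(\chi K) = \delta_0 + f$, where $f = -2\nabla\chi\cdot\nabla K - K\Delta\chi$ is smooth, since it is supported away from the origin. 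It follows that $G = \chi K - (1-\Delta)^{-1} f$ on $\TT^4$. Because $1-\Delta$ is invertible on the torus with smoothing inverse, the second term is smooth and in particular bounded. This gives continuity of $G$ away from $0$, and it reduces the asymptotic claim to the corresponding claim for $\chi K$.

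For the expansion, we use the small-argument series
\begin{equation*}
K_1(r) = \frac{1}{r} + \frac{r}{2}\log\frac{r}{2} + c\, r + O\big(r^3 |\log r|\big) \;,
\end{equation*}
valid as $r \to 0$. Substituting gives
\begin{equation*}
K(x) = \frac{1}{4\pi^2|x|^2} + \frac{1}{8\pi^2}\log|x| + O(1) \;,
\end{equation*}
uniformly on $|x| \le 1$. On the region $1 \le |x|$ inside the torus, both $\chi K$ and the explicit singular terms are bounded. Together with the bounded smooth correction, this yields the supremum bound. If one prefers to avoid Bessel functions, an alternative is to take $\Phi(x) = (4\pi^2|x|^2)^{-1}$, for which $-\Delta\Phi = \delta_0$ in $\RR^4$. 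One then checks directly that $(1-\Delta)\big(\chi\,(\Phi + \tfrac{1}{8\pi^2}\log|x|)\big) = \delta_0 + g$. Since $\Delta\log|x| = 2/|x|^2$ in four dimensions, the mass term $\Phi$ is cancelled, and $g$ is $O(\log|x|)$, which lies in every $L^p$. Elliptic regularity then shows that $(1-\Delta)^{-1}g$ is bounded, because $W^{2,p} \hookrightarrow L^\infty$ for $p > 2$.

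For positivity, we use the heat kernel representation
\begin{equation*}
G(x) = \int_0^\infty e^{-t}\, p_t^{\TT^4}(x)\,\ud t \;,
\end{equation*}
where $p_t^{\TT^4}$ is the periodised Gaussian heat kernel. This kernel is strictly positive, so $G(x) > 0$ for $x \neq 0$; the integral is finite there by the bounds already established. The main obstacle is the bookkeeping in the expansion step, namely checking that the coefficient of the logarithm is exactly $\tfrac{1}{8\pi^2}$ with a plus sign. The sign is consistent: $\log|x| \to -\infty$ as $x \to 0$, which lowers $G$ relative to $\Phi$, as the mass term should. The remaining errors are bounded rather than merely integrable, and this follows from the $W^{2,p}$ argument above.
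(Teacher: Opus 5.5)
Your proof is correct, but your main route differs from the paper's in how the local singularity is captured. The paper never uses the whole-space massive kernel. Instead it builds the parametrix $G_1(x)=\frac{1}{4\pi^2|x|^2}+\frac{1}{8\pi^2}\log|x|+\frac{1}{64\pi^2}|x|^2\log|x|$ by hand. The last term is chosen so that its $-\Delta$ cancels the $\frac{1}{8\pi^2}\log|x|$ that the mass produces from the second term, which makes $(1-\Delta)G_1-\delta_0$ bounded near the origin. The paper then cuts off, periodises, and concludes from $(1-\Delta)\mathcal{E}\in L^\infty$ that $\mathcal{E}=G-G_{1,\mathrm{per}}$ is bounded.

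Your Bessel route replaces that computation with the exact fundamental solution $\frac{K_1(|x|)}{4\pi^2|x|}$. The defect $f$ is then smooth and supported in an annulus, so the correction $(1-\Delta)^{-1}f$ is $C^\infty$. The cost is importing the small-argument expansion of $K_1$, which is where your coefficient $\frac{1}{8\pi^2}$ comes from. The gain is a smooth remainder and, if wanted, the full asymptotic expansion.

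Your Bessel-free alternative is essentially the paper's argument stopped one order earlier. You keep the remainder $O(\log|x|)\in\bigcap_{p<\infty}L^p$ and pay with $L^p$ elliptic regularity and $W^{2,p}\hookrightarrow L^\infty$ for $p>2$. The paper adds the $|x|^2\log|x|$ term precisely so that the data are bounded and only an $L^\infty$ estimate is needed, such as $\|G\star\tilde R\|_\infty\le\|G\|_{L^1}\|\tilde R\|_\infty$.

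Your heat-kernel argument also proves the positivity that the paper only calls standard. To tighten it, do not appeal to ``the bounds already established'' for finiteness, since those bounds concern $G$ before it has been identified with the integral. Instead note two facts:
\begin{itemize}
\item $\int_0^\infty e^{-t}p_t^{\TT^4}\,\ud t$ has Fourier coefficients $(1+|k|^2)^{-1}$, so it coincides with $G$ almost everywhere.
\item It is bounded below everywhere by $\int_1^2 e^{-t}\inf_x p_t^{\TT^4}(x)\,\ud t>0$.
\end{itemize}
Continuity of $G$ on $\TT^4\setminus\{0\}$ then upgrades this almost-everywhere lower bound to $G>0$ pointwise.
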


\begin{proof}
  The continuity of $G$ outside zero is standard (in fact $G$ is smooth outside
  the origin), and similarly the strict positivity. Let us concentrate on the
  bound on the blow-up of the solution. If we consider the function $G_0(x) = 1/(4
  \pi^2 |x|^2)$ defined on $\RR^4$ (with $|\cdot|$ the Euclidean distance rather
  than the periodic one), then $G$ solves the Poisson equation $- \Delta G_0 =
  \delta_0$. In fact, $G_0$ is explicit from the Poisson formula
  \begin{equation*}
    \begin{aligned}
      G_0(x) & = \int_{0}^\infty \frac{1}{(4 \pi t)^{\frac{d}{2}}} e^{- \frac{|x|^2}{4t}} \ud t  = \frac{|x|^2}{4} \int_{0}^\infty \frac{1}{( \pi r |x|^2)^{\frac{d}{2}}} e^{- \frac{1}{r}} \ud r = \frac{1}{4 \pi^2 |x|^2 } \;.
    \end{aligned}
  \end{equation*}
  However, $G_0$ does not contain a mass term. To account for the mass term we
  will need a logarithmic correction. To make this construction clearer, let us assume that
  we want to construct the Green's function of the massive Laplacian $(\mf{m} -
  \Delta) G = \delta$, for arbitrary $\mf{m} > 0$. We will then set $\mf{m}
  =1$, but since the argument is a perturbative expansion, it is easier to
  follow in the general case.

  Let us  define the corrected Green's function
  \begin{equation*}
    G_1 (x) := \frac{1}{4 \pi^2 |x|^2} + \frac{\mf{m} }{8 \pi^2}\log( |x|) +  \frac{\mf{m}^2}{64 \pi^2}|x|^2 \log (| x|)\;.
  \end{equation*}
  For this corrected function, we find that
  \begin{equation*}
    (\mf{m} - \Delta) G_1 = \delta + \frac{2 \mf{m}}{8 \pi^2 |x|^2}  -  \frac{2
\mf{m} }{8 \pi^2} \frac{1}{|x|^{2}} + \frac{\mf{m}^2}{8 \pi^2} \log (|x|) -
\frac{ 8 \mf{m}^2}{64 \pi^2}\log(|x|) + R_{ \mf{m}} (x) \;,
  \end{equation*}
  where
  \begin{equation*}
    R_{ \mf{m}} (x) = - \frac{6 \mf{m}^2}{64 \pi^2} +  \frac{\mf{m}^3}{64 \pi^2}|x|^2 \log (| x|)
  \end{equation*}
  Here we have repeatedly written the Laplacian on $\RR^d$ in
  polar coordinates, meaning that for radial functions $\varphi$ it holds $\Delta \varphi
  = \partial_{r}^2 \varphi + (d-1) r^{-1} \partial_r \varphi$. For example, we find
  \begin{equation*}
    \Delta \big(|x|^2 \log(|x|)\big) = 2 \log(|x|) + 3  + (d-1) (2\log(|x|) + 1) = 2 d \log(|x|) + d +2 \;.
  \end{equation*}
  This leaves us with
  \begin{equation*}
    (\mf{m} - \Delta) G_1 = \delta + R_{\mf{m}} \;, \qquad \forall x \in \RR^4 \;.
  \end{equation*}
  Now we introduce a smooth, radial, function $\varphi$ with compact support
  in $(-\pi, \pi)^4$ and such that $\varphi \equiv 1$ in a neighbourhood of
  zero. We use $\varphi$ as a cutoff function, and define $G_{1, \rm{per}}$ the
  periodic extension of $x \mapsto G_1(x) \varphi(x)$. Hence, there exists an
  $\tilde{R}_{\mf{m}} \in L^{\infty}(\TT^4)$ such that
  \begin{equation*}
    (\mf{m} - \Delta) G_{1, \rm{per}} = \delta + \tilde{R}_{ \mf{m}}  \;, \qquad \forall x \in \TT^4 \;.
  \end{equation*}
  Finally, we can compare $G_{1, \rm{per}}$ to the actual Green's function of
  the massive Laplacian on the torus. If we define $\mE = G - G_{1, \rm{per}}$,
  then we obtain
  \begin{equation*}
    (\mf{m} - \Delta) \mE = - \tilde{R}_{ \mf{m}} \;,
  \end{equation*}
  and since $\tilde{R}_{\mf{m}} \in L^\infty$, we conclude that
  \begin{equation*}
    \|G - G_{1, \rm{per}}\|_{\infty} \lesssim_{ \mf{m}}  1 \;,
  \end{equation*}
  which implies the desired result.
\end{proof}
Similarly, we obtain an estimate on mollifications of the Green's function.

\begin{lemma}\label{lem_greens_function_order_bound}
Let $ \rho \in C^{\infty}_c(\RR^4)$ be a non-negative radial function with
compact support in $(-1, 1)^4$, and $
\int \rho = 1$. Then set $ \rho_{\varepsilon}
( \cdot ) := \varepsilon^{-d} \rho ( \cdot / \varepsilon) $ and
define $ G_{\varepsilon}:= \rho_{ \varepsilon} \star  G$ the periodic,
mollified Green's function of the massive Laplacian. Then for some constant $C >
0$
\begin{equation}\label{e:G-bd1}
  |G_{\ve} (x)| \leq \frac{C}{|x|^2 + \ve^2} \qquad \forall x \in \TT^4 \;.
\end{equation}
In addition uniformly over $\ve, x$ such that $|x| \geq 2 \ve$ it holds that for
some $C > 0$
\begin{equation}\label{e:G-bd2}
  \frac{1}{4 \pi^2 ( |x| + \ve)^2} +\frac{1}{8 \pi^2} \log(|x|) - C \leq G_{\ve} (x) \leq \frac{1}{4 \pi^2 ( |x| - \ve)^2} +\frac{1}{8 \pi^2} \log(|x|) +C \;.
\end{equation}
\end{lemma}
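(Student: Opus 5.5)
The plan is to reduce both bounds to Lemma~\ref{lem:G} and to the fact that $\rho_\ve$ is supported in a ball of radius comparable to $\ve$. To avoid constant-chasing, assume (as the statement intends) that the support of $\rho$ lies in $\{|z|\leq 1\}$. Then $G_\ve(x)=\int \rho_\ve(z) G(x-z)\ud z$ is an average of $G$ over points $x-z$ with $|z|\leq\ve$. Write $G = G_0 + \frac{1}{8\pi^2}\log|\cdot| + B$, where $G_0(y)=(4\pi^2|y|^2)^{-1}$ and $\|B\|_\infty<\infty$ by Lemma~\ref{lem:G}. Since $\rho_\ve\geq 0$ has unit mass, the term $B$ contributes at most $\|B\|_\infty$ to $G_\ve$, and this is absorbed into the constant $C$ in both estimates.

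For \eqref{e:G-bd2}, assume $|x|\geq 2\ve$. Every $z$ in the support satisfies $|x|-\ve\leq |x-z|\leq |x|+\ve$, so
\[
\frac{1}{4\pi^2(|x|+\ve)^2}\leq G_0(x-z)\leq \frac{1}{4\pi^2(|x|-\ve)^2}.
\]
Averaging against $\rho_\ve$ gives the same two-sided bound for $\rho_\ve\star G_0$. For the logarithm, the ratio $|x-z|/|x|$ lies in $[1/2,3/2]$, so $\big|\log|x-z|-\log|x|\big|\leq \log 2$, and this too goes into $C$. On the torus, $|\cdot|$ is the periodic distance, and the triangle inequality still holds for it, so the argument is unchanged.

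For \eqref{e:G-bd1} I split into two cases.
\begin{itemize}
\item If $|x|\geq 2\ve$, the previous step gives $G_\ve(x)\lesssim (|x|-\ve)^{-2}+|\log|x||+1$. Since $|x|-\ve\geq |x|/2$ and $|x|^2\geq \tfrac{4}{5}(|x|^2+\ve^2)$, the first term is $\lesssim (|x|^2+\ve^2)^{-1}$. The logarithm and the constant are also $\lesssim (|x|^2+\ve^2)^{-1}$, because $|x|$ is bounded on the torus and $|\log r|\lesssim r^{-2}$ for bounded $r$.
\item If $|x|<2\ve$, use $|G(y)|\lesssim |y|^{-2}$ on the torus, which follows from Lemma~\ref{lem:G}. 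Then
\[
|G_\ve(x)|\lesssim \ve^{-4}\int_{|y|\leq 3\ve}|y|^{-2}\ud y\lesssim \ve^{-4}\ve^{2}=\ve^{-2}\lesssim (|x|^2+\ve^2)^{-1}.
\]
\end{itemize}

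The only real subtlety is the periodic setting. Here $G_0$ is understood through the periodized decomposition from Lemma~\ref{lem:G}, and when $|x|$ is close to the diameter of the torus the bound $|x-z|\leq|x|+\ve$ still holds for the torus distance. Away from the origin, $G$ is bounded, so any mismatch between the periodic and Euclidean forms there is again absorbed into $C$. I therefore expect no genuine obstacle: the argument is just the triangle inequality, positivity and unit mass of the mollifier, and the asymptotics of Lemma~\ref{lem:G}.
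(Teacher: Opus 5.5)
Your proposal is correct and follows essentially the same route as the paper. You split $G$ via Lemma~\ref{lem:G} into the $|x|^{-2}$ part, the logarithm and a bounded remainder, bound the first part using $|x|-\varepsilon\le|x-z|\le|x|+\varepsilon$ on the support of $\rho_\varepsilon$, and treat $|x|<2\varepsilon$ by integrating $|y|^{-2}$ over a ball of radius $\lesssim\varepsilon$. Two small differences: your direct bound $\bigl|\log|x-z|-\log|x|\bigr|\le\log 2$ replaces the paper's appeal to an external reference, and your assumption $\mathrm{supp}\,\rho\subseteq\{|z|\le 1\}$ needs no separate justification, since a radial function compactly supported in $(-1,1)^4$ is automatically supported in the open unit ball.
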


\begin{proof}
We rely on the estimate in Lemma~\ref{lem:G} without further reference to it. To
obtain the first estimate we control
\begin{equation*}
  |G_{\ve}(x)| \lesssim \int_{\TT^4} \frac{1}{|x-z|^2} \vr_{\ve}(z) \ud z \lesssim \begin{cases}
    \frac{1}{|x|^2} & \quad \text{ if } |x| > 2 \ve \;, \\
    \int_{\TT^{4}} |z|^{-2} \vr_{\ve} (z) \ud z \lesssim \ve^{-2} \;, & \quad \text{ if } |x| \leq 2 \ve \;.
  \end{cases}
\end{equation*}
This is an estimate of the required order for~\eqref{e:G-bd1}.

As for~\eqref{e:G-bd2}, we start singling out the logarithmic term. Here we use that
\begin{equation}\label{eq_supp1_Green}
  \int \log(|x - z|) \varrho_\ve (z) \ud z = \log(|x| + \ve) + \mO(1) = \log(|x| ) + \mO(1)  \;,
\end{equation}
for $|x| \geq 2 \ve$, see for example~\cite[Lemma 3.5]{MR3652040}.

Instead, for the polynomial term we estimate from below
\begin{equation*}
  \int \frac{1}{|x - z|^2} \vr_{\ve}(z) \ud z \geq \frac{1}{(|x|+ \ve)^2} \;.
\end{equation*}
And similarly from above, using that $|x| \geq 2 \ve$:
\begin{equation*}
  \int \frac{1}{|x - z|^2} \vr_{\ve}(z) \ud z \leq \frac{1}{(|x|- \ve)^2} \;.
\end{equation*}
Combining these estimates leads to~\eqref{e:G-bd2}, and thus the proof of the
result is complete.
\end{proof}

\begin{lemma}\label{lem_Gmol2ratiomol}
In the same setup as in the previous lemma, there exists a $C>0$ such that
\begin{equation*}
\begin{aligned}
\bigg| G_{\ve}(x) - \frac{1}{4 \pi^{2}} \frac{1}{|x|^{2}} \bigg|
\leqslant C\bigg(
\frac{ \ve}{ (|x| + \ve )^{3}} + \log \frac{1}{|x|} + 1 \bigg)
\end{aligned}
\end{equation*}
for all $\ve \in (0, 1)$ and $ |x| \geq 2 \ve$.
\end{lemma}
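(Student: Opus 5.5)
The plan is to split $G_\ve - G_0$, with $G_0(x)=(4\pi^2|x|^2)^{-1}$ on the fundamental domain, into a mollification error for the singular part and a mollification error for the logarithmic remainder. Lemma~\ref{lem:G} gives $G = G_0 + \frac{1}{8\pi^2}\log|\cdot| + R$ with $R$ bounded. Since $\rho_\ve$ is supported in $|z|\le \sqrt{4}\,\ve = 2\ve$, we have $\rho_\ve \star R$ bounded uniformly in $\ve$. Moreover, \eqref{eq_supp1_Green} gives $\rho_\ve\star \log|\cdot|(x) = \log|x| + \mO(1)$ for $|x|\ge 2\ve$, so this piece is bounded by $C(\log\frac1{|x|}+1)$. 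Here and below, $2\ve$ should be read up to the constant $\sqrt{d}$ from the support of $\rho$. If necessary, the region where $|x-z|$ could vanish is excluded by first proving the claim for $|x|\ge c\ve$ with $c$ large. The range $2\ve\le|x|\le c\ve$ then follows from the crude bound \eqref{e:G-bd1}, since there $|x|^{-2}\lesssim \ve(|x|+\ve)^{-3}$.

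It therefore remains to control $\rho_\ve\star G_0(x) - G_0(x)$ for $|x|\ge c\ve$. I will Taylor expand $G_0(x-z)$ around $x$ to second order. By radial symmetry of $\rho$, $\int z\,\rho_\ve(z)\ud z=0$, so the first-order term vanishes. For $|z|\le 2\ve\le |x|/2$, the second-order remainder is bounded by $|z|^2\sup_{|y-x|\le|x|/2}|D^2G_0(y)|\lesssim \ve^2|x|^{-4}$. Since $|x|\gtrsim \ve$, this gives $\ve^2|x|^{-4}\lesssim \ve(|x|+\ve)^{-3}$. If one prefers to avoid the symmetry argument, a first-order expansion already gives the bound $\ve|x|^{-3}\lesssim \ve(|x|+\ve)^{-3}$, which is exactly the claimed rate. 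I would use this simpler route.

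Summing the three contributions yields the statement. The main subtlety is bookkeeping on the torus: $G_0$ must be understood with the periodic distance near the origin only. Away from the origin (that is, for $|x|$ bounded below), both $G_\ve$ and $G_0$ are bounded, so the estimate is trivial there. I do not expect a genuine obstacle; the only point needing care is keeping the argument $x-z$ away from $0$, which is ensured by $|x|\ge 2\ve$ together with the support of $\rho_\ve$.
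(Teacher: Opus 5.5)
Your proposal is correct and is essentially the paper's argument. The paper also splits $G$ via Lemma~\ref{lem:G}, handles the logarithm with \eqref{eq_supp1_Green} and the bounded remainder trivially, and controls the singular part by subtracting $\frac{1}{4\pi^2|x|^2}$ from the two-sided bound $\frac{1}{4\pi^2(|x|\pm\ve)^2}$ in \eqref{e:G-bd2}; that is the same first-order (triangle-inequality) estimate as your mean-value bound $\ve|x|^{-3}$.

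Your extra split at $|x|\approx c\ve$ using \eqref{e:G-bd1} is a small but genuine gain in care. The paper's $(|x|-\ve)^{-2}$ tacitly assumes $\mathrm{supp}\,\rho_\ve\subseteq\{|z|\le\ve\}$, whereas $\rho_\ve$ may be supported up to radius $2\ve$.
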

\begin{proof}	
From~\eqref{e:G-bd2} in Lemma~\ref{lem_greens_function_order_bound}, we find
that
\begin{equation*}
  G_\ve(x) -  \frac{1}{4 \pi^2 |x|^{2}}  \lesssim \frac{\ve}{( |x| - \ve)^3} + \log\frac{1}{|x|} +1\;.
\end{equation*}
Similarly from below:
\begin{equation*}
  G_\ve(x) -  \frac{1}{4 \pi^2 |x|^{2}}  \gtrsim - \frac{\ve}{( |x| + \ve)^3} -  \log\frac{1}{|x|}- 1\;.
\end{equation*}
This concludes the proof, by using $|x| \geq 2 \ve$.
\end{proof}

\section{Structure of negative Anderson--Feynman diagrams}\label{app:bphz}

We start by proving the following result, which guarantees that any divergent
diagram with strictly negative degree must be obtained from an internal pairing.
Given a diagram $\Gamma$ and subdiagram $\overline{\Gamma}$ we say that $e =
(e_-, e_+) \in
E$ is incoming into $\overline{\Gamma}$ (resp. outgoing) if $e_+ \in
\overline{V}, e_- \in V \setminus\overline{V} $ (resp. $e_- \in
\overline{V}, e_+ \in V \setminus\overline{V} $). Furthermore, recall that for a
subdiagram $\overline{\Gamma} \subseteq \Gamma$ of a Feynman diagram $\Gamma$ we
write
\begin{equation*}
  \overline{\deg} (\overline{\Gamma})  = 4 ( | \overline{V} | -1) - 2 | \overline{E} | \;.
\end{equation*}
The only difference to the degree defined in~\eqref{e_def_deg} is that we do not
require $\overline{\Gamma}$ to be connected, so that
$\overline{\deg}(\overline{\Gamma}) = \deg(\overline{\Gamma}) + {4}( c -1)$, where $c$ is
the number of connected components of $\overline{\Gamma}$.

\begin{lemma}\label{lem:neg}
Let $ \Gamma $ be a connected Anderson--Feynman diagram, and $ \overline{\Gamma}
= (\overline{V}, \overline{E}) \subseteq \Gamma
$ a subdiagram. Then $ \overline{\deg} ( \overline{\Gamma}) \in -2 +\NN$, and $ \overline{\deg} ( \overline{\Gamma}) = -2$
if and only if $\overline{\Gamma}$ is connected and there exist exactly one outgoing and one incoming edge.

Moreover, fix $n_1, n_2 \in \NN_*$ and a paring $\kappa$ such that $\Gamma =
(\mI_{n_1}, \mI_{n_2})_{\kappa}$. If $\deg (\overline{\Gamma}) = -2$, then the edges of $\overline{\Gamma}$ belong either
entirely to $\mI_{n_1}$ or entirely $\mI_{n_2}$.
\end{lemma}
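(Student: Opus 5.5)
The first part of Lemma~\ref{lem:neg} is a degree-counting (handshake) argument. Every inner vertex of an Anderson--Feynman diagram has degree four. For a subdiagram $\overline{\Gamma}=(\overline{V},\overline{E})$ with $\overline{V}\subseteq V_\star$, write $b$ for the number of edges of $\Gamma$ (legs included) with exactly one endpoint in $\overline{V}$, and $m$ for the number of internal edges of $\Gamma$ with both endpoints in $\overline{V}$ that are not in $\overline{E}$. Summing degrees over $\overline{V}$ gives $4|\overline{V}| = 2|\overline{E}| + 2m + b$. Hence
\begin{equation*}
\overline{\deg}(\overline{\Gamma}) = 4(|\overline{V}|-1) - 2|\overline{E}| = 2m + b - 4 .
\end{equation*}

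Next I need $b$ to be even. Each tree $\mI_n$ is a directed path, so every inner vertex of $(\tau_1,\dots,\tau_k)_\kappa$ has in-degree two and out-degree two. Counting heads minus tails over $\overline{V}$, internal edges contribute zero, so $\#\{\text{incoming}\} = \#\{\text{outgoing}\}$ and $b = 2\,\#\{\text{incoming}\}$ is even.

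Connectivity of $\Gamma$ gives $b\geq 2$ whenever $\overline{V}\neq V_\star$. If $\overline{V}=V_\star$, then $b$ equals the number of legs, and $b\geq 2$ unless there are no legs at all; for the diagrams under consideration there are always legs. So $\overline{\deg}\geq -2$, with values in $-2+2\NN\subseteq -2+\NN$. Equality forces $m=0$ and $b=2$, that is, one incoming and one outgoing edge. For connectedness, use the relation $\overline{\deg}=\deg+4(c-1)$ together with the bound $\deg\geq -2$ applied to each of the $c$ components. This gives $\overline{\deg}\geq -2c+4(c-1)=2c-4$, so $c=1$. The converse direction is the same identity read backwards, with $m=0$ automatic from $\overline{\Gamma}$ being full. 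I will also check that a connected subdiagram with $b=2$ but $m>0$ has degree at least $0$, which is consistent with the statement.

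The second part is where the real work lies. Take $\Gamma=(\mI_{n_1},\mI_{n_2})_\kappa$ and $\deg(\overline{\Gamma})=-2$. Each tree is a directed path from its lower leaf to its upper leaf, and the edges of $\Gamma$ decompose into these two paths. Consider the path of $\mI_{n_1}$: each maximal consecutive stretch of its edges lying in $\overline{E}$ is entered through an incoming edge and left through an outgoing edge of $\overline{\Gamma}$ (leg or internal, with $m=0$). The path cannot start or end inside $\overline{V}$, since its endpoints are leaves. So every maximal run contributes exactly one incoming and one outgoing edge, and distinct runs contribute distinct boundary edges. Since $b=2$, the total number of runs over both paths is at most one.

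The main obstacle is showing that the edges of $\overline{\Gamma}$ cannot belong to neither run, i.e.\ that the single run carries all of them. The point is that a path can only touch $\overline{V}$ while traversing edges: visiting a vertex of $\overline{V}$ means entering it by an edge, which is either in $\overline{E}$ or a boundary edge. Since $\overline{\Gamma}$ is full and $m=0$, every edge of $\Gamma$ with both ends in $\overline{V}$ lies in $\overline{E}$, so each edge in $\overline{E}$ belongs to some run. With at most one run in total, all edges of $\overline{E}$ sit in a single run and hence in a single tree $\mI_{n_i}$.
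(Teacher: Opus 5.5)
Your proposal is correct and is essentially the paper's own argument: the paper also counts boundary edges. It writes $|\overline{E}| = 2|\overline{V}| - L$, with $L$ the number of outgoing (equivalently incoming) edges, so that $\overline{\deg}(\overline{\Gamma}) = 2L-4$; it gets $L \geq 1$ because a leaf can be reached from any inner vertex, and for the second part it observes that every tree contributing an edge to $\overline{\Gamma}$ needs its own exit edge, so $L$ is at least the number of such trees.

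Your version makes explicit what the paper leaves tacit. The term $m$ encodes the fullness the paper assumes, the in/out-degree balance gives the parity, and degree additivity gives connectedness where the paper only says ``for the same reason''. One small correction to your last step: fullness is not what puts each edge of $\overline{E}$ into a run, since every edge lies on exactly one tree path anyway. Fullness is what guarantees that the edges entering and leaving a run are genuine boundary edges.
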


\begin{proof}
The edge set $ \overline{E} $ of $ \overline{\Gamma} $
satisfies $ | \overline{E} | = 2 | \overline{V} | - L$, where
$ L \geqslant 0 $ is the number of outgoing (or equivalently,
incoming)
edges that connect $ \overline{\Gamma} $ to
the rest of the diagram $ \Gamma $.
In particular, it follows that
\begin{equation*}
\begin{aligned}
\overline{\deg} (\overline{\Gamma}) = 4  (| \overline{V} | -1) -2  |
\overline{E} | = - 4 + 2 L \;.
\end{aligned}
\end{equation*}
Now the first result is proven if we show that $ L \geqslant 1 $. But note that by
construction, in the original directed diagram $ \Gamma $ it is always possible
to reach a leaf (or leg), from any internal vertex, which implies $ L \geqslant
1 $. Reversing the argument, we see that in order for $ L =1 $ to hold, $
\overline{\Gamma} $ must have exactly an incoming and an outgoing edge that
connect it to $ \Gamma $. For the same reason $\overline{\Gamma}$ must be connected.

Finally, a similar argument shows that if $ \overline{\Gamma} $ contains edges
from $ k $ distinct trees, then $ L \geqslant k$ (for each
tree present there must be a path out of $ \overline{\Gamma} $). The result is
proven.
\end{proof}

Now we are ready to conclude.

\begin{proof}[Proof of Proposition~\ref{prop:bphz}]
  We restrict to proving the result in the case $k=2$, since the general case
  follows along the same arguments, with only an additional burden of notation.
  Note that for every pairing $\kappa$ between $\tau_1$ and $\tau_2$ there are
  two unique internal pairings $\kappa_i \in \mK^{\rm{int}} (\tau_i)$ and a
  unique external pairings $\kappa_{\rm{ext}}$ such that $(\tau_1,
  \tau_2)_\kappa = ( (\tau_1)_{\kappa_1},
  (\tau_2)_{\kappa_2})_{\kappa_{\rm{ext}}}$. Here by external pairing we
  mean that it links to vertices that do not belong to the same tree: if
  $\kappa_{\rm{ext}}$ pairs a vertex $v \in \tau_1$, then the vertex
  $\kappa_{\rm{ext}}(v)$ with which it is paired belongs to $\tau_2$ (and
  viceversa). Therefore, it suffices to prove that for any fixed pairing
  $\kappa \in \mK(\tau_1, \tau_2)$ we have the identity
  \begin{equation*}
  \Pi_{\varepsilon}  \left( \mf{R}_{\ve} (\tau_1)_{\kappa_1} ,  \mf{R}_{\ve} (\tau_2)_{\kappa_2} \right)_{\kappa_{\rm{ext}}}= \hp (\tau_1, \tau_2)_\kappa = \hp ( (\tau_1)_{\kappa_1}, (\tau_2)_{\kappa_2})_{\kappa_{\rm{ext}}} \;.
  \end{equation*}
  Here note that the last identity follows by definition, while the first one needs to be
  checked. To see that the term on the left is even meaningful, and corresponds to a
  slice of the expected value that we are trying to estimate, we note that $\mf{R}_\ve$
  as defined in~\eqref{e:ren-1}, that~\eqref{e:ren-2} leaves single homogeneous chaoses invariant, and that
  \begin{equation}\label{eq_supp_propbphz1}
    \begin{aligned}
      \EE [ \mf{R}_{\ve} (\tau_1)_{\kappa_1}  \mf{R}_{\ve} (\tau_2)_{\kappa_2}] &= \sum_{\kappa_{\rm{ext}}} \left( \mf{R}_{\ve} (\tau_1)_{\kappa_1} ,  \mf{R}_{\ve} (\tau_2)_{\kappa_2} \right)_{\kappa_{\rm{ext}}} \\
      & = \sum_{\kappa_{\mathrm{ext}}}
      \int_{( \mathbf{T}^{4})^{ V_{o} }} \hat{\Pi}_\ve \tau_{1, \kappa_1}
    (x_L, x_{V_{o}}) \hat{\Pi}_\ve \tau_{2, \kappa_2}
    (x_L, x_{\kappa_{\rm{ext}}(V_{o})})  \ud x_{V_{o}} \;,
    \end{aligned}
  \end{equation}
  where the sum runs over all complete external pairings among the noise
vertices $V_o$ in $\tau_{1, \kappa_1}$ and the corresponding vertices
$\kappa_{\rm{ext}}(V_o)$ (the overall set of paired vertices does not
depend on the choice of the external pairing) in $\tau_{2,\kappa_2}$. Now
let us denote with $\mG^-_{i}$ the set of divergent diagrams in the Feynman
diagram $\tau_{i, \kappa_i}$ (we view the latter as a Feynman diagram as in the
discussion following~\eqref{e:ren-1}), and $\mF_i^-$ the set of all forests of
elements in $\mG^-_i$. Then by definition, for fixed $\kappa_{\rm{ext}}$, the
last term in~\eqref{eq_supp_propbphz1} can be rewritten as
  \begin{equation*}
    \sum_{ (\mf{F}_1, \mf{F}_2) \in \mF^-_{1} \times \mF^-_{2}} (-1)^{|\mf{F}_1^-|+|\mf{F}_2^-|} \left( \Pi_\ve \mC_{\mf{F}_1} \tau_{1,
    \kappa_1} ,  \Pi_\ve \mC_{\mf{F}_2} \tau_{2,\kappa_2} \right)_{\kappa_\mathrm{ext}} \;.
  \end{equation*}
  At this point, let $\mG^-$ indicate the set of negative divergences in $(\tau_1, \tau_2)_{\kappa}$. Then Lemma~\ref{lem:neg} implies that $\mG^- = \mG^-_{1} \sqcup \mG^-_{2}$. As a consequence, every forest $\mf{F}$ of subsets of $\mG^-$ can be decomposed uniquely into the disjoint union of two forests $\mf{F}_i \in \mF^-_{i}$, such that $\mf{F} = \mf{F}_1\sqcup \mf{F}_2$.

 It follows that
 \begin{equation*}
  \begin{aligned}
  \sum_{ (\mf{F}_1, \mf{F}_2) \in \mF^-_{1} \times \mF^-_{2}} & (-1)^{|\mf{F}_1^-|+|\mf{F}_2^-|} \left( \Pi_\ve \mC_{\mf{F}_1} \tau_{1,
    \kappa_1} ,  \Pi_\ve \mC_{\mf{F}_2} \tau_{2,\kappa_2} \right)_{\kappa_\mathrm{ext}}  \\
    & = \sum_{\mf{F} \in \mF^-} (-1)^{|\mf{F}|} \Pi_\ve \mC_{\mf{F}}  \left(\tau_{1,\kappa_2} ,  \tau_{2,\kappa_2} \right)_{\kappa_\mathrm{ext}} = \hp ( \tau_{1,\kappa_1}, \tau_{2,\kappa_2})_{\kappa_{\rm{ext}}} \;,
  \end{aligned}
 \end{equation*}
 as desired. This concludes the proof of the result.
\end{proof}

\section{Properties of Hepp sectors} \label{sec:hepp}

Throughout this section, let $ \Gamma = ( V, E ) $ be a Feynman diagram, with
inner nodes $ V_{\star}$.

\begin{lemma}\label{lem_hepp_almost_covers}
  For every $ x \in ( \mathbf{T}^{d})^{ V_{\star}}$ such that
  $ x_{v}\neq x_{w}$ for all $ v \neq w \in V_{\star}$, there exists at least one
  Hepp sector $\mathbf{T}=  (T, \mathbf{n})$ such that $ x \in D_{\mathbf{T}}$,
  as defined in~\eqref{eq_def_Hepp_dom}.
  \end{lemma}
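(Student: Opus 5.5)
The plan is to construct the Hepp tree $T$ explicitly from the point configuration $x$ by a hierarchical (single-linkage) clustering at dyadic scales. Writing $c = \sqrt{d}\pi$ and $m(v,w) := \lfloor \log_2 (c/|x_v - x_w|) \rfloor$, we have $c\,2^{-m-1} < |x_v-x_w| \le c\,2^{-m}$. Since $c$ is the diameter of the torus, every distance satisfies $|x_v-x_w|\le c$, so $m(v,w) \in \NN$. Because the points are pairwise distinct, $m(v,w)$ is finite for all $v\neq w$.

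For each $k \in \NN$ we define an equivalence relation $\sim_k$ on $V_\star$: $v \sim_k w$ if and only if $m(v,w) \ge k$. This relation is transitive by an ultrametric-type property, which is the main obstacle. For true distances the ultrametric inequality fails, so the "least common ancestor" scale cannot simply be read off pairwise. I will therefore pass to the single-linkage (minimal spanning tree) ultrametric
\[
\delta(v,w) = \min_{\text{paths } v=u_0,\dots,u_r=w}\ \max_i |x_{u_i}-x_{u_{i+1}}| .
\]
Its dyadic clusters are nested and form a rooted tree, and a generic binary refinement of each multi-way merge gives a binary $T$. I then need to check that $\mathbf n(v\wedge w)$, taken as the dyadic level of $\delta(v,w)$, matches the dyadic level of the true distance $|x_v-x_w|$.

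In general it will not match exactly. The fallback is to observe that \eqref{eq_def_Hepp_dom} asks every pair to lie precisely in its dyadic shell, and this condition is an ultrametric condition on $m$. I will verify that $m$ itself satisfies $m(v,w)\ge \min(m(v,u),m(u,w))$, or else accept a slightly weaker reading. If that verification fails, I would argue instead that $D_{\mathbf T}$ is meant with the pair-assignment given by $T$ built from $m$ via single linkage, since the covering statement only requires existence.

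Given a nested hierarchy, the root corresponds to the whole of $V_\star$ at level $\min m$. Each multi-child split at a fixed level is resolved into successive binary splits assigned the same $\mathbf n$-value, so that compatibility $\mathbf n(u')\ge \mathbf n(u)$ holds with equality allowed. Leaves are the singletons. Then for any $v,w$, the node $v\wedge w$ is the last cluster containing both, its level equals $m(v,w)$, and the shell inequality holds by the definition of $m$. Hence $x\in D_{(T,\mathbf n)}$. The only delicate points are the transitivity of $\sim_k$ and the bound $m\ge0$ coming from the choice of the constant $c$.
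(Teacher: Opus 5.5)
You build the tree the same way the paper does (minimal spanning tree, equivalently single linkage), and you correctly locate the crux. The step you leave open cannot be closed, however: the dyadic levels $m(v,w)$ of the true distances are not ultrametric. So the claim in your last paragraph, that the level of $v\wedge w$ equals $m(v,w)$, is false in general, and the lemma itself is false as stated.

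Take $x_1$, $x_2=x_1+s\,e_1$, $x_3=x_1+2s\,e_1$ with $s=0.3\,\sqrt{d}\pi\,2^{-k}$ and $k$ large enough that $2s<\pi$. Then $m(1,2)=m(2,3)=k+1$ while $m(1,3)=k$, so your fallback inequality $m(v,w)\ge\min(m(v,u),m(u,w))$ fails.

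Now check every binary tree on three leaves. Two of the pairwise least common ancestors coincide, and the third is a strict descendant of that common node.
\begin{itemize}
\item $[[1,2],3]$ would force $m(1,3)=m(2,3)$.
\item $[[2,3],1]$ would force $m(1,2)=m(1,3)$.
\item $[[1,3],2]$ would force $\mathbf{n}(1\wedge 3)=k<k+1=\mathbf{n}(\bullet)$, which violates compatibility.
\end{itemize}
All the shell inequalities here are strict, so an open set of configurations lies in no $D_{\mathbf T}$ of \eqref{eq_def_Hepp_dom}. This persists for every $|V_\star|\ge 3$, by restricting a Hepp tree to three of its leaves. Reinterpreting $D_{\mathbf T}$, as you suggest, changes the statement rather than proving it.

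The paper's own proof has the same gap. After building $T$ from the spanning tree, it fixes $\mathbf{n}$ so that \eqref{e:compatibility} holds for every pair. The only justification given is that each pair separately has some dyadic level. Nothing forces that level to depend on $v\wedge w$ alone or to be monotone along $T$, and in the example above it is neither.

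What single linkage does prove is the following. Let $G_k$ be the graph on $V_\star$ with an edge $\{v,w\}$ whenever $|x_v-x_w|\le\sqrt{d}\pi\,2^{-k}$. Let $\mathbf{n}(v\wedge w)$ be the largest $k$ for which $v,w$ lie in the same component of $G_k$. After resolving multi-way splits into binary ones with equal labels, this is a well-defined compatible map. Two bounds then hold:
\begin{itemize}
\item $|x_v-x_w|>\sqrt{d}\pi\,2^{-\mathbf{n}(v\wedge w)-1}$, because the edge $\{v,w\}$ is absent from $G_{\mathbf{n}(v\wedge w)+1}$.
\item $|x_v-x_w|\le(|V_\star|-1)\sqrt{d}\pi\,2^{-\mathbf{n}(v\wedge w)}$, by the triangle inequality along a path in $G_{\mathbf{n}(v\wedge w)}$.
\end{itemize}
So a covering statement holds only for sectors enlarged by a factor depending on $|V_\star|$ in one of the two bounds. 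Alternatively, define a sector as the set of configurations whose single-linkage hierarchy is $(T,\mathbf{n})$.

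Either version suffices for the power-counting upper bounds of Section~\ref{sec:nonneg}, up to diagram-dependent constants. In both versions, though, the shells are no longer exact dyadic annuli, and the enlarged sectors also lose disjointness. The arguments relying on Lemma~\ref{lem_disjoin_hepp} or on exact shells in Section~\ref{sec_exact} would therefore need to be re-examined.
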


  \begin{proof}
  Let $ x \in  ( \mathbf{T}^{d})^{V_{\star}}$ 
	  such that  $ x_{v}\neq x_{w}$ for all $ v \neq w \in V_{\star}$, and let $ \{
	  \overline{e}_{i}\}_{i}$ be an order on the edges of the (undirected) complete graph $ \mG $ with
	  vertices $
	  V_{\star} $ such that 
	  \begin{equation}\label{eq_supp1_disjoint}
		  \begin{aligned}
			  0 < | x_{ \overline{e}_{i,+}} - x_{ \overline{e}_{i,-}}|  \leqslant |
			  x_{ \overline{e}_{j , +}}- x_{ \overline{e}_{j , -}}|\,, \qquad \forall i <j \,.
		  \end{aligned}
	  \end{equation}
  Now, let $ \Sigma =( V_{\star}, \{e_{i}\}_{i=1}^{|V_{\star}| -1}) $,
  with $\{e_{i}\}_{i=1}^{|V_{\star}| -1} \subset \{ \overline{e}_{i}\}_{i} $,  be the minimal spanning tree of $ \mG $ with respect to the
  total order induced by~\eqref{eq_supp1_disjoint}, where we labelled the edges in
  the order they have been added to the spanning tree, from $ 1$ to  $|V_{\star}|-1  $.

  Then, starting from the root, we construct a tree $ T= T(\Sigma) \in
  \mT_{V_{\star}}$ as follows.
  If $|V_\star|=2$, then $T$ is simply the tree with leaves the vertices
  $V_\star$ grafted under a root labelled by the unique edge in $\Sigma$.

  For $|V_\star|>2$ we proceed by induction.
  We remove the edge $ e_{|V_{\star}|-1}$ from $ \Sigma$, (the last edge
  to be added to the minimal spanning tree). This splits
  the tree $ \Sigma $ into two subtrees $ \Sigma_{1}, \Sigma_{2}$ with disjoint vertex sets $ V_{1} \sqcup V_{2}=
  V_{\star}$. Then we define inductively $T(\Sigma) = [T(\Sigma_1),
  T(\Sigma_2)]_{e_{|V_\star|-1}}$, where the latter denotes the binary tree
  obtained by grafting the two trees $ T(\Sigma_i)$ under a new root labelled
  by $ e_{|V_{\star}|-1}$, see the following example:
  \begin{equation*}
    \begin{tikzpicture}[scale=1,
      every node/.style={font=\scriptsize},
      dot/.style={circle,fill=white!50!black,inner sep=1.4pt},
      baseline = 7ex
    ]
    \node (v1) at (-0.0,-0.5)   {1};
    \node (v2) at (-2,1)  {2};
    \node (v3) at (-1,3)  {3};
    \node (v4) at (1,3)   {4};
    \node (v5) at (2,1)   {5};
    \foreach \a/\b in {1/2,1/3,1/4,1/5,2/3,2/4,2/5,3/4,3/5,4/5}
      \draw (v\a) -- (v\b);
    \begin{scope}[line width=6pt, draw=white!50!black, line cap=round, opacity=.35]
      \draw (v1) -- node[midway, below, opacity =1.0] {\scriptsize $e_1$} (v2);
      \draw (v2) -- node[midway, left, opacity =1.0] {\scriptsize $e_3$}(v3);
      \draw (v1) -- node[midway, right, opacity =1.0] {\scriptsize $e_4$}(v4);
      \draw (v4) -- node[midway, right, opacity =1.0] {\scriptsize $e_2$}(v5);
    \end{scope}
    \foreach \a/\b in {1/2,2/3,1/4,4/5}
      \draw (v\a) -- (v\b);
  \end{tikzpicture} \qquad \qquad \longrightarrow \qquad \qquad
  \begin{tikzpicture}[scale=1,
      every node/.style={font=\scriptsize},
      baseline = 8ex
    ]
    \node (x1) at (6.5,0) {1};
    \node (x2) at (7.5,0) {2};
    \node (x3) at (8.5,0) {3};
    \node (x4) at (9.5,0) {4};
    \node (x5) at (10.5,0) {5};
    \node[dot, label=above:{$e_1$}] (e1) at (7,1)   {};
    \node[dot, label=above:{$e_3$}] (e3) at (8.1,2) {};
    \node[dot, label=above:{$e_4$}] (e4) at (9.2,3) {};
    \node[dot, label=right:{$e_2$}] (e2) at (10,1)  {};
    \draw (x1) -- (e1);
    \draw (x2) -- (e1);
    \draw (e1) -- (e3);
    \draw (x3) -- (e3);
    \draw (e3) -- (e4);
    \draw (e4) -- (e2);
    \draw (x4) -- (e2);
    \draw (x5) -- (e2);
    \end{tikzpicture}
  \end{equation*}
  Finally, having constructed the tree, we fix the compatible map  $ \mathbf{n} \in \mA(T)$ such that
  \begin{equation}\label{e:compatibility}
  \begin{aligned}
  (\sqrt{d}\pi)   2^{- \mathbf{n} (v \wedge w)-1} < | x_{v} - x_{w} |
  \leqslant (\sqrt{d}\pi)   2^{- \mathbf{n} (v \wedge w)}\,,
  \end{aligned}
  \end{equation}
  for every $ v, w \in V_{\star} $. Note that there exists an $\mathbf{n} (v \wedge w) \in \NN$ such
  that~\eqref{e:compatibility} is satisfied because the maximal distance between
 to points on the $d$-dimensional torus is $\sqrt{d} \pi$. Hence, by construction and definition of Hepp sectors
\eqref{eq_def_Hepp_dom}, we have $ x \in
  D_{( T, \mathbf{n})}$.
  This finishes the proof.
  \end{proof}
  The next result proves that most Hepp sectors are disjoint.

  \begin{lemma}\label{lem_disjoin_hepp}
  Let $ \Gamma = ( V, E ) $ be a Feynman diagram, with inner vertices $
V_{\star} $ and define $$\mA^{\#}(T):= \{\mathbf{n} \in  \mA (T)\,:\,
\mathbf{n}(u) \neq \mathbf{n}(u') \;, \forall u \neq u' \in \overline{T} \}$$ for any tree $T \in \mT_{V_\star}$. Then:
  \begin{enumerate}
  \item For every $ T \in \mT_{V_{\star}}$ and any pair $\mathbf{n}, \mathbf{n}'
  \in \mA(T)$ with $\mathbf{n} \neq \mathbf{n}'$ we have $D_{(T, \mathbf{n}) }
  \cap D_{(T, \mathbf{n}') } = \emptyset$.

  \item Moreover for $T, T' \in \mT_{V_\star}$ and
  $\mathbf{n} \in  \mA^{\#}(T), \mathbf{n}' \in  \mA^{\#}(T')$,
  we have $D_{(T, \mathbf{n}) }
  \cap D_{(T', \mathbf{n}') } = \emptyset$.

  \end{enumerate}
  \end{lemma}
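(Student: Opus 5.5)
Both parts come down to recovering the Hepp data $(T,\mathbf{n})$ from a point $x\in D_{(T,\mathbf{n})}$. By \eqref{eq_def_Hepp_dom}, membership in $D_{(T,\mathbf{n})}$ fixes, for every pair $v\neq w$, the integer $\mathbf{n}(v\wedge w)$ as the unique $k\in\NN$ with $(\sqrt d\pi)2^{-k-1}<|x_v-x_w|\le(\sqrt d\pi)2^{-k}$. These dyadic shells are disjoint in $k$, so $x$ determines the map $\{v,w\}\mapsto \mathbf{n}(v\wedge w)$ on pairs of leaves.

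For point~1, fix $T$ and take $x\in D_{(T,\mathbf{n})}\cap D_{(T,\mathbf{n}')}$. Then $\mathbf{n}(v\wedge w)=\mathbf{n}'(v\wedge w)$ for every pair of leaves. In a binary tree every inner node $u\in\trim$ is the least common ancestor of some pair of leaves: pick one leaf below each of its two children. Hence $\mathbf{n}=\mathbf{n}'$ on all of $\trim$, which contradicts $\mathbf{n}\neq\mathbf{n}'$.

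For point~2, take $x$ in both sectors with $\mathbf{n}\in\mA^{\#}(T)$ and $\mathbf{n}'\in\mA^{\#}(T')$. It suffices to show $T=T'$, since point~1 then applies. The plan is to reconstruct $T$ from the pair function $f(v,w):=\mathbf{n}(v\wedge w)$, which by the first paragraph equals $\mathbf{n}'(v\wedge_{T'}w)$.

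1. Take the root $\bullet$ of $T$. Since $\mathbf{n}$ is compatible (non-decreasing away from the root) and injective, $\mathbf{n}(\bullet)$ is strictly smaller than $\mathbf{n}(u)$ for every other inner node $u$. Therefore $\min f=\mathbf{n}(\bullet)$, and $f(v,w)=\min f$ holds exactly when $v$ and $w$ lie in different subtrees of the root.
2. It follows that the two root subtrees of $T$ have leaf sets equal to the two classes of the equivalence relation $f(v,w)>\min f$. This relation is defined from $f$ alone, so the same relation, and the same partition, arises from $T'$. Transitivity needs checking here; it holds because the relation coincides with ``lying in the same root subtree''.
3. Recurse into each class, restricting $f$. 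The restricted $\mathbf{n}$ is still injective and compatible on each subtree. Induction on $|V_\star|$ gives $T=T'$ as rooted trees on leaf set $V_\star$, and equal as labelled trees.

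The main obstacle is step~1, where injectivity is essential. Without it a child $u$ of the root could have $\mathbf{n}(u)=\mathbf{n}(\bullet)$, and the partition by $\min f$ would not be binary, so different trees could share points. Under $\mA^{\#}$ the compatibility inequality becomes strict, and that is what makes the root's split determined by $x$.
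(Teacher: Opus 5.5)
Your proof is correct. Part~1 matches the paper's argument: a point cannot lie in two distinct dyadic shells for the same pair $v,w$, and every inner node is $v\wedge w$ for some pair of leaves.

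For part~2 you take a genuinely different route.

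\emph{The paper's route.} It argues by contradiction via an explicit witness. If $T\neq T'$, it descends both trees simultaneously to the first node where the leaf bipartitions differ. There it picks a triple $v_1,v_2,v_3$ with $(v_1\wedge v_2)\succ_T(v_2\wedge v_3)$ but $(v_2\wedge v_3)\succ_{T'}(v_1\wedge v_2)$. Injectivity makes compatibility strict, so one sector forces $|x_{v_1}-x_{v_2}|<|x_{v_2}-x_{v_3}|$ and the other forces the reverse inequality.

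\emph{Your route.} You first discretise the point into the integer pair function $f$, which both sectors must share. You then prove a purely combinatorial fact: a full binary tree carrying an injective compatible labelling $\mathbf{n}$ is determined, together with $\mathbf{n}$, by its LCA function $f$. The reason is that $\min f$ singles out the root split, and you recurse. This is the familiar statement that an ultrametric with distinct levels determines its dendrogram.

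\emph{What each buys.} Your version cleanly separates the geometric input (disjointness of the dyadic shells) from the combinatorics. It gives a concrete procedure reading off the unique sector containing $x$. It also avoids the bookkeeping the paper needs when labelling the subtrees so that the witness triple exists: choosing between $\tilde V_1\setminus\tilde V_1'$ and $\tilde V_1'\setminus\tilde V_1$, and possibly swapping the roles of $T$ and $T'$. The paper's version instead exhibits a single local obstruction, namely one conflicting triple of distances, which is closer in form to how Hepp sectors are handled elsewhere in the paper.

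Both arguments rest on the same two facts. First, injectivity makes $\mathbf{n}$ strictly increasing away from the root. Second, trees are identified as unordered hierarchies of leaf sets, and part~2 is read as a statement about distinct pairs $(T,\mathbf{n})\neq(T',\mathbf{n}')$. You state the latter explicitly by reducing to $T=T'$ and then invoking part~1.
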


  We note that the exclusion of diagonals in the second part of the lemma is
  necessary, because the constant assignment $ \mathbf{n} \equiv n \in
  \mathbf{N}$ is compatible with any
  tree $ T \in \mT_{V_{\star}}$. For example, in the case $|V_{\star}|=3$
  and $ d \geqslant 2$,
  we can find $ x \in ( \mathbf{T}^{d})^{ V_{\star}}$ such that
  \begin{equation*}
  \begin{aligned}
  | x_{v}- x_{w}| = \alpha \,, \qquad \forall v,w \in V_{\star}, v \neq w \,,
  \end{aligned}
  \end{equation*}
  for any $\alpha$ sufficiently small, by embedding an equilateral triangle in
  the torus.

  \begin{proof}[Proof of Lemma~\ref{lem_disjoin_hepp}]
  We prove the two points separately.

  \vspace{0.5em}

  \textit{Step 1.} Let $ T \in \mT_{V_{\star}}$ and assume that $ x \in D_{(T, \mathbf{n})} \cap D_{(T,
  \mathbf{n}')}$, for distinct and compatible $\mathbf{n}, \mathbf{n}'$.
  Since $ \mathbf{n} \neq \mathbf{n} '$, there exists $u \in \trim $ such that $
  \mathbf{n} (u) >
  \mathbf{n} '(u)$ (or vise versa).
  Now, let $ v, w \in V_{\star}$ such that $u = v \wedge w$. Consequently, from
  the definition~\eqref{eq_def_Hepp_dom},
  \begin{equation*}
  \begin{aligned}
  | x_{v} - x_{w} |
  \leqslant ( \sqrt{d} \pi)
  2^{- \mathbf{n}(u)}
  \leqslant
( \sqrt{d} \pi)
  2^{- \mathbf{n}'(u)-1}
  <
  | x_{v} - x_{w} |\,,
  \end{aligned}
  \end{equation*}
which is a contradiction.
  Hence, $ D_{(T, \mathbf{n})} \cap D_{(T,
  \mathbf{n}')}= \emptyset$.

  \vspace{0.5em}

  \textit{Step 2.} The case for fixed $ T$ and varying $ \mathbf{n}$ has been covered in the
  previous point.
  Consider therefore $ T \neq T' $ and
   assume there exists a triplet of points $
  v_{1}, v_{2} , v_{3} \in V_{\star} $ such that
  \begin{equation}\label{eq_triplet}
  \begin{aligned}
  ( v_{1} \wedge v_{2}) \succ_{T} ( v_{2} \wedge v_{3})
  \quad \text{but} \quad
  ( v_{2} \wedge v_{3})  \succ_{T'} ( v_{1} \wedge v_{2})\,.
  \end{aligned}
  \end{equation}
  Then,~\eqref{eq_triplet} and the assumption $\mathbf{n} \in \mA^\#(T),
  \mathbf{n}' \in \mA^\#(T')$ imply
  that every $x \in D_{(T, \mathbf{n})} \cap
  D_{(T', \mathbf{n}')}$ satisfies
  \begin{equation*}
  \begin{aligned}
  | x_{v_{1}} - x_{v_{2}}| \leqslant
  ( \sqrt{d} \pi) 2^{- \mathbf{n}( v_{1} \wedge v_{2})}
  \leqslant
  ( \sqrt{d} \pi ) 2^{-  \mathbf{n} ( v_{2} \wedge v_{3})-1}
   < | x_{v_{2}} - x_{v_{3}}| \,,
  \end{aligned}
  \end{equation*}
  as well as
  \begin{equation*}
  \begin{aligned}
  | x_{v_{1}} - x_{v_{2}}| >
  ( \sqrt{d} \pi ) 2^{- \mathbf{n}'( v_{1} \wedge v_{2})-1}
  \geqslant
  ( \sqrt{d} \pi ) 2^{-  \mathbf{n}' ( v_{2} \wedge v_{3})}
  \geqslant
   | x_{v_{2}} - x_{v_{3}}| \,.
  \end{aligned}
  \end{equation*}
  We deduce that the intersection $  D_{(T, \mathbf{n})} \cap
  D_{(T', \mathbf{n}')}$ is empty.

  Therefore, the proof is complete if we show
  that a triplet of points $
  v_{1}, v_{2} , v_{3} \in V_{\star} $ satisfying~\eqref{eq_triplet} always exists.
  To this end, we start by partitioning $ V_{\star}$ into two sets $ V_{1} ,
  V_{2}$, which correspond to the vertex sets spanned by the two subtrees $T_1,
  T_2$ such that $T = [T_1, T_2]$ is obtained by grafting these trees to the root.
  Similarly, let $ V_{1}' ,V_{2}' $ be the specular partitioning induced by $ T ' $.
  If $ V_{\alpha}= V_{\beta} '$ for some $\{\alpha, \beta\} \subseteq \{1,2\}$, we proceed in the same
  fashion with the subtrees $T_\alpha, T'_\beta$ and $T_{\alpha^c},
  T'_{\beta^c}$ (where $\{\alpha , \alpha^c\} = \{\beta, \beta^c\} = \{1,2\}$)
  until we find two subtrees $\tilde{T} = [\tilde{T}_1, \tilde{T}_2] \subseteq
  T, \tilde{T}' = [\tilde{T}'_1, \tilde{T}'_2] \subseteq T'$
  such that $\tilde{V}_\alpha = V(\tilde{T}_\alpha) \neq V(\tilde{T}'_\beta = \tilde{V}'_\beta$ for any $\{\alpha,
  \beta\} \subseteq \{1,2\}$ (here $V(\tilde{T}_\alpha)$ is the set of vertices
  spanned by the subtree $\tilde{T}_\alpha$).
  Note that $\tilde{T}, \tilde{T}'$ exist and are non-empty, since otherwise $ T
  =  T ' $.
  Without loss of generality, we assume that $ \tilde{V}_{1} \neq
  \tilde{V}_{1}'$.
  Now, let  $v_{1} \in \tilde{V}_{1}' \cap \tilde{V}_{1}$, $ v_{2} \in \tilde{V}_{1} \setminus \tilde{V}_{1}' $
  (or $ v_{2} \in \tilde{V}_{1}' \setminus
  \tilde{V}_{1}$ if $ \tilde{V}_{1} \subset \tilde{V}_{1}' $), and $ v_{3} \in
  V(\tilde{T})\setminus (
    \tilde{V}_{1} \cup \tilde{V}_{1}' ) = V(\tilde{T}') \setminus (
    \tilde{V}_{1} \cup \tilde{V}_{1}' )$. See the following diagram:
\begin{equation*}
     \begin{tikzpicture}[>=stealth, thick, scale=.8,
      every node/.style={font=\small}]
    \tikzstyle{bigset}=[circle,draw,fill=black!8,minimum size=1cm]
    \node at (-0.0,2.8) {$T$};
    \node at (-1,2) {$v_2 \wedge v_3$};
    \node[dot] (r) at (0,2) {};
    \node[bigset]  (V1) at (-1.3,0) {$\tilde{V}_1$};
    \node[bigset]  (V2) at ( 1.3,0) {$\tilde{V}_2$};
    \draw (r) -- (V1);
    \draw (r) -- (V2);
    \node at (-1.3,-1) {$v_1,v_2$};
    \node at ( 1.3,-1) {$v_3$};
    \begin{scope}[xshift=6cm]
    \node at (0.0,2.8) {$T'$};
    \node at (1,2) {$v_1 \wedge v_2$};
    \node[dot] (r2) at (0,2) {};
    \node[bigset]   (V1p) at (-1.3,0) {$\tilde{V}_1'$};
    \node[bigset]   (V2p) at ( 1.3,0) {$\tilde{V}_2'$};
    \draw (r2) -- (V1p);
    \draw (r2) -- (V2p);
    \node at (-1.3,-1) {$v_1$};
    \node at ( 1.3,-1) {$v_2,v_3$};
    \end{scope}
    \end{tikzpicture}
  \end{equation*}
  By construction, the points $ v_{1}, v_{2}, v_{3} \in V_{\star} $ satisfy
 ~\eqref{eq_triplet}. This completes the proof of the result.
  \end{proof}

\section{Taylor expansions of kernels}

A key ingredient in estimates connected to the BPHZ theorem is
a suitably stated Taylor expansion for the kernels involved in Feynman diagrams.

\begin{lemma}\label{lem_taylor_new}
For any $\ve \in (0, 1)$, let $ K_{ \varepsilon}: \mathbf{T}^{d} \to \mathbf{R}$ be a
function satisfying, for every $\ell \in \mathbf{N}^{d}$ with $| \ell
|=k$ and uniformly over $\ve\in (0, 1)$:
\begin{equation*}
\begin{aligned}
|D^{\ell}K_{ \varepsilon} (x) |\lesssim ( |x | + \varepsilon)^{
 \mathfrak{t}- k}\,, \quad \text{ for some }\ \mathfrak{t}<0\,.
\end{aligned}
\end{equation*}
Let $ x, x_{\star},  y \in \mathbf{T}^{d}$ such that $| x- x_{\star}| <  | x- y
|, | x_{\star} - y |$.
Then for every $N \in \mathbf{N}$
\begin{equation}\label{eq_taylor}
\begin{aligned}
\Big|
K_{\varepsilon}( x- y)
- \sum_{| \ell | < N} \frac{1}{\ell !}
( x - x_{\star})^{\ell}
( D^{\ell}K_{ \varepsilon}) ( x_{\star}- y)
\Big|
\lesssim_{N} \frac{ | x- x_{\star}|^{N} \big(|
x- y| +
\varepsilon\big)^{ \mathfrak{t}}}{( \min\{ | x- y
|, | x_{\star} - y | \}+ \varepsilon)^{N}} \,.
\end{aligned}
\end{equation}
\end{lemma}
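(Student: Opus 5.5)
We apply Taylor's theorem with integral remainder along the segment from $x_\star$ to $x$, writing $h = x - x_\star$ and $z_t = x_\star + t h - y$ for $t \in [0,1]$. The function $t \mapsto K_\ve(z_t)$ is smooth in $t$, since the segment stays away from the origin, as we check below. The left side of~\eqref{eq_taylor} then equals the remainder
\begin{equation*}
\sum_{|\ell| = N} \frac{N}{\ell!}\, h^{\ell} \int_0^1 (1-t)^{N-1} (D^{\ell} K_\ve)(z_t) \ud t \;,
\end{equation*}
so it suffices to bound $|D^\ell K_\ve(z_t)|$ uniformly in $t$ by a constant times $(m+\ve)^{-N}(|x-y|+\ve)^{\mf{t}}$, where $m := \min\{|x-y|, |x_\star - y|\}$.

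The key geometric step is a lower bound on $|z_t|$ along the segment. By the triangle inequality, $|z_t| \geq |x_\star - y| - t|h| $ and also $|z_t| \geq |x-y| - (1-t)|h|$. Since $|h| < m$, for $t \leq 1/2$ the first bound gives $|z_t| \geq |x_\star-y| - |h|/2 > m/2$, and for $t \geq 1/2$ the second gives $|z_t| > m/2$. Hence $|z_t| \geq m/2$ on the whole segment. On the torus one has to use the periodic distance: the same argument works with $|\cdot|$ the torus distance, because $z_t$ is a shift by a vector of length $t|h|$, and the torus distance is $1$-Lipschitz. The only subtlety is that differentiation along a short straight path is well defined, which is fine because $|h|$ is less than the injectivity radius, given that $|h| < |x-y| \leq \sqrt{d}\pi$. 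Possibly one should restrict to $|h|$ below half the period, and I expect this technical point to be the only real obstacle; if needed I would lift to $\RR^d$ using a representative of $x_\star - y$ of minimal norm.

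With this bound, $|D^\ell K_\ve(z_t)| \lesssim (m/2 + \ve)^{\mf{t} - N} \lesssim (m+\ve)^{-N} (m+\ve)^{\mf{t}}$. It remains to replace $(m+\ve)^{\mf{t}}$ by $(|x-y|+\ve)^{\mf{t}}$. Since $\mf{t} < 0$ and $m \leq |x-y|$, this goes the wrong way in general, so we also need $|x-y| \lesssim m$. This follows from $|x-y| \leq |x_\star - y| + |h| < |x_\star-y| + m \leq 2|x_\star - y|$, which gives $|x - y| \leq 2m$. Therefore $(m+\ve)^{\mf{t}} \lesssim (|x-y|+\ve)^{\mf{t}}$, with a constant depending only on $\mf{t}$. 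Summing over the finitely many multi-indices $|\ell| = N$ and using $|h^\ell| \leq |h|^N$ yields~\eqref{eq_taylor}, with an implicit constant depending on $N$, $d$, $\mf{t}$ and the constants in the assumed derivative bounds.
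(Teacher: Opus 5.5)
Your proof is correct and is essentially the argument the paper relies on. The paper defers to \cite[Lemma~3.8]{BPHZ}: a Taylor expansion with integral remainder along the segment from $x_\star$ to $x$, with $\ve$ tracked through the two elementary facts $|z_t|\geq m/2$ and $|x-y|\leq 2m$.

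The torus issue you flag is not a real obstacle. Lift $K_\ve$ to a periodic function on $\RR^d$ and take $h$ to be a minimal-norm representative of $x-x_\star$; the lift of $x_\star-y$ is irrelevant by periodicity. Since the torus distance is $1$-Lipschitz under Euclidean translations, your two lower bounds on $|z_t|$ then hold verbatim with no smallness requirement on $|h|$. Incidentally, $\sqrt{d}\pi$ is the diameter of $\TT^d$, not its injectivity radius, which is $\pi$.
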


The proof of this estimate follows the same steps of the proof of
\cite[Lemma~3.8]{BPHZ}, while explicitly keeping track of $ \ve$ in the upper bound.


\end{document}